\numberwithin{equation}{section}
\newtheorem{theorem}{Theorem}[section]
\newtheorem{lemma}[theorem]{Lemma}
\newtheorem{proposition}[theorem]{Proposition}
\newtheorem{corollary}[theorem]{Corollary}
\newtheorem{conjecture}[theorem]{Conjecture}
\newtheorem{question}[theorem]{Question}
\newtheorem*{theorem*}{Theorem}
\theoremstyle{remark}
\newtheorem{remark}[theorem]{Remark}
\newtheorem{definition}[theorem]{Definition}
\numberwithin{equation}{section}
\newcommand{\lif}[1]{\widetilde{#1}}  
\newcommand{\com}[2][\theta_{0}]{#2^{#1}}
\newcommand{\End}[2]{\mathcal{E}_{#1}(#2)}
\newcommand{\tEnd}[3][\omega]{\mathcal{E}_{#2}(#3,#1)}
\newcommand{\uP}[1]{\Phi^{+}_{unit}(#1)}
\renewcommand{\P}[1]{\Phi(#1)}
\newcommand{\Q}[1]{\Psi(#1)}
\newcommand{\Pbd}[1]{\Phi_{bdd}(#1)}
\newcommand{\Psm}[1]{\Phi_{sim}(#1)}
\newcommand{\Pdt}[1]{\Phi_{2}(#1)}
\newcommand{\cuP}[1]{\bar{\Phi}^{+}_{unit}(#1)}
\newcommand{\cP}[1]{\bar{\Phi}(#1)}
\newcommand{\cPbd}[1]{\bar{\Phi}_{bdd}(#1)}
\newcommand{\cPdt}[1]{\bar{\Phi}_{2}(#1)}
\newcommand{\cPsm}[1]{\bar{\Phi}_{sim}(#1)}
\newcommand{\cPel}[1]{\bar{\Phi}_{ell}(#1)}
\newcommand{\p}{\phi}
\newcommand{\lp}{\tilde{\phi}}
\renewcommand{\lq}{\tilde{\psi}}
\newcommand{\Pkt}[1]{\Pi_{#1}}
\newcommand{\cPkt}[1]{\bar{\Pi}_{#1}}
\newcommand{\clPkt}[1]{\tilde{\bar{\Pi}}_{#1}}
\renewcommand{\r}{\pi}
\newcommand{\lr}{\tilde{\pi}}
\renewcommand{\H}{\mathcal{H}}
\newcommand{\sH}{\bar{\mathcal{H}}}
\newcommand{\Idt}[2]{I_{disc #2}^{#1}}
\newcommand{\tIdt}[2]{I_{disc #2}^{(#1, \omega)}}
\newcommand{\Rdt}[2]{R_{disc #2}^{#1}}
\newcommand{\tRdt}[2]{R_{disc #2}^{(#1, \omega)}}
\newcommand{\Sdt}[2]{S_{disc #2}^{#1}} 
\newcommand{\cS}[1]{\bar{S}_{#1}}
\renewcommand{\S}[1]{\mathcal{S}_{#1}}
\newcommand{\+}{\oplus}
\renewcommand{\#}{\boxplus}
\renewcommand{\c}{\lambda}
\newcommand{\lG}{\widetilde{G}}
\newcommand{\lM}{\widetilde{M}}
\newcommand{\lP}{\widetilde{P}}
\newcommand{\x}{\omega}
\renewcommand{\L}[1]{{}^L#1}
\newcommand{\D}[1]{\widehat{#1}}
\newcommand{\Gal}[1]{\Gamma_{#1}}
\renewcommand{\a}{\alpha}
\renewcommand{\Im}{\text{Im}\,}
\newcommand{\lZ}{Z_{\widetilde{G}}}
\newcommand{\Z}{Z_{G}}
\newcommand{\iG}[1]{{G^{1}_{#1}}}
\newcommand{\ir}{\pi^{1}}
\newcommand{\Res}{\text{Res}}
\newcommand{\Ind}{\text{Ind}}
\newcommand{\Cent}{\text{Cent}}
\newcommand{\Two}{\mathbb{Z}/2\mathbb{Z}}
\newcommand{\C}{\mathbb{C}}
\newcommand{\R}{\mathbb{R}}
\newcommand{\A}{\mathbb{A}}
\newcommand{\ep}{\phi^{\mathcal{E}}}
\newcommand{\ePbd}[1]{\Phi_{bdd}^{\mathcal{E}}(#1)}
\newcommand{\ePsm}[1]{\Phi_{sim}^{\mathcal{E}}(#1)}
\newcommand{\cePbd}[1]{\bar{\Phi}_{bdd}^{\mathcal{E}}(#1)}
\newcommand{\cePsm}[1]{\bar{\Phi}_{sim}^{\mathcal{E}}(#1)}
\newcommand{\lf}{\tilde{f}}
\newcommand{\Norm}{\text{Norm}}
\newcommand{\Nm}{\text{Nm}}
\newcommand{\cN}[1]{\bar{N}_{#1}}
\newcommand{\N}[1]{\frak{N}_{#1}}
\newcommand{\cT}[1]{\bar{T}_{#1}}
\newcommand{\Cl}{\mathcal{C}}
\newcommand{\cCl}{\bar{\mathcal{C}}}
\newcommand{\m}{\bar{m}}
\newcommand{\Aut}{\text{Aut}}
\newcommand{\Int}{\text{Int}}
\newcommand{\Out}{\text{Out}}
\newcommand{\Hom}{\text{Hom}}
\newcommand{\Ker}{\text{Ker}}
\newcommand{\e}{\varepsilon}
\begin{document}
\title{L-packets of quasisplit $GSp(2n)$ and $GO(2n)$}

\author{Bin Xu}

\address{Department of Mathematics and Statistics \\  University of Calgary, Canada }
\email{bin.xu2@ucalgary.ca}

\subjclass[2010]{22E50 (primary); 11F70 (secondary)}
\keywords{similitude group, twisted endoscopic transfer, L-packet, stabilized twisted trace formula}


\maketitle

\begin{abstract}
In his monograph (2013) Arthur characterizes the L-packets of quasisplit symplectic groups and orthogonal groups. By extending his work, we characterize the L-packets for the corresponding similitude groups with desired properties. In particular, we show these packets satisfy the conjectural endoscopic character identities. 
\end{abstract}

\tableofcontents

\section{Introduction}
\label{sec: introduction}

Let $F$ be a local field of characteristic zero and $W_{F}$ be the Weil group, then the local Langlands group is defined as follows 
\[
L_{F} = 
\begin{cases}
W_{F} & \text{if $F$ is archimedean}, \\ 
W_{F} \times SL(2, \C) & \text{if $F$ is nonarchimedean}.
\end{cases}
\] 
Let $G$ be a quasisplit connected reductive group over $F$ and $\D{G}$ be its complex dual group, the Langlands dual group $\L{G}$ is a semidirect product $\D{G} \rtimes W_{F}$, where the action of $W_{F}$ on $\D{G}$ factors through the absolute Galois group $\Gal{F} = \text{Gal}(\bar{F}/F)$. A local Langlands parameter $\p$ is a $\D{G}$-conjugacy class of admissible homomorphisms from $L_{F}$ to $\L{G}$ (see \cite{Borel:1979}). In particular, it respects the projections on $W_{F}$ from both $L_{F}$ and $\L{G}$. We denote a representative of $\p$ by $\underline{\p}: L_{F} \rightarrow \L{G}$. Let $\P{G}$ be the set of local Langlands parameters and $\Pkt{}(G(F))$ be the set of isomorphism classes of irreducible admissible representations of $G(F)$. The local Langlands conjecture asserts a correspondence between $\P{G}$ and $\Pkt{}(G(F))$. The correspondence is not necessarily a bijection. In fact, it is conjectured that each $\p \in\P{G}$ is associated with a finite set $\Pkt{\p}$ of $\Pkt{}(G(F))$, such that they give a partition of $\Pkt{}(G(F))$
\begin{align*}
\Pkt{}(G(F)) = \bigsqcup_{\p \in \P{G}} \Pkt{\p}.
\end{align*}
Such sets $\Pkt{\p}$ are called L-packets. The local Langands conjecture has been proved for $GL(N)$ by Harris-Taylor \cite{HarrisTaylor:2001}, Henniart \cite{Henniart:2000} and Scholze \cite{Scholze:2013}, in which case one does get a bijection. Arthur \cite{Arthur:2013} extended their results to $Sp(N)$ and $SO(N)$ through the theory of twisted endoscopy, and in his case the packets are not always singletons. By the Langlands classification of irreducible admissible representations of $G(F)$, one can reduce this correspondence to the tempered case, namely one can replace $\Pkt{}(G(F))$ by the subset $\Pkt{temp}(G(F))$ of tempered representations, and $\P{G}$ by the subset $\Pbd{G}$ of bounded parameters (i.e., the closure of the image of $\underline{\p}|_{W_{F}}$ is compact). The tempered L-packets can be characterized by ``stability". To explain this concept, we need to introduce the Harish-Chandra characters. For any $\r \in \Pkt{}(G(F))$, the associated Harish-Chandra character is a distribution on $G(F)$ defined by 
\[
f_{G}(\r) := trace \int_{G(F)} f(g)\r(g) dg
\]
for $f \in C^{\infty}_{c}(G(F))$. Harish-Chandra showed this distribution can be represented by a $G(F)$-conjugate invariant locally integrable function $\Theta_{\r}$ over $G(F)$. Moreover, $\Theta_{\r}$ is smooth over the strongly regular semisimple elements $G_{reg}(F)$. Later on, we will simply call them characters. We say a finite linear combination $\Theta$ of Harish-Chandra characters is {\bf stable} if it is $G(\bar{F})$-conjugate invariant over $G_{reg}(F)$, namely $\Theta(\gamma) = \Theta(\gamma')$ for any $\gamma, \gamma' \in G_{reg}(F)$ such that $\gamma = g^{-1} \gamma' g$ for some $g \in G(\bar{F})$. Then the tempered L-packets are conjectured to be the minimal subsets of irreducible tempered representations, within which some linear combination of the Harish-Chandra characters is stable (cf. Conjecture 9.2, \cite{Shahidi:1990}). 

Let $D$ be a torus and $\lG$ be a quasisplit connected reductive group over $F$, which is an extension of $D$ by $G$
\begin{align*}
\xymatrix{1 \ar[r] & G \ar[r] & \lG \ar[r]^{\c}  & D \ar[r] & 1. }
\end{align*}
Dual to this exact sequence, we have
\begin{align*}
\xymatrix{1 \ar[r] & \D{D} \ar[r] & \D{\lG} \ar[r]^{\bold{p}}  & \D{G} \ar[r] & 1. }
\end{align*}
The projection $\bold{p}: \D{\lG} \rightarrow \D{G}$ can be extended to an L-homomorphism, so it induces a map $\Pbd{\lG} \rightarrow \Pbd{G}$. Labesse (\cite{Labesse:1985}, Theorem 8.1) showed this map is in fact surjective. For $\lp \in \Pbd{\lG}$ and $\p = \bold{p} \circ \lp$, it is believed that the restriction $\Pkt{\lp}|_{G} = \Pkt{\p}$. Motivated by this, we want to construct the L-packets of $\lG$ from that of $G$, when $G = Sp(2n)$ (resp. $SO(2n)$) and $\lG = GSp(2n)$ (resp. $GSO(2n)$). In fact, one can also consider the case when $G = SO(2n+1)$ and $\lG = GO(2n+1)$. Note $GO(2n+1)$ is connected. Since $GO(2n+1) \cong SO(2n+1) \times \mathbb{G}_{m}$, this case would be trivial.
To give the precise statement of our result, we need to first recall Arthur's results about $G$. We fix an outer automorphism $\theta_{0}$ of $G$, such that it is trivial when $G = Sp(2n)$, and it is induced from the conjugate action of $O(2n)$ when $G = SO(2n)$. Let $\Sigma_{0} = <\theta_{0}>$, then $\Sigma_{0}$ acts on $\Pkt{}(G(F))$. Note $\theta_{0}$ induces a dual automorphism $\D{\theta}_{0}$ on $\D{G}$, so $\Sigma_{0}$ also acts on $\P{G}$ through the action of $\D{\theta}_{0}$ on $\D{G}$. We denote the set of $\Sigma_{0}$-orbits in $\Pkt{temp}(G(F))$ by $\cPkt{temp}(G(F))$ and the set of $\Sigma_{0}$-orbits in $\Pbd{G}$ by $\cPbd{G}$. The action of $\Sigma_{0}$ can be extended to $\lG$, so we can also define the analogues of these sets for $\lG$.

\begin{theorem}[Arthur]
\label{thm: L-packet for G}
\begin{enumerate}
\item
There is a canonical way to associate any $\p \in \cPbd{G}$ with a finite subset $\cPkt{\p}$ of $\cPkt{temp}(G(F))$ such that 
\begin{align*}
\cPkt{temp}(G(F)) = \bigsqcup_{\p \in \cPbd{G}} \cPkt{\p}.
\end{align*}
\item
For $\p \in \cPbd{G}$, 
\[
\bar{\Theta}_{\p} := \frac{1}{2} \sum_{[\r] \in \cPkt{\p}} (\Theta_{\r} + \Theta_{\r^{\theta_{0}}})
\]
is stable.
\end{enumerate}
\end{theorem}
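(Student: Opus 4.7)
The plan is to follow Arthur's global strategy via $\theta$-twisted endoscopic transfer from $GL(N)$, where $N = 2n$ or $2n+1$ and $\theta$ is the outer automorphism $g \mapsto J\,{}^{t}g^{-1}J^{-1}$. The $\theta$-twisted elliptic endoscopic data for $GL(N)$ are precisely the quasisplit groups $G$ under consideration, equipped with their outer action $\theta_{0}$, and parameters in $\cPbd{G}$ correspond bijectively to self-dual bounded parameters of $GL(N)$ of the appropriate orthogonal or symplectic type. Each such $\p$ therefore determines a self-dual isobaric representation $\r_{\p} = \r_{1} \# \cdots \# \r_{r}$ of $GL(N, F)$, and the strategy is to pull back information from the known local Langlands correspondence for $GL(N)$ through the stabilized twisted trace formula.

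First I would handle non-elliptic parameters by parabolic induction. If some simple summand of $\p$ fails to be self-dual of the correct type, or some self-dual summand appears with multiplicity, then $\p$ factors through $\L{M}$ for a proper Levi subgroup $M \subset G$, a product of $GL$-factors with a smaller classical group. One defines $\cPkt{\p}$ as the set of $\Sigma_{0}$-orbits of irreducible constituents of the parabolic induction of the (inductively known) packet on $M(F)$; stability of $\bar\Theta_{\p}$ is then inherited from the Levi through the compatibility of parabolic induction with endoscopic transfer of stable distributions.

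The heart of the argument is the case of elliptic parameters. Given an elliptic $\p$ at a local place $v_{0}$, I would globalize it to a suitable global parameter $\dot\p$ for a global form $\dot G$ of $G$, arranging its other local components to be either unramified or to lie in ranges already controlled by induction. Working with the stabilized twisted trace formula for $(GL(N), \theta)$ alongside the ordinary stable trace formula for its endoscopic groups, one has the identity
\[
\tIdt{GL(N)}{}(\lf) \;=\; \sum_{G'} \iota(GL(N), G')\, \Sdt{G'}{}(\lf^{G'}),
\]
where the sum runs over $\theta$-twisted elliptic endoscopic data. Isolating the $\dot\p$-contribution on both sides and using linear independence of characters at the auxiliary places extracts a stable linear form on $G(F_{v_{0}})$ realizing $\bar\Theta_{\p}$; unfolding the same identity spectrally on the $G$-side produces the finite set $\cPkt{\p}$ of $\Sigma_{0}$-orbits of tempered representations contributing to it, yielding both the partition assertion in (1) and the stability assertion in (2).

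The main obstacle is the tightly interlocked induction: the local existence of packets, the endoscopic character identities, the stable multiplicity formula, and the global spectral multiplicity formula all feed into one another and must be proved simultaneously by induction on $N$. The most delicate point inside this induction is the separation of terms in the trace-formula comparison when $G = SO(2n)$: the outer automorphism $\theta_{0}$ pairs up representations and parameters whose stable transfers to $GL(N)$ coincide, so these pairs cannot be distinguished by twisted orbital integrals alone. This is precisely the reason both $\cPbd{G}$ and $\cPkt{\p}$ must be taken modulo $\Sigma_{0}$ in the statement, and why $\bar\Theta_{\p}$ is written as the average over a $\theta_{0}$-orbit.
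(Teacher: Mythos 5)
This theorem is not proved in the paper at all: it is stated as ``Theorem (Arthur)'' and is a compressed restatement of results from \cite{Arthur:2013} (specifically Theorem 1.5.1, Theorem 2.2.1, and Theorem 2.2.3 there), which the paper then re-presents more precisely as Theorems~\ref{thm: LLC}, \ref{thm: character identity}, and \ref{thm: twisted LLC} in Section~\ref{sec: Arthur's theory} and takes as given input for the rest of the argument. So there is no in-paper proof to compare your proposal against.

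That said, as a sketch of Arthur's actual proof your outline captures the right architecture: the identification of $G$ with a $\theta$-twisted elliptic endoscopic datum of $GL(N)$, the dictionary between $\cPbd{G}$ and self-dual parameters of $GL(N)$ via the twisted transfer $f \mapsto f^{G}$, the reduction of non-elliptic parameters to Levis by parabolic induction, and the globalization-plus-trace-formula-comparison to handle elliptic parameters, with everything tied together by induction on $N$. You also correctly flag why the even orthogonal case forces one to work modulo $\Sigma_{0}$. The main thing your outline understates is that Arthur's construction of the packet $\cPkt{\p}$ is not just an existence statement pulled out of the trace formula: it is built together with the canonical pairing $[\r] \mapsto \langle \cdot, \r\rangle$ valued in characters of $\S{\p}$ (part (1) of Theorem~\ref{thm: character identity}), and the proof of both (1) and (2) of the present theorem is inseparable from simultaneously establishing the ordinary and twisted endoscopic character identities \eqref{eq: character relation for G}, \eqref{eq: theta twisted character relation for G} and, crucially, the \emph{local intertwining relation} (\cite{Arthur:2013}, Theorem 2.4.1), which your sketch does not mention but which is the technical engine that lets one separate the spectral and endoscopic expansions of the trace formula at the level of individual parameters. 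Without the local intertwining relation the ``unfolding the identity spectrally'' step you invoke cannot be made to close the induction.
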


When $G = SO(2n)$, we let $\Pkt{\p}^{\Sigma_{0}}$ be the set of all irreducible representations of $O(2n)$, whose restriction to $SO(2n)$ have irreducible constituents contained in $\cPkt{\p}$, and we call $\Pkt{\p}^{\Sigma_{0}}$ an L-packet of $O(2n)$. In this sense, the sets $\cPkt{\p}$ really determine the L-packets of $Sp(2n)$ and $O(2n)$. But for simplicity, we will still call the sets $\cPkt{\p}$ L-packets of $G$ in this paper.

Suppose $\lp \in \cPbd{\lG}$ and $\p = \bold{p} \circ \lp$. Since $\cPkt{\p}$ admits a stable linear combination of Harish-Chandra characters, $\lG(F)$ acts on $\cPkt{\p}$ by conjugation. We fix a character $\lif{\zeta}$ of the centre $\lZ(F)$ of $\lG(F)$, such that its restriction to $\Z(F) = \lZ(F) \cap G(F)$ is the central character of $\cPkt{\p}$. Let $\clPkt{\p, \lif{\zeta}}$ be the subset of representations of $\cPkt{temp}(\lG(F))$ with central character $\lif{\zeta}$, whose restriction to $G(F)$ have irreducible constituents contained in $\cPkt{\p}$.  Let $X = \Hom(\lG(F)/\lZ(F)G(F), \C^{\times})$. Note $X$ acts on $\clPkt{\p, \lif{\zeta}}$ by twisting. In Corollary~\ref{cor: theta twisting character} we show there exists a subgroup $\a(\S{\p}^{\Sigma_{0}})$ of $X$ such that for any $[\lr] \in \clPkt{\p, \lif{\zeta}}$, 
\(
\lr \otimes \x \cong \lr^{\theta} \text{ for some $\theta \in \Sigma_{0}$ if and only if } \x \in \a(\S{\p}^{\Sigma_{0}}).
\)
Now we can state our main result.

\begin{theorem}
\label{thm: main local theorem}
Suppose $\p \in \cPbd{G}$, there exists a subset $\cPkt{\lp}$ of $\clPkt{\p, \lif{\zeta}}$ unique up to twisting by $X$, such that it satisfies the following properties:

\begin{enumerate}
\item
\[
\clPkt{\p, \lif{\zeta}} = \bigsqcup_{\x \in X / \a(\S{\p}^{\Sigma_{0}})} \cPkt{\lp} \otimes \x.
\]

\item 
\[
\bar{\Theta}_{\lp} := \frac{1}{2} \sum_{[\lr] \in \cPkt{\lp}} (\Theta_{\lr} + \Theta_{\lr^{\theta_{0}}})
\]
is stable.
\end{enumerate}

\end{theorem}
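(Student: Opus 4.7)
The plan is to construct $\cPkt{\lp}$ by extending representations from $G(F)$ to $\lG(F)$ via induction, verify the partition assertion (1) using Clifford theory together with the stabilizer computation of Corollary~\ref{cor: theta twisting character}, and establish the stability assertion (2) via a global-to-local argument. Part (2) will be the main obstacle.

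For each $[\r] \in \cPkt{\p}$, pick a representative $\r$. The central character of $\r$ on $\Z(F) = \lZ(F) \cap G(F)$ agrees with $\lif{\zeta}|_{\Z(F)}$ by assumption, so $\r$ extends to a representation $\r^{+}$ of $\lZ(F)G(F)$ via $\r^{+}(zg) = \lif{\zeta}(z)\r(g)$; well-definedness is a direct check from the matching of central characters. By Frobenius reciprocity, the irreducible constituents of $\Ind^{\lG(F)}_{\lZ(F)G(F)} \r^{+}$ are exactly the elements of $\Pkt{temp}(\lG(F))$ with central character $\lif{\zeta}$ whose restriction to $G(F)$ contains $\r$. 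I form $\cPkt{\lp}$ by collecting $\Sigma_{0}$-orbits of one such irreducible constituent for each $[\r] \in \cPkt{\p}$, making the choices consistently so that the result lies in a single $X$-orbit inside $\clPkt{\p, \lif{\zeta}}$. Any $\x \in X$ is trivial on $\lZ(F)$, so the twisting action of $X$ preserves the central character and commutes with the $\Sigma_{0}$-action; Corollary~\ref{cor: theta twisting character} identifies the $X$-stabilizer of each $[\lr]$ with $\a(\S{\p}^{\Sigma_{0}})$. Any two irreducible constituents of $\Ind^{\lG(F)}_{\lZ(F)G(F)} \r^{+}$ differ by a character of $\lG(F)/\lZ(F)G(F)$, and every element of $\clPkt{\p, \lif{\zeta}}$ arises as a constituent of such an induction, so the disjoint partition in (1) follows and $\cPkt{\lp}$ is uniquely determined up to twisting by $X$.

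For part (2), note first that any $\x \in X$ factors through $D(F) = \lG(F)/G(F)$ and is therefore constant on stable conjugacy classes in $\lG(F)$; twisting by $\x$ hence preserves stability of characters, and it suffices to verify (2) for one $X$-orbit representative. I would then proceed globally. Globalize $F$ to a number field $\dot F$ with a place $v_{0}$ satisfying $\dot F_{v_{0}} = F$, and globalize $G$, $\lG$, $\lp$ correspondingly to a global parameter $\dot\lp$ whose localization at $v_{0}$ is $\lp$ and whose localizations elsewhere are simple enough (unramified, or of controlled supercuspidal type) to be handled inductively. The global stable multiplicity formula for similitude groups --- developed earlier in the paper by lifting Arthur's stable trace formula for $G$ across the extension $G \hookrightarrow \lG$ through twisted endoscopy --- then expresses the $\dot\lp$-isotypic part of the automorphic spectrum as a stable distribution on $\lG(\A)$. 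Localizing this identity at $v_{0}$ and cancelling the contributions at the remaining places via Theorem~\ref{thm: L-packet for G} and the induction hypothesis yields stability of $\bar\Theta_{\lp}$ at $v_{0}$. The symmetrization $\frac{1}{2}(\Theta_{\lr} + \Theta_{\lr^{\theta_{0}}})$ handles the $GSO(2n)$ case in parallel with Theorem~\ref{thm: L-packet for G}(2): individual characters are not $\theta_{0}$-invariant, but the averaged sum is, and this $\theta_{0}$-invariance is exactly what upgrades $G(\bar F)$-conjugation invariance to $\lG(\bar F)$-conjugation invariance.

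The hard part is assembling the global stable trace formula for $\lG$ and correctly matching its endoscopic expansion against that of $G$ via the dual projection $\D{\lG} \to \D{G}$. Tracking the interaction among the $X$-twisting action, the $\Sigma_{0}$-action, and the similitude character $\c$ requires substantial care, but once this framework is in place both (1) and (2) follow along the lines above.
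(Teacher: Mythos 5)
Your construction of $\cPkt{\lp}$ in part (1) via induction from $\lZ(F)G(F)$ and Clifford theory is essentially the same as the paper's (cf.\ Proposition~\ref{prop: coarse L-packet}), and the observation that characters in $X$ factor through $\c$ and are therefore constant on stable conjugacy classes is correct and useful. But be careful with the claim that the partition alone pins $\cPkt{\lp}$ down ``uniquely up to twisting by $X$'': for $|\cPkt{\p}| > 1$ there are many sections of the fibration $\clPkt{\p,\lif\zeta}\to\cPkt{\p}/\lG(F)$ that are not related by a single $X$-twist, and it is only condition (2) that singles out the distinguished one. Also, the $\theta_{0}$-symmetrization does not ``upgrade $G(\bar F)$-invariance to $\lG(\bar F)$-invariance''; it is there because the stable linear functional $\lf\mapsto\lf(\lp)$ is a priori only defined on the $\Sigma_{0}$-invariant Hecke algebra $\sH(\lG)$, and averaging over $\theta_{0}$ is what extends it to all of $\H(\lG)$.

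The real gap is in part (2), and it sits exactly where you wave your hands: ``localizing this identity at $v_{0}$ and cancelling the contributions at the remaining places.'' For $G$ itself this cancellation works because strong multiplicity one holds for the global $L$-packets of $G$, so a family of Satake parameters almost everywhere determines the global packet, and linear independence of characters away from $v_{0}$ cleanly isolates the packet at $v_{0}$. For $\lG$ strong multiplicity one \emph{fails}: the $\dot\p$-component of the spectral side can receive contributions from $\cPkt{\lif{\dot\p}}$ and from a nontrivial twist $\cPkt{\lif{\dot\p}}\otimes\dot\x$ with $\dot\x\in Y$, because one may have $\cPkt{\lif{\dot\p}_v}=\cPkt{\lif{\dot\p}_v}\otimes\dot\x_v$ at every $v\neq v_{0}$ while the global packets differ. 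In that situation varying test functions away from $v_{0}$ does not separate $[\lr_{v_0}]$ from $[\lr_{v_0}\otimes\dot\x_{v_0}]$, so you cannot extract a single local $L$-packet. ``Making the other places simple enough (unramified or controlled supercuspidal)'' does not fix this, since the twisting characters can be unramified everywhere. Two further conditions on the global lift are needed and your proposal does not supply them: $\S{\lif{\dot\p}}=1$, which forces $I^{\lif{\dot G}}_{disc,\dot\p}=S^{\lif{\dot G}}_{disc,\dot\p}$ to be stable in the first place (Corollary~\ref{cor: stability}), and ``$\Sigma_{0}$-strong multiplicity one at the place $u$'' (Section~\ref{subsec: construction of global parameters}), which is what allows the localization to go through. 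Producing a $\dot\p$ with the latter property is the most technical step of the argument; it is done by imposing open Plancherel-density conditions at an auxiliary finite set of places chosen via a norm-theoretic criterion (Lemmas~\ref{lemma: strong multiplicity one at one place 1}, \ref{lemma: strong multiplicity one at one place 2}) and invoking Shin's automorphic Plancherel density theorem (Lemma~\ref{lemma: multiple global lifting}). Without this, the global-to-local step as you have written it does not close.
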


In this paper, we call the sets $\cPkt{\lp}$ in this theorem L-packets of $\lG$, although they really determine the L-packets of $GSp(2n)$ and $GO(2n)$ for the same reason as we have discussed above. When $F$ is archimedean, this theorem is known due to Langlands \cite{Langlands:1989} and Shelstad \cite{Shelstad:1979}. In fact this case could also follow from Theorem~\ref{thm: L-packet for G} directly. So in this paper, we will focus on the case when $F$ is nonarchimedean. Note if $\cPkt{\p}$ is a singleton, then $\cPkt{\lp}$ is also a singleton by part (1) of the theorem, but it is still by no means clear that part (2) will hold for such $\cPkt{\lp}$. Our proof of this theorem is by global means, and it is certainly interesting to know if one can establish it by purely local methods.

The main idea of the proof is to realize the L-packet as the local component of some global L-packet. To describe the global picture, we let $F$ be a number field and $\A_{F}$ be the adele ring of $F$. We define the automorphic representations of $G$ to be the irreducible constituents of the regular representation of $G(\A_{F})$ on $L^{2}(G(F) \backslash G(\A_{F}))$. If $\r$ is an irreducible admissible representation of $G(\A_{F})$, it can be decomposed as a restricted tensor product 
\[
\r = \otimes_{v}'\r_{v}
\]
of irreducible admissible representations $\r_{v}$ of $G(F_{v})$ over all the places $v$. These local representations $\r_{v}$ are unramified for almost all places, which is the necessary condition to form the restricted tensor product. We assume the global Langlands group $L_{F}$ exists and it is equipped with embeddings $L_{F_{v}} \rightarrow L_{F}$ for all places $v$. Then we can define the global Langlands parameters as in the local case. We denote the set of $\Sigma_{0}$-orbits of bounded global Langlands parameters by $\cP{G}$, for this is the set relevant in the classification of automorphic representations of $G$. For any $\p \in \cP{G}$, we can associate a family of local Langlands parameter $\p_{v} \in \cPbd{G_{v}}$ for all places by the following diagram
\[
\xymatrix{L_{F_{v}}  \ar[d]  \ar[rr]^{\,\, \p_{v}} && \L{G_{v}} \ar[d] \\
                 L_{F}  \ar[rr]^{\,\, \p} && \L{G}.}
\]
So one can define the global L-packet to be the restricted tensor product of the local L-packets
\[
\cPkt{\p} := \otimes'_{v} \cPkt{\p_{v}}.
\]

\begin{theorem}[Arthur]
\label{thm: global L-packet for G}
There exist automorphic representations in $\cPkt{\p}$.
\end{theorem}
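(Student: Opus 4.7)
The plan is to derive the existence from Arthur's multiplicity formula for the tempered discrete spectrum of $G$, reducing to the case of discrete (square-integrable) parameters via parabolic induction.

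First, suppose $\p \in \cP{G}$ is not discrete, i.e., its global centralizer has positive-dimensional identity component. Then $\p$ factors through a proper standard Levi subgroup: there exist $M \subsetneq G$ and a discrete parameter $\p_{M} \in \cPdt{M}$ such that $\p$ is the image of $\p_{M}$ under the $L$-embedding $\L{M} \hookrightarrow \L{G}$. An induction on the semisimple rank of $G$ furnishes an automorphic representation $\r_{M} \in \cPkt{\p_{M}}$. Since $\p_{M}$ is bounded, the global parabolic induction $\Ind_{P(\A_{F})}^{G(\A_{F})} \r_{M}$ is unitary and irreducible, and it is automorphic in the sense that it appears in the continuous spectrum of $L^{2}(G(F) \backslash G(\A_{F}))$ through the theory of Eisenstein series. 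By compatibility of parabolic induction with the local Langlands correspondence at each place, the local components of this induced representation belong to $\cPkt{\p_{v}}$, hence the representation lies in $\cPkt{\p}$.

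For discrete $\p \in \cPdt{G}$, invoke Arthur's multiplicity formula. For each $\r = \otimes'_{v} \r_{v} \in \cPkt{\p}$, the multiplicity of $\r$ in the $L^{2}$-discrete spectrum of $G$ is given by an average over the finite group $\S{\p}$ of the product pairing $\langle x, \r \rangle = \prod_{v} \langle x_{v}, \r_{v} \rangle$ twisted by a canonical sign character $\e_{\p}$. When $\p$ is tempered, $\e_{\p}$ is trivial, so the multiplicity is nonzero exactly when $\r$ is chosen so that $\langle \cdot, \r \rangle$ is trivial on the diagonal image of $\S{\p}$ in $\prod_{v} \S{\p_{v}}$. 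Such a choice always exists: at unramified places one takes the spherical element of $\cPkt{\p_{v}}$, which pairs trivially, and at the finitely many remaining places one adjusts $\r_{v}$ within $\cPkt{\p_{v}}$ so that the product character matches the required trivial character globally.

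The main obstacle, already overcome in Arthur's monograph, is the multiplicity formula itself; its proof proceeds by comparing the stable trace formula for $G$ with the stable twisted trace formula for $GL(N)$ via twisted endoscopy, through an intricate induction on the rank seeded by the simple formal self-dual cuspidal automorphic representations of $GL(N)$. Once the multiplicity formula is in hand, the present theorem is a direct reading at tempered parameters, combined with the reduction to discrete parameters via Eisenstein series.
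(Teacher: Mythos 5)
The paper does not itself prove this statement; it records it as Arthur's result and points to (\cite{Arthur:2013}, Theorems 1.4.1, 1.5.2) via the strong multiplicity one discussion that follows the theorem. So there is no in-paper argument to match against, and what follows is an assessment of your reconstruction on its own terms.

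The overall strategy is the right one and is indeed the one underlying Arthur's proof: reduce a non-discrete $\p$ to a discrete parameter on a proper Levi, realize the Levi datum automorphically by induction, and feed it into the Eisenstein spectrum; then at discrete $\p$ read off existence from the multiplicity formula of Theorem~\ref{thm: discrete spectrum}. Two points need tightening.

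First, the global induced representation $\Ind_{P(\A_{F})}^{G(\A_{F})} \r_{M}$ need not be irreducible. For a bounded but non-discrete parameter, $\mathcal{I}_{P_v}(\r_{M,v})$ can reduce at infinitely many places, and $\cPkt{\p_v}$ is by definition the set of irreducible constituents of these local inductions. The correct statement is that the (possibly reducible, unitary) induced representation sits in $L^{2}_{\mathrm{cont}}(G(F)\backslash G(\A_{F}))$ by Langlands' theory of Eisenstein series, so each of its irreducible summands is automorphic, and each such summand is a restricted tensor product of local constituents hence lies in $\cPkt{\p}$. The conclusion survives; the irreducibility claim should be dropped.

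Second, the mechanism you give for producing $[\r]\in\cPkt{\p}$ with $\langle\cdot,\r\rangle = 1$ has a real gap. Taking the spherical member at unramified places is fine, but the assertion that one can "adjust $\r_{v}$ at the remaining places so that the product matches" does not follow: the localization map $\S{\p}\to\S{\p_{v}}$ is a homomorphism that need not be injective, so the dual map $\D{\S{\p_v}}\to\D{\S{\p}}$ need not be surjective, and you cannot in general prescribe the contribution at a single ramified place to cancel the product from the others. The argument that actually works, and is built into the Whittaker normalization in Theorem~\ref{thm: character identity}, is simpler: take $\r_{v}$ to be the generic (Whittaker-normalized) member of $\cPkt{\p_{v}}$ at every place. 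For this member $\langle\cdot,\r_{v}\rangle = 1$, so the global product character is trivial identically, and the multiplicity formula immediately yields $m(\r) = m_{\p} \geq 1$. At unramified places the generic member coincides with the spherical one, so your observation there is a special case of the right global choice.
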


For any irreducible admissible representation $\r$ of $G(\A_{F})$, one can associate a family of Satake parameters $c(\r) = \{c(\r_{v})\}$ for all unramified places of $\r$. If we define an equivalence relation on the families of Satake parameters attached to irreducible admissible representations of $G(\A_{F})$ by requiring $c(\r) \sim c(\r')$ if $c(\r_{v})$ is $\Sigma_{0}$-conjugate to $c(\r'_{v})$ for almost all places, then another way of characterizing $\cPkt{\p}$ is through the equivalence class $c(\p)$ of family of Satake parameters associated with the representations in $\cPkt{\p}$. If we take the standard embedding $\xi:\L{G} \rightarrow GL(N, \C)$, where $N = 2n + 1$ (resp. $2n$) if $G = Sp(2n)$ (resp. $G = SO(2n)$), then $\xi(c(\p))$ defines a family of Satake parameters for irreducible admissible representations of $GL(N, \A_{F})$. By the conjectural Langlands principle of functoriality and strong multiplicity one for automorphic representations of $GL(N)$, $\xi(c(\p))$ determines a unique automorphic representation of $GL(N)$. In practice, Arthur gets around the assumption on global Langlands group by reversing our discussion here. To be more precise, he substituted for $\cP{G}$ by the subset of self-dual automorphic representations of $GL(N)$, which are induced from cuspidal automorphic representations of the Levi subgroups of $GL(N)$. Then $\p_{v}$ will correspond to the representations of $GL(N, F_{v})$. Since we do not have the generalized Ramanujan conjecture, now we can only conclude $\p_{v} \in \cuP{G} \supseteq \cPbd{G}$ (see Proposition~\ref{prop: local constituents of cuspidal representations}). Nonetheless, the local packet $\cPkt{\p}$ can still be defined in this case. In this way, Theorem~\ref{thm: global L-packet for G} should really be viewed as a statement about Langlands principle of functoriality with respect to the embedding $\xi$. To summarize, the global L-packet $\cPkt{\p}$ can be uniquely characterized by either an equivalence class of family of Satake parameters $c(\p)$ or an automorphic representation of $GL(N)$ associated with $\xi(c(\p))$. We call this the strong multiplicity one property for the global L-packets of $G$.

The main tool in our proof is the stabilized twisted Arthur-Selberg trace formula. The ordinary stable trace formula has been established by Arthur in 
\cite{Arthur:2001}\cite{Arthur:2002}\cite{Arthur:2003}. 
The twisted case results from a long project of M{\oe}glin and Waldspurger \cite{MW:2016}
which has been finished recently. All of these also rest upon Ngo's celebrated proof \cite{Ngo:2010} of the Fundamental Lemma. To give some ideas of the proof of our theorem, we would like to briefly describe two typical kinds of trace formulas used in this paper. Let $\lif{\zeta}$ be a character of $\lZ(F) \backslash \lZ(\A_{F})$. The space of $\lif{\zeta}$-equivariant $L^{2}$-functions over $\lG(F) \backslash \lG(\A_{F})$ can be decomposed into a discrete part and a continuous part:
\[
L^{2}(\lG(F) \backslash \lG(\A_{F}), \lif{\zeta}) = L^{2}_{disc}(\lG, \lif{\zeta}) \+ L^{2}_{cont}(\lG, \lif{\zeta}).
\]
If we take a $\lif{\zeta}^{-1}$-equivariant smooth compactly supported function $\lf = \otimes_{v} \lf_{v}$ over $\lG(\A)$, then we can define an operator on $L^{2}_{disc}(\lG, \lif{\zeta})$ by 
\[
(R^{\lG}_{disc}(\lf) \varphi)(x) = \int_{\lZ(\A_{F}) \backslash \lG(\A_{F})} \lf(y) \varphi(xy) dy, \,\,\,\,\, \varphi \in L^{2}_{disc}(\lG, \lif{\zeta}).
\]
M{\"u}ller \cite{Muller:1989} showed this operator $R^{\lG}_{disc}(\lf)$ is of trace class, so we can write
\[
tr R^{\lG}_{disc}(\lf) = \sum_{\lr} m(\lr) \lf_{\lG}(\lr). 
\]
where the sum is over all irreducible admissible representations of $\lG(\A_{F})$ and $m(\lr)$ is the multiplicity of $\lr$ in $L^{2}_{disc}(\lG, \lif{\zeta})$. We define the discrete part of the trace formula to be the following distribution 
\[
I^{\lG}_{disc}(\lf) = tr R^{\lG}_{disc}(\lf) + ``\text{ symmetric part in } L^{2}_{cont}(\lG, \lif{\zeta})",
\]
where the symmetry on the continuous spectrum is given by the action of the regular elements of the relative Weyl groups (see Section~\ref{subsec: stable trace formula}). The stable trace formula gives a stabilization of this distribution $I^{\lG}_{disc}(\lf)$, and it relates the ``error terms" to the stable distributions on some smaller groups, i.e, elliptic endoscopic groups of $\lG$. We state it in the following theorem.

\begin{theorem}[Arthur]
By induction, one can define a stable distribution
\begin{align}
\label{eq: stable trace formula}
S^{\lG}_{disc}(\lf) = I^{\lG}_{disc}(\lf) - \sum_{\lG'} \iota(\lG, \lG')S^{\lG'}_{disc}(\lf^{\lG'}),
\end{align}
where the sum is over elliptic endoscopic groups $\lG' \neq \lG$ of $\lG$, $\iota(\lG, \lG')$ are some constants (see \eqref{formula: endoscopic coefficient}), and $\lf \rightarrow \lf^{\lG'}$ is the Langlands-Shelstad-Kottwitz transfer.
\end{theorem}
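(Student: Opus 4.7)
The plan is to take the displayed equation as the \emph{definition} of $S^{\lG}_{disc}$ and to prove, by induction on (a suitable measure of the complexity of) the elliptic endoscopic data of $\lG$, that the distribution so defined is genuinely stable. The base case is handled by groups without proper elliptic endoscopic data, where one simply sets $S^{\lG}_{disc} = I^{\lG}_{disc}$; in general, one assumes $S^{\lG'}_{disc}$ has been constructed and shown to be stable for every proper elliptic endoscopic datum $\lG'$, and takes the right-hand side of \eqref{eq: stable trace formula} as the definition. What must then be proved is that $S^{\lG}_{disc}(\lf) = 0$ whenever all stable (twisted) orbital integrals of $\lf$ vanish.

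To achieve this, I would invoke the full (twisted) Arthur--Selberg trace formula, which provides a fine geometric expansion of $I^{\lG}_{disc}(\lf)$ as a weighted sum of orbital integrals and, dually, a fine spectral expansion in terms of weighted characters. The heart of the argument is the \emph{geometric stabilization}: one rewrites the geometric side of the trace formula as a sum over elliptic endoscopic data $\lG'$ of the geometric side of the stable trace formula of $\lG'$, evaluated at the Langlands--Shelstad--Kottwitz transfer $\lf^{\lG'}$. This rests on the existence of the transfer, on Ng\^o's proof of the (twisted, weighted) fundamental lemma, and on the compatibility of local and global transfer factors, together with descent arguments handling the non-elliptic and unipotent contributions.

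Once the geometric stabilization is in hand, the spectral side is rearranged by subtraction and induction: the inductive hypothesis identifies the spectral contribution of each proper $\lG'$, and the residual term is, tautologically, the distribution $S^{\lG}_{disc}(\lf)$ defined by the displayed formula. The matching of the two expansions then presents $S^{\lG}_{disc}(\lf)$ as a sum of stable weighted orbital integrals on $\lG$, so it depends only on the stable orbital integrals of $\lf$; the vanishing hypothesis therefore forces $S^{\lG}_{disc}(\lf) = 0$. Running the induction across all elliptic endoscopic data of $\lG$ yields the theorem.

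The principal obstacle is the geometric stabilization in the \emph{twisted} setting. While the non-twisted case is Arthur's work in \cite{Arthur:2001, Arthur:2002, Arthur:2003}, the twisted analogue requires extending the fundamental lemma to twisted weighted orbital integrals and carefully tracking twisted transfer factors at every place, along with the corresponding archimedean analysis; this constitutes the bulk of the M\oe glin--Waldspurger project \cite{MW:2016} mentioned in the excerpt, which itself rests on \cite{Ngo:2010}. Given that machinery, the inductive definition of $S^{\lG}_{disc}$ and the verification of stability proceed along the lines of Arthur's original stabilization of the ordinary trace formula.
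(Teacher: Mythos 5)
The paper does not prove this theorem; it cites it as a result of Arthur from \cite{Arthur:2001}, \cite{Arthur:2002}, \cite{Arthur:2003} (with the twisted analogue coming from \cite{MW:2016} and resting on \cite{Ngo:2010}), so there is no in-paper argument to compare your proposal against. What you have written is therefore evaluated as a standalone sketch of Arthur's stabilization.

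Your high-level plan is sound — take \eqref{eq: stable trace formula} as the definition, induct on elliptic endoscopic data, and deduce stability by matching a stabilized geometric expansion with a stabilized spectral one. But there is a real imprecision you should fix: you speak of ``a fine geometric expansion of $I^{\lG}_{disc}(\lf)$,'' and this is not a thing that exists. The object $I_{disc}$ is, as defined in \eqref{eq: spectral side}, a piece of the \emph{spectral} side of the full invariant trace formula; it has no geometric expansion of its own, and the stability of $S_{disc}$ cannot be read off from a single geometric-vs.-spectral comparison applied directly to the discrete part. Arthur's actual argument stabilizes the entire invariant trace formula $I(f)$ (both its geometric expansion in weighted orbital integrals and its spectral expansion in weighted characters) via a long double induction, and the stability of $S_{disc}$ is extracted at the end: once the geometric side of $S^{\lG}(f)$ is shown to be stable and the non-discrete spectral terms (weighted characters supported on proper Levi data) are shown, inductively, to be stable, the residual discrete term $S^{\lG}_{disc}(f)$ is forced to be stable as well. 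Your sketch should therefore be reframed as an induction on the full $S^{\lG}$ together with its geometric and spectral expansions, with $S^{\lG}_{disc}$ appearing as the final residual term, rather than as a geometric expansion of $I_{disc}$ per se.

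Two smaller remarks. First, the statement under review is the untwisted case, so the twisted fundamental lemma and twisted transfer factors you discuss in the final paragraph belong to the subsequent theorem of M{\oe}glin and Waldspurger rather than to this one (though your reading of the paper's division of labor there is correct). Second, ``vanishing of all stable orbital integrals of $\lf$ implies $S^{\lG}_{disc}(\lf)=0$'' is the right criterion for stability, but your deduction of it, as written, presupposes the very presentation of $S^{\lG}_{disc}$ in terms of stable data that is the thing to be proved; the logic is salvageable but needs the reframing above to be non-circular.
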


The relation between $S^{\lG}_{disc}(\lf)$ and L-packets can be described in the following conjecture.

\begin{conjecture}[Stable Multiplicity Formula]
\label{conj: global conjecture}
\[
S^{\lG}_{disc}(\lf) = \sum_{\lq \in \Q{\lG}} a_{\lq} S^{\lG}_{\lq}(\lf),
\]
and 
\[
S^{\lG}_{\lq}(\lf) = \prod_{v} \lf_{v}(\lq_{v}),
\]
where $\lf_{v}(\lq_{v})$ is a linear combination of Harish-Chandra characters in some finite subset $\Pkt{\lq_{v}}$ of $\Pkt{}(\lG(F_{v}))$, which defines a stable distribution on $\lG(F_{v})$. Moreover, there is an explicit formula for the constants $a_{\lq}$.
\end{conjecture}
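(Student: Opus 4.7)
The plan is to deduce Conjecture~\ref{conj: global conjecture} from Arthur's stable multiplicity formula for $G$ by induction on elliptic endoscopic data, exploiting the central extension $1 \to G \to \lG \to D \to 1$. Fix a central character $\lif{\zeta}$ of $\lZ(\A_F)$ and work throughout with the $\lif{\zeta}$-isotypic part of the discrete spectrum; this part is controlled by $L^{2}_{disc}(G, \lif{\zeta}|_{\Z(\A_F)})$ together with characters from the global analogue of the group $X$ appearing in Theorem~\ref{thm: main local theorem}. On the parameter side, Labesse's surjection $\bold{p}: \Q{\lG} \to \Q{G}$ sends every $\lq \in \Q{\lG}$ to $\q := \bold{p} \circ \lq$, to which Arthur's formula applies, with each fiber $\bold{p}^{-1}(\q)$ a torsor for that same global character group.

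First I would extend the local construction of Theorem~\ref{thm: main local theorem} from tempered $L$-parameters to Arthur parameters by restricting $\lq_v$ via its $SL(2,\C)$-component to a bounded parameter of a Levi and parabolically inducing; this yields $\Pkt{\lq_v}$ and a canonical stable virtual character $\lf_v(\lq_v)$, with stability inherited from Theorem~\ref{thm: main local theorem}(2) at the Levi and preserved by parabolic induction. The global packet is $\cPkt{\lq} := \otimes'_v \cPkt{\lq_v}$, rigidified by requiring compatibility with the standard embedding $\xi \circ \lq$ into $GL(N', \C)$ together with strong multiplicity one for $GL$. Next I would compare~\eqref{eq: stable trace formula} for $\lG$ with Arthur's version for $G$ via the descent $\lf \mapsto f$ obtained by integrating $\lf$ along the fibers of $\c$ against $\lif{\zeta}|_{D(\A_F)}$. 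Induction on the dimension of elliptic endoscopic data kills all $\lG' \neq \lG$ terms on the right of~\eqref{eq: stable trace formula}, and reorganizing the resulting $\q$-sum as a double sum indexed by $\bold{p}^{-1}(\q)$ identifies $a_{\lq}$ as Arthur's $a_{\q}$ corrected by a fiber factor built from $|\S{\lq}|$, $|\S{\q}|$, and the $\Sigma_0$-equivariance data.

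The main obstacle is not the distribution-matching, which becomes formal once the descent is in place, but simultaneously controlling the $X$-twisting ambiguity of Theorem~\ref{thm: main local theorem}(1) and the $\Sigma_0$-averaging built into $\bar{\Theta}_{\lp}$. I expect both to be eliminated globally by the chosen $\lif{\zeta}$ together with the Satake data of $\xi \circ \lq$ at unramified places, but making this rigorous amounts to proving a strong multiplicity one statement for $\lG$ in parallel with the stable multiplicity formula --- essentially a bootstrapping argument carried along the induction. The twisted endoscopy required when $\lG = GSO(2n)$, which forces a passage through $GO(2n)$ and carries an $\x$-twist at every step, is the technically heaviest point, paralleling Arthur's $SO(2n)$-versus-$O(2n)$ machinery but compounded by the presence of the central torus $D$.
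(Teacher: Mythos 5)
Your proposal aims at the full conjecture, but the paper never proves it: it establishes only a tempered special case (Theorem~\ref{thm: main global}, i.e.\ Conjectures~\ref{conj: global L-packet}--\ref{conj: compatible normalization} and, for discrete parameters, Conjecture~\ref{conj: stable multiplicity formula}, under the extra hypothesis $\S{\lp_i}=1$). The author states explicitly in the introduction that the techniques of the paper ``will not be sufficient'' for the non-tempered Arthur-packet case, and that constructing those packets for $\lG$ would require extending the work of M{\oe}glin on explicit Arthur packets of $G$. So the scope of your proposal already exceeds what the paper achieves.

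The specific step where this bites is your construction of $\Pkt{\lq_v}$ ``by restricting $\lq_v$ via its $SL(2,\C)$-component to a bounded parameter of a Levi and parabolically inducing.'' That operation produces the $L$-packet $\Pkt{\phi_{\lq_v}}$ of the associated Langlands parameter, not the Arthur packet: for non-tempered $\lq_v$ the Arthur packet is strictly larger, contains constituents that are not parabolic inductions from any Levi, and is rigidified by endoscopic character relations rather than by induction. So the local stable distributions $\lf_v(\lq_v)$ you define would not be the ones appearing in the stable multiplicity formula, and the product over $v$ would not match $\Sdt{\lG}{}$.

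Two further points are off target even within the tempered setting the paper actually handles. First, the descent $\lf\mapsto f$ by integrating along fibers of $\c$ goes the wrong way: the paper works with the extension map $f\mapsto\lf$ of functions supported on $\lif{Z}_F G(F)$ (\eqref{map: inclusion of Hecke algebra}), and the global link is $L^{2}_{disc}(\lG) \cong \Ind^{\lG(\A_F)}_{\lG(F)\lZ(\A_F)G(\A_F)} L^{2}_{disc}(G)$ (Lemma~\ref{lemma: induced discrete spectrum}), not a projection. Second, the elliptic endoscopic terms $\lG'\neq\lG$ in~\eqref{eq: stable trace formula} do not get ``killed'' by induction; they contribute nonzero stable distributions which are matched against the $x\neq 1$ terms of the spectral expansion via the local intertwining relation (Lemma~\ref{lemma: twisted spectral expansion} vs.\ Lemma~\ref{lemma: twisted endoscopic expansion}), which is the heart of the argument. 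You do correctly identify the strong-multiplicity-one issue for $\lG$ as the main obstruction; the paper resolves it not by a self-referential bootstrap but by an explicit global lifting construction (Lemmas~\ref{lemma: strong multiplicity one at one place}--\ref{lemma: global lifting}, using Shin's automorphic Plancherel density theorem) that forces $\Sigma_0$-strong multiplicity one to hold at a single place, which is enough for the stability argument of Theorem~\ref{thm: standard argument on stability}.
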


In this conjecture, $\Q{\lG}$ is the set of so-called global Arthur parameters of $\lG$, which generalizes the set $\P{\lG}$ of bounded global Langlands parameters. The global packet $\Pkt{\lq} = \bigotimes'_{v}\Pkt{\lq_{v}}$ associated with the stable distribution $S^{\lG}_{\lq}(\lf)$ is called a global Arthur packet. One can view the global L-packets as a special case of global Arthur packets, and the local L-packets that we are looking for will be the local components of some global L-packets, which contribute to $S^{\lG}_{disc}(\lf)$. Before we can talk about how to isolate a global L-packet from $S^{\lG}_{disc}(\lf)$, we want to introduce the twisted version of \eqref{eq: stable trace formula} first. 

Let $\x$ be a character of $\lG(\A_{F})/\lG(F)G(\A_{F})$ and $\theta \in \Sigma_{0}$, we define the discrete part of the $(\theta, \x)$-twisted trace formula to be
\[
I^{(\lG^{\theta}, \x)}_{disc}(\lf) = tr (R(\theta)^{-1} \circ R(\x) \circ R^{\lG}_{disc}(\lf)) + ``\text{ $(\theta, \x)$-twisted symmetric part in } L^{2}_{cont}(\lG, \lif{\zeta})",
\]
where $R(\theta)$ is induced by action on $\lG(\A_{F})$ by $\theta$, and $R(\x)$ is induced by multiplication on $L^{2}_{disc}(\lG, \lif{\zeta})$ by $\x$. Then the stabilization of $I^{(\lG^{\theta}, \x)}_{disc}(\lf)$ is given by the following theorem.

\begin{theorem}[Moeglin and Waldspurger]
\begin{align}
\label{eq: twisted stable trace formula}
I^{(\lG^{\theta}, \x)}_{disc}(\lf) = \sum_{\lG'} \iota(\lG, \lG')S^{\lG'}_{disc}(\lf^{\lG'}),
\end{align}
where the sum is over $(\theta, \x)$-twisted elliptic endoscopic groups $\lG'$ of $\lG$.
\end{theorem}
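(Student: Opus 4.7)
The plan is to adapt Arthur's stabilization of the ordinary trace formula \eqref{eq: stable trace formula} to the twisted setting, following the overall strategy of Langlands--Kottwitz and carried out in full generality by M{\oe}glin--Waldspurger \cite{MW:2016}. The starting point is the twisted invariant trace formula for $\lG$, which is an equality between a geometric distribution $I^{(\lG^{\theta}, \x)}_{geom}(\lf)$ and a spectral distribution $I^{(\lG^{\theta}, \x)}_{spec}(\lf)$. Each side decomposes further: geometrically into elliptic terms and weighted orbital integrals attached to proper Levi subgroups of $\lG$, and spectrally into the discrete part $I^{(\lG^{\theta}, \x)}_{disc}(\lf)$ and the parabolically induced contributions. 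The proof runs by a double induction, on $\dim \lG$ (so that \eqref{eq: stable trace formula} and \eqref{eq: twisted stable trace formula} are available inductively for proper endoscopic groups of $\lG$) and on the Levi stratification.

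First I would stabilize the elliptic part of the geometric side. Using the Langlands--Shelstad--Kottwitz transfer $\lf \mapsto \lf^{\lG'}$, the twisted elliptic orbital integrals on $\lG(\A_{F})$ are rewritten as sums, indexed by $(\theta, \x)$-twisted elliptic endoscopic data $\lG'$ of $\lG$, of stable elliptic orbital integrals on $\lG'(\A_{F})$ weighted by the coefficients $\iota(\lG, \lG')$. This step rests on Ngo's proof of the Fundamental Lemma \cite{Ngo:2010}, on the local and global orbital transfer of Waldspurger, and on the twisted and archimedean refinements due to Shelstad and M{\oe}glin--Waldspurger.

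Next I would stabilize the non-elliptic geometric contributions, which are global weighted orbital integrals attached to proper Levi subgroups. These are the most delicate pieces: they are not stable individually, and their stabilization relies on the weighted Fundamental Lemma of Chaudouard--Laumon together with its twisted variant developed in \cite{MW:2016}. By the inductive hypothesis, each weighted contribution from a proper Levi can be expressed as a stable weighted distribution on a smaller endoscopic group, and these combine with the elliptic stabilization to produce $\sum_{\lG'} \iota(\lG, \lG') \, S^{\lG'}_{geom}(\lf^{\lG'})$. The hard part is managing the nonendoscopic correction terms, in particular the unipotent contributions, which only cancel once one sums over all endoscopic data; tracking this cancellation is what occupies the bulk of \cite{MW:2016}.

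Finally, having stabilized the geometric side, one transfers the identity to the spectral side via the invariant trace formula and isolates the discrete component. Because $(\theta, \x)$ is a nontrivial outer twisting, $\lG$ itself is not a $(\theta, \x)$-twisted elliptic endoscopic datum of itself, so no $S^{\lG}_{disc}(\lf)$ term appears on the right-hand side of \eqref{eq: twisted stable trace formula} (in contrast to the untwisted identity \eqref{eq: stable trace formula}); every summand comes from a proper endoscopic group $\lG'$, to which the ordinary stabilization \eqref{eq: stable trace formula} applies by induction. The symmetrization over regular Weyl elements built into the definition of $I^{(\lG^{\theta}, \x)}_{disc}$ then annihilates the parabolically induced spectral contributions, leaving exactly the identity asserted.
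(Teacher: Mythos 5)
The paper does not prove this statement; it is quoted as a theorem of M{\oe}glin and Waldspurger and attributed to the monograph \cite{MW:2016}, upon which the whole paper rests as a black box. There is therefore nothing internal to compare your sketch against, and as a high-level account of the M{\oe}glin--Waldspurger stabilization your description of the overall architecture (stabilize the elliptic geometric terms via Langlands--Shelstad--Kottwitz transfer and the Fundamental Lemma, stabilize the weighted orbital integrals via the weighted Fundamental Lemma of Chaudouard--Laumon and its twisted analogue, then match discrete spectral pieces by a double induction) is accurate in outline.

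One point in your final paragraph is imprecise, though. You assert that because $(\theta, \x)$ is a ``nontrivial outer twisting,'' $\lG$ is never a $(\theta, \x)$-twisted endoscopic datum of itself, so no $S^{\lG}_{disc}(\lf)$ appears on the right. But the theorem as stated in the paper places no such restriction on $(\theta, \x)$: the operator $\theta \in \Sigma_{0}$ is allowed to be trivial (indeed $\Sigma_{0}$ is trivial when $G = Sp(2n)$), and $\x$ is allowed to be the trivial character. When $(\theta, \x) = (id, 1)$ the principal endoscopic datum is $\lG$ itself, the term $\iota(\lG, \lG) S^{\lG}_{disc}(\lf) = S^{\lG}_{disc}(\lf)$ does appear on the right-hand side, and \eqref{eq: twisted stable trace formula} reduces to \eqref{eq: stable trace formula} precisely by moving that one term across; the two identities are not logically independent. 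So your claim that ``every summand comes from a proper endoscopic group'' is a feature of the genuinely twisted case only, not of the theorem as stated, and the ``in contrast to the untwisted identity'' framing inverts the actual relationship between the two displays. Relatedly, the remark that the Weyl-regular symmetrization ``annihilates'' the parabolically induced spectral contributions is loose: that symmetrization is what \emph{defines} $I^{(\lG^{\theta},\x)}_{disc}$, and the matching of discrete parts on both sides comes from stabilizing the full spectral expansion term-by-term in the Levi/infinitesimal-character decomposition, not from a cancellation performed after the fact.
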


One application of \eqref{eq: twisted stable trace formula} is it gives a multiplicity formula for the automorphic representations of $\lG$. Let $X = \Hom(\lG(\A_{F}) / \lZ(\A_{F})G(\A_{F}), \C^{\times})$ and  $Y = \Hom(\lG(\A_{F}) / \lG(F) \lZ(\A_{F})G(\A_{F}), \C^{\times})$. If $\lr$ is an irreducible admissible representation of $\lG(\A_{F})$, we write $Y(\lr) = \{\x \in Y: \lr \cong \lr \otimes \x\}$, which is finite.

\begin{proposition}
\label{prop: global result 1}
\begin{enumerate}
\item
Suppose $\lr$ is a discrete automorphic representation of $\lG$, and $\r$ is an irreducible constituent of $\lr$ restricted to $G(\A_{F})$. If $[\r] \in \cPkt{\p}$ for $\p \in \cP{G}$, then
\begin{align}
m(\lr) = m_{\lp} |Y(\lr) /\a(\S{\p})|,   
\end{align}
where $\a(\S{\p})$ (see \eqref{eq: global twisted endoscopic sequence}) is a subgroup of $Y(\lr)$,  $m_{\lp} = 1 \text{ or } 2$. Moreover, $m_{\lp} = 2$ only when $G$ is special even orthogonal, $\p \notin \P{G^{\theta_{0}}}$ (see Section~\ref{subsec: substitute Langlands parameter}), and $\lr \otimes \x \cong \lr^{\theta_{0}}$ for some $\x \in Y$.
\item
Suppose $\lr$ and $\lr'$ are discrete automorphic representations of $\lG$, and there exists $\x \in X$ such that $\lr_{v}$ is $\Sigma_{0}$-conjugate to $\lr'_{v} \otimes \x_{v}$ for all places. If $\r$ is an irreducible constituent of $\lr$ restricted to $G(\A_{F})$ and $[\r] \in \cPkt{\p}$ for $\p \in \cP{G}$, then there exists some $\x' \in Y$ and $\theta \in \Sigma_{0}$ such that $\lr' \cong \lr^{\theta} \otimes \x'$.

\end{enumerate}

\end{proposition}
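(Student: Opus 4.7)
The plan is to derive both parts of Proposition~\ref{prop: global result 1} by comparing the spectral and endoscopic sides of the $(\theta, \x)$-twisted stable trace formula \eqref{eq: twisted stable trace formula} for $\lG$, varying $(\theta, \x)$ over $\Sigma_{0} \times Y$. Applying the stable multiplicity formula (Conjecture~\ref{conj: global conjecture}), which we assume has been proven for $\lG$ and its $(\theta, \x)$-twisted elliptic endoscopic groups in the preceding sections, the right-hand side of \eqref{eq: twisted stable trace formula} is reorganised as a sum indexed by global Arthur parameters. The endoscopic character identities then translate these contributions, packet by packet, into linear combinations of Harish-Chandra characters of representations of $\lG(\A_{F})$ lying in the global L-packets $\cPkt{\lp}$ sitting above a fixed global L-packet $\cPkt{\p}$ of $G$.

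For part (1), fix $\lr$ and $\p \in \cP{G}$ as in the statement and restrict attention to the $(\lp, \lif{\zeta})$-isotypic piece of the discrete spectrum. The operator $R(\theta)^{-1} \circ R(\x) \circ R^{\lG}_{disc}(\lf)$ contributes to the $\lr$-isotypic piece precisely when there is an isomorphism $\lr \cong \lr^{\theta} \otimes \x$, in which case its trace is $m(\lr) \lf_{\lG}(\lr)$ (after normalising the intertwiners). Summing \eqref{eq: twisted stable trace formula} over $(\theta, \x) \in \Sigma_{0} \times Y$ and matching both sides packet-by-packet, the spectral side registers $m(\lr)$ times the number of $(\theta, \x)$ stabilising $\lr$, whereas the endoscopic side registers the combined contribution of the orbit of $\lr$ in $\cPkt{\lp}$, with precisely those $\x \in \a(\S{\p})$ acting trivially on the packet up to relabelling. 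Cancelling the common factors yields $m(\lr) = m_{\lp}\,|Y(\lr)/\a(\S{\p})|$. The residual scalar $m_{\lp} \in \{1,2\}$ records whether the pair $\{\r, \r^{\theta_{0}}\}$ contributes once or twice to $\bar{\Theta}_{\lp}$: it equals $1$ whenever $\p \in \P{G^{\theta_{0}}}$, and jumps to $2$ only when $G$ is special even orthogonal, $\p \notin \P{G^{\theta_{0}}}$, and a nontrivial twist by $\x \in Y$ intertwines $\lr$ with $\lr^{\theta_{0}}$.

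For part (2), the hypothesis that $\lr_{v}$ is $\Sigma_{0}$-conjugate to $\lr'_{v} \otimes \x_{v}$ for all $v$, together with the triviality of $\x$ on $G(\A_{F})$, forces the restrictions $\lr|_{G(\A_{F})}$ and $\lr'|_{G(\A_{F})}$ to share Satake parameters up to $\Sigma_{0}$ at almost all unramified places. The strong multiplicity one property for global L-packets of $G$ recorded after Theorem~\ref{thm: global L-packet for G} then places every constituent of both restrictions in the same packet $\cPkt{\p}$, so $\lr' \in \clPkt{\p, \lif{\zeta}'}$ for an appropriate central character $\lif{\zeta}'$. Combining the local decomposition of $\clPkt{\p, \lif{\zeta}'}$ into $X$-orbits (Theorem~\ref{thm: main local theorem}(1)), applied placewise, with the multiplicity formula of part (1), we deduce that $\lr'$ must be globally of the form $\lr^{\theta} \otimes \x'$ for some $\theta \in \Sigma_{0}$ and $\x' \in Y$, via a standard globalisation of the local twists.

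The main obstacle will be the identification of the factor $m_{\lp}$ in the special even orthogonal case. Establishing that the multiplicity exactly doubles, and does so only when the $\theta_{0}$-symmetry of $\lr$ is realised through a $Y$-twist rather than through an honest $\theta_{0}$-fixing of $\p$, requires a careful tracking of the $(\theta_{0}, \x)$-twisted elliptic endoscopic data of $\lG = GSO(2n)$ and the associated transfer factors. One has to verify that the putative doubling is not cancelled or double-counted by contributions from endoscopic groups $\lG' \neq \lG$, and that the coefficients $\iota(\lG, \lG')$ behave compatibly under the $(\theta_{0}, \x)$-twist. This is the point at which the bookkeeping set up in the earlier sections of the paper must be invoked most delicately.
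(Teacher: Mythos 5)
Your proposal has a fundamental circularity problem. You propose to "apply the stable multiplicity formula (Conjecture~\ref{conj: global conjecture}), which we assume has been proven for $\lG$" and, for part (2), to invoke "Theorem~\ref{thm: main local theorem}(1)." But neither of these is available at the point where Proposition~\ref{prop: global result 1} is established. Proposition~\ref{prop: global result 1} is proved in Section~\ref{sec: multiplicity formula} precisely as an \emph{input} to the later induction that establishes the main local theorem and (a special case of) the stable multiplicity formula for $\lG$ in Section~\ref{sec: refined L-packet}. Assuming those results now is assuming what the proposition is meant to help prove.

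The paper's actual route avoids this by leaning only on what is already known. The core input is Arthur's multiplicity formula for $G$ (Theorem~\ref{thm: discrete spectrum}), not any multiplicity statement about $\lG$. One first relates $L^{2}_{disc}(\lG, \lif{\zeta})$ to $\mathrm{Ind}$ of $L^{2}_{disc}(G, \zeta)$ (Lemma~\ref{lemma: induced discrete spectrum}) and extracts a Clifford-theory-style relation $\sum_{\x \in X/YX(\lr)} m(\lr \otimes \x) = \sum_{g \in \lG(\A_F)/\lG(\r)\lG(F)} m(\r^g)$ (Lemma~\ref{lemma: multiplicity relation}). Plugging in Arthur's formula for $\m(\r)$ and integrating over $\lG(\A_F)/\lG(F)\lZ(\A_F)G(\A_F)$ using \eqref{eq: theta twisting character} gives the coarse bound $\m(\lr) = m_{\p}|Y(\lr)|/|\a(\S{\p})|$ (Lemma~\ref{lemma: coarse multiplicity formula}). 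Only then is the twisted trace formula for $\lG$ brought in — and only to establish that for a nontrivial $\x \in Y(\lr)/\a(\S{\p})$ the trace $\Idt{(\lG, \x)}{, \p}(\lf) = 0$ vanishes, because $\p$ does not factor through the relevant endoscopic datum (Lemma~\ref{lemma: vanishing}); no stable multiplicity formula for any $\lG'$ is needed, only this vanishing. The refinement from $\m(\lr)$ to $m(\lr)$ then rests on a purely finite-group-theoretic observation: a complex representation of a finite group in which every nontrivial element has trace zero has dimension divisible by the group order. Part (2) follows by comparing the two forms of the multiplicity formula you get before and after passing from $\m(\lr)$ to $m(\lr)$; there is no appeal to a not-yet-proven local theorem for $\lG$.

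Your sketch of the $m_{\lp}$ bookkeeping is too vague to recover the precise statement — the doubling is governed by whether $\lr \otimes \x \cong \lr^{\theta_{0}}$ for some $\x \in Y$, and this comes out of tracking the two cases $\r \cong \r^{\theta_0}$ versus $\r \ncong \r^{\theta_0}$ and, in the former case with $m_{\p} = 2$, whether the twist that realises $\lr^{\theta_0}$ lies in $Y$. This bifurcation is essential to the final shape of the formula and cannot be reconstructed from a generic "matching packet-by-packet" argument. If you want to salvage your approach, you must replace the stable multiplicity formula for $\lG$ with the weaker vanishing statement (Lemma~\ref{lemma: vanishing}), which is available at this stage, and you must route part (2) through a direct comparison of the two multiplicity formulas rather than through the unproven local theorem for $\lG$.
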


Back to the proof of Theorem~\ref{thm: main local theorem}, a key step is to isolate the global L-packets from the stable distribution $S^{\lG}_{disc}(\lf)$ for $\lf = \otimes_{v}\lf_{v}$ such that $\lf_{v}$ is $\Sigma_{0}$-invariant. By the theory of multipliers, one can isolate the parts associated with different equivalence classes of families of Satake parameters. For $\p \in \cP{G}$, the equivalence class $c(\p)$ determines the packet $\cPkt{\p}$ of $G$ uniquely from our previous discussion. But this may not be the case for $\lG$. In view of part (2) of Proposition~\ref{prop: global result 1}, this means if the global L-packet $\cPkt{\lp}$ exists for $\lp \in \cP{\lG}$, there might exist $\x \in Y$ such that $\cPkt{\lp} \neq \cPkt{\lp}\otimes \x$, whereas $\cPkt{\lp_{v}} = \cPkt{\lp_{v}} \otimes \x_{v}$ for almost all places. For our proof, we only need something weaker, that is we will fix a nonarchimeadan place $u$, and we require if $\cPkt{\lp_{v}} = \cPkt{\lp_{v}} \otimes \x_{v}$ for all places $v \neq u$, then $\cPkt{\lp} = \cPkt{\lp} \otimes \x$. Such global parameters can be constructed using the result of Sug-Woo Shin on automorphic plancherel density \cite{Shin:2012}. At last, we prove the following global result, which is parallel with Theorem~\ref{thm: global L-packet for G}.

\begin{theorem}
\label{thm: global result 2}
For $\p \in \cP{G}$ satisfying $\S{\lp} = 1$ (see Section~\ref{subsec: Langlands parameters}), there exists a global L-packet  
\[
\cPkt{\lp} = \otimes'_{v} \cPkt{\lp_{v}}
\]
of $\lG$ unique up to twisting by $Y$, such that if $\lr$ is an automorphic representation of $\lG$ whose irreducible constituents in the restriction to $G(\A_{F})$ are contained in $\cPkt{\p}$, then $[\lr]$ is contained in $\cPkt{\lp} \otimes \x$ for some $\x \in Y$.
\end{theorem}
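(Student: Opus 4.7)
The plan is to construct $\cPkt{\lp}$ as the local-component packet of an automorphic lift, starting from an element of $\cPkt{\p}$ supplied by Theorem~\ref{thm: global L-packet for G}, and then to deduce the covering property from the multiplicity formula in Proposition~\ref{prop: global result 1}. The hypothesis $\S{\lp} = 1$ will enter at the crucial step of assembling local twisting data into a single global one.

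First I would apply Theorem~\ref{thm: global L-packet for G} to produce a discrete automorphic representation $\r = \otimes'_{v}\r_{v}$ of $G(\A_{F})$ with $[\r] \in \cPkt{\p}$. Since $\lG$ is an extension of the torus $D$ by $G$, one can lift $\r$ to a discrete automorphic representation $\lr = \otimes'_{v}\lr_{v}$ of $\lG(\A_{F})$ with central character $\lif{\zeta}$; the lift is unique up to twisting by $Y$. At each place $v$, the restriction of $\lr_{v}$ to $G(F_{v})$ has irreducible constituents in $\cPkt{\p_{v}}$. I would then define $\cPkt{\lp_{v}}$ to be the set of $\Sigma_{0}$-orbits obtained from $[\lr_{v}]$ by twisting with the local subgroup $\a(\S{\p_{v}}^{\Sigma_{0}})$ of $X_{v} = \Hom(\lG(F_{v})/\lZ(F_{v}) G(F_{v}), \C^{\times})$, and set $\cPkt{\lp} := \otimes'_{v}\cPkt{\lp_{v}}$. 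At almost all unramified places the local packet reduces to a single $\Sigma_0$-orbit pinned down by its Satake parameter, so the restricted tensor product is well defined.

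To verify the covering property, let $\lr'$ be any discrete automorphic representation of $\lG$ whose restriction to $G(\A_{F})$ has irreducible constituents in $\cPkt{\p}$. At each place, $\lr_{v}$ and $\lr'_{v}$ both restrict into $\cPkt{\p_{v}}$ with the same central character $\lif{\zeta}_{v}$, so Corollary~\ref{cor: theta twisting character} produces $\chi_{v} \in X_{v}$ with $[\lr'_{v}] = [\lr_{v}] \otimes \chi_{v}$ (modulo $\Sigma_{0}$), unique up to $\a(\S{\p_{v}}^{\Sigma_{0}})$. To apply Proposition~\ref{prop: global result 1}(2) I must assemble the $\chi_{v}$ into a single global $\chi \in X$. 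This is where $\S{\lp} = 1$ enters: the local freedom $\a(\S{\p_{v}}^{\Sigma_{0}})$ is governed by the global group $\a(\S{\p}) \subseteq Y$, whose image in $\S{\lp}$ vanishes, so that the obstruction to choosing a global $\chi$ is killed. Proposition~\ref{prop: global result 1}(2) then yields $\chi' \in Y$ and $\theta \in \Sigma_{0}$ with $\lr' \cong \lr^{\theta} \otimes \chi'$, and the $\Sigma_{0}$-stability of $\cPkt{\lp}$ gives $[\lr'] \in \cPkt{\lp} \otimes \chi'$ as required.

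The main obstacle is precisely this global assembly step. At each place the twist $\chi_{v}$ is only determined modulo $\a(\S{\p_{v}}^{\Sigma_{0}})$, and without $\S{\lp} = 1$ the local ambiguities act independently at different places and need not match to give a global character. Once the obstruction-theoretic argument is made precise and a global $\chi$ is produced, the residual freedom in $\a(\S{\p}) \subseteq Y$ is absorbed by the $Y$-twisting permitted in the conclusion, and one checks that $\cPkt{\lp}$ is independent of the initial choice of $\r$ up to twisting by $Y$.
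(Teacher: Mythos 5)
Your construction of the local packets in step~3 is incorrect, and this collapses the whole argument. You define $\cPkt{\lp_{v}}$ to be the set of $\Sigma_{0}$-orbits obtained from $[\lr_{v}]$ by twisting with $\a(\S{\p_{v}}^{\Sigma_{0}})$. But by Corollary~\ref{cor: theta twisting character}, a character $\x_{v} \in \a(\S{\p_{v}}^{\Sigma_{0}})$ is precisely one for which $\lr_{v} \otimes \x_{v} \cong \lr_{v}^{\theta}$ for some $\theta \in \Sigma_{0}$; equivalently, $\a(\S{\p_{v}}^{\Sigma_{0}})$ is the \emph{stabilizer} of the orbit $[\lr_{v}]$. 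So your $\cPkt{\lp_{v}}$ is the singleton $\{[\lr_{v}]\}$. The actual refined packet of Theorem~\ref{thm: refined L-packet} is a section of the fibration $\clPkt{\p_{v}, \lif{\zeta}_{v}} \to \cPkt{\p_{v}}/\lG(F_{v})$ from Proposition~\ref{prop: coarse L-packet}, hence has $|\S{\lp_{v}}|$ elements in the nonarchimedean case. Even when the global hypothesis $\S{\lp}=1$ holds, the local groups $\S{\lp_{v}}$ can be nontrivial, so these packets are genuinely larger than a singleton; the global condition does not localize.

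With a singleton local packet, the covering property fails for a concrete reason: two automorphic representations $\lr$, $\lr'$ whose restrictions land in $\cPkt{\p}$ may have $\lr_{v}|_{G}$ and $\lr'_{v}|_{G}$ lying in \emph{different} $\lG(F_{v})$-orbits of $\cPkt{\p_{v}}$, in which case no $\x_{v} \in X_{v}$ relates $[\lr_{v}]$ and $[\lr'_{v}]$. Your covering argument begins by invoking Corollary~\ref{cor: theta twisting character} to produce such local twists, but that corollary only compares representations lying over the \emph{same} irreducible constituent $\r$, not over different members of the same L-packet. Consequently the hypothesis of Proposition~\ref{prop: global result 1}(2) is not available, and the appeal to it is circular: it presupposes exactly the local matching that has to be established. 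The final paragraph's claim that $\S{\lp}=1$ ``kills the obstruction'' to assembling a global character is not substantiated by any cohomological computation, and it is not how the hypothesis enters the paper's argument. In the paper, $\S{\lp}=1$ is used via Corollary~\ref{cor: stability}: it forces all nontrivial elliptic endoscopic contributions in \eqref{eq: endoscopic expansion} to vanish, so that $I^{\lG}_{disc, \p}(\lf)$ is a stable distribution. One then applies place-by-place stability together with Corollary~\ref{cor: refined L-packet} and linear independence of characters to pin down $\cPkt{\lp_{v}}$ at each place and to force all members of the restricted tensor product $\otimes'_{v}\cPkt{\lp_{v}}$ to appear in the discrete spectrum. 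That stable-trace-formula machinery is the core of the proof and has no substitute in your proposal.
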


The local and global results of this paper will be proved together by a complicated induction argument. For the purpose of giving a clear proof of the local results, we have minimized the global assumptions needed in our induction arguments by imposing very restrictive conditions on the global results (like in Theorem~\ref{thm: global result 2}). In a sequel to this paper, we will prove the global results of this paper in a more general setting. 

A full description of the discrete spectrum of $\lG$ will also require the Arthur packets. Unfortunately, the technique in this paper will not be sufficient for that. This is somehow reflected by the fact that the Arthur packets of $G$ can have more complicated structure than its L-packets. To be able to construct the Arthur packets of $\lG$ in the nonarchimedean case, one will need to extend the works of M{\oe}glin \cite{Moeglin:2006} \cite{Moeglin:2009} on explicit construction of the Arthur packets of $G$. The global case could be even more challenging, because that would require certain description of the residue spectrum for both $G$ and $\lG$. So we would like to keep that as a project for the future.

This paper is organized as follows. In Section~\ref{sec: preliminary}, we discuss various group theoretic properties about $G$ and $\lG$. We introduce their Levi subgroups and twisted endoscopic groups. We also discuss the relation between $\cP{G}$ and $\cP{\lG}$ both in the local and global cases. We recall some known results about restricting the local representations of $\lG$ to $G$, in particular we have restriction multiplicity one in this case. In Section~\ref{sec: Arthur's theory}, we review Arthur's endoscopic classification theory for $G$ in the tempered case. In the local theory, we describe the $\theta$-twisted endoscopic character identities (or character relations) for $G$ and $\theta \in \Sigma_{0}$. In the global theory, we give Arthur's multiplicity formula for automorphic representations of $G$. In Section~\ref{sec: coarse L-packet}, we state our main local theorem (Theorem~\ref{thm: refined L-packet}). In this theorem, we formulate the $(\theta, \x)$-twisted endoscopic character identities for $\lG$ and $\x \in X$, which are natural extensions of the $\theta$-twisted endoscopic character identities for $G$. Similarly we also formulate the natural extensions of the twisted local intertwining relations from $G$ to $\lG$. In Section~\ref{sec: multiplicity formula}, we introduce various stable trace formulas used in this paper. We prove Proposition~\ref{prop: global result 1} as an application of the twisted stable trace formula. We also state some global conjectures, whose special versions have to be proved together with our main local theorem. In particular, we give the precise statement of Conjecture~\ref{conj: global conjecture} in the tempered case. In the end of this section, we make a comparison of both sides of the twisted stable trace formulas for $\lG$, which is analogous to what Arthur did for $G$. In the final section, we give the proofs of our main local theorem together with all the global theorems by an induction argument. In particular, we address the issue of lack of strong multiplicity one as we mentioned above.

{\bf Some standard notations}: If $G$ is a reductive group over a field $F$, let $G^{0}$ be the identity component, $G_{der}$ be the derived group of $G^{0}$, $G_{sc}$ be the simply connected cover of $G_{der}$, and $G_{ad}$ be the adjoint group of $G_{der}$. We denote the centre of $G$ by $Z_{G}$ or $Z(G)$, the split connected component of $Z_{G}$ by $A_{G}$. If $G$ is connected, let $X^{*}(G)$ be the group of algebraic characters of $G$ over $F$ and $\mathfrak{a}_{G} = \Hom_{\mathbb{Z}}(X^{*}(G), \mathbb{R})$. If $F$ is a local field, there is a homomorphism $H_{G}: G(F) \rightarrow \mathfrak{a}_{G}$ defined by $e^{<H_{G}(g), \chi>} = |\chi(g)|_{F}$ for $g \in G(F)$ and $\chi \in X^{*}(G)$. If $G$ is abelian and $\theta$ is an automorphism of $G$, let $G^{\theta}$ be the $\theta$-invariant subgroup of $G$, and $G_{\theta}$ be the $\theta$-coinvariant group of $G$, i.e., $G_{\theta} = G / (\theta - 1)G$. If $A$ is a locally compact abelian group, we denote its Pontryagin dual by $A^{*}$. 

{\bf Acknowledgements}: The author wants to thank his thesis advisor James Arthur for his generous support and constant encouragement when this work was carried out. He also wants to thank the hospitality of the Institute for Advanced Study, where he finished writing up the current version. During his stay at IAS, he was supported by the National Science Foundation No. DMS-1128155 and DMS-1252158. At last, the author wants to thank the referee for many helpful comments and suggestions.

\section{Preliminary}
\label{sec: preliminary}

\subsection{Groups}
\label{subsec: groups}

\subsubsection{Similitude groups}
\label{subsubsec: notations}

Let $F$ be a local or global field of characteristic zero and $\bar{F}$ be its algebraic closure. When $F$ is global, let us denote the adele ring over $F$ by $\A_{F}$, and the id\`ele group by $I_{F}$. The absolute Galois group over $F$ is written as $\Gal{F}$ or $\Gal{}$ for abbreviation. Let $G$ be a quasisplit connected reductive group over $F$ and $D$ be a torus. We denote by $\lG$ an extension of $D$ by $G$
\begin{align}
\label{eq: extension}
\xymatrix{1 \ar[r] & G \ar[r] & \lG \ar[r]^{\c}  & D \ar[r] & 1. }
\end{align}
Let us denote the centres of $G$ and $\lG$ by $\Z$ and $\lZ$ respectively. Sometimes we need to distinguish $\c$ for different groups, so we will also write $\c_{G} = \c$. The primary example that we are going to consider in this paper is when $G$ is a special even orthogonal group or a symplectic group, and $\lG$ is the corresponding similitude group, in which case $\c$ is called the similitude character.

A split general symplectic group (or symplectic similitude group) is defined as follows
$$GSp(2n) = \{ g \in GL(2n) : g  
\begin{pmatrix} 
      0 & -J_{n} \\
      J_{n} &  0 
\end{pmatrix}
{}^tg = \c(g) 
\begin{pmatrix}
      0 & -J_{n} \\
      J_{n} &  0 
\end{pmatrix}
\},$$
where 
\(
J_{n} = 
\begin{pmatrix}
        &&&1\\
        &&1&\\
        &\iddots&&\\
        1&&&
\end{pmatrix}
\)
 and $\c(g)$ is a scalar. It is connected as an algebraic group. A split general even orthogonal group (or orthogonal similitude group) is defined by
$$GO(2n) = \{ g \in GL(2n) : g  
\begin{pmatrix} 
      0 &  J_{n} \\
      J_{n} &  0 
\end{pmatrix}
{}^tg = \c(g)
\begin{pmatrix} 
      0 &  J_{n} \\
      J_{n} &  0 
\end{pmatrix}
\}.$$
Since $(\det g)^{2} = \c(g)^{2n}$, it has two connected components depending on whether $\det g / \c(g)^{n}$ being $1$ or $-1$. Let us denote the identity component by $GSO(2n)$, and we call it the connected general even orthogonal group. Because $SO(2n)$ (resp.$GSO(2n)$) has an outer automorphism from the conjugate action by $O(2n)$ (resp.$GO(2n)$), let us denote an outer twist of $SO(2n)$ (resp.$GSO(2n)$) with respect to this outer automorphism and an arbitrary quadratic extension $E / F$ by $SO(2n, \eta)$ (resp.$GSO(2n, \eta)$), where $\eta$ is the quadratic (id\`ele class) character associated to $E / F$ by the local (global) class field theory. We would like to allow $E = F$ and $\eta = 1$, in which case this is the split group. If $G = SO(2n, \eta)$, we define $\eta_{G} = \eta$. If $G = Sp(2n)$, we define $\eta_{G} = 1$. These groups that we have defined above give all the quasisplit general symplectic groups and quasisplit connected general even orthogonal groups. 

Another description of quasisplit general symplectic groups and quasisplit connected general even orthogonal groups is given by 
\[
GSp(2n) = (\mathbb{G}_{m} \times Sp(2n)) / (\Two) \,\, \text{ and } \,\, GSO(2n, \eta) = (\mathbb{G}_{m} \times SO(2n, \eta)) / (\Two),
\] 
where $\Two$ is embedded diagonally into the centre of each factor. The similitude character $\c$ is square on $\mathbb{G}_{m}$ and trivial on the other factor. More generally we can define 
\begin{align}
\label{eq: similitude}
G(Sp(2n_{1}) \times \cdots \times Sp(2n_{s}) \times SO(2n_{s+1}, \eta_{1}) \times \cdots \times SO(2n_{s+t}, \eta_{t}))
\end{align}
to be 

\[
(\mathbb{G}_{m} \times Sp(2n_{1}) \times \cdots \times Sp(2n_{s}) \times SO(2n_{s+1}, \eta_{1}) \times \cdots \times SO(2n_{s+t}, \eta_{t})) / (\Two),
\]
where $\Two$ is again embedded diagonally. We can also generalize the similitude character $\c$ to these groups such that it is square on $\mathbb{G}_{m}$ and trivial on all the other factors.
At last let us write $GSp(0) = GSO(0) = \mathbb{G}_{m}$ and set $\c = id$ in this case.  

For any quasisplit connected reductive group $G$ defined over $F$, we denote by $\D{G}$ its complex dual group, by $Z(\D{G})$ the centre of $\D{G}$, and by $\L{G}$ its $L$-group, which is a semidirect product of $\D{G}$ with the Weil group $W_{F}$, i.e., $\D{G} \rtimes W_{F}$. Then dual to the extension \eqref{eq: extension}, we have 
\begin{align*}
\xymatrix{1 \ar[r] & \D{D} \ar[r] & \D{\lG} \ar[r]^{\bold{p}}  & \D{G} \ar[r] & 1,}
\end{align*}
where all the homomorphisms can be extended to $L$-homomorphisms of $L$-groups. If $\lG$ is $GSp(2n)$ or $GSO(2n, \eta)$, then $\D{\lG}$ is the general Spin group
\[
GSpin(2n+1, \C) = (\C^{\times} \times Spin(2n+1, \C)) / (\Two) \,\, \text{ or } \,\, GSpin(2n, \C) = (\C^{\times} \times Spin(2n, \C)) / (\Two),
\] 
where $\Two$ is embedded diagonally to the centre of each factor. Here the embedding needs to be specified. Note that the Spin group is an extension of the special orthogonal group by $\Two$. If we denote the generator of this $\Two$ by $z$, then in defining the general Spin group we want $\Two$ to be embedded to $<z>$ for the Spin factor. In fact,
$Z(Spin(2n + 1, \C)) = <z>$, and for $Z(Spin(2n, \C))$ there is an exact sequence 
\[
\xymatrix{1 \ar[r] & <z> \ar[r] & Z(Spin(2n, \C)) \ar[r] & Z(SO(2n, \C)) \ar[r] & 1}.
\]
We take a preimage of the generator of $Z(SO(2n, \C)) \cong \Two$ in $Z(Spin(2n, \C))$ and denote it by $w$, then it is well-known that $w^{2} = 1$ if $n$ is even and $w^{2} = z$ if $n$ is odd. On the other hand, $Z(GSpin(2n+1, \C)) \cong \C^{\times}$, and $Z(GSpin(2n, \C)) \cong \C^{\times} \times \Two$. This is because when $n$ is even $u = (1, w)$  (resp. $u = (\sqrt{-1}, w)$ when $n$ is odd) splits the exact sequence
\[
\xymatrix{1 \ar[r] & \C^{\times} \ar[r] & Z(GSpin(2n, \C)) \ar[r] & Z(SO(2n, \C)) \ar[r] & 1.}
\]
$\L{\lG}$ is $GSpin(2n+1, \C) \times W_{F}$ or $GSpin(2n, \C) \rtimes W_{F}$ where the action of $W_{F}$ on $GSpin(2n, \C)$ factors through the Galois group $\Gal{E/F}$ of the quadratic extension $E/F$ associated with $\eta$, and it acts trivially on $\C^{\times}$. It is interesting to see its action on the centre of $GSpin(2n, \C)$. If $\tau$ is the nontrivial element in $\Gal{E/F}$, then $\tau$ is trivial on the factor $\C^{\times}$ and 
\begin{align}
\label{eq: Galois action}
\tau(u) = (-1) \cdot u,  \text{ for } -1 \in \C^{\times}.
\end{align}
If $\lG$ is type \eqref{eq: similitude}, then $\D{\lG}$ is 
\[
(\C^{\times} \times Spin(2n_{1}+1, \C) \times \cdots \times Spin(2n_{s}+1, \C) \times Spin(2n_{s+1}, \C) \times \cdots \times Spin(2n_{s+t}, \C)) / (\Two)^{s+t},
\]
where $(\Two)^{s+t}$ is embedded as the subgroup generated by
\[
(-1, \overbrace{1, \cdots, 1}^{k-1}, z, \overbrace{1, \cdots, 1}^{s+t-k} \,)
\]
for $1 \leqslant k \leqslant s+t$. For $\L{\lG}$, the action of $W_{F}$ on $\D{\lG}$ factors through the Galois group $\Gal{E'/F}$, where $E'$ is the composite field $E_{1} E_{2} \cdots E_{t}$ for the quadratic extensions $E_{i}/F$ associated with $\eta_{i}$, and it acts on each factor as in the previous case.

\begin{lemma}
\label{lemma: similitude character}
The image of $\c$ on GSp(2n,F), GSO(2n,F) is $F^\times$, and on $GSO(2n, \eta)(F)$ is $Nm_{E/F}E^\times$, where $E/F$ is the quadratic extension associated to $\eta$.
\end{lemma}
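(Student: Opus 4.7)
The lemma has three assertions. For the two split cases I would use the explicit element $g_a = \text{diag}(aI_n, I_n)$ for $a \in F^\times$; a direct matrix computation with the defining matrices in the definitions of $GSp(2n)$ and $GSO(2n)$ shows $\c(g_a) = a$, and in the orthogonal case a check of $\det g_a / \c(g_a)^n = a^n/a^n = 1$ confirms that $g_a$ lies in the identity component $GSO(2n, F)$ rather than only $GO(2n, F)$. This gives surjectivity of $\c$ onto $F^\times$ in both split cases.

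The third assertion, for the quasisplit non-split group $GSO(2n, \eta)$, is the real content. I would begin by identifying $SO(2n, \eta)$ with $SO(V, Q)$, where $(V, Q)$ is the $2n$-dimensional quadratic space $H^{n-1} \oplus (E, \Nm_{E/F})$ and $E/F$ is the quadratic extension attached to $\eta$; the rank-two anisotropic summand is the norm form. This is the standard realization of the quasisplit even orthogonal group of Witt index $n-1$ with discriminant class $\eta$, and under it $GSO(2n, \eta)(F)$ sits inside the identity component of the $F$-points of the similitude group of $(V, Q)$, with $\c$ the usual similitude factor.

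To prove $\c(GSO(2n, \eta)(F)) \subseteq \Nm_{E/F}(E^\times)$: an element $g$ with $\c(g) = a$ is the same as an $F$-isometry $(V, Q) \xrightarrow{\sim} (V, aQ)$. Since the hyperbolic plane satisfies $aH \cong H$ for any $a \in F^\times$, Witt cancellation forces the anisotropic kernels to be isometric, reducing the question to $a\Nm_{E/F} \cong \Nm_{E/F}$ as binary quadratic forms over $F$. These two forms have the same discriminant, so standard binary-form theory gives that they are isometric iff they represent a common non-zero value, which amounts to $\Nm_{E/F}(E^\times) \cap a\Nm_{E/F}(E^\times) \neq \emptyset$, and since $\Nm_{E/F}(E^\times)$ is a subgroup of $F^\times$ this is in turn equivalent to $a \in \Nm_{E/F}(E^\times)$.

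For the reverse inclusion I would construct a preimage explicitly: given $a = \Nm_{E/F}(\mu)$ with $\mu \in E^\times$, the $F$-linear multiplication map $m_\mu \colon E \to E$ is a similitude of $(E, \Nm_{E/F})$ with factor $a$ and determinant $\Nm_{E/F}(\mu) = a$; combining $m_\mu$ with the similitude $\text{diag}(a, 1)$ of similitude factor $a$ on each of the $n-1$ hyperbolic summands produces $g$ in the similitude group of $(V, Q)$ over $F$ with $\c(g) = a$ and $\det g = a^n = \c(g)^n$, so $g \in GSO(2n, \eta)(F)$. The only non-routine step in the whole argument is the binary-form classification invoked in the upper bound, and this is exactly what forces the image to be the norm subgroup rather than all of $F^\times$.
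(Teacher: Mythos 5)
Your proof is correct, but it takes a genuinely different route from the paper's. The paper handles $GSO(2n,\eta)$ by using the Bruhat decomposition — noting that Weyl representatives can be taken in $G(F)$ and that $\lif{N}=N$ — to reduce $\c(\lG(F))$ to $\c(\lif{T}(F))$ for a maximal torus $\lif{T}$, and then an explicit coordinate computation on the torus reduces the question to the rank-one case $GSO(2,\eta)(F)\cong E^\times$, where $\c$ coincides with $\det$ and hence with $\Nm_{E/F}$. You instead realize $GSO(2n,\eta)$ as the similitude group of $H^{n-1}\oplus (E,\Nm_{E/F})$ and argue entirely through quadratic-form theory: Witt cancellation strips off the hyperbolic part, and the classification of binary forms by discriminant and represented values pins down when $\Nm_{E/F}\cong a\,\Nm_{E/F}$, with an explicit $m_\mu\oplus\mathrm{diag}(a,1)^{n-1}$ construction supplying the reverse inclusion. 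Both are sound. The paper's version stays within the matrix realization and torus structure it has already set up for later use; yours is more intrinsic and avoids the Bruhat reduction, at the cost of invoking the identification of the paper's outer-twisted $SO(2n,\eta)$ with the similitude group of that particular quadratic space, a standard fact but one the paper itself does not spell out. Your explicit $g_a=\mathrm{diag}(aI_n,I_n)$ for the split cases is also a concrete version of what the paper dismisses as "obvious."
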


\begin{proof}
The cases of $GSp(2n)$ and $GSO(2n)$ are obvious, so we will only consider the case that $\lG = GSO(2n, \eta)$. If $n=1$, $GSO(2, \eta)$ can be embedded into $GL(2)$ and $\c$ is given by the determinant map. Since $GSO(2, \eta)(F) = E^{\times}$, it is easy to see that the determinant map becomes the norm map on $E^{\times}$, and the image is $Nm_{E/F}E^\times$. For general $n$, we can take a Borel subgroup $\lif{B}$ of $GSO(2n, \eta)$ with a maximal torus $\lif{T}$ and unipotent radical $\lif{N}$. By the Bruhat decomposition, 
\[
\lG(F) = \bigsqcup_{w \in W(\lif{T}(F), \lG(F))} \lif{B}(F) \dot{w} \lif{B}(F),
\] 
where $\dot{w}$ are representatives of $w$ in $\lG(F)$. Since $W(\lif{T}(F), \lG(F)) \cong W(T(F), G(F))$ for $T = G \cap \lif{T}$, one can take $\dot{w}$ in $G(F)$. Moreover, $\lif{N} = N$ for $N = G \cap \lif{N}$. Therefore, $\c(\lG(F)) = \c(\lif{B}(F)) = \c(\lif{T}(F))$. Let us write $\lG = (\mathbb{G}_{m} \times G) / (\Two)$, and choose $\lif{T}(\bar{F})$ such that it consists of $(x, g)$ modulo $\Two$, where $x \in \bar{F}^{\times}$ and 
\[
g = \text{diag}\{z_{1}, \cdots, z_{n-1}, y, y^{-1}, z_{n-1}^{-1}, \cdots, z_{1}^{-1} \} \in G(\bar{F})
\]
with $z_{i}, y \in \bar{F}^{\times}$. If $(x, g) \in \lif{T}(F)$, then $(x, y) \in GSO(2, \eta)(F)$, and $\c(x, g) = \c_{SO(2, \eta)}(x, y) = x^{2}$. On the other hand, if $(x, y) \in GSO(2, \eta)(F)$, then by letting $z_{i} = x$ for $1 \leqslant i \leqslant n-1$, we have $(x, g) \in \lif{T}(F)$. This shows $\c(\lif{T}(F)) = \c_{SO(2, \eta)}(GSO(2, \eta)(F)) = Nm_{E/F}E^\times$.

\end{proof}

This lemma can be easily generalized to groups of type \eqref{eq: similitude}.

\begin{lemma}
\label{lemma: generalized similitude character}
Suppose $\lG$ is of type \eqref{eq: similitude}, the image of $\c$ on $\lG(F)$ is 
\[
F^{\times} \cap Nm_{E_{1}/F}E_{1}^{\times} \cap \cdots \cap Nm_{E_{t}/F}E_{t}^{\times},
\]
where $E_{i} / F$ is the quadratic extension associated to $\eta_{i}$ for $1 \leqslant i \leqslant t$.
\end{lemma}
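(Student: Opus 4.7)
The plan is to mimic the proof of Lemma~\ref{lemma: similitude character} verbatim. First I would use the Bruhat decomposition on $\lG$, together with the fact that the Weyl group and the unipotent radical of a chosen Borel of $\lG$ coincide with those of its $Sp \times SO$ part, to reduce the problem to the similitude character on a maximal torus: $\c(\lG(F)) = \c(\lif{T}(F))$. Writing $\lG$ in the form~\eqref{eq: similitude}, a typical element of $\lif{T}(\bar{F})$ is $(x, g'_{1}, \ldots, g'_{s}, g_{1}, \ldots, g_{t})/(\Two)$ with $x \in \bar{F}^{\times}$, each $g'_{j}$ a diagonal element of the split torus of $Sp(2n_{j})$, and each $g_{i} = \text{diag}\{z_{i,1}, \ldots, z_{i,n_{s+i}-1}, y_{i}, y_{i}^{-1}, z_{i,n_{s+i}-1}^{-1}, \ldots, z_{i,1}^{-1}\}$ in the torus of $SO(2n_{s+i}, \eta_{i})$ as in the proof of Lemma~\ref{lemma: similitude character}; on such an element $\c = x^{2}$.

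For the forward inclusion, I would exhibit, for each $i$, the $F$-rational projection $\lG \rightarrow GSO(2, \eta_{i})$ sending the class of a torus element to $(x, y_{i})/(\Two)$. Since this map intertwines the similitude characters, Lemma~\ref{lemma: similitude character} forces $x^{2} \in Nm_{E_{i}/F} E_{i}^{\times}$ for every $i$, while $x^{2} \in F^{\times}$ is automatic. Hence $\c(\lG(F)) \subseteq F^{\times} \cap \bigcap_{i} Nm_{E_{i}/F} E_{i}^{\times}$.

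For the converse, given $a$ in the right-hand side of the asserted identity, I would fix one square root $x \in \bar{F}^{\times}$ of $a$ and, by applying Lemma~\ref{lemma: similitude character} to each $GSO(2, \eta_{i})$, choose $y_{i} \in \bar{F}^{\times}$ with $(x, y_{i})/(\Two) \in GSO(2, \eta_{i})(F)$ (flipping the sign of $y_{i}$ if needed to reconcile with the fixed $x$). Then, setting $z_{i,k} = x$ for all $i, k$ and taking $g'_{j} = \text{diag}\{x, \ldots, x, x^{-1}, \ldots, x^{-1}\} \in Sp(2n_{j})(\bar{F})$, the resulting torus element has similitude $a$, and its $F$-rationality reduces to a direct Galois check: any $\sigma \in \Gal{F}$ satisfies $\sigma x = \pm x$, and in either case every coordinate of the tuple is either fixed or uniformly negated, with the latter absorbed by the diagonal $(\Two)$-quotient.

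The only subtlety I anticipate is the uniform sign bookkeeping across all the factors at once. The diagonal $(\Two)$-quotient in $\lG$ only absorbs a simultaneous sign flip on every coordinate, so the ambiguity $x \leftrightarrow -x$ must propagate consistently through every factor. The construction above handles this automatically, because each coordinate of the chosen torus element is polynomial in the single scalar $x$ together with the $y_{i}$'s; no independent square root is introduced in the symplectic factors or in the auxiliary orthogonal coordinates, so the single sign choice of $x$ determines the sign in every other coordinate.
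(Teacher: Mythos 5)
Your argument is correct, but it takes a considerably longer route than the paper. You re-run the Bruhat decomposition and the explicit torus computation from the proof of Lemma~\ref{lemma: similitude character} for the multi-factor group, then check Galois-rationality by hand (the sign bookkeeping you flag is indeed the only delicate point, and your treatment of it — all auxiliary coordinates being monomials in the single scalar $x$, so the ambiguity $x \mapsto -x$ propagates uniformly and is absorbed by the diagonal $\Two$ — is correct). The paper instead observes that $\lG$ embeds as the subgroup of the product $\lif{\lG} = GSp(2n_1) \times \cdots \times GSO(2n_{s+t}, \eta_t)$ cut out by the condition that all the individual similitude characters agree, so that $\c(\lG(F)) = \bigcap_i \c_i(\lG_i(F))$, and the lemma drops out of Lemma~\ref{lemma: similitude character} with no further work. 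Your approach is self-contained and doesn't rely on recognizing this fiber-product structure; the paper's approach is a two-line structural reduction. One small imprecision in your write-up: there is no $F$-rational group homomorphism $\lG \to GSO(2, \eta_i)$; what you actually construct and use is the $F$-rational projection of maximal tori $\lif{T} \to T_{GSO(2, \eta_i)}$, which is all you need since you have already reduced to $\c(\lif{T}(F))$ (alternatively, the genuine $F$-rational projection $\lG \to \lG_i$ already gives the forward inclusion without passing through tori, which is precisely what the paper does).
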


\begin{proof}
Let us denote by $\lif{\lG}$ the product 
\begin{align}
\label{eq: product group}
GSp(2n_{1}) \times GSp(2n_{2}) \times \cdots \times GSp(2n_{s}) \times GSO(2n_{s+1}, \eta_{1}) \times \cdots \times GSO(2n_{s+t}, \eta_{t}),
\end{align}
then $\lG$ is the subgroup of $\lif{\lG}$ characterized by $\c_{1}(g_{1}) = \cdots = \c_{s+t}(g_{s+t})$ for $(g_{1}, \cdots, g_{s+t}) \in \lif{\lG}$. In particular, $\c(g) = \c_{1}(g_{1})$ for $g \in \lG \subseteq \lif{\lG}$. Then the lemma follows immediately from Lemma~\ref{lemma: similitude character}.
\end{proof}

When $F$ is global, we have the following corollary, whose proof is obvious.

\begin{corollary}
\label{cor: similitude character}
Suppose $\lG$ is of type \eqref{eq: similitude}, the image of $\c$ on $\lG(\A_{F})$ is 
\[
I_{F} \cap Nm_{E_{1}/F}I_{E_{1}} \cap \cdots \cap Nm_{E_{t}/F}I_{E_{t}},
\]
where $E_{i} / F$ is the quadratic extension associated to $\eta_{i}$ for $1 \leqslant i \leqslant t$.
\end{corollary}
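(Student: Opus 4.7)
The plan is to deduce the corollary from Lemma~\ref{lemma: generalized similitude character} by passing from local fields to their adeles place by place. Since $\c: \lG \to \mathbb{G}_m$ is a morphism of algebraic groups defined over $F$, the induced map $\c: \lG(\A_F) \to I_F$ is the restricted direct product of the local maps $\c_v: \lG(F_v) \to F_v^\times$, and the image of $\c$ on $\lG(\A_F)$ is the restricted product of the local images with respect to the standard hyperspecial subgroups at almost all places.

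First I would apply Lemma~\ref{lemma: generalized similitude character} at each place $v$ to obtain
\[
\c_v(\lG(F_v)) = F_v^\times \cap Nm_{E_{1,v}/F_v}E_{1,v}^\times \cap \cdots \cap Nm_{E_{t,v}/F_v}E_{t,v}^\times,
\]
where $E_{i,v} = E_i \otimes_F F_v$ is viewed as a product of local fields with the convention that the norm on a product is the product of the norms on the factors (which includes the case when $v$ splits in $E_i$, in which case the norm is surjective onto $F_v^\times$). Next, I would assemble these identities by taking the restricted direct product over all places of $F$. The standard identification of the adelic norm $Nm_{E_i/F} I_{E_i}$ with the restricted product $\prod'_v Nm_{E_{i,v}/F_v} E_{i,v}^\times$ (with respect to units at unramified places) then gives
\[
\c(\lG(\A_F)) = I_F \cap Nm_{E_1/F} I_{E_1} \cap \cdots \cap Nm_{E_t/F} I_{E_t}.
\]

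The only point that needs care is verifying the restricted product structure is the correct one: for almost all places $v$, the group $\lG$ has a reductive integral model, $\c_v$ is surjective onto the units $\mathcal{O}_{F_v}^\times$ (since $v$ is unramified in each $E_i$ and $\mathcal{O}_{F_v}^\times \subseteq Nm_{E_{i,v}/F_v} \mathcal{O}_{E_{i,v}}^\times$), and the intersection of the local images at almost all places is exactly $\mathcal{O}_{F_v}^\times$. This is precisely the same integrality data used to define the right hand side as a restricted product, so the two restricted products match, completing the proof. Because each step is either an instance of Lemma~\ref{lemma: generalized similitude character} or a routine fact from adelic class field theory, there is no substantive obstacle beyond bookkeeping the local-global compatibility of norms.
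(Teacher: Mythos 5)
Your proof is correct and is the natural fleshing-out of what the paper dismisses as ``obvious'': one applies Lemma~\ref{lemma: generalized similitude character} at each place $v$ (with the convention that the split places of $E_i$ contribute all of $F_v^\times$), and then assembles via the restricted product structure, using that at almost every place $\c$ maps $\lG(\mathcal{O}_v)$ onto $\mathcal{O}_v^\times$ and $\mathcal{O}_v^\times \subseteq Nm_{E_{i,v}/F_v}\mathcal{O}_{E_{i,v}}^\times$. This is precisely the intended local-to-global reduction, so no comparison is needed.
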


\begin{corollary}
\label{cor: relative Hasse principle}
Suppose $\lG$ is of type \eqref{eq: similitude}, then $\c(\lG(\A_{F})) \cap F^{\times} = \c(\lG(F))$ and $\c(\lZ(\A_{F})) \cap F^{\times} = \c(\lZ(F))$.
\end{corollary}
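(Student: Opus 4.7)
The plan is to reduce both identities to the Hasse norm theorem for quadratic extensions. For the first identity, Corollary~\ref{cor: similitude character} gives
\[
\c(\lG(\A_{F})) \cap F^{\times} \;=\; F^{\times} \cap Nm_{E_{1}/F}I_{E_{1}} \cap \cdots \cap Nm_{E_{t}/F}I_{E_{t}}.
\]
Each $E_{i}/F$ is a quadratic (hence cyclic) extension, so the Hasse norm theorem yields $F^{\times} \cap Nm_{E_{i}/F}I_{E_{i}} = Nm_{E_{i}/F}E_{i}^{\times}$ for every $i$. Intersecting over $i$ and comparing with Lemma~\ref{lemma: generalized similitude character} then produces the first equality $\c(\lG(\A_{F})) \cap F^{\times} = \c(\lG(F))$.

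For the centre statement, I first identify $\lZ$ as an algebraic group. Embedding $\lG$ in $\lif{\lG}$ of \eqref{eq: product group} as in the proof of Lemma~\ref{lemma: generalized similitude character}, each factor of $\lif{\lG}$ has centre $\mathbb{G}_{m}$ on which the similitude character acts by squaring. A short calculation (using that $\lif{\lG} = \lG \cdot Z(\lif{\lG})$, which itself follows because any constraint $\c_{k}(g_{k}) = c$ can be arranged by scaling $g_{k}$ by an algebraic square root) shows $\lZ = Z(\lif{\lG}) \cap \lG$. Consequently
\[
\lZ \;=\; \{(x_{1}, \ldots, x_{s+t}) \in \mathbb{G}_{m}^{s+t} : x_{i}^{2} = x_{j}^{2} \text{ for all } i, j\},
\]
and $\c$ restricted to $\lZ$ is the map $(x_{1}, \ldots, x_{s+t}) \mapsto x_{1}^{2}$. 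Hence $\c(\lZ(F)) = (F^{\times})^{2}$ and $\c(\lZ(\A_{F})) = (I_{F})^{2}$, so the second identity reduces to showing $(I_{F})^{2} \cap F^{\times} = (F^{\times})^{2}$.

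For this reduced claim, suppose $x \in F^{\times}$ is a square in every completion $F_{v}$. Then $F(\sqrt{x})/F$ is a quadratic extension that splits at every place, which by Chebotarev density forces $F(\sqrt{x}) = F$, that is, $x \in (F^{\times})^{2}$ (equivalently, apply the Hasse norm theorem to $F(\sqrt{x})/F$). The genuine content of the corollary is this cyclic-extension case of the Hasse norm principle; the identification of $\lZ$ and the assembly from the pieces are routine. The only mild subtlety is to phrase things so that the various similitude characters combine into a single norm condition per quadratic extension, after which the classical global-to-local principle closes the argument cleanly.
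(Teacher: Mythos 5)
Your argument is correct and follows essentially the same line as the paper: reduce the first identity to the Hasse norm theorem for each quadratic $E_i/F$ via Lemma~\ref{lemma: generalized similitude character} and Corollary~\ref{cor: similitude character}, and reduce the second to $F^{\times}\cap I_F^2=(F^{\times})^2$ after computing $\c(\lZ(F))=(F^\times)^2$ and $\c(\lZ(\A_F))=I_F^2$. The only cosmetic difference is that where the paper cites Grunwald--Wang for $F^{\times}\cap I_F^2=(F^{\times})^2$, you prove that special case directly via the Hasse norm theorem (equivalently Chebotarev) applied to $F(\sqrt{x})/F$, which is the same content, and you supply the short verification of the identification $\lZ=Z(\lif{\lG})\cap\lG$ that the paper leaves implicit.
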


\begin{proof}
For the first equality, by Lemma~\ref{lemma: generalized similitude character} and Corollary~\ref{cor: similitude character} it suffices to show $Nm_{E_{i}/F}I_{E_{i}} \cap F^{\times} = Nm_{E_{i}/F} E_{i}^{\times}$ for all $1 \leqslant i \leqslant t$, and this is a consequence of Hasse norm theorem (see \cite{Neukirch:1999}, Corollary VI.4.5). For the second equality, note $\c(\lZ(\A_{F})) = I_{F}^{2}$ and $\c(\lZ(F)) = F^{\times^{2}}$. So we need to show $F^{\times} \cap I_{F}^{2} = F^{\times^{2}}$, and this follows from Grunwald-Wang theorem.
\end{proof}

\subsubsection{Levi subgroups}
\label{subsubsec: Levi subgroups}

Let $\lG$, $G$, $D$ and $\c$ be defined as in Section~\ref{subsubsec: notations}. If we restrict $\c$ to a Levi subgroup $\lM$ of $\lG$, then its kernel will be a Levi subgroup $M$ of $G$, and we have 
\[
\label{eq: Levi subgroup}
\xymatrix{1 \ar[r] & M \ar[r] & \lM \ar[r]^{\c}  & D \ar[r] & 1.}
\]
It is easy to see that this induces a bijection between Levi subgroups of $\lG$ and $G$. 

Suppose $\lG$ is a general symplectic group or a connected general even orthogonal group of semisimple rank $n$, then $\lM$ is isomorphic to
\begin{align}
\label{eq: levi}
GL(n_{1}) \times \cdots \times GL(n_{r}) \times \lG_{-},
\end{align}
where $\lG_{-}$ is of the same type as $\lG$ with semisimple rank $n_{-} \geqslant 0$ and $n = \sum_{i = 1}^{r} n_{i} + n_{-}$. Throughout this paper we fix a Borel subgroup $\lif{B}$ of $\lG$ consisting of upper-triangular matrices and we choose $\lM$ to be contained in the group 
\[
\begin{pmatrix}
GL(n_{1})&&&&&&0 \\
&\ddots &&&&& \\
&& GL(n_{r})&&&&\\
&&&\lG_{-} &&&\\
&&&&GL(n_{r})&& \\
&&&&&\ddots&\\
0&&&&&&GL(n_{1})
\end{pmatrix}.
\]
In fact this gives all the standard Levi subgroups if $\lG$ is $GSp(2n)$ or $GSO(2n, \eta)$ ($\eta \neq 1$), and $GO(2n)$-conjugacy classes of standard Levi subgroups if $\lG$ is $GSO(2n)$. We fix an isomorphism from $\eqref{eq: levi}$ to $\lif{M}$ as follows
\[
(g_{1}, \cdots g_{r}, g) \longrightarrow \text{diag}\{g_{1}, \cdots, g_{r}, g, \c(g){}_tg^{-1}_{r}, \cdots, \c(g){}_tg^{-1}_{1}\}
\]
if $n_{-} > 0$, and
\[
(g_{1}, \cdots g_{r}, g) \longrightarrow \text{diag}\{g_{1}, \cdots, g_{r}, \c(g){}_tg^{-1}_{r}, \cdots, \c(g){}_tg^{-1}_{1}\}
\]
if $n_{-} = 0$.
Here ${}_tg_{i} = J_{n_{i}}{}^tg_{i}J^{-1}_{n_{i}}$ for $1 \leqslant i \leqslant r$. Under this isomorphism, the Weyl group $W(\lM) = \Norm(A_{\lM}, G)/\lM$ acts on $\lM$ by permuting the general linear factors and changing some $g_{i}$ to $\c(g){}_tg^{-1}_{i}$ (also compositions of these). Finally, note $M \cong GL(n_{1}) \times \cdots \times GL(n_{r}) \times G_{-}$ and $W(M) \cong W(\lM)$.

\subsubsection{Twisted endoscopic groups}
\label{subsubsec: twisted endoscopy}
Let $G$ be a quasisplit connected reductive group over $F$. When $F$ is local, we have an isomorphism 
\begin{align}
\label{eq: local characters}
H^{1}(W_{F}, Z(\D{G})) \longrightarrow \Hom(G(F), \C^{\times}).
\end{align}
When $F$ is global, we have a homomorphism
\begin{align}
\label{eq: global characters}
H^{1}(W_{F}, Z(\D{G})) \longrightarrow \Hom(G(\A_{F})/G(F), \C^{\times}),
\end{align}
where $\Hom(G(\A_{F})/G(F), \C^{\times})$ denotes the quasicharacter of $G(\A_{F})$ trivial on $G(F)$. If we let $F_{v}$ be the localization of $F$ at place $v$, then there is a commutative diagram
\begin{align*}
\xymatrix{  H^{1}(W_{F}, Z(\D{G})) \ar[d] \ar[r] &  \Hom(G(\A_{F})/ G(F), \C^{\times})  \ar[d] \\
                  H^{1}(W_{F_{v}}, Z(\D{G}_{v})) \ar[r]  & \Hom(G(F_{v}), \C^{\times}). }     
\end{align*}
Let 
\[
\Ker^{1}(W_{F}, Z(\D{G})) :=  \bigcap_{v} \Ker \{H^{1}(W_{F}, Z(\D{G})) \rightarrow H^{1}(W_{F_{v}}, Z(\D{G}_{v}))\},
\]
it is finite and gives the kernel of \eqref{eq: global characters}. Suppose $\lG$, $G$, $D$ and $\c$ are defined as in Section~\ref{subsubsec: notations}, then we have the following fact.

\begin{lemma}
Suppose $Z(\D{G})$ is $\Gal{F}$-invariant and $D$ is split, then $\Ker^{1}(W_{F}, Z(\D{\lG})) = 1$.
\end{lemma}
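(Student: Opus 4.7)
The plan is to reduce the vanishing of $\Ker^{1}$ for $Z(\D{\lG})$ to that for the simpler groups $\D{D}$ and $Z(\D{G})$. Applying $Z(\cdot)$ to the dual exact sequence $1 \to \D{D} \to \D{\lG} \to \D{G} \to 1$ produces a short exact sequence of $\Gal{F}$-modules
\[
1 \to \D{D} \to Z(\D{\lG}) \to Z(\D{G}) \to 1.
\]
Here $\D{D} \subseteq Z(\D{\lG})$ since $\D{D}$ is a normal torus in the connected reductive complex group $\D{\lG}$. For surjectivity, for any preimage $\tilde z \in \D{\lG}$ of $z \in Z(\D{G})$ the commutator map $g \mapsto g\tilde z g^{-1}\tilde z^{-1}$ is a homomorphism $\D{\lG} \to \D{D}$, hence trivial on the semisimple part $\D{\lG}_{der}$; combined with the reductive decomposition $\D{\lG} = \D{\lG}_{der} \cdot Z(\D{\lG})$ this forces $\tilde z$ to be central. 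By the two hypotheses, both end terms carry the trivial Galois action.

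Next I would pass to the associated long exact sequences in continuous $W_{F}$-cohomology, globally and at each place $v$, and assemble them into a commutative ladder via the localization maps; denote by $\delta \colon Z(\D{G}) \to H^{1}(W_{F}, \D{D})$ and $\delta_{v}$ the coboundaries, and by $j, p$ (with local analogues $j_v, p_v$) the other arrows. Two base-case vanishings feed into the chase. First, $\Ker^{1}(W_{F}, \D{D}) = 1$: since $D$ is split, $\D{D}$ is a product of copies of $\C^{\times}$ with trivial Galois action, so $H^{1}(W_{F}, \D{D})$ classifies Hecke quasicharacters on $D$, and a Hecke quasicharacter trivial at every place is globally trivial. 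Second, $\Ker^{1}(W_{F}, Z(\D{G})) = 1$: decompose $Z(\D{G})$ into its identity component (a split torus, handled as above) and its component group (finite abelian with trivial Galois action), then apply Chebotarev density, since a continuous homomorphism $\Gal{F} \to A$ trivial at every place is trivial on every Frobenius element and therefore identically trivial.

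The remaining step, which I expect to be the main obstacle, is a diagram chase that ultimately reduces to a Grunwald--Wang-type local-to-global assertion. Given $\alpha \in \Ker^{1}(W_{F}, Z(\D{\lG}))$, its image in $H^{1}(W_{F}, Z(\D{G}))$ lies in $\Ker^{1} = 1$ and is therefore trivial, so $\alpha = j(\beta)$ for some $\beta \in H^{1}(W_{F}, \D{D})$; the hypothesis $\alpha_{v} = 0$ translates into $\beta_{v} \in \operatorname{im}(\delta_{v})$ at every place $v$. What remains is to upgrade this to $\beta \in \operatorname{im}(\delta)$, for then $\beta - \delta(c)$ will lie in $\Ker^{1}(W_{F}, \D{D}) = 1$ for an appropriate $c \in Z(\D{G})$, giving $\beta = \delta(c)$ and hence $\alpha = 0$. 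Since $\operatorname{im}(\delta)$ is a finite subgroup of $H^{1}(W_{F}, \D{D})$ whose generators are all split over a fixed finite Galois extension $F'/F$, I would handle this globalization step by applying Chebotarev density on $\Gal{F'/F}$, using the trivial Galois action on $Z(\D{G})$ to conclude that the compatible family $(\beta_{v} \bmod \ker\delta_{v})$ must come from a single global element $c \in Z(\D{G})$.
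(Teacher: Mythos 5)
Your proof follows the same route as the paper's: reduce via the exact sequence $1\to\D{D}\to Z(\D{\lG})\to Z(\D{G})\to 1$ and the resulting global-vs-local cohomology ladder, establish the two base vanishings $\Ker^{1}(W_{F},\D{D})=\Ker^{1}(W_{F},Z(\D{G}))=1$ via Hecke characters and Chebotarev, and then run a diagram chase whose last step is again a Chebotarev argument using that the left vertical map $\pi_{0}(Z(\D{G})^{\Gal{}})\to\pi_{0}(Z(\D{G}_{v})^{\Gal{v}})$ is an isomorphism because the Galois action is trivial. The "Grunwald--Wang-type" step you flag as the main obstacle is exactly the step the paper disposes of by a second appeal to Chebotarev, so the approaches coincide.
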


\begin{proof}
It is a consequence of Chebotarev's density theorem (see \cite{Neukirch:1999}, Corollary VII.13.10) that 
\[
\Ker^{1}(W_{F}, Z(\D{G})) = \Ker^{1}(W_{F}, \D{D}) = 1.
\]
Then we consider the exact sequence 
\[
\xymatrix{1 \ar[r] &  \D{D} \ar[r]   & Z(\D{\lG})  \ar[r]  & Z(\D{G})  \ar[r]  &1,}
\]
it induces a commutative diagram
\[
\xymatrix{\pi_{0}(Z(\D{G})^{\Gal{}}) \ar[d]^{\simeq} \ar[r] & H^{1}(W_{F}, \D{D}) \ar[d] \ar[r] & H^{1}(W_{F}, Z(\D{\lG})) \ar[d] \ar[r] & H^{1}(W_{F}, Z(\D{G})) \ar[d] \\
\pi_{0}(Z(\D{G}_{v})^{\Gal{v}}) \ar[r] & H^{1}(W_{F_{v}}, \D{D}_{v}) \ar[r] & H^{1}(W_{F_{v}}, Z(\D{\lG}_{v}))  \ar[r] & H^{1}(W_{F_{v}}, Z(\D{G}_{v})),}
\]
with both the top and bottom rows being exact. Suppose $u \in \Ker^{1}(W_{F}, Z(\D{\lG}))$, then by the commutativity of the right square and $\Ker^{1}(W_{F}, Z(\D{G})) = 1$, $u$ has a preimage $w$ in $H^{1}(W_{F}, \D{D})$. Since $\Ker^{1}(W_{F}, \D{D}) = 1$, the Langlands correspondence for tori allows us to identify $H^{1}(W_{F}, \D{D})$ with $\Hom(D(\A_{F})/D(F), \C^{\times})$. Now without loss of generality, we can assume $w \neq 1$. By the commutativity of the left square and the fact that the left end vertical map is an isomorphism, the localization of $w$ is determined by the localizations of those in the image of $\pi_{0}(Z(\D{G})^{\Gal{}})$ in $H^{1}(W_{F}, \D{D})$. Finally, we use Chebotarev's density theorem again to conclude that $w$ has to lie in the image of $\pi_{0}(Z(\D{G})^{\Gal{}})$, and hence $u = 1$.

\end{proof}

\begin{corollary}
\label{cor: ker1}
Suppose $\lG$ is of type \eqref{eq: similitude} and $\c$ is the generalized similitude character, then
\[
\Ker^{1}(W_{F}, Z(\D{\lG})) = \Ker^{1}(W_{F}, Z(\D{G})) = 1.
\]
\end{corollary}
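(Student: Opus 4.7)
The plan is to reduce the statement to the preceding lemma by verifying its two hypotheses in our specific setting. Concretely, for $\lG$ of type \eqref{eq: similitude} we have the extension $1 \to G \to \lG \to D \to 1$ with $D = \mathbb{G}_{m}$ (via the generalized similitude character $\c$), so $D$ is automatically split. It then remains to check that $Z(\D{G})$ is pointwise $\Gal_F$-fixed and, separately, that $\Ker^1(W_F, Z(\D{G}))=1$.

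For the Galois invariance of $Z(\D{G})$, I would compute the dual group factor by factor. Since $\D{Sp(2n_i)} = SO(2n_i+1,\C)$ and $\D{SO(2n_{s+j},\eta_j)} = SO(2n_{s+j},\C)$, the center $Z(\D{G})$ is a product of copies of $\Two = \{\pm I\}$. The Galois action on each $SO(2n_{s+j},\C)$ factors through $\Gal_{E_j/F}$ and is implemented (up to inner automorphism) by conjugation by an element of $O(2n_{s+j},\C)$; any such conjugation fixes $\pm I$. Hence the action of $W_F$ on $Z(\D{G})$ is trivial, which is precisely the invariance hypothesis of the previous lemma. Applying that lemma then yields $\Ker^1(W_F, Z(\D{\lG}))=1$.

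For the second equality $\Ker^1(W_F, Z(\D{G}))=1$, I would use that $Z(\D{G})$ is a finite abelian group with trivial Galois action. Therefore
\[
H^1(W_F, Z(\D{G})) = \Hom(W_F, Z(\D{G})) = \Hom(W_F^{ab}, Z(\D{G})),
\]
and similarly locally. Under global class field theory these correspond to continuous characters of $\A_F^\times / F^\times$ and of $F_v^\times$ respectively, so the restriction map is just localization of idele class characters. A character of $\A_F^\times/F^\times$ that becomes trivial at every completion must be trivial, by Chebotarev's density theorem (equivalently, because the union of the images of the $F_v^\times$ is dense in $\A_F^\times$).

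The only content beyond bookkeeping is the determination of the Galois action on the center of each simple factor; there is no real obstacle here, since it reduces to the well-known observation that the outer automorphism of $SO(2n)$ acts trivially on $\{\pm I\}$. With these two points in hand, the corollary follows immediately from the preceding lemma combined with Chebotarev's theorem.
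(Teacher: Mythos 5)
Your proof is correct and follows essentially the same route as the paper's one-line argument: verify that $D=\mathbb{G}_m$ is split and that $\Gal_F$ acts trivially on $Z(\D{G})$, then invoke the preceding lemma (whose proof already contains the Chebotarev step giving $\Ker^1(W_F,Z(\D{G}))=1$). One small slip worth fixing: the symplectic factors $Sp(2n_i)$ have dual group $SO(2n_i+1,\C)$, whose center is \emph{trivial} (since $-I$ has determinant $-1$ in odd dimension), so $Z(\D{G})$ is a product of $\Two$'s coming only from the special even orthogonal factors; this does not affect the triviality of the Galois action, which is all the argument needs.
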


\begin{proof}
One just needs to observe that in this case $\Gal{F}$ acts trivially on $Z(\D{G})$, and $D = \mathbb{G}_{m}$.
\end{proof}

Let $\theta$ be an automorphism of $G$, and $\x$ be a quasicharacter of $G(F)$ if $F$ is local, or a quasicharacter of $G(\A_{F})$ trivial on $G(F)$ if $F$ is global. We define a twisted endoscopic datum for $(G, \theta, \x)$ to be a triple $(H, s, \xi)$, where $H$ is a quasisplit connected reductive group over $F$, $s$ is a semisimple element in $\D{G} \rtimes \D{\theta}$, and $\xi$ is an $L$-embedding from $\L{H}$ to $\L{G}$ satisfying the following conditions:
\begin{enumerate}
\item 
\(
\Int(s) \circ \xi = {\bold a} \cdot \xi, 
\)
for a $1$-cocycle ${\bold a}$ of $W_{F}$ in $Z(\D{G})$ which is mapped to $\x$ under \eqref{eq: local characters} or \eqref{eq: global characters};
\item $\D{H} \cong \Cent(s, \D{G})^{0}$ through $\xi$.
\end{enumerate}
Here $H$ is called a twisted endoscopic group of $G$ and for abbreviation we will denote $(H, s, \xi)$ by $H$. In this definition, we have required $\xi$ to be an $L$-embedding of $\L{H}$. But in general, $\xi$ can be an embedding of certain extension group of $\D{H}$ by $W_{F}$, which may not necessarily be isomorphic to $\L{H}$. In that case, one has to consider $z$-pairs (see \cite{KottwitzShelstad:1999}, 2.2). Since we do not need to deal with this general situation in this paper, we are content with the current definition. 

Two twisted endoscopic data $(H, s, \xi)$ and $(H', s', \xi')$ are called isomorphic if there exists an element $g \in \D{G}$ such that $g\xi(\L{H})g^{-1} = \xi'(\L{H'})$ and $gsg^{-1} \in s'Z(\D{G})$. Here such $g$ is called an isomorphism. We denote by $\tEnd{}{G^{\theta}}$ the set of isomorphism classes of twisted endoscopic data for $(G, \theta, \x)$. When $\theta = id$ and $\x =1$, we get the ordinary endoscopic data, and we abbreviate $\End{}{G^{\theta}, \x}$ to $\End{}{G}$.  A twisted endoscopic datum $(H, s, \xi)$ is called {\bf elliptic} if $\xi(Z(\D{H})^{\Gal{F}})^{0} \subseteq Z(\D{G})$, and we denote by $\End{ell}{G^{\theta}, \x}$ the set of isomorphism classes of twisted elliptic endoscopic data for $(G, \theta, \x)$. When $G = GL(N)$, we write $\End{ell}{N^{\theta}}$ for $\End{ell}{GL(N)^{\theta}}$. One can see from the definition that a twisted endoscopic group for $G$ can be viewed as an elliptic endoscopic group of some $\theta$-stable Levi subgroup $M$ (which also admits a $\theta$-stable parabolic subgroup $P \supseteq M$) of $G$. On the other hand, one can obtain all the twisted endoscopic groups of $G$ by taking the Levi subgroups of the twisted elliptic endoscopic groups of $G$. 

If $(H, s, \xi)$ is a twisted endoscopic datum for $(G, \theta, \x)$, we denote the automorphism group of this twisted endoscopic datum by $\Aut_{G}(H)$. By our definition, it is a subgroup of $\D{G}$. We define the inner automorphism group $\Int_{G}(H)$ of this twisted endoscopic datum to be $\D{H}Z(\D{G})^{\Gal{F}}$, and the outer automorphism group to be
\[
\Out_{G}(H) = \Aut_{G}(H) / \Int_{G}(H).
\]
By fixing an $F$-splitting for $H$, we get a homomorphism from $\Out_{G}(H)$ to the outer automorphism group $\Out(H)$ of $H$. Let us denote the image by $\Out(H, G)$, and define 
\[
C := \{z \in Z(\D{G}): \sigma (z) z^{-1} \in Z(\D{G}) \cap \D{H} \text{ for $\sigma \in \Gal{F}$ }\}.
\] 
Then there is an exact sequence
\begin{align}
\label{eq: automorphism of endoscopic datum}
\xymatrix{1 \ar[r] &  C/C \cap \D{H}Z(\D{G})^{\Gal{F}}   \ar[r]   & \Out_{G}(H)  \ar[r]  & \Out(H, G) \ar[r]  &1.}
\end{align}
When $F$ is local, there is an action of $\Out_{G}(H)$ on $\H(H)$ (or equivalently $C^{\infty}_{c}(H(F))$). For $g \in \Out_{G}(H)$, let us  denote its image in $\Out(H, G)$ by $\tau_{g}$ and choose a representative $\dot{g}$ in $\Aut_{G}(H)$ such that $\Int(\dot{g})$ preserves a $\Gal{F}$-splitting of $\D{H}$. Then $b_{g}(w) = \dot{g}^{-1} \xi(1 \rtimes w) \dot{g} \xi(1 \rtimes w)^{-1}$ defines a $1$-cocycle of $W_{F}$ in $Z(\D{H})$ and it induces a quasicharacter $w_{g}$ of $H(F)$ by \eqref{eq: local characters}. So the action of $\Out_{G}(H)$ on $\H(H)$ sends $f(h)$ to $^{g}f(h) = f(\tau_{g}(h)) \x_{g}(h)^{-1}$. In all the cases that we will be considering in this paper, one can always split the exact sequence \eqref{eq: automorphism of endoscopic datum} and get $\Out_{G}(H) \cong \Out(H, G) \times (C/C \cap \D{H}Z(\D{G})^{\Gal{F}})$ such that $\Out(H, G)$ acts on $\H(H)$ through its action on $H(F)$. When $G$ is a product of symplectic groups and special even orthogonal groups, $\Out_{G}(H) \cong \Out(H,G)$. When $G = GL(N)$, we write $\Out_{N}(H)$ for $\Out_{GL(N)}(H)$.

Suppose $\lG$, $G$, $D$, $\c$ are defined as Section~\ref{subsec: groups}. Let $\theta$ be an automorphism of $\lG$, and we assume {\bf $\c$ is $\theta$-invariant}, then $\theta$ also induces an automorphism of $G$. If $\x_{\lG}$ is a quasicharacter associated with $\lG$ as in the setup of twisted endoscopic datum, let us write $\x_{G}$ for the restriction of $\x_{\lG}$ to $G(F)$ if $F$ is local or to $G(\A_{F})/G(F)$ if $F$ is global. The following lemma describes the relation for twisted endoscopic data between $\lG$ and $G$.

\begin{proposition}
\label{prop: lifting endoscopic group}
There is a one to one correspondence between $\End{}{G^{\theta}, \x_{G}}$ and 
\[
\bigsqcup_{\x_{\lG}|_{G} = \x_{G}} \End{}{\lG^{\theta}, \x_{\lG}},
\] 
such that if $G'$ corresponds to $\lG'$, then there exists an exact sequence 
\[
\xymatrix{1 \ar[r] & G' \ar[r]  & \lG' \ar[r]^{\c'} & D \ar[r] & 1 }.
\]
Moreover, the same is true for twisted elliptic endoscopic data.
\end{proposition}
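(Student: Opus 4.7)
The plan is to construct the correspondence in both directions and verify the two maps are mutual inverses on isomorphism classes. The essential input is that $\Ker\bold{p} = \D{D}$ is a torus sitting inside $Z(\D{\lG})$, since $D$ is abelian and appears as a quotient of $\lG$ by a normal subgroup.

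For the forward map, given $(G', s, \xi) \in \End{}{G^{\theta}, \x_{G}}$, I would choose a lift $\tilde{s} \in \D{\lG} \rtimes \D{\theta}$ of $s$; this is possible because $\bold{p}$ is surjective and the $\theta$-invariance of $\c$ allows $\D{\theta}$ to be extended to $\D{\lG}$. Since $\D{D}$ is central in $\D{\lG}$, conjugation by $\tilde{s}$ acts trivially on the Lie algebra of $\D{D}$ and as conjugation by $s$ on a chosen complement isomorphic to the Lie algebra of $\D{G}$, so that the identity component of $\Cent(\tilde{s}, \D{\lG})$ fits into a short exact sequence
\[
1 \to \D{D} \to \D{\lG'} \to \D{G'} \to 1,
\]
where $\D{G'} = \Cent(s, \D{G})^{0}$, and is independent of the $\D{D}$-ambiguity in $\tilde{s}$. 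Transporting the $W_{F}$-action from $\L{\lG}$ via $\D{\lG'} \hookrightarrow \D{\lG}$ and dualizing, I obtain a quasisplit group $\lG'$ over $F$ with the desired exact sequence $1 \to G' \to \lG' \to D \to 1$. I extend $\xi$ to $\tilde{\xi}: \L{\lG'} \to \L{\lG}$ using this inclusion; the $1$-cocycle $\tilde{\bold{a}}$ defined by $\Int(\tilde{s}) \circ \tilde{\xi} = \tilde{\bold{a}} \cdot \tilde{\xi}$ lives in $Z(\D{\lG})$, projects to $\bold{a}$ in $Z(\D{G})$, and via \eqref{eq: local characters} or \eqref{eq: global characters} determines a character $\x_{\lG}$ restricting to $\x_{G}$.

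For the inverse, given $(\lG', \tilde{s}, \tilde{\xi}) \in \End{}{\lG^{\theta}, \x_{\lG}}$, I would set $s := \bold{p}(\tilde{s})$ and use the same Lie-algebra decomposition to see that $\bold{p}$ sends $\Cent(\tilde{s}, \D{\lG})^{0}$ onto $\Cent(s, \D{G})^{0} =: \D{G'}$ with kernel $\D{D}$; the composition $\bold{p} \circ \tilde{\xi}$ then descends through $\D{D}$ to an $L$-embedding $\xi: \L{G'} \to \L{G}$ whose associated cocycle is $\bold{a}$ representing $\x_{G}$. The centralizer constructions being canonical and the $\D{D}$-ambiguity in $\tilde{s}$ being absorbed by centrality, the two maps are mutually inverse on isomorphism classes. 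Ellipticity transfers both ways because $\D{D} \subseteq Z(\D{\lG})$ and $Z(\D{\lG'})$ is related to $Z(\D{G'})$ by an extension with central torus $\D{D}$ and compatible $\Gal{F}$-actions, so the conditions $\xi(Z(\D{G'})^{\Gal{F}})^{0} \subseteq Z(\D{G})$ and $\tilde{\xi}(Z(\D{\lG'})^{\Gal{F}})^{0} \subseteq Z(\D{\lG})$ are equivalent.

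The main technical obstacle is the $L$-group extension step: one must produce $\tilde{\xi}$ as an honest section over $W_{F}$ extending $\xi$, and identify the cohomology class of $\tilde{\bold{a}}$ in $H^{1}(W_{F}, Z(\D{\lG}))$ with an actual character $\x_{\lG}$ of $\lG$ lifting $\x_{G}$. The freedom in the lift $\tilde{s}$ modulo $\D{D}$ is exactly what is needed to realize every admissible extension $\x_{\lG}$ of $\x_{G}$, which is why the disjoint union on the right is the correct codomain and bijectivity, rather than mere injectivity, holds.
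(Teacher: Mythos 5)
Your overall construction follows the same route as the paper, which delegates the proof to \cite{Xu:2016}: the correspondence is pinned down by $\bold{p}(\lif{s}) = s$ together with the commuting square of $L$-embeddings, and the centrality of $\D{D}$ in $\D{\lG}$ is what makes the centralizers fit into the short exact sequence you write down. The ellipticity check at the end also matches the paper's. There are, however, two genuine problems.

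First, the construction of $\lif{\xi}$ is not actually carried out. You flag this yourself as ``the main technical obstacle'' but do not resolve it. This is exactly the content of Labesse's lifting result applied to the $L$-homomorphism $w \mapsto \xi(1 \rtimes w)$: in the local case the resolution is in \cite{Xu:2016} (ultimately \cite{Labesse:1985}, Theorem 8.1), and in the global case, where no Langlands group is available, one must use the $W_{F}$-version of Labesse's theorem, as the paper explicitly remarks. Without this input the forward map is not defined, so your argument as written is incomplete precisely where you say it is.

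Second, and more seriously, the closing claim that ``the freedom in the lift $\lif{s}$ modulo $\D{D}$ is exactly what is needed to realize every admissible extension $\x_{\lG}$ of $\x_{G}$'' is incorrect and reflects a misreading of why the disjoint union appears. Because $\D{D}$ is a central, $\Gal{F}$-stable and $\D{\theta}$-fixed torus, replacing $\lif{s}$ by $z\lif{s}$ with $z \in \D{D}$ changes the cocycle
\[
\lif{\bold{a}}(w) = \lif{s}\,\lif{\xi}(1\rtimes w)\,\lif{s}^{-1}\,\lif{\xi}(1\rtimes w)^{-1}
\]
only by the coboundary $w \mapsto z\,w(z)^{-1}$, and replacing $\lif{\xi}$ by $b\cdot\lif{\xi}$ with $b \in Z^{1}(W_{F}, \D{D})$ does not change $\lif{\bold{a}}$ at all. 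Moreover, since $\D{D}$ is a connected central subgroup of $\Cent(\lif{s}, \D{\lG})$, it lies in $\D{\lG'}$, so these modifications yield isomorphic endoscopic data in the paper's sense. Hence the class $\x_{\lG}$ is an \emph{invariant} of $(G', s, \xi)$: each datum downstairs lifts to a unique isomorphism class upstairs, landing in exactly one component of the disjoint union. The disjoint union over $\x_{\lG}$ restricting to $\x_{G}$ is there not because a single $(G', s, \xi)$ lifts in many ways, but because different data $(G', s, \xi)$ for the same $\x_{G}$ may lift to data with different $\x_{\lG}$ --- as the table in Section~\ref{subsubsec: twisted endoscopy} illustrates, where the character $\x = \eta \circ \c$ attached to $G(Sp(2n_{1}) \times SO(2n_{2}, \eta))$ depends on which endoscopic group $Sp(2n_{1}) \times SO(2n_{2}, \eta)$ of $Sp(2n)$ one started from.
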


\begin{proof}
We say a twisted endoscopic datum $(\lG', \lif{s}, \lif{\xi})$ for $(\lG, \theta, \x_{\lG})$ corresponds to a twisted endoscopic datum $(G', s, \xi)$ for $(G, \theta, \x_{G})$ if $\bold{p}(\lif{s}) = s$ and they satisfy the following diagram
\[
\xymatrix{ \L{\lG'} \ar[r]^{\lif{\xi}}     \ar[d]^{\bold{p}}   & \L{\lG}   \ar[d]^{\bold{p}} \\
    \L{G'}    \ar[r]^{\xi}     &    \L{G}.}
\]
In \cite{Xu:2016} we have shown this gives a one to one correspondence between isomorphism classes of twisted endoscopic data when $F$ is local. In fact, our proof also works in the global case except that we need to use a global lifting result of Labesse for homomorphisms from the global Weil group to $L$-groups (see the paragraph after Theorem~\ref{thm: lifting parameter}). Moreover, it is easy to see $\lif{\xi}(Z(\D{\lG'})^{\Gal{F}})^{0} \subseteq Z(\D{\lG})$ if and only if $\xi(Z(\D{G}')^{\Gal{F}})^{0} \subseteq Z(\D{G})$, so $(\lG', \lif{s}, \lif{\xi})$ is elliptic if and only if $(G', s, \xi)$ is elliptic.
\end{proof}

\begin{remark}
\label{rk: lifting endoscopic group}

The most important case for us is when $\x_{G} = 1$. Then Proposition~\ref{prop: lifting endoscopic group} shows there is a one to one correspondence between the $\theta$-twisted endoscopic data $\End{}{G^{\theta}}$ and the $(\theta, \x)$-twisted endoscopic data
\[
\bigsqcup_{\x} \End{}{\lG^{\theta}, \x},
\]
where $\x$ runs through quasicharacters of $\lG(F)/G(F)$ if $F$ is local and quasicharacters of $\lG(\A_{F})/\lG(F)G(\A_{F})$ if $F$ is global. The same is true for elliptic endoscopic data.

\end{remark}

As our most important examples, let us consider the general symplectic groups and connected general even orthogonal groups with trivial automorphisms, and we have the following table (cf. \cite{Arthur:2013}, 1.2 and \cite{Morel:2011}, 2.1): let $n = n_{1} + n_{2}$.

\begin{spacing}{1.5}
\begin{center}
\begin{tabular}{| c | m{5cm}  || c | m{5cm} | }
     \hline
      $G$                        &      $\End{ell}{G}$                                                                &     $\lG$              &       $\tEnd{ell}{\lG}$ \\
     \hline
      $Sp(2n)$               &      $Sp(2n_1) \times SO(2n_2, \eta) $                           &     $GSp(2n)$         &      $G(Sp(2n_1) \times SO(2n_2, \eta))$ \newline  $ \omega = \eta \circ \c$ \\
     \hline
      $SO(2n)$              &      $SO(2n_1, \eta) \times SO(2n_2, \eta)$                    &     $GSO(2n)$        &      $G(SO(2n_1, \eta) \times SO(2n_2, \eta))$ \newline  $ \omega = \eta \circ \c$ \\
     \hline
      $SO(2n, \eta)$  &    $SO(2n_1, \eta_1) \times SO(2n_2, \eta_1\eta) $ & $GSO(2n, \eta')$  & $G(SO(2n_1, \eta) \times SO(2n_2, \eta \eta'))$ \newline $\omega = \eta \circ \c$ \\
     \hline
\end{tabular}
\end{center}
\end{spacing}
 
Note in the cases above the isomorphism classes of twisted endoscopic data are completely determined by the twisted endoscopic groups. But that is not the case in general. For example, in the case of connected general even orthogonal groups, if we let $\theta_{0}$ be the outer automorphism induced by the conjugate action of the full orthogonal group, then the isomorphism classes of $\theta_{0}$-twisted elliptic endoscopic data of $SO(2n, \eta')$ are classified by $\theta_{0}$-twisted elliptic endoscopic groups $Sp(2n_1) \times Sp(2n_2)$ $(n = n_{1} + n_{2} +1)$ with a pair of quadratic characters $(\eta,\eta\eta')$. Correspondingly, the isomorphism classes of $(\theta_{0},\omega)$-twisted elliptic endoscopic data of $GSO(2n, \eta')$ are classified by $(\theta_{0}, \x)$-twisted elliptic endoscopic groups $G(Sp(2n_1) \times Sp(2n_2))$ $(n = n_{1} + n_{2} +1)$ with a pair of quadratic characters $(\eta,\eta\eta')$ and $\omega =\eta \circ \c$. 


\subsection{Langlands parameters}
\label{subsec: Langlands parameters}

Suppose $G$ is a quasisplit connected reductive group over $F$, a Langlands parameter of $G$ is a $\D{G}$-conjugacy class of admissible homomorphisms from the Langlands group $L_{F}$ to the $L$-group of $G$ (cf. \cite{Borel:1979}). We denote the set of Langlands parameters of $G$ by $\P{G}$, and for any $\p \in \P{G}$ we denote a representative by $\underline{\p}: L_{F} \rightarrow \L{G}$. If $F$ is local, the Langlands group is defined as follows, 
\[
L_{F} = 
\begin{cases}
W_{F} & \text{if $F$ is archimedean}, \\ 
W_{F} \times SL(2, \C) & \text{if $F$ is nonarchimedean}.
\end{cases}
\] 
If $F$ is global, the existence of Langlands group is still conjectural. Let $\lG$, $G$, $D$ and $\c$ be defined as in Section~\ref{subsubsec: notations}. The following theorem shows the relation for local Langlands parameters between $G$ and $\lG$.

\begin{theorem}[Labesse]
\label{thm: lifting parameter}
Suppose $F$ is a local, every Langlands parameter $\p$ of $G$ can be lifted to a Langlands parameter $\lp$ of $\lG$ in the sense that the following diagram commutes
\begin{displaymath}
 \xymatrix{ L_{F} \ar[rr]^{\underline{\lp}}  \ar[drr]_{\underline{\p}} & & \L{\lG} \ar[d]^{\bold{p}}  \\
& & \L{G}.}
\end{displaymath}
\end{theorem}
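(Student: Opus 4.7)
The plan is to reduce the lifting problem to a standard cohomological obstruction question for the central extension $1 \to \D{D} \to \D{\lG} \to \D{G} \to 1$, in which $\D{D}$ is a complex torus sitting centrally in $\D{\lG}$ (with $W_{F}$-action factoring through $\Gal{F}$). First I would fix a continuous set-theoretic section of $\L{\lG} \to \L{G}$, which exists because the latter is a principal $\D{D}$-bundle, and use it to pull $\underline{\p}$ back to a continuous map $\underline{\lp}_{0}: L_{F} \to \L{\lG}$ lying over $\underline{\p}$ and compatible with the projections to $W_{F}$; I would also arrange for a chosen Frobenius lift to be sent to a semisimple element of $\L{\lG}$.

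In the nonarchimedean case, write $L_{F} = W_{F} \times SL(2,\C)$ and treat the two factors separately. Because $SL(2,\C)$ is simply connected and semisimple, the restriction $\underline{\p}|_{SL(2,\C)}$ admits a canonical lift to a homomorphism into $\D{\lG}$, any two such lifts differing by a character $SL(2,\C) \to \D{D}$, which must be trivial. For the Weil-group factor, the failure of $\underline{\lp}_{0}|_{W_{F}}$ to be a homomorphism is measured by the continuous $\D{D}$-valued 2-cocycle
\[
c(w_{1},w_{2}) = \underline{\lp}_{0}(w_{1})\underline{\lp}_{0}(w_{2})\underline{\lp}_{0}(w_{1}w_{2})^{-1} \in \D{D},
\]
whose class in $H^{2}_{cont}(W_{F},\D{D})$ is the obstruction to a homomorphic lift. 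Once this class vanishes, I can modify $\underline{\lp}_{0}$ by a continuous 1-cochain to produce a bona fide homomorphism $\underline{\lp}_{W}: W_{F} \to \L{\lG}$, preserving both continuity and the semisimplicity of the Frobenius image.

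Gluing the two pieces, commutativity of $W_{F}$ and $SL(2,\C)$ in $L_{F}$ transfers automatically to $\L{\lG}$: the commutator of $\underline{\lp}_{W}(w)$ with the $SL(2,\C)$-lift at $\sigma$ projects to the identity in $\L{G}$, hence lies in the central subgroup $\D{D}$, and as a function of $\sigma \in SL(2,\C)$ into the abelian group $\D{D}$ it factors through the (trivial) abelianization of $SL(2,\C)$. Admissibility of the resulting $\underline{\lp}$ then follows from that of $\underline{\p}$: continuity, semisimplicity of Frobenius, and relative compactness on inertia are all inherited. The genuine content, and the main obstacle, is the vanishing of $H^{2}_{cont}(W_{F},\D{D})$ for $F$ a local field and $\D{D}$ a complex torus; this is Labesse's key technical input, obtained by combining the explicit description of the local Weil group with Tate's local duality for tori, exploiting the divisibility of complex tori. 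Given that vanishing, the construction above is formal.
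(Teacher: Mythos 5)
The paper does not prove this theorem: it cites Labesse (Theorem 8.1 of \cite{Labesse:1985}) as the source for both the local statement and its Weil-group global analogue. So there is no proof in the text to compare yours against; the task is to assess whether your reconstruction is sound.

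Your outline is correct and is the standard route, almost certainly coinciding with Labesse's. The reduction is what it should be: split off the $SL(2,\C)$ factor, which lifts uniquely because the pullback of $1 \to \D{D} \to \D{\lG} \to \D{G} \to 1$ along a homomorphism from a simply connected group with a torus kernel splits, and the ambiguity is a homomorphism from a perfect group to an abelian one, hence trivial. For the Weil-group factor the obstruction is the usual class $[c] \in H^2_{cont}(W_F, \D{D})$ (with $W_F$ acting on $\D{D}$ through $\Gal{F}$, trivially when $D$ is split as in the paper's applications), and modification by a continuous $1$-cochain solves the problem once $[c]=0$; since the correction term lies in the central torus, Frobenius-semisimplicity and relative compactness on inertia are preserved. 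Your gluing step is also right, and you correctly note the small point that the commutator map $\sigma \mapsto [\underline{\lp}_W(w), \underline{\lp}_{SL_2}(\sigma)]$ is genuinely a homomorphism precisely because its values are central, so the twisted-cocycle term disappears; it then dies on the perfect group $SL(2,\C)$. Finally, you correctly isolate the sole nontrivial input, the vanishing $H^2_{cont}(W_F, T)=0$ for a complex torus $T$ over a local field, and defer it to Labesse rather than claiming a proof. One caveat: your one-line gloss of how that vanishing is proved (local duality for tori plus divisibility) is a plausible paraphrase but should not be presented as a precise description of Labesse's argument unless you have checked it against his paper; the rest of your write-up does not depend on it.
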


In fact the global analogue of this theorem is also true if one uses the the global Weil group $W_{F}$ instead of the global Langlands group $L_{F}$. Both the local and global cases are proved in (\cite{Labesse:1985}, Theorem 8.1). In the global case, since we do not have the global Langlands group yet, this kind of lifting theorem for global Langlands parameters is unavailable. However let us assume the existence of global Langlands group and also the same kind of lifting theorem at this moment, so that we can investigate the relation for both local and global Langlands parameters between $G$ and $\lG$ in a uniform way. Moreover, the consequences of this investigation will serve as motivations for the later definitions (see Section~\ref{sec: Arthur's theory}) that complement the lack of global Langlands group. 

To further simplify our discussion, we are going to assume 
\begin{align}
\label{eq: group assumption}
\Ker^{1}(W_{F}, Z(\D{\lG})) = \Ker^{1}(W_{F}, Z(\D{G})) = \Ker^{1}(W_{F}, \D{D})= 1
\end{align}
when $F$ is global. This assumption allows us to treat the local and global cases at the same time, and it also suffices for our purpose in view of Corollary~\ref{cor: ker1}. Let $\Sigma$ be a finite abelian group of automorphisms of $\lG$ preserving an $F$-splitting of $\lG$, and we assume $\c$ is {\bf $\Sigma$-invariant}, so $\Sigma$ also acts on $G$. We denote the dual automorphisms by $\D{\Sigma}$ and form the semidirect products $\D{\lG}^{\Sigma} := \D{\lG} \rtimes \D{\Sigma}$ and $\D{G}^{\Sigma} := \D{G} \rtimes \D{\Sigma}$. Let $\Sigma$ act on $\P{\lG}$ and $\P{G}$ through the action of $\D{\Sigma}$ on $\D{\lG}$ and $\D{G}$ respectively. For $\theta \in \Sigma$, we denote by $\P{G^{\theta}}$ the set of $\p \in \P{G}$ such that $\p^{\theta} = \p$.

Suppose $F$ is either local or global, for any $\p \in \P{G}$ we choose a representative $\underline{\p}$. Let $L_{F}$ act on $\D{D}$, $\D{G}^{\Sigma}$, and $\D{\lG}^{\Sigma}$ by conjugation through $\underline{\p}$. We denote the corresponding group cohomology by $H^{*}_{\underline{\p}}(L_{F}, \cdot)$. Note $H^{0}_{\underline{\p}}(L_{F}, \D{D}) = \D{D}^{\Gal{}}$, $H^{1}_{\underline{\p}}(L_{F}, \D{D}) = H^{1}(W_{F}, \D{D})$, and 
\[
S^{\Sigma}_{\underline{\p}}: = \Cent(\Im \underline{\p}, \D{G}^{\Sigma}) = H^{0}_{\underline{\p}}(L_{F}, \D{G}^{\Sigma}), 
\]
\[
S_{\underline{\lp}}^{\Sigma} := \Cent(\Im \underline{\lp}, \D{\lG}^{\Sigma}) = H^{0}_{\underline{\p}}(L_{F}, \D{\lG}^{\Sigma}).
\]
The short exact sequence 
\[
\xymatrix{1 \ar[r] & \D{D} \ar[r] & \D{\lG}^{\Sigma} \ar[r]  & \D{G}^{\Sigma} \ar[r] & 1}
\]
induces a long exact sequence
\begin{align*}
\xymatrix{1 \ar[r] &  \D{D}^{\Gamma} \ar[r] & S_{\underline{\lp}}^{\Sigma} \ar[r] & S_{\underline{\p}}^{\Sigma} \ar[r]^{\delta \quad \quad} & H^{1}(W_{F}, \D{D}),}
\end{align*}
and hence
\begin{align}
\label{eq: old twisted endoscopic sequence}
\xymatrix{1 \ar[r] &  S_{\underline{\lp}}^{\Sigma}/\D{D}^{\Gal{}} \ar[r]^{\quad \iota} & S_{\underline{\p}}^{\Sigma} \ar[r]^{\delta \quad \quad} & H^{1}(W_{F}, \D{D}).}
\end{align}

To describe $\delta$, we can identify 
\[
S_{\underline{\p}}^{\Sigma} = \{ \lif{s} \in \D{\lG}^{\Sigma}: \lif{s} \underline{\lp}(u) \lif{s}^{-1}\underline{\lp}(u)^{-1} \in \D{D}, \text{ for all } u \in L_{F}\} /  \D{D}. 
\]
Then $\delta(s) : u \longmapsto \lif{s} \underline{\lp}(u) \lif{s}^{-1}\underline{\lp}(u)^{-1}$, where $\lif{s}$ is a preimage of $s$ in $\D{\lG}^{\Sigma}$, and $\delta(s)$ factors through $W_{F}$. About \eqref{eq: old twisted endoscopic sequence} we have the following lemma.

\begin{lemma}
\label{lemma: centralizer}
The image of $\delta$ consists of ${\bold a} \in H^{1}(W_{F}, \D{D})$ such that 
\[
\lp^{\theta} = \lp \otimes {\bold a}
\]
for some $\theta \in \Sigma$, and in particular it is finite.
\end{lemma}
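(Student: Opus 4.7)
My plan is to verify both directions of the characterization of $\operatorname{Im}\delta$ by a direct computation with the connecting homomorphism, and then deduce finiteness from an algebraic group argument applied to the Lie algebras of the centralizers.

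First I would pick a representative $\lif{s} = \lif{t} \rtimes \D{\theta}$ in $\D{\lG}^{\Sigma}$ of a given $s \in S^{\Sigma}_{\underline{\p}}$, where $\lif{t} \in \D{\lG}$ and $\theta \in \Sigma$, and write $\underline{\lp}(u) = \lif{g}(u) \rtimes u$ inside the ambient group $\D{\lG} \rtimes (\D{\Sigma} \times W_{F})$ (here $\D{\Sigma}$ and $W_{F}$ commute, because $\Sigma$ preserves an $F$-splitting). A direct multiplication in this semidirect product gives
\[
\delta(s)(u) \;=\; \lif{s}\,\underline{\lp}(u)\,\lif{s}^{-1}\,\underline{\lp}(u)^{-1} \;=\; \lif{t}\,\D{\theta}(\lif{g}(u))\,u(\lif{t}^{-1})\,\lif{g}(u)^{-1},
\]
which indeed lies in $\D{D}$ since its image in $\D{G}$ is trivial by the assumption that $s$ centralizes $\underline{\p}(L_{F})$.

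Setting $\bold{a} := \delta(s)$ and rearranging, this identity becomes
\[
\D{\theta}(\lif{g}(u)) \;=\; \lif{t}^{-1}\,\bold{a}(u)\,\lif{g}(u)\,u(\lif{t}),
\]
which is exactly the statement that $\underline{\lp}^{\theta}$ is the conjugate of $\underline{\lp} \otimes \bold{a}$ by $\lif{t}^{-1}$; hence $\lp^{\theta} = \lp \otimes \bold{a}$ as $\D{\lG}$-conjugacy classes. Conversely, any intertwiner $g \in \D{\lG}$ realizing $\lp^{\theta} = \lp \otimes \bold{a}$ yields, upon setting $\lif{t} = g^{-1}$, an element $\lif{s} = \lif{t} \rtimes \D{\theta}$ whose image $s$ in $\D{G}^{\Sigma}$ lies in $S^{\Sigma}_{\underline{\p}}$ by the same calculation and satisfies $\delta(s) = \bold{a}$.

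For the finiteness assertion, I would first note that for each fixed $\theta \in \Sigma$ the set of $\bold{a} \in H^{1}(W_{F}, \D{D})$ with $\lp^{\theta} = \lp \otimes \bold{a}$ is either empty or a coset of the subgroup $X(\lp) := \{\bold{a} : \lp = \lp \otimes \bold{a}\}$, so since $\Sigma$ is finite it suffices to show $X(\lp)$ is finite. By the first part, $X(\lp) = \delta(S_{\underline{\p}})$, and the exact sequence \eqref{eq: old twisted endoscopic sequence} (with trivial $\Sigma$) identifies this with $S_{\underline{\p}} / \bold{p}(S_{\underline{\lp}})$. The key step, and the most substantive one, is to see that $\bold{p}: S_{\underline{\lp}} \to S_{\underline{\p}}$ is surjective on Lie algebras: given a tangent vector in $\mathfrak{s}_{\underline{\p}} = \mathfrak{g}^{L_{F}}$, the obstruction to lifting it to $\mathfrak{s}_{\underline{\lp}} = \widehat{\mathfrak{\lG}}^{L_{F}}$ lies in $H^{1}(L_{F}, \mathfrak{d})$, where $L_{F}$ acts on the $\C$-vector space $\mathfrak{d} = \operatorname{Lie}\D{D}$ through a finite Galois quotient; this cohomology is killed by the order of that quotient and by a nonzero scalar, so it vanishes. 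Consequently $\bold{p}(S_{\underline{\lp}})$ contains $S_{\underline{\p}}^{0}$, and $S_{\underline{\p}} / \bold{p}(S_{\underline{\lp}})$ is a quotient of the finite component group $\pi_{0}(S_{\underline{\p}})$; thus $X(\lp)$, and with it $\operatorname{Im}\delta$, is finite.
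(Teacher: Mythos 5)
Your cocycle computation verifying the characterization of $\Im\delta$ is correct, and the reduction of finiteness to finiteness of $X(\lp)$, and then to the containment $\bold{p}(S_{\underline{\lp}}) \supseteq S_{\underline{\p}}^{0}$, is sound. The gap is the claim that $H^{1}(L_{F}, \mathfrak{d}) = 0$. This is false: $\mathfrak{d}$ is a $\C$-vector space, and even when the Galois action is trivial (say $D = \mathbb{G}_{m}$), a Frobenius element gives nontrivial continuous homomorphisms $W_{F}^{ab} \to (\C, +)$, so $H^{1}(L_{F}, \mathfrak{d}) \neq 0$. The inflation map from the cohomology of the finite Galois quotient into $H^{1}(L_{F}, \mathfrak{d})$ is injective but far from surjective; the averaging argument you invoke kills only that finite-group cohomology, which is a proper subgroup, and says nothing about the cocycles factoring through the noncompact part of $L_{F}$.

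Your conclusion is nevertheless salvageable by a sharper observation: the sequence $0 \to \mathfrak{d} \to \widehat{\mathfrak{g}}_{\lG} \to \widehat{\mathfrak{g}}_{G} \to 0$ actually splits $L_{F}$-equivariantly, since it is the identity on derived Lie algebras (as $\D{\lG} \to \D{G}$ is an isogeny modulo the central $\D{D}$) and on the centers it is a short exact sequence of modules over the group algebra of the finite Galois quotient, which splits by Maschke. Equivalently, any invariant vector in $\mathfrak{s}_{\underline{\p}}$ can be lifted compatibly with its center-plus-derived decomposition, so the connecting map $\mathfrak{s}_{\underline{\p}} \to H^{1}(L_{F}, \mathfrak{d})$ factors through the vanishing finite-group cohomology rather than all of $H^{1}(L_{F}, \mathfrak{d})$. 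Note also that the paper proceeds differently: it cites a local version of the lemma from an earlier paper for the identification of $\Im\delta$, and obtains global finiteness by local-global compatibility, observing that $S^{0}_{\p}$ maps into $S^{0}_{\p_{v}}$ at each place and $\delta_{v}$ is already known to vanish on $S^{0}_{\p_{v}}$, hence $\delta = \prod_{v}\delta_{v}$ vanishes on $S^{0}_{\p}$.
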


\begin{proof}
We have shown the lemma when $F$ is local in \cite{Xu:2016}. In particular the same argument applies to the global case except for the finiteness of $\Im \delta$. When $F$ is global, we need to use the commutative diagram
\[
\xymatrix{
1 \ar[r] & S^{\Sigma}_{\lp} / \D{D}^{\Gal{}} \ar[r]    \ar@{^{(}->}[d]    &   S^{\Sigma}_{\p} \ar[r]^{\delta \quad \quad}   \ar@{^{(}->}[d]   &   H^{1}(W_{F}, \D{D}) \ar[d] \\
1 \ar[r] & S^{\Sigma}_{\lp_{v}} / \D{D}_{v}^{\Gal{v}} \ar[r]    &   S^{\Sigma}_{\p_{v}}  \ar[r]^{\delta_{v} \quad \quad}  &  H^{1}(W_{F_{v}}, \D{D}_{v}).}
\]
Since $\com[0]{S_{\p}}$ is mapped into $\com[0]{S_{\p_{v}}}$ and $\delta_{v}$ is trivial on $\com[0]{(S^{\Sigma}_{\lp_{v}}/ \D{D}^{\Gal{v}})} = \com[0]{S_{\p_{v}}}$, then $\delta = \prod_{v} \delta_{v}$ is trivial on $\com[0]{S_{\p}}$. This implies the image of $\delta$ is finite.
\end{proof}

Take
\[
\xymatrix{1 \ar[r] & \D{D} \ar[r] & Z(\D{\lG}) \ar[r]  & Z(\D{G}) \ar[r] & 1}
\]
and it induces
\[
\xymatrix{1 \ar[r] &  \D{D}^{\Gamma} \ar[r] & Z(\D{\lG})^{\Gal{}} \ar[r] & Z(\D{G})^{\Gal{}} \ar[r]^{\delta \quad} & H^{1}(W_{F}, \D{D}) \ar[r] & H^{1}(W_{F}, Z(\D{\lG})). }
\]
So $\Ker \delta|_{Z(\D{G})^{\Gal{}}} = Z(\D{\lG})^{\Gal{}} / \D{D}^{\Gamma}$. Let 
\(
\bar{H}^{1}(W_{F}, \D{D}) : = H^{1}(W_{F}, \D{D}) / \delta(Z(\D{G})^{\Gal{}}).
\) 
Taking the quotient of \eqref{eq: old twisted endoscopic sequence} by $Z(\D{G})^{\Gal{}}$, we get
\begin{align}
\label{eq: twisted endoscopic sequence mod center}
\xymatrix{1 \ar[r] &  \cS{\underline{\lp}}^{\Sigma} \ar[r]^{\iota} & \cS{\underline{\p}}^{\Sigma} \ar[r]^{\bar{\delta} \quad \quad}  & \bar{H}^{1}(W_{F}, \D{D}),}                 
\end{align}
where $\cS{\underline{\lp}}^{\Sigma} =  S_{\underline{\lp}}^{\Sigma}/Z(\D{\lG})^{\Gal{}}$ and $\cS{\underline{\p}}^{\Sigma} =  S_{\underline{\p}}^{\Sigma}/Z(\D{G})^{\Gal{}}$. Since $\Im \delta$ is finite, we have $\cS{\underline{\lp}}^{0} = \cS{\underline{\p}}^{0}$. After taking the quotient of \eqref{eq: twisted endoscopic sequence mod center} by the identity component, we get
\begin{align}
\label{eq: twisted endoscopic sequence}
\xymatrix{1 \ar[r] &  \S{\underline{\lp}}^{\Sigma} \ar[r]^{\iota} & \S{\underline{\p}}^{\Sigma} \ar[r]^{\bar{\delta} \quad \quad}  & \bar{H}^{1}(W_{F}, \D{D}),}                
\end{align}
where $\S{\underline{\lp}}^{\Sigma} =  \cS{\underline{\lp}}^{\Sigma} / \cS{\underline{\lp}}^{0}$ and $\S{\underline{\p}}^{\Sigma} =  \cS{\underline{\p}}^{\Sigma} / \cS{\underline{\p}}^{0}$.  There are natural maps from $S^{\Sigma}_{\underline{\p}}, \cS{\underline{\p}}^{\Sigma}$, and $\S{\underline{\p}}^{\Sigma}$ to $\D{\Sigma}$, and for $\theta \in \Sigma$, we denote the preimages of $\D{\theta} \in \D{\Sigma}$ by $S^{\theta}_{\underline{\p}}, \cS{\underline{\p}}^{\theta}$ and $\S{\underline{\p}}^{\theta}$ respectively.

By the Langlands correspondence for tori and the assumption $\Ker^{1}(W_{F}, \D{D})= 1$, we can identify $H^{1}(W_{F}, \D{D})$ with $\Hom(D(F), \C^{\times})$ if $F$ is local or $\Hom(D(\A_{F})/D(F), \C^{\times})$ if $F$ is global. Then we can compose with $\c$ to get a homomorphism from $H^{1}(W_{F}, \D{D})$ to $\Hom(\lG(F)/G(F), \C^{\times})$ if $F$ is local or $\Hom(\lG(\A_{F})/ \lG(F)G(\A_{F}), \C^{\times})$ if $F$ is global. Since $\delta(Z(\D{G})^{\Gal{}})$ is trivial in $H^{1}(W_{F}, Z(\D{\lG}))$, it induces the trivial character on $\lG(F)$ if $F$ is local or $\lG(\A_{F})$ if $F$ is global. So we have a homomorphism 
\[
r: \bar{H}^{1}(W_{F}, \D{D}) \rightarrow \Hom(\lG(F)/G(F), \C^{\times})
\]
if $F$ is local, and 
\[
r: \bar{H}^{1}(W_{F}, \D{D}) \rightarrow \Hom(\lG(\A_{F})/ \lG(F)G(\A_{F}), \C^{\times})
\]
if $F$ is global. In the local case, $r$ is an isomorphism due to the fact that $\eqref{eq: local characters}$ is an isomorphism. For the global case, we have the following lemma.

\begin{lemma}
\label{lemma: identification}
If $F$ is global and $\lG$ is of type \eqref{eq: similitude}, then $r$ is an isomorphism.
\end{lemma}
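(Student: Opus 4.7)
The plan is to compute both sides of $r$ explicitly and identify them with a common quotient of $\Hom(I_{F}/F^{\times}, \C^{\times})$. First, using the assumption $\Ker^{1}(W_{F}, \D{D}) = 1$ together with the Langlands correspondence for tori, I would identify $H^{1}(W_{F}, \D{D})$ with $\Hom(D(\A_{F})/D(F), \C^{\times}) = \Hom(I_{F}/F^{\times}, \C^{\times})$, where the last equality uses the fact that $D = \mathbb{G}_{m}$ for $\lG$ of type \eqref{eq: similitude}.

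Next I would describe the codomain of $r$. The similitude character $\c$ induces an isomorphism $\lG(\A_{F})/\lG(F)G(\A_{F}) \cong \c(\lG(\A_{F}))/\c(\lG(F))$, and by Corollary~\ref{cor: relative Hasse principle}, $\c(\lG(F)) = F^{\times} \cap \c(\lG(\A_{F}))$. Pulling back through $\c$ and using the divisibility of $\C^{\times}$ to extend characters, this identifies $\Hom(\lG(\A_{F})/\lG(F)G(\A_{F}), \C^{\times})$ with the quotient of $\Hom(I_{F}/F^{\times}, \C^{\times})$ by the subgroup of characters trivial on $\c(\lG(\A_{F}))$. Applying Corollary~\ref{cor: similitude character}, we have $\c(\lG(\A_{F})) = \bigcap_{i=1}^{t} \Nm_{E_{i}/F} I_{E_{i}}$, so by global class field theory the relevant kernel is exactly the finite group $\langle \eta_{1}, \ldots, \eta_{t} \rangle$ generated by the quadratic characters associated to the extensions $E_{i}/F$.

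The remaining task is to identify $\delta(Z(\D{G})^{\Gal{}})$ with the same subgroup $\langle \eta_{1}, \ldots, \eta_{t} \rangle$ inside $H^{1}(W_{F}, \D{D})$. The symplectic factors contribute nothing, since $Z(SO(2n_{i}+1, \C)) = 1$. For each orthogonal factor $SO(2n_{s+j}, \eta_{j})$ the outer automorphism fixes the central element $-I$, so the generator $z_{j}$ of $Z(SO(2n_{s+j}, \C)) \cong \Two$ lies in $Z(\D{G})^{\Gal{}}$. I would lift $z_{j}$ to $\lif{z}_{j} \in Z(\D{\lG})$ using the element $u$ from the paragraph preceding \eqref{eq: Galois action}, embedded in the appropriate Spin factor. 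The connecting map then sends $z_{j}$ to the cocycle $w \mapsto w(\lif{z}_{j})\lif{z}_{j}^{-1}$, which by \eqref{eq: Galois action} is $-1$ on the nontrivial element of $\Gal{E_{j}/F}$ and $1$ on $W_{E_{j}}$; via class field theory this is precisely $\eta_{j}$. Since $r$ is by construction induced from composition with $\c$, it intertwines the two identifications, so it descends to an isomorphism on the quotients.

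The main subtlety I expect is bookkeeping across the several simple factors: one must correctly lift each central generator into the corresponding Spin component of $Z(\D{\lG})$ (with the $\sqrt{-1}$ adjustment when $n_{s+j}$ is odd) and verify that the resulting cocycle matches $\eta_{j}$ exactly, rather than a product of $\eta_{j}$'s with characters coming from other factors on which the Galois action is trivial.
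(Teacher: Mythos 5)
Your argument is correct, but it takes a genuinely different route from the paper's. Both proofs establish surjectivity the same way, using Corollary~\ref{cor: relative Hasse principle} to embed $\lG(\A_{F})/\lG(F)G(\A_{F})$ into $I_{F}/F^{\times}$ via $\c$ and then extending characters by divisibility of $\C^{\times}$. The difference is injectivity: the paper never identifies $\ker\c^{*}$ or $\delta(Z(\D{G})^{\Gal{}})$ explicitly, but instead chases the diagram and invokes the vanishing $\Ker^{1}(W_{F}, Z(\D{\lG})) = 1$ of Corollary~\ref{cor: ker1} to see that the two groups coincide abstractly, whatever they are. You compute both and match them with $\langle\eta_{1},\ldots,\eta_{t}\rangle$. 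That is more work and requires the $GSpin$ bookkeeping you flag, but it yields an explicit description of $\ker r$ that the paper's proof does not supply; the paper's version is shorter and avoids any computation inside $Z(\D{\lG})$.

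Two steps in your version should be spelled out. To see that the characters of $I_{F}/F^{\times}$ trivial on $\c(\lG(\A_{F}))$ form exactly $\langle\eta_{1},\ldots,\eta_{t}\rangle$ and nothing more, note that $\Nm_{E/F}I_{E} \subseteq \bigcap_{i}\Nm_{E_{i}/F}I_{E_{i}} = \c(\lG(\A_{F}))$ for the compositum $E = E_{1}\cdots E_{t}$, so any such character factors through $I_{F}/F^{\times}\Nm_{E/F}I_{E} \cong \Gal{E/F}$, whose Pontryagin dual is generated by the restrictions $\eta_{i}$; writing ``by global class field theory'' elides this reduction to the compositum, which is exactly the step that rules out extra characters. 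For $\delta(z_{j}) = \eta_{j}$, the simplest lift of $z_{j}$ is the class of $(1, 1, \ldots, w_{j}, \ldots, 1)$ in $\D{\lG}$ with $w_{j}$ in the $j$-th $Spin$-slot; the $\sqrt{-1}$ in the paper's element $u$ is only there to split the center of a single $GSpin(2n, \C)$ and plays no role in the cocycle computation. The cocycle $\tau(\lif{z}_{j})\lif{z}_{j}^{-1}$ then picks up the nontrivial element of $\ker(Spin(2n_{s+j}, \C) \to SO(2n_{s+j}, \C))$ in the $j$-th slot when $\tau$ is nontrivial on $E_{j}$, and this is identified with $-1 \in \D{D}$ precisely because of the $(\Two)^{s+t}$ relations divided out in forming $\D{\lG}$.
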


\begin{proof}
First we consider the following diagram
\begin{align}
\label{diagram: global Langlands correspondence for characters}
\xymatrix{   \Hom(\lG(\A_{F}) / \lG(F), \C^{\times})  &  \Hom(D(\A_{F})/D(F), \C^{\times}) \ar[l]_{\c^{*}} & \\
                     H^{1}(W_{F}, Z(\D{\lG})) \ar[u]  & H^{1}(W_{F}, \D{D}) \ar[u]_{\simeq} \ar[l] & \pi_{0}(Z(\D{G})^{\Gal{}}). \ar[l]_{\delta} }
\end{align}
By Corollary~\ref{cor: relative Hasse principle}, we have $\Im \c^{*} = \Hom(\lG(\A_{F})/ \lG(F)G(\A_{F}), \C^{\times})$, and hence $r$ is surjective. On the other hand, the kernel of 
\begin{align*}
H^{1}(W_{F}, Z(\D{\lG})) \longrightarrow \Hom(\lG(\A_{F})/ \lG(F), \C^{\times}) 
\end{align*}
is 
\(
\Ker^{1}(W_{F},Z(\D{\lG})) = 1
\)
by Corollary~\ref{cor: ker1}. Therefore $r$ is also an inclusion.

\end{proof}

Let us denote the composition $r \circ \bar{\delta}$ by $\a$, then we can rewrite \eqref{eq: twisted endoscopic sequence} as 
\begin{align}
\label{eq: local twisted endoscopic sequence}
\xymatrix{1 \ar[r] &  \S{\underline{\lp}}^{\Sigma} \ar[r]^{\iota} & \S{\underline{\p}}^{\Sigma} \ar[r]^{\a \quad \quad \quad \quad}  & \Hom(\lG(F)/G(F), \C^{\times})}               
\end{align}
if $F$ is local, and
\begin{align}
\label{eq: global twisted endoscopic sequence}
\xymatrix{1 \ar[r] &  \S{\underline{\lp}}^{\Sigma} \ar[r]^{\iota} & \S{\underline{\p}}^{\Sigma} \ar[r]^{\a \quad \quad \quad \quad \quad \quad}  & \Hom(\lG(\A_{F})/ \lG(F)G(\A_{F}), \C^{\times})}                
\end{align}
if $F$ is global. Note in the global case, we only know it is exact when $\lG$ is of type \eqref{eq: similitude} according to the previous lemma. Sometimes, we want to distinguish the map $\a$ for different groups, so we will also write $\a^{G} = \a$.

Next we want to discuss the relation between lifting Langlands parameters (see Theorem~\ref{thm: lifting parameter}) and lifting twisted endoscopic groups (see Proposition~\ref{prop: lifting endoscopic group}). Suppose $F$ is local or global and $\p \in \P{G}$. For any semisimple element $s \in \cS{\underline{\p}}^{\theta}$, let $\D{G}'  := \Cent(s, \D{G})^{0}$ and it can be equipped with a Galois action given by $\underline{\p}$. This determines a quasisplit connected reductive group $G'$, and $\underline{\p}$ will factor through $\L{G'}$ for some $\theta$-twisted endoscopic datum $(G', s, \xi)$ of $G$, and hence we get a parameter $\p' \in \P{G'}$. In this way, we call $(G', \p')$ corresponds to $(\p, s)$, and we denote it by $(G', \p') \rightarrow (\p, s)$. 

By Proposition~\ref{prop: lifting endoscopic group}, $(G', s, \xi)$ can be lifted to a $(\theta, \x)$-twisted endoscopic datum $(\lG', \lif{s}, \lif{\xi})$ of $\lG$ for some character $\x$ of $\lG(F)/G(F)$ if $F$ is local or $\lG(\A_{F})/ \lG(F)G(\A_{F})$ if $F$ is global. Then by theorem~\ref{thm: lifting parameter} and the global assumption that we made after, we can have a lift $\lp'$ of $\p'$ in $\P{\lG'}$. All of these can be summarized in the diagram below
\begin{displaymath}
\xymatrix{L_{F}    \ar[r]^{\underline{\lp}'}    \ar[dr]_{\underline{\p}'}  & \L{\lG'} \ar[r]^{\lif{\xi}}     \ar[d]   & \L{\lG}   \ar[d] \\
&    \L{G'}    \ar[r]^{\xi}     &    \L{G}.}
\end{displaymath}
Then we have the following lemma.

\begin{lemma}
\label{lemma: twisted character}
$\a(s) = \x$.
\end{lemma}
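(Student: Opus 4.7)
The plan is to unpack both sides of the claimed equality in terms of the $1$-cocycle arising from the twisted endoscopic datum on the $\lG$-side, and then to match them via naturality of the Langlands correspondence for characters. Following the setup just preceding the lemma, I fix a lift $\underline{\lp}'$ of $\underline{\p}'$ to $\L{\lG'}$ (by Theorem~\ref{thm: lifting parameter} in the local case, or Labesse's global analogue) and set $\underline{\lp} := \lif{\xi} \circ \underline{\lp}'$; any two such lifts of $\underline{\p}$ differ by a twist valued in the central subgroup $\D{D}$, which commutes with $\lif{s}$, so the value $\delta(s)(u)$ computed below is independent of this choice. Writing $\underline{\lp}'(u) = h_{u} \rtimes w_{u} \in \D{\lG'} \rtimes W_{F}$ and applying the defining relation $\Int(\lif{s}) \circ \lif{\xi} = \lif{\bold{a}} \cdot \lif{\xi}$, where $\lif{\bold{a}}: W_{F} \to Z(\D{\lG})$ is a $1$-cocycle representing $\x$, together with the facts that $\lif{s}$ commutes with $\D{\lG'} \supseteq \{h_{u}\}$ (by its very definition as $\Cent(\lif{s}, \D{\lG})^{0}$ via $\lif{\xi}$) and that $\lif{\bold{a}}(w_{u})$ is central, a short direct calculation gives
\[
\delta(s)(u) \;=\; \lif{s}\,\underline{\lp}(u)\,\lif{s}^{-1}\,\underline{\lp}(u)^{-1} \;=\; \lif{\bold{a}}(w_{u}).
\]

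Next I would verify that $\lif{\bold{a}}$ actually takes values in $\D{D} \subseteq Z(\D{\lG})$, so that the identity above realizes $\bar{\delta}(s)$ as the class of $\lif{\bold{a}}$ inside $H^{1}(W_{F}, \D{D})$. This uses that the projection $\bold{p}(\lif{\bold{a}}) = \bold{a}$ is the $1$-cocycle attached to the $G$-side datum $(G', s, \xi)$: since $s \in \cS{\underline{\p}}^{\theta}$ lifts to an element of $S^{\theta}_{\underline{\p}} = \Cent(\Im\,\underline{\p},\,\D{G}^{\theta})$, that lift literally centralizes $\Im\,\underline{\p}$, so the same computation on the $G$-side forces $\bold{a}(w_{u}) = 1$ for every $u$; since the $W_{F}$-projection of $\underline{\p}$ surjects onto $W_{F}$ by the definition of a Langlands parameter, we conclude $\bold{a} \equiv 1$, and hence $\lif{\bold{a}}$ lands in $\Ker(Z(\D{\lG}) \to Z(\D{G})) = \D{D}$.

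To conclude, I would invoke the naturality of the Langlands correspondence for characters applied to the central inclusion $\D{D} \hookrightarrow Z(\D{\lG})$, which is Pontryagin-dual to $\c: \lG \to D$. Concretely, the square whose vertical maps are \eqref{eq: local characters} (resp.\ the corresponding global map) for $D$ and for $\lG$, whose top row is the induced map $H^{1}(W_{F}, \D{D}) \to H^{1}(W_{F}, Z(\D{\lG}))$, and whose bottom row is the pullback $\c^{*}$ along $\c$, commutes; in the global case this is part of diagram \eqref{diagram: global Langlands correspondence for characters}. Tracing $\lif{\bold{a}}$ around this square in the two possible ways produces $\a(s) = r(\bar{\delta}(s))$ on one route and $\x$ on the other, so they coincide. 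The main obstacle is simply the careful bookkeeping of these identifications in both the local and global setups; no new ingredient beyond the definitions of $\delta$, $r$, and $\a$ is required.
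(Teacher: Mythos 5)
Your argument is correct and is almost certainly the same proof that appears in \cite{Xu:2016}, to which the paper defers (the paper itself only cites that reference and notes the global case is identical). The key identity $\delta(s)(u) = \lif{\bold{a}}(w_u)$, the observation that $\bold{a}\equiv 1$ on the $G$-side because the chosen representative of $s$ in $S^{\theta}_{\underline{\p}}$ literally centralizes $\Im\underline{\p}$ (hence $\lif{\bold{a}}$ lands in $\D{D}=\Ker(Z(\D{\lG})\to Z(\D{G}))$), and the final appeal to functoriality of \eqref{eq: local characters}/\eqref{eq: global characters} through the diagram \eqref{diagram: global Langlands correspondence for characters} are exactly the right ingredients and are assembled in the right order.

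Two small remarks. First, the intermediate facts you list to justify $\lif{s}\underline{\lp}(u)\lif{s}^{-1}\underline{\lp}(u)^{-1}=\lif{\bold{a}}(w_u)$ --- that $\lif{s}$ commutes with $\lif{\xi}(\D{\lG'})\ni\lif{\xi}(h_u)$ and that $\lif{\bold{a}}(w_u)$ is central --- are not separately needed: the defining relation $\Int(\lif{s})\circ\lif{\xi}=\lif{\bold{a}}\cdot\lif{\xi}$ already holds on all of $\L{\lG'}$, so applying it to $\underline{\lp}'(u)$ directly yields $\lif{s}\underline{\lp}(u)\lif{s}^{-1}=\lif{\bold{a}}(w_u)\underline{\lp}(u)$ in one step. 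Second, when you write "that lift literally centralizes $\Im\underline{\p}$," it is worth making explicit that the representative $s\in\D{G}^{\theta}$ appearing in the endoscopic datum $(G',s,\xi)$ is the one chosen so that $\underline{\p}$ factors through $\xi(\L{G'})$ and $s$ centralizes $\Im\underline{\p}$; a different central translate of $s$ would give the same $\Cent(s,\D{G})^0$ but not the literal centralizing property, and it is the literal one that forces $\bold{a}\equiv 1$ (as a cocycle, not merely a coboundary). Since $\delta$ and $\a$ are insensitive to such central translates, this is a bookkeeping point rather than a gap.
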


\begin{proof}
It has been shown for the local case in \cite{Xu:2016}, and the proof for the global case is the same.
\end{proof}

\begin{remark}
\label{rk: twisted character}
From this lemma we see the character $\x$ associated with the twisted endoscopic datum $\lG'$ only depends on the image of $s$ in $\S{\p}^{\theta}$. In the global case, lifting Langlands parameter is not available due to the lack of the global Langlands group. However one can always lift twisted endoscopic groups in both local and global cases, so this lemma is behind the idea of our later definition of the map $\a$ (see \eqref{eq: global twisted endoscopic sequence}) in the global case.
\end{remark}

\subsection{Representations}
\label{subsec: representations}

Let us assume $F$ is a local field, and $G$, $\lG$, $D$, $\c$ are defined as in Section~\ref{subsubsec: notations}. In this section, we would like to recall some results about the restriction map $\Pkt{}(\lG(F)) \rightarrow \Pkt{}(G(F))$ from (\cite{Xu:2016}, Section 6.1).

\begin{lemma}
\label{lemma: finite restriction}
If $\lr$ is an irreducible admissible representation of $\lG(F)$, then the restriction of $\lr$ to G(F) is a direct sum of finitely many irreducible admissible representations.
\end{lemma}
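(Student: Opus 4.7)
The plan is to factor the restriction through the intermediate subgroup $H(F) := \lZ(F) \cdot G(F)$, which will have finite index in $\lG(F)$, and then apply Clifford's theory of restriction to a finite-index closed normal subgroup.

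First, by Schur's lemma in its admissible form, $\lr$ admits a central character $\lif{\zeta}$ on $\lZ(F)$. Consequently, $\lZ(F)$ acts by scalars on the underlying space, so every $G(F)$-stable subspace is automatically $H(F)$-stable and vice versa. It follows that the decompositions of $\lr|_{H(F)}$ and $\lr|_{G(F)}$ into irreducibles correspond bijectively, and it therefore suffices to prove the lemma for the restriction to $H(F)$.

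Next I would verify that $[\lG(F) : H(F)]$ is finite. Applying the similitude character, one has $\c(H(F)) \supseteq \c(\lZ(F)) = (F^{\times})^{2}$, while $\c(\lG(F)) \subseteq F^{\times}$ by Lemma~\ref{lemma: generalized similitude character}. Since $G(F) = \Ker(\c|_{\lG(F)})$, the index $[\lG(F) : H(F)]$ equals $[\c(\lG(F)) : \c(H(F))]$, which divides $[F^{\times} : (F^{\times})^{2}]$; the latter is finite over a local field of characteristic zero. As a closed subgroup of finite index, $H(F)$ is then automatically open in $\lG(F)$.

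Finally, with $H(F)$ a closed normal open subgroup of $\lG(F)$ of finite index, the classical restriction theorem in the smooth/admissible setting shows that $\lr|_{H(F)}$ decomposes as a finite direct sum of irreducible admissible representations of $H(F)$, each appearing with a common finite multiplicity and permuted under the conjugation action of $\lG(F)/H(F)$. Admissibility of the constituents on $H(F)$ is inherited from that of $\lr$ on $\lG(F)$, since every open compact subgroup of $H(F)$ is also open compact in $\lG(F)$; and the constituents remain admissible upon further restriction to $G(F)$ because $\lif{\zeta}$ is trivial on some open compact subgroup of $\lZ(F)$, allowing one to compare $K$-fixed vectors for $K \subset G(F)$ open compact with those of a slightly larger open compact in $H(F)$. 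No step is especially delicate here: the whole argument is driven by the finiteness of $F^{\times}/(F^{\times})^{2}$ in the local setting, together with the fact that $\lZ(F)$ acts by a single scalar, which lets one pass freely between restrictions to $H(F)$ and to $G(F)$.
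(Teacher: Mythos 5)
Your argument is correct and follows the standard route one expects here: reduce to the finite-index subgroup $H(F) = \lZ(F)G(F)$ via the central character, invoke Clifford theory for an open normal subgroup of finite index, and pass back to $G(F)$ using that $\lZ(F)$ acts by scalars. Two small points worth tightening: the closedness of $H(F)$ (which you assert but do not prove) is immediate because $H(F) = \lG(F) \cap \c^{-1}\big((F^{\times})^{2}\big)$ is the preimage of the open subgroup $(F^{\times})^{2} \subseteq F^{\times}$, so $H(F)$ is open hence closed directly; and the final paragraph on admissibility of the $G(F)$-constituents can be streamlined, since each constituent is an irreducible smooth representation of the reductive $p$-adic group $G(F)$ and is therefore automatically admissible by the Jacquet--Bernstein theorem, without any comparison of $K$-fixed vectors.
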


\begin{theorem}[J.D. Adler and D. Prasad, \cite{AdlerPrasad:2006}]
\label{thm: restriction multiplicity one}
Suppose $\lG$ is a quasisplit general symplectic group or connected general even orthogonal group, and $\lr$ is an irreducible admissible representation of $\lG(F)$, then the restriction of $\lr$ to $G(F)$ is multiplicity free.
\end{theorem}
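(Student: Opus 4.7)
The plan is to apply Clifford theory to the normal inclusion $G \triangleleft \lG$. Since $\lG/G \cong D$ is a (commutative) torus, the action of $\lG(F)$ on $\Pkt{}(G(F))$ by conjugation preserves isomorphism classes along $D(F)$-cosets, and the restriction $\lr|_{G(F)}$ is a direct sum of constituents forming a single $\lG(F)$-orbit, all appearing with a common finite multiplicity $m$ (the finiteness being Lemma~\ref{lemma: finite restriction}). The task is to show $m=1$.

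Fix an irreducible constituent $\r \subseteq \lr|_{G(F)}$ and let $I_{\r} = \{g \in \lG(F) : g \r g^{-1} \cong \r\}$ denote its inertia subgroup. Standard Clifford theory identifies $\lr$ with the induction of a projective representation of $I_{\r}$ extending $\r$, and the multiplicity $m$ is realized as the dimension of an irreducible projective representation of $I_{\r}/G(F)$ with cocycle $\omega \in H^{2}(I_{\r}/G(F), \C^{\times})$. Because $I_{\r}/G(F) \subseteq \lG(F)/G(F)$ is abelian, such a projective representation is one-dimensional if and only if $\omega$ is a coboundary. Thus the theorem reduces to showing that $\r$ admits an honest (not merely projective) extension to $I_{\r}$.

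To construct this extension, one chooses for each $g \in I_{\r}$ an intertwiner $T_g : \r \xrightarrow{\sim} \r \circ \Int(g^{-1})$, well-defined up to $\C^{\times}$ by Schur's lemma, so that $g \mapsto T_g$ is a projective homomorphism with cocycle $\omega$. The crucial input is that in the similitude setting one can rigidify these intertwiners using the structure of $\lG$: every element of $I_{\r}$ can be written as a product of an element of $G(F)$, a central element of $\lZ(F)$, and an element acting by a scaling of the defining form on $G$ via the similitude character $\c$. For the first two factors $T_g$ is determined by $\r$ and by the central character $\lif{\zeta}$; for the similitude scaling, one normalizes $T_g$ using a Whittaker functional (or equivalently, by specifying its action on a $\r$-fixed vector relative to a chosen Iwahori or hyperspecial subgroup), which makes $g \mapsto T_g$ genuinely multiplicative. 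This trivializes $\omega$ and forces $m = 1$.

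The hard part is the normalization step for $\lG = GSO(2n,\eta)$, because the center of $\D{\lG}$ has an extra discrete factor $\Two$ (as recalled after \eqref{eq: Galois action}) and the similitude character is not surjective onto $F^{\times}$ (Lemma~\ref{lemma: similitude character}); consequently $I_{\r}/G(F)$ can fail to be cyclic, and one must verify that the chosen family of intertwiners is compatible across all generators simultaneously. Resolving this requires a case-by-case study of $I_{\r}$ modulo $\Z(F)$: either $\r$ is generic, in which case a Whittaker normalization gives a canonical trivialization, or $\r$ is non-generic, in which case one transfers the normalization from a generic member of the $L$-packet of $\r$ by means of the stability of characters (Theorem~\ref{thm: L-packet for G}(2)) and the outer automorphism $\theta_{0}$ linking $\r$ to $\r^{\theta_{0}}$. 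Both cases yield the desired honest extension and hence $m=1$.
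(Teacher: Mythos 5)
The paper does not prove this statement; it imports it from Adler--Prasad, so there is no internal argument to compare yours against. I'll therefore assess your proposal on its own merits, and against what Adler--Prasad actually do.

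Your Clifford-theoretic reduction is correct: since $\lG(F)/G(F)$ is abelian (it embeds in $F^\times$ via $\c$), the common multiplicity $m$ equals the dimension of an irreducible $\omega$-projective representation of $I_\r/G(F)$, and $m=1$ for all such is equivalent to $[\omega]=0$, which is equivalent to $\r$ extending honestly to $I_\r$. The Whittaker normalization for \emph{generic} $\r$ is a genuine and standard device: since $\r^g\cong\r$ forces the Whittaker datum of $\r$ to be preserved up to $G(F)$-conjugacy, one can choose representatives $h$ of $I_\r/G(F)$ inside $\mathrm{Stab}_{\lG(F)}(N,\psi)$, normalize $T_h$ by $\Lambda\circ T_h=\Lambda$, and the one-dimensionality of the Whittaker space makes $h\mapsto T_h$ multiplicative, trivializing $\omega$. (The parenthetical claim that this is ``equivalent'' to normalizing on an Iwahori- or hyperspecial-fixed vector is, however, simply wrong: most irreducible admissible representations have no such fixed vectors.)

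The real gap is the non-generic case. Your proposed fix --- transferring the normalization from a generic packet member ``by means of the stability of characters and the outer automorphism $\theta_0$'' --- does not work. For $G=Sp(2n)$, $\theta_0$ is \emph{trivial}, so it provides no leverage at all, and $Sp(2n)$ certainly has non-generic discrete series. More fundamentally, the cocycle $\omega$ attached to a particular $\r$ is a representation-theoretic invariant of $\r$ and its inertia group $I_\r$; stability is a statement about a $G(\bar F)$-invariance of the \emph{sum} of Harish-Chandra characters over the packet. Nothing in that statement relates the 2-cocycle for a non-generic $\r$ to the 2-cocycle for the generic member $\r'$ --- their inertia groups need not even coincide, and the scalars in $T_g\r(x)T_g^{-1}=\r(gxg^{-1})$ are not visible in characters. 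Finally, invoking Theorem~\ref{thm: L-packet for G} here would make the argument circular in spirit, since the whole point of Theorem~\ref{thm: restriction multiplicity one} in this paper is to be a \emph{prerequisite} for the similitude-group analogue of Arthur's theory, not a consequence of it. The actual Adler--Prasad argument avoids all of this by exploiting the M\oe glin--Vign\'eras--Waldspurger involution: for $G=Sp(2n)$ or $O(n)$ every irreducible $\r$ satisfies $\r^\vee\cong\r^\tau$ for a fixed element $\tau\in\lG(F)$ of similitude $-1$, and this extra duality trivializes the cocycle uniformly, with no genericity hypothesis and no input from endoscopy.
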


\begin{remark}
\label{rk: restriction multiplicity one}
This theorem can be easily extended to the groups $\lG$ of type \eqref{eq: similitude}. To do so, we can first extend a representation of $\lG(F)$ to $\lif{\lG}(F)$ (see \eqref{eq: product group}), and then restrict it to $G(F)$.
\end{remark}

\begin{lemma}
\label{lemma: existence}
If $\r$ is an irreducible admissible representation of G(F), then there exists a unique irreducible admissible representation $\lr$ of $\lG(F)$ up to twisting by $\Hom(\lG(F)/G(F), \C^{\times})$, such that it contains $\r$ in its restriction to $G(F)$. 
\end{lemma}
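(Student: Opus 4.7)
The plan is to prove existence and uniqueness separately, using the intermediate closed subgroup $\lZ(F) G(F) \subseteq \lG(F)$, which has finite index in $\lG(F)$ by Lemma~\ref{lemma: generalized similitude character} (the index injects into the finite group $F^{\times}/F^{\times 2}$ via the similitude character).

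For existence, let $\omega_{\r}$ denote the central character of $\r$, viewed as a character of $\Z(F) = \lZ(F) \cap G(F)$. Since $\lZ(F)$ is abelian, $\omega_{\r}$ extends to some character $\lif{\zeta}$ of $\lZ(F)$, and then $\r$ together with $\lif{\zeta}$ determine an irreducible admissible representation $\r \otimes \lif{\zeta}$ of $\lZ(F) G(F)$ (compatible on the intersection $\Z(F)$). I would then form the induced representation $\Ind_{\lZ(F) G(F)}^{\lG(F)}(\r \otimes \lif{\zeta})$, which is admissible of finite length since the index is finite. By Mackey's restriction formula, its $G(F)$-restriction is $\bigoplus_{g} \r^{g}$ over coset representatives of $\lG(F)/\lZ(F) G(F)$, hence contains $\r$ itself (taking $g = 1$); any irreducible subrepresentation $\lr$ that meets this $\r$-component provides the required extension.

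For uniqueness, suppose $\lr$ and $\lr'$ are two irreducible admissible representations of $\lG(F)$ both containing $\r$ upon restriction to $G(F)$. By standard Clifford theory applied to the normal inclusion $G(F) \triangleleft \lG(F)$, the irreducible $G(F)$-constituents of $\lr|_{G(F)}$ form a single $\lG(F)$-conjugacy orbit, and similarly for $\lr'|_{G(F)}$; since these two orbits share $\r$, they coincide, and Theorem~\ref{thm: restriction multiplicity one} then yields $\lr|_{G(F)} \cong \lr'|_{G(F)}$ as $G(F)$-representations. Picking a nonzero $G(F)$-intertwiner $\phi \colon \lr \to \lr'$ supported on the $\r$-isotypic line, I would verify that $\phi_{g} := \lr'(g)^{-1} \circ \phi \circ \lr(g)$ is again a $G(F)$-intertwiner (using normality of $G(F)$) landing in the same one-dimensional piece, so $\phi_{g} = \chi(g) \phi$ for a scalar $\chi(g) \in \C^{\times}$. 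The assignment $g \mapsto \chi(g)$ defines a character of $\lG(F)/G(F)$, and rearranging gives $\lr' \cong \lr \otimes \chi^{-1}$.

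The main technical point is the scalar conclusion in the uniqueness step: it relies essentially on Theorem~\ref{thm: restriction multiplicity one}, which guarantees that the $\r$-isotypic component of $\Hom_{G(F)}(\lr, \lr')$ is one-dimensional. Without multiplicity-one, the map $g \mapsto \phi_{g}$ would only be known to give a representation of the abelian group $\lG(F)/G(F)$ on a higher-dimensional intertwining space, and one would still recover the character by decomposing this representation into eigenlines, but the clean statement above uses multiplicity-one directly.
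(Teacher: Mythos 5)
The paper does not reprove Lemma~\ref{lemma: existence}; it is recalled from \cite{Xu:2016}, so there is no in-paper proof to compare against, and I will judge your argument on its own terms. The existence half is correct: you extend $\omega_\r$ to a character $\lif{\zeta}$ of $\lZ(F)$ (possible since $\lZ(F)$ is abelian), form $\r\otimes\lif{\zeta}$ on $\lZ(F)G(F)$, which has finite index in $\lG(F)$ because $\c$ maps $\lG(F)/\lZ(F)G(F)$ injectively into $F^{\times}/F^{\times 2}$, induce up, and read off the Mackey restriction $\bigoplus_{g}\r^{g}$. An irreducible sub or quotient of the induced module then contains $\r$ by Clifford theory, since the $G(F)$-constituents of any irreducible $\lG(F)$-representation form a single $\lG(F)$-orbit and each $\r^g$ lies in the orbit of $\r$.

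The uniqueness step as written contains a genuine gap. You take $\phi$ supported on the $\r$-isotypic piece and assert that $\phi_{g}=\lr'(g)^{-1}\circ\phi\circ\lr(g)$ lands in the same one-dimensional piece. That is false whenever $g$ does not stabilize the isomorphism class of $\r$: in general $\phi_{g}$ is supported on a conjugate isotypic component of $\lr|_{G(F)}$, a different summand of $\Hom_{G(F)}(\lr,\lr')$. Theorem~\ref{thm: restriction multiplicity one} makes each such summand one-dimensional, but it does not make the $\lG(F)$-action on $\Hom_{G(F)}(\lr,\lr')$ preserve any particular summand, which is exactly what your scalar equation $\phi_{g}=\chi(g)\phi$ requires. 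The argument you relegate to the last sentence is in fact the one you must run: $g\cdot\phi:=\lr'(g)\circ\phi\circ\lr(g)^{-1}$ defines an action of the abelian group $\lG(F)/G(F)$ (which injects into $D(F)=F^{\times}$ via $\c$) on the nonzero finite-dimensional space $\Hom_{G(F)}(\lr,\lr')$; a simultaneous eigenvector $\phi_{0}$ with eigencharacter $\chi$ then gives a nonzero $\lG(F)$-equivariant map $\lr\to\lr'\otimes\chi^{-1}$, hence an isomorphism by irreducibility. Note that this corrected version does not actually use multiplicity one at all; abelianity of $\lG(F)/G(F)$ plus $\Hom_{G(F)}(\lr,\lr')\neq 0$ (which follows, as you observe, from $\r$ being a common constituent) already suffice, so invoking Theorem~\ref{thm: restriction multiplicity one} here is both insufficient for your shortcut and unnecessary for the real argument.
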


If $\r$ is an irreducible admissible representation of $G(F)$, let us denote
\[
\lG(\r) = \{ g \in \lG(F) : \r^g \cong \r \}. 
\] 
If $\lr$ is an irreducible admissible representation of $\lG(F)$, let us denote 
\[
X(\lr) = \{ \x \in (\lG(F)/\lZ(F)G(F))^{*} : \lr \otimes \x \cong \lr \}.
\]

\begin{proposition}
\label{prop: restriction multiplicity one}
Suppose $\lG$ is of type \eqref{eq: similitude}, $\lr$ is an irreducible admissible representation of $\lG(F)$ and $\r$ is contained in its restriction to $G(F)$, then for $\x \in (\lG(F)/\lZ(F)G(F))^{*}$, $\x$ is in $X(\lr)$ if and only if $\x$ is trivial on $\lG(\r)$. Moreover, the restriction of $\lr$ contains $|X(\lr)|$ irreducible admissible representations of G(F).
\end{proposition}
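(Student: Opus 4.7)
The plan is to approach this by Clifford theory applied to the inclusion $G(F) \subseteq \lG(F)$, leveraging the crucial multiplicity-one property from Theorem~\ref{thm: restriction multiplicity one} (extended to type \eqref{eq: similitude} by Remark~\ref{rk: restriction multiplicity one}). First I would observe that $\lG(F)/G(F)$ is abelian: the long exact sequence associated with \eqref{eq: extension} embeds it into $D(F) = F^\times$. Since $\lG(\r) \supseteq G(F)$, this immediately makes $\lG(\r)$ a normal subgroup of $\lG(F)$. Combined with the finiteness of the $\lG(F)$-orbit of $\r$ (a consequence of Lemma~\ref{lemma: finite restriction}), this also gives $[\lG(F):\lG(\r)] < \infty$, and therefore $|(\lG(F)/\lG(\r))^{*}| = [\lG(F):\lG(\r)]$ by Pontryagin duality for finite abelian groups.

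Next I would invoke Clifford theory in its multiplicity-free form to write
\[
\lr \cong \Ind_{\lG(\r)}^{\lG(F)}\tau,
\]
where $\tau$ is an irreducible representation of $\lG(\r)$ whose restriction to $G(F)$ is $\r$. Applying Mackey's restriction formula (normality of $\lG(\r)$ makes this especially clean),
\[
\lr|_{\lG(\r)} \cong \bigoplus_{x \in \lG(F)/\lG(\r)} \tau^{x},
\]
with pairwise distinct summands, since their $G(F)$-restrictions are the distinct conjugates $\r^{x}$. Restricting further to $G(F)$, the number of irreducible components of $\lr|_{G(F)}$ is exactly $[\lG(F):\lG(\r)]$, which by the previous paragraph equals $|(\lG(F)/\lG(\r))^{*}|$. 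So the counting assertion will follow once I identify $X(\lr)$ with the characters trivial on $\lG(\r)$.

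The main task is then the characterization of $X(\lr)$. For the easy direction, if $\x|_{\lG(\r)} = 1$, the projection formula gives
\[
\lr \otimes \x \cong \Ind_{\lG(\r)}^{\lG(F)}(\tau \otimes \x|_{\lG(\r)}) \cong \Ind_{\lG(\r)}^{\lG(F)}\tau \cong \lr,
\]
so $\x \in X(\lr)$. For the converse, suppose $\lr \otimes \x \cong \lr$. Comparing $\lG(\r)$-decompositions, the set $\{\tau^{x}\}$ must coincide with $\{\tau^{x}\otimes \x|_{\lG(\r)}\}$. Because $\x$ is trivial on $G(F)$, each $\tau^{x}\otimes \x|_{\lG(\r)}$ has the same $G(F)$-restriction $\r^{x}$ as $\tau^{x}$, and the pairwise distinctness forces $\tau^{x} \otimes \x|_{\lG(\r)} \cong \tau^{x}$ for every $x$; in particular $\tau \otimes \x|_{\lG(\r)} \cong \tau$.

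The subtle step—and the only place where any care is really needed—is extracting triviality of $\x|_{\lG(\r)}$ from the isomorphism $\tau \otimes \x|_{\lG(\r)} \cong \tau$. Any intertwiner $f \colon V_{\tau} \to V_{\tau}$ must satisfy $f\tau(h) = \x(h)\tau(h)f$ for all $h \in \lG(\r)$. Restricting to $h \in G(F)$, the identity $f\r(h) = \r(h)f$ together with the irreducibility of $\r$ forces $f$ to be a scalar by Schur's lemma. Substituting back, $\x(h) = 1$ for all $h \in \lG(\r)$, as desired. I expect this Schur-lemma step to be the only conceptually delicate point: it is the irreducibility of $\r$ (not of $\tau$) that pins down $\x|_{\lG(\r)}$ precisely, with no residual ambiguity by a character of $\lG(\r)/G(F)$. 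Putting everything together completes both assertions of the proposition.
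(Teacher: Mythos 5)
Your proof is correct. The proposition is stated in the paper without proof (it is recalled from \cite{Xu:2016}, Section 6.1), so a direct comparison isn't possible, but the Clifford-theoretic route you follow is the standard—and essentially unique—approach for this kind of restriction problem, and all the hypotheses needed to make it work are in place: $\lG(F)/G(F)$ abelian forces $\lG(\r)$ to be normal, Lemma~\ref{lemma: finite restriction} gives finite index, Theorem~\ref{thm: restriction multiplicity one} (with Remark~\ref{rk: restriction multiplicity one}) supplies multiplicity one so that the isotypic piece $\tau$ over $\lG(\r)$ restricts exactly to $\r$, and the identification $\lr \cong \Ind_{\lG(\r)}^{\lG(F)}\tau$ plus Mackey gives the count. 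Your observation that $\tau^{x}\otimes\x|_{\lG(\r)}$ and $\tau^{x}$ have the same $G(F)$-restriction $\r^{x}$, so distinctness pins the permutation to be trivial, is exactly the right way to avoid the standard pitfall there, and the Schur-lemma step using irreducibility of $\r$ (rather than $\tau$) to get a scalar intertwiner is the correct way to extract $\x|_{\lG(\r)}=1$. No gaps.
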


\begin{lemma}
\label{lemma: restrict tempered character}
Suppose $\lr$ is an irreducible admissible unitary representation of $\lG(F)$, then $\lr$ is an essentially discrete series representation of $\lG(F)$ if and only if its restriction to $G(F)$ is an essentially discrete series representation. The same is true of the tempered representations.
\end{lemma}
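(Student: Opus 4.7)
The plan is to compare the integrability of matrix coefficients on $\lG(F)$ with that on $G(F)$, reducing the lemma to a measure-theoretic consequence of the finite-index inclusion $\lZ(F) G(F) \subseteq \lG(F)$. Recall that for a unitary irreducible admissible representation $\lr$ of $\lG(F)$, after twisting $\lr$ by a quasicharacter so that its central character is unitary, $\lr$ is essentially discrete series iff every matrix coefficient $f_{v,w}(g) = \langle \lr(g) v, w \rangle$ belongs to $L^{2}(A_{\lG}(F) \backslash \lG(F))$, and tempered iff it belongs to $L^{2+\e}(A_{\lG}(F) \backslash \lG(F))$ for every $\e > 0$. Since $\lZ/A_{\lG}$ is anisotropic, $\lZ(F)/A_{\lG}(F)$ is compact and one may equivalently use $\lZ(F) \backslash \lG(F)$. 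The analogous criteria hold for each constituent $\r_{j}$ on $G(F)$ with $A_{G}(F)$ or $\Z(F)$ in place of $A_{\lG}(F)$ or $\lZ(F)$.

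The first structural step is to verify that $\lZ(F) G(F)$ has finite index in $\lG(F)$: the quotient $\lG/\lZ G$ is a quotient of $D/\c(\lZ)$, which by Lemma~\ref{lemma: similitude character} is a quotient of the finite group $F^{\times}/F^{\times 2}$ (or a product of such over the relevant quadratic extensions). Fix coset representatives $g_{1}, \ldots, g_{r} \in \lG(F)$. By Lemma~\ref{lemma: finite restriction}, $\lr|_{G(F)} = \bigoplus_{j=1}^{s} \r_{j}$ is a finite direct sum, so for any vectors $v, w$ the restriction of $f_{v,w}$ to $G(F)$ is a finite sum of matrix coefficients of the $\r_{j}$'s. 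The identity $f_{v,w}(y g_{i}) = f_{\lr(g_{i}) v,\, w}(y)$ for $y \in G(F)$ then shows that the restriction of $f_{v,w}$ to each coset $G(F) g_{i}$ is likewise a finite sum of matrix coefficients of the $\r_{j}$'s.

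Next I would write $\lZ(F) \backslash \lG(F) = \bigsqcup_{i=1}^{r} \lZ(F) G(F) g_{i} / \lZ(F)$ and identify each piece with $G(F)/Z'$ for the finite central subgroup $Z' := \lZ(F) \cap G(F) \subseteq \Z(F)$. Because a matrix coefficient with unitary central character is constant in absolute value on $\Z(F)$-cosets and $\Z(F)/Z'$ is finite, the $L^{p}$-integrability of $f_{v,w}$ on $\lZ(F) \backslash \lG(F)$ is equivalent, up to multiplicative constants depending only on $r$ and $|\Z(F)/Z'|$, to the $L^{p}$-integrability of the matrix coefficients of each $\r_{j}$ on $\Z(F) \backslash G(F)$, for every $p \geqslant 2$. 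Taking $p = 2$ yields the essentially discrete series equivalence, and taking all $p > 2$ yields the tempered equivalence.

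The main obstacle is the central bookkeeping: one must verify that $A_{\lG}$ being strictly larger than $A_{G}$ (for instance $A_{\lG} = \mathbb{G}_{m}$ versus $A_{G} = 1$ for $GSp(2n)$ and $Sp(2n)$) is absorbed harmlessly by the compact quotient $\lZ(F)/A_{\lG}(F)$, and that right translation by the $g_{i}$ merely permutes the set of isomorphism classes of the $\r_{j}$'s rather than introducing extraneous constituents, the latter being a direct consequence of Lemma~\ref{lemma: finite restriction} applied to $\lr$. With these verified, the argument is just $L^{p}$-integration over a finite union of cosets, and no input beyond Lemma~\ref{lemma: finite restriction} and the explicit structure of the similitude groups from Section~\ref{subsec: groups} is needed.
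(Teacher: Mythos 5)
The paper does not contain its own proof of this lemma; it is recalled from Section 6.1 of \cite{Xu:2016}, so there is no in-paper argument to compare against. Your proof is correct and is the natural one. The structural facts you need are all in place: $\lZ(F)\cap G(F) = \Z(F)$ (not merely contained in it, since $\Z = \lZ\cap G$), so each left $\lZ(F)$-coset piece of $\lZ(F)G(F)g_{i}$ is a copy of $\Z(F)\backslash G(F)$; the quotient $\lG(F)/\lZ(F)G(F)$ injects via $\c$ into $F^{\times}/(F^{\times})^{2}$, hence is finite for local $F$; and $A_{\lG}=\lZ\cong\mathbb{G}_{m}$ for all the similitude groups in question, so the compactness of $\lZ(F)/A_{\lG}(F)$ that you worry about is actually vacuous. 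Two cleanliness points worth recording: first, one should observe that the auxiliary twist making the central character of $\lr$ unitary can always be chosen to be trivial on $G(F)$ (because $\lif{\zeta}|_{\Z(F)}$ is automatically unitary, $\Z(F)$ being finite, and any unitarizing correction on $\lZ(F)/\Z(F)$ extends across the abelian group $\lG(F)/G(F)$), so that the restriction $\lr|_{G}$ is genuinely unchanged by the normalization; second, the $L^{2+\e}$ characterization of temperedness is a theorem (Cowling--Haagerup--Howe) rather than a definition, so it should be cited. With those adjustments the argument is complete; the directions are handled exactly as you indicate, using that matrix coefficients of each $\r_{j}$ are restrictions to $G(F)$ of matrix coefficients of $\lr$, and conversely that $f_{v,w}(yg_{i})=f_{\lr(g_{i})v,w}(y)$ expresses the restriction to each coset as a finite sum of matrix coefficients of the $\r_{j}$'s, which suffices for the $L^{p}$ bookkeeping over the finite union $\lZ(F)\backslash\lG(F)=\bigsqcup_{i}\Z(F)\backslash G(F)$.
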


\subsection{Langlands-Shelstad-Kottwitz transfer}
\label{subsec: the transfer map}

Let $F$ be a local field of characteristic zero and $G$ be a quasisplit connected reductive group over $F$. Suppose $\theta$ is an automorphism of $G$ preserving an $F$-splitting and $\x_{G}$ is a quasicharacter of $G(F)$. We choose a quasicharacter $\chi$ on a closed subgroup $Z_{F}$ of $\Z(F)$, and define $\H(G, \chi)$ to be the space $\chi^{-1}$-equivariant smooth compactly supported functions over $G(F)$ (i.e., equivariant Hecke algebra of $G$). Let $\delta$ be a strongly $\theta$-regular $\theta$-semisimple element of $G(F)$ such that $\x_{G}$ is trivial on the $\theta$-twisted centralizer group $G^{\theta}_{\delta}(F)$ of $\delta$. We choose Haar measures on $G(F)$ and $G^{\theta}_{\delta}(F)$, and they induce a $G(F)$-invariant measure on $G^{\theta}_{\delta}(F) \backslash G(F)$. Then we can form the $(\theta, \x_{G})$-twisted orbital integral of $f \in \H(G, \chi)$ over $\delta$ as
\[
O^{\theta, \x_{G}}_{G}(f, \delta) := \int_{G_{\delta}(F) \backslash G(F)} \x_{G}(g)f(g^{-1}\delta \theta(g)) dg.
\]
We also form the $(\theta, \x_{G})$-twisted stable orbital integral over $\delta$ as
\[
SO^{\theta, \x_{G}}_{G}(f, \delta) := \sum_{\{\delta'\}_{G(F)}^{\theta} \thicksim_{st} \{\delta\}_{G(F)}^{\theta}} O_{G}^{\theta, \x_{G}}(f, \delta'), 
\]
where the sum is over $\theta$-twisted conjugacy classes $\{\delta'\}^{\theta}_{G(F)}$ in the $\theta$-twisted stable conjugacy class of $\delta$ (i.e., $\delta' = g^{-1} \delta \theta(g)$ for some  $g \in G(\bar{F})$), and the Haar measure on $G^{\theta}_{\delta'}(F)$ is translated from that on $G^{\theta}_{\delta}(F)$ by conjugation. Let $\mathcal{I}(G^{\theta, \x_{G}}, \chi)$ ($\mathcal{SI}(G^{\theta, \x_{G}}, \chi)$) be the space of $(\theta, \x_{G})$-twisted (stable) orbital integrals of $\H(G, \chi)$ over the set $G^{\theta}_{reg}(F)$ of strongly $\theta$-regular $\theta$-semisimple elements of $G(F)$, then by definition we have projections 
\[
\xymatrix{\H(G, \chi) \ar@{->>}[r] & \mathcal{I}(G^{\theta, \x_{G}}, \chi) \ar@{->>}[r] & \mathcal{SI}(G^{\theta, \x_{G}}, \chi).}
\]

Suppose $\r$ is an irreducible admissible representation of $G(F)$ and $\chi_{\r}$ is the central character of $\r$. Let $\chi = \chi_{\r}|_{Z_{F}}$. Suppose $\r^{\theta} \cong \r \otimes \x_{G}$, let $A_{\r}(\theta, \x_{G})$ be the intertwining operator between $\r \otimes \x_{G}$ and $\r^{\theta}$ (this is uniquely determined up to a scalar), we then define the $(\theta, \x_{G})$-twisted character of $\r$ to be the distribution 
\begin{align}
\label{eq: twisted character}
f_{G^{\theta}}(\r, \x_{G}) :=  trace (\int_{Z_{F} \backslash G(F)} f(g)\r(g) dg \circ A_{\r}(\theta, \x_{G})),
\end{align}
for $f \in \H(G, \chi)$. By results of Harish-Chandra \cite{H-C:1963} \cite{H-C:1999} in the non-twisted case, Bouaziz \cite{Bouaziz:1987}, Clozel \cite{Clozel:1987} and Lemaire \cite{Lemaire:2016} in the twisted case, there exists a locally integrable function $\Theta^{G^{\theta}, \x_{G}}_{\r}$ on $G(F)$ such that for $x \in G^{\theta}_{reg}(F), g \in G(F)$
\[
\Theta^{G^{\theta}, \x_{G}}_{\r} (g^{-1} x \theta(g)) = \x_{G}(g) \Theta^{G^{\theta}, \x_{G}}_{\r}(x),
\]
and
\[
f_{G^{\theta}}(\r, \x_{G}) = \int_{Z_{F} \backslash G(F)} f(g)\Theta^{G^{\theta}, \x_{G}}_{\r}(g) dg.
\]
By the twisted Weyl integration formula, one can show this character defines a linear functional on $\mathcal{I}(G^{\theta, \x_{G}}, \chi)$. A linear functional on $\mathcal{I}(G^{\theta, \x_{G}}, \chi)$ is called {\bf stable} if it factors through $\mathcal{SI}(G^{\theta, \x_{G}}, \chi)$. 

For a $(\theta, \x_{G})$-twisted endoscopic datum $(H, s, \xi)$ of $G$, there is a map defined over $F$ from the semisimple conjugacy classes of $H(\bar{F})$ to the $\theta$-twisted conjugacy classes of $\theta$-semisimple elements in $G(\bar{F})$. We call a strongly regular semisimple element $\gamma \in H(\bar{F})$ is strongly $G$-regular if its associated $H(\bar{F})$-conjugacy class maps to a $\theta$-twisted $G(\bar{F})$-conjugacy class of strongly $\theta$-regular $\theta$-semisimple elements in $G(\bar{F})$. We denote the set of strongly $G$-regular semisimple elements of $H(F)$ by $H_{G-reg}(F)$. The transfer factor defined in \cite{KottwitzShelstad:1999} is a function
\[
\Delta_{G, H}(\cdot, \cdot): H_{G-reg}(F) \times G^{\theta}_{reg}(F) \rightarrow \C,
\]
which is nonzero only when $\gamma \in H_{G-reg}(F)$ is a norm of $\delta \in G^{\theta}_{reg}(F)$, i.e., the $H(\bar{F})$-conjugacy class of $\gamma$ maps to the $\theta$-twisted $G(\bar{F})$-conjugacy class of $\delta$. Note if $\delta \in G^{\theta}_{reg}(F)$ has a norm $\gamma \in H_{G-reg}(F)$, then $\x_{G}$ is trivial on $G^{\theta}_{\delta}(F)$ (see Lemma 4.4.C, \cite{KottwitzShelstad:1999}). In this paper, we always normalize the transfer factor with respect to some fixed $\theta$-stable Whittaker datum $(B, \Lambda)$, and we also assume the Haar measure is preserved for any admissible embedding $T_{H} \xrightarrow{\simeq} T_{\theta}$, where $T_{H}$ is a maximal torus of $H$, $T$ is a $\theta$-stable maximal torus of $G$ and $T_{\theta} = T/(\theta - 1)T$.

There is a canonical inclusion $(Z_{G})_{\theta} \hookrightarrow Z_{H}$. Let us denote the image of $Z_{F}$ in $Z_{H}(F)$ by $Z'_{F}$, then one can associate a quasicharacter $\chi'$ of $Z'_{F}$, depending only on $\chi$ and the twisted endoscopic embedding $\xi$. The Langlands-Kottwitz-Shelstad transfer map (or twisted endoscopic transfer) is a correspondence from $f \in \H(G, \chi)$ to $f^{H} \in \H(H, \chi')$ such that
\begin{align}
\label{eq: geometric transfer}
SO_{H}(f^{H}, \gamma) = \sum_{\{\delta'\}_{G(F)}^{\theta} \thicksim_{st} \{\delta\}_{G(F)}^{\theta}} \Delta_{G, H}(\gamma, \delta') O_{G}^{\theta, \x_{G}}(f, \delta')
\end{align}
where the sum is over $\theta$-twisted conjugacy classes $\{\delta'\}_{G(F)}^{\theta}$ in the $\theta$-twisted stable conjugacy class of $\delta$. In particular, it descends to a surjection 
\[
\mathcal{I}(G^{\theta, \x_{G}}, \chi) \longrightarrow \mathcal{SI}(H, \chi')^{\Out_{G}(H)},
\]
where the action of $\Out_{G}(H)$ on $\mathcal{SI}(H, \chi')$ is independent of the choice of $F$-splitting for $H$ (see Section~\ref{subsubsec: twisted endoscopy}). The existence of such a transfer has been a long standing problem. In the real case, it is now a theorem of Shelstad \cite{Shelstad:2012}. In the nonarchimedean case, the main obstacle is the Fundamental Lemma, which has been finally resolved by Ngo \cite{Ngo:2010}. And the proof of the transfer conjecture in this case was completed by Waldspurger \cite{Waldspurger:2008}.

Now let us assume $G$, $\lG$, $D$ and $\c$ are defined as in Section~\ref{subsubsec: notations}. Let $\theta$ be an automorphism of $\lG$ preserving an $F$-splitting and $\c$ is $\theta$-invariant. Let $\lif{Z}_{F}$ be a closed subgroup of $Z_{\lG}(F)$ such that $\lif{Z}_{F} \rightarrow (Z_{\lG})_{\theta}(F)$ is injective and $D(F) / \c(\lif{Z}_{F})$ is finite (this is possible because we assume $\c$ is $\theta$-invariant). Let $Z_{F} = \lif{Z}_{F} \cap G(F)$. We choose Haar measures on $\lif{Z}_{F}$ and $Z_{F}$ such that the measure on $Z_{F} \backslash G(F)$ is the restriction of that on $\lif{Z}_{F} \backslash \lG(F)$. Let $\lif{\chi}$ be a quasicharacter of $\lif{Z}_{F}$ and we denote its restriction to $Z_{F}$ by $\chi$. For every $f \in \H(G, \chi)$,
it can be extended to ${\lG}(F)$ through $\lif{Z}_{F}$ by $\lif{\chi}$, and the extension lies in $\H(\lG, \lif{\chi})$, supported on $\lif{Z}_{F}G(F)$. Hence we get an inclusion map
\begin{align}
\label{map: inclusion of Hecke algebra}
\xymatrix{\H(G, \chi) \, \ar@{^{(}->}[r]   &  \H(\lG, \lif{\chi})   \\
f \ar@{|->}[r]   & \lif{f} },  
\end{align}
and we can identify $\H(G, \chi)$ with its image. Let $\x_{\lG}$ be a quasicharacter of $\lG(F)$ and $\x_{G} = \x_{\lG}|_{G}$. For any strongly $\theta$-regular $\theta$-semisimple element $\delta$ of $G(F)$ such that $\x_{G}$ is trivial on the $G^{\theta}_{\delta}(F)$, we fix the Haar measure on $\lG^{\theta}_{\delta}(F) \backslash \lG(F)$, which determines the Haar measure on $G^{\theta}_{\delta}(F) \backslash G(F)$ by restriction. Then for $f \in \H(G, \chi)$, and $\lf \in \H(\lG, \lif{\chi})$ being its extension, we have
\[
SO_{\lG}(\lf, \delta) = SO_{G}(f, \delta),
\]
and
\[
O_{\lG}^{\theta, \x_{\lG}}(\lf, \delta) = \sum_{\{\delta'\}_{G(F)}^{\theta} \thicksim_{\lG(F)} \{\delta\}_{G(F)}^{\theta}} O_{G}^{\theta, \x_{G}}(f, \delta')\x_{\lG}(g)
\]
where the sum is over $\theta$-twisted $G(F)$-conjugacy classes $\{\delta'\}_{G(F)}^{\theta}$ in the $\theta$-twisted $\lG(F)$-conjugacy classes $\{\delta\}_{\lG(F)}^{\theta}$ with $\delta' = g^{-1} \delta g$ for $g \in \lG(F)$, and the Haar measure on $G^{\theta}_{\delta'}(F)$ is translated from that on $G^{\theta}_{\delta}$ by conjugation. Because $\lif{Z}_{F}G(F)$ is $\theta$-conjugate invariant under $\lG(F)$, the map \eqref{map: inclusion of Hecke algebra} induces a map from $\mathcal{I}(G^{\theta, \x_{G}}, \chi)$ to $\mathcal{I}(\lG^{\theta, \x_{\lG}}, \lif{\chi})$. Moreover $\lif{Z}_{F}G(\bar{F})$ is conjugate invariant under $\lG(\bar{F})$, so it also induces a map from $\mathcal{SI}(G, \chi)$ to $\mathcal{SI}(\lG, \lif{\chi})$.

Suppose $\lG' \in \End{}{\lG^{\theta}, \x_{\lG}}$ and $G' \in \End{}{G^{\theta}, \x_{G}}$ correspond to each other according to Proposition~\ref{prop: lifting endoscopic group}. The natural inclusion $(\lZ)_{\theta} \rightarrow Z_{\lG'}$ induces an inclusion on $\lif{Z}_{F}$. So we can define $\lif{Z}'_{F} \subseteq Z_{\lG'}(F)$ to be the image of $\lif{Z}_{F}$ and $Z'_{F} = \lif{Z}'_{F} \cap Z_{G'}(F)$. The twisted endoscopic transfer sends $\H(\lG, \lif{\chi})$ to $\H(\lG', \lif{\chi}')$, where $\chi'$ is a quasicharacter of $\lif{Z}'_{F}$, depending only on $\lif{\chi}$ and the twisted endoscopic embedding. Let $\chi'$ be the restriction of $\lif{\chi}'$ to $Z'_{F}$. Then we have 
\begin{align*}
\xymatrix{\H(G', \chi') \, \ar@{^{(}->}[r]   &  \H(\lG', \lif{\chi}')   \\
f \ar@{|->}[r]   & \lif{f} },  
\end{align*}
The following lemma shows these inclusion maps are compatible with the twisted endoscopic transfers.

\begin{lemma}[\cite{Xu:2016}, Lemma 3.8]
\label{lemma: twisted endoscopic transfer}
Suppose $f \in \H(G, \chi)$, then the $(\theta, \x_{\lG})$-twisted endoscopic transfer of the extension $\lf$ of $f$ is equal to the extension of $(\theta, \x_{G})$-twisted endoscopic transfer $f^{G'}$ of $f$ as elements in $\mathcal{SI}({\lG'}, \lif{\chi}')$, i.e.
\begin{align}
\lf^{\lG'} = \lif{(f^{G'})} \label{eq: twisted endoscopic transfer}
\end{align}
\end{lemma}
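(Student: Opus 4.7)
The plan is to verify $\lf^{\lG'} = \lif{(f^{G'})}$ by comparing stable orbital integrals $SO_{\lG'}(\cdot, \gamma')$ at each strongly $\lG$-regular semisimple $\gamma' \in \lG'(F)$. First I would observe that the extension map $\H(G', \chi') \hookrightarrow \H(\lG', \lif{\chi}')$ produces functions supported on $\lif{Z}'_{F} G'(F)$, while $\lf$ is supported on $\lif{Z}_{F} G(F)$. Hence both sides of \eqref{eq: twisted endoscopic transfer} vanish unless $\gamma' = z'\gamma$ with $z' \in \lif{Z}'_{F}$ and $\gamma \in G'(F)$ strongly $G$-regular, and on the $\lG$-side only $\theta$-twisted stable conjugacy classes represented by $\delta = z \delta_{0}$ with $z \in \lif{Z}_{F}$ and $\delta_{0} \in G(F)$ contribute.

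Next I would compute each side explicitly. For the right-hand side, the extension map gives
\[
SO_{\lG'}(\lif{(f^{G'})}, \gamma') = \lif{\chi}'(z')\, SO_{G'}(f^{G'}, \gamma),
\]
which by the defining transfer identity \eqref{eq: geometric transfer} on $(G, G')$ equals
\[
\lif{\chi}'(z') \sum_{\{\delta_{0}'\}^{\theta}_{G(F)} \thicksim_{st} \{\delta_{0}\}^{\theta}_{G(F)}} \Delta_{G,G'}(\gamma, \delta_{0}')\, O^{\theta, \x_{G}}_{G}(f, \delta_{0}')
\]
for some fixed preimage $\delta_{0}$ of $\gamma$. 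For the left-hand side, I would apply \eqref{eq: geometric transfer} to $(\lG, \lG')$ and substitute the decomposition of $O^{\theta, \x_{\lG}}_{\lG}(\lf, \cdot)$ into $(\theta, \x_{G})$-twisted $G(F)$-orbital integrals of $f$ displayed just before the lemma, then reorganize the resulting double sum so that it is indexed by pairs $(z, \{\delta_{0}'\}^{\theta}_{G(F)})$ with $\{\delta_{0}'\}^{\theta}_{G(F)}$ a $G(F)$-stable $\theta$-conjugacy class sitting inside the $\lG(F)$-stable $\theta$-class of $z\delta_{0}$.

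The comparison of the two expressions then reduces to two matchings: first, the central-character identity $\lif{\chi}(z) = \lif{\chi}'(z')$ for corresponding norms $z \leftrightarrow z'$, which is built into the definition of $\lif{\chi}'$ from $\lif{\chi}$ via the embedding $\lif{\xi}$; second, the transfer-factor identity
\[
\Delta_{\lG, \lG'}(z'\gamma, z\delta_{0}') = \Delta_{G, G'}(\gamma, \delta_{0}').
\]
The hard part will be the second matching. To establish it, I would carry out a term-by-term inspection of the Kottwitz--Shelstad factors $\Delta_{I}, \Delta_{II}, \Delta_{III_{1}}, \Delta_{III_{2}}, \Delta_{IV}$: since the lifted endoscopic datum $(\lG', \lif{s}, \lif{\xi})$ projects to $(G', s, \xi)$ under $\bold{p}: \D{\lG} \to \D{G}$ by Proposition~\ref{prop: lifting endoscopic group}, and the chosen $\theta$-stable Whittaker datum for $\lG$ restricts to that of $G$, the admissible maximal tori together with the $\chi$- and $a$-data for $\lG$ may be taken to extend those of $G$, so each KS factor on the $\lG$-side is computed from the same $G$-side data and is independent of the central pair $(z, z')$. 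The auxiliary factors $\x_{\lG}(g)$ appearing in the decomposition of the $\lG$-orbital integrals should then precisely compensate the non-uniqueness in the choice of $G(F)$-representatives within each $\lG(F)$-stable class, so they cancel in the final sum, yielding $\lf^{\lG'} = \lif{(f^{G'})}$.
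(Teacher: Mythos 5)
Your framework --- compare stable orbital integrals, substitute the decomposition of $O_{\lG}^{\theta, \x_{\lG}}(\lf, \cdot)$ into $G(F)$-orbital integrals given just before the lemma, and reduce to a transfer-factor comparison --- is the right one and is surely the route the cited proof takes. But two of the steps are described in a way that misidentifies the mechanism or leaves a real gap.

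The $\x_{\lG}(g)$ factors do not ``cancel'': they are absorbed by the $\omega$-equivariance of the Kottwitz--Shelstad transfer factor, $\Delta_{\lG, \lG'}(\gamma', g^{-1}\delta\theta(g)) = \x_{\lG}(g)\,\Delta_{\lG, \lG'}(\gamma', \delta)$ for $g \in \lG(F)$ (the same equivariance that makes $\Delta \cdot O$ constant on twisted conjugacy classes). This converts $\Delta_{\lG, \lG'}(\gamma', \delta)\,\x_{\lG}(g)$ into $\Delta_{\lG, \lG'}(\gamma', \delta')$ for $\delta' = g^{-1}\delta\theta(g)$, and the double sum collapses to $\sum_{\{\delta'\}^{\theta}_{G(F)}} \Delta_{\lG, \lG'}(\gamma', \delta')\,O_G^{\theta,\x_G}(f, \delta')$. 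Without the equivariance the $\x_{\lG}(g)$ factors stay put. Moreover, the collapsed sum runs over $\theta$-twisted $G(F)$-classes in the $\theta$-twisted $\lG(\bar{F})$-stable class of $\delta$, which may strictly contain the $G(\bar{F})$-stable class: writing $g = g_0 z$ with $g_0 \in G(\bar{F})$, $z \in \lZ(\bar{F})$ gives $g^{-1}\delta\theta(g) = z^{-1}\theta(z)\,g_0^{-1}\delta\theta(g_0)$ with $z^{-1}\theta(z) \in \Z(\bar{F})$ possibly nontrivial. You need to check either that the two index sets coincide for strongly $\theta$-regular $\delta$, or that $\gamma$ fails to be a $G$-norm of the extra classes so that both transfer factors vanish there; your proposal does not address this.

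The transfer-factor identity itself carries the content of the lemma and your sketch underestimates it. Comparing $\Delta_I$, $\Delta_{II}$, $\Delta_{IV}$ is straightforward once the $a$- and $\chi$-data are chosen to be identical, since the roots of a maximal torus $T$ of $G$ and its preimage $\lif{T}$ in $\lG$ coincide. The substantive part is the $\Delta_{III}$-type factor, which is a Tate--Nakayama pairing built from Galois (hyper)cohomology of the torus and from the local Langlands correspondence for $T_\theta$: on the $\lG$-side these are computed for $\lif{T}$ and $\lif{T}_\theta$, which differ from $T$ and $T_\theta$ by $D$, and tracing the resulting $\D{D}$-contribution is what produces the $\lif{\chi}'(z')^{-1}$ needed to match the central translation (note the inverse, forced by the $\lif{\chi}'^{-1}$-equivariance of the Hecke-algebra extension; your sketch has $\lif{\chi}'(z')$). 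Your proposal hands this off as a routine factor-by-factor check, but in fact the proof lives in the interplay between the $\Delta_{III}$ piece and the central character.
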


\begin{remark}
The inclusion map \eqref{map: inclusion of Hecke algebra} of Hecke algebras induces a restriction map of distributions in the opposite direction. Moreover the restriction of an invariant distribution is again invariant, and the restriction of a stable distribution is again stable. In particular, the restriction of the character of a representation is compatible with the restriction of the representation in the usual sense.
\end{remark}

\begin{corollary}
\label{cor: twisted endoscopic transfer}
Suppose $S^{\lG'}(\cdot)$ is a stable distribution on $\lG'$, then the restriction of the pull-back of $S^{\lG'}(\cdot)$ is equal to the pull-back of the restriction of $S^{\lG'}(\cdot)$, i.e.
\[
S^{\lG'}(\lif{f}^{\lG'}) = S^{\lG'}(\lif{f^{G'}})
\]
\end{corollary}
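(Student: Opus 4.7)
The plan is to derive the corollary as an immediate consequence of Lemma~\ref{lemma: twisted endoscopic transfer}, using the defining property of a stable distribution. Recall that a stable distribution on $\lG'$ is, by definition, a linear functional on $\H(\lG', \lif{\chi}')$ which factors through the projection $\H(\lG', \lif{\chi}') \twoheadrightarrow \mathcal{SI}(\lG', \lif{\chi}')$. Consequently, the value of $S^{\lG'}(\cdot)$ on any function in $\H(\lG', \lif{\chi}')$ depends only on its image in $\mathcal{SI}(\lG', \lif{\chi}')$.

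The first step is to unpack the two sides of the asserted identity. The left-hand side $S^{\lG'}(\lif{f}^{\lG'})$ is obtained by first extending $f \in \H(G,\chi)$ to $\lf \in \H(\lG, \lif{\chi})$ via the inclusion \eqref{map: inclusion of Hecke algebra}, then applying the $(\theta, \x_{\lG})$-twisted endoscopic transfer to land in $\mathcal{SI}(\lG', \lif{\chi}')$, and finally evaluating $S^{\lG'}$. The right-hand side $S^{\lG'}(\lif{f^{G'}})$ is obtained by first applying the $(\theta, \x_{G})$-twisted endoscopic transfer to obtain $f^{G'} \in \H(G', \chi')$, then extending it to $\lif{f^{G'}} \in \H(\lG', \lif{\chi}')$ via the analogous inclusion for the endoscopic groups, and evaluating $S^{\lG'}$.

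The second and essentially only substantive step is to invoke Lemma~\ref{lemma: twisted endoscopic transfer}, which asserts precisely that
\[
\lf^{\lG'} = \lif{(f^{G'})}
\]
as elements of $\mathcal{SI}(\lG', \lif{\chi}')$. Combining this with the stability of $S^{\lG'}$, both sides of the desired identity are obtained by evaluating the same linear functional on $\mathcal{SI}(\lG', \lif{\chi}')$ at the same class, and the equality follows at once. There is no genuine obstacle here; the corollary is really a restatement of the lemma in terms of stable distributions, and the only content is the observation that stability allows us to pass from the equality in $\mathcal{SI}(\lG', \lif{\chi}')$ to an equality of scalar values.
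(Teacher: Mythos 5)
Your proof is correct and takes the same approach as the paper: both proofs simply substitute the equality $\lf^{\lG'} = \lif{(f^{G'})}$ from Lemma~\ref{lemma: twisted endoscopic transfer} into $S^{\lG'}(\cdot)$, using that stability means the distribution factors through $\mathcal{SI}(\lG', \lif{\chi}')$. Your exposition is just a more careful unpacking of the same one-line argument the paper gives.
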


\begin{proof}
One just need to substitute \eqref{eq: twisted endoscopic transfer} into $S^{\lG'}(\cdot)$.
\end{proof}


\section{Arthur's Classification Theory: tempered case}
\label{sec: Arthur's theory}

In this section we will review Arthur's classification theory for the tempered representations of quasi-split symplectic groups and special even orthogonal groups (cf. \cite{Arthur:2013}). So throughout this section, $G$ will always be a quasisplit symplectic group or special even orthogonal group over $F$ (if not specified). We fix an outer automorphism $\theta_{0}$ of $G$, and a nontrivial automorphism $\theta_{N}$ of $GL(N)$, so that they preserve an $F$-splitting respectively. When $G$ is symplectic, $\theta_{0}$ is trivial. When $G$ is special even orthogonal, we require $\theta_{0}$ to be the unique outer automorphism induced from the conjugation of the full orthogonal group. We denote $\Sigma_{0} = <\theta_{0}>$.  When $F$ is local, $\Sigma_{0}$ acts on $\Pkt{}(G(F))$ and we denote the set of $\Sigma_{0}$-orbits in $\Pkt{}(G(F)$ by $\cPkt{}(G(F))$. We denote by $\sH(G, \chi)$ the subspace of $\Sigma_{0}$-invariant functions in $\H(G, \chi)$, and we abbreviate $\H(GL(N))$ to $\H(N)$. We also denote the corresponding space of (stable) twisted orbital integrals by $\bar{\mathcal{I}}(G^{\theta}, \x_{G})$ ($\bar{\mathcal{SI}}(G^{\theta}, \x_{G})$) for $\theta \in \Sigma_{0}$ and $\x_{G} \in \Hom(G(F), \C^{\times})$.

\subsection{Substitute Langlands parameter}
\label{subsec: substitute Langlands parameter}

First let $F$ be a global field, we define the sets of substitute global generic (or tempered) Langlands parameters as follows,
\begin{align*}
\Psm{N} & := \{ \text{ isomorphism classes of irreducible unitary cuspidal automorphic representations of } \, GL(N) \}, \\
\Psm{N^{\theta_{N}}} & := \{ \p \in \Psm{N} : \p = \p^{\vee} \}, \\
\P{N^{\theta_{N}}} & := \{ \p = l_{1}\p_{1} \# \cdots \# l_{r}\p_{r} : \p = \p^{\vee} , \p_{i} \in \Psm{N_{i}}, \text{ with } \sum_{i=1}^{r} l_{i}N_{i} = N \}.
\end{align*}
Here $\p^{\vee}$ denotes the dual (or contragredient) of $\p$ if $\p \in \Psm{N}$, and
\[
\p^{\vee} := l_{1}\p_{1}^{\vee} \# \cdots \# l_{r}\p_{r}^{\vee}
\]
if $\p \in \P{N^{\theta_{N}}}$. Note that $\P{N^{\theta_{N}}}$ is just a set of formal sums of irreducible unitary cuspidal automorphic representations, and for every parameter $\p \in \P{N^{\theta_{N}}}$ we can assign a family of semisimple conjugacy classes in $GL(N, \mathbb{C})$ by 
\[
c(\p_{v}) :=  \underbrace{c(\p_{1, v}) \+ \cdots \+ c(\p_{1, v})}_{l_{1}} \+ \cdots \+ \underbrace{c(\p_{r, v}) \+ \cdots \+ c(\p_{r, v})}_{l_{r}}
\]
for unramified places $v$ of $\p$, where $c(\p_{i, v})$ is the Satake parameter of the local component $\p_{i, v}$. Inside $\Psm{N^{\theta_{N}}}$ there are two types of parameters, we call $\p$ is of {\bf orthogonal type} if the symmetric square $L$-function $L(s, \p, S^2)$ has a pole at $s = 1$; we call $\p$ is of {\bf symplectic type} if the skew-symmetric square $L$-function $L(s, \p, \wedge^2)$ has a pole at $s = 1$. In fact every $\p \in \Psm{N^{\theta_{N}}}$ will always be either one of these two types due to the fact that the Rankin-Selberg L-function
\[
L(s, \p \otimes \p) = L(s, \p, S^2)L(s, \p, \wedge^2)
\] 
has a simple pole at $s = 1$. Moreover when $N$ is odd, $\p$ is always of orthogonal type. The following theorem proved in (\cite{Arthur:2013}, Theorem 1.4.1 and Theorem 1.5.3) shows how automorphic representations of $GL(N)$ are related to that of orthogonal groups and symplectic groups. If $\r$ is an automorphic representation of $G$, we denote by $c(\r) = \{c(\r_{v})\}$ the family of Satake parameters of $\r_{v}$ at the unramified places.

\begin{theorem}
\label{thm: global functorial lifting}
Suppose $\p \in \Psm{N^{\theta_{N}}}$, then there is a unique class of elliptic endoscopic data $(G_{\p}, s_{\p}, \xi_{\p})$ in $\End{ell}{N^{\theta_{N}}}$ such that 
\(
c(\p_{v}) = \xi_{\p}(c(\r_{v}))
\)
for some discrete automorphic representation $\r$ of $G_{\p}$ at almost all places. Moreover if $\p$ is of orthogonal type, $\D{G}_{\p} = SO(2n+1, \C)$ when $N = 2n+1$, or $SO(2n, \C)$ when $N = 2n$; if $\p$ is of symplectic type, $\D{G}_{\p} = Sp(2n, \C)$ with $N = 2n$.
\end{theorem}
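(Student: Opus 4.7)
The plan is to establish this via a comparison of two trace formulas: the $\theta_{N}$-twisted stabilized trace formula for $GL(N)$ on one side, and the ordinary stable trace formulas of the twisted elliptic endoscopic groups of $(GL(N), \theta_{N})$ on the other. This is a global argument, proved by induction on $N$ and interlocked with Arthur's local classification for the smaller classical groups.

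First I would enumerate $\End{ell}{N^{\theta_{N}}}$. Unwinding the definition of the twisted endoscopic datum for $(GL(N), \theta_{N})$ (including the computation of the possible $L$-embeddings), one finds the representatives are $Sp(N-1)$ when $N$ is odd (dual $SO(N, \C)$), and, for $N$ even, $SO(N+1)$ (dual $Sp(N,\C)$) together with $SO(N, \eta)$ for each quadratic idèle class character $\eta$ (dual $SO(N, \C)$). In particular, the three possible dual groups are exactly those listed in the theorem. Next, on the spectral side of the twisted trace formula of $GL(N)$, the Jacquet--Shalika/Moeglin--Waldspurger description of the $\theta_{N}$-discrete spectrum shows that the contributions are indexed by self-dual isobaric sums $l_{1}\p_{1} \# \cdots \# l_{r}\p_{r}$ with $\p_{i}$ unitary cuspidal self-dual. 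Using the multiplier technique I would isolate the piece of $I^{(N,\theta_{N})}_{disc}(f)$ attached to a given simple parameter $\p \in \Psm{N^{\theta_{N}}}$: by strong multiplicity one for $GL(N)$ the family $c(\p) = \{c(\p_{v})\}$ uniquely determines $\p$, so the isolated piece really comes only from $\p$.

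On the endoscopic side, the expansion
\[
I^{(N,\theta_{N})}_{disc}(f) = \sum_{G' \in \End{ell}{N^{\theta_{N}}}} \iota(N, G')\, S^{G'}_{disc}(f^{G'})
\]
implies that the $c(\p)$-component on the right-hand side is non-zero. By the inductive stable multiplicity formula for the (smaller) classical groups $G'$ -- whose Arthur parameters are built from cuspidal data on $GL(N_{i})$ with $N_{i} < N$ -- the only way for this component to survive is that there is some $G_{\p} \in \End{ell}{N^{\theta_{N}}}$ and a discrete automorphic representation $\r$ of $G_{\p}$ with $\xi_{\p}(c(\r_{v})) = c(\p_{v})$ at almost all places. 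Uniqueness of $G_{\p}$ follows because the distinct embeddings $\xi_{G'}: \L{G'} \hookrightarrow GL(N, \C)$ produce distinguishable Satake families, and because $\p$ is simple (so it cannot factor through a proper Levi of any $G'$). The orthogonal versus symplectic dichotomy is then pinned down by the pole of an $L$-function at $s=1$: by Langlands--Shahidi, $L(s,\p,S^{2})$ has a pole precisely when $\D{G_{\p}}$ preserves a symmetric form (orthogonal case), and $L(s,\p,\wedge^{2})$ has a pole precisely when $\D{G_{\p}}$ preserves a skew-symmetric form (symplectic case); this matches $SO(N,\C)$ with the orthogonal type (for both $N$ even and odd) and $Sp(N,\C)$ with the symplectic type.

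The main obstacle is that the induction used here is not merely on $N$: it is deeply entangled with Arthur's local classification (L-packets, endoscopic character identities) and the stable multiplicity formula for each $G'$, all of which must be set up in a simultaneous induction to avoid circular reasoning. Controlling the non-cuspidal isobaric contributions $l_{1}\p_{1} \# \cdots \# l_{r}\p_{r}$ on the spectral side, and matching them against the parabolic contributions to $S^{G'}_{disc}$ coming from Levi subgroups of the $G'$, requires the delicate machinery of the standard model and the combinatorial sign calculations that occupy Chapter 4 of \cite{Arthur:2013}. In practice, this is the heart of the proof and the reason Arthur proves the full classification as a single package rather than as separate theorems.
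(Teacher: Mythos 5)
The paper does not give a proof of this statement: it is quoted verbatim (as part of a review section) from Arthur's monograph, citing Theorems 1.4.1 and 1.5.3 of \cite{Arthur:2013}. So there is no "paper's own proof" to compare against; what you have produced is a high-level sketch of Arthur's argument in that reference.

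As a sketch it is sound in outline and identifies the correct machinery: the stabilized $\theta_{N}$-twisted trace formula for $GL(N)$ compared against the stable trace formulas of the twisted elliptic endoscopic data, the use of strong multiplicity one on the $GL(N)$ side to isolate the $c(\p)$-component, the simultaneous induction with the local classification and the stable multiplicity formulas for the smaller classical groups, and the $L$-function criterion (via $L(s,\p,S^{2})$ and $L(s,\p,\wedge^{2})$) to pin down the type. Your enumeration of $\End{ell}{N^{\theta_{N}}}$ is also correct. Two remarks worth flagging. First, your uniqueness argument, that "distinct embeddings produce distinguishable Satake families," is too quick: for $N=2n$ the data $SO(2n,\eta)$ for varying $\eta$ all share the same dual group $SO(2n,\C)$, and distinguishing them requires tracking the quadratic character through the unramified parameters, not just the conjugacy class in $GL(N,\C)$; more broadly, establishing that a non-vanishing $c(\p)$-component on the endoscopic side forces contribution from exactly one $G'$, rather than surviving via cancellation among several, is precisely what the "standard model" machinery of Arthur's Chapter 4 is for, and is not a consequence of the simplicity of $\p$ alone. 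Second, you invoke the stabilization of the twisted trace formula as an input; in Arthur's original treatment this was taken as a hypothesis (now discharged by the cited work of M{\oe}glin--Waldspurger), so the logical status of this ingredient should at least be noted. Neither of these is an error, but in a real write-up they are exactly where the work lives.
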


For $\p = l_{1}\p_{1} \# \cdots \# l_{r}\p_{r} \in \P{N^{\theta_{N}}}$, since $\p = \p^{\vee}$, one gets an involution on the indices by letting $\p_{i^{\vee}} = \p_{i}^{\vee}$, and consequently one has $l_{i} = l_{i^{\vee}}$. This gives a disjoint decomposition of these indices 
\[
I_{\p} \sqcup J_{\p} \sqcup J_{\p}^{\vee},
\]
where $I_{\p}$ indexes the set of self-dual simple parameters. Let $K_{\p} = I_{\p} \sqcup J_{\p}$, and let $I_{\p, O}$ ($I_{\p, S}$) indexes the self-dual simple parameters of orthogonal (symplectic) type. By Theorem~\ref{thm: global functorial lifting}, for each $\p_{i}$ with $i \in I_{\p}$, we have a twisted elliptic endoscopic group $G_{i}$ of $GL(N_{i})$ and we fix the twisted endoscopic embedding $\xi_{i} : \L{G_{i}} \longrightarrow \L{GL(N_{i})}$. 
For $\p_{j}$ with $j \in J_{\p}$, let us just take $G_{j}$ to be $GL(N_{j})$ and define an $L$-embedding $\xi_{j} : \L{G_{j}} \longrightarrow \L{GL(2N_{j})}$ by sending $g \rtimes w$ to $\text{diag}\{g, {}^tg^{-1}\} \times w$. Then Arthur defines a substitute global Langlands group by taking the fibre product
\[
\mathcal{L}_{\p} := \prod_{k \in K_{\p}} \{ \L{G_{k}} \longrightarrow W_{F} \},
\]
and he also defines an $L$-homomorphism $\p^{\mathcal{E}} : \mathcal{L}_{\p} \longrightarrow \L{GL(N)}$, where
\[
\p^{\mathcal{E}} := \bigoplus_{k \in K_{\p}} l_{k}\xi_{k}.
\] 
By viewing $G$ as in $\End{ell}{N^{\theta_{N}}}$, we can define the set of substitute global parameters of $G$ as follows
\[
\cP{G} := \{ \p \in \P{N^{\theta_{N}}} : \ep \text{ factors through } \L{G} \}.
\]
As a simple exercise, one can show for $\p = l_{1}\p_{1} \# \cdots \# l_{r}\p_{r} \in \P{N^{\theta_{N}}}$, $\p$ is in $\cP{G}$ if and only if  $l_{i}$ is even for all $i \in I_{\p, S}$. Since $\Out_{N}(G) \cong \Sigma_{0}$, the above set is really the analogue of the set of $\Sigma_{0}$-conjugacy classes of global Langlands parameters for $G$. For $\p \in \cP{G}$ and $\Sigma \subseteq \Sigma_{0}$, one can define
\begin{align*}
S^{\Sigma}_{\p} &= \Cent(\Im \ep, \D{G}^{\Sigma}), \\
\cS{\p}^{\Sigma} &= S^{\Sigma}_{\p} / Z(\D{G})^{\Gal{F}}, \\
\S{\p}^{\Sigma} &= \cS{\p}^{\Sigma} / \com[0]{\cS{\p}},
\end{align*}
and from here one can also define the following important subsets of $\cP{G}$
\begin{align*}
\cPsm{G} &= \{ \p \in \cP{G} : \cS{\p}^{\Sigma_{0}} = 1\}, \\
\cPdt{G} &= \{ \p \in \cP{G} : |\cS{\p}| < \infty \}, \\
\cP{G^{\theta}} & = \{ \p \in \cP{G}: S_{\p}^{\theta} \neq \emptyset\}, \\
\cPel{G^{\theta}} &= \{ \p \in \cP{G^{\theta}} : |\cS{\p, s}^{0}| < \infty \, \text{for some semisimple} \, s \in \cS{\p}^{\theta} \},
\end{align*}
where $\theta \in \Sigma_{0}$. In fact, one can even compute $S_{\p}$ very explicitly (see \cite[(1.4.8)]{Arthur:2013})
\begin{align}
\label{formula: centralizer}
S_{\p} = (\prod_{i \in I_{\p, O}} O(l_{i}, \C))_{\p}^{+} \times (\prod_{i \in I_{\p, S}} Sp(l_{i}, \C)) \times (\prod_{j \in J_{\p}} GL(l_{j}, \C)), 
\end{align}
where $(\prod_{i \in I_{\p, O}} O(l_{i}, \C))_{\p}^{+}$ is the kernel of the character 
\[
\e_{\p}^{+} : \prod_{i} g_{i} \longrightarrow \prod_{i} (\det \, g_{i})^{N_{i}}, \,\,\,\,\, g_{i} \in O(l_{i}, \C), i \in I_{\p, O}.
\]
Note $G$ is symplectic or speical even orthogonal here, so we have $I_{\p, O} = I^{+}_{\p}$ and $I_{\p, S} = I^{-}_{\p}$ in Arthur's original formula of $S_{\p}$. When $G$ is special even orthogonal, 
\begin{align}
\label{formula: plus centralizer}
S^{\Sigma_{0}}_{\p} = (\prod_{i \in I_{\p, O}} O(l_{i}, \C)) \times (\prod_{i \in I_{\p, S}} Sp(l_{i}, \C)) \times (\prod_{j \in J_{\p}} GL(l_{j}, \C)).
\end{align}
As a consequence, one has the following description of those subsets of $\cP{G}$.

\begin{lemma}
\label{lemma: discrete parameter}
\begin{enumerate}
\item $\cPsm{G} = \Psm{N} \cap \cP{G}$.
\item Suppose $\p \in \cP{G}$, then $\p$ is in $\cPdt{G}$ if and only if $K_{\p} = I_{\p, O}$ and $l_{i} =1$ for all $i \in K_{\p}$.
\item Suppose $\p$ is in $\cPel{G^{\theta}}$ for $\theta \in \Sigma_{0}$, then $K_{\p} = I_{\p, O}$ and $l_{i} \leqslant 2$ for all $i \in K_{\p}$.
\item Suppose $G$ is special even orthogonal and $\p \in \cP{G}$, then $\p$ is in $\cP{G^{\theta_{0}}}$ if and only if there exists $i \in I_{\p, O}$ such that $N_{i}$ is odd.
\end{enumerate}
\end{lemma}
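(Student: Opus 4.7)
The plan is to derive all four statements by direct inspection of the explicit centralizer formulas \eqref{formula: centralizer} and \eqref{formula: plus centralizer}, combined with the observation that $Z(\D{G})^{\Gal{F}}$ is a finite $2$-group in each case under consideration (trivial when $G=Sp(2n)$, and of order $2$ when $G$ is special even orthogonal), so in particular its identity component is trivial. Once this is set up, each part reduces to factor-by-factor bookkeeping.

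I would begin with (2), as it is the cleanest: $|\cS{\p}|<\infty$ is equivalent to $\com[0]{S_{\p}}$ being contained in $(Z(\D{G})^{\Gal{F}})^{0}=1$, i.e.\ $\com[0]{S_{\p}}=1$. The identity component of the right-hand side of \eqref{formula: centralizer} is
\[
\Big(\prod_{i\in I_{\p,O}} SO(l_{i},\C)\Big)\times\Big(\prod_{i\in I_{\p,S}} Sp(l_{i},\C)\Big)\times\Big(\prod_{j\in J_{\p}} GL(l_{j},\C)\Big),
\]
since the $\e_{\p}^{+}$ constraint disappears on the identity component. This vanishes exactly when $J_{\p}=I_{\p,S}=\emptyset$ and each $l_{i}=1$, giving (2). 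For (1), the natural inclusion $\cS{\p}\hookrightarrow \cS{\p}^{\Sigma_{0}}$ shows that $\cS{\p}^{\Sigma_{0}}=1$ forces $|\cS{\p}|<\infty$ and hence, by (2), that $\p=\p_{1}\#\cdots\#\p_{r}$ with each $\p_{i}$ orthogonal simple and $l_{i}=1$. Substituting into \eqref{formula: plus centralizer} expresses $\cS{\p}^{\Sigma_{0}}$ as a quotient of $\Two^{r}$ of explicit order; comparing with $|Z(\D{G})^{\Gal{F}}|$, and using in the symplectic case that $\e_{\p}^{+}$ is nontrivial whenever some $N_{i}$ is odd, pins down $r=1$, so $\p\in\Psm{N}\cap\cP{G}$. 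The reverse inclusion is a short direct verification with the same formulas.

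For part (3), I would inspect the requirement $\cS{\p,s}^{0}=1$ factor by factor. Any semisimple element of $GL(l_{j},\C)$ has a positive-dimensional Levi centralizer, and any semisimple element of $Sp(l_{i},\C)$ contains a nontrivial torus in its centralizer, so one must have $J_{\p}=I_{\p,S}=\emptyset$, i.e.\ $K_{\p}=I_{\p,O}$. In each remaining $O(l_{i},\C)$ factor one tries $s_{i}=\text{diag}(I_{a},-I_{l_{i}-a})$, whose centralizer $O(a)\times O(l_{i}-a)$ has trivial identity component iff $l_{i}\leqslant 2$. The twisted case $\theta=\theta_{0}$ (with $G$ special even orthogonal) is parallel: after fixing a representative of $\theta_{0}$ in $\D{G}\rtimes\theta_{0}$, its action respects the product decomposition, and the same bounds on the $l_{i}$ emerge. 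For part (4), comparing \eqref{formula: centralizer} with \eqref{formula: plus centralizer} gives
\[
S_{\p}^{\Sigma_{0}}/S_{\p}\cong \Big(\prod_{i\in I_{\p,O}} O(l_{i},\C)\Big)\Big/\Big(\prod_{i\in I_{\p,O}} O(l_{i},\C)\Big)_{\p}^{+},
\]
which is $\Two$ if $\e_{\p}^{+}$ is nontrivial and $1$ otherwise. Since $\p\in\cP{G^{\theta_{0}}}$ is equivalent to $S_{\p}^{\theta_{0}}\neq\emptyset$, i.e.\ $S_{\p}^{\Sigma_{0}}\supsetneq S_{\p}$, and since $\e_{\p}^{+}(\prod g_{i})=\prod(\det g_{i})^{N_{i}}$ is nontrivial exactly when some $N_{i}$ with $i\in I_{\p,O}$ is odd, the claim follows.

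The main obstacle is the twisted ellipticity step in (3), where one must track carefully how $\theta_{0}$ acts on the $O(l_{i})$ factors to ensure that the chosen $s$ genuinely lies in the $\theta_{0}$-coset and produces the expected centralizer; the rest is elementary group-theoretic bookkeeping modulo the centralizer decomposition imported from \cite{Arthur:2013}.
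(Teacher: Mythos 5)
Your proposal is correct and takes the same route as the paper: the paper's own proof consists of the one-line remark that the lemma is ``a direct application of formulas~\eqref{formula: centralizer} and~\eqref{formula: plus centralizer},'' and your factor-by-factor inspection of those formulas (using that $Z(\D{G})^{\Gal{F}}$ is a finite $2$-group and spelling out the identity components, elliptic semisimple elements, and the $\theta_0$-coset via $\e_{\p}^{+}$) is precisely the bookkeeping that remark leaves to the reader.
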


The proof is a direct application of formulas~\eqref{formula: centralizer} and \eqref{formula: plus centralizer}.

Now let $F$ be a local field, we also define the substitute local generic (or tempered) Langlands parameters similarly as follows,
\begin{align*}
\ePsm{N} & := \{ \text{ isomorphism classes of irreducible essentially discrete series representations of } \, GL(N) \}, \\
\ePsm{N^{\theta_{N}}} & := \{ \ep \in \ePsm{N} : \ep = ( \ep ) ^{\vee} \}, \\
\ePbd{N^{\theta_{N}}} & := \{ \ep = l_{1}\ep_{1} \+ \cdots \+ l_{r}\ep_{r} : \ep = ( \ep )^{\vee} , \ep_{i} \in \ePsm{N_{i}} \text{ with } \sum_{i=1}^{r} l_{i}N_{i} = N \}.
\end{align*}
Suppose $\ep \in \ePsm{\com{N}}$, we call $\ep$ is of {\bf orthogonal type} if the local symmetric square $L$-function $L(s, \ep, S^2)$ has a pole at $s =0$;  we call $\ep$ is of {\bf symplectic type} if the local skew-symmetric square $L$-function $L(s, \ep, \wedge^2)$ has a pole at $s=0$. As in the global case, every $\ep \in \ePsm{N^{\theta_{N}}}$ will be either of orthogonal type or symplectic type. We would like to state a local version of Theorem~\ref{thm: global functorial lifting}, which is proved in \cite[Theorem 6.1.1 and Corollary 6.8.1]{Arthur:2013}. For $\ep \in \ePsm{N^{\theta_{N}}}$, let $\r_{\ep}$ be the self-dual essentially discrete series representation of $GL(N)$ defined by $\ep$. We write
\begin{align}
\label{eq: twisted character of GL(N)}
f_{N^{\theta_{N}}}(\ep) := f_{GL(N)^{\theta_{N}}}(\r_{\ep}) \,\,\,\,\,\,\,\,\,\,\, f \in \H(N),
\end{align}
with respect to some intertwining operator $A_{\r_{\ep}}(\theta_{N})$.

\begin{theorem}
\label{thm: local functorial lifting}
Suppose $\ep \in \ePsm{N^{\theta_{N}}}$, then there is a unique class of elliptic endoscopic data $(G_{\ep}, s_{\ep}, \xi_{\ep})$ in $\End{ell}{N^{\theta_{N}}}$ such that 
\[
f_{N^{\theta_{N}}}(\ep) = f^{G_{\ep}}(\ep), \text {\, \, for all \,}f \in \H(N)
\]
for some stable distribution $f(\ep)$ on $G_{\ep}$, where $f^{G_{\ep}}$ is the twisted endoscopic transfer of $f$. Moreover if $\ep$ is of orthogonal type, $\D{G}_{\ep} = SO(2n+1, \C)$ when $N = 2n+1$, or $SO(2n, \C)$ when $N = 2n$; if $\ep$ is of symplectic type, $\D{G}_{\ep} = Sp(2n, \C)$ with $N = 2n$.
\end{theorem}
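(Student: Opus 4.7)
My plan is to split the theorem into two parts: first the identification of the endoscopic group $G_{\ep}$, then the character identity $f_{N^{\theta_N}}(\ep) = f^{G_\ep}(\ep)$. For the first part, I would use the local Langlands correspondence for $GL(N)$ (Harris--Taylor, Henniart, Scholze) to attach to $\ep$ an irreducible $N$-dimensional Frobenius-semisimple representation $\phi_\ep$ of $L_F$. The self-duality $\ep = \ep^\vee$ forces $\phi_\ep$ to admit, by Schur's lemma, a non-degenerate invariant bilinear form which is either symmetric or skew-symmetric, and this alternative is detected precisely by whether $L(s,\ep,S^2)$ or $L(s,\ep,\wedge^2)$ has a pole at $s=0$. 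The form pins down a classical subgroup of $GL(N,\C)$ through which $\phi_\ep$ factors, giving $\D{G_\ep}$ as $SO(2n+1,\C)$ or $SO(2n,\C)$ in the orthogonal cases and $Sp(2n,\C)$ in the symplectic case. Uniqueness of the isomorphism class of the twisted endoscopic datum is then a matter of checking that in the $SO(2n,\C)$ case the Galois action on $\phi_\ep$ selects exactly one quadratic twist among the $SO(2n,\eta)$.

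For the harder character identity my plan is a globalization argument of the type used repeatedly in Arthur's monograph. I would choose a totally real number field $\dot F$ with a distinguished place $u$ such that $\dot F_u\cong F$, and produce a global self-dual cuspidal $\dot\psi \in \Psm{N^{\theta_N}}$ whose local component at $u$ is $\ep$ and whose components elsewhere are prescribed (archimedean components with regular cohomological infinitesimal character; Steinberg or unramified at the remaining nonarchimedean places) so that the simple form of the stabilized twisted trace formula applies. The existence of $\dot\psi$ can be arranged by a standard Poincar\'e series construction together with the twisted Paley--Wiener theorem to ensure that $\r_\ep$ actually appears in the $u$-component of the twisted discrete spectrum of $GL(N)$. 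Theorem~\ref{thm: global functorial lifting} then attaches to $\dot\psi$ a global elliptic twisted endoscopic group $G_{\dot\psi}$; comparing the twisted stable trace formula for $GL(N)$ with the stable trace formula for $G_{\dot\psi}$ produces a global identity which, when tested against pure tensors $\lf = \lf_u \otimes (\otimes_{v\neq u}\lf_v)$ and combined with linear independence of characters, collapses to the local identity $f_{N^{\theta_N}}(\ep) = f^{G_\ep}(\ep)$ at the place $u$. The stable distribution $f(\ep)$ on $G_\ep$ is then defined by this identity, and its stability is inherited from the fact that the right-hand side factors through $\bar{\mathcal{SI}}(G_\ep,\chi)$.

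The principal obstacle is that the stabilizations used on both sides of the trace formula comparison already involve the stable distributions $h(\eta)$ for parameters $\eta$ of twisted endoscopic subgroups of $GL(N)$, so the very statement one is trying to prove at rank $N$ must be available for lower ranks. Consequently the local theorem must be woven into an induction on $N$ that is run in parallel with the global multiplicity formula, exactly as in Arthur's simultaneous induction. A secondary technical point is handling the distinction between the split and non-split quasi-split $SO(2n,\eta)$ in orthogonal type: one must verify that the Galois cocycle encoded by $\phi_\ep$ matches the discriminant of the form, and that the normalization of the transfer factor—fixed by the $\theta_N$-stable Whittaker datum chosen at the outset—is compatible with the Whittaker normalization used to set up the global comparison at $u$. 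Once these compatibilities are in place, the argument outlined above carries through.
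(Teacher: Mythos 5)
The paper does not prove this theorem; it is stated as a result of Arthur, cited as \cite[Theorem 6.1.1 and Corollary 6.8.1]{Arthur:2013}, so there is no in-paper proof to measure your proposal against. What you have written is a reasonable high-level sketch of Arthur's own argument, and you correctly identify its dominant structural feature: the local character identity cannot be isolated and proved on its own but must be established inside the simultaneous induction (on $N$, running the local theorems and the global stable multiplicity formula in parallel) that organizes the whole of Arthur's monograph.

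One substantive deviation is worth flagging. Arthur's proof of Theorem 6.1.1 is carried out entirely on the automorphic side: the dichotomy orthogonal/symplectic is defined by the pole of $L(s,\ep,S^2)$ versus $L(s,\ep,\wedge^2)$ at $s=0$, the endoscopic datum $G_\ep$ is extracted from the twisted trace formula comparison, and the local Langlands correspondence for $GL(N)$ enters only afterwards (Corollary 6.8.1, which in the present paper is Theorem~\ref{thm: parameter identification}). Your route inverts this order: you take the LLC parameter $\phi_\ep$, equip it with its Schur bilinear form, and read off $\D{G}_\ep$ before touching the trace formula. That is defensible, but it silently uses Henniart's theorem that LLC is compatible with the $S^2$ and $\wedge^2$ local factors—without it, the L-function definition of orthogonal/symplectic type in the statement would not be known to agree with the symmetry type of your bilinear form, and the "moreover" part of the theorem would be unproved. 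A secondary point: the Poincar\'e-series construction you invoke globalizes a supercuspidal, but $\ep$ is only an essentially discrete series; Arthur's globalization (his Lemma 6.2.2, of the same flavor as Lemma~\ref{lemma: multiple global lifting} in this paper) goes through the simple twisted trace formula precisely to handle this, and also to impose the self-duality and the local constraints at $S_\infty$ in one stroke. Finally, stability of $f(\ep)$ is not quite ``inherited from the right-hand side'': one must invoke the surjectivity of the transfer $\mathcal{I}(N^{\theta_N}) \twoheadrightarrow \mathcal{SI}(G_\ep)^{\Out_N(G_\ep)}$ to conclude that the identity actually defines a linear functional on stable orbital integrals. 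None of these points derail the outline—they are exactly the points Arthur's chapters 5–6 are built to handle—but they are where the real content lives.
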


Note $\Out_{N}(G) \cong \Sigma_{0}$, so the twisted endoscopic transfer $f^{G_{\ep}}$ lies in $\sH(G)$. As in the global case, one can define the substitute local Langlands group $\mathcal{L}_{\ep}$ and substitute local parameter $\ep$. One can also define the set $\cePbd{G}$ of substitute parameters for $G$, various centralizer groups of parameter $\ep$ in $\D{G}$, and various subsets in $\cePbd{G}$. Moreover, the formula~\eqref{formula: centralizer}, \eqref{formula: plus centralizer} and Lemma~\ref{lemma: discrete parameter} still hold in the local case.

The link between these substitute local parameters and the genuine local Langlands parameters is through the local Langlands correspondence for $GL(N)$ proved by Harris-Taylor \cite{HarrisTaylor:2001}, Henniart \cite{Henniart:2000} and Scholze \cite{Scholze:2013}. The local Langlands correspondence for $GL(N)$ gives a bijection between $\ePsm{N}$ and the set $\Pdt{N}$ of equivalence classes of $N$-dimensional irreducible unitary representations of $L_{F}$, which also induces a bijection for the self-dual ones. Later in the paper, we will identify them and use the notation $\Psm{N}$ as in the global case. The following theorem proved in (\cite{Arthur:2013}, Corollary 6.8.1) shows the substitute local parameters of $G$ correspond to the genuine local Langlands parameters of $G$ under this bijection. 

\begin{theorem}
\label{thm: parameter identification}
Suppose $\ep \in \ePsm{N^{\theta_{N}}}$, then $\ep$ is in $\cePsm{G}$ if and only if its corresponding Langlands parameter $\p$ factors through $\L{G}$.
\end{theorem}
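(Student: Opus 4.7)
The plan is to reduce the statement to the local Langlands correspondence for $GL(N)$, the Frobenius--Schur dichotomy for self-dual irreducible representations, the compatibility of the local Langlands correspondence with symmetric and exterior square $L$-factors, and Arthur's local functorial lifting (Theorem~\ref{thm: local functorial lifting}).

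First I would translate the self-duality condition. By the local Langlands correspondence for $GL(N)$, we identify $\ePsm{N}$ with the set of $N$-dimensional irreducible unitary Langlands parameters of $L_F$; under this identification, $\ep \cong \ep^{\vee}$ is equivalent to self-duality of the corresponding $\underline{\p}: L_F \to GL(N,\C)$. The Frobenius--Schur dichotomy for irreducible self-dual representations then forces $\underline{\p}$ to factor, up to conjugation in $GL(N,\C)$, through either $O(N,\C)$ or $Sp(N,\C)$, and these two cases are mutually exclusive.

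Next I would match the $L$-function type of $\ep$ with the structural type of $\underline{\p}$. Using the known preservation of symmetric and exterior square local $L$-factors under the local Langlands correspondence, $L(s,\ep,S^{2})$ has a pole at $s=0$ exactly when $\underline{\p}$ preserves a non-degenerate symmetric bilinear form, and dually for $L(s,\ep,\wedge^{2})$. Hence $\ep$ is of orthogonal (resp.\ symplectic) type precisely when $\underline{\p}$ factors through $O(N,\C)$ (resp.\ $Sp(N,\C)$). Combined with Theorem~\ref{thm: local functorial lifting}, which identifies the dual group of $G_{\ep}$ in terms of the type of $\ep$ and the parity of $N$, this shows that $G_{\ep} = G$ if and only if the image of $\underline{\p}$ lies in the image of the standard embedding $\xi: \L{G} \hookrightarrow \L{GL(N)}$, whose image in $GL(N,\C)$ is precisely the $\theta_{N}$-fixed special orthogonal or symplectic subgroup.

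Finally, I would unwind the definitions. The local analogue of Lemma~\ref{lemma: discrete parameter}(1), proved via formula~\eqref{formula: plus centralizer} applied to simple parameters, gives $\cePsm{G} = \ePsm{N^{\theta_{N}}} \cap \cePbd{G}$; for a simple parameter one has by construction $\ep^{\mathcal{E}} = \xi_{\ep}$, so $\ep \in \cePsm{G}$ is equivalent to $G_{\ep} = G$. Combined with the previous paragraph this yields the claimed equivalence. The main obstacle is that the heavy lifting is done by Theorem~\ref{thm: local functorial lifting} (which rests on Arthur's twisted trace formula machinery) together with the $L$-factor compatibility, so conceptually the present statement is a bookkeeping identification of the endoscopic datum $G_{\ep}$ with the group through which $\underline{\p}$ genuinely factors. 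One subtle point to address is that when $G = SO(2n)$, the embedding $\xi$ is only canonical up to $\Out_{N}(G) \cong \Sigma_{0}$, so the condition "$\underline{\p}$ factors through $\L{G}$" must be interpreted modulo $\Sigma_{0}$-conjugacy, which is consistent with the convention in force throughout Section~\ref{sec: Arthur's theory}.
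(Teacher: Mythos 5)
Your proposal departs from the paper's treatment---which proves nothing locally and simply cites Arthur's Corollary 6.8.1---and it contains one genuine gap. The reduction via Frobenius--Schur is sound in outline, but the ``known preservation of symmetric and exterior square local $L$-factors under the local Langlands correspondence'' that you invoke as a black box is itself a substantial theorem (Henniart), not something the paper or the ambient LLC statements supply; you should name it as a separate input, and be aware that Arthur's own proof of Corollary 6.8.1 is organized around the local intertwining relation and a globalization argument rather than around this compatibility.

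The gap is more serious: matching the dual group $\D{G}_{\ep}$ produced by Theorem~\ref{thm: local functorial lifting} with the classical group $O(N,\C)$ or $Sp(N,\C)$ through which $\underline{\p}$ factors does not pin down the full elliptic endoscopic datum, and $\cePsm{G}$ is defined in terms of that datum, i.e.\ in terms of the $L$-embedding $\xi_{\ep}$, not merely the dual group. When $\ep$ is of orthogonal type, the datum $(G_{\ep}, s_{\ep}, \xi_{\ep})$ has $\D{G}_{\ep} = SO(N,\C)$, but there are several non-isomorphic data in $\End{ell}{N^{\theta_{N}}}$ with this same dual group, distinguished by a quadratic character built into the $L$-embedding (for $N = 2n$ they give the distinct forms $SO(2n,\eta)$; for $N = 2n+1$ they give $Sp(2n)$ with the $\eta$-twisted embedding). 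The ``moreover'' clause of Theorem~\ref{thm: local functorial lifting} that you quote only gives $\D{G}_{\ep}$, so knowing the type of $\ep$ cannot by itself tell you which of these data $\xi_{\ep}$ is. To conclude $G_{\ep} = G$ you must additionally show that the quadratic character attached to $\xi_{\ep}$ by Theorem~\ref{thm: local functorial lifting} coincides with $\det \underline{\p}$ (equivalently, with the central character of $\ep$), and your argument does not supply this step; nor does it follow formally from Frobenius--Schur plus the dual-group statement. This identification is part of the actual content of Arthur's Corollary 6.8.1 and has to be established, not read off.
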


Since the elements in $\ePbd{N^{\theta_{N}}}$ corresponds to self-dual tempered representations of $GL(N)$ by the parabolic induction, the local Langlands correspondence also gives a bijection between $\Pbd{N^{\theta_{N}}}$ and $\ePbd{N^{\theta_{N}}}$. And we have the following corollary.

\begin{corollary}
\label{cor: parameter identification}
\begin{enumerate}
\item Suppose $\ep \in \ePbd{N^{\theta_{N}}}$, then $\ep$ is in $\cePbd{G}$ if and only if its corresponding Langlands parameter $\p$ factors through $\L{G}$.\\
\item Suppose $\ep \in \cePbd{G}$ corresponds to $\p \in \cPbd{G}$, then $S_{\ep} \cong S_{\underline{\p}}$ for any representative $\underline{\p}$ of $\p$.
\end{enumerate}
\end{corollary}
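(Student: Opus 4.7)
The plan is to reduce both parts to the simple case settled in Theorem~\ref{thm: parameter identification}, by exploiting the direct-sum decomposition of a self-dual bounded parameter into irreducible constituents. First I would write $\ep = l_{1}\ep_{1} \+ \cdots \+ l_{r}\ep_{r}$ with $\ep_{i} \in \ePsm{N_{i}}$, and via the local Langlands correspondence for $GL(N)$ (which is compatible with parabolic induction) obtain a matching decomposition $\underline{\p} = \bigoplus_{i} l_{i}\underline{\p}_{i}$, where each $\underline{\p}_{i} : L_{F} \to GL(N_{i},\C)$ is an irreducible bounded representation. The self-duality of $\ep$ transports to the involution $i \mapsto i^{\vee}$ on indices, giving the decomposition $I_{\ep} \sqcup J_{\ep} \sqcup J_{\ep}^{\vee}$, and the same involution acts on the $\underline{\p}_{i}$.

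For (1), the question is whether $\ep^{\mathcal{E}} = \bigoplus_{k \in K_{\ep}} l_{k}\xi_{k}$ factors through $\L{G}$ if and only if $\underline{\p}$ does. For each self-dual simple $\ep_{i}$ (with $i \in I_{\ep}$), Theorem~\ref{thm: parameter identification} identifies the type (orthogonal or symplectic) of $\ep_{i}$ with the type of $\underline{\p}_{i}$, and hence identifies $\xi_{i} : \L{G_{i}} \to \L{GL(N_{i})}$ with the honest Langlands parameter of $\underline{\p}_{i}$ composed with the standard embedding. For $j \in J_{\ep}$ there is nothing to check, as $\xi_{j}$ is by definition $g \rtimes w \mapsto \text{diag}\{g, {}^{t}g^{-1}\} \rtimes w$ and $\underline{\p}_{j} \oplus \underline{\p}_{j}^{\vee}$ has the same form. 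Consequently both $\ep^{\mathcal{E}}$ and $\underline{\p}$ factor through the same Levi subgroup of $\L{GL(N)}$, and their further factorization through $\L{G}$ becomes the identical combinatorial condition (parity of $l_{i}$ for $i \in I_{\ep, S}$ when $G$ is orthogonal, and symmetrically).

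For (2), I would invoke the explicit formula \eqref{formula: centralizer} for $S_{\ep}$ and note that $S_{\underline{\p}}$ admits the entirely parallel description by isotypic decomposition: the centralizer in $\D{G}$ of a direct sum representation decomposes as a product of $O(l_{i},\C)$ (for self-dual components of orthogonal type whose ambient group is $\D{G}$), $Sp(l_{i},\C)$ (for those of symplectic type), and $GL(l_{j},\C)$ (for pairs $\{j, j^{\vee}\}$), with the same constraint $\e_{\p}^{+}$ arising from the determinant condition that forces the image into $SO$ rather than $O$ when $\D{G}$ is orthogonal. Since the types and multiplicities of simple constituents match by part (1), the factor-by-factor identification yields $S_{\ep} \cong S_{\underline{\p}}$.

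The main subtlety I expect is verifying that Theorem~\ref{thm: parameter identification}, which identifies the \emph{endoscopic datum} of a self-dual simple $\ep_{i}$ with the dual group of the image of $\underline{\p}_{i}$, indeed guarantees that the \emph{embeddings} $\xi_{i}$ and $\underline{\p}_{i}$ agree up to $\D{G}$-conjugacy rather than merely having conjugate images; this uses that for a simple bounded parameter the $L$-embedding into $\L{GL(N_{i})}$ is essentially determined by its image, so the substitute structure on $\mathcal{L}_{\ep}$ is faithfully reflected by the honest Langlands parameter on $L_{F}$. Once this point is granted, both parts of the corollary follow formally.
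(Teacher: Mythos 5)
Your proposal is correct and follows essentially the same route as the paper: decompose the bounded self-dual parameter into irreducible constituents and reduce to the simple case handled by Theorem~\ref{thm: parameter identification}, then match centralizers factor by factor via the explicit formula \eqref{formula: centralizer}. The paper compresses this into a one-line remark (noting the decomposition $\p = l_{1}\p_{1} \+ \cdots \+ l_{r}\p_{r}$ with each $\p_{i}$ irreducible); your write-up just makes the reduction explicit.
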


For the proof, one just needs to notice for $\p \in \cPbd{G}$, there is a decomposition through the twisted endoscopic embedding to $GL(N, \C)$  
\[
\p = l_{1}\p_{1} \+ \cdots \+ l_{r} \p_{r},
\]
where $\p_{i} \in \Pdt{N_{i}}$ is irreducible.

As a consequence of Theorem~\ref{thm: parameter identification} and Corollary~\ref{cor: parameter identification}, one can identify $\cePbd{G}$ with $\cPbd{G}$ through the twisted endoscopic embedding $\xi: \L{G} \rightarrow \L{GL(N)}$. And we also denote $S_{\ep}$ by $S_{\p}$. 

\subsection{Local theory}
\label{subsec: local theory}

Now we can state the main local result of Arthur's theory (\cite{Arthur:2013}, Theorem 1.5.1 and Theorem 2.2.1) for quasisplit symplectic groups and special even orthogonal groups in the tempered case. Let $F$ be local. We fix a $\theta_{N}$-stable Whittaker datum $(B_{N}, \Lambda)$ for $GL(N)$. We also fix the twisted endoscopic embedding $\xi: \L{G} \rightarrow \L{GL(N)}$.

\begin{theorem}
\label{thm: LLC}

For every $\p \in \cPbd{G}$, one can associate it with a finite set $\cPkt{\p}$ of $\cPkt{temp}(G(F))$ such that it satisfies the following properties:

\begin{enumerate}

\item 
The distribution 
\begin{align}
\label{eq: stable distribution}
f(\p) := \sum_{[\r] \in \cPkt{\p}} f_{G}(\r), \, \, \,  f \in \sH(G)
\end{align}
is stable.

\item 
If we normalize the intertwining operator $A_{\r_{\p}}(\theta_{N})$ such that it preserves the Whittaker functional on $\r_{\p}$, then 
\begin{align}
\label{eq: twisted character identity GL(N)}
f_{N^{\theta_{N}}}(\p) = f^{G}(\p)
\end{align}
for $f \in \H(N)$ and the twisted endoscopic transfer $f^{G} \in \sH(G)$.

\item
There is a disjoint decomposition
\[
\cPkt{temp}(G(F)) = \bigsqcup_{\p \in \cPbd{G}} \cPkt{\p}.
\]

\end{enumerate}

\end{theorem}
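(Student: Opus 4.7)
The plan is to follow Arthur's strategy in the monograph: establish the theorem by a simultaneous induction with its global counterpart, driven by the stabilization of the $\theta_{N}$-twisted discrete trace formula for $GL(N)$ compared with the stable trace formulas of its twisted elliptic endoscopic groups.

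First I would install the global input. The stabilized twisted discrete trace formula reads
\[
I^{GL(N), \theta_{N}}_{disc}(f) \;=\; \sum_{G' \in \End{ell}{N^{\theta_{N}}}} \iota(N, G') \, S^{G'}_{disc}(f^{G'}), \qquad f \in \H(GL(N)),
\]
where $G'$ ranges over the quasisplit symplectic and special orthogonal endoscopic groups (and their products). Combined with Ng\^o's Fundamental Lemma, Waldspurger's transfer, Arthur's own stabilization of the ordinary trace formula for classical $G$, and Theorem~\ref{thm: local functorial lifting} which already pins down the stable linear form $f(\ep)$ on each $G_{\ep}$, this identity is the engine that produces the packets.

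Second, given a local $\p \in \cPbd{G}$ at a place $u$, I would construct a global substitute parameter $\dot\p \in \cP{G}$ whose localization at $u$ is $\p$ and whose remaining local components are controlled (unramified outside a finite set, and at auxiliary places lying in packets already known by induction on $N$). Comparison of the spectral expansions of both sides of the twisted trace formula then forces an identity of the shape $f_{N^{\theta_{N}}}(\dot\p) = \sum_{G'} f^{G'}(\dot\p)$; after separating families of Satake parameters via the theory of multipliers and isolating the contribution of $G$ via strong multiplicity one in $\cPsm{N}$, one extracts the local identity \eqref{eq: twisted character identity GL(N)} at $u$. The stable linear form $f(\p)$ on $\sH(G)$ is then \emph{defined} as the right-hand side, and $\cPkt{\p}$ is read off as the support of this character on $\cPkt{temp}(G(F))$, well-defined up to the $\Sigma_{0}$-action in the even orthogonal case.

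Third, property (1) becomes tautological from the construction, since $f(\p)$ is obtained from a stable distribution on $GL(N)$ through twisted endoscopic transfer. In property (3), disjointness follows from strong multiplicity one for $GL(N)$ applied to distinct images under the embedding $\xi: \L{G}\rightarrow \L{GL(N)}$, while exhaustion requires showing that every class in $\cPkt{temp}(G(F))$ appears in some $\cPkt{\p}$. For discrete series this is obtained from the elliptic part of the spectral side of the trace formula for $G$; for general tempered representations one reduces to the discrete case via Langlands' classification and the theory of $R$-groups, the latter being controlled by the local intertwining relation.

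The principal obstacle is the exhaustion in (3) combined with the stability of $f(\p)$ for non-discrete $\p$, both of which rest on the local intertwining relation. This relation has to be carried through the induction as a running hypothesis and verified simultaneously with the packet construction; in the special even orthogonal case it holds only up to the $\theta_{0}$-ambiguity, which is exactly why the theorem is phrased for $\Sigma_{0}$-orbits $\cPkt{\p} \subseteq \cPkt{temp}(G(F))$ rather than for individual isomorphism classes. A secondary but delicate point is the normalization in (2): one must verify that the Whittaker-normalized intertwiner $A_{\r_{\p}}(\theta_{N})$ produces precisely the right sign in the character identity, which amounts to Arthur's sign lemma and ultimately rests on Shahidi's theory of local coefficients for generic representations.
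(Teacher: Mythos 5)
Your sketch is a compressed but essentially faithful summary of the strategy in Arthur's monograph. However, the paper does not prove Theorem~\ref{thm: LLC} at all: immediately before the statement the paper writes that this is Arthur's result, citing Theorem 1.5.1 and Theorem 2.2.1 of \cite{Arthur:2013}, and it is taken as a black box on which the rest of the paper is built. There is therefore no internal proof to compare your proposal against; the expected answer is a one-line citation rather than a reconstructed argument.

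Since you went ahead and sketched Arthur's proof, two small points of attribution are worth flagging. First, the reason the packet $\cPkt{\p}$ is only determined as a $\Sigma_{0}$-orbit in the even orthogonal case is not primarily the $\theta_{0}$-ambiguity in the local intertwining relation; the more immediate cause is that $\Out_{N}(G)\cong\Sigma_{0}$ acts on the twisted endoscopic datum $(G, s, \xi)$ for $GL(N)^{\theta_{N}}$, so the twisted transfer $f\mapsto f^{G}$ lands in $\Sigma_{0}$-invariant functions $\sH(G)$, and the stable distribution $f(\p)$ can only see the packet modulo $\Sigma_{0}$. Second, the compatibility of the Whittaker normalization of $A_{\r_{\p}}(\theta_{N})$ with the normalization imposed by the trace formula is established in Arthur's monograph via the theory of the standard model; while Shahidi's local coefficients do enter, the sign lemma (Lemma 4.3.1) you invoke is used for the global normalizing factors $r_{P}(w,\p_{M})$ in the spectral expansion, not directly for fixing the intertwiner in property (2).
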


Since the transfer map $\mathcal{I}(N^{\theta_{N}}) \rightarrow \mathcal{SI}(G)^{\Out_{N}(G)}$ is surjective (see Section~\ref{subsec: the transfer map}), $\p$ determines the stable distribution \eqref{eq: stable distribution} on $G(F)$ through \eqref{eq: twisted character identity GL(N)}. In this way, $\p$ determines the L-packet $\cPkt{\p}$. If $G$ is a product of symplectic groups and special even orthogonal groups, we define a group of automorphisms of $G$ by taking the product of $\Sigma_{0}$ on each factors, and we denote this group again by $\Sigma_{0}$. We denote the set of $\Sigma_{0}$-orbits in $\Pkt{temp}(G(F))$ by $\cPkt{temp}(G(F))$ and the set of $\Sigma_{0}$-orbits in $\Pbd{G}$ by $\cPbd{G}$. Let $\sH(G)$ be the $\Sigma_{0}$-invariant functions in $\H(G)$. Then part (1) and (3) of this theorem can also be generalized to this case, in particular, the L-packets of $G$ are formed by taking tensor products of those of each factor. If $G' \in \End{}{G^{\theta}}$ for $\theta \in \Sigma_{0}$, then $G' \cong M_{l} \times G'_{-}$, where $M_{l}$ is a product of general linear groups, and $G'_{-}$ is again a product of symplectic groups and special even orthogonal groups. We can extend the action of $\Sigma_{0}$ to $G'$ by letting it act trivially on $M_{l}$. Then we can define $\cPbd{G'}$ and $\cPkt{temp}(G'(F))$ similarly. Part (1) and (3) of this theorem can again be extended further to this case.

\begin{theorem}
\label{thm: character identity}
\begin{enumerate}
\item
For $\p \in \cPbd{G}$, there is a canonical pairing between $\cPkt{\p}$ and $\S{\p}$, which induces an inclusion
\begin{align}
\label{eq: canonical pairing for G}
[\r] \longrightarrow <\cdot, \r>, \,\,\,\, [\r] \in \cPkt{\p},
\end{align}
from $\cPkt{\p}$ into the group $\D{\S{\p}}$ of characters on $\S{\p}$, such that $<\cdot, \r> = 1$ if $G$ and $\r$ are unramified. It becomes a bijection when $F$ is nonarchimedean. 
\item
Suppose $s$ is a semisimple element in $\cS{\p}$ and $(G', \p') \longrightarrow (\p, s)$ with $G' \in \End{}{G}$ and $\p' \in \cPbd{G'}$. The packet $\cPkt{\p'}$ can be defined by the generalization of the previous theorem. If $x$ is the image of $s$ in $\S{\p}$, then
\begin{align}
\label{eq: character relation for G}
f^{G'}(\p') = \sum_{[\r] \in \cPkt{\p}} <x, \r> f_{G}(\r) , \,\,\,\,  f \in \sH(G).       
\end{align}
\end{enumerate}
\end{theorem}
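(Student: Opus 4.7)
The plan is to prove (1) and (2) simultaneously by starting from the character identity in (2) and then reading (1) off as its consequence. Given a semisimple $s \in \cS{\p}$ with $(G',\p') \to (\p,s)$, Theorem~\ref{thm: LLC}(1) applied to the product group $G'$ (in the generalization noted after that theorem) produces a stable distribution $f' \mapsto f'(\p')$ on $\bar{\mathcal{SI}}(G')$. Composing with the Langlands--Shelstad--Kottwitz transfer $f \mapsto f^{G'}$ from $\sH(G)$ into $\bar{\mathcal{SI}}(G')^{\Out_{G}(G')}$ yields an invariant linear functional $f \mapsto f^{G'}(\p')$ on $\sH(G)$. The first task is to show that this functional is a finite $\mathbb{C}$-linear combination of characters $f_{G}(\r)$ with $[\r] \in \cPkt{\p}$. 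For this I would use transitivity of twisted endoscopic transfer through $GL(N)$: the $\theta_{N}$-twisted character identity \eqref{eq: twisted character identity GL(N)} for $\p$ combined with its analogue for $\p'$ composed with $\xi$ shows that any tempered constituent contributing to $f^{G'}(\p')$ must have Langlands parameter factoring through $\xi \circ \underline{\p}'$, and hence lies in $\cPkt{\p}$ by the disjoint decomposition of Theorem~\ref{thm: LLC}(3).

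Writing the resulting expansion as $f^{G'}(\p') = \sum_{[\r] \in \cPkt{\p}} c(\r,s)\, f_{G}(\r)$, the next step is to show that $c(\r,s)$ descends to a function $\langle x,\r\rangle$ of the image $x$ of $s$ in $\S{\p}$. Invariance modulo $Z(\D{G})^{\Gal{F}}$ is built into the isomorphism class of the twisted endoscopic datum $(G',s,\xi)$. Invariance modulo the identity component $\cS{\p}^{0}$ is more subtle: I would deduce it from the stability clause of Theorem~\ref{thm: LLC}(1) applied to $G'$ together with the observation that translating $s$ by an element of $\cS{\p}^{0}$ produces an isomorphic endoscopic datum whose transferred stable distribution coincides with the original. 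Granted this, the formula in (2) becomes the definition of $\langle x,\r\rangle$, and the whole construction is clearly canonical since the coefficients are uniquely determined by linear independence of tempered characters on $\sH(G)$.

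To finish (1) I would verify that $x \mapsto \langle x,\r\rangle$ is a genuine character of $\S{\p}$. Multiplicativity $\langle x_{1}x_{2},\r\rangle = \langle x_{1},\r\rangle\langle x_{2},\r\rangle$ reduces, via the iterated product structure on twisted endoscopic data $(G'', s_{1}s_{2}, \xi'')$, to the uniqueness portion of Theorem~\ref{thm: LLC}(1); the normalization $\langle \cdot,\r\rangle = 1$ in the unramified case follows because the Whittaker-normalized spherical function transfers to the Whittaker-normalized spherical function on $G'$, forcing the coefficient of the unramified $\r$ to be $1$. Injectivity of $[\r] \mapsto \langle \cdot,\r\rangle$ is then immediate: if two classes gave the same character of $\S{\p}$, varying $s$ in \eqref{eq: character relation for G} and using linear independence of tempered characters on $\sH(G)$ would force a contradiction.

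The main obstacle is the surjectivity assertion in (1) when $F$ is nonarchimedean, namely that every character of $\S{\p}$ is realized by some $[\r] \in \cPkt{\p}$. This is intrinsically global: one would globalize $\p$ to a parameter $\dot{\p} \in \cP{G}$ whose localization at the chosen place returns $\p$, and choose enough auxiliary ramified places so that the natural map $\S{\dot{\p}} \twoheadrightarrow \S{\p}$ is surjective. Theorem~\ref{thm: global L-packet for G} together with the stable multiplicity formula (Conjecture~\ref{conj: global conjecture} in the tempered case, which is a theorem in Arthur's book) then forces every character of $\S{\dot{\p}}$ to be attained by some discrete automorphic constituent of $G$, and projecting to the chosen place supplies all the missing local characters. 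Arranging the surjectivity $\S{\dot{\p}} \twoheadrightarrow \S{\p}$ while keeping the global parameter compatible with the trace formula is the delicate point, handled by prescribing the local components of $\dot{\p}$ at auxiliary archimedean and nonarchimedean places through pseudo-coefficient constructions.
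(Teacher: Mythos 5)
This theorem is not proved in the paper at all: it is quoted verbatim from Arthur's monograph (the text introduces Theorem~\ref{thm: LLC} and Theorem~\ref{thm: character identity} as ``\cite{Arthur:2013}, Theorem 1.5.1 and Theorem 2.2.1''), and the whole point of the surrounding material is to treat Arthur's tempered theory for $G$ as a black box while extending it to similitude groups. So there is no ``paper's proof'' to match your argument against; what you have written is a compressed outline of Arthur's own several-hundred-page local-global induction.

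Assessed as such an outline, it captures the right high-level ideas (transfer through $GL(N)$, globalization for surjectivity), but three steps as written have real gaps. First, identifying the support of $f \mapsto f^{G'}(\p')$ as exactly $\cPkt{\p}$ ``by transitivity of twisted endoscopic transfer'' is circular: that transitivity tells you the $GL(N)$-parameter of any contributing constituent, but to conclude that it lies in $\cPkt{\p}$ you already need the partition of Theorem~\ref{thm: LLC}(3), which Arthur establishes \emph{simultaneously} with Theorem~\ref{thm: character identity} in a single induction on $N$ --- you cannot use one as a prior input to the other. Second, multiplicativity $\langle x_1 x_2, \r\rangle = \langle x_1,\r\rangle\langle x_2,\r\rangle$ does not reduce to any ``iterated product structure on twisted endoscopic data'': for $s_1, s_2 \in \cS{\p}$ there is no canonical factorization of the datum attached to $s_1 s_2$ through those attached to $s_1$ and $s_2$, and Arthur proves multiplicativity by a delicate analysis tied to the local intertwining relation (Theorem 2.4.1 of his book), not by composing transfers. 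Third, the stable multiplicity formula you invoke for surjectivity is itself one of the statements proved inside the same global induction, so ``Theorem~\ref{thm: global L-packet for G} together with the stable multiplicity formula then forces \ldots'' begs the question unless you are explicit that these are proved together, in which case the globalization argument needs the much more careful bookkeeping that occupies Chapter 6 of Arthur's book. None of this means your outline is headed in the wrong direction, only that each of the three steps hides an argument as long as the one you set out to give.
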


When $G$ is special even orthogonal, one could further characterize those $\theta_{0}$-stable tempered representation.
It is a theorem proved in (\cite{Arthur:2013}, Theorem 2.2.3).

\begin{theorem}
\label{thm: twisted LLC}
Suppose $G$ is a special even orthogonal group and $\p \in \cPbd{\com{G}}$.
\begin{enumerate}
\item For any $[\r] \in \cPkt{\p}$, $\r$ is a $\theta_{0}$-stable representation of $G(F)$ and hence has an extension $\r^{+}$ to $G^{+}(F) = G(F) \rtimes <\theta_{0}>$. 
\\
\item Suppose $s$ is a semisimple element in $\com{\cS{\p}}$ and $(G', \p') \longrightarrow (\p, s)$ with $G' \in \End{}{\com{G}}, \p' \in \cPbd{G'}$, then 
\begin{align}
\label{eq: theta twisted character relation for G}
f'(\p') = \sum_{[\r] \in \cPkt{\p}} <x, \r^{+}>f_{G^{\theta_{0}}}(\r), \,\,\,\, f \in \sH(G),  
\end{align}
where $x$ is the image of $s$ in $\com{\S{\p}}$, $<\cdot, \r^{+}>$ is an extension of the character $<\cdot, \r>$ to $\S{\p}^{+} = <\S{\p}^{\theta_{0}}>$, and the intertwining operator $A_{\r}(\theta_{0}) = \r^{+}(\theta_{0})$.

\end{enumerate}
\end{theorem}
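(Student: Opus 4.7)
The plan is to prove both parts simultaneously by exploiting the $\theta_{0}$-twisted endoscopy attached to $G$. For part (1), I would start from the observation that $\p \in \cPbd{G^{\theta_{0}}}$ means $\p^{\theta_{0}} = \p$, so the $\Sigma_{0}$-orbits in the preimage of $\cPkt{\p}$ are permuted among themselves by $\theta_{0}$; however this is weaker than individual self-duality $\r^{\theta_{0}} \cong \r$. To upgrade, I would use Lemma~\ref{lemma: discrete parameter}(4) together with formula \eqref{formula: plus centralizer} to produce a semisimple element $s \in \cS{\p}^{\theta_{0}}$ and a corresponding pair $(G', \p') \to (\p, s)$ with $G' \in \End{}{G^{\theta_{0}}}$ and $\p' \in \cPbd{G'}$, so that the stable distribution $f'(\p')$ on $G'$ transfers under the Langlands--Shelstad--Kottwitz transfer to a $\theta_{0}$-twisted invariant distribution on $G$.

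The key step is then to express this transferred distribution in terms of $\theta_{0}$-twisted characters on $G(F)$. By linear independence of $\theta_{0}$-twisted characters on $\bar{\mathcal{I}}(G^{\theta_{0}})$, any such invariant distribution must be a linear combination of $f_{G^{\theta_{0}}}(\r)$ for $\theta_{0}$-stable $\r$. Since a representation $\r$ with $\r^{\theta_{0}} \not\cong \r$ has vanishing $\theta_{0}$-twisted character, and since $f^{G'}(\p')$ is nonzero by the endoscopic setup, every $\r$ that contributes to $f^{G'}(\p')$ must be $\theta_{0}$-stable. Combined with the untwisted character identity \eqref{eq: character relation for G} of Theorem~\ref{thm: character identity}, which shows that the characters $<\cdot, \r>$ for $[\r] \in \cPkt{\p}$ span a space of functions on $\S{\p}$ of dimension equal to $|\cPkt{\p}|$, one deduces that every member of $\cPkt{\p}$ must itself be $\theta_{0}$-stable, establishing (1).

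For part (2), having fixed an extension $\r^{+}$ of each $\r$ to $G^{+}(F) = G(F) \rtimes <\theta_{0}>$, the intertwining operator $A_{\r}(\theta_{0}) = \r^{+}(\theta_{0})$ is determined up to a sign, and the twisted character $f_{G^{\theta_{0}}}(\r)$ is thereby well-defined. I would then write
\[
f^{G'}(\p') = \sum_{[\r] \in \cPkt{\p}} d(\r, s)\, f_{G^{\theta_{0}}}(\r)
\]
and \emph{define} the extension $<x, \r^{+}> := d(\r, s)$, where $x$ is the image of $s$ in $\com{\S{\p}}$. One must then check that this prescription depends only on $x$ and that it genuinely extends $<\cdot, \r>$ on $\S{\p}$. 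Both consistency statements I would obtain via a global argument: realize the local $\p$ as the component at a distinguished place of a global substitute parameter of sufficiently simple type, and invoke the stable twisted trace formula together with the global analogue of Theorem~\ref{thm: LLC} to propagate the identity from other places.

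The main obstacle is precisely this global input. Showing both the nonvanishing of $f^{G'}(\p')$ and the canonicity of the extension $<\cdot, \r^{+}>$ requires embedding $\p$ into a global parameter with controlled behavior away from the place of interest; in the nonarchimedean case this is delicate because the local $\theta_{0}$-twisted characters are only well-defined up to the sign ambiguity in $\r^{+}$, and one needs the global trace formula to pin down the correct normalization compatibly across all $[\r] \in \cPkt{\p}$ and all lifts $s$ of $x$. Once this global lifting is in hand, the linear independence of $\theta_{0}$-twisted characters and the comparison of the two sides of the twisted stable trace formula reduce the remaining issues to bookkeeping on $\S{\p}^{+} = <\S{\p}^{\theta_{0}}>$.
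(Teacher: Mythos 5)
This theorem is not proved in the paper at all: it is quoted directly from Arthur (\cite{Arthur:2013}, Theorem~2.2.3), whose proof runs through his entire global induction. So there is no ``paper's own proof'' to compare against line by line; what you have produced is a sketch aligned in spirit with Arthur's strategy (twisted endoscopic transfer plus the stabilized twisted trace formula), but it has a concrete gap in part~(1) and part~(2) is essentially deferred rather than proved.

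The gap in part~(1) is the step from ``every $\r$ contributing to $f^{G'}(\p')$ must be $\theta_{0}$-stable'' to ``every member of $\cPkt{\p}$ is $\theta_{0}$-stable.'' You try to bridge this using Theorem~\ref{thm: character identity}, saying the untwisted characters $\langle\cdot,\r\rangle$ span a space of dimension $|\cPkt{\p}|$; but that pairing lives on $\S{\p}$, not on $\S{\p}^{+}$, and says nothing about which $\Sigma_{0}$-orbits $[\r]$ are singletons. A priori the $\theta_{0}$-twisted transfer $f^{G'}(\p')$ could be supported on a proper subset of $\cPkt{\p}$, or even on representations outside the packet, and linear independence of twisted characters only constrains the expansion, not the membership in $\cPkt{\p}$. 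To close this you actually need the comparison with the $(GL(N), \theta_{N})$ side: from \eqref{eq: twisted character identity GL(N)} and the global trace formula one shows that the full stable packet distribution $f(\p)$ and the twisted expansion match coefficient by coefficient under the map $\a$, which is the mechanism forcing every $[\r]\in\cPkt{\p}$ to appear and hence to be $\theta_{0}$-stable. Also, minor point: saying ``a representation $\r$ with $\r^{\theta_{0}}\ncong\r$ has vanishing $\theta_{0}$-twisted character'' is not quite right --- such a representation has no $\theta_{0}$-twisted character at all (there is no intertwining operator), which is a slightly different statement and matters when you expand an arbitrary twisted-invariant distribution.

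For part~(2), defining $\langle x,\r^{+}\rangle := d(\r, s)$ is the right move, but as you yourself note, the well-definedness (dependence only on $x$, not $s$), the multiplicativity of $\langle\cdot,\r^{+}\rangle$ on $\S{\p}^{+}$, the consistency of sign choices across the packet, and the nonvanishing of the transfer all rest on a global construction that you describe but do not carry out. This is precisely where Arthur's proof is long: it requires embedding $\p$ into a global parameter, the full machinery of the stable multiplicity formula, and the sign lemma to control normalizations of the twisted intertwining operators. Your outline is plausible but the hard content --- the very content Arthur devotes most of Chapters~4--6 to --- is acknowledged as missing rather than supplied.
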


\begin{remark}
\label{rk: twisted LLC}
In the $\theta_{0}$-twisted character relation \eqref{eq: theta twisted character relation for G}, both the extensions of representation $\r^{+}$ and character $<\cdot, \r^{+}>$ are not uniquely determined, but the product $<\cdot, \r^{+}>f_{G^{\theta_{0}}}(\r)$ is determined and depends only on $\r$. Moreover, \eqref{eq: twisted character identity GL(N)} is the analogue of \eqref{eq: theta twisted character relation for G} for general linear groups, where we have fixed the extension of $\r_{\p}$ using the Whittaker datum and taken the extended character $<\cdot, \r^{+}>$ to be trivial. It is not hard to see how to generalize both Theorem~\ref{thm: LLC} and Theorem~\ref{thm: twisted LLC} to products of symplectic groups and special even orthogonal groups. 
\end{remark}

Because one does not know whether all the local constituents of a unitary cuspidal automorphic representation of $GL(N)$ are tempered, i.e. the generalized Ramanujan conjecture, one has to deal with a more general set of parameters $\uP{N}$, which is defined as follows. Let $\nu^{a}$ denote the map $| \cdot |_{F}^{a}$ of $W_{F}$ for $a \in \R$. Then,
\begin{align*}
\uP{N} := \{& \p = \p_{1} \+ \cdots \+ \p_{r} \+ ( \nu^{a_{1}}\p_{r+1} \+ \nu^{-a_{1}}\p_{r+1} ) \cdots \+ (\nu^{a_{s}}\p_{r+s} \+ \nu^{-a_{s}}\p_{r+s}) : \\ 
& \p_{i} \in \Psm{N_{i}} \text{ for } 1 \leqslant i \leqslant r+s \text{ and } 0 < a_{j} < 1/2 \text{ for } 1 \leqslant j \leqslant s \}.
\end{align*}
From the classification of the unitary dual of $GL(N)$ (cf. \cite{Tadic:2009} \cite{Vogan:1986} archimedean case, \cite {Tadic:1986} nonarchimedean case), we know the associated irreducible admissible representation $\r_{\p}$ for any $\p \in \uP{N}$ is unitary. And we have the following fact.

\begin{proposition}
\label{prop: local constituents of cuspidal representations}
If $F$ is global and $\p \in \Psm{N}$, then $\p_{v} \in \uP{N_{v}}$.
\end{proposition}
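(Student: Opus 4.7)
The plan is to combine the classification of the local unitary dual of $GL(N)$ with the Jacquet--Shalika bound on the local components of cuspidal automorphic representations of $GL(N)$.

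First, I would observe that by Flath's tensor product theorem and the unitarity of $\p$, each local component $\p_v$ is an irreducible admissible unitary representation of $GL(N, F_v)$. Since $\p$ is cuspidal, it is globally generic (Shalika), so each $\p_v$ is generic. Applying the classification of the irreducible unitary dual of $GL(N, F_v)$ (Tadi\'c \cite{Tadic:1986} in the nonarchimedean case and Vogan \cite{Vogan:1986} in the archimedean case), together with the Langlands classification, $\p_v$ is realized as the (irreducible) full parabolic induction from a representation
\[
\nu^{b_{1}}\delta_{1} \otimes \cdots \otimes \nu^{b_{k}}\delta_{k}
\]
of a standard Levi, where each $\delta_{i}$ is a unitary discrete series of some $GL(N_{i}, F_{v})$ and $b_{i} \in \R$. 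Unitarity of the fully induced representation forces the multiset $\{(b_{i}, \delta_{i})\}$ to be stable under $(b, \delta) \mapsto (-b, \delta)$, so after reordering we can group the summands into singletons with $b_{i} = 0$ and complementary pairs $(\nu^{a_{j}}\delta_{j}, \nu^{-a_{j}}\delta_{j})$ with $a_{j} > 0$. This gives a decomposition of the shape appearing in the definition of $\uP{N_{v}}$, a priori only with $0 < a_{j} \leqslant 1/2$.

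The main point --- and the only real obstacle --- is to improve this to the strict bound $a_{j} < 1/2$. This is precisely the content of the theorem of Jacquet--Shalika: for a unitary cuspidal automorphic representation $\p$ of $GL(N)$, every local component $\p_{v}$ has Langlands parameter whose real exponents satisfy $|b_{i}| < 1/2$. The key input is the nonvanishing of $L(s, \p \times \p^{\vee})$ on the line $\Re(s) = 1$ together with the fact that its only pole in $\Re(s) \geqslant 1$ is a simple pole at $s = 1$; equivalently, one may invoke the Moeglin--Waldspurger description of the residual spectrum of $GL(N)$, from which it follows that exponents $a_{j} = 1/2$ occur only in residues of Eisenstein series and hence are incompatible with cuspidality of $\p$.

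Combining the classification of the unitary generic dual with the Jacquet--Shalika strict bound yields $\p_{v} \in \uP{N_{v}}$ directly from the definition of the latter set.
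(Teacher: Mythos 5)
Your conclusion is right and your opening steps --- Flath decomposition, Shalika's genericity of local components, and appeal to the local unitary dual classification --- match the paper's proof in the appendix. But the step you flag as ``the only real obstacle'' rests on a misreading of the classification, and the route you take to clear it is a genuine departure. You claim that the classification of the irreducible unitary generic dual of $GL(N,F_v)$ a priori only yields $0 < a_{j} \leqslant 1/2$. It does not: in Tadi\'c's theorem (and the archimedean analogue) the complementary series $\nu^{\alpha}\delta \times \nu^{-\alpha}\delta$ with $\delta$ a unitary discrete series are irreducible and unitary precisely for $\alpha$ in the \emph{open} interval $(0,1/2)$; at $\alpha=1/2$ the induced representation is already reducible and its unique generic constituent is a discrete series, hence absorbed into an $a_{j}=0$ term. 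So the irreducible generic unitary representations of $GL(N,F_v)$ are exactly those of the form $\uP{N_v}$, and once genericity and unitarity are known for $\p_v$ the conclusion is immediate. This is the paper's argument, which stays purely local after the genericity step. Your Jacquet--Shalika detour (nonvanishing and pole location of $L(s,\p\times\p^{\vee})$) also establishes $|a_{j}|<1/2$ and is a valid alternative --- historically it predates the complete unitary dual classification --- but it exchanges local representation theory for global analytic input that the proposition does not need. The Moeglin--Waldspurger aside is weaker than you present it: the description of the residual spectrum does not by itself transcribe into a bound on the local exponents of a cuspidal representation without further argument, so I would not advertise it as equivalent to the $L$-function approach.
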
  

Correspondingly, we can define
\[
\cuP{G} : = \cP{G} \cap \uP{N}.
\]
Theorem~\ref{thm: LLC},  Theorem~\ref{thm: character identity} and Theorem~\ref{thm: twisted LLC} can be extended to the case $\p \in \cuP{G}$ except for the constituents of $\cPkt{\p}$ may be non-tempered. In fact, for any $\p \in \cuP{G^{\theta}}$ with $\theta \in \Sigma_{0}$, $\p$ can be regarded as $\p_{M, \lambda} := \p_{M} \otimes (\lambda \circ |\cdot|_{F})$ for some $\theta$-stable Levi subgroup $M$ (which also admits a $\theta$-stable parabolic subgroup $P \supseteq M$), where $\p_{M} \in \cPbd{M^{\theta}}$ and $\lambda \in \mathfrak{a}^{*}_{M}$ lies in the open chamber determined by $P$. Let $\cPkt{\p_{M, \lambda}} := \cPkt{\p_{M}} \otimes e^{<H_{M}(\cdot), \lambda>}$. Then one can just define $\cPkt{\p}$ to be the irreducible constituents of the parabolic induction $\mathcal{I}_{P}( \cPkt{\p_{M, \lambda}} )$. Note that the $\theta$-twisted endoscopy transfer is compatible with this parabolic induction, and also $\S{\p_{M}} \cong \S{\p}$, then it is enough to know the following proposition.

\begin{proposition}
\label{prop: irreducibility of non-unitary induced representation}
Suppose $F$ is local, $\p \in \cuP{G}$, and $\p$ can be regarded as $\p_{M, \lambda}$ where $\p_{M} \in \cPbd{M}$ and $\lambda \in \mathfrak{a}^{*}_{M}$ lies in some open chamber determined by $P \supseteq M$. Then for any $[\r_{M}] \in \cPkt{\p_{M}}$, the induced representation $\mathcal{I}_{P} (\r_{M, \lambda}) $ is irreducible.
\end{proposition}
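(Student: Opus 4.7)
The plan is to reduce the irreducibility of $\mathcal{I}_{P}(\r_{M,\lambda})$ to the analogous classical irreducibility statement on $GL(N)$, via the twisted endoscopic embedding $\xi:\L{G}\hookrightarrow \L{GL(N)}$. Under $\xi$, the parameter $\p\in\cuP{G}$ corresponds to a self-dual parameter $\xi(\p)\in\uP{N^{\theta_{N}}}$, and the datum $(P,M,\p_{M},\lambda)$ corresponds to a datum $(P_{N},M_{N},\xi(\p_{M}),\lambda')$ on the $GL(N)$ side, where $M_{N}$ is a product of general linear groups and $\xi(\p_{M})\in\Pbd{M_{N}}$ is bounded. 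By the classification of the unitary dual of $GL(N)$ (Tadi\'c in the nonarchimedean case, Vogan in the archimedean case), the parabolically induced representation $\mathcal{I}_{P_{N}}(\r_{\xi(\p_{M}),\lambda'})$ is irreducible whenever the real exponents in $\lambda'$ lie in the strip $|a_{j}|<1/2$, which is precisely the condition imposed by $\xi(\p)\in\uP{N}$.

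To transfer this irreducibility to $G$, I would work with normalized intertwining operators. Pick a second parabolic $P'$ with the same Levi $M$, and consider the standard intertwining operator $J_{P'|P}(\r_{M},\lambda):\mathcal{I}_{P}(\r_{M,\lambda})\to\mathcal{I}_{P'}(\r_{M,\lambda})$, which is a meromorphic function of $\lambda$. Arthur's normalization uses local $L$- and $\varepsilon$-factors built from the factorization of $\p_{M}$ through $\L{M}$ and the standard embedding into $GL$; the resulting normalized operator $R_{P'|P}(\r_{M},\lambda)$ is, by construction, compatible with the corresponding normalized operator $R_{P'_{N}|P_{N}}(\r_{\xi(\p_{M})},\lambda')$ on $GL(N)$, in the sense that the two share the same normalizing scalar factors.

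The final step is then a standard criterion: since $\mathcal{I}_{P_{N}}(\r_{\xi(\p_{M}),\lambda'})$ is irreducible, the normalized intertwining operator $R_{P'_{N}|P_{N}}$ at $\lambda'$ is a nonzero scalar, hence has neither a pole nor a zero there; by the matching of normalizing factors, the same is true for $R_{P'|P}(\r_{M},\lambda)$ at $\lambda$. Standard Harish-Chandra/Knapp--Stein theory (holomorphy and invertibility of the normalized operator across all $P'$ rules out nontrivial subrepresentations) then yields irreducibility of $\mathcal{I}_{P}(\r_{M,\lambda})$. The main obstacle I expect is verifying carefully that Arthur's choice of normalizing factors on $G$ really does agree with those inherited from $GL(N)$ under $\xi$, and that this agreement survives the passage to $\Sigma_{0}$-invariants, which is relevant when $G$ is special even orthogonal. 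Once that compatibility is in place, the unitarity range input from $GL(N)$ propagates to $G$ and the irreducibility follows directly.
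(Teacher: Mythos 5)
Your central step does not go through: knowing that the normalizing factor $r_{P'|P}(\lambda,\p_M)$ on $G$ agrees (via the parameter) with the one used for $GL(N)$ only tells you that the same scalar divides the two unnormalized operators $J_{P'|P}$; it gives no operator identity between $R_{P'|P}(\r_M,\lambda)$ on $\mathcal{I}_P(\r_{M,\lambda})$ and $R_{P'_N|P_N}$ on the $GL(N)$ side, since these act on induced representations of different groups. Bijectivity of one therefore does not imply bijectivity of the other. The same issue already appears for tempered parameters: $\mathcal{I}_{P_N}(\xi(\p_M))$ on $GL(N)$ is always irreducible, yet $\mathcal{I}_P(\r_M)$ on $G$ is reducible precisely when the $R$-group $R_\p(G,M)$ is nontrivial, with the same $L$-function normalizing factors on both sides. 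So irreducibility simply does not transfer across the twisted endoscopic embedding $\xi$ at the level of normalized intertwining operators, and asserting that it does is the gap.

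What the paper actually does is reduce to a base case and invoke a genuinely new input there. Via the Speh--Vogan reductions (Lemmas~\ref{lemma: first reduction}, \ref{lemma: second reduction}) and Zelevinsky's $GL$-theory, one reduces to the irreducibility of $\nu^a\r_{\p_i}\rtimes\r_{G'_-}$ with $\r_{G'_-}$ a discrete series of a smaller classical group, and then (archimedean: Speh--Vogan; nonarchimedean: Tadi\'c's Theorem~\ref{thm: a theorem of Tadic}) further reduces, via the M{\oe}glin--Tadi\'c classification and a chain of normalized intertwining operators, to the cuspidal case $\nu^a\rho\rtimes\sigma$. There the essential ingredient is M{\oe}glin's theorem (Proposition~\ref{prop: reducibility of generalized principal series; cuspidal case}) that the reducibility exponents for self-dual cuspidal $\rho$ and cuspidal $\sigma$ lie in $\frac{1}{2}\mathbb{Z}$, so the strip $0<a<1/2$ is safe. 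That theorem does ultimately use the functorial lift to $GL$ and Shahidi's $L$-function theory, which is perhaps the source of your intuition, but it is a deep statement about the location of reducibility points of $G$-side intertwining operators, not something that can be read off from the equality of normalizing factors. Your proposal as written skips both the descent to the cuspidal base case and the M{\oe}glin input that makes the base case work.
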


Proposition~\ref{prop: local constituents of cuspidal representations} and Proposition~\ref{prop: irreducibility of non-unitary induced representation} are well known to experts, but for the convenience of the readers we will give their proofs in Appendix~\ref{sec: irreducibility}. 

\subsection{Global theory}
\label{subsec: global theory}

Now let us assume $F$ is global, and we fix the twisted endoscopic embedding $\xi: \L{G} \rightarrow \L{GL(N)}$. The global parameters and local parameters are related by the following theorem.

\begin{theorem}
\label{thm: local-global compatibility}
Suppose $\p \in \cPsm{G}$, then $\p_{v}$ factors through $\L{G_{v}}$ for all places $v$, i.e. $\p_{v} \in \cuP{G_{v}}$.
\end{theorem}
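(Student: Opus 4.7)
The plan is to deduce the local factorization from Arthur's local functorial lifting (Theorem~\ref{thm: local functorial lifting}) combined with the global functorial lifting (Theorem~\ref{thm: global functorial lifting}) and a twisted trace formula comparison that forces local-global compatibility of types. Since $\p \in \cPsm{G} = \Psm{N} \cap \cP{G}$, by Proposition~\ref{prop: local constituents of cuspidal representations} each local component $\p_{v}$ lies in $\uP{N_{v}}$, and by the self-duality of $\p$ it decomposes as
\[
\p_{v} = \bigoplus_{i} \p_{v,i} \; \oplus \; \bigoplus_{j} \bigl(\nu^{a_{j}}\p_{v,j} \oplus \nu^{-a_{j}}\p_{v,j}^{\vee}\bigr)
\]
with $\p_{v,i}$ self-dual simple. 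By Theorem~\ref{thm: local functorial lifting}, each $\p_{v,i}$ carries a well-defined local type (orthogonal or symplectic) corresponding to a unique element of $\End{ell}{N_{v,i}^{\theta_{N}}}$. To prove $\p_{v} \in \cuP{G_{v}}$ it suffices to show that all these local types are compatible with the global type of $G$, so that $\p_{v}$ actually factors through $\L{G_{v}}$.

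Compatibility will be extracted from the trace formula. By Theorem~\ref{thm: global functorial lifting}, the simple self-dual cuspidal $\p$ determines the elliptic endoscopic datum $G$ uniquely on the $GL(N)$ side, and there is a discrete automorphic representation $\r$ of $G$ with $c(\p_{v}) = \xi(c(\r_{v}))$ at almost all unramified places. I would then compare the $\theta_{N}$-twisted trace formula for $GL(N)$ with the stable trace formula for $G$. The contribution of $\p$ on the $GL(N)$ side is concentrated in a single cuspidal term (because $\p$ is simple, hence has no nontrivial decomposition in $\P{N^{\theta_{N}}}$), and via carefully chosen test functions $\lf = \otimes_{v} \lf_{v}$ this contribution can be isolated. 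Matching it with the stable side on $G$ and invoking the local twisted endoscopic character identity of Theorem~\ref{thm: local functorial lifting} at each place then forces the local type of each self-dual simple constituent $\p_{v,i}$, together with its multiplicity parity, to be compatible with the type of $G_{v}$. This yields the desired factorization $\p_{v} \in \cuP{G_{v}}$ at every place.

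The main obstacle is isolating the $\p$-contribution rigorously at ramified places, and controlling the multiplicities of orthogonal constituents inside a symplectic parameter (or vice versa). Arthur's ``seed theorem'' resolves this using multipliers at archimedean places and judicious highly ramified local test functions elsewhere, together with the stabilization of the twisted trace formula (provided here by M{\oe}glin--Waldspurger) and the Fundamental Lemma of Ng\^o. The simplicity of $\p$ is essential throughout: without it, secondary parameters would also contribute to the trace formula comparison, and the local-global matching of types could not be read off cleanly from a single global term.
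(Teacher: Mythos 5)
The paper does not prove this statement itself; it records it as Arthur's result and cites \cite{Arthur:2013}, Theorem~1.4.2 --- the ``seed theorem'', one of the most technically demanding parts of Arthur's book. Your proposal is a reasonable high-level account of what Arthur does: reduce to $\p_v \in \uP{N_v}$ via Proposition~\ref{prop: local constituents of cuspidal representations}, decompose $\p_v$ into self-dual simple constituents plus pairs, classify each self-dual piece by its local type via twisted endoscopic transfer, and then run a trace formula comparison with carefully chosen test functions to force local/global compatibility of types. Two cautions are in order. First, Theorem~\ref{thm: local functorial lifting} and the present statement are not logically independent in Arthur's development: they are established simultaneously by an intertwined induction in \cite{Arthur:2013}, Chapter~6, with the global seed theorem and local classification bootstrapping off each other through constructed global lifts and the stable multiplicity formula. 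Invoking the local result freely ``at each place'' without specifying where one sits in that induction risks circularity. Second, the substantive content of the argument --- the archimedean multiplier construction that isolates the contribution of the cuspidal $\p$, the vanishing of secondary discrete contributions, the elimination of proper endoscopic summands $G' \neq G$, and the precise mechanism by which twisted character matching at a ramified place reads off that $\p_v$ factors through $\L{G_v}$ rather than through the Langlands dual of some other endoscopic group of the same rank --- is entirely deferred to Arthur in your last paragraph. As an annotated citation this is fine (and is precisely what the paper itself does), but as a proof attempt it reproduces only the scaffolding, not the argument that makes the scaffolding bear weight.
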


This theorem is proved in (\cite{Arthur:2013}, Theorem 1.4.2).  So for $\p \in \cP{G}$, one has a commutative diagram
\[
\xymatrix{L_{F_{v}} \ar[r]^{\p_{v}}   \ar[d]    &  \L{G_{v}} \ar[d] \\
\mathcal{L}_{\p} \ar[r]^{\ep}    &  \L{G},}
\]
where $L_{F_{v}} \rightarrow \mathcal{L}_{\p}$ is defined by $\p_{v}$. It gives rise to an inclusion $S_{\p} \hookrightarrow S_{\p_{v}}$ for any place $v$, which induces a homomorphism $\S{\p} \rightarrow \S{\p_{v}}$. One can define the global $L$-packet by taking the restricted tensor product
\[
\cPkt{\p} := \sideset{}{'} \bigotimes_{v}  \cPkt{\p_{v}}
\]
and define the global pairing by
\[
<x, \r> := \prod_{v} <x_{v}, \r_{v}>.
\]
Note that for almost all places $v$, $<\cdot, \r_{v}> = 1$ by Theorem~\ref{thm: LLC}, so this product is well-defined. The main global result of Arthur's theory is to give a description of the discrete spectrum of automorphic representations of $G$. Here we only state it for those discrete automorphic representations parametrized by $\cPdt{G}$, i.e. for $\p \in \cPdt{G}$, we want to describe $L^2_{disc, \p} (G(F) \backslash G(\A_{F}))$ which consists of discrete automorphic representations $\r$ such that the Satake parameters satisfy $\xi(c(\r_{v})) = c(\p_{v})$ for almost all places. Let $\sH(G) = \otimes'_{v} \sH(G_{v})$.

\begin{theorem}
\label{thm: discrete spectrum}
Suppose $\p \in \cPdt{G}$, there is a decomposition as $\sH(G)$-modules
\[
L^2_{disc, \p} (G(F) \backslash G(\A_{F}))= m_{\p} \sum_{\substack{[\r] \in \cPkt{\p}  \\  <\cdot, \r> = 1}} \r
\]
where $m_{\p} =1 \text{ or } 2$, and $m_{\p} = 2$ only when $G$ is special even orthogonal and $\p \notin \cP{\com{G}}$. Moreover, 
\[
L^2_{disc, \p} (G(F) \backslash G(\A_{F})) = 0
\]
for $\p \in \cP{G} - \cPdt{G}$. 
\end{theorem}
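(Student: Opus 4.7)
The plan is to follow Arthur's standard endoscopic strategy: compare the discrete spectral expansion of $I^G_{disc}(f)$ with its endoscopic expansion, and seed the comparison with the twisted stable trace formula for $GL(N)$, where the discrete multiplicities are known explicitly. Using Theorem~\ref{thm: local-global compatibility}, one refines $I^G_{disc}(f)$ according to equivalence classes of families of Satake parameters, obtaining pieces $I^G_{disc,\p}(f)$ indexed by $\p\in\cP{G}$. Arthur's stabilization \eqref{eq: stable trace formula} then reads
\[
I^G_{disc,\p}(f) = \sum_{G'\in\End{ell}{G}}\iota(G,G')\,S^{G'}_{disc,\p'}(f^{G'}),
\]
with the sum ranging over pre-images $(G',\p')\to(\p,s)$ for semisimple $s\in\cS{\p}$.

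The key input is the stable multiplicity formula, which in the tempered case asserts
\[
S^G_{disc,\p}(f) = m_\p\,|\S{\p}|^{-1}\,f(\p),\qquad \p\in\cPdt{G},
\]
with $f(\p)$ the stable distribution of \eqref{eq: stable distribution} and $m_\p\in\{1,2\}$. I would establish this by induction on $N$: the base case at $\p\in\cPsm{G}$ comes from the $(\theta_N,1)$-twisted stable trace formula for $GL(N)$, since elements of $\Psm{N}$ occur with multiplicity one in the $GL(N)$ discrete spectrum, and combined with Theorem~\ref{thm: global functorial lifting} this yields a single explicit identity that isolates $S^G_{disc,\p}$; the general case follows by subtracting the (inductively known) contributions of the proper elliptic endoscopic subgroups. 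Substituting into the stabilization and inserting the local endoscopic character identity \eqref{eq: character relation for G} at each semisimple $s\in\cS{\p}$, one obtains
\[
I^G_{disc,\p}(f) = m_\p\,|\S{\p}|^{-1}\sum_{x\in\S{\p}}\sum_{[\r]\in\cPkt{\p}}<x,\r>\,f_G(\r) = m_\p\!\sum_{\substack{[\r]\in\cPkt{\p}\\ <\cdot,\r>=1}}\!f_G(\r),
\]
by Fourier inversion on $\S{\p}$, which is $2$-torsion for $\p\in\cPdt{G}$ by \eqref{formula: centralizer} and Lemma~\ref{lemma: discrete parameter}(2). Comparing with the spectral expansion of $I^G_{disc,\p}(f)$, and noting the vanishing of continuous contributions at $\p$ (guaranteed by $|\cS{\p}|<\infty$), gives the decomposition of $L^2_{disc,\p}$.

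For $\p\in\cP{G}-\cPdt{G}$ one has $|\cS{\p}|=\infty$, so a parallel inductive argument identifies $I^G_{disc,\p}(f)$ entirely with residues of Eisenstein series induced from proper Levi subgroups, yielding $L^2_{disc,\p}=0$. The main technical obstacle is the factor $m_\p$: when $G$ is special even orthogonal and $\p\notin\cP{\com{G}}$, no semisimple element of $S_\p$ projects to $\D{\theta_0}$, so the $(\theta_0,1)$-twisted stable trace formula for $G$ vanishes at $\p$, and tracing this through the spectral side forces each $\Sigma_0$-orbit $[\r]\in\cPkt{\p}$ to split into two inequivalent automorphic representations $\r,\r^{\theta_0}$, each occurring with multiplicity one, which viewed as an $\sH(G)$-module produces $m_\p=2$. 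Intertwining this doubling consistently with the extended character identity of Theorem~\ref{thm: twisted LLC}, and running the induction simultaneously for the untwisted and $\theta_0$-twisted stabilizations, is the principal bookkeeping difficulty and the reason the full Moeglin--Waldspurger twisted stable trace formula is indispensable.
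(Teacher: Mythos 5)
The paper does not prove this theorem; immediately after its statement, Remark~\ref{rk: discrete spectrum} notes that it ``is a special case of (\cite{Arthur:2013}, Theorem 1.5.2)'', and the theorem is simply quoted as part of the input from Arthur's endoscopic classification. So there is no in-paper proof to compare against, and your task was really to reconstruct Arthur's argument, which you have done at a reasonable level of fidelity: the stabilized discrete part of the trace formula, the stable multiplicity formula established inductively with the $(\theta_N,1)$-twisted trace formula for $GL(N)$ as seed, Fourier inversion on the $2$-group $\S{\p}$, and the observation that $|\cS{\p}|<\infty$ kills the Levi contributions are all the right ingredients, and your remark that the full M{\oe}glin--Waldspurger twisted stabilization is indispensable is accurate.

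Two substantive imprecisions are worth flagging. First, your explanation of the factor $m_{\p}$ gets the causal chain backwards. The vanishing of the $(\theta_0,1)$-twisted stable trace formula at $\p\notin\cP{\com{G}}$ does not \emph{force} the orbit to split; rather, for a special even orthogonal $G$ and $\p\notin\cP{\com{G}}$, the $\Sigma_0$-orbit $\p\in\cP{G}$ is the image of two distinct $\D{G}$-conjugacy classes of $L$-homomorphisms $\ep$ and $(\ep)^{\theta_0}$, each of which contributes once to the stable multiplicity formula; the coefficient $m_\p = |\P{G,\p}|$ is the size of this fibre (precisely the count used in Section~\ref{subsec: comparison of trace formulas}), and the vanishing of the twisted formula and the splitting of $[\r]$ into $\r\not\cong\r^{\theta_0}$ are \emph{consequences} of $\p\notin\cP{\com{G}}$ rather than intermediate deductive steps. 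Second, your sketch treats the local character relations \eqref{eq: character relation for G} and the extended relation of Theorem~\ref{thm: twisted LLC} as inputs that may be substituted into the endoscopic expansion, but in Arthur's actual argument these local identities are \emph{themselves} targets of the same global induction on $N$; they cannot be taken as given at the inductive step where one proves the multiplicity formula at rank $N$. Your acknowledgment of ``the principal bookkeeping difficulty'' points toward this, but it understates the issue: the local intertwining relation and local character identities must be proved simultaneously with the stable multiplicity formula, and the argument does not cleanly separate into ``assume local, deduce global'' as your outline suggests.
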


\begin{remark}
\label{rk: discrete spectrum}
This theorem is a special case of (\cite{Arthur:2013}, Theorem 1.5.2). By Arthur's complete description of the discrete spectrum for orthogonal and symplectic groups, one can see $\cPdt{G}$ only contributes to the discrete spectrum of $G$. It is not hard to extend this to products of symplectic and special even orthogonal groups. In fact if $G = G_{1} \times G_{2} \times \cdots \times G_{q}$, then we can define $\cP{G}$ to be consisting of $\p := \p_{1} \times \p_{2} \times \cdots \times \p_{q}$ such that $\p_{i} \in \cP{G_{i}}$ for $1 \leqslant i \leqslant q$. Moreover, we can define $\mathcal{L}_{\p} := \prod_{i=1}^{q}\mathcal{L}_{\p_{i}}$, then $S_{\p} = \prod_{i=1}^{q} S_{\p_{i}}$. And we let $m_{\p} = \prod_{i=1}^{q} m_{\p_{i}}$.
\end{remark}

For $\p \in \cP{G}$ and any subgroup $\Sigma \subseteq \Sigma_{0}$, let $\mathcal{L}_{\p}$ act on $\D{D}$, $\D{\lG}^{\Sigma}$ and $\D{G}^{\Sigma}$ by conjugation through $\ep$. We denote the corresponding group cohomology by $H^{*}_{\ep}(\mathcal{L}_{\p}, \cdot)$. Note $H^{0}_{\ep}(\mathcal{L}_{\p}, \D{D}) = \D{D}^{\Gal{}}, H^{0}_{\ep}(\mathcal{L}_{\p}, \D{G}^{\Sigma}) = S^{\Sigma}_{\p}$ and $H^{1}_{\ep}(\mathcal{L}_{\p}, \D{D}) = H^{1}(W_{F}, \D{D})$. We define $S_{\lp}^{\Sigma} := H^{0}_{\ep}(\mathcal{L}_{\p}, \D{\lG}^{\Sigma})$. Then we have the following diagram 

\[
\xymatrix{
1 \ar[r] & S^{\Sigma}_{\lp} / \D{D}^{\Gal{}} \ar[r]    \ar@{^{(}->}[d]    &   S^{\Sigma}_{\p} \ar[r]^{\delta \quad \quad}   \ar@{^{(}->}[d]   &   H^{1}(W_{F}, \D{D}) \ar[d] \\
1 \ar[r] & S^{\Sigma}_{\lp_{v}} / \D{D}_{v}^{\Gal{v}} \ar[r]    &   S^{\Sigma}_{\p_{v}}  \ar[r]^{\delta_{v} \quad \quad}  &  H^{1}(W_{F_{v}}, \D{D}_{v}).}
\]
Then Lemma~\ref{lemma: centralizer} is still valid, and we again have the following exact sequence as in Section~\ref{subsec: Langlands parameters}
\begin{align*}
\xymatrix{1 \ar[r] &  \S{\lp}^{\Sigma} \ar[r]^{\iota} & \S{\p}^{\Sigma} \ar[r]^{\a \quad \quad \quad \quad \quad \quad}  & \Hom(\lG(\A_{F})/ \lG(F)G(\A_{F}), \C^{\times}).}                
\end{align*}

\section{Coarse L-packet}
\label{sec: coarse L-packet}

\subsection{Statement of main local theorem}
\label{subsec: statement of main local theorem}

Now we assume $\lG$ is of type \eqref{eq: similitude}, and $\c$ is the generalized similitude character. In this case $G$ is a product of symplectic groups and special even orthogonal groups. We also assume $\theta \in \Sigma_{0}$.


\begin{lemma}[\cite{Xu:2016}, Lemma 3.13]
\label{lemma: twisting character}
Suppose $\p \in \cPbd{G}$ and $[\r] \in \cPkt{\p}$ then
\begin{align}
<x, (\r^{+})^{g}> = \x_{x}(g)<x, \r^{+}>  \label{eq: theta twisting character}
\end{align}
for any $g \in \lG(F)$ and $x \in \S{\p}^{\theta}$, where $\x_{x} = \a(x)$ and $\r^{+}$ is an extension of $\r$ to $G^{+}(F) = G(F) \times <\theta>$. 
\end{lemma}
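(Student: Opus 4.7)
The plan is to compare the $\theta_{0}$-twisted character identity of Theorem~\ref{thm: twisted LLC} applied with the two extensions $\r^{+}$ and $(\r^{+})^{g}$ of $\r$; note that $[\r^{g}] = [\r]$ in $\cPkt{\p}$ because $\lG(F)$ acts on $G(F)$ by inner automorphisms (since $\lG(\bar F) = Z_{\lG}(\bar F)\,G(\bar F)$). First I would unpack the intertwining operator of $(\r^{+})^{g}$. Working inside $\lG \rtimes \langle \theta_{0} \rangle$, the relation $\theta_{0}\,g^{-1} = \theta_{0}(g^{-1})\,\theta_{0}$ yields $g^{-1}\theta_{0}\,g = g^{-1}\theta_{0}(g) \cdot \theta_{0}$. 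Since $\c$ is $\theta_{0}$-invariant, $c := g^{-1}\theta_{0}(g)$ satisfies $\c(c)=1$ and thus lies in $G(F)$, so
\[
(\r^{+})^{g}(\theta_{0}) = \r(c)\,A_{\r}(\theta_{0}).
\]

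Next I would fix an extension $\lr$ of $\r$ to $\lG(F)$ (Lemma~\ref{lemma: existence}). Because $\r$ is $\theta_{0}$-stable and by Adler--Prasad restriction multiplicity one (Theorem~\ref{thm: restriction multiplicity one}), there is a unique $\omega \in X = \Hom(\lG(F)/G(F), \C^{\times})$ with $\lr^{\theta_{0}} \cong \lr \otimes \omega$, and by rescaling I may take the intertwiner $\phi:\lr^{\theta_0}\to\lr\otimes\omega$ to coincide with $A_{\r}(\theta_{0})^{-1}$ on $V_{\r}$. This normalization gives the operator identity $\lr(\theta_{0}(g)) = A_{\r}(\theta_{0})\,\lr(g)\,\omega(g)\,A_{\r}(\theta_{0})^{-1}$; substituting $\r(c) = \lr(g)^{-1}\lr(\theta_{0}(g))$ into the expression above, a short manipulation yields $(\r^{+})^{g}(\theta_{0}) = \omega(g)\,\lr(g)^{-1}A_{\r}(\theta_{0})\,\lr(g)$. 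Under the canonical identification $T = \lr(g)^{-1}: \r^{g}\xrightarrow{\sim}\r$, this shows that $(\r^{+})^{g}$ is identified with the extension $\omega(g)\cdot \r^{+}$ of $\r$. Plugging into the character identity of Theorem~\ref{thm: twisted LLC} and using that scaling the intertwiner by $\omega(g)$ scales the pairing by $\omega(g)^{-1} = \omega(g)$ (since $\omega^{2}=1$, because $\theta_{0}$ acts trivially on $\lG/G$ whence $\omega^{\theta_{0}} = \omega$, while $\omega\omega^{\theta_{0}}=1$ from $(\lr^{\theta_{0}})^{\theta_{0}} \cong \lr$), one obtains
\[
<x, (\r^{+})^{g}> = \omega(g)\,<x, \r^{+}>.
\]

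It remains to identify $\omega$ with $\x_{x} = \a(x)$. For this I would lift the $\theta_{0}$-twisted endoscopic datum $(G', \p')$ attached to $(\p, s)$ (with $s$ a preimage of $x$) to a $(\theta_{0}, \x_{x})$-twisted endoscopic datum $(\lG', \lp')$ of $\lG$ via Proposition~\ref{prop: lifting endoscopic group}, where $\x_{x} = \a(x)$ by Lemma~\ref{lemma: twisted character}. Extending $f \in \sH(G)$ to $\lf \in \H(\lG)$ supported on $\lZ(F)\,G(F)$, Lemma~\ref{lemma: twisted endoscopic transfer} gives $\lf^{\lG'} = \lif{(f^{G'})}$, so the stable distribution at $\lp'$ reproduces $f^{G'}(\p')$; expanding the latter as a $(\theta_{0}, \x_{x})$-twisted character sum on $\lG$ shows that the term $<x, \r^{+}>$ can be nonzero only for those $[\r]$ admitting an extension $\lr$ with $\lr^{\theta_{0}} \cong \lr \otimes \x_{x}$, forcing $\omega = \x_{x}$ on the support. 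This gives the claim. The main obstacle is this last identification: although the manipulations in $\lG \rtimes \langle \theta_{0} \rangle$ and the formal structure of $\omega$ are routine, matching $\omega$ with the specific endoscopic character $\x_{x} = \a(x)$ requires carefully tracking the lifted endoscopic datum, the compatibility of transfer factors under the inclusion $\H(G) \hookrightarrow \H(\lG)$, and the description of $\a$ in the exact sequence \eqref{eq: local twisted endoscopic sequence}.
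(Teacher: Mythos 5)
The argument begins with the assertion ``$[\r^{g}] = [\r]$ in $\cPkt{\p}$,'' which is false in general. Conjugation by $g \in \lG(F)$ is inner over $\bar F$, so it stabilizes the packet as a set --- that is what stability of $\sum_{[\r]}\Theta_{\r}$ gives --- but it typically permutes its members: by Proposition~\ref{prop: coarse L-packet}(1), the $\lG(F)$-orbits in $\cPkt{\p}$ have size $|\S{\p}/\S{\lp}|$, which is $>1$ whenever $\a(\S{\p})\neq 1$. Under multiplicity-one restriction, $\r^{g}\cong\r$ holds precisely when $\lr(g)V_{\r}=V_{\r}$, i.e.\ when $\a(x)(g)=1$ for all $x\in\S{\p}$; your ``canonical identification'' $T=\lr(g)^{-1}\colon\r^{g}\xrightarrow{\sim}\r$, and the replacement of $A\lr(g)$ by $A_{\r}(\theta_{0})\lr(g)$ on $V_{\r}$, both presuppose this. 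So the computation only applies when $g \in \lG(\r)$, a proper subgroup when the packet has nontrivial $\lG(F)$-orbits.

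As a result, the output has the wrong shape. Your derivation yields a twist by a single character $\omega(g)$, independent of $x$, whereas the lemma's twist is $\x_{x}(g)=\a(x)(g)$, which genuinely varies as $x$ runs over the coset $\S{\p}^{\theta}$ (by exactly $\a(\S{\p})$, the group controlling the orbit size). In particular, the case $\theta = id$, which the statement includes (then $\r^{+}=\r$ and $\S{\p}^{\theta}=\S{\p}$), already reads $<x,\r^{g}>=\a(x)(g)<x,\r>$: the pairing character of $\S{\p}$ attached to $\r$ changes to that attached to the (generally different) packet member $\r^{g}$, and this is precisely what the opening assumption ``$\r^{g}\cong\r$'' erases. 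Separately, $\omega^{2}=1$ is not justified: $(\lr^{\theta_{0}})^{\theta_{0}}\cong\lr$ gives only $\omega\omega^{\theta_{0}}\in X(\lr)$, not $\omega\omega^{\theta_{0}}=1$. A correct proof must track how the summation index $\r$ moves within the packet under conjugation, for instance by conjugating the character identities \eqref{eq: character relation for G}, \eqref{eq: theta twisted character relation for G} by $g$ and comparing coefficients of linearly independent (twisted) characters, with $\a(x)(g)$ emerging from the behaviour of the endoscopic transfer under $\lG(F)$-conjugation (cf.\ Lemma~\ref{lemma: twisted character}).
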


\begin{corollary}[\cite{Xu:2016}, Proposition 6.28]
\label{cor: theta twisting character}
Suppose $\p \in \cPbd{G}$ and $[\r] \in \cPkt{\p}$. If $\lr$ is an irreducible admissible representation of $\lG(F)$ whose restriction to $G(F)$ contains $\r$, then $\lr^{\theta} \cong \lr \otimes \x$ if and only if $\x \in \a(\S{\p}^{\theta})$. In particular,
\[
X(\lr) = \a(\S{\p}).
\]
\end{corollary}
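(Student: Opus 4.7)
The plan is to deduce the corollary from Lemma~\ref{lemma: twisting character} together with the multiplicity-one restriction theory of Section~\ref{subsec: representations}. Recall from Proposition~\ref{prop: restriction multiplicity one} that $\lr|_{G}$ is a multiplicity-free sum of $|X(\lr)|$ irreducible representations, all $\lG(F)$-conjugates of $\r$, and that $X(\lr)$ coincides with the annihilator in $X$ of $\lG(\r) := \{g \in \lG(F) : \r^{g} \cong \r\}$.

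I would first establish the ``in particular'' statement $X(\lr) = \a(\S{\p})$, corresponding to $\theta = id$. For $\a(\S{\p}) \subseteq X(\lr)$: given $x \in \S{\p}$ and $g \in \lG(\r)$, Lemma~\ref{lemma: twisting character} with $\theta = id$ gives $<x, \r^{g}> = \a(x)(g) <x, \r>$; since $\r^{g} \cong \r$ forces $<x, \r^{g}> = <x, \r>$ and $<\cdot, \r>$ is a nowhere-vanishing character of $\S{\p}$ (Theorem~\ref{thm: character identity}), we conclude $\a(x)(g) = 1$, hence $\a(x) \in X(\lr)$. For the reverse inclusion I argue by cardinality: the $\lG(F)$-orbit of $[\r]$ in $\cPkt{\p}$ has size $|X(\lr)|$ by Proposition~\ref{prop: restriction multiplicity one}; via the injection $\cPkt{\p} \hookrightarrow \D{\S{\p}}$ of Theorem~\ref{thm: character identity} together with Lemma~\ref{lemma: twisting character}, this orbit maps bijectively onto the coset $\{\a(\cdot)(g) \cdot <\cdot, \r> : g \in \lG(F)\}$ of characters, so its cardinality equals that of the image of $\lG(F)/\lG(\r) \to \D{\S{\p}}$, $g \mapsto \a(\cdot)(g)$, which by Pontryagin duality dualizes $\a|_{\S{\p}}: \S{\p} \to X(\lr) = \widehat{\lG(F)/\lG(\r)}$; hence $|X(\lr)| = |\a(\S{\p})|$, and equality follows.

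For arbitrary $\theta \in \Sigma_{0}$, both $\a(\S{\p}^{\theta})$ and $\{\x \in X : \lr^{\theta} \cong \lr \otimes \x\}$ are, when non-empty, cosets of $X(\lr) = \a(\S{\p})$ in $X$---the former because $\a$ is a homomorphism and $\S{\p}^{\theta}$ is a coset of $\S{\p}$ in $\S{\p}^{\Sigma_{0}}$ (see \eqref{eq: local twisted endoscopic sequence}), the latter by definition of $X(\lr)$. So it suffices to exhibit one common element. Given $x \in \S{\p}^{\theta}$ and setting $\x := \a(x)$, Lemma~\ref{lemma: twisting character} supplies the scalar identity $<x, (\r^{+})^{g}> = \x(g) <x, \r^{+}>$ for all $g \in \lG(F)$; I would then realize $\lr$ as induced from a suitable extension of $\r$ to its stabilizer $\lG(\r)$, and combine this pointwise identity with the twisted character relation \eqref{eq: theta twisted character relation for G} to produce the intertwining operator witnessing $\lr^{\theta} \cong \lr \otimes \x$. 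Conversely, if $\lr^{\theta} \cong \lr \otimes \x$ for some $\x \in X$, then $\r^{\theta}$ must lie in the $\lG(F)$-orbit of $\r$, which forces $\p$ to be $\theta$-invariant and in particular $\S{\p}^{\theta}$ non-empty.

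The main obstacle will be the last passage from the scalar identity of Lemma~\ref{lemma: twisting character} to a representation-theoretic intertwiner on all of $\lG(F)$. Off $G(F)$ the character of $\lr$ encodes the gluing data of the induced representation, and one must navigate the sign ambiguities inherent in the choice of $\r^{+}$ extending $\r$ to $G \rtimes <\theta>$. The cleanest route is likely to sum the pairings $<x, \r_{i}^{+}>$ over the constituents $\r_{i}$ of $\lr|_{G}$ and verify that the resulting $(\theta, \x)$-twisted character of $\lr$ matches the character of $\lr \otimes \x$ on a dense subset of $\lG(F)$, thereby yielding the required isomorphism.
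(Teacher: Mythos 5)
The paper does not prove this statement---it cites it wholesale from the author's earlier paper [Xu:2016, Proposition 6.28]---so there is no in-paper argument to compare yours against. Evaluating your argument on its own terms, it has one fillable gap and one genuine one.

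In the $\theta=\mathrm{id}$ part, the inclusion $\a(\S{\p})\subseteq X(\lr)$ is correct as written. For the reverse inclusion, you assert that the $\lG(F)$-orbit of $[\r]$ in $\cPkt{\p}$ has size $|X(\lr)|$ ``by Proposition~\ref{prop: restriction multiplicity one}.'' But that proposition only gives the orbit of $\r$ in $\Pkt{}(G(F))$ the size $|X(\lr)|$; $\cPkt{\p}$ is a set of $\Sigma_0$-orbits, so a priori the image could be smaller if some $\lG(F)$-conjugate of $\r$ is isomorphic to $\r^{\theta_0}$ without being isomorphic to $\r$. To close this you need an extra observation: either $\p^{\theta_0}=\p$, in which case Theorem~\ref{thm: twisted LLC}(1) already makes every constituent $\theta_0$-stable, so the $\Sigma_0$-quotient is trivial on the orbit; or $\p^{\theta_0}\neq\p$, in which case $\r^{\theta_0}$ belongs to $\Pkt{\p^{\theta_0}}\neq\Pkt{\p}$ while $\lG(F)$-conjugation preserves $\Pkt{\p}$, so $\r^{\theta_0}$ cannot be a $\lG(F)$-conjugate of $\r$. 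Either way $\lG([\r])=\lG(\r)$ and your cardinality count goes through. As written, the citation of Proposition~\ref{prop: restriction multiplicity one} alone does not establish this.

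The general $\theta$ part is where the real gap is, and you flag it yourself. Reducing to ``exhibit one common element of the two cosets'' is the right strategy (both sets, once non-empty, are indeed cosets of $X(\lr)$, and you correctly handle when emptiness can occur). But the last step---passing from the scalar identity of Lemma~\ref{lemma: twisting character} to an actual $\lG(F)$-intertwiner witnessing $\lr^{\theta}\cong\lr\otimes\a(x)$---is not carried out, and the route you propose (``verify that the resulting $(\theta,\x)$-twisted character of $\lr$ matches the character of $\lr\otimes\x$'') is close to circular: the $(\theta,\x)$-twisted character of $\lr$ is only defined once you already have an intertwiner between $\lr^{\theta}$ and $\lr\otimes\x$, i.e., once the claim holds. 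The cleaner comparison is between $\Theta_{\lr}\circ\theta$ and $\x\cdot\Theta_{\lr}$ as ordinary characters on $\lG(F)$; these agree on $G(F)$ for free (both reduce to $\sum_i\Theta_{\r_i}$ since $\r_i^{\theta}\cong\r_i$ and $\x|_{G}=1$), but off $G(F)$ one must actually unwind the induction from $\lG(\r)$ and control the cocycle coming from the choices of extensions $\r_i^{+}$---which is precisely the content you would need, and have not supplied. Note also that Theorem~\ref{thm: refined L-packet}(3) and the twisted character relation \eqref{eq: theta twisted character relation} cannot be invoked here, since they logically postdate the present corollary. So while the skeleton of the argument is plausible, the proof of the $\theta\neq\mathrm{id}$ case is not complete.
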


\begin{remark}
In view of Proposition~\ref{prop: irreducibility of non-unitary induced representation}, Lemma~\ref{lemma: twisting character} and Corollary~\ref{cor: theta twisting character} also hold for parameters in $\cuP{G}$, and the proofs are the same.
\end{remark}

For $\p \in \cPbd{G}$, let us fix a character $\lif{\zeta}$ of $\lZ(F)$ such that $\lif{\zeta}|_{\Z(F)}$ is the central character of $\cPkt{\p}$. Then we define $\clPkt{\p, \lif{\zeta}}$ to be the subset of $\cPkt{temp}(\lG)$ with central character $\lif{\zeta}$, whose restriction to $G(F)$ have irreducible constituents contained in $\cPkt{\p}$. Let 
\(
X = \Hom(\lG(F)/\lZ(F)G(F), \C^{\times}),
\)
so $X$ acts on $\clPkt{\p, \lif{\zeta}}$ by twisting. We call $\clPkt{\p, \lif{\zeta}}$ a coarse $L$-packet of $\lG$, and its structure can be described in the following proposition.

\begin{proposition}[\cite{Xu:2016}, Proposition 6.29]
\label{prop: coarse L-packet}
Suppose $\p \in \cPbd{G}$ and $\lif{\zeta}$ is chosen as above.
\begin{enumerate}
\item The orbits in $\cPkt{\p}$ under the conjugate action of $\lG(F)$ all have size $|\S{\p} / \S{\lp}|$. If $F$ is nonarchimedean, there are exactly $|\S{\lp}|$ orbits. \\
\item There is a natural fibration 
\[
\xymatrix{ X / \a(\S{\p}^{\Sigma_{0}})  \ar[r]   &   \clPkt{\p, \lif{\zeta}}    \ar[r]^{Res \quad}     &  \cPkt{\p} / \lG(F)}
\]
\item There is a pairing 
\[
\lr \longrightarrow <\cdot, \lr>
\]
from $\clPkt{\p, \lif{\zeta}} / X$ into $\D{\S{\lp}}$. It is uniquely characterized by 
\[
<x, \lr> = <\iota(x), \r> ,
\]
where $\iota: \S{\lp} \hookrightarrow \S{\p}$ and $\r$ is any irreducible representation of $G(F)$ in the restriction of $\lr$. Suppose $\lG$ and $\lr$ are unramified, then $<\cdot, \lr> = 1$. Moreover, this mapping from $\clPkt{\p, \lif{\zeta}} / X$ to $\D{\S{\lp}}$ is injective and when $F$ is nonarchimedean it is in fact a bijection.
\end{enumerate} 
\end{proposition}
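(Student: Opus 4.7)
The proof naturally splits into the three parts, and the engine throughout is Lemma~\ref{lemma: twisting character} together with the parametrization $[\r] \mapsto <\cdot, \r>$ of $\cPkt{\p}$ in Theorem~\ref{thm: character identity}.

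For part (1), I would compute the stabilizer of $[\r] \in \cPkt{\p}$ under conjugation by $\lG(F)$. Applying Lemma~\ref{lemma: twisting character} with $\theta = \text{id}$, conjugation by $g$ modifies $<\cdot, \r>$ on $\S{\p}$ by multiplication by $\a(\cdot)(g)$. Injectivity of $[\r] \mapsto <\cdot, \r>$ in Theorem~\ref{thm: character identity} then identifies the stabilizer of $[\r]$ modulo $\lZ(F)G(F)$ with the annihilator of $\a(\S{\p})$ in $\lG(F)/\lZ(F)G(F)$. Pontryagin duality, combined with the exact sequence \eqref{eq: local twisted endoscopic sequence} identifying $\a(\S{\p}) \cong \S{\p}/\S{\lp}$, gives orbit size $|\S{\p}/\S{\lp}|$. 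In the nonarchimedean case, Theorem~\ref{thm: character identity} upgrades the injection to a bijection, so $|\cPkt{\p}| = |\S{\p}|$ and the number of orbits is $|\S{\lp}|$.

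For part (2), the restriction map is well-defined: Lemma~\ref{lemma: finite restriction} ensures $\lr|_{G(F)}$ has finitely many irreducible constituents, which form a single $\lG(F)$-orbit inside $\cPkt{\p}$. Surjectivity: given $\r \in \cPkt{\p}$, Lemma~\ref{lemma: existence} produces a lift $\lr$ of $\r$ to $\lG(F)$, whose central character can be adjusted to $\lif{\zeta}$ by an $X$-twist. For the fibre, two members $\lr_1, \lr_2 \in \clPkt{\p, \lif{\zeta}}$ restricting to the same $\r$ differ by a character of $\lG(F)/\lZ(F)G(F)$, hence by an element of $X$, again by Lemma~\ref{lemma: existence} and matching of central characters. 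Finally, passing to $\Sigma_0$-orbits identifies $\lr$ with $\lr^{\theta_0}$; by Corollary~\ref{cor: theta twisting character} this collapses exactly the subgroup $\a(\S{\p}^{\Sigma_0}) \subseteq X$, yielding the fibre $X/\a(\S{\p}^{\Sigma_0})$.

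For part (3), I would define $<x, \lr> := <\iota(x), \r>$ and verify the required properties one by one. Well-posedness follows because all $\r \subseteq \lr|_G$ are $\lG(F)$-conjugate and $\iota(x) \in \ker \a$, so Lemma~\ref{lemma: twisting character} yields no conjugation twist on $<\iota(x), \r>$; twisting $\lr$ by $\x \in X$ does not alter $\lr|_G$, so the pairing descends to $\clPkt{\p, \lif{\zeta}}/X$. The unramified clause is inherited from Theorem~\ref{thm: character identity}. For injectivity, if two coarse packet members yield the same pairing on $\S{\lp}$, their $\S{\p}$-characters differ by an element of $\widehat{\a(\S{\p})}$, which by Pontryagin duality is realized by conjugation by some $g \in \lG(F)$; Theorem~\ref{thm: character identity} then forces $\r_2 \cong \r_1^g$, and Lemma~\ref{lemma: existence} promotes this to $\lr_2 \cong \lr_1 \otimes \x$ for some $\x \in X$. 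Surjectivity in the nonarchimedean case: extend any character of $\S{\lp}$ to $\S{\p}$, use the bijection in Theorem~\ref{thm: character identity} to produce $\r$, and lift to $\lr$ with central character $\lif{\zeta}$ via Lemma~\ref{lemma: existence}. The main technical care lies in harmonizing the central character constraint, the $\lG(F)$-conjugation action, and the $X$-twisting throughout — in particular, the injectivity argument in part (3) hinges on realizing every character of $\a(\S{\p})$ via conjugation and threading this realization back through Lemma~\ref{lemma: existence} to obtain a genuine $X$-twist.
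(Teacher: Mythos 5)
This proposition is stated in the paper with only an external citation to \cite{Xu:2016}, Proposition~6.29; the paper contains no proof of its own, so there is nothing in the present text to compare your argument against. Judged on its own, your proposal is correct and is the natural argument from the tools the paper does set up: the twisting identity of Lemma~\ref{lemma: twisting character}, the exact sequence \eqref{eq: local twisted endoscopic sequence} (giving $\a(\S{\p}) \cong \S{\p}/\S{\lp}$ as a subgroup of $X$ by Corollary~\ref{cor: theta twisting character}), the injectivity/bijectivity of $[\r]\mapsto\langle\cdot,\r\rangle$ from Theorem~\ref{thm: character identity}, and the restriction lemmas of Section~2.3. The orbit--stabilizer computation in (1), the transitivity-plus-freeness analysis of the fibre in (2), and the Pontryagin-duality step in the injectivity argument of (3) all go through as you describe, and the finiteness of $\lG(F)/\lZ(F)G(F)$ needed for that duality step is available here since it embeds in $F^{\times}/F^{\times 2}$.

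Two small points worth making explicit rather than leaving implicit. First, in (3) the pairing must be checked to be independent of the representative of the $\Sigma_{0}$-orbit $[\lr]$, not just of the constituent $\r\subseteq\lr|_{G}$ and the $X$-twist; this is automatic because Theorem~\ref{thm: character identity} already pairs $\S{\p}$ with $\Sigma_{0}$-orbits, so $\langle\cdot,\r^{\theta_{0}}\rangle=\langle\cdot,\r\rangle$, but you should say so. Second, in (2) the well-definedness of $\Res$ on orbits, and the reduction to the case where both $\lr_{1},\lr_{2}$ contain a fixed $\r$, uses that a $\Sigma_{0}$-twist of $\lr$ has the same central character $\lif{\zeta}$ (since $\theta_{0}$ fixes $\lZ$ pointwise) and that the constituents of $\lr|_{G}$ form a single $\lG(F)$-orbit; you invoke this implicitly and it is true, but making the chain of adjustments by $\theta\in\Sigma_{0}$ and $g\in\lG(F)$ explicit would close the argument cleanly.
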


This proposition is also true for $\cuP{G}$. Now it is natural to ask the following question.

\begin{question}
\label{que: refined L-packet}
For any lift $\lp$ of $\p \in \cPbd{G}$, can one assign a packet $\cPkt{\lp}$ of representations of $\lG(F)$ which gives a section of $\Res: \clPkt{\p, \lif{\zeta}} \rightarrow \cPkt{\p} / \lG(F)$, and also a stable distribution?
\end{question}

The answer to this question can be formulated in the following theorem, which is our main local result.

\begin{theorem}
\label{thm: refined L-packet}
Suppose $\p \in \cPbd{G}$, and $\lif{\zeta}$ is a character of $\lZ(F)$ whose restriction to $\Z(F)$ is the central character of $\cPkt{\p}$. Let $\lif{\chi} = \lif{\zeta}|_{\lif{Z}_{F}}$. Then there exists a subset $\cPkt{\lp}$ of $\clPkt{\p, \lif{\zeta}}$ unique up to twisting by $X$, and it is characterized by the following properties:
\begin{enumerate}
\item 
\[
\clPkt{\p, \lif{\zeta}} = \bigsqcup_{\x \in X / \a(\S{\p}^{\Sigma_{0}})} \cPkt{\lp} \otimes \x. 
\]  

\item For $\lf \in \sH(\lG, \lif{\chi})$, the distribution
\[
\lf(\lp) := \sum_{[\lr] \in \cPkt{\lp}} \lf_{\lG}(\lr)
\]
is stable.

\item Suppose $s$ is a semisimple element in $\cS{\p}^{\theta}$ with $\x = \a(s)$ and  $(G', \p') \longrightarrow (\p, s)$. Fix a packet $\cPkt{\lp'}$ defined by part (1) and local Langlands correspondence for $GL(n)$, then we can choose $\cPkt{\lp}$ such that
\begin{align}
\label{eq: theta twisted character relation}
\lf'(\lp') = \sum_{[\lr] \in \cPkt{\lp}} \lf_{\lG^{\theta}}(\lr, \x), \,\,\,\,\,\, \lf \in \sH(\lG, \lif{\chi})  
\end{align}
where $\lf_{\lG^{\theta}}(\lr, \x) = tr (\lr(\lf) \circ A_{\lr}(\theta, \x))$, and $A_{\lr}(\theta, \x)$ is an intertwining operator between $\lr \otimes \x$ and $\lr^{\theta}$, which is normalized in a way so that if $f$ is the restriction of $\lf$ on $G(F)$, then 
\begin{align}
\label{eq: theta twisted intertwining operator}
(\lf|_{\lif{Z}_{F}G(F)})_{\lG^{\theta}}(\lr, \x) = \sum_{\r \subseteq \lr|_{G}} <x, \r^+> f_{G^{\theta}}(\r)  
\end{align}
where $x$ is the image of $s$ in $\S{\p}^{\theta}$, $\r^{+}$ is an extension of $\r$ to $G^{+}(F) = G(F) \rtimes <\theta>$ such that $\r^{+}(\theta) = A_{\r}(\theta)$.
\end{enumerate}
\end{theorem}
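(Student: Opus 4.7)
The plan is to prove Theorem~\ref{thm: refined L-packet} by global means, realizing $\cPkt{\lp}$ as the local component at a distinguished nonarchimedean place $u$ of a global L-packet for a similitude group $\dot{\lG}$ constructed via the stabilized twisted trace formula. First I would globalize the data: choose a number field $\dot{F}$ and a place $u$ with $\dot{F}_{u} \cong F$, a global form $\dot{G}$ of $G$ with $\dot{G}_{u} \cong G$, and a global substitute parameter $\dot{\p} \in \cP{\dot{G}}$ satisfying $\dot{\p}_{u} = \p$ and $\S{\dot{\lp}} = 1$. The globalization must be chosen generic enough that a weak rigidity property holds at $u$, namely that $\cPkt{\dot{\lp}_{v}} = \cPkt{\dot{\lp}_{v}} \otimes \x_{v}$ for every place $v \neq u$ forces $\cPkt{\dot{\lp}} = \cPkt{\dot{\lp}} \otimes \x$. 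Such globalizations exist by Sug Woo Shin's automorphic plancherel density theorem, as anticipated in the introduction.

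Next I would invoke the global existence result Theorem~\ref{thm: global result 2}, proved in parallel by the same simultaneous induction, to produce a global L-packet $\cPkt{\dot{\lp}} = \otimes'_{v}\cPkt{\dot{\lp}_{v}}$ of $\dot{\lG}$ capturing $\cPkt{\dot{\p}}$ up to twisting by $Y$. I would then set $\cPkt{\lp} := \cPkt{\dot{\lp}_{u}}$. The partition property (1) follows from the coarse L-packet fibration of Proposition~\ref{prop: coarse L-packet}, with the stabilizer $\a(\S{\p}^{\Sigma_{0}})$ of the $X$-twisting action on $\clPkt{\p, \lif{\zeta}}$ computed by Corollary~\ref{cor: theta twisting character}; uniqueness up to $X$-twist is inherited from this partition.

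For the $(\theta, \x)$-twisted endoscopic character identity (3), I would compare the two sides of the twisted stabilized trace formula \eqref{eq: twisted stable trace formula} for $\dot{\lG}$, evaluated at a decomposable test function $\lf = \otimes_{v}\lf_{v}$, against Arthur's $\theta$-twisted stabilized trace formula for $\dot{G}$. The inclusion of Hecke algebras \eqref{map: inclusion of Hecke algebra} is compatible with twisted endoscopic transfer by Lemma~\ref{lemma: twisted endoscopic transfer} and Corollary~\ref{cor: twisted endoscopic transfer}, so the known character relations on $\dot{G}$ of Theorem~\ref{thm: character identity} and Theorem~\ref{thm: twisted LLC} lift to statements on $\dot{\lG}$ after extension by $\lif{\zeta}$. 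Applying multiplier operators to isolate the contribution of the Satake class $c(\dot{\p})$ and letting the test data at places $v \neq u$ vary (using Proposition~\ref{prop: global result 1} to track multiplicities), one extracts the local identity \eqref{eq: theta twisted character relation} at $u$. The intertwining operator $A_{\lr}(\theta, \x)$ is pinned down by the normalization constraint \eqref{eq: theta twisted intertwining operator}, which forces its restriction to $G(F)$ to recover Arthur's Whittaker-normalized intertwiner on $G$; consistency of this normalization with $X$-twists is provided by Lemma~\ref{lemma: twisting character}. Part (2) then drops out of (3) by specializing to the trivial endoscopic datum $(\lG, 1, \text{id})$ with $\theta = 1$ and $\x = 1$: the left-hand side becomes $\lf^{\lG}(\lp)$, which factors through $\bar{\mathcal{SI}}(\lG)$ and is therefore stable.

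The principal obstacle is the failure of strong multiplicity one for L-packets of $\lG$: the Satake class $c(\dot{\lp})$ determines $\cPkt{\dot{\lp}}$ only up to twisting by a subgroup of $Y$ potentially larger than the obvious one, so isolating by Satake parameters in the trace formula comparison yields not a single packet but finitely many $X$-twists at the place $u$. The carefully chosen globalization above is designed precisely so that this residual ambiguity collapses onto the allowed local ambiguity $\a(\S{\p}^{\Sigma_{0}})$, which is exactly why Theorem~\ref{thm: global result 2} only demands rigidity at a single place. Because that global result in turn requires the local theorem for all twisted endoscopic groups of smaller dimension, the entire argument must proceed by a simultaneous induction on the rank of $\lG$, with the archimedean case of Langlands and Shelstad providing the base and the local and global statements for proper endoscopic and Levi subgroups feeding the trace formula comparison at rank $n$.
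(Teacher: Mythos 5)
Your overall strategy---globalize to a carefully chosen $\dot{\p}$ with $\S{\lif{\dot{\p}}} = 1$ using Shin's Plancherel density result, compare spectral and endoscopic sides of the stabilized (twisted) trace formula, and run a simultaneous local--global induction---is the same as the paper's. You also correctly identify the strong multiplicity one issue at the place $u$ as the main technical obstacle, and the resolution via the condition $\a(\S{\p}^{\Sigma_{0}}) = \prod^{aut}_{v\neq u}\a(\S{\p_{v}}^{\Sigma_{0}})$ matches the paper's Remark on $\Sigma_{0}$-strong multiplicity one at one place.

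There is, however, a genuine logical flaw in the way you derive part (2) from part (3). For the trivial datum $s = 1$, $\theta = \mathrm{id}$, $\x = 1$, the endoscopic group is $\lG$ itself, and the ``transfer'' $\lf \mapsto \lf^{\lG}$ is simply the projection $\bar{\mathcal{I}}(\lG) \to \bar{\mathcal{SI}}(\lG)$. The left side $\lf^{\lG}(\lp')$ is then, by construction, the evaluation of a stable distribution on $\lG$ associated to $\lp$ --- which is precisely the object whose existence part (2) asserts. Thus the relation \eqref{eq: theta twisted character relation} at $s=1$ is tautologically the equation $\lf(\lp) = \sum_{[\lr]}\lf_{\lG}(\lr)$, and asserting it gives no information about whether the right-hand side factors through $\bar{\mathcal{SI}}(\lG)$; you would be assuming what you want to prove. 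For $G$, stability of $f(\p)$ is free because $f(\p)$ is pulled back from $GL(N)$ via a genuine twisted endoscopic transfer, but $\lG$ is not a twisted endoscopic group of any general linear group, so there is no such shortcut. The paper therefore proves (2) \emph{first} (Theorem 6.4): the hypothesis $\S{\lif{\dot{\p}}}=1$ makes the global distribution $I^{\lif{\dot{G}}}_{disc,\dot{\p}}$ equal to $S^{\lif{\dot{G}}}_{disc,\dot{\p}}$, hence stable, via the endoscopic expansion (Corollary 5.28); stability then propagates to the place $u$ and, together with the rigidity at $u$, pins down both the packet $\cPkt{\lp}$ and the stability of $\lf(\lp)$ simultaneously. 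Only afterwards does the paper establish the twisted character relation (3) by a separate comparison of the twisted spectral and endoscopic expansions (Theorem 6.12), using (1) and (2) as inputs to control the spectral side. A related but secondary imprecision: ``invoking Theorem~\ref{thm: global result 2}'' for $\lG$ itself is circular, since that global statement presupposes the local packets at all places including $u$; the correct logical structure is to construct $\cPkt{\lp}$ directly out of the trace-formula decomposition (the local packets at $v \neq u$ being available by induction since $\dot{\p}_{v}$ is non-elliptic there), from which the global statement is then read off.
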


\begin{remark}
\label{rk: refined L-packet}
\begin{enumerate}

\item In the notation of $\cPkt{\lp}$, one can think of $\lp$ as some parameter of $\lG$ lifted from $\p$. Since $\cPkt{\lp}$ is only defined up to twisting by $X$, one can also take $\lp$ as a formal symbol built in the notation of $\cPkt{\lp}$. In this paper, we will take the second point of view.

\item The normalizations in \eqref{eq: theta twisted intertwining operator} is a consequence of \eqref{eq: theta twisting character}. When $\theta = id$, $\x = 1$ and $x \in \S{\lp}$, $A_{\lr}(id, 1)$ becomes a scalar and is equal to $<x, \lr>$ by \eqref{eq: theta twisted intertwining operator}. So we obtain the character relation from \eqref{eq: theta twisted character relation}
\begin{align}
\label{eq: character relation}
\lf'(\lp') = \sum_{[\r] \in \cPkt{\lp}} \lf_{\lG}(\lr, 1) = \sum_{[\r] \in \cPkt{\lp}} <x, \lr> \lf_{\lG}(\lr).
\end{align}

\item If $F$ is archimedean, $\cPkt{\lp}$ is defined by Langlands \cite{Langlands:1989}. In fact, we have $\cPkt{\lp} = \clPkt{\p, \lif{\zeta}}$, if $\cPkt{\p}$ is not a singleton (see Proposition~\ref{prop: refined L-packet Archimedean case} and also \cite{H-C:1975}, Theorem 27.1). Moreover, Part (2) and (3) can be directly reduced to \eqref{eq: stable distribution}, \eqref{eq: character relation for G} and \eqref{eq: theta twisted character relation for G} (see \cite{Xu:2016}, Remark 6.32).

\item For $\p \in \cuP{G}$, since $\cPkt{\p} = \mathcal{I}_{P}(\cPkt{\p_{M, \lambda}})$, we can define $\cPkt{\lp} := \mathcal{I}_{\lP}(\cPkt{\lp_{M, \lambda}})$, and this theorem can be easily extended to this case.
\end{enumerate}
\end{remark}

Let us call this subset $\cPkt{\lp}$ the refined $L$-packet of $\lG$, and it is the genuine $L$-packet that one would expect modulo the action of $\Sigma_{0}$. As we can see from this theorem, the refined $L$-packet is uniquely determined by the character relation \eqref{eq: character relation} up to twisting by $X$. As a consequence of that, we can give another characterization of the refined L-packet.

\begin{corollary}
\label{cor: refined L-packet}
In the setup of the previous theorem, any stable linear combination in $\clPkt{\p, \lif{\zeta}}$ is given by a linear combination of $\lf(\lp \otimes \x) := (\lf \otimes \x)(\lp)$ for $\x \in X$.
\end{corollary}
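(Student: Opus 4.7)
The plan is to combine the disjoint decomposition of Part~(1) with the stability produced by Part~(2) and the character identities of Part~(3). First I would apply Part~(2) of Theorem~\ref{thm: refined L-packet} to each lift $\lp \otimes \x$ with $\x \in X$; this shows the distribution $\lf(\lp \otimes \x) = \sum_{[\lr] \in \cPkt{\lp} \otimes \x} \lf_{\lG}(\lr)$ is stable. By the partition in Part~(1), there are $|X/\a(\S{\p}^{\Sigma_0})|$ such distributions, supported on pairwise disjoint subsets of $\clPkt{\p, \lif{\zeta}}$, and hence linearly independent.

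Next I would show that every stable distribution $\Theta = \sum c_{\lr} \Theta_{\lr}$ supported on $\clPkt{\p, \lif{\zeta}}$ is a linear combination of these. Decompose $\Theta = \sum_{\x} \Theta_{\x}$ along the refined L-packet partition. Within a single refined L-packet $\cPkt{\lp}$, Proposition~\ref{prop: coarse L-packet}(3) gives a bijection $\cPkt{\lp} \leftrightarrow \D{\S{\lp}}$ in the nonarchimedean case, and the character identity \eqref{eq: character relation} of Part~(3) (with $\theta = id$) reads
\[
\lf^{\lG'}(\lp') = \sum_{[\lr] \in \cPkt{\lp}} \langle x, \lr \rangle \lf_{\lG}(\lr),
\]
for $s \in \cS{\p}$ with image $x \in \S{\lp}$ and $(\lG', \lp') \to (\lp, s)$. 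Fourier inversion on $\S{\lp}$ then expresses each $\lf_{\lG}(\lr)$ as a combination of transfers from $\lG'$, and only the term $x = 1$ (where $\lG' = \lG$) produces a stable distribution on $\lG$. Thus the stable distributions supported on a single refined L-packet are exactly the scalar multiples of $\lf(\lp)$, and similarly for each $\cPkt{\lp} \otimes \x$.

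The main obstacle is to argue that stability of $\Theta$ forces each block $\Theta_{\x}$ to be individually stable. I would handle this using rigidity of Langlands parameters: the refined L-packets $\cPkt{\lp} \otimes \x$ correspond to genuinely distinct parameters $\lp \otimes \x$ of $\lG$, and the character identities of Part~(3) applied to each twist separately isolate the different blocks. Testing $\Theta$ against functions matching each $\lp \otimes \x$ exhibits each $\Theta_{\x}$ as the $\lG$-side of a stable transfer from an endoscopic group, forcing $\Theta_{\x}$ to be stable on its own. Combined with the preceding paragraph, this identifies each $\Theta_{\x}$ with a scalar multiple of $\lf(\lp \otimes \x)$ and completes the proof.
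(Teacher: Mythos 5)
Your proposal correctly identifies what needs to be done, but the crucial step --- the one you flag as the ``main obstacle'' --- is not resolved, and the way you propose to resolve it does not work. The paper's proof is organized so as to \emph{avoid} the block-separation step entirely, and that is the reason.

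The problem with your approach: after writing $\Theta = \sum_{\x} \Theta_{\x}$ with $\Theta_{\x}$ the part supported on $\cPkt{\lp} \otimes \x$, nothing forces each $\Theta_{\x}$ to be stable on its own. Stability is a vanishing condition on a subspace of test functions, and a priori the non-stable parts of different blocks could cancel against each other. You cannot separate the blocks by central character (the $\x$ are trivial on $\lZ(F)$, so every $\lr \otimes \x$ has the same central character $\lif{\zeta}$), and ``rigidity of Langlands parameters'' and ``testing against functions matching $\lp \otimes \x$'' are not arguments here --- there is no canonical way to match a single test function to a single twist without already knowing the decomposition you are trying to prove. There is a second unproven assertion hidden in your within-a-block step as well: that the pullbacks $\lf'(\lp, x)$ for $x \neq 1$ do not contribute any new stable distribution on $\lG$ and are linearly independent of $\lf(\lp)$. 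This is true but is exactly the nontrivial input.

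What the paper does instead is treat the whole double sum $\sum_{i,j} a_{ij}\lf_{\lG}(\lr_i \otimes \x_j)$ at once, invert the character relation to rewrite it in terms of the transfers $\lf'(\lp \otimes \x_j, x)$, and move the $x=1$ terms to the left. The resulting identity \eqref{eq: stable distribution identity} has a stable distribution on $\lG$ on the left and a combination of pullbacks of stable distributions from proper elliptic endoscopic data $\lG' \neq \lG$ on the right. The key tool is then Arthur's description of the image of the elliptic transfer map $\mathcal{T}^{\varepsilon}$ (from \cite{Arthur:1996}): since none of these distributions are supported on proper Levi subgroups (because $\p$ is bounded and the $\p'$ are elliptic), the identity forces both sides to vanish, and from the vanishing of the left side one deduces $a_{ij}=a_{1j}$ by linear algebra. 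This single application of the Arthur 1996 characterization handles both the block separation and the within-block rigidity simultaneously, which is precisely what your outline is missing. You would also want to address the archimedean case separately (the paper cites Shelstad), since the bijection $\cPkt{\lp} \leftrightarrow \D{\S{\lp}}$ you invoke is only stated for nonarchimedean $F$.
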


\begin{proof}
In the archimedean case, one can deduce this from (\cite{Shelstad:1979}, Lemma 5.3).
So we will assume $F$ is nonarchimedean, and we fix a refined L-packet $\cPkt{\lp} = \{ [\lr_{i}] \}_{i = 1}^{r}$. Suppose
\[
\lf(\lp^{1}) := \sum_{i, j} a_{i j} \lf_{\lG}(\lr_{i} \otimes \x_{j})
\]
is also stable for distinct $\x_{j} \in X / \a(\S{\p}^{\Sigma_{0}})$ and $a_{ij} \in \C$. Here $\lp^{1}$ is just a formal symbol for denoting another stable distribution. Since the map $[\lr] \longrightarrow <\cdot, \lr>$ is a bijection in the nonarchimedean case, we have 
\[
\sum_{x \in \S{\lp}} <x, \lr_{i}><x, \lr_{j}> = r \cdot \delta_{ij}.
\]
By inverting the formula of character relation \eqref{eq: character relation} we get
\[
\lf_{\lG}(\lr_{i}) = \sum_{x \in \S{\lp}} c(\lr_{i}, x) \lf' (\lp, x),
\]
where $\lf' (\lp, x) = \lf'(\lp')$ for some semisimple element $s \in \cS{\lp}$ whose image in $\S{\lp}$ is $x$, and some $\cPkt{\lp'}$ with $(G', \p') \longrightarrow (\p, s)$, and 
\[
c(\lr_{i}, x) = \frac{1}{r} <x, \lr_{i}>.
\]
Therefore
\[
\lf^{\lG}(\lp^{1}) = \sum_{i, j} a_{ij} c(\lr_{i}, x) \lf' (\lp\otimes \x_{j}, x).
\]
If we separate those terms with $x = 1$ from the right hand side, and move them to the left hand side, we get
\begin{align}
\label{eq: stable distribution identity}
\lf^{\lG}(\lp^{1}) - \frac{1}{r} \sum_{i, j} a_{ij }\lf(\lp\otimes \x_{j}) = \sum_{i, j} \sum_{x \neq 1} c(\lr_{i}, x) \lf' (\lp\otimes \x_{j}, x)  
\end{align}
Now let us consider the endoscopic transfer map
\[
\xymatrix{\mathcal{T}^{\varepsilon}: \sH(\lG) \ar[r] & \bigoplus_{\lG' \in \End{ell}{\lG}} \bar{\mathcal{SI}}(\lG') \\ 
\lf \ar@{|->}[r]  &\bigoplus_{\lG' \in \End{ell}{\lG}} \lf^{\lG'}. }
\]
The left hand side of \eqref{eq: stable distribution identity} can be viewed as value of $\mathcal{T}^{\varepsilon}(\lf)$ on some stable distribution of $\lG$, and similarly the right hand side of \eqref{eq: stable distribution identity} can be viewed as values of $\mathcal{T}^{\varepsilon}(\lf)$ on stable distributions of elliptic endoscopic groups $\lG' \in \End{ell}{\lG} - \{\lG\}$. It is a consequence of the main results in \cite{Arthur:1996} that the image of $\mathcal{T}^{\varepsilon}$ can be characterized as families of functions $(\lf^{\lG'})_{\lG' \in \End{ell}{\lG}}$ such that for any $\lG'_{1}, \lG'_{2} \in \End{ell}{\lG}$ the parabolic descent of any two functions $\lf^{\lG'_{1}}$ and  $\lf^{\lG'_{2}}$ to their common Levi subgroups $\lM'$ of $\lG'_{1}$ and $\lG'_{2}$ coincide. Since $\p'$ does not factor though any proper Levi subgroups of $\L{G}$ for $x \neq 1$, then the stable distribution associated to $\p'$ is not supported on any proper Levi subgroups of $G$. The same is true for the stable distribution associated with $\cPkt{\lp'}$. So the right hand side of $\eqref{eq: stable distribution identity}$ is not valued on any stable distributions supported on the Levi subgroups of $\lG$. Since $\eqref{eq: stable distribution identity}$ holds for all functions in $\sH(\lG)$, then both sides of $\eqref{eq: stable distribution identity}$ must be zero. Therefore, 
\[
0 = \lf^{\lG}(\lp^{1}) - \frac{1}{r} \sum_{i, j} a_{ij }\lf(\lp\otimes \x_{j})  =  \sum_{k, j} (a_{kj} - \frac{1}{r} \sum_{i} a_{ij}) \lf_{\lG}(\lr_{k} \otimes \x_{j}).
\]
By the linear independence of characters, we have 
\[
a_{kj} - \frac{1}{r} \sum_{i} a_{ij} = 0
\]
for any $k, j$. As we fix $j$ and vary $k$, we get a system of linear equations. The solutions of this system are $a_{ij} = a_{1j}$ for $1 \leqslant i \leqslant r$. Since this is also valid when we vary $j$, so we can conclude
\[
\lf(\lp^{1}) = \sum_{ j} a_{1j} \lf(\lp \otimes \x_{j}).
\]
\end{proof}

This corollary is also valid for $\p \in \cuP{G}$, and the proof is the same.


\subsection{Local twisted intertwining relation}
\label{subsec: local twisted intertwining relation}

The proof of our main local theorem (Theorem~\ref{thm: refined L-packet}) requires global methods, and the existence of refined $L$-packet needs to be proved together with the character relations. Before we proceed to prove the theorem, let us first consider another form of character relation, called intertwining relation. The intertwining relation in its global form comes up naturally in the trace formula and it plays an important role in Arthur's work \cite{Arthur:2013}. Here we need a twisted version of the intertwining relation, which in its local form is related to the twisted character relation \eqref{eq: theta twisted character relation}. In this section, we again assume $\lG$ is of type \eqref{eq: similitude}, and $\theta \in \Sigma_{0}$.

Suppose $\p \in \cPbd{G}$, and we assume $\p$ factors through $\p_{M} \in \cPbd{M}$ for some Levi subgroup $M$ of $G$. Let us define
\[
\cT{\p}(G, M) = A_{\D{M}} Z(\D{G})^{\Gal{F}} / Z(\D{G})^{\Gal{F}},
\]
where $A_{\D{M}}$ is the maximal split central torus in $\D{M}$. It is a torus in $\com[0]{\cS{\p}}$. Then we can define its normalizer in $\cS{\p}$
\[
\cN{\p}(G, M)  = \Norm(\cT{\p}(G, M), \cS{\p}),
\]
and the group of its connected components
\begin{align*}
\N{\p}(G, M) & = \cN{\p}(G, M) / \cN{\p}(G, M)^{0} \\
& = \Norm(\cT{\p}(G, M), \cS{\p}) / \Cent(\cT{\p}(G, M), \com[0]{\cS{\p}})^{0}.
\end{align*}
Notice $\S{\p}(M) := \S{\p_{M}}$ is a normal subgroup of $\N{\p}(G, M)$. The quotient $\N{\p}(G, M) / \S{\p}(M)$ is the Weyl group 
\[
W_{\p}(G, M) 
= W(\cS{\p}, \cT{\p}(G, M)).
\] 
We write $\com[0]{W_{\p}}(G, M)$ to be the normal subgroup of automorphisms in $W_{\p}(G, M)$ that are induced from the connected component $\com[0]{\cS{\p}}$, and let
\[
R_{\p}(G, M) = W_{\p}(G, M) / \com[0]{W_{\p}}(G, M)
\]
Moreover, $\com[0]{W_{\p}}(G, M)$ is a normal subgroup of $\N{\p}(G, M)$, and we denote their quotient by $\S{\p}(G, M)$, which is a subgroup of $\S{\p}$. 
Suppose $\lM$ is the Levi subgroup of $\lG$ containing $M$, then similarly we can define
\[
\cT{\lp} (G, M) = A_{\D{\lM}} Z(\D{\lG})^{\Gal{F}} / Z(\D{\lG})^{\Gal{F}}
\]
which is a torus of $\com[0]{\cS{\lp}}$. Since $A_{\D{\lM}} / \D{D} = A_{\D{M}}$, we have $\cT{\lp}(G, M) = \cT{\p}(G, M)$. We can also define
\[
\cN{\lp}(G, M)  = \Norm(\cT{\lp}(G, M), \cS{\lp}) \subseteq \cN{\p}(G, M),
\]
and the group of its connected components
\begin{align*}
\N{\lp}(G, M)  & = \cN{\lp}(G, M) / \cN{\lp}(G, M)^{0} \\
& = \Norm(\cT{\lp}(G, M), \cS{\lp}) / \Cent(\cT{\lp}(G, M), \com[0]{\cS{\lp}})^{0} \subseteq \N{\p}(G, M).
\end{align*}
Again $\S{\lp}(M) := \S{\lp_{M}}$ is a normal subgroup of $\N{\lp}(G, M)$. The quotient $\N{\lp}(G, M) / \S{\lp}(M)$ is the Weyl group
\[
W_{\lp}(G, M) = W(\cS{\lp}, \cT{\lp}(G, M)).
\]
Let us write $\com[0]{W_{\lp}}(G, M)$ to be the normal subgroup of automorphism in $W_{\lp}(G, M)$ that are induced from the connected component $\com[0]{\cS{\lp}}$. Since $\com[0]{\cS{\p}} = \com[0]{\cS{\lp}}$, we have $\com[0]{W_{\lp}}(G, M) = \com[0]{W_{\p}}(G, M)$. So 
\[
R_{\lp}(G, M) = W_{\lp}(G, M) / \com[0]{W_{\lp}}(G, M) \subseteq R_{\p}(G, M).
\]
At last, $\com[0]{W_{\lp}}(G, M)$ is a normal subgroup of $\N{\lp}(G, M)$, and we denote their quotient by $\S{\lp}(G, M)$, which is a subgroup of $\S{\lp}$. If $\p_{M} \in \cPdt{M}$, then $\cT{\p}(G, M) = \cT{\p}$ is a maximal torus in $\com[0]{\cS{\p}}$, and hence $\S{\p}(G, M) = \S{\p}$, $\S{\lp}(G, M) = \S{\lp}$. So in this case let us also write 
\[
\N{\p}(G, M) = \N{\p}, \,\,\,\,\,\,\,\,\,\,\,  \N{\lp}(G, M) = \N{\lp},
\]
\[
W_{\p}(G, M) = W_{\p}, \,\,\,\,\,\,\,\,\,\,\,  W_{\lp}(G, M) = W_{\lp},
\]
\[
\com[0]{W_{\p}}(G, M) = \com[0]{W_{\p}}, \,\,\,\,\,\,\,\,\,\,\,  \com[0]{W_{\lp}}(G, M) = \com[0]{W_{\lp}}.
\]
To summarize all these relations, we have the following commutative diagram.
\begin{align} 
\label{eq: twisted intertwining relation diagram}
\xymatrix @C=0.5cm @R=0.5cm{&&&&& 1 \ar[dd] && 1 \ar[dd] && \\
&&&& 1     \ar[dd]   && 1    \ar[dd]       &&& \\
&&&&& \com[0]{W_{\lp}}(G, M)                \ar[dd]           \ar@{=}[rr]              && \com[0]{W_{\lp}}(G, M)          \ar[dd]&& \\
&&&& \com[0]{W_{\p}}(G, M)           \ar@{=}[ur]             \ar[dd]          \ar@{=}[rr]  && \com[0]{W_{\p}}(G, M)                  \ar@{=}[ur]                \ar[dd] \\
& 1 \ar[rr] &&           \S{\lp}(M)           \ar@{_{(}->}[dl]            \ar@{=}[dd]            \ar[rr]  && \N{\lp}(G, M)             \ar@{_{(}->}[dl]            \ar[dd]  \ar[rr]   && W_{\lp}(G, M)      \ar@{_{(}->}[dl]  \ar [dd]      \ar[rr] && 1\\  
1 \ar[rr]  && \S{\p}(M)           \ar@{=}[dd]              \ar[rr]  &&          \N{\p}(G, M)        \ar[dd]             \ar[rr] && W_{\p}(G, M)        \ar[dd]            \ar[rr] && 1 & \\
& 1 \ar[rr] && \S{\lp}(M)       \ar@{_{(}->}[dl]            \ar[rr]  && \S{\lp}(G, M)      \ar[dd]     \ar@{_{(}->}[dl]    \ar[rr]   && R_{\lp}(G, M)       \ar[dd]       \ar@{_{(}->}[dl]  \ar[rr] && 1\\
1 \ar[rr]  && \S{\p}(M)        \ar[rr]  && \S{\p}(G, M)         \ar[dd]           \ar[rr] && R_{\p}(G, M)      \ar[dd]          \ar[rr] && 1 & \\
&&&&& 1 && 1  && \\
&&&& 1      && 1      &&& }  
\end{align}

Suppose $u \in \N{\p}(G, M)$, we write $w_{u}$ for the image of $u$ in $W_{\p}(G, M)$ and $x_{u}$ for the image of $u$ in $\S{\p}(G, M)$. Since $w_{u}$ normalizes $A_{\D{M}}$, it also normalizes $\D{M}$, and therefore can be treated as an element in $W(\D{M})$. The standard parabolic subgroup $P$ containing $M$ allows us to identify $w_{u}$ with an element in $W(M) \cong W(\lM)$. We choose a representative $\theta_{u}$ of $w_{u}$ in $G(F)$ preserving the $F$-splitting of $M$.
Then $\p_{M} \in \cPbd{M^{\theta_{u}}}$ and $u$ defines an element in $\S{\p}(M)^{\theta_{u}} := \S{\p_{M}}^{\theta_{u}}$. Note that 
\[
M \cong GL(N_{1}) \times \cdots \times GL(N_{q}) \times G_{-},
\]
and 
\[
\lM \cong GL(N_{1}) \times \cdots \times GL(N_{q}) \times \lG_{-},
\]
where $G_{-}$ (resp. $\lG_{-}$) is of the same type as $G$ (resp. $\lG$) with smaller rank.
Suppose 
\[
\p_{M} = \p_{1} \times \cdots \times \p_{q} \times \p_{-},
\]
where $\p_{i} \in \Pbd{N_{i}}$ and $\p_{-} \in \cPbd{G_{-}}$. Then we can define 
\[
\cPkt{\p_{M}} = \r_{\p_{1}} \otimes \cdots \otimes \r_{\p_{q}} \otimes \cPkt{\p_{-}},
\]
where $\r_{\p_{i}}$ is associated to $\p_{i}$. And any representation in this packet can be written as
\begin{align*}
\r_{M} & = \r_{\p_{1}} \otimes \cdots \otimes \r_{\p_{q}} \otimes \r_{-} \\
& = \r_{GL} \otimes \r_{-}.
\end{align*}
Since $\S{\p_{M}} \cong \S{\p_{-}}$, we can define a pairing between $\cPkt{\p_{M}}$ and $\S{\p_{M}}$ by 
\[
<\cdot, \r_{M}> : = <\cdot, \r_{GL}>  <\cdot, \r_{-}>,
\]
where $<\cdot, \r_{GL}>$ is in fact trivial. 
By Theorem~\ref{thm: twisted LLC} and the local Langlands correspondence for $GL(n)$, we know $\r_{M}^{\theta_{u}} \cong \r_{M}$. As usual, we can take the intertwining operator $\r_{M}^{+}(\theta_{u}) = \r_{GL}^{+}(\theta_{u}) \otimes \r_{-}^{+}(\theta_{u})$, which preserves the Whittaker models on those general linear components, then the extension $< \cdot, \r_{GL}^{+} >$ of $<\cdot, \r_{GL}>$ to $\S{\p}(M)^{\theta_{u}}$ is trivial (see Theorem~\ref{thm: LLC}). So the extension $<\cdot, \r_{-}^{+}>$ defined in Theorem~\ref{thm: twisted LLC} (see also Remark~\ref{rk: twisted LLC}) determines an extension $<\cdot, \r_{M}^{+}>$. Now we define
\begin{align}
\label{eq: induced character}
f_{G}(\p, u) = \sum_{[\r_{M}] \in \cPkt{\p_{M}}} <u, \r_{M}^{+}>tr(R_{P}(\theta_{u}, \r_{M}^{+}, \p)\mathcal{I}_{P}(\r_{M}, f)), \,\,\,\,\,\, f \in \sH(G, \chi),   
\end{align}
where $R_{P}(\theta_{u}, \r_{M}^{+}, \p)$ is the normalized self-intertwining operator on the space $\mathcal{H}_{P}(\r_{M})$ of normalized induced representation $\mathcal{I}_{P}(\r_{M})$ (see \cite{Arthur:2013}, Section 2.4). 
If we assume the existence of refined $L$-packet $\cPkt{\lp_{-}}$ for $\lp_{-}$ as defined in Theorem~\ref{thm: refined L-packet}, then $\cPkt{\lp_{M}}$ can be defined in the same way as $\cPkt{\p_{M}}$. And we can also define
\begin{align}
\label{eq: induced twisted character}
\lif{f}_{\lG}(\lp, u) = \sum_{[\lr_{M}] \in \cPkt{\lp_{M}}} tr(R_{\lP}(u, \lr_{M}, \lp) \mathcal{I}^{\x}_{P}(\lr_{M} \otimes \x^{-1}, \lif{f})), \,\,\,\,\,\, \lif{f} \in \sH(\lG, \lif{\chi}).   
\end{align}
Here we need to give some explanations of this distribution. Firstly, $\x = \a^{M}(u) = \a^{G}(x_{u})$. If 
\[
\lr_{M} = \r_{\p_{1}} \otimes \cdots \otimes \r_{\p_{q}} \otimes \lr_{-}
\]
contains $\r_{M}$ in its restriction to $M(F)$, then it follows from Corollary~\ref{cor: theta twisting character} that $\lr_{M}^{\theta_{u}} \cong \lr_{M} \otimes \x$, and we let $A_{\lr_{M}}(\theta_{u}, \x)$ be the intertwining operator. Secondly, the automorphism $\theta_{u}$ on $\lM$ is a composition of permutations of the general linear factors and automorphisms sending $g_{i}$ to
\(
\theta_{N_{i}}(g_{i}) \cdot \c(g_{-}),
\) 
where $g = g_{1} \times \cdots \times g_{q} \times g_{-} \in \lM$. However the effect on general linear components of $\lr_{M}$ is the same as for $M$, so we can use the same intertwining operator for the general linear components of $\lr_{M}$. In view of \eqref{eq: theta twisted intertwining operator}, the pairing inside \eqref{eq: induced character} is built into $A_{\lr_{M}}(\theta_{u}, \x)$ and hence into the operator $R_{\lP}(u, \lr_{M}, \lp)$. Thirdly,
\[
\mathcal{I}^{\x}_{P}(\lr_{M} \otimes \x^{-1}, \lif{f}) = R(\x) \circ \mathcal{I}_{P}(\lr_{M} \otimes \x^{-1}, \lif{f})
\] 
where $R(\x)$ is multiplication by $\x$, and $R_{\lP}(u, \lr_{M}, \lp)$ is the normalized intertwining operator between $\mathcal{H}_{\lP}(\lr_{M})$ and $\mathcal{H}_{\lP}(\lr_{M} \otimes \x^{-1})$. The last thing is about this normalization. Let us recall the formulation of the normalized intertwining operator
\[
R_{P}(\theta_{u}, \r_{M}^{+}, \p) := \r_{M}^{+}(\theta_{u}) \circ (r_{P}(w_{u}, \p_{M})^{-1} J_{P}(\theta_{u}, \r_{M})).
\]
Here $r_{P}(w_{u}, \p_{M})$ is the normalizing factor, and $J_{P}(\theta_{u}, \r_{M})$ is the unnormalized intertwining operator between $\mathcal{H}_{P}(\r_{M})$ and $\mathcal{H}_{P}(\r_{M}^{\theta_{u}^{-1}})$, which is defined by an integral over
\[
N_{P} \cap w_{u} N_{P} w_{u}^{-1} \backslash N_{P},
\]
where $N_{P}$ is the unipotent radical of $P$. The key point is to notice that 
\[
\Res^{\lG(F)}_{G(F)} \mathcal{I}_{\lP} ( \lr_{M} ) \cong  \mathcal{I}_{P} ( \Res^{\lM(F)}_{M(F)} \lr_{M} ).
\]
So we obtain isomorphisms between the following spaces as $\sH(G, \chi)$-modules
\begin{align*}
\mathcal{H}_{\lP}(\lr_{M}) & \cong \bigoplus_{\r_{M} \subseteq \Res \lr_{M}} \mathcal{H}_{P}(\r_{M}) \cong \mathcal{H}_{\lP}(\lr_{M} \otimes \x^{-1}),  \\
\mathcal{H}_{\lP}(\lr_{M}^{\theta_{u}^{-1}}) & \cong \bigoplus_{\r_{M} \subseteq \Res \lr_{M}} \mathcal{H}_{P}(\r_{M}^{\theta_{u}^{-1}}).
\end{align*}
Under these identifications, we can easily see from the definition of unnormalized intertwining operators that
\[
J_{\lP}(\theta_{u}, \lr_{M}) = \bigoplus_{\r_{M} \subseteq \Res \lr_{M}} J_{P}(\theta_{u}, \r_{M}).
\]
Let $\r_{\p_{M}} = \r_{\p_{1}} \otimes \cdots \otimes \r_{\p_{q}} \otimes \r_{\p_{-}}$. The normalizing factor $r_{P}(w_{u}, \p_{M})$ for $J_{P}(\theta_{u}, \r_{M})$ is equal to the product of $\lambda$-factor $\lambda(w_{u})$ (see \cite{Arthur:2013}, (2.3.19)) and 
\begin{align}
\label{formula: normalizing factor}
L(0, \r_{\p_{M}}, \rho^{\vee}_{w_{u}^{-1}P | P} ) \varepsilon(0, \r_{\p_{M}}, \rho^{\vee}_{w_{u}^{-1}P | P}, \psi_{F} )^{-1} L(1, \r_{\p_{M}}, \rho^{\vee}_{w_{u}^{-1}P | P})^{-1}
\end{align}
where the $L$-functions involved here are either Rankin-Selberg $L$-functions or (skew)-symmetric square $L$-functions. We can set $r_{\lP}(w_{u}, \lp_{M}) := r_{P}(w_{u}, \p_{M})$ for $J_{\lP}(\theta_{u}, \lr_{M})$. In fact this is what one would expect according to Langlands' conjectural formula for the normalizing factors. By his conjecture, \eqref{formula: normalizing factor} could be replaced by
\[
L(0, \rho^{\vee}_{w_{u}^{-1}P | P} \circ \p_{M} ) \varepsilon(0, \rho^{\vee}_{w_{u}^{-1}P | P} \circ \p_{M}, \psi_{F} )^{-1} L(1, \rho^{\vee}_{w_{u}^{-1}P | P} \circ \p_{M} )^{-1},
\]
where $\rho^{\vee}_{w_{u}^{-1}P | P}$ is the contragredient of the adjoint representation of $\L{M}$ over $\D{\mathfrak{n}}_{w_{u}^{-1}P} \cap \D{\mathfrak{n}}_{P} \backslash \D{\mathfrak{n}}_{w_{u}^{-1}P}$, where $\D{\mathfrak{n}}_{P}$ is the Lie algebra of $\D{N}_{P}$. Since
\[
\rho^{\vee}_{w_{u}^{-1}\lP | \lP} \circ \lp_{M} = \rho^{\vee}_{w_{u}^{-1}P | P} \circ \p_{M},
\]
then the conjectural formulas for  $r_{\lP}(w_{u}, \lp_{M})$ and $r_{P}(w_{u}, \p_{M})$ are the same. Finally we can normalize $A_{\lr_{M}}(\theta_{u}, \x)$ according to \eqref{eq: theta twisted intertwining operator}, so after composing with this operator we get 
\begin{align}
\label{eq: intertwining operator identity}
R_{\lP}(u, \lr_{M}, \lp)  := A_{\lr_{M}}(\theta_{u}, \x) \circ (r_{\lP}(w_{u}, \lp_{M})^{-1} J_{\lP}(\theta_{u}, \lr_{M})) 
                                        = \bigoplus_{\r_{M} \subseteq \Res \lr_{M}} <u, \r_{M}^+> R_{P}(\theta_{u}, \r_{M}^{+}, \p).     
\end{align}
As a result, if $\lf \in \sH(\lG, \lif{\chi})$ is supported on $\lif{Z}_{F}G(F)$ and $f$ is its restriction to $G(F)$, then 
\[
\lf_{\lG}(\lp, u)= f_{G}(\p, u).
\]

Suppose $s$ is a semisimple element in $\cS{\p}$, and $(G', \p') \longrightarrow (\p, s)$. For any lift $\cPkt{\lp'}$, let us define
\[
\lf'_{\lG}(\lp, s) = \lf^{\lG'}(\lp'), \,\,\,\,\,\, \lf \in \sH(\lG, \lif{\chi}).
\]
Now we can state the local $\x$-twisted intertwining relation.

\begin{theorem}
\label{thm: twisted intertwining relation}
Suppose $\p \in \cPbd{G}$, for semisimple $s \in \cS{\p}$ with $(G', \p') \longrightarrow (\p, s)$, the following identity holds for some lifts $\cPkt{\lp}$ and $\cPkt{\lp'}$ that
\begin{align}
\label{eq: twisted intertwining relation}
\lf_{\lG}(\lp, u) = \lf'_{\lG}(\lp, s),  \,\,\,\,\,\,\, \lf \in \sH(\lG, \lif{\chi}),  
\end{align} 
where $u \in \N{\p}(G, M)$ and $s \in \cS{\p}$ have the same image in $\S{\p}$.
\end{theorem}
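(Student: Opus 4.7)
The strategy is to reduce \eqref{eq: twisted intertwining relation} to Arthur's local intertwining relation for the underlying classical group $G$, namely
\begin{align*}
f_{G}(\p, u) = f'(\p, s), \quad f \in \sH(G, \chi),
\end{align*}
established in Chapter~2 of \cite{Arthur:2013}. The passage from $G$ to $\lG$ is mediated by three ingredients already assembled in the paper: the inclusion of Hecke algebras \eqref{map: inclusion of Hecke algebra}, its compatibility with the twisted endoscopic transfer (Lemma~\ref{lemma: twisted endoscopic transfer} and Corollary~\ref{cor: twisted endoscopic transfer}), and the decomposition \eqref{eq: intertwining operator identity} of the normalized intertwining operator $R_{\lP}(u, \lr_{M}, \lp)$ over the irreducible restrictions $\r_{M} \subseteq \lr_{M}|_{M(F)}$.

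First I would verify \eqref{eq: twisted intertwining relation} on the subspace of $\lf \in \H(\lG, \lif{\chi})$ supported on $\lif{Z}_{F} G(F)$, that is, on extensions of functions $f \in \H(G, \chi)$ via \eqref{map: inclusion of Hecke algebra}. For such $\lf$, the coincidence $r_{\lP}(w_{u}, \lp_{M}) = r_{P}(w_{u}, \p_{M})$ of normalizing factors together with \eqref{eq: intertwining operator identity} yields $\lf_{\lG}(\lp, u) = f_{G}(\p, u)$, as already noted after \eqref{eq: intertwining operator identity}. On the endoscopic side, choosing the lift $\cPkt{\lp'}$ in accordance with Theorem~\ref{thm: refined L-packet}(3) applied to $G'$, Corollary~\ref{cor: twisted endoscopic transfer} gives $\lf'_{\lG}(\lp, s) = \lf^{\lG'}(\lp') = f^{G'}(\p') = f'(\p, s)$. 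Arthur's relation for $G$ then closes the argument on this subspace.

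To extend from extensions to arbitrary $\lf \in \sH(\lG, \lif{\chi})$, I would use that (in the nonarchimedean case) the character group $X = \Hom(\lG(F)/\lZ(F)G(F), \C^{\times})$ is finite and acts on $\H(\lG, \lif{\chi})$ by twisting, with the extensions from $G$ together with their $X$-translates spanning the whole Hecke algebra. Both sides of \eqref{eq: twisted intertwining relation} transform under $\lf \mapsto \lf \otimes \x^{-1}$ by the same character $\x = \a(x) = \a(s)$: this covariance is built into the normalization \eqref{eq: theta twisted intertwining operator} of $A_{\lr_{M}}(\theta_{u}, \x)$ on the LHS, and into the way $\x$ enters the character identities of Theorem~\ref{thm: refined L-packet}(3) on the RHS. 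Consequently the identity propagates to all of $\sH(\lG, \lif{\chi})$.

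The main difficulty I anticipate is coordinating the simultaneous choices of lifts $\cPkt{\lp}$ and $\cPkt{\lp'}$ so that the pairings $<u, \r_{M}^{+}>$ appearing inside $R_{\lP}(u, \lr_{M}, \lp)$ via \eqref{eq: intertwining operator identity} align precisely with the pairings $<x, \r^{+}>$ dictated by the character relations on $G'$. When $x$ fails to lift to $\S{\lp}$ the character $\x = \a(x)$ is nontrivial, and $A_{\lr}(id, \x)$ becomes a genuine intertwining isomorphism $\lr \otimes \x \to \lr$ (existing by Corollary~\ref{cor: theta twisting character}) rather than a scalar. Verifying that one consistent normalization makes both sides agree for every $u \in \N{\p}(G, M)$ in the preimage of a given $x \in \S{\p}$ is the step that makes essential use of the commutative diagram \eqref{eq: twisted intertwining relation diagram}.
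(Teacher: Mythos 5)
There is a genuine gap, and it is exactly at the step you single out as the extension from extensions of $f \in \H(G,\chi)$ to all of $\sH(\lG, \lif{\chi})$.

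First, the concrete error: your claim that the extensions from $G$ together with their $X$-twists ``span the whole Hecke algebra'' is false. The action of $X = \Hom(\lG(F)/\lZ(F)G(F), \C^{\times})$ on $\H(\lG, \lif{\chi})$ by pointwise multiplication preserves the support of a function. An extension $\lif{f}$ of $f \in \H(G,\chi)$ is by definition supported on $\lif{Z}_{F}G(F)$, and $\lif{f} \cdot \x_{0}$ for $\x_{0} \in X$ is still supported on $\lif{Z}_{F}G(F)$. When $X$ is nontrivial (which is precisely the interesting case, where the twisted intertwining relation has content beyond Arthur's theorem for $G$), the functions supported on $\lif{Z}_{F}G(F)$ form a proper subspace of $\H(\lG, \lif{\chi})$, so no amount of twisting recovers the full Hecke algebra.

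Second, and more structurally: the step you are trying to make is not a matter of a clever spanning argument. The paper reduces Theorem~\ref{thm: twisted intertwining relation} to the twisted character relation \eqref{eq: theta twisted character relation} (with $\theta = \mathrm{id}$) via Lemma~\ref{lemma: twisted intertwining relation 1} — which is itself proved by exactly the restriction argument you describe — and then proves that twisted character relation by \emph{global} trace-formula methods in Section~\ref{sec: refined L-packet} (Lemma~\ref{lemma: character case}, Theorem~\ref{thm: twisted character relation for elliptic parameter}, Theorem~\ref{thm: refined L-packet for discrete parameter}). The content that is invisible to extensions of $f \in \H(G,\chi)$ is precisely the behavior of $\lf^{\lG'}$ on $\x$-twisted orbital integrals off of $\lif{Z}_{F}G(F)$; Corollary~\ref{cor: twisted endoscopic transfer} only pins down $\lf^{\lG'}$ on $\mathcal{SI}(\lG', \lif{\chi}')$ when $\lf$ is itself an extension, so it does not give you $\lf'_{\lG}(\lp, s) = \sum_{[\lr]} \lf_{\lG}(\lr,\x)$ for general $\lf$. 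That identity is the twisted character relation, which is the part that requires the global lifting of $\p$ and the comparison of stabilized twisted trace formulas. The paper's introduction remarks explicitly that the proof is by global means and that a purely local proof is not known; your proposal is an attempt at such a local proof and, for the reason above, does not close.

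What is correct in your proposal: the computation that on the subspace of extensions both sides reduce to Arthur's local intertwining relation for $G$, and the observation (implicit in the paper's Lemma~\ref{lemma: twisted intertwining relation 1}) that this restriction determines the operators $A_{\lr}(\x)$ uniquely by multiplicity-one and linear independence of the characters of the constituents of $\lr|_{G}$. This is the genuine content of the local reduction; what remains, and is not local, is the verification that the transfer $\lf \mapsto \lf^{\lG'}(\lp')$ produces exactly the sum $\sum_{[\lr]} \lf_{\lG}(\lr, \x)$ with those normalized operators on all of $\sH(\lG, \lif{\chi})$.
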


The next lemma shows that for $\lG$, the $\x$-twisted intertwining relation \eqref{eq: twisted intertwining relation} is equivalent to the $\x$-twisted character relation (see \eqref{eq: theta twisted character relation} when $\theta$ = id), if one has the local intertwining relation for $G$ (\cite{Arthur:2013}, Theorem 2.4.1).

\begin{lemma}
\label{lemma: twisted intertwining relation 1}
For $\p \in \cPbd{G}$, we assume $\p$ factors through $\p_{M} \in \cPdt{M}$ and $\cPkt{\lp_{M}}$ exists. We define $\cPkt{\lp} := \mathcal{I}_{P}(\cPkt{\lp_{M}})$. Suppose $u \in \N{\p}(G, M)$ and semisimple $s \in \cS{\p}$ have the same image $x$ in $\S{\p}$. Then 
\[
\lf_{\lG}(\lp, u) = \lf'_{\lG}(\lp, s)  
\] 
for some $\cPkt{\lp}$ and $\cPkt{\lp'}$ if and only if 
\[ 
\lf'_{\lG}(\lp, s)= \sum_{[\lr] \in \cPkt{\lp}} \lf_{\lG}(\lr, \x),   
\]
where $\x = \a(x)$ and $\lf \in \sH(\lG, \lif{\chi})$.
\end{lemma}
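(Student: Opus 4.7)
The plan is as follows. The two statements in the lemma share the common right-hand side $\lf'_{\lG}(\lp, s)$, so their equivalence reduces to proving the unconditional identity
\[
\lf_{\lG}(\lp, u) = \sum_{[\lr] \in \cPkt{\lp}} \lf_{\lG}(\lr, \x), \quad \lf \in \sH(\lG, \lif{\chi}),
\]
which I would establish directly. First I would expand the left side using \eqref{eq: induced twisted character}, recalling that for tempered $\p_M$ the induced representation $\mathcal{I}_{\lP}(\lr_M)$ has finite length with irreducible constituents exhausting $\cPkt{\lp}$ as $\lr_M$ ranges over $\cPkt{\lp_M}$. On the right side, the operator $A_{\lr}(1, \x)$ entering $\lf_{\lG}(\lr, \x)$ is nonzero precisely for those $\lr$ with $\x \in X(\lr) = \a(\S{\p})$, which by Corollary~\ref{cor: theta twisting character} corresponds exactly to the constituents arising from those $\lr_M$ satisfying $\lr_M^{\theta_u} \cong \lr_M \otimes \x$.

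The core step will be to identify, on each irreducible constituent $\lr$ of $\mathcal{I}_{\lP}(\lr_M)$, the restriction of $R_{\lP}(u, \lr_M, \lp) \circ R(\x)$ (appearing in the trace on the left side) with a scalar multiple of $A_{\lr}(1, \x)$. This step will use the decomposition \eqref{eq: intertwining operator identity}, which writes $R_{\lP}(u, \lr_M, \lp)$ as a direct sum over constituents $\r_M \subseteq \lr_M|_M$ of $<u, \r_M^+>\, R_P(\theta_u, \r_M^+, \p)$. Arthur's local intertwining relation for $G$ (\cite[Theorem~2.4.1]{Arthur:2013}) asserts that $R_P(\theta_u, \r_M^+, \p)$ acts on each irreducible constituent $\r \subseteq \mathcal{I}_P(\r_M)$ by the scalar $<u, \r^+>$. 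Combined with the compatibility $\Res^{\lG(F)}_{G(F)} \mathcal{I}_{\lP}(\lr_M) \cong \bigoplus_{\r_M} \mathcal{I}_P(\r_M)$, this will pin down the action of $R_{\lP}(u, \lr_M, \lp)$ on each $\lr$-isotypic piece; matching this with the normalization \eqref{eq: theta twisted intertwining operator} of $A_{\lr}(1, \x)$ will then complete the identification.

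The main obstacle will be consistent bookkeeping of the various extensions and normalizations of the intertwining operators -- specifically, verifying that the choices of $\r^+$ and the implicit pairing built into $A_{\lr}(1, \x)$ via \eqref{eq: theta twisted intertwining operator} are compatible with the scalars coming from Arthur's relation for $G$ on each constituent. A secondary technical issue is that $\lf$ need not be supported on $\lif{Z}_F G(F)$; but both sides transform compatibly under $\lf \mapsto \lf \cdot \x'$ for $\x' \in X$ (shifting $\x$ to $\x\x'$ on the left and twisting $\cPkt{\lp}$ by $\x'^{-1}$ on the right), so it will suffice to check the identity on $X$-translates of extensions of functions in $\sH(G, \chi)$, where the computation reduces to the corresponding identity for $G$.
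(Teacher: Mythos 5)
Your proposal is correct and follows essentially the same route as the paper: both reduce the biconditional to the single unconditional identity $\lf_{\lG}(\lp, u) = \sum_{[\lr]} \lf_{\lG}(\lr, \x)$, both expand the left side via the decomposition \eqref{eq: induced twisted character} into isotypic pieces, and both pin down the intertwining-operator normalization by restricting to test functions supported on $\lif{Z}_{F}G(F)$ and matching with the local intertwining relation for $G$.

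One small remark on your closing paragraph: the twist-equivariance detour is not needed and, as stated, does not accomplish what you want -- multiplying $\lf$ by a character $\x' \in X$ leaves its support unchanged, so $X$-translates of extensions of $\sH(G,\chi)$ remain supported on $\lif{Z}_{F}G(F)$ and do not span $\sH(\lG, \lif{\chi})$. The correct mechanism, which your core step already supplies, is the one the paper uses: the expansion $\lf_{\lG}(\lp, u) = \sum_{[\lr]} tr(\lr(\lf)\circ A_{\lr}(\x)')$ holds for \emph{all} $\lf$ with a priori unknown intertwining operators $A_{\lr}(\x)'$; restricting to $\lif{Z}_{F}G(F)$-supported $\lf$ and applying the $G$-level relation forces $A_{\lr}(\x)' = A_{\lr}(\x)$ as operators, and because this is an operator identity the trace equality then holds for every $\lf \in \sH(\lG, \lif{\chi})$ with no further reduction.
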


\begin{proof}
By Corollary~\ref{cor: theta twisting character}, we have $\cPkt{\lp} = \cPkt{\lp} \otimes \x$. Since
\[
\lf_{\lG}(\lp, u)  = \sum_{[\lr_{M}] \in \cPkt{\lp_{M}}} tr(R_{\lP}(u, \lr_{M}, \lp)\mathcal{I}^{\x}_{P}(\lr_{M} \otimes \x^{-1}, \lif{f})),
\]
we can assume 
\[
\lf_{\lG}(\lp, u) =  \sum_{[\lr] \in \cPkt{\lp}} \lf_{\lG}(\lr, \x)',
\]
where $\lf_{\lG}(\lr, \x)' = tr(\lr(\lf) \circ A_{\lr}(\x)')$ for some $A_{\lr}(\x)'$ intertwining $\lr \otimes \x$ and $\lr$. Note that if $f$ is the restriction of $\lf$ on $G(F)$, then $(\lf|_{\lif{Z}_{F}G(F)})_{\lG}(\lp, u) = f_{G}(\p, u)$. By the local intertwining relation for $G$ (\cite{Arthur:2013}, Theorem 2.4.1), we have 
\[
(\lf|_{\lif{Z}_{F}G(F)})_{\lG}(\lp, u) = f_{G}(\p, u) = f'_{G}(\p, s) = \sum_{[\r] \in \cPkt{\p}} <x, \r>f_{G}(\r).
\]
So $\lf_{\lG}(\lr, \x)' =  \lf_{\lG}(\lr, \x)$ for $\lf$ supported on $\lif{Z}_{F}G(F)$. This means for $[\lr] \in \cPkt{\lp}$, $A_{\lr}(\x)' = A_{\lr}(\x)$ as defined by \eqref{eq: theta twisted intertwining operator}. Thus $\lf_{\lG}(\lr, \x)' =  \lf_{\lG}(\lr, \x)$ for all $\lf \in \sH(\lG, \lif{\chi})$, and the lemma is clear.
\end{proof}

As we can see from the proof of this lemma, $\lf_{\lG}(\lp, u)$ only depends on the image of $u$ in $\S{\p}$. And one should expect $\lf'_{\lG}(\lp, s)$ only depends on the image of $s$ in $\S{\p}$ as well either from the $\x$-twisted character relation or the $\x$-twisted intertwining relation. But there is a little ambiguity here for $\lf'_{\lG}(\lp, s)$ depends on the choice of lift $\cPkt{\lp'}$. The next lemma resolves the ambiguity and establishes this property directly.

\begin{lemma}
\label{lemma: induced twisted character}
For $\p \in \cPbd{G}$ and $x \in \S{\p}$, there is a natural way to get a family of lifts $\cPkt{\lp'}$ for all semisimple $s \in \cS{\p}$ with image $x$ in $\S{\p}$ and $(G', \p') \rightarrow (\p, s)$. And for these lifts $\lf'_{\lG}(\lp, s)$ are the same.
\end{lemma}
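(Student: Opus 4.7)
The plan is to reduce, by conjugation within $\cS{\p}$, to the case where $s_{1}, s_{2}$ both lie in a common maximal torus $\cT{\p}(G, M)$ of $\com[0]{\cS{\p}}$; this is possible because $s_{1}^{-1} s_{2} \in \cS{\p}^{0}$ can be moved, by conjugation, into the same torus as $s_{1}$. Under this reduction, $\p$ factors through $\p_{M} \in \cPdt{M}$ for the associated Levi subgroup $M$ of $G$; both twisted endoscopic groups $G'_{i}$ contain a Levi subgroup $M'_{i}$ with $M'_{i} \cap G = M$, and each $\p'_{i}$ is induced from $\p_{M}$ via an embedding $\L{M'_{i}} \hookrightarrow \L{G'_{i}}$. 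Lifting via Proposition~\ref{prop: lifting endoscopic group}, each $\lG'_{i}$ contains $\lM'_{i}$ as a Levi.

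Next I construct the natural family of lifts. By induction on the semisimple rank of $G$, I assume Theorem~\ref{thm: refined L-packet} has been established for $\lM$, yielding a refined L-packet $\cPkt{\lp_{M}}$. Combining $\cPkt{\lp_{M}}$ with the local Langlands correspondence for the additional general linear factors inside $\lM'_{i}$, I obtain a packet $\cPkt{\lp_{M'_{i}}}$ on each $\lM'_{i}$, and set $\cPkt{\lp'_{i}} := \mathcal{I}_{\lP'_{i}}(\cPkt{\lp_{M'_{i}}})$ by parabolic induction. This construction is uniform in $i$: both $\cPkt{\lp'_{1}}$ and $\cPkt{\lp'_{2}}$ are parabolically induced from data built out of the common seed $\cPkt{\lp_{M}}$.

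To compare $\lf^{\lG'_{1}}(\lp'_{1})$ and $\lf^{\lG'_{2}}(\lp'_{2})$, I first treat $\lf = \lif{f}$, the extension to $\lif{Z}_{F}G(F)$ of some $f \in \sH(G, \chi)$. By Lemma~\ref{lemma: twisted endoscopic transfer} and Corollary~\ref{cor: twisted endoscopic transfer}, $\lf^{\lG'_{i}} = \lif{(f^{G'_{i}})}$ in $\mathcal{SI}(\lG'_{i}, \lif{\chi}')$. Because the stable distribution attached to the parabolically induced packet $\cPkt{\lp'_{i}}$ restricts on such extensions to the stable distribution attached to the induced packet $\cPkt{\p'_{i}}$ on $G'_{i}$, we obtain $\lf^{\lG'_{i}}(\lp'_{i}) = f^{G'_{i}}(\p'_{i})$; Theorem~\ref{thm: character identity}(2) identifies this with $\sum_{[\r] \in \cPkt{\p}} \langle x, \r \rangle f_{G}(\r)$, which depends only on $x$.

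Extending to general $\lf \in \sH(\lG, \lif{\chi})$ uses the $X$-equivariance of the whole construction: for $\x \in X$, the natural lift satisfies $\cPkt{\lp'_{i} \otimes \x} = \cPkt{\lp'_{i}} \otimes \x$, while $(\lf \otimes \x)^{\lG'_{i}}(\lp'_{i}) = \lf^{\lG'_{i}}(\lp'_{i} \otimes \x^{-1})$. Decomposing $\lf$ according to its support among cosets of $\lif{Z}_{F}G(F)$ in $\lG(F)$ and translating each piece back by an element of $\lG(F)$, the equality from the previous paragraph propagates term by term. The main obstacle lies precisely in this final step: ensuring that the induced packets $\cPkt{\lp'_{i}}$ depend on the embeddings $\lif{\xi}_{i}$ only through the image $x \in \S{\p}$, i.e., that the Whittaker normalization on the general linear factors and the choice of $\cPkt{\lp_{M}}$ are compatible across both $i$, so that no spurious central character twist survives beyond what is already absorbed into $x$.
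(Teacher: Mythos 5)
Your reduction step is where the argument breaks. You propose to conjugate $s_{1},s_{2}$ so that both ``lie in a common maximal torus $\cT{\p}(G,M)$ of $\com[0]{\cS{\p}}$.'' But $\cT{\p}(G,M)\subseteq\com[0]{\cS{\p}}$ maps to the identity in $\S{\p}$, so no semisimple element with nontrivial image $x$ can be conjugated \emph{into} it. What can be arranged (and what the paper does) is that after $\com[0]{\cS{\p}}$-conjugation a representative $s_{x}$ of $x$ stabilizes a fixed pair $(\cT{\p},\bar{B}_{\p})$, and then any other such representative is a $\bar{T}_{\p,x}$-translate of $s_{x}$ up to $\cT{\p}$-conjugation, where $\bar{T}_{\p,x}=\Cent(s_{x},\cT{\p})^{0}$. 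The torus $\bar{T}_{\p,x}$ is in general a \emph{proper} subtorus of $\cT{\p}$, so your subsequent claim that ``$\p$ factors through $\p_{M}\in\cPdt{M}$'' is also wrong: the relevant Levi is $M_{x}$ with $\D{M}_{x}=\Cent(\bar{T}_{\p,x},\D{G})$, and $\p_{M_{x}}\in\cPbd{M_{x}}$ need not be discrete. What is discrete is the \emph{endoscopic} parameter $\p'_{M_{x}}\in\cPdt{M'_{x}}$.

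This leads to a second genuine error in how you produce the packets. You build $\cPkt{\lp_{M'_{i}}}$ by ``combining $\cPkt{\lp_{M}}$ with the local Langlands correspondence for the additional general linear factors inside $\lM'_{i}$,'' treating $M'_{i}$ as if it were $M$ enlarged by GL factors. In fact $M'_{x}$ is an elliptic endoscopic group of $M_{x}$: it has the \emph{same} GL factors and a \emph{different} classical-group factor (an endoscopic group of the classical factor of $M_{x}$). Its refined packet therefore comes from the inductive hypothesis applied to the classical factor of $M'_{x}$, not from $\cPkt{\lp_{M}}$. With this identified, the rest of your argument — the detour through functions supported on $\lif{Z}_{F}G(F)$ and $X$-equivariance — is unnecessary: once the common Levi $M'_{x}$ of all the $G'$ (as $s$ ranges over $\bar{T}_{\p,x}$-translates of $s_{x}$) is in place, the descent of the twisted transfer to $\lM'_{x}$ gives $\lf'_{\lG}(\lp,s)=\lf^{\lM'_{x}}(\lp'_{M_{x}})$ directly for all $\lf\in\sH(\lG,\lif{\chi})$, and the natural family of lifts is simply parabolic induction from a single chosen $\cPkt{\lp'_{M_{x}}}$. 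Your last-paragraph worry about Whittaker normalizations is a symptom of the missing structural point rather than the genuine obstacle.
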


\begin{proof}
The proof is essentially the same as for $f'_{G}(\p, s)$ in  (\cite{Arthur:2013}, Section 4.5) except for one does not have any ambiguity in that case. Since it is important to clarify the ambiguity here, we will review the original proof and show how one can get rid of this ambiguity. Suppose semisimple $s \in \cS{\p}$ has image $x$ in $\S{\p}$, if $s$ is replaced by an $\com[0]{\cS{\p}}$-conjugate $s_{1}$, then the corresponding pair $(G'_{1}, \p'_{1})$ is isomorphic to $(G', \p')$ under $\com[0]{\cS{\p}}$-conjugation. And this extends to an isomorphism between $\lG'_{1}$ and $\lG'$ for $\com[0]{\cS{\lp}} = \com[0]{\cS{\p}}$. So we can simply take the lifts $\cPkt{\lp'} \cong \cPkt{\lp'_{1}}$ and it is clear that $\lf'_{\lG}(\lp, s) = \lf'_{\lG}(\lp, s_{1})$. 

Now if we fix a maximal torus $\bar{T}_{\p}$ of $\com[0]{\cS{\p}}$ and a Borel subgroup $\bar{B}_{\p}$ containing it, any automorphism of the complex reductive group $\com[0]{\cS{\p}}$ stabilizes a conjugate of $(\bar{T}_{\p}, \bar{B}_{\p})$. So we can choose a semisimple representative $s_{x}$ of $x$ in $\cS{\p}$ so that $\Int(s_{x})$ stabilizes $(\bar{T}_{\p}, \bar{B}_{\p})$, and such representatives are determined up to a $\bar{T}_{\p}$-translate. Moreover the complex torus
\[
\bar{T}_{\p, x} = \Cent(s_{x}, \bar{T}_{\p})^{0}
\]
in $\bar{T}_{\p}$ is uniquely determined by $x$. Note that $\bar{T}_{\p, x}$ is the connected component of the kernel of the following morphism
\[
\xymatrix{ \bar{T}_{\p} \ar[r]  & \bar{T}_{\p} \\
t \ar@{|->}[r]  & s_{x}^{-1}ts_{x}t^{-1}}
\]
So any point of $\bar{T}_{\p}$ can be written as $(s_{x}^{-1}ts_{x}t^{-1}) t_{x}$ for $t \in \bar{T}_{\p}$ and $t_{x} \in \bar{T}_{\p, x}$ (see \cite{Springer:2009}, Corollary 5.4.5), and hence any point in $s_{x} \bar{T}_{\p}$ can be written as
\[
s_{x} (s_{x}^{-1}ts_{x}t^{-1}) t_{x} = t s_{x} t^{-1} t_{x} = t s_{x} t_{x}  t^{-1}, \,\,\,\, t \in \bar{T}_{\p}, \,\,\, t_{x} \in \bar{T}_{\p, x}.
\]
This means it is enough to consider the $\bar{T}_{\p, x}$-translates of $s_{x}$. Finally, we can take the centralizer $\D{M}$ of $\bar{T}_{\p, x}$ in $\D{G}$ which is a Levi subgroup of $\D{G}$, and it is dual to a Levi subgroup $M_{x}$ of $G$. So $(\p, s_{x})$ is the image of a pair
\[
(\p_{M_{x}}, s_{M_{x}}), \,\,\,\, \p_{M_{x}} \in \cPbd{M_{x}}, \,\, s_{M_{x}} \in S_{\p_{M_{x}}},
\] 
attached to $M_{x}$ under the $L$-embedding $\L{M_{x}} \subseteq \L{G}$. This pair is in turn the image of an endoscopic pair $(M'_{x}, \p'_{M_{x}})$, and in particular, $\p'_{M_{x}} \in \cPdt{M'_{x}}$. Note that for all $\bar{T}_{\p, x}$-translates $s_{x, 1}$of $s_{x}$, the corresponding $\p'$ also factors through $\L{M'_{x}}$. And we have
\[
\lf'_{\lG}(\lp, s_{x}) = \lf^{M'_{x}}(\lp'_{M_{x}}) = \lf'_{\lG}(\lp, s_{x, 1}).
\]
Now if we reverse our argument, we see any lift $\cPkt{\lp'_{M_{x}}}$ will give rise to a family of lifts $\cPkt{\lp'}$ for all semisimple $s \in \cS{\p}$ with image $x$ in $\S{\p}$, such that $\lf'_{\lG}(\lp, s)$ are the same. This finishes the proof.

\end{proof}

In fact our discussion of the $\x$-twisted intertwining relation for $\lG$ can be extended to that for $\lG \rtimes \theta$. For $\p \in \cPbd{G^{\theta}}$, suppose it factors through $\p_{M} \in \cPbd{M}$ for some Levi subgroup $M$ of $G$. Let us define
\[
\xymatrix{\N{\p}^{\theta}(G, M) = \Norm(\cT{\p}(G, M), \cS{\p}^{\theta})/ \Cent(\cT{\p}(G, M), \com[0]{\cS{\p}})^{0},  & W_{\p}^{\theta}(G, M) = W(\cS{\p}^{\theta}, \cT{\p}(G, M)), }
\]
\[
\xymatrix{\N{\p}^{+}(G, M) = \Norm(\cT{\p}(G, M), \cS{\p}^{+})/ \Cent(\cT{\p}(G, M), \com[0]{\cS{\p}})^{0},  & W_{\p}^{+}(G, M) = W(\cS{\p}^{+}, \cT{\p}(G, M)), }
\] 
and 
\[
\xymatrix{\N{\lp}^{+}(G, M) = \Norm(\cT{\lp}(G, M), \cS{\lp}^{+})/ \Cent(\cT{\lp}(G, M), \com[0]{\cS{\lp}})^{0},  & W_{\lp}^{+}(G, M) = W(\cS{\lp}^{+}, \cT{\lp}(G, M)).}
\]
Then we can have a commutative diagram analogous to \eqref{eq: twisted intertwining relation diagram}.
\begin{align} 
\label{eq: theta twisted intertwining relation diagram}
\xymatrix @C=0.5cm @R=0.5cm{&&&&& 1 \ar[dd] && 1 \ar[dd] && \\
&&&& 1     \ar[dd]   && 1    \ar[dd]       &&& \\
&&&&& \com[0]{W_{\lp}}(G, M)                \ar[dd]           \ar@{=}[rr]              && \com[0]{W_{\lp}}(G, M)          \ar[dd]&& \\
&&&& \com[0]{W_{\p}}(G, M)           \ar@{=}[ur]             \ar[dd]          \ar@{=}[rr]  && \com[0]{W_{\p}}(G, M)                  \ar@{=}[ur]                \ar[dd] \\
& 1 \ar[rr] &&           \S{\lp}(M)           \ar@{_{(}->}[dl]            \ar@{=}[dd]            \ar[rr]  && \N{\lp}^{+}(G, M)             \ar@{_{(}->}[dl]            \ar[dd]  \ar[rr]   && W_{\lp}^{+}(G, M)      \ar@{_{(}->}[dl]  \ar [dd]      \ar[rr] && 1\\  
1 \ar[rr]  && \S{\p}(M)           \ar@{=}[dd]              \ar[rr]  &&          \N{\p}^{+}(G, M)        \ar[dd]             \ar[rr] && W_{\p}^{+}(G, M)        \ar[dd]            \ar[rr] && 1 & \\
& 1 \ar[rr] && \S{\lp}(M)       \ar@{_{(}->}[dl]            \ar[rr]  && \S{\lp}^{+}(G, M)      \ar[dd]     \ar@{_{(}->}[dl]    \ar[rr]   && R_{\lp}^{+}(G, M)       \ar[dd]       \ar@{_{(}->}[dl]  \ar[rr] && 1\\
1 \ar[rr]  && \S{\p}(M)        \ar[rr]  && \S{\p}^{+}(G, M)         \ar[dd]           \ar[rr] && R_{\p}^{+}(G, M)      \ar[dd]          \ar[rr] && 1 & \\
&&&&& 1 && 1  && \\
&&&& 1      && 1      &&& }  
\end{align}

For $u \in \N{\p}^{\theta}(G, M)$, we again write $w_{u}$ for the image of $u$ in $W^{\theta}_{\p}(G, M)$ and $x_{u}$ for the image of $u$ in $\S{\p}^{\theta}(G, M)$. Since $w_{u}$ normalizes $A_{\D{M}}$, it also normalizes $\D{M}$, and therefore can be treated as an element in the Weyl set $W(\D{G} \rtimes \D{\theta}, \D{M})$. The standard parabolic subgroup $P$ containing $M$ allows us to identify $w_{u}$ with an element in the Weyl set $W(G \rtimes \theta, M) \cong W(\lG \rtimes \theta, \lM)$. We choose a representative $\theta_{u}$ of $w_{u}$ in $G(F) \rtimes \theta$ preserving the $F$-splitting of $M$. Then $\p_{M} \in \cPbd{M^{\theta_{u}}}$ and $u$ defines an element in $\S{\p_{M}}^{\theta_{u}}$.

As in the previous case, we define
\begin{align}
\label{eq: theta induced character}
f_{G^{\theta}}(\p, u) = \sum_{\r_{M} \in \cPkt{\p_{M}}} <u, \r_{M}^{+}>tr(R_{P|\theta P}(\theta_{u}, \r_{M}^{+}, \p) \mathcal{I}_{P}^{\theta}(\r_{M}, f)), \,\,\,\,\,\, f \in \sH(G, \chi).  
\end{align}
Here $R_{P|\theta P}(\theta_{u}, \r_{M}^{+}, \p)$ is the normalized intertwining operator between $\mathcal{H}_{\theta P}(\r^{\theta^{-1}}_{M})$ and $\mathcal{H}_{P}(\r_{M})$, and 
\[
\mathcal{I}_{P}^{\theta}(\r_{M}, f) = R(\theta)^{-1} \circ \mathcal{I}_{P}(\r_{M}, f),
\]
where $R(\theta)$ is induced from the action of $\theta$ on $G(F)$.
We can also define
\begin{align}
\label{eq: theta induced twisted character}
\lif{f}_{\lG^{\theta}}(\lp, u) = \sum_{[\lr_{M}] \in \cPkt{\lp_{M}}} tr(R_{\lP | \theta \lP}(u, \lr_{M}, \lp) \mathcal{I}^{\theta, \x}_{P}(\lr_{M} \otimes \x^{-1}, \lif{f})), \,\,\,\,\,\, \lif{f} \in \sH(\lG, \lif{\chi}).   
\end{align}
Here $\x = \a^{M}(u) = \a^{G}(x_{u})$, $R_{\lP | \theta \lP}(u, \lr_{M}, \lp)$ is the normalized intertwining operator between $\mathcal{H}_{\theta \lP}(\lr_{M}^{\theta^{-1}})$ and $\mathcal{H}_{\lP}(\lr_{M} \otimes \x^{-1})$, and 
\[
\mathcal{I}_{\lP}^{\theta, \x}(\lr_{M} \otimes \x^{-1}, \lf) = R(\theta)^{-1} \circ \mathcal{I}^{\x}_{\lP}(\lr_{M} \otimes \x^{-1}, \lf).
\]
As before, we can identify 
\[
\mathcal{H}_{\theta \lP}(\lr_{M}^{\theta^{-1}}) \cong \bigoplus_{\r_{M} \subseteq \Res \lr_{M}} \mathcal{H}_{\theta P}(\r_{M}^{\theta^{-1}}),
\] 
and 
\[
\mathcal{H}_{\lP}(\lr_{M} \otimes \x^{-1}) \cong \bigoplus_{\r_{M} \subseteq \Res \lr_{M}} \mathcal{H}_{P}(\r_{M}).
\]
Then under these identifications, we have 
\[
R_{\lP | \theta \lP}(u, \lr_{M}, \lp) = \bigoplus_{\r_{M} \subseteq \Res \lr_{M}} <u, \r_{M}^{+}> R_{P|\theta P}(\theta_{u}, \r_{M}^{+}, \p).
\]
Therefore, if $\lf \in \sH(\lG, \lif{\chi})$ is supported on $\lif{Z}_{F}G(F)$ and $f$ is the restriction of $\lf$ to $G(F)$, then 
\[
\lf_{\lG^{\theta}}(\lp, u) = f_{G^{\theta}}(\p, u).
\]

Suppose $s$ is a semisimple element in $\cS{\p}^{\theta}$, and $(G', \p') \longrightarrow (\p, s)$. For any lift $\cPkt{\lp'}$, let us define
\[
\lf'_{\lG^{\theta}}(\lp, s) = \lf^{\lG'}(\lp'), \,\,\,\,\,\, \lf \in \sH(\lG, \lif{\chi}).
\]
Now we can state the $(\theta, \x)$-twisted intertwining relation.

\begin{theorem}
\label{thm: theta twisted intertwining relation}
Suppose $\p \in \cPbd{G}$, for semisimple $s \in \cS{\p}^{\theta}$ with $(G', \p') \longrightarrow (\p, s)$, the following identity holds for some lifts $\cPkt{\lp}$ and $\cPkt{\lp'}$ that 
\begin{align}
\label{eq: theta twisted intertwining relation}
\lf_{\lG^{\theta}}(\lp, u) = \lf'_{\lG^{\theta}}(\lp, s),  \,\,\,\,\,\,\, \lf \in \sH(\lG, \lif{\chi}), 
\end{align} 
where $u \in \N{\p}^{\theta}(G, M)$ and $s \in \cS{\p}^{\theta}$ have the same image in $\S{\p}^{\theta}$.
\end{theorem}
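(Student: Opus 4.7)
The proof will follow the strategy of Lemma~\ref{lemma: twisted intertwining relation 1}, adapted to the $\theta$-twisted setting, and must be carried out jointly with the existence of the lifts $\cPkt{\lp}$ and $\cPkt{\lp'}$ in Theorem~\ref{thm: refined L-packet} via an induction on rank. Assuming the lifts exist, the first step is to reduce to the case $\p_M \in \cPdt{M}$ via standard parabolic descent, using the product structure encoded in \eqref{eq: theta twisted intertwining relation diagram}; in this case we define $\cPkt{\lp} := \mathcal{I}_{\lP}(\cPkt{\lp_M})$. The argument of Lemma~\ref{lemma: induced twisted character} carries over verbatim to the $\theta$-twisted setting, allowing us to choose the lifts $\cPkt{\lp'}$ coherently so that $\lf'_{\lG^\theta}(\lp,s)$ depends only on the image $x$ of $s$ in $\S{\p}^\theta$.

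The key step is to restrict attention to functions $\lf \in \sH(\lG, \lif{\chi})$ supported on $\lif{Z}_F G(F)$, and let $f$ denote the restriction of $\lf$ to $G(F)$. The $\theta$-twisted analogue of the operator identity \eqref{eq: intertwining operator identity}, derived in the same way using the decomposition
\[
\mathcal{H}_{\theta \lP}(\lr_M^{\theta^{-1}}) \cong \bigoplus_{\r_M \subseteq \Res \lr_M} \mathcal{H}_{\theta P}(\r_M^{\theta^{-1}})
\]
together with the matching normalizing factors $r_{\lP}(w_u, \lp_M) = r_P(w_u, \p_M)$, yields $\lf_{\lG^\theta}(\lp, u) = f_{G^\theta}(\p, u)$. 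On the endoscopic side, Lemma~\ref{lemma: twisted endoscopic transfer} gives $\lf^{\lG'} = \lif{(f^{G'})}$, and combining this with part (2) of Theorem~\ref{thm: refined L-packet} applied to $\lG'$ (available by induction on rank), we obtain $\lf'_{\lG^\theta}(\lp, s) = f'_{G^\theta}(\p, s)$. The identity on this subspace therefore reduces to $f_{G^\theta}(\p, u) = f'_{G^\theta}(\p, s)$, which is the $\theta$-twisted local intertwining relation for $G$ established in \cite[Theorem 2.4.1]{Arthur:2013}.

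The main obstacle is extending the identity from functions supported on $\lif{Z}_F G(F)$ to all of $\sH(\lG, \lif{\chi})$. Since $\lG(F)/\lif{Z}_F G(F) \cong D(F)/\c(\lif{Z}_F)$ is a finite abelian group, every function in $\sH(\lG, \lif{\chi})$ decomposes into finitely many pieces, each supported on a single coset of $\lif{Z}_F G(F)$, and each such piece is obtained from a $\lif{Z}_F G(F)$-supported function by multiplication with a character of $\lG(F)/\lif{Z}_F G(F)$. The left side of \eqref{eq: theta twisted intertwining relation} transforms under this character twisting via the action of $X$ on $\cPkt{\lp}$ classified by Corollary~\ref{cor: theta twisting character}, while the right side transforms via the effect of the character twist on the endoscopic transfer $\lf \mapsto \lf^{\lG'}$, corresponding to the dual action on the isomorphism class of $(\lG', \lif{s}, \lif{\xi})$ given by Proposition~\ref{prop: lifting endoscopic group}. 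Verifying that these two transformations match, mediated by the identification $\x = \a^M(u) = \a^G(x_u)$ and by the normalization in \eqref{eq: theta twisted intertwining operator} built into $A_{\lr_M}(\theta_u, \x)$, is the most delicate bookkeeping in the argument; the framework of Section~\ref{subsec: local twisted intertwining relation} is designed precisely so that this compatibility holds, and with it the identity propagates to the full Hecke algebra.
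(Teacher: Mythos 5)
Your local reduction for test functions supported on $\lif{Z}_{F}G(F)$ is correct, and it matches what the paper records in the $\theta$-twisted analogue of Lemma~\ref{lemma: twisted intertwining relation 1} and Lemma~\ref{lemma: induced twisted character}. But the final step, extending the identity from $\lif{Z}_{F}G(F)$-supported functions to all of $\sH(\lG, \lif{\chi})$, contains a genuine gap, and it is precisely the step a purely local argument cannot close.

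First, a concrete technical error: a piece of $\lf$ supported on a nontrivial coset $g\lif{Z}_{F}G(F)$ is \emph{not} obtained from a $\lif{Z}_{F}G(F)$-supported function by multiplication with a character of $\lG(F)/\lif{Z}_{F}G(F)$. Such a character is constant on every coset, so multiplication by it leaves the support unchanged; it cannot move a function off $\lif{Z}_{F}G(F)$. More substantially, the distribution $\lf \mapsto \lf'_{\lG^{\theta}}(\lp, s)$ is not determined by its restriction to $\lif{Z}_{F}G(F)$-supported functions: for $\x' \in X$, the $(\theta,\x)$-twisted characters of $\lr$ and of $\lr \otimes \x'$ agree on $\lif{Z}_{F}G(F)$ but differ on $\lG(F)$. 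To pin down the endoscopic side you would need to know \emph{a priori} that the pullback $\lf^{\lG'}(\lp')$ is a combination of twisted characters drawn from $\cPkt{\lp}$ rather than from some twist $\cPkt{\lp}\otimes\x'$; Corollary~\ref{cor: theta twisting character} and Proposition~\ref{prop: lifting endoscopic group} classify the possible twists but do not select among them, so the "compatibility" you invoke at the end is exactly the statement that remains to be proved.

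This is where the paper leaves local methods behind. The descent argument of Lemma~\ref{lemma: refined L-packet for non-discrete parameter} handles semisimple $s$ with $|\cS{\p,s}^{0}| = \infty$; for $|\cS{\p,s}^{0}| < \infty$ (so $\p \in \cPel{G^{\theta}}$ and $s$ is elliptic), Theorem~\ref{thm: theta twisted intertwining relation} is established globally in Lemma~\ref{lemma: character case} (type II) and Theorem~\ref{thm: twisted character relation for elliptic parameter}: one lifts $\p$ to a global parameter $\dot{\p}$ satisfying $\S{\lif{\dot{\p}}} = 1$ and $\Sigma_{0}$-strong multiplicity one at the distinguished place (Lemma~\ref{lemma: global lifting}), compares the spectral and endoscopic expansions of the stabilized $(\theta,\dot\x)$-twisted trace formula (Lemma~\ref{lemma: twisted spectral expansion} and Lemma~\ref{lemma: twisted endoscopic expansion}), and uses linear independence of twisted characters at the auxiliary places, together with strong multiplicity one, to isolate the desired local identity. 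Your argument omits this global input entirely, so as written the proposal does not prove the theorem.
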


Finally, it is easy to see that Lemma~\ref{lemma: twisted intertwining relation 1} and Lemma~\ref{lemma: induced twisted character} can be extended to this case too. Moreover, the discussion of this section can be carried out for $\p \in \cuP{G}$ as well, and the corresponding twisted intertwining relation will follow from the tempered case.

\section{Stable Trace Formula and Multiplicity Formula} 
\label{sec: multiplicity formula}

\subsection{General setup and stable trace formula}
\label{subsec: stable trace formula}

In this section, we will set up the means to prove the main local theorem (Theorem~\ref{thm: refined L-packet}). The method is global and we are going to use various types of stabilized trace formulas. To be more precise it is the discrete part of the trace formula and its stabilized form that we are going to use. A detailed discussion of this can be found in (\cite{Arthur:2013}, Chapter 3). The stabilization of the ordinary trace formula has been established by Arthur in \cite{Arthur:2001} \cite{Arthur:2002} \cite{Arthur:2003}, and it also rests upon Ngo's proof \cite{Ngo:2010} of the Fundamental Lemma. In the twisted case, this results from the long project of Moeglin and Waldspurger \cite{MW:2016}.

Let us assume $F$ is global, and let $G$, $\lG$, $D$ and $\c$ be defined as in Section~\ref{subsubsec: notations}. Let $\theta$ be an automorphism of $\lG$ preserving an $F$-splitting and we assume $\c$ is $\theta$-invariant. Let $\lif{Z}_{\A_{F}} = \prod'_{v} \lif{Z}_{F_{v}}$ be a closed subgroup of $\lZ(\A_{F})$, such that $\lif{Z}_{F_{v}}$ satisfy the conditions in Section~\ref{subsec: the transfer map}. We also require $\lif{Z}_{\A_{F}} \Z(\A_{F}) = \lZ(\A_{F})$ and $\lif{Z}_{\A_{F}} \lZ(F)$ is mapped to a closed and cocompact subgroup in $\lZ(\A_{F})_{\theta}$. Let $\lif{Z}_{F} = \lif{Z}_{\A_{F}} \cap \lZ(F)$ and $\lif{\chi}$ be a character of $\lif{Z}_{\A_{F}} / \lif{Z}_{F}$. Let $Z_{{\A_{F}}} = \lif{Z}_{\A_{F}} \cap \Z(\A_{F})$ and $Z_{F} = \lif{Z}_{F} \cap \Z(F)$. We denote the restriction of $\lif{\chi}$ to $Z_{\A_{F}}$ by $\chi$. First we consider the discrete part of the $\theta$-twisted trace formula for $G$. For any nonnegative real number $t$ and $f \in \H(G, \chi)$, it is a distribution defined as follows
\begin{align}
\label{eq: spectral side}
\Idt{G^{\theta}}{, t}(f) = \sum_{\{ M \}} |W(M)|^{-1} \sum_{w \in W^{\theta}(M)_{reg}} |\det(w-1)_{\mathfrak{a}^{G^{\theta}}_{M}}|^{-1} tr(M_{P|\theta P, t}(w, \chi)I^{\theta}_{P, t}(\chi, f)).     
\end{align}
Here we need to give some explanations of this formula. The  outer sum is taken over $G(F)$-conjugacy classes of Levi subgroups $M$ of $G$, and the inner sum is taken over elements $w$ in the Weyl set
\[
W^{\theta}(M) = \Norm(A_{M}, G \rtimes \theta) / M
\]
such that $|\det(w-1)_{\mathfrak{a}^{G^{\theta}}_{M}}|^{-1} \neq 0$, where $\mathfrak{a}^{G^{\theta}}_{M}$ is the kernel of the canonical projection of 
\[
\mathfrak{a}_{M} \rightarrow \mathfrak{a}_{G} \rightarrow \mathfrak{a}_{G^{\theta}},
\] 
and 
\[
\mathfrak{a}_{G^{\theta}} = \mathfrak{a}_{G} / \{ X - \theta(X): X \in \mathfrak{a}_{G}\}.
\]
For any Levi subgroup $M$ of $G$, we can take the direct sum of 
\[
L^{2}_{disc, t}(M(F) \backslash M(\A_{F}), \zeta_{M}) \subseteq L^{2}_{disc}(M(F) \backslash M(\A_{F}), \zeta_{M})
\] 
such that the central character $\zeta_{M}$ extends $\chi$ and is invariant under some element of $W^{\theta}(M)_{reg}$, and the archimedean infinitesimal characters of the irreducible constituents have norm $t$ on their imaginary parts, then 
\[
I_{P, t}(\chi, f) = \int_{Z_{\A_{F}} \backslash G(\A_{F})} f(g) I_{P, t}(\chi, g) dg
\]
defines an operator on the space $\H_{P, t}(\chi)$ of the corresponding normalized induced representations. The operator $I^{\theta}_{P, t}(\chi, f)$ is the composition $R(\theta)^{-1} \circ I_{P, t}(\chi, f)$, and $M_{P|\theta P, t}(w, \chi)$ is the standard intertwining operator between $\H_{\theta P, t}(\chi)$ and $\H_{P, t}(\chi)$. Note when $\theta = id$, the term for $M = G$ in \eqref{eq: spectral side} is given by trace of $R^{G}_{disc, t}(f) := I_{G, t}(\chi, f)$ on the corresponding part of the discrete spectrum of $G$.

The discrete part of the trace formula \eqref{eq: spectral side} can be stabilized, and we get the following formula,
\begin{align}
\label{eq: endoscopic side}
\Idt{G^{\theta}}{, t}(f) = \sum_{G' \in \End{ell}{G^{\theta}}} \iota(G, G') \Sdt{G'}{, t}(f^{G'}).
\end{align}
Here $\Sdt{G'}{, t}(f')$ are stable distributions on $G'$, and they are defined by induction from the stabilized formula for $\Idt{G'}{, t}(f')$. If we denote the image of $Z_{\A_{F}}$ under the inclusion $(\Z)_{\theta} \rightarrow Z_{G'}$ by $Z'_{\A_{F}}$ and let $Z'_{F} = Z'_{\A_{F}} \cap Z_{G'}(\A_{F})$, then $\Idt{G'}{, t}(f')$ is defined with respect to a character $\chi'$ of $Z'_{\A_{F}}/Z'_{F}$ determined by $\chi$ and the twisted endoscopic embedding $\L{G}' \rightarrow \L{G}$, and $f' \in \H(G', \chi')$. The coefficients $\iota(G, G')$ are given by the Kottwitz's formula,
\begin{align}
\label{formula: endoscopic coefficient}
\iota(G, G')= \frac{|\pi_{0}(Z(\D{G})^{\Gal{}})|}{|\pi_{0}(Z(\D{G'})^{\Gal{})}|} \cdot  \frac{|\ker^{1}(F, Z(\D{G'}))|}{|\ker^{1}(F, Z(\D{G}))|} \cdot |\pi_{0}(\Aut_{G}(G'))|^{-1} \cdot |\kappa_{G^{\theta}} / \kappa_{G^{\theta}} \cap Z(\D{G}')|^{-1}.
\end{align}
where $\kappa_{G^{\theta}} = A_{\D{G}}^{\D{\theta}}$. When $\theta = id$, the term $|\kappa_{G^{\theta}} / \kappa_{G^{\theta}} \cap Z(\D{G}')|^{-1} =1$.

We can also write down the same trace formulas for $\lG$, but in this case we also need to consider the $\x$-twisted version of these trace formulas. Let $\x$ be a character of $\lG(\A_{F})/\lG(F)$ and $\lf \in \H(\lG, \lif{\chi})$, the discrete part of the $(\theta, \x)$-twisted trace formula for $\lG$ takes the form
\begin{align}
\label{eq: twisted spectral side}
\tIdt{\lG^{\theta}}{, t}(\lf) = \sum_{ \{ \lM \} } |W(\lM)|^{-1} \sum_{w \in W^{\theta}(\lM)_{reg}} |\det(w-1)_{\mathfrak{a}^{\lG^{\theta}}_{\lM}}|^{-1} tr(M_{\lP|\theta \lP, t}(w, \lif{\chi}) I^{\theta, \x}_{\lP, t}(\lif{\chi}, \lf)),     
\end{align}
where the operator $I^{\theta, \x}_{\lP, t}(\lif{\chi}, \lf)$ is the composition $R(\theta)^{-1} \circ R(\x) \circ I_{\lP, t}(\lif{\chi}, \lf)$. For the term corresponding to $\lM = \lG$, we let $R^{\lG}_{disc, t}(\lf) := I_{\lG, t}(\lif{\chi}, \lf)$ and denote $R(\theta)^{-1} \circ R(\x) \circ R^{\lG}_{disc, t}(\lf)$ by $R^{(\lG^{\theta}, \x)}_{disc, t}(\lf)$. After stabilization, \eqref{eq: twisted spectral side} becomes
\begin{align}
\label{eq: twisted endoscopic side}
\tIdt{\lG^{\theta}}{, t}(\lf) = \sum_{\lG' \in \tEnd{ell}{\lG^{\theta}}} \iota(\lG, \lG') \Sdt{\lG'}{, t}(\lf^{\lG'}),          
\end{align}
where the coefficients $\iota(\lG, \lG')$ are given by the same kind of formula as in ~\eqref{formula: endoscopic coefficient}.

We denote by $\mathcal{A}(\lG)$ (resp. $\mathcal{A}_{2}(\lG)$) the set of (resp. discrete) automorphic representations of $\lG$, and by $\Cl_{aut}(\lG)$ the set of families of Satake parameters of automorphic representations of $\lG$, modulo the equivalence relation that $c = c' \in \Cl_{aut}(\lG)$ if and only if $c_{v} = c'_{v}$ for almost all places $v$. More generally, we extend this notion to admissible representations of $\lG(\A_{F})$, and we denote the corresponding set by $\Cl_{\A}(\lG)$. For $\lif{c} \in \Cl_{\A}(\lG)$ and its projection $c$ on $\L{G}$, we can write $\Idt{G^{\theta}}{, t, c}(f)$ (resp. $\tIdt{\lG^{\theta}}{, t, \lif{c}}(\lf)$ and $R^{(\lG^{\theta}, \x)}_{disc, t, \lif{c}}(\lf)$) for the part of $\Idt{G^{\theta}}{, t}(f)$ (resp. $\tIdt{\lG^{\theta}}{, t}(\lf)$ and $R^{(\lG^{\theta}, \x)}_{disc, t}(\lf)$), which is contributed from automorphic representations $\r$ (resp. $\lr$) satisfying $c(\r) = c$ (resp. $c(\lr) = \lif{c}$). Then $\Sdt{G}{, t, c}(f)$ can be defined inductively using \eqref{eq: endoscopic side} for $\theta = id$. To be more precise, let
\[
\Sdt{G'}{, t, c}(f') = \sum_{c' \rightarrow c} \Sdt{G'}{, t, c'}(f'),
\]
and the sum is over the preimages $c' $of $c$ in $\mathcal{C}_{\A}(G')$ under the twisted endoscopic embedding $\L{G}' \rightarrow \L{G}$. Then we define
\begin{align*}
\Sdt{G}{, t, c}(f) = \Idt{G}{, t, c}(f)  - \sum_{G' \in \End{ell}{G} - \{G\}} \iota(G, G') \Sdt{G'}{, t, c}(f^{G'}).
\end{align*}
Similarly, we can define $\Sdt{\lG}{, t, \lif{c}}(\lf)$. The next lemma shows that $\Sdt{G}{, t, c}(f)$ (resp. $\Sdt{\lG}{, t, \lif{c}}(\lf)$) is stable and we get a decomposition for \eqref{eq: endoscopic side} (resp. \eqref{eq: twisted endoscopic side}).

\begin{lemma}
\label{lemma: trace formula component}
\begin{enumerate}
\item $\Sdt{G}{, t, c}(f)$ (resp. $\Sdt{\lG}{, t, \lif{c}}(\lf)$) is stable. 
\item The stabilization of the twisted trace formula \eqref{eq: endoscopic side} (resp. \eqref{eq: twisted endoscopic side}) can be decomposed according to $c \in \Cl_{\A}(G)$ (resp. $\lif{c} \in \Cl_{\A}(\lG)$), i.e.
\[
\Idt{G^{\theta}}{, t, c}(f) = \sum_{G' \in \End{ell}{G^{\theta}}} \iota(G, G') \Sdt{G'}{, t, c}(f^{G'}).
\]
resp.
\begin{align}
\label{eq: trace formula component}
\tIdt{\lG^{\theta}}{, t, \lif{c}}(\lf) = \sum_{\lG' \in \tEnd{ell}{\lG^{\theta}}} \iota(\lG, \lG') \Sdt{\lG'}{, t, \lif{c}}(\lf^{\lG'}).          
\end{align}

\end{enumerate}
\end{lemma}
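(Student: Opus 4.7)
The plan is to treat both parts simultaneously by induction on $\dim G$ (and $\dim \lG$), following the multiplier argument of Arthur in \cite[\S3.5]{Arthur:2013}. In the inductive step, I assume both (1) and (2) for all proper elliptic endoscopic groups of $G$ and $\lG$, and use the full (twisted) stable trace formulae \eqref{eq: endoscopic side} and \eqref{eq: twisted endoscopic side} as already established.

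The core device is to isolate $c$-components of the trace formula by combining unramified Hecke convolution with archimedean multipliers. Fix a finite set of places $S$ outside of which all data are unramified. For $v \notin S$, the spherical Hecke algebra at $v$ acts on $\H(G, \chi)$ by convolution and distinguishes irreducible constituents through their Satake parameter at $v$; at archimedean places, Arthur's multipliers separate infinitesimal characters whose imaginary parts have norm $t$. Since any two distinct classes in $\Cl_{\A}(G)$ differ at some unramified place, a finite combination of these operations suffices, in the limit, to project a test function onto its $c$-component. The crucial feature is that these operations preserve stability, being convolutions by central or unramified functions, and commute with the Langlands--Shelstad--Kottwitz transfer: at unramified places via the Fundamental Lemma and its twisted form (Ngo, Waldspurger), and at archimedean places via Shelstad's compatible matching of infinitesimal characters.

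With this in hand, part (1) for $G$ follows by applying the $c$-projection to the inductive identity defining $\Sdt{G}{, t, c}(f)$: on the left we recover $\Sdt{G}{, t, c}(f)$ itself (using the inductive hypothesis for smaller endoscopic groups together with the definition), and on the right we obtain a stable distribution; the non-twisted case for $\lG$ is identical. For part (2), applying the $\lif{c}$-projection to the twisted stabilization \eqref{eq: twisted endoscopic side} yields $\tIdt{\lG^{\theta}}{, t, \lif{c}}(\lf)$ on the left; on the right, compatibility of the projection with the twisted transfer $\lf \mapsto \lf^{\lG'}$ via the embedding $\L{\lG'} \hookrightarrow \L{\lG}$ collapses into a sum over preimages $\lif{c}' \in \Cl_{\A}(\lG')$ of $\lif{c}$, yielding exactly $\Sdt{\lG'}{, t, \lif{c}}(\lf^{\lG'})$ as defined.

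The hardest step is establishing the compatibility between $\lif{c}$-projection and the twisted endoscopic transfer; once granted, everything is bookkeeping. At unramified places this reduces to the twisted Fundamental Lemma, which guarantees that spherical functions transfer to spherical functions with matching Satake parameters under $\L{\lG'} \hookrightarrow \L{\lG}$, and at archimedean places it reduces to Shelstad's twisted transfer, which respects the correspondence of infinitesimal characters. Given these, stability of the full $\Sdt{G}{, t}$ and $\Sdt{\lG}{, t}$ refines to stability of each $c$-component (respectively $\lif{c}$-component) by linear independence across distinct Satake parameter classes.
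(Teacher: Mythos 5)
Your proposal is correct and follows essentially the same route as the paper, which simply cites the multiplier argument of Arthur (\cite{Arthur:2013}, Lemma 3.3.1 — not \S3.5, but the substance is the same). The two genuinely load-bearing points — that the $c$-isolating operations (unramified Hecke convolution plus archimedean multipliers) commute with the (twisted) Langlands--Shelstad--Kottwitz transfer, via the Hecke-algebra form of the (twisted) Fundamental Lemma and Shelstad's archimedean transfer, and that these operations preserve stability so that the $c$-component inherits stability from the full $S^{G}_{disc,t}$ — are both present in your write-up; the closing appeal to ``linear independence across distinct Satake parameter classes'' is really the separation step, while stability descends because the projection is realized as a limit of stability-preserving convolutions, which you do state earlier.
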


The lemma is an application of the theory of multipliers, and the proof is the same as in (\cite{Arthur:2013}, Lemma 3.3.1). 

As in the local case where we study the relation of representations between $G$ and $\lG$, here we want to discuss the relation of $\Idt{G}{, t, c}(f)$  (resp. $\Sdt{G}{, t, c}(f)$) with $\Idt{\lG}{, t, \lif{c}}(\lf)$ (resp. $\Sdt{\lG}{, t, \lif{c}}(\lf)$). The next lemma is the first step of studying this relation.

\begin{lemma}
\label{lemma: Hecke eigenvalue correspondence}
Suppose $\lr$ is an irreducible admissible representation of $\lG(\A_{F})$ and $\r$ is an irreducible constituent of $\lr$ restricted to $G(\A_{F})$, then the set of Satake parameters $c(\lr)$ is mapped to $c(\r)$ under the projection $p : \L{\lG} \longrightarrow \L{G}$.
\end{lemma}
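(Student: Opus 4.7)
The plan is to reduce this global-looking statement to a purely local one at almost all places, then analyze spherical representations via unramified principal series. Both $\lr$ and $\r$ are admissible, hence unramified at almost all places, and I can choose a cofinite set $S$ of places at which $\lr_v$ and $\r_v$ are simultaneously unramified (such $\r_v$ must exist in the restriction since any $\lif{K}_v$-fixed vector in $\lr_v$ is a fortiori fixed by $K_v := \lif{K}_v \cap G(F_v)$). It then suffices to prove, at each $v \in S$, the identity $p(c(\lr_v)) = c(\r_v)$ as semisimple conjugacy classes in $\L{G_v}$.

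At such a $v$, I would realize $\lr_v$ as the unique spherical subquotient of an unramified principal series $\Ind_{\lif{B}(F_v)}^{\lG(F_v)} \chi$, where $\lif{B} = \lif{T}\lif{N}$ is a Borel subgroup of $\lG$ containing a maximal $F$-split torus $\lif{T}$, and $\chi$ is an unramified character of $\lif{T}(F_v)$ whose $W$-orbit in $\D{\lif{T}}$ represents $c(\lr_v)$. The central geometric input is the identity $\lG = \lif{B} \cdot G$: this reduces to the surjectivity of $\lif{T} \to D$, which is automatic in our setting because $\dim \lif{T} - \dim T = \dim D$ and the central cocharacter $\mathbb{G}_m \subset \lZ \subset \lif{T}$ already maps onto $D$ via $\c$ (by Lemma~\ref{lemma: similitude character}, the image has full dimension in $D$). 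Given this factorization, the Iwasawa decomposition together with Mackey's formula yields
\[
\Ind_{\lif{B}(F_v)}^{\lG(F_v)} \chi \,\big|_{G(F_v)} \;=\; \Ind_{B(F_v)}^{G(F_v)} (\chi|_{T(F_v)}),
\]
where $B = \lif{B} \cap G$ and $T = \lif{T} \cap G$. Hence $\r_v$, being an unramified irreducible constituent of the left-hand side, is the unique spherical subquotient of the right-hand side, and $c(\r_v)$ is represented by $\chi|_{T(F_v)} \in \D{T}$.

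To conclude I would observe that character restriction $\chi \mapsto \chi|_{T(F_v)}$ from $\lif{T}(F_v)$ to $T(F_v)$ corresponds, under the Satake identification of unramified characters with elements of the dual torus, to the canonical projection $\D{\lif{T}} \twoheadrightarrow \D{T}$ dual to $T \hookrightarrow \lif{T}$; and this projection is precisely the restriction of $p : \D{\lG} \to \D{G}$ to the maximal tori (compatible with the Frobenius actions in the quasi-split case). Therefore $p(c(\lr_v)) = c(\r_v)$ at every $v \in S$, which gives the desired equivalence of families. I do not foresee a serious obstacle here; the only mildly technical point is the factorization $\lG = \lif{B} \cdot G$, which is essentially built into the definition of the similitude groups of type~\eqref{eq: similitude}.
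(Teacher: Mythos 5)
Your argument follows essentially the same route as the paper's proof: reduce to unramified places, compare $\lr_v$ with $\r_v$ via the restriction of an unramified principal series, and invoke the compatibility of the Satake parametrization with the projection $\D{\lif{T}} \twoheadrightarrow \D{T}$ (the Langlands correspondence for tori). The paper simply asserts the isomorphism $\Res^{\lG_v}_{G_v}\mathcal{I}_{\lif{B}_v}(\lif{\chi}_v) \cong \mathcal{I}_{B_v}(\lif{\chi}_v|_{T_v})$, whereas you make explicit the group-theoretic fact $\lG(F_v) = \lif{B}(F_v)G(F_v)$ that underlies it — which is worth spelling out.

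However, your justification of that factorization is flawed. You argue surjectivity of $\lif{T}\to D$ by a dimension count together with the observation that the central cocharacter $\mathbb{G}_m \subset \lZ \subset \lif{T}$ maps onto $D$. That reasoning only lives at the level of algebraic groups over $\bar{F}$, and it does not yield the surjectivity on $F_v$-points that Mackey theory requires: indeed by Lemma~\ref{lemma: similitude character} one has $\c(\lZ(F_v)) = (F_v^\times)^2$, a proper subgroup of $\c(\lG(F_v))$, so $\lZ(F_v)G(F_v) \subsetneq \lG(F_v)$ in general, and surjectivity of $\lif{T}\to D$ as algebraic groups does not by itself give $\lG(F_v) = \lif{T}(F_v)G(F_v)$. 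The factorization is nonetheless true, but the correct justification is the Bruhat-decomposition argument carried out inside the proof of Lemma~\ref{lemma: similitude character}: since Weyl representatives can be taken in $G(F_v)$ and $\lif{N}=N$, one gets $\c(\lG(F_v)) = \c(\lif{B}(F_v)) = \c(\lif{T}(F_v))$, whence $\lG(F_v) = \lif{T}(F_v)G(F_v) = \lif{B}(F_v)G(F_v)$. With that substitution the rest of your argument goes through and matches the paper's. (Minor aside: the parenthetical about $\lif{K}_v$-fixed vectors is superfluous — $\r_v$ is unramified at almost all $v$ simply because $\r$ is admissible, and your observation only shows that \emph{some} constituent of $\lr_v|_{G(F_v)}$ is unramified, not the specific $\r_v$.)
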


\begin{proof}
This lemma is essentially local, and it suffices to show for any place $v$ of $F$, if both $\r_{v}$ and $\lr_{v}$ are unramified and $\r_{v}$ is contained in the restriction of $\lr_{v}$ to $G(F_{v})$, then $c(\lr_{v})$ is mapped to $c(\r_{v})$. If $\lG_{v}$ is a torus, this follows from the Langlands correspondence for tori. In general, $\lr_{v}$ is an irreducible constituent of 
\(
\mathcal{I}_{\lif{B}_{v}} (\lif{\chi}_{v})
\)
for some unramified character $\lif{\chi}_{v}$ on the maximal torus $\lif{T}_{v}$ with Borel subgroup $\lif{B}_{v} \supseteq \lif{T}_{v}$, and one has $c(\lr_{v}) = c(\lif{\chi}_{v})$. Since
\[
\Res^{\lG_{v}}_{G_{v}} \, \, \mathcal{I}_{\lif{B}_{v}} \, (\lif{\chi}_{v}) \cong \mathcal{I}_{B_{v}} \, ( \Res^{\lif{T}_{v}}_{T_{v}} \, \lif{\chi}_{v}),
\]
then $c(\r_{v}) = c(\lif{\chi}_{v}|_{T_{v}})$. So again by the Langlands correspondence for tori one has $c(\lr_{v})$ mapped to $c(\r_{v})$.
\end{proof}


Now we assume $\lG$ is of type \eqref{eq: similitude}. By Corollary~\ref{cor: relative Hasse principle}, $\c(\lZ(\A_{F})) \cap D(F) = \c(\lZ(F))$. So we have $\c(\lZ(\A_{F})) \cap \c(\lG(F)) = \c(\lZ(F))$, which is equivalent to
\[
G(\A_{F}) \cap \lG(F) \lZ(\A_{F}) = G(F) \Z(\A_{F}).
\]
Therefore, 
\[
G(F)\Z(\A_{F}) \backslash G(\A_{F}) = \lG(F) \lZ(\A_{F}) \backslash \lG(F) \lZ(\A_{F}) G(\A_{F}).
\]
Let $\lif{\zeta}$ be a character of $\lZ(\A_{F})/\lZ(F)$ and $\zeta$ be the restriction of $\lif{\zeta}$ to $\Z(\A_{F})$, then we have
\[
L^{2}_{disc}( G(F) \backslash G(\A_{F}), \zeta) = L^{2}_{disc}(\lG(F) \backslash \lG(F) \lZ(\A_{F}) G(\A_{F}) , \lif{\zeta}).
\]
Note that right multiplication by $\lG(F) \lZ(\A_{F}) G(\A_{F})$ on the right hand side induces an action on the left hand side. In fact the action by $\lG(F)$ on the left hand side is given by conjugation on $G(F) \backslash G(\A_{F})$ and the action by $\lZ(\A_{F})$ is through the central character $\lif{\zeta}$. The following lemma shows that the $L^{2}$-discrete spectrum of $\lG(\A_{F})$ is essentially induced from the $L^{2}$-discrete spectrum of $G(\A_{F})$.

\begin{lemma}
\label{lemma: induced discrete spectrum}
$Ind^{\, \lG(\A_{F})}_{\, \lG(F)\lZ(\A_{F})G(\A_{F})} L^{2}_{disc}( G(F) \backslash G(\A_{F}), \zeta) \cong L^{2}_{disc}(\lG(F) \backslash \lG(\A_{F}), \lif{\zeta})$.
\end{lemma}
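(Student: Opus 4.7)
The plan is to deduce this from induction in stages for $L^{2}$-spaces, combined with the compactness of the quotient $\lG(\A_{F})/H$ where $H := \lG(F)\lZ(\A_{F})G(\A_{F})$. The identification just obtained gives $L^{2}_{disc}(G(F)\backslash G(\A_{F}), \zeta) \cong L^{2}_{disc}(\lG(F)\backslash H, \lif{\zeta})$ as $H$-modules, so the lemma reduces to the statement that the functor $\Ind_{H}^{\lG(\A_{F})}$ carries the discrete spectrum on the $H$-side to the discrete spectrum on the $\lG(\A_{F})$-side.

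First I would verify that $H$ is a closed normal subgroup of $\lG(\A_{F})$ with compact quotient. Normality is immediate because $\c$ identifies $\lG(\A_{F})/H$ with a subquotient of the abelian group $D(\A_{F})$. For compactness, passing through $\c$ gives
\[
\lG(\A_{F})/H \,\cong\, \c(\lG(\A_{F}))/\c(\lG(F))\c(\lZ(\A_{F})),
\]
and since $\c(\lZ(\A_{F})) = I_{F}^{2}$ while $\c(\lG(F)) \subseteq F^{\times}$, this is a subquotient of the id\`ele class group modulo squares $I_{F}/F^{\times}I_{F}^{2}$. The connected component of $I_{F}/F^{\times}$ is a quotient of a product of factors $\R_{+}$ and $\C^{\times}$, hence divisible, so it lies in the square subgroup; consequently $I_{F}/F^{\times}I_{F}^{2}$ is totally disconnected and compact. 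Corollary~\ref{cor: relative Hasse principle} enters here to ensure that the relations between $\c(\lG(F))$, $\c(\lZ(F))$, $F^{\times}$, and $I_{F}^{2}$ behave as expected.

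Next, the standard induction-in-stages identity for $L^{2}$-spaces gives
\[
L^{2}(\lG(F)\backslash \lG(\A_{F}), \lif{\zeta}) \cong \Ind_{H}^{\lG(\A_{F})}\, L^{2}(\lG(F)\backslash H, \lif{\zeta}),
\]
and the lemma then follows from the general principle that unitary induction from a closed normal subgroup with compact abelian quotient $A := \lG(\A_{F})/H$ preserves the discrete/continuous decomposition. In one direction, any irreducible subrepresentation $\pi \subseteq L^{2}_{disc}(\lG(F)\backslash H, \lif{\zeta})$ induces to a sum of irreducible unitary representations of $\lG(\A_{F})$ all sitting in $L^{2}_{disc}(\lG(F)\backslash \lG(\A_{F}), \lif{\zeta})$. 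In the converse direction, Clifford theory for a normal subgroup with compact quotient shows that any irreducible $\lr \subseteq L^{2}_{disc}(\lG(F)\backslash \lG(\A_{F}), \lif{\zeta})$ restricts to $H$ as a single $A$-orbit of irreducibles, each of which Frobenius reciprocity forces into $L^{2}_{disc}(\lG(F)\backslash H, \lif{\zeta})$.

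The main obstacle will be the converse direction: verifying carefully that restriction of a discrete-spectrum summand on the $\lG(\A_{F})$-side remains in the discrete spectrum on the $H$-side, rather than landing in the continuous part of $L^{2}(\lG(F)\backslash H, \lif{\zeta})$. This is a classical argument but relies essentially on the compactness of $A$, which ensures that the $A$-orbit of any irreducible $\pi \in \hat{H}$ is closed in $\hat{H}$ and hence that the corresponding piece of the induced representation lies entirely in the discrete spectrum.
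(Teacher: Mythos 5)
Your proposal is correct and takes essentially the same route as the paper: both establish the identification of $L^{2}(\lG(F)\backslash\lG(\A_{F}),\lif{\zeta})$ with the unitary induction from $H=\lG(F)\lZ(\A_{F})G(\A_{F})$ and then invoke compactness of $\lG(\A_{F})/H$ (which the paper gets by viewing it as a closed subgroup of $\c(\lZ(\A_{F}))D(F)\backslash D(\A_{F})$) to conclude that induction carries discrete spectrum to discrete spectrum. The only surface difference is that you phrase the first step as abstract induction in stages while the paper constructs the $L^{2}$-completion explicitly, and you spell out the Clifford-theoretic converse direction that the paper leaves implicit.
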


\begin{proof}
First of all,  there is a natural $\lG(\A_{F})$-equivariant isomorphism 
\[
\lG(F)\lZ(\A_{F}) \backslash \lG(\A_{F}) \cong G(F)\Z(\A_{F}) \backslash G(\A_{F}) \times_{\lG(F)\lZ(\A_{F})G(\A_{F})} \lG(\A_{F}).
\]
Here we can view 
\(
\lG(F)\lZ(\A_{F})G(\A_{F}) \backslash \lG(\A_{F}) 
\)
as a closed subgroup of $\c(\lZ(\A_{F}))D(F) \backslash D(\A_{F})$. Since 
\[
\c(\lZ(\A_{F}))D(F) \backslash D(\A_{F})
\] 
is compact (see \cite{Neukirch:1999}, Theorem 6.1.6), then $\lG(F)\lZ(\A_{F})G(\A_{F}) \backslash \lG(\A_{F})$ is also compact. So one can define an inner product on the space of $\lG(F)\lZ(\A_{F})G(\A_{F})$-equivariant continuous functions from $\lG(\A_{F})$ to $L^{2}( G(F) \backslash G(\A_{F}), \zeta )$, i.e.,
\[
[L^{2}( G(F) \backslash G(\A_{F}), \zeta ) \otimes C (\lG(\A_{F}))]^{\lG(F)\lZ(\A_{F})G(\A_{F})}
\]
by integrating over $\lG(F)\lZ(\A_{F})G(\A_{F}) \backslash \lG(\A_{F})$. Moreover, one can normalize its Haar measure such that
\[
L^{2}(\lG(F) \backslash \lG(\A_{F}), \lif{\zeta}) \cong \text{ completion of }[L^{2}( G(F) \backslash G(\A_{F}), \zeta ) \otimes C (\lG(\A_{F}))]^{\lG(F)\lZ(\A_{F})G(\A_{F})},
\]
which is compatible with $\lG(\A_{F})$-action. Note that the right hand side is nothing but 
\[
\Ind^{\lG(\A_{F})}_{\lG(F)\lZ(\A_{F})G(\A_{F})} L^{2}( G(F) \backslash G(\A_{F}), \zeta).
\]
Finally, since $\lG(F)\lZ(\A_{F})G(\A_{F}) \backslash \lG(\A_{F})$ is compact, one must have
\[
\Ind^{\lG(\A_{F})}_{\lG(F)\lZ(\A_{F})G(\A_{F})} L^{2}_{disc}( G(F) \backslash G(\A_{F}), \zeta) \cong L^{2}_{disc}(\lG(F) \backslash \lG(\A_{F}), \lif{\zeta}).
\]

\end{proof}

Let $X$ be the set of characters of $\lG(\A_{F}) / \lZ(\A_{F})G(\A_{F})$, and let $Y$ be the set of characters of $\lG(\A_{F}) / \lG(F) \lZ(\A_{F})G(\A_{F})$. If $\r$ is an irreducible admissible representation of $G(\A_{F})$, and $\lr$ is an irreducible admissible representation of $\lG(\A_{F})$, let us define
\begin{align*}
\lG(\r) &= \{g \in \lG(\A_{F}): \r^{g} \cong \r\} \\
X(\lr) &= \{ \x \in X: \lr \cong \lr \otimes \x \} \\
Y(\lr) &= Y \cap X(\lr).
\end{align*}
By \cite[Lemma 4.11]{HiragaSaito:2012}, we know $Y(\lr) = (\lG(\A_{F})/\lG(\r)\lG(F))^{*}$ is finite. The following lemma is inspired by \cite[Lemma 6.2]{LabesseLanglands:1979}.

\begin{lemma}
\label{lemma: multiplicity relation}
Suppose $\lr$ is an irreducible admissible representation of $\lG(\A_{F})$, and $\r$ is an irreducible constituent of $\lr$ restricted to $G(\A_{F})$. Then the multiplicities of $\lr$ and $\r$ in the discrete spectrum are related by the following formula
\begin{align}
\label{eq: multiplicity relation}
\sum_{\x \in X/ YX(\lr)} m(\lr \otimes \x) = \sum_{g \in \lG(\A_{F}) / \lG(\r)\lG(F)} m(\r^{g}).   
\end{align}
\end{lemma}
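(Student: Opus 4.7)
The plan is to combine Lemma~\ref{lemma: induced discrete spectrum} with Frobenius reciprocity and a Mackey-type restriction, and then match the resulting indices by Pontryagin duality. Set $H = \lG(F) \lZ(\A_{F}) G(\A_{F})$ and $V = L^{2}_{disc}(G(F) \backslash G(\A_{F}), \zeta)$. Since $H \supseteq \lZ(\A_{F}) G(\A_{F})$ and the quotient $\lG(\A_F)/\lZ(\A_F) G(\A_F)$ is abelian (as $\lG/G \cong D$ is abelian, all commutators of $\lG$ lie in $G$), the subgroup $H$ is normal in $\lG(\A_F)$; moreover by Lemma~\ref{lemma: induced discrete spectrum} we have $L^{2}_{disc}(\lG(F) \backslash \lG(\A_{F}), \lif{\zeta}) \cong \Ind_{H}^{\lG(\A_{F})} V$. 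I will compute the multiplicity of $\r$ inside the restriction of this representation to $G(\A_F)$ in two ways.

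On the spectral side, Frobenius reciprocity gives $m(\lr')= \dim \Hom_{H}(\lr'|_{H}, V)$ for each irreducible $\lr'$. By restriction multiplicity one (Theorem~\ref{thm: restriction multiplicity one}) combined with Lemma~\ref{lemma: existence}, the irreducibles $\lr'$ whose restrictions to $G(\A_F)$ contain $\r$ are precisely the distinct isomorphism classes of $\lr\otimes\x$, parametrized by $\x \in X/X(\lr)$, and each occurs with multiplicity one in its restriction. Hence the multiplicity of $\r$ in $(\Ind_{H}^{\lG(\A_F)} V)|_{G(\A_F)}$ equals $\sum_{\x \in X/X(\lr)} m(\lr \otimes \x)$. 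On the induction side, since $G(\A_F) \subseteq H$ and $H$ is normal, Mackey yields $(\Ind_{H}^{\lG(\A_F)} V)|_{G(\A_F)} \cong \bigoplus_{g \in \lG(\A_F)/H} V^{g}$, where $V^{g}$ denotes $V$ with its $G(\A_F)$-action precomposed with $\Int(g)$; the $\r$-multiplicity of $V^{g}$ is $m(\r^{g})$, so the total multiplicity is $\sum_{g \in \lG(\A_F)/H} m(\r^{g})$. Equating,
\[
\sum_{\x \in X/X(\lr)} m(\lr \otimes \x) \;=\; \sum_{g \in \lG(\A_F)/H} m(\r^{g}).
\]

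Next I group terms on each side. Since any $\x \in Y$ is trivial on $\lG(F)$, twisting by $\x$ is an automorphism of $L^{2}_{disc}(\lG, \lif{\zeta})$, so $m(\lr \otimes \x)$ is constant on $Y$-cosets; therefore the left-hand side equals $[YX(\lr):X(\lr)]\cdot\sum_{\x \in X/YX(\lr)} m(\lr \otimes \x)$. Likewise $\lG(F)$ acts isometrically on $V$ by conjugation, sending the $\r'$-isotypic subspace to the $(\r')^{\gamma}$-isotypic subspace and preserving multiplicities; combined with $\r^{gh} \cong \r^{g}$ for $h \in \lG(\r)$, this shows $m(\r^{g})$ depends only on the class of $g$ in $\lG(\A_F)/\lG(\r)\lG(F)$. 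Since $\lZ(\A_F) G(\A_F) \subseteq \lG(\r)$, we have $\lG(\r)\lG(F) = \lG(\r) H$, so the right-hand side becomes $[\lG(\r) H : H]\cdot \sum_{g \in \lG(\A_F)/\lG(\r)\lG(F)} m(\r^{g})$.

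The last step identifies the two indices. By Proposition~\ref{prop: restriction multiplicity one} applied locally at each place, $X(\lr)$ consists of those characters in $X$ that are trivial on $\lG(\r)$; together with the tautological description of $Y$ as characters trivial on $\lG(F)$, this identifies $Y\cap X(\lr)$ with the annihilator of the image of $\lG(\r)\lG(F)=\lG(\r)H$ in the abelian group $\lG(\A_F)/\lZ(\A_F) G(\A_F)$. Pontryagin duality applied to
\[
1 \longrightarrow \lG(\r) H / H \longrightarrow \lG(\A_F)/H \longrightarrow \lG(\A_F)/\lG(\r) H \longrightarrow 1
\]
then yields $[YX(\lr):X(\lr)] = [Y : Y \cap X(\lr)] = [\lG(\r) H : H]$. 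Dividing the equality displayed above by this common factor produces \eqref{eq: multiplicity relation}. The main subtleties are justifying the Mackey decomposition in the adelic unitary setting and ensuring the two indices are finite so that the duality identification is legitimate; the latter reduces to the finiteness of $Y(\lr)$ already recalled in the paragraph before the lemma.
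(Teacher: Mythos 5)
The overall plan here is sound and is close in spirit to the paper's own proof (both start from Lemma~\ref{lemma: induced discrete spectrum}), but there is a genuine gap at the final ``cancel the common factor'' step, and your closing remark misdiagnoses where the finiteness issue lives.

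Your intermediate identity
\[
\sum_{\x \in X/X(\lr)} m(\lr \otimes \x) \;=\; \sum_{g \in \lG(\A_F)/H} m(\r^{g}), \qquad H = \lG(F)\lZ(\A_F)G(\A_F),
\]
is a relation between two quantities that are typically \emph{infinite}. The compact group $\lG(\A_F)/H$ is not finite: for $\lG = GSp(2n)$ it is $I_F/F^{\times}I_F^{2}$, an infinite profinite group whose Pontryagin dual is the group of quadratic Hecke characters. Correspondingly $Y$ is infinite. Since $m(\lr\otimes\x)=m(\lr)$ for every $\x\in Y$, and since $Y/(Y\cap X(\lr))=Y/Y(\lr)$ is infinite while $Y(\lr)$ is finite, the left-hand side already has infinitely many equal nonzero contributions whenever $m(\lr)>0$; the same holds on the right over the coset $\lG(\r)H/H$, which is the kernel of the finite quotient $\lG(\A_F)/H \to \lG(\A_F)/\lG(\r)\lG(F)$ and is therefore infinite. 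So the common factor $[YX(\lr):X(\lr)] = [Y:Y(\lr)] = [\lG(\r)H:H]$ you divide by is infinite, and the passage to the finite identity \eqref{eq: multiplicity relation} is not legitimate --- you are essentially writing $\infty = \infty$ and then ``dividing by $\infty$''. Your closing sentence attributes the needed finiteness to $|Y(\lr)|<\infty$, but that only controls $\lG(\A_F)/\lG(\r)\lG(F)$, not $\lG(\r)H/H$ or $Y/Y(\lr)$; those are exactly the indices that blow up. Relatedly, the ``Mackey'' decomposition $\bigl(\Ind_H^{\lG(\A_F)}V\bigr)\big|_{G(\A_F)}=\bigoplus_{g\in\lG(\A_F)/H}V^{g}$ is really a direct integral over a non-discrete compact group, so reading off the $\r$-multiplicity as a literal sum $\sum_g m(\r^g)$ requires additional justification even before the cancellation step.

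The paper's proof avoids all of this by never inducing across the whole infinite quotient $\lG(\A_F)/H$ in one step. Instead it first decomposes $V$ into $\lG(F)$-orbits of constituents $\r$, records the finite-multiplicity structure $I(\r)=\bigoplus_{\x_1}^{m(\r)}\r_1\otimes\x_1$ over the stabilizer $G_1(\r)$, and then induces in stages through $\lG(\r)$: the crucial induction from $\lG(\r)\lG(F)$ to $\lG(\A_F)$ is of \emph{finite} index $|Y(\lr)|$, which is where the right-hand sum $\sum_{g\in\lG(\A_F)/\lG(\r)\lG(F)}m(\r^g)$ comes from. To repair your argument you would need to reorganize along the same lines --- for instance, restrict not to $G(\A_F)$ but to the finite-index subgroup $\lG(\r)\lG(F)$ of $\lG(\A_F)$, and track isotypic components at that level --- rather than pushing all the way down to $G(\A_F)$ and trying to recover the finite identity by cancellation afterward.
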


\begin{proof}

By Lemma~\ref{lemma: induced discrete spectrum},  
\begin{align}
\label{eq: multiplicity relation 0}
L^{2}_{disc}(\lG(F) \backslash \lG(\A_{F}), \lif{\zeta}) \cong \Ind^{\lG(\A_{F})}_{\lG(F)\lZ(\A_{F})G(\A_{F})} L^{2}_{disc}( G(F) \backslash G(\A_{F}), \zeta)
\end{align}
and we would like to expand the right hand side. First, we need to decompose $L^{2}_{disc}( G(F) \backslash G(\A_{F}), \zeta)$ as a representation of $\lG(F)\lZ(\A_{F})G(\A_{F})$. Recall $\lZ(\A_{F})$ acts through $\lif{\zeta}$ and $\lG(F)$ acts by conjugation on $G(F) \backslash G(\A_{F})$. Let $\r$ be any constituent in $L^{2}_{disc}(G(F) \backslash G(\A_{F}), \zeta )$ and
\[
G_{1}(\r) = \lG(F)\lZ(\A_{F})G(\A_{F}) \cap \lG(\r).
\]
Then $G_{1}(\r)$ will act on the $\r$-isotypic component $I(\r)$ and we get 
\[
I(\r) = \bigoplus^{m(\r)}_{\x_{1}} \r_{1} \otimes \x_{1},
\] 
where $\r_{1}$ is an extension of $\r$ to $G_{1}(\r)$ and the sum is over $m(\r)$ characters $\x_{1}$ of $G_{1}(\r) / \lZ(\A_{F})G(\A_{F})$, which depend on the extension $\r_{1}$ and can have multiplicities. Since $m(\r) = m(\r^{g})$ for $g \in \lG(F)$, we have the following decomposition 
\[
L^{2}_{disc}( G(F) \backslash G(\A_{F}), \zeta) = \bigoplus_{ \{ \r \} } \Ind^{\lG(F)\lZ(\A_{F})G(\A_{F})}_{G_{1}(\r)} (\bigoplus^{m(\r)}_{\x_{1}} \r_{1} \otimes \x_{1}),
\]
where the outer sum is taken over equivalence classes $\{ \r \}$ of constituents in $L^{2}_{disc}(G(F) \backslash G(\A_{F}), \zeta )$ under the action by $\lG(F)$. Taking this expression into \eqref{eq: multiplicity relation 0}, we get 
\begin{align*}
L^{2}_{disc}(\lG(F) \backslash \lG(\A_{F}), \lif{\zeta}) & \cong \bigoplus_{ \{ \r \} } \Ind^{\lG(\A_{F})}_{\lG(F)\lZ(\A_{F})G(\A_{F})} \Ind^{\lG(F)\lZ(\A_{F})G(\A_{F})}_{G_{1}(\r)} (\bigoplus^{m(\r)}_{\x_{1}} \r_{1} \otimes \x_{1}) \\
& \cong \bigoplus_{ \{ \r \} } \Ind^{\lG(\A_{F})}_{G_{1}(\r)} (\bigoplus^{m(\r)}_{\x_{1}} \r_{1} \otimes \x_{1}). 
\end{align*}
Moreover, 
\begin{align*}
L^{2}_{disc}(\lG(F) \backslash \lG(\A_{F}), \lif{\zeta}) & \cong \bigoplus_{ \{ \r \} } \Ind^{\lG(\A_{F})}_{\lG(\r)} \Ind^{\lG(\r)}_{G_{1}(\r)} (\bigoplus^{m(\r)}_{\x_{1}} \r_{1} \otimes \x_{1}) \\
& \cong  \bigoplus_{ \{ \r \} } \Ind^{\lG(\A_{F})}_{\lG(\r)} \bigoplus^{m(\r)}_{\x_{1}} ( \bigoplus_{\x \in (\lG(\r) / G_{1}(\r))^{*}} \lr_{1} \otimes \x \,\,\,\,) \otimes \x_{1} \\
& \cong \bigoplus_{ \{ \r \} }  \bigoplus^{m(\r)}_{\x_{1}} \bigoplus_{\x \in (\lG(\r) / G_{1}(\r))^{*}} \Ind^{\lG(\A_{F})}_{\lG(\r)} \lr_{1} \otimes \x \otimes \x_{1},
\end{align*}
where $\lr_{1}$ is an extension of $\r_{1}$ to $\lG(\r)$ and $\x_{1}$ is extended to $\lG(\r)$. Suppose $\r' = \r^{g}$ for some $g \in \lG(\A_{F}) / \lG(F)\lG(\r)$, we have 
\begin{align*}
G_{1}(\r') = G_{1}(\r^{g}) = \lG(F)\lZ(\A_{F})G(\A_{F}) \cap \lG(\r^{g}) = \lG(F)\lZ(\A_{F})G(\A_{F}) \cap \lG(\r)^{g}.
\end{align*}
Since $\lG(\r)^{g} \cong \lG(\r)$, then $G_{1}(\r') = G_{1}(\r)$. Hence $\r_{1}' \cong \r_{1}^{g} \otimes \x_{g}$ for some character $\x_{g}$ of $G_{1}(\r) / \lZ(\A_{F})G(\A_{F})$. Similarly one can show $\lr_{1}' \cong \lr_{1}^{g} \otimes \x_{g}$ for some extension of $\x_{g}$ to $\lG(\r)$. So
\[
\Ind^{\lG(\r)}_{G_{1}(\r)} \r_{1}' \cong \bigoplus_{\x \in (\lG(\r) / G_{1}(\r))^{*}} \lr_{1}^{g} \otimes \x \otimes \x_{g},
\]
and 
\begin{align*}
\Ind^{\lG(\A_{F})}_{\lG(\r)} \Ind^{\lG(\r)}_{G_{1}(\r)} \r_{1}' & \cong  \bigoplus_{\x \in (\lG(\r) / G_{1}(\r))^{*}} \Ind^{\lG(\A_{F})}_{\lG(\r)} \lr_{1}^{g} \otimes \x \otimes \x_{g} \\
& \cong \bigoplus_{\x \in (\lG(\r) / G_{1}(\r))^{*}} \Ind^{\lG(\A_{F})}_{\lG(\r)} \lr_{1} \otimes \x \otimes \x_{g}.
\end{align*}
Therefore 
\[
L^{2}_{disc}(\lG(F) \backslash \lG(\A_{F}), \lif{\zeta}) \cong \bigoplus_{ \{ \r \}^{\sim} } \bigoplus_{g \in \lG(\A_{F}) / \lG(F)\lG(\r)}  \bigoplus^{m(\r^{g})}_{\x_{1}} \bigoplus_{\x \in (\lG(\r) / G_{1}(\r))^{*}}  \Ind^{\lG(\A_{F})}_{\lG(\r)} \lr_{1} \otimes \x \otimes \x_{1} \otimes \x_{g},
\]
where the outer sum is taken over equivalence classes $\{ \r \}^{\sim}$ of constituents in $L^{2}(G(F) \backslash G(\A_{F}), \zeta )$ under the action by $\lG(\A_{F})$. Note the characters $\x_{1}$ in this formula depend on $\r^{g}$. By our definition of $G_{1}(\r)$, the characters of $\lG(\r) / G_{1}(\r)$ can be extended to that of $\lG(\A_{F}) / \lG(F)\lZ(\A_{F})G(\A_{F})$. If we let
\[
\lr =  \Ind^{\lG(\A_{F})}_{\lG(\r)} \lr_{1},
\]
then from the above formula one can see easily that
\[
\sum_{\x \in X/ YX(\lr)} m(\lr \otimes \x) = \sum_{g \in \lG(\A_{F}) / \lG(\r)\lG(F)} m(\r^{g}).   
\]
\end{proof}

\subsection{Multiplicity formula}
\label{subsec: multiplicity formula}

It is natural to apply Arthur's multiplicity formula (cf. Theorem~\ref{thm: discrete spectrum}) to the right hand side of \eqref{eq: multiplicity relation} for those representations parametrized by $\p \in \cPdt{G}$. But we can not apply that formula directly since it only gives the multiplicity of $\r$ as an $\sH(G)$-module for $[\r] \in \cPkt{\p}$. So let us define
\[
\m(\r) = \sum_{\r' \sim \r} m(\r'),
\]
where $\r' \cong \r$ as $\sH(G)$-modules. Then the multiplicity formula for $[\r] \in \cPkt{\p}$ asserts that
\begin{align}
\label{eq: multiplicity formula for classical group}
\m(\r) = m_{\p} |\S{\p}|^{-1} \sum_{x \in \S{\p}} <x , \r> ,   
\end{align}
where $m_{\p}$ is defined in Theorem~\ref{thm: discrete spectrum} and Remark~\ref{rk: discrete spectrum}. For any irreducible admissible representation $\lr$ of $\lG(\A_{F})$, whose restriction to $G(\A_{F})$ contains $\r$, let us also write
\[
\m(\lr) = \sum_{\{\lr' \sim_{X} \lr\} / X} \quad \sum_{\x \in X/ YX(\lr')} m(\lr' \otimes \x),            
\]
where $\lr' \cong \lr \otimes \x'$ as $\sH(\lG)$-modules for some $\x' \in X$, and we take such $\lr'$ modulo twists by $X$ in the sum. Then we can rewrite the formula \eqref{eq: multiplicity relation} as 
\begin{align}
\label{eq: multiplicity relation 1}
\m(\lr) = \sum_{g \in \lG(\A_{F}) / \lG(\r)\lG(F)} \m(\r^{g}).   
\end{align}
Now we can apply Arthur's multiplicity formula \eqref{eq: multiplicity formula for classical group} to the right hand side of \eqref{eq: multiplicity relation 1} to get the following result.

\begin{lemma}
\label{lemma: coarse multiplicity formula}
Suppose $\lr$ is an irreducible admissible representation of $\lG(\A_{F})$, and $\r$ is an irreducible constituent of $\lr$ restricted to $G(\A_{F})$. If $[\r] \in \cPkt{\p}$ for $\p \in \cPdt{G}$, then 
\begin{align}
\label{eq: coarse multiplicity formula}
\m(\lr) = m_{\p} \frac{|Y(\lr)|}{|\a(\S{\p})|}  \cdot |\S{\lp}|^{-1} \sum_{x \in \S{\lp}}<x , \r>. 
\end{align}
\end{lemma}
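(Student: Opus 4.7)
The plan is to combine the multiplicity relation (5.1.8) with Arthur's multiplicity formula (5.2.1) and then carry out a character-orthogonality calculation on the finite quotient $\lG(\A_F)/\lG(\r)\lG(F)$. First, since $\cPkt{\p}$ is $\lG(\A_F)$-invariant (by stability of the $L$-packet under conjugation), $\r^g \in \cPkt{\p}$ for every $g \in \lG(\A_F)$, and Arthur's formula applies to each $\r^g$, giving
\[
\m(\lr) \;=\; \sum_{g \in \lG(\A_F)/\lG(\r)\lG(F)} \m(\r^g) \;=\; m_\p\,|\S{\p}|^{-1} \sum_{g} \sum_{x \in \S{\p}} <x,\r^g>.
\]

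Next, I would establish a global version of the twisting-character relation of Lemma~\ref{lemma: twisting character} (with $\theta=\mathrm{id}$). Since the global pairing is defined as $<x,\r> = \prod_v <x_v,\r_v>$ and $\r^g = \otimes_v \r_v^{g_v}$, applying the local identity at each place yields
\[
<x,\r^g> \;=\; \a(x)(g)\cdot <x,\r>.
\]
Substituting and swapping the order of summation,
\[
\m(\lr) \;=\; m_\p\,|\S{\p}|^{-1}\sum_{x \in \S{\p}} <x,\r>\Bigl(\sum_{g \in \lG(\A_F)/\lG(\r)\lG(F)}\a(x)(g)\Bigr).
\]

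The third step evaluates the inner sum by character orthogonality. By Proposition~\ref{prop: restriction multiplicity one} applied placewise, $Y(\lr)$ is naturally the Pontryagin dual of $\lG(\A_F)/\lG(\r)\lG(F)$ (using that $\lZ(\A_F)G(\A_F) \subseteq \lG(\r)$), so in particular $|\lG(\A_F)/\lG(\r)\lG(F)| = |Y(\lr)|$. Moreover, by Corollary~\ref{cor: theta twisting character} applied at each place, $\a_v(\S{\p_v}) = X(\lr_v)$, so placewise $\a(x)_v \in X(\lr_v)$, whence globally $\a(\S{\p}) \subseteq Y(\lr)$. Therefore $\a(x)$ descends to a genuine character of $\lG(\A_F)/\lG(\r)\lG(F)$, and
\[
\sum_{g \in \lG(\A_F)/\lG(\r)\lG(F)}\a(x)(g) \;=\; \begin{cases} |Y(\lr)| & \text{if } x \in \ker\a,\\ 0 & \text{otherwise.}\end{cases}
\]
By the exact sequence \eqref{eq: global twisted endoscopic sequence}, $\ker\a = \iota(\S{\lp})$, and by Proposition~\ref{prop: coarse L-packet}(3), $<\iota(x),\r> = <x,\lr>$ is precisely the pairing written as $<x,\r>$ for $x \in \S{\lp}$ in the statement. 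Thus
\[
\m(\lr) \;=\; m_\p\,|\S{\p}|^{-1}\,|Y(\lr)|\sum_{x \in \S{\lp}} <x,\r>,
\]
and the exact sequence \eqref{eq: global twisted endoscopic sequence} gives $|\S{\p}| = |\S{\lp}|\cdot|\a(\S{\p})|$, yielding the claimed identity.

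The only delicate point is the global twisting-character relation: one must verify that combining the local identities place by place is legitimate, which relies on the fact that at almost all places $\r_v$ and $\lr_v$ are unramified with trivial pairings and trivial twisting characters, so the product over all places is really a finite product of meaningful contributions. Beyond this, the argument is a formal group-theoretic manipulation once the local ingredients (Corollary~\ref{cor: theta twisting character}, Proposition~\ref{prop: restriction multiplicity one}, and the exact sequence) are in hand.
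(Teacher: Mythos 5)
Your argument is correct and follows essentially the same route as the paper's: expand $\m(\lr)$ via the multiplicity relation and Arthur's formula for $G$, globalize the local twisting-character identity \eqref{eq: theta twisting character}, and use character orthogonality to isolate $x \in \ker\a = \iota(\S{\lp})$. The one cosmetic difference is that the paper performs the orthogonality as a volume-normalized integral over the compact group $\lG(\A_{F})/\lG(F)\lZ(\A_{F})G(\A_{F})$, while you carry it out directly as a finite sum over $\lG(\A_{F})/\lG(\r)\lG(F)$, which is slightly cleaner but requires establishing $\a(\S{\p}) \subseteq Y(\lr)$ up front (which you correctly justify placewise via Corollary~\ref{cor: theta twisting character}).
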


\begin{proof}
First we want to rewrite the right hand side of \eqref{eq: multiplicity relation 1} as an integral over $ \lG(\A_{F}) / \lG(F)\lZ(\A_{F})G(\A_{F}) $. Consider the integral
\begin{align*}
\int_{ \lG(\A_{F}) / \lG(F)\lZ(\A_{F})G(\A_{F}) } \m(\r^{g}) \, \mathrm{d} g  & = \sum_{g \in \lG(\A_{F}) / \lG(F)\lG(\r)} \int_{\lG(\r) / \lG(\r) \cap \lG(F)\lZ(\A_{F})G(\A_{F})} \m(\r^{hg}) \, \mathrm{d} h \\
& = \sum_{g \in \lG(\A_{F}) / \lG(F)\lG(\r)} \m(\r^{g}) \cdot \, \mathrm{vol}\{ \lG(\r) / \lG(\r) \cap \lG(F)\lZ(\A_{F})G(\A_{F}) \}.
\end{align*}
Since  
\begin{align*}
\mathrm{vol}\{ \lG(\r) / \lG(\r) \cap \lG(F)\lZ(\A_{F})G(\A_{F}) \}  = \frac{\mathrm{vol}\{ \lG(\A_{F}) / \lG(F)\lZ(\A_{F})G(\A_{F}) \}}{ |\lG(\A_{F}) / \lG(F)\lG(\r)|}
\end{align*}
and 
\[
|\lG(\A_{F}) / \lG(F)\lG(\r)| = |Y(\lr)|,
\]
then
\begin{align}
\m(\lr) =  \frac{|Y(\lr)|}{\mathrm{vol}\{ \lG(\A_{F}) / \lG(F)\lZ(\A_{F})G(\A_{F}) \}} \cdot \int_{ \lG(\A_{F}) / \lG(F)\lZ(\A_{F})G(\A_{F}) } \m(\r^{g}) \, \mathrm{d} g.  \label{eq: coarse multiplicity formula 1}
\end{align}
Combining the multiplicity formula \eqref{eq: multiplicity formula for classical group} and also our local formula \eqref{eq: theta twisting character}, we can compute the integral on the  right hand side of \eqref{eq: coarse multiplicity formula 1} as follows,
\begin{align*}
& \int_{ \lG(\A_{F}) / \lG(F)\lZ(\A_{F})G(\A_{F}) } \m(\r^{g}) \, \mathrm{d} g  = \int_{ \lG(\A_{F}) / \lG(F)\lZ(\A_{F})G(\A_{F}) } m_{\p} |\S{\p}|^{-1} \sum_{x \in \S{\p}} <x , \r^{g}> \, \mathrm{d} g \\
& = m_{\p} |\S{\p}|^{-1} \sum_{x \in \S{\p}} <x , \r> \cdot \int_{ \lG(\A_{F}) / \lG(F)\lZ(\A_{F})G(\A_{F}) } \x_{x}(g) \, \mathrm{d} g \\
& = m_{\p} |\S{\p}|^{-1} \sum_{x \in \S{\lp}} <x, \r> \cdot  \mathrm{vol}\{ \lG(\A_{F}) / \lG(F)\lZ(\A_{F})G(\A_{F}) \} \\
& = m_{\p} |\S{\p} / \S{\lp}|^{-1} |\S{\lp}|^{-1} \sum_{x \in \S{\lp}} <x , \r>  \cdot  \mathrm{vol}\{ \lG(\A_{F}) / \lG(F)\lZ(\A_{F})G(\A_{F}) \}.  \\
\end{align*}
Substitute this into \eqref{eq: coarse multiplicity formula 1}, one gets 
\[
\m(\lr) = m_{\p} \frac{|Y(\lr)|}{|\a(\S{\p})|} \cdot |\S{\lp}|^{-1} \sum_{x \in \S{\lp}} <x , \r>.
\]
\end{proof}

Although this lemma does not give a multiplicity formula for $\lG$, it has a very interesting consequence.

\begin{corollary}
\label{cor: coarse multiplicity formula}
Suppose $\lr$ is an irreducible admissible representation of $\lG(\A_{F})$, and $\r$ is an irreducible constituent of $\lr$ restricted to $G(\A_{F})$. If $[\r] \in \cPkt{\p}$ for $\p \in \cPdt{G}$, then there exists $\x \in X$ such that $\lr \otimes \x$ is isomorphic to a discrete automorphic representation as $\sH(\lG)$-module if and only if $<\cdot, \lr> = 1$. In particular,  if $\S{\lp} =1$ such character always exists.
\end{corollary}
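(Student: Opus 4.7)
The plan is to deduce the corollary directly from Lemma~\ref{lemma: coarse multiplicity formula} by analyzing when the right-hand side of the displayed formula \eqref{eq: coarse multiplicity formula} is nonzero. First I would unpack the inner sum $|\S{\lp}|^{-1} \sum_{x \in \S{\lp}} <x, \r>$. By part (3) of Proposition~\ref{prop: coarse L-packet}, the pairing on the coarse L-packet satisfies $<x, \lr> = <\iota(x), \r>$ for $x \in \S{\lp}$, so this sum is literally the average of the character $<\cdot, \lr>$ over the finite group $\S{\lp}$. By the orthogonality relations for characters of finite groups, this average equals $1$ when $<\cdot, \lr> = 1$ on $\S{\lp}$ and vanishes otherwise. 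Since the other factors $m_{\p}$, $|Y(\lr)|$, and $|\a(\S{\p})|^{-1}$ in the formula are all strictly positive (finite), I conclude that $\m(\lr) > 0$ if and only if $<\cdot, \lr> = 1$.

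Next I would translate the positivity of $\m(\lr)$ into the desired existence statement. Recalling
\[
\m(\lr) = \sum_{\{\lr' \sim_{X} \lr\} / X} \ \sum_{\x \in X/ YX(\lr')} m(\lr' \otimes \x),
\]
the condition $\m(\lr) > 0$ is equivalent to the existence of some $\lr'$ with $\lr' \cong \lr \otimes \x_{0}$ as $\sH(\lG)$-modules for some $\x_{0} \in X$, together with some $\x \in X$, such that $m(\lr' \otimes \x) > 0$; equivalently, $\lr \otimes (\x_{0} \x)$ is isomorphic as an $\sH(\lG)$-module to a discrete automorphic representation of $\lG(\A_{F})$. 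Conversely, if such a twist exists it visibly contributes to $\m(\lr)$. This furnishes both directions of the stated equivalence. The ``in particular'' assertion is then immediate: when $\S{\lp} = 1$, the character $<\cdot, \lr>$ on $\S{\lp}$ is trivial by default, so the required $\x$ always exists.

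I do not expect any serious obstacle in this argument, since it is a direct consequence of Lemma~\ref{lemma: coarse multiplicity formula} once one performs the orthogonality computation. The only point requiring care is the bookkeeping between $\sH(\lG)$-module equivalence and honest isomorphism of $\lG(\A_{F})$-representations: twisting by $X$ does not change the $\sH(\lG)$-module structure, so equivalence classes under $\sim_{X}$ are precisely the orbits detected by $\m(\lr)$, and this is exactly what the statement of the corollary is phrased to accommodate.
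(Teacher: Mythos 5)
Your proposal is correct and takes essentially the same approach as the paper: both deduce the corollary directly from the formula in Lemma~\ref{lemma: coarse multiplicity formula} by noting that the average $|\S{\lp}|^{-1}\sum_{x\in\S{\lp}}<x,\r>$ is $1$ when $<\cdot,\lr>=1$ and $0$ otherwise, so $\m(\lr)>0$ precisely when $<\cdot,\lr>=1$. The paper states this more tersely, but your unpacking of the orthogonality computation and the translation of $\m(\lr)>0$ into the stated existence of a twist is exactly the intended reasoning.
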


\begin{proof}
Since $<x, \lr> = <x, \r>$ for $x \in \S{\lp}$, it follows from the formula \eqref{eq: coarse multiplicity formula} that 
\[
\m(\lr) = \begin{cases}
                                                                         m_{\p} \frac{|Y(\lr)|}{|\a(\S{\p})|} &\text{ if } <\cdot, \lr> = 1, \\
                                                                         0 &\text{ otherwise }.                                                                        
                                                                         \end{cases}  
\]
So the first part of this corollary is clear. Next if $\S{\lp} = 1$, then we always have
\[
\m(\lr) = m_{\p} \frac{|Y(\lr)|}{|\a(\S{\p})|},
\]
and this shows the second part.
\end{proof}

In fact, we can refine the result of Lemma~\ref{lemma: coarse multiplicity formula} to get a multiplicity formula for $\lG$ by applying the stabilized twisted trace formulas. First, we need to define an equivalence relation on $\Cl_{\A}(G)$ such that $c \sim c' \in \Cl_{\A}(G)$ if and only if $c_{v}$ is $\Sigma_{0}$-conjugate to $c'_{v}$ for almost all places, and we denote the set of equivalence classes by $\cCl_{\A}(G)$. Let $\cCl_{aut}(G)$ be the subset of $\cCl_{\A}(G)$ consisting of equivalence classes of $\Cl_{aut}(G)$. 

\begin{lemma}
\label{lemma: vanishing}
Suppose $\lif{c} \in \Cl_{\A}(\lG)$, then 
\[
\Idt{\lG}{, t, \lif{c}} (\lf) = \Sdt{\lG}{, t, \lif{c}} (\lf ) = 0
\]
for $\lf \in \H(\lG, \lif{\chi})$, unless the projection of $\lif{c}$ under $\bold{p}: \L{\lG} \rightarrow \L{G}$ belongs to the set $\cCl_{aut}(G)$.
\end{lemma}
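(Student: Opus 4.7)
I would prove the two vanishing statements in turn, beginning with the invariant distribution $I^{\lG}_{disc,t,\lif{c}}$ and then handling $S^{\lG}_{disc,t,\lif{c}}$ by induction on the rank of $\lG$.

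First I would look at the spectral expansion \eqref{eq: spectral side} of $\Idt{\lG}{,t,\lif{c}}(\lf)$. A nonzero term requires the existence of a Levi subgroup $\lM \subseteq \lG$, an element $w \in W(\lM)_{reg}$, and a discrete automorphic representation $\lr_{\lM}$ of $\lM(\A_{F})$, such that $\lr_{\lM}$ is $w$-invariant and the Satake parameters of the induced representation $\mathcal{I}_{\lP}(\lr_{\lM})$ coincide with $\lif{c}_{v}$ at almost all places. Because $\lG$ is of type \eqref{eq: similitude}, the Levi $\lM$ is a product of general linear groups and a similitude factor $\lG_{-}$ sitting in an exact sequence $1 \to G_{-} \to \lG_{-} \to D \to 1$ of the same type as for $\lG$. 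Applying the verbatim argument of Lemma~\ref{lemma: induced discrete spectrum} to $(\lM,M)$ (the compactness of $\lM(F)\lZ_{\lM}(\A_{F})M(\A_{F})\backslash \lM(\A_{F})$ follows from the $GSp$/$GSO$ case since the general linear factors contribute nothing), one sees that $\lr_{\lM}$ is obtained by induction from a discrete automorphic representation $\r_{M}$ of $M(\A_{F})$. Lemma~\ref{lemma: Hecke eigenvalue correspondence} then tells us $\bold{p}(c(\lr_{\lM})) = c(\r_{M})$, and since parabolic induction preserves Satake parameters, this forces $\bold{p}(\lif{c})$ to be the Satake family of the automorphic representation $\mathcal{I}_{P}(\r_{M})$ of $G(\A_{F})$. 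Hence its class in $\cCl_{\A}(G)$ lies in $\cCl_{aut}(G)$, proving the first vanishing.

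Next, for $\Sdt{\lG}{,t,\lif{c}}(\lf)$, I induct on the semisimple rank of $\lG$. The defining identity, obtained from Lemma~\ref{lemma: trace formula component}(2) applied in the untwisted case ($\theta=\mathrm{id}$, $\x=1$), gives
\begin{equation*}
\Sdt{\lG}{,t,\lif{c}}(\lf) \;=\; \Idt{\lG}{,t,\lif{c}}(\lf) \;-\; \sum_{\lG' \in \End{ell}{\lG}\setminus\{\lG\}} \iota(\lG,\lG')\, \Sdt{\lG'}{,t,\lif{c}}(\lf^{\lG'}).
\end{equation*}
The invariant term vanishes by the previous paragraph under the hypothesis $\bold{p}(\lif{c}) \notin \cCl_{aut}(G)$. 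For each remaining endoscopic $\lG'$, Proposition~\ref{prop: lifting endoscopic group} identifies $\lG'$ as an extension of $D$ by $G' \in \End{ell}{G}$, of strictly smaller rank and of the same structural type, so the inductive hypothesis applies. Expanding $\Sdt{\lG'}{,t,\lif{c}}(\lf^{\lG'}) = \sum_{\lif{c}' \to \lif{c}} \Sdt{\lG'}{,t,\lif{c}'}(\lf^{\lG'})$ over preimages under the twisted endoscopic embedding $\lif{\xi}:\L{\lG'} \to \L{\lG}$, each summand vanishes unless $\bold{p}(\lif{c}') \in \cCl_{aut}(G')$. The commutative diagram
\begin{equation*}
\xymatrix{\L{\lG'} \ar[r]^{\lif{\xi}} \ar[d]_{\bold{p}} & \L{\lG} \ar[d]^{\bold{p}} \\ \L{G'} \ar[r]^{\xi} & \L{G}}
\end{equation*}
shows $\bold{p}(\lif{c}) = \xi(\bold{p}(\lif{c}'))$ at almost all unramified places, so it suffices to verify that if $\bold{p}(\lif{c}') \in \cCl_{aut}(G')$ then $\xi(\bold{p}(\lif{c}')) \in \cCl_{aut}(G)$. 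This last assertion is precisely the global functorial lifting supplied by Theorem~\ref{thm: global functorial lifting} (and its generalization to products of classical groups via Theorem~\ref{thm: discrete spectrum}/Remark~\ref{rk: discrete spectrum}): an automorphic representation of $G'$ produces a substitute global parameter in $\cP{G'}$, whose image under $\xi$ lies in $\cP{G}$ and corresponds to a family of Satake parameters of an automorphic representation of $G$. Contrapositively, $\bold{p}(\lif{c}) \notin \cCl_{aut}(G)$ forces $\bold{p}(\lif{c}') \notin \cCl_{aut}(G')$ for every preimage, so every term in the sum vanishes by induction. Combining this with the vanishing of $\Idt{\lG}{,t,\lif{c}}(\lf)$ yields $\Sdt{\lG}{,t,\lif{c}}(\lf) = 0$.

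The base case of the induction is the trivial rank situation, where $\lG = D = \mathbb{G}_{m}$ and the assertion reduces to the Langlands correspondence for tori together with Lemma~\ref{lemma: induced discrete spectrum}. The main subtlety is the passage through the endoscopic diagram in the inductive step: one must be careful that the equivalence relation defining $\cCl_{\A}$ (conjugacy by $\Sigma_{0}$ at almost all places) is preserved under the twisted endoscopic embedding, which it is since $\xi$ is only defined up to $\D{G}$-conjugacy and the outer automorphism $\theta_{0}$ is captured by the $\Sigma_{0}$-action on both sides. I do not anticipate further obstacles beyond bookkeeping, since the skeleton of the argument is entirely parallel to Arthur's treatment in \cite{Arthur:2013}, Lemma~3.3.1, once the reduction to $G$ via Lemmas~\ref{lemma: induced discrete spectrum} and~\ref{lemma: Hecke eigenvalue correspondence} is in hand.
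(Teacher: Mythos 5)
Your proof is correct and follows essentially the same path as the paper: the vanishing of $\Idt{\lG}{,t,\lif{c}}$ is deduced term-by-term from the spectral expansion via Lemmas~\ref{lemma: Hecke eigenvalue correspondence} and~\ref{lemma: induced discrete spectrum} applied to the Levi pairs $(\lM,M)$, and the vanishing of $\Sdt{\lG}{,t,\lif{c}}$ then follows by induction through the defining stabilization identity, using functoriality (Arthur's theory) to propagate the non-automorphy of $\bold{p}(\lif{c})$ to the endoscopic groups $G'$. The only stylistic difference is that you make the Levi contribution explicit via the Eisenstein-series/parabolic-induction argument where the paper compresses this into a single invocation of ``the principle of functoriality'' — your version is perhaps slightly more self-contained for that piece — but the skeleton and the key lemmas used are identical.
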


\begin{proof}
It follows from Lemma~\ref{lemma: Hecke eigenvalue correspondence} and Lemma~\ref{lemma: induced discrete spectrum} that 
\begin{align}
\label{eq: vanishing}
tr\Rdt{\lG}{, t, \lif{c}} (\lf )= 0,    
\end{align}
unless $\lif{c}$ projects to $c \in \cCl_{aut}(G)$. Suppose the projection of $\lif{c}$ in $\cCl_{\A}(G)$ does not belong to $\cCl_{aut}(G)$, then by the principle of functoriality (which results from Arthur's theory \cite{Arthur:2013}), it neither belongs to $\cCl_{aut}(M)$ for any Levi subgroup $M$ of $G$, nor to $\cCl_{aut}(G')$ for any endoscopic group $G'$ of $G$. Then for the same reason as \eqref{eq: vanishing}, one gets $tr\Rdt{\lM}{, t, \lif{c}} (\lf_{M}) = 0$. So it follows from the definition (see \eqref{eq: spectral side}) that 
\[
\Idt{\lG}{, t, \lif{c}} (\lf) = 0.
\]
Since
\[
\Sdt{\lG}{, t, \lif{c}} (\lf) = \Idt{\lG}{, t, \lif{c}} (\lf) - ( \sum_{\lG' \in \End{ell}{\lG} - \{\lG\}} \iota(\lG, \lG') \Sdt{\lG'}{, t, \lif{c}}(\lf^{\lG'}) \,\,\, ),
\]  
we can assume $\Sdt{\lG'}{, t, \lif{c}}(\lf^{\lG'}) = 0$ by induction, then
\[
 \Sdt{\lG}{, t, \lif{c}} (\lf) = 0.
 \]
\end{proof}

For $\p \in \cP{G}$,  Arthur (cf. \cite{Arthur:2013}, Section 3.3) defines the $\p$-component of the discrete part of the twisted trace formula for $G$ and its stabilized form. Note that $c(\p)$ defines an element in $\cCl_{\A}(G)$, and $\p$ also determines the norm of the imaginary part of archimedean infinitesimal character, which can be denoted by $t(\p)$, so we can write 
\[
\Idt{G^{\theta}}{, \p}(f) =  \sum_{c \rightarrow c(\p)} \Idt{G^{\theta}}{, t(\p), c}(f),
\]
and
\[
\Sdt{G}{, \p}(f) = \sum_{c \rightarrow c(\p)} \Sdt{G}{, t(\p), c}(f),
\]
where these sums are all over preimages $c$ of $c(\p)$ in $\Cl_{\A}(G)$. Then the stabilization of the $\p$-component of the twisted trace formula for $G$ is 
\begin{align*}
\Idt{G^{\theta}}{, \p}(f) = \sum_{G' \in \End{ell}{G^{\theta}}} \iota(G, G') \Sdt{G'}{, \p}(f^{G'}),    
\end{align*}
where 
\[
\Sdt{G'}{, \p}(f^{G'}) = \sum_{c' \rightarrow c(\p)} \Sdt{G'}{, t(\p), c'}(f^{G'}).
\]
Here we want to define the $\p$-component of the discrete part of the twisted trace formula for $\lG$. Let us write
\begin{align*}
\tIdt{\lG^{\theta}}{, \p}(\lf) &= \sum_{\lif{c} \rightarrow c(\p)} \tIdt{\lG^{\theta}}{, t(\p), \lif{c}}(\lf),
\end{align*}
and 
\[
\Sdt{\lG}{, \p}(\lf) = \sum_{\lif{c} \rightarrow c(\p)} \Sdt{\lG}{, t(\p), \lif{c}}(\lf).
\]
Then the stabilization of the $\p$-component of the twisted stable trace formula for $\lG$ is 
\begin{align}
\label{eq: endoscopic side component}
\tIdt{\lG^{\theta}}{, \p}(\lf) = \sum_{\lG' \in \End{ell}{\lG^{\theta}, \x}} \iota(\lG, \lG') \Sdt{\lG'}{, \p}(\lf^{\lG'}),    
\end{align}
where 
\[
\Sdt{\lG'}{, \p}(\lf^{\lG'}) = \sum_{\lif{c}' \rightarrow c(\p)} \Sdt{\lG'}{, t(\p), \lif{c}'}(\lf^{\lG'}).
\]
Similarly, we can also define $R_{disc, \p}^{(\lG^{\theta}, \x)}(\lf)$. For $\p \in \cPdt{G}$, it only contributes to the discrete spectrum of $G$ (cf. Remark~\ref{rk: discrete spectrum}), and by Lemma~\ref{lemma: induced discrete spectrum} it also only contributes to the discrete spectrum of $\lG$. So we have
\begin{align}
\label{eq: discrete part vs discrete spectrum}
\tIdt{\lG^{\theta}}{, \p}(\lf) = tr R_{disc, \p}^{(\lG^{\theta}, \x)}(\lf).
\end{align}

Now we can give our multiplicity formula for $\lG$, and we will start with the simplest case, i.e., $\lG = GSp(2n)$ or $GSO(2n, \eta)$.

\begin{proposition}
\label{prop: multiplicity formula}
Suppose $\lG = GSp(2n)$ or $GSO(2n, \eta)$, $\lr$ is a discrete automorphic representation of $\lG$, and $\r$ is an irreducible constituent of $\lr$ restricted to $G(\A_{F})$. If $[\r] \in \cPkt{\p}$ for $\p \in \cPdt{G}$, then 
\begin{align}
\label{eq: multiplicity formula}
m(\lr) = m_{\lp} \frac{|Y(\lr)|}{|\a(\S{\p})|},   
\end{align}
where $m_{\lp} = 1 \text { or } 2$, and $m_{\lp} = 2$ only when $G$ is special even orthogonal, $\p \notin \cP{\com{G}}$, and $\lr \cong \lr^{\theta_{0}} \otimes \x$ for some $\x \in Y$.
\end{proposition}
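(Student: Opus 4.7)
My plan is to extract $m(\lr)$ from the coarse multiplicity $\m(\lr)$ computed in Lemma~\ref{lemma: coarse multiplicity formula}, via a careful analysis of how $\m(\lr)$ assembles from its defining double sum. By Corollary~\ref{cor: coarse multiplicity formula}, the condition $<\cdot, \lr>|_{\S{\lp}} = 1$ is necessary for $m(\lr) \neq 0$, in which case Lemma~\ref{lemma: coarse multiplicity formula} yields $\m(\lr) = m_{\p} |Y(\lr)|/|\a(\S{\p})|$. The remaining task is to distribute this coarse quantity among the individual multiplicities $m(\lr' \otimes \x)$ in the definition of $\m(\lr)$, where $\lr'$ ranges over $\sH(\lG)$-equivalence classes modulo $X$-twist and $\x$ ranges over $X/YX(\lr')$.

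For $\lG = GSp(2n)$ we have $\Sigma_{0} = \{1\}$, so $\sH(\lG) = \H(\lG)$ and the outer sum has a single term. Combined with Lemma~\ref{lemma: multiplicity relation}, which equates the inner sum with $\sum_{g} m(\r^{g})$, a character-theoretic analysis using the global analog of Lemma~\ref{lemma: twisting character} (namely $<x, \r^{g}> = \a(x)(g) <x, \r>$) together with Arthur's Theorem~\ref{thm: discrete spectrum} identifies the non-zero contributions and forces $m(\lr) = |Y(\lr)|/|\a(\S{\p})|$, i.e., $m_{\lp} = 1$.

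For $\lG = GSO(2n,\eta)$, the outer sum has one or two terms depending on whether $\lr^{\theta_{0}}$ lies in the $X$-orbit of $\lr$, and I will split the analysis according to condition~(iii) of the proposition (existence of an automorphic twist $\x \in Y$ with $\lr \cong \lr^{\theta_{0}} \otimes \x$). In the first subcase the same reasoning as for $GSp$ yields $m_{\lp} = m_{\p}$, which equals $2$ precisely when $\p \notin \cP{G^{\theta_{0}}}$ by Arthur's dichotomy. In the complementary subcase the outer sum genuinely splits into two terms that contribute equally by the symmetry $m(\lr) = m(\lr^{\theta_{0}})$; the resulting doubling combined with the coarse formula and the integrality of $m_{\lp}$ then forces $m_{\p} = 2$ and $m_{\lp} = 1$.

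The main technical obstacle will be the $GSO$ case, specifically the distinction between the local $X$-twist relation (governed by Corollary~\ref{cor: theta twisting character}) and the global automorphic $Y$-twist of condition~(iii). When $\p \notin \cP{G^{\theta_{0}}}$ yet the abstract isomorphism $\lr \cong \lr^{\theta_{0}} \otimes \x$ can still hold via a local-global mismatch (the twist $\x$ being automorphic without arising from a global centralizer element in $\S{\p}^{\theta_{0}}$), one must verify carefully that precisely this situation produces the $m_{\lp} = 2$ regime. Resolving this requires combining the fiber analysis via Pontryagin duality (noting $\a(\S{\p}) \subseteq Y(\lr)$ and the surjection $\lG(\A_{F})/\lG(\r)\lG(F) \twoheadrightarrow \a(\S{\p})^{*}$) with the explicit structure of $\S{\p}^{\Sigma_{0}}$ described in \eqref{formula: centralizer}--\eqref{formula: plus centralizer}.
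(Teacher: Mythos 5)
Your plan correctly identifies the starting point: Lemma~\ref{lemma: coarse multiplicity formula} (fed by Corollary~\ref{cor: coarse multiplicity formula}) gives $\m(\lr) = m_{\p}|Y(\lr)|/|\a(\S{\p})|$, and the problem is to extract the individual $m(\lr)$ from the defining double sum. But the step where you propose to do this is not justified, and it is precisely the step that carries the real content of the proposition.

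The gap: you claim that ``a character-theoretic analysis using the global analog of Lemma~\ref{lemma: twisting character} together with Arthur's Theorem~\ref{thm: discrete spectrum} identifies the non-zero contributions and forces $m(\lr) = |Y(\lr)|/|\a(\S{\p})|$.'' Those two ingredients, however, are already consumed in the proof of Lemma~\ref{lemma: coarse multiplicity formula}; they determine the \emph{sum} $\m(\lr) = \sum_{\x \in X/YX(\lr')} m(\lr'\otimes\x)$ (over the relevant outer terms), but give no control over how that sum distributes among its terms. A priori some $m(\lr\otimes\x)$ could vanish and others be larger, or all could be equal — the coarse formula is blind to this. Pontryagin duality and the explicit description of $\S{\p}^{\Sigma_0}$ from \eqref{formula: centralizer}--\eqref{formula: plus centralizer} are likewise only ``soft'' structural facts about the index groups; they do not touch the multiplicities themselves. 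What is missing is a genuinely new analytic input: the paper invokes the stabilized $(\theta_0,\x)$-twisted trace formula \eqref{eq: endoscopic side component}. For $\x \in Y(\lr)$ with $\x \notin \a(\S{\p})$, the parameter $\p$ cannot factor through any $G'$ with $\lG' \in \tEnd{ell}{\lG}$, so Lemma~\ref{lemma: vanishing} forces every $\Sdt{\lG'}{,\p}(\lf^{\lG'})$ to vanish, hence $\operatorname{tr} R^{(\lG,\x)}_{disc,\p}(\lf)=0$. Passing to the finite-index subgroup $\iG{\A_F}$ on which $Y(\lr)/\a(\S{\p})$ acts compatibly with the Hecke action, this vanishing says the character of the multiplicity module $W$ of $Y(\lr)/\a(\S{\p})$ is supported at the identity, whence $|Y(\lr)/\a(\S{\p})|$ divides $\dim W = m(\lr)$. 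Only the divisibility statement \eqref{eq: divisibility}, sandwiched against the upper bound $m(\lr) \leqslant \m_0(\lr)$ from the coarse formula, pins down $m(\lr)$. Without the twisted trace formula you have an upper bound but no lower bound, and the argument does not close. The same issue recurs in your $GSO$ subcase analysis: the ``doubling plus integrality'' reasoning only works once you have both the divisibility input (requiring the twisted trace formula for the two-group $\langle(\theta_0,\x_1)\rangle \times (Y(\lr)/\a(\S{\p}))$) and the coarse bound.

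A secondary, smaller issue: you phrase the $GSO$ case split in terms of condition~(iii) of the statement ($\lr \cong \lr^{\theta_0}\otimes\x$ for $\x\in Y$), while the paper first splits on $\r \cong \r^{\theta_0}$ versus $\r \ncong \r^{\theta_0}$, and only within the $m_{\p}=2$ subcase distinguishes whether the intertwining twist lies in $Y$ or merely in $X$. Your framing also ``deduces'' $m_{\p}=2$ from the coarse formula and integrality, but $m_{\p}$ is fixed by Arthur's Theorem~\ref{thm: discrete spectrum} independently of $\lG$, so the logical direction is reversed. These are fixable, but the central missing ingredient — the stabilized twisted trace formula and the finite-group divisibility it yields — is not.
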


\begin{proof}
Since $\lr$ is automorphic, we can take $\r$ to be automorphic as well by Lemma~\ref{lemma: induced discrete spectrum}, and hence $<\cdot, \r> = 1$. It follows from Lemma~\ref{lemma: coarse multiplicity formula} that 
\begin{align}
\label{eq: multiplicity formula 0}
\m(\lr) = m_{\p} \frac{|Y(\lr)|}{|\a(\S{\p})|}.
\end{align}
Since $\theta_{0}$ acts on $\{\lr' \sim_{X} \lr\}$, we can write 
\[
\m_{0}(\lr) = \sum_{\{\lr' \sim_{X} \lr\} / X, \theta_{0}} \quad \sum_{\x \in X/YX(\lr')} m(\lr' \otimes \x),
\]
where the sum modulo twists by $X$ and $\theta_{0}$. If $\r \cong \r^{\theta_{0}}$, then 
\[
\m_{0}(\lr) = \m(\lr) = \sum_{\x \in X/YX(\lr)} m(\lr \otimes \x).
\]
If $\r \ncong \r^{\theta_{0}}$, $\m_{0}(\lr) = \frac{1}{2} \m(\lr)$. Therefore, we have
\begin{align}
\label{eq: multiplicity formula 1}
\m_{0}(\lr) =  \begin{cases}                                                                                           
                                                                          m_{\p} \frac{|Y(\lr)|}{|\a(\S{\p})|} &     \text{ if } \r \cong \r^{\theta_{0}}, \\
                                                                          \frac{|Y(\lr)|}{|\a(\S{\p})|}              &     \text{ if } \r \ncong \r^{\theta_{0}}.                                                                
                                                                          \end{cases}                        
\end{align}
Note that $\a(\S{\p}) \subseteq Y(\lr)$, so 
\begin{align*}
\frac{|Y(\lr)|}{|\a(\S{\p})|} = |Y(\lr) / \a(\S{\p})|.  
\end{align*}
In particular, $Y(\lr) / \a(\S{\p})$ is a two-group. We can fix a subgroup of representatives in $Y(\lr)$ and denote it again by $Y(\lr) / \a(\S{\p})$.

Let us first consider the case $\r \ncong \r^{\theta_{0}}$. If $Y(\lr) / \a(\S{\p}) = 1$, then the lemma becomes obvious. So let us assume $1 \neq \x \in Y(\lr) / \a(\S{\p})$, and by the stabilized $\x$-twisted trace formula \eqref{eq: endoscopic side component}, one gets
\[
\tIdt{\lG}{, \p}(\lf) = \sum_{\lG' \in \tEnd{ell}{\lG}} \iota(\lG, \lG') \Sdt{\lG'}{, \p}(\lf^{\lG'}),       
\]
for $\lf \in \H(\lG, \lif{\chi})$. Since $\x$ is not in $\a(\S{\p})$, $\p$ can not factor through $\L{G'}$ for any $G' \in \End{ell}{G}$ such that $\lG' \in \tEnd{ell}{\lG}$. Then by Lemma~\ref{lemma: vanishing}, $\Sdt{\lG'}{, \p}(\lf^{\lG'}) = 0$ for all $\lG' \in \tEnd{ell}{\lG}$, and hence
\[
\tIdt{\lG}{, \p}(\lf) = 0.
\]
In particular,
\begin{align}
\label{eq: multiplicity formula 2}
tr \tRdt{\lG}{, \p}(\lf) = \tIdt{\lG}{, \p}(\lf) = 0,   
\end{align}
as $\p \in \cPdt{G}$. This is true for all nontrivial $\x \in Y(\lr) / \a(\S{\p})$. Let $I(\lr)$ be the $\lr$-isotypic component in $\Rdt{\lG}{, \p}$, and one observes $Y(\lr) / \a(\S{\p})$ will act on $I(\lr)$ by multiplication. The action of $Y(\lr) / \a(\S{\p})$ does not commute with that of $\lG(\A_{F})$, but one can take
\[
\iG{\A_{F}} = \{g \in \lG(\A_{F}) : \x(g) = 1 \text{ for all } \x \in Y(\lr) / \a(\S{\p}) \},
\]
which is of finite index in $\lG(\A_{F})$, and then it will commute with the action of $\iG{\A_{F}}$. In fact one can have a decomposition 
\[
I(\lr) = \bigoplus_{g \in \lG(\A_{F}) / \iG{\A_{F}}} I((\ir)^{g})
\]
by restricting to $\iG{\A_{F}}$.  The point is each summand is invariant under $Y(\lr) / \a(\S{\p})$ and has the same multiplicity as $\lr$. By \eqref{eq: multiplicity formula 2}, one has 
\[
tr ( R(\lf) \circ R(\x) )|_{I(\lr)} = 0,
\]
where $R(\x)$ denotes the multiplication by $\x$. In particular, one can restrict to those $\lf$ supported on $\iG{\A_{F}}$, then one has 
\[
tr (R(\lf) \circ R(\x) ) |_{I((\ir)^{g})} = 0
\] 
for all $g \in \lG(\A_{F}) / \iG{\A_{F}}$. We can view $I(\ir)$ as a representation of $\H(\iG{\A_{F}}) \times Y(\lr) / \a(\S{\p})$ and write it as $ \ir \otimes W$, then 
\[
tr (R(\lf) \circ R(\x) )|_{I(\ir)} = tr \ir(\lf) \cdot tr \ir_{W}(\x) = 0,
\]
where $\ir_{W}$ is the corresponding representation of $Y(\lr) / \a(\S{\p})$ on $W$. Therefore,
\[
tr \ir_{W}(\x) = 0
\] 
for $1 \neq \x \in Y(\lr) / \a(\S{\p})$.
We claim 
\begin{align}
\label{eq: divisibility}
|Y(\lr) / \a(\S{\p})| \text{ divides } \dim (W).
\end{align}
If that is the case, by noticing $m(\lr) = \dim (W)$ and comparing with \eqref{eq: multiplicity formula 1} one must have 
\[
|Y(\lr) / \a(\S{\p})| = \dim (W),
\]
hence $m(\lr) = |Y(\lr) / \a(\S{\p})|$.

To prove the claim \eqref{eq: divisibility}, one just needs to show the following general statement. Suppose $V$ is a finite dimensional representation over the complex numbers of a finite group $A$ such that the trace of each nontrivial element of $A$ is zero, then the order of $A$ must divide the dimension of $V$. To see this, let $\chi_{V}$ and $\chi_{\text{triv}}$ be the characters of $V$ and the trivial representation of $A$ respectively, then the multiplicity of the trivial representation in $V$ can be given by
\[
m = \langle \chi_{V}, \chi_{\text{triv}} \rangle = \dim(V)/|A|,
\]
which is an integer. Hence $|A|$ divides $\dim(V)$.


For the case $\r \cong \r^{\theta_{0}}$ and $m_{\p} = 1$,  the proof is the same. So we are left with the case $\r \cong \r^{\theta_{0}}$ and $m_{\p} = 2$. In this case, we have $\lr^{\theta_{0}} \cong \lr \otimes \x$ for some $\x \in X$. Let 
\[
X_{0}(\lr) = \{ \x \in X : \lr \cong \lr \otimes \x \text{ or } \lr^{\theta_{0}} \cong \lr \otimes \x \}.
\] 
If $\lr \otimes \x \ncong \lr^{\theta_{0}}$ for any $\x \in Y$, then 
\[
\sum_{\x \in X/ YX_{0}(\lr)} m(\lr \otimes \x) = \frac{|Y(\lr)|}{|\a(\S{\p})|}
\]
and the rest of the proof is again the same. If $\lr \otimes \x_{1} \cong \lr^{\theta_{0}}$ for some $\x_{1} \in Y$, we need to consider the action of the two-group
\begin{align}
\label{eq: theta character group}
<(\theta_{0}, \x_{1})> \times ( Y(\lr) / \a(\S{\p}) )
\end{align}
on $I(\lr)$, where $(\theta_{0}, \x_{1})$ acts by $R(\theta_{0})^{-1} \circ R(\x_{1})$. It commutes with the action of $(\theta_{0}, \x_{1})$-invariant functions in $\H(\iG{\A_{F}})$, i.e. $\lf^{\theta_{0}} \otimes \x_{1} = \lf$. And as a module of such space of functions, we have
\[
I(\lr) \cong ( \bigoplus_{g \in \lG(\A_{F}) / \iG{\A_{F}}} I((\ir_{+})^{g}) ) \bigoplus ( \bigoplus_{g \in \lG(\A_{F}) / \iG{\A_{F}}} I((\ir_{-})^{g}) )
\]
where the sign is according to the eigenvalues $\{\pm 1\}$ of any fixed intertwining operator between $\lr \otimes \x_{1}$ and $\lr^{\theta_{0}}$ after we identify $I(\lr) \cong m(\lr) \, \lr$. Note that the multiplicity of $\lr$ is the same as that of irreducible modules in $I(\ir_{+})$ and $I(\ir_{-})$ of the subspace of functions described above, and then the rest of the argument proceeds in the same way as before by using the stabilized $(\theta, \x)$-twisted trace formula \eqref{eq: endoscopic side component} for $(\theta_{0}, \x)$ in \eqref{eq: theta character group}.
 

\end{proof}

\begin{corollary}
\label{cor: modular character}
Suppose $\lr$ and $\lr'$ are discrete automorphic representations of $\lG$, such that $\lr \cong \lr' \otimes \x$ as $\sH(\lG)$-modules for some $\x \in X$. If $\r$ is an irreducible constituent in the restriction of $\lr$ to $G(\A_{F})$ and $[\r] \in \cPkt{\p}$ for $\p \in \cPdt{G}$, then there exists some $\x' \in Y$ and $\theta \in \Sigma_{0}$ such that $\lr' \cong \lr^{\theta} \otimes \x'$.
\end{corollary}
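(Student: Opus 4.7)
The proof will combine a reduction afforded by the $\Sigma_{0}$-action with a double computation of a multiplicity sum, drawing on Proposition~\ref{prop: multiplicity formula} and Lemma~\ref{lemma: multiplicity relation}.

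First, I observe that since $\sH(\lG)$ is the $\Sigma_{0}$-invariant subspace of $\H(\lG)$, two irreducible admissible representations of $\lG(\A_{F})$ are isomorphic as $\sH(\lG)$-modules if and only if they lie in the same $\Sigma_{0}$-orbit as $\lG(\A_{F})$-representations. So the hypothesis furnishes some $\theta \in \Sigma_{0}$ with $\lr \cong (\lr')^{\theta} \otimes \x^{\theta}$ as honest $\lG(\A_{F})$-representations, where $\x^{\theta} \in X$. Replacing $\lr$ by $\lr^{\theta^{-1}}$ (still discrete automorphic with the same multiplicity), it suffices to show: if $\lr$ and $\lr \otimes \chi$ are both discrete automorphic for some $\chi \in X$, then either $\chi \in Y \cdot X(\lr)$, or $\lr \otimes \chi \cong \lr^{\theta_{0}} \otimes y$ for some $y \in Y$. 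In either case, absorbing the $\theta_{0}$ into the $\theta$ yields the stated conclusion.

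To establish the reduced claim, I compute the sum
\[
S := \sum_{\chi' \in X/YX(\lr)} m(\lr \otimes \chi')
\]
in two distinct ways. On one hand, Lemma~\ref{lemma: multiplicity relation} gives $S = \sum_{g \in \lG(\A_{F})/\lG(\r)\lG(F)} m(\r^{g})$; combining Arthur's multiplicity formula \eqref{eq: multiplicity formula for classical group} for $G$ with the character-twisting identity $<x, \r^{g}> = \x_{x}(g)<x, \r>$ and character orthogonality on $\S{\p}$, this right-hand side evaluates to $m_{\p} \cdot |Y(\lr)/\a(\S{\p})|$, exactly as in the proof of Lemma~\ref{lemma: coarse multiplicity formula}. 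On the other hand, by Proposition~\ref{prop: multiplicity formula}, each term $m(\lr \otimes \chi')$ is either $0$ or $m_{\lp} \cdot |Y(\lr)/\a(\S{\p})|$, since $Y(\lr \otimes \chi') = Y(\lr)$ and the Arthur parameter $\p$ is invariant under $X$-twisting. Letting $N$ denote the number of $\chi' \in X/YX(\lr)$ with $m(\lr \otimes \chi') > 0$, we deduce $N = m_{\p}/m_{\lp} \in \{1, 2\}$.

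When $N = 1$ (i.e.\ $m_{\p} = m_{\lp}$), only the trivial class in $X/YX(\lr)$ supports automorphic representations, so $\chi \in Y \cdot X(\lr)$ and the claim is proved. When $N = 2$, which forces $m_{\p} = 2$ and $m_{\lp} = 1$ (i.e.\ $\p \notin \cP{G^{\theta_{0}}}$ and $\lr \otimes y \not\cong \lr^{\theta_{0}}$ for any $y \in Y$), a second automorphic class appears. Using that the $\sH(\lG)$-orbit of $\lr$ consists of $\{\lr, \lr^{\theta_{0}}\}$, together with the fibration $X/\a(\S{\p}^{\Sigma_{0}}) \to \clPkt{\p, \lif{\zeta}} \to \cPkt{\p}/\lG(F)$ from Proposition~\ref{prop: coarse L-packet} and the fact that $\lr^{\theta_{0}}$ is itself discrete automorphic, I will identify this second class with $\lr^{\theta_{0}}$ modulo $Y$-twists. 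Absorbing the $\theta_{0}$ into the $\theta$ from the reduction step then completes the argument. The principal obstacle lies precisely in this exceptional case $(m_{\p}, m_{\lp}) = (2, 1)$, where one must carefully identify the extra automorphic $X$-twist of $\lr$ as a $Y$-translate of $\lr^{\theta_{0}}$; this is what necessitates the flexibility of arbitrary $\theta \in \Sigma_{0}$ in the corollary's conclusion.
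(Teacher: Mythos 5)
Your strategy---compute $S = \sum_{\chi' \in X/YX(\lr)} m(\lr\otimes\chi')$ in two ways---is essentially the paper's own, since $S$ is precisely the quantity $\m_{0}(\lr)$ appearing in \eqref{eq: multiplicity formula 1}, and the paper's proof is exactly the comparison of \eqref{eq: multiplicity formula 1} with \eqref{eq: multiplicity formula}. The difficulty is that your claimed value $S = m_{\p}\,|Y(\lr)|/|\a(\S{\p})|$ is wrong when $\r \ncong \r^{\theta_{0}}$, and the error propagates into the case analysis.

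The source of the error is a conflation of $m$ with $\m$. Lemma~\ref{lemma: multiplicity relation} gives $S = \sum_{g} m(\r^{g})$, a sum of genuine $\H(G)$-multiplicities. Arthur's formula \eqref{eq: multiplicity formula for classical group}, however, computes $\m(\r^{g})$, the multiplicity summed over the $\Sigma_{0}$-orbit of $\r^{g}$; the computation you invoke from Lemma~\ref{lemma: coarse multiplicity formula} evaluates $\sum_{g}\m(\r^{g}) = \m(\lr) = m_{\p}\,|Y(\lr)|/|\a(\S{\p})|$, not $S$. Since $L^{2}_{disc}(G)$ is $\theta_{0}$-stable, one has $m((\r^{g})^{\theta_{0}}) = m(\r^{g})$, so $\m(\r^{g}) = m(\r^{g})$ if $\r^{g}\cong(\r^{g})^{\theta_{0}}$ and $\m(\r^{g}) = 2m(\r^{g})$ otherwise, and this dichotomy is constant in $g$. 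Therefore $S = \m(\lr)$ when $\r\cong\r^{\theta_{0}}$ but $S = \tfrac{1}{2}\m(\lr)$ when $\r\ncong\r^{\theta_{0}}$. In the latter case $m_{\p}=2$ (because $\r\ncong\r^{\theta_{0}}$ forces $\p\notin\cP{\com{G}}$ by Theorem~\ref{thm: twisted LLC}) and $m_{\lp}=1$, so $S = |Y(\lr)|/|\a(\S{\p})|$ and $N=1$, not $N=2$ as your formula asserts.

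The mis-computation is not harmless. If you run the $N=2$ analysis in the case $\r\ncong\r^{\theta_{0}}$, the identification of the ``second class'' with $\lr^{\theta_{0}}$ modulo $Y$-twists cannot work: by Corollary~\ref{cor: theta twisting character} applied at each place, $\lr^{\theta_{0}}\cong\lr\otimes\chi$ for some $\chi\in X$ forces $\r\cong\r^{\theta_{0}}$, so when $\r\ncong\r^{\theta_{0}}$ the representation $\lr^{\theta_{0}}$ is not any $X$-twist of $\lr$ at all. The fix is to split on $\r\cong\r^{\theta_{0}}$ versus $\r\ncong\r^{\theta_{0}}$ before computing $S$, which is what the paper does implicitly by using $\m_{0}$ rather than $\m$; then $N=2$ can only occur when $\r\cong\r^{\theta_{0}}$, and there your identification argument goes through cleanly. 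A smaller omission: Proposition~\ref{prop: multiplicity formula} is stated only for $\lG=GSp(2n)$ or $GSO(2n,\eta)$, so the general $\lG$ of type \eqref{eq: similitude} requires first passing to the product group \eqref{eq: product group} and then restricting, as the paper's second paragraph indicates.
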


\begin{proof}
If $\lG = GSp(2n)$ or $GSO(2n, \eta)$, this can be seen easily by comparing \eqref{eq: multiplicity formula} with \eqref{eq: multiplicity formula 1}. In general, we can first go to the product group $\lif{\lG}$ of general symplectic groups and connected general even orthogonal groups (see \eqref{eq: product group}), and it is clear this corollary holds in that case. Then by restricting to $\lG$ we get the result.
\end{proof}

To generalize Proposition~\ref{prop: multiplicity formula}, for $[\r] \in \cPkt{\p}$ with $\p \in \cPdt{G}$, we denote by $\Sigma_{0}(\r, Y)$ the subgroup of $\Sigma_{0}$ consisting of $\theta$ such that $\lr  \otimes \x \cong \lr^{\theta}$ for some $\x \in Y$, where $\lr$ is an irreducible admissible representation of $\lG(\A_{F})$ containing $\r$ in its restriction to $G(\A_{F})$. If we write $\Sigma_{Y}(\r)$ for the quotient of $\Sigma_{0}$ by $\Sigma_{0}(\r, Y)$, then we have an exact sequence 
\begin{align}
\xymatrix{1 \ar[r] & \Sigma_{0}(\r, Y) \ar[r] & \Sigma_{0} \ar[r] & \Sigma_{Y}(\r) \ar[r] & 1},
\end{align}
where all these groups are two-groups. We can also choose a splitting of this sequence and write $\Sigma_{0} \cong \Sigma_{0}(\r, Y) \times \Sigma_{Y}(\r)$. 

\begin{corollary}
\label{cor: multiplicity formula}
Suppose $\lr$ is a discrete automorphic representation of $\lG$, and $\r$ is an irreducible constituent of $\lr$ restricted to $G(\A_{F})$. If $[\r] \in \cPkt{\p}$ for $\p \in \cPdt{G}$, then 
\begin{align}
\label{eq: generalized multiplicity formula}
m(\lr) = m_{\lp} \frac{|Y(\lr)|}{|\a(\S{\p})|},   
\end{align}
where $m_{\lp} = m_{\p} / |\Sigma_{Y}(\r)|$.
\end{corollary}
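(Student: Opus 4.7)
The plan is to adapt the argument of Proposition~\ref{prop: multiplicity formula} to accommodate the larger group $\Sigma_{0}$ that arises when $\lG$ has several special even orthogonal factors. The starting point is the coarse multiplicity formula (Lemma~\ref{lemma: coarse multiplicity formula}), which is valid for any $\lG$ of type~\eqref{eq: similitude}. Since $\r$ is automorphic, $\langle\cdot,\r\rangle=1$ on $\S{\p}$, so
\[
\m(\lr) \;=\; m_{\p}\,\frac{|Y(\lr)|}{|\a(\S{\p})|}.
\]

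First I would pass from $\m(\lr)$ to the genuine automorphic multiplicity $m(\lr)$. By definition, $\m(\lr)$ enumerates the $Y$-twist classes inside the $\sim_{X}$-equivalence class of $\lr$, each weighted by its $\lG(\A_{F})$-multiplicity. By Corollary~\ref{cor: modular character}, any two discrete automorphic representations in the $\sim_{X}$-class of $\lr$ differ by the action of some $\theta\in\Sigma_{0}$ together with a $Y$-twist; thus these $Y$-classes form a single $\Sigma_{0}$-orbit, whose size is $|\Sigma_{Y}(\r)|$ by the very definition of $\Sigma_{0}(\r,Y)$. Because $\Sigma_{0}$ is defined over $F$ and preserves $L^{2}_{disc}(\lG(F)\backslash\lG(\A_{F}),\lif{\zeta})$, every element of the orbit has the same automorphic multiplicity as $\lr$, giving
\[
\m(\lr) \;=\; |\Sigma_{Y}(\r)|\cdot m(\lr).
\]
Dividing the coarse formula by $|\Sigma_{Y}(\r)|$ then yields~\eqref{eq: generalized multiplicity formula}.

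To confirm that this averaging argument identifies $m(\lr)$ exactly --- and not just up to some combinatorial ambiguity --- I would run the $(\theta,\x)$-twisted stable trace formula argument of Proposition~\ref{prop: multiplicity formula} for the elementary abelian $2$-group $\Sigma_{0}(\r,Y)\times(Y(\lr)/\a(\S{\p}))$ acting on the $\lr$-isotypic subspace $I(\lr)$ of $R^{\lG}_{disc,\p}$. For each nontrivial $(\theta,\x)$ in this group, the $\p$-component $\tIdt{\lG^{\theta}}{, \p}(\lf)$ vanishes by Lemma~\ref{lemma: vanishing} and \eqref{eq: trace formula component}, because $\p\in\cPdt{G}$ cannot factor through any $(\theta,\x)$-twisted elliptic endoscopic group corresponding to a character outside $\a(\S{\p})$. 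The divisibility argument used at the end of the proof of Proposition~\ref{prop: multiplicity formula} then forces the order of this two-group to divide the dimension of the appropriate isotypic piece, and combined with the identity $\m(\lr)=|\Sigma_{Y}(\r)|\cdot m(\lr)$ it pins down $m(\lr)$ exactly.

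The chief obstacle is the combinatorial bookkeeping when $\Sigma_{0}$ has several independent generators, one for each special even orthogonal factor of $\lG$: one must verify that a splitting $\Sigma_{0}\cong\Sigma_{0}(\r,Y)\times\Sigma_{Y}(\r)$ interacts correctly with the $(\theta,\x)$-twisted trace formula, and that the local doubling phenomena arising on the individual orthogonal factors aggregate coherently into the global $m_{\p}$. Fortunately, both $\Sigma_{0}$ and $Y(\lr)/\a(\S{\p})$ are elementary abelian $2$-groups, so one can decompose $I(\lr)$ into simultaneous eigenspaces for these commuting actions and analyze the vanishing one character of the product group at a time, reducing everything to iterated applications of the $GSp(2n)$ and $GSO(2n,\eta)$ cases already handled in Proposition~\ref{prop: multiplicity formula}.
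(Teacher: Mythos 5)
Your first paragraph is essentially the paper's proof: the paper deduces the corollary directly from the coarse formula $\m(\lr) = m_{\p}|Y(\lr)|/|\a(\S{\p})|$ of Lemma~\ref{lemma: coarse multiplicity formula} (using $\langle\cdot,\r\rangle=1$ for automorphic $\r$), then uses Corollary~\ref{cor: modular character} to write $\m(\lr) = \sum_{\theta\in\Sigma_{Y}(\r)} m(\lr^{\theta})$ and the identity $m(\lr^{\theta})=m(\lr)$ to conclude $|\Sigma_{Y}(\r)|\cdot m(\lr) = m_{\p}|Y(\lr)|/|\a(\S{\p})|$. This is complete; no further confirmation is needed, and the paper gives none.

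The additional ``confirmation'' in your second and third paragraphs is not only superfluous but contains an error. You assert that $\tIdt{\lG^{\theta}}{, \p}(\lf)$ vanishes for \emph{every} nontrivial $(\theta,\x)$ in $\Sigma_{0}(\r,Y)\times(Y(\lr)/\a(\S{\p}))$, justified because ``$\p$ cannot factor through any $(\theta,\x)$-twisted elliptic endoscopic group corresponding to a character outside $\a(\S{\p})$.'' But for elements $(\theta,1)$ with $\theta\in\Sigma_{0}(\r,Y)$ nontrivial, the character is $1\in\a(\S{\p})$, so this justification does not apply --- and the conclusion is false in general: $\theta\in\Sigma_{0}(\r,Y)$ means precisely $\lr^{\theta}\cong\lr\otimes\x'$ for some $\x'\in Y$, which (together with $\p\in\cP{G^{\theta}}$) is exactly the circumstance in which the $\theta$-twisted trace formula does \emph{not} vanish and $m_{\p}=2$. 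The correct handling of this case is the delicate final part of Proposition~\ref{prop: multiplicity formula}, which uses the specific $\x_{1}$ with $\lr\otimes\x_{1}\cong\lr^{\theta_{0}}$ rather than a blanket vanishing statement. For the present corollary the paper sidesteps all of this by quoting Corollary~\ref{cor: modular character} --- which already has the twisted trace formula input baked in --- so there is no need to rerun that argument here, and attempting to do so without the careful casework of Proposition~\ref{prop: multiplicity formula} would break down.
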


\begin{proof}
We will use the formula~\eqref{eq: multiplicity formula 0}
\[
\m(\lr) = m_{\p} \frac{|Y(\lr)|}{|\a(\S{\p})|}.
\]
It follows from Corollary~\ref{cor: modular character} that 
\[
\m(\lr) = \sum_{\theta \in \Sigma_{Y}(\r)} m(\lr^{\theta}).
\]
Since $m(\lr^{\theta}) = m(\lr)$ for $\theta \in \Sigma_{0}$, then we get
\[
|\Sigma_{Y}(\r)| \cdot m(\lr) = m_{\p} \frac{|Y(\lr)|}{|\a(\S{\p})|}.
\]
So by writing $m_{\lp} = m_{\p} / |\Sigma_{Y}(\r)|$, we have proved the formula \eqref{eq: generalized multiplicity formula}.
\end{proof}

Suppose $\p \in \cPdt{G}$, let $\lif{\zeta}$ be a character of $\lZ(\A_{F}) / \lZ(F)$ such that $\zeta = \lif{\zeta}|_{\Z}$ is the central character of $\cPkt{\p}$. If we denote by $\clPkt{\p, \lif{\zeta}}$ all equivalence classes of irreducible admissible representations of $\lG(\A_{F})$ as $\sH(\lG)$-modules with central character $\lif{\zeta}$, whose restriction to $G(\A_{F})$ have irreducible constituents contained in $\cPkt{\p}$, then by Corollary~\ref{cor: coarse multiplicity formula} we can always choose a representative for $[\lr] \in \clPkt{\p, \lif{\zeta}} / X$ with $<\cdot, \lr> = 1$ in the discrete spectrum of $\lG$. The following proposition gives a decomposition of the $\p$-component of the discrete spectrum of $\lG$.

\begin{proposition}
\label{prop: discrete spectrum}
Suppose $\p \in \cPdt{G}$, we have the following decomposition as $\sH(\lG)$-modules 
\begin{align}
\label{eq: discrete spectrum}
L^2_{disc, \p} (\lG(F) \backslash \lG(\A_{F}), \lif{\zeta}) = m_{\p} \sum_{\x \in Y / \a(\S{\p})} \quad \sum_{\substack{[\lr] \in \clPkt{\p, \lif{\zeta}} / X  \\  <\cdot, \lr> = 1}} \lr \otimes \x,
\end{align}
where $\lr$ are taken to be the representatives of $\clPkt{\p, \lif{\zeta}} / X$ in the discrete automorphic spectrum. Moreover,
\[
L^2_{disc, \p} (\lG(F) \backslash \lG(\A_{F}), \lif{\zeta}) = 0
\]
for $\p \in \cP{G} - \cPdt{G}$.
\end{proposition}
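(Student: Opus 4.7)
The proof naturally splits into two parts: the vanishing claim for $\p \in \cP{G}\setminus \cPdt{G}$, and the main decomposition for $\p \in \cPdt{G}$. The vanishing is essentially immediate: by Lemma~\ref{lemma: induced discrete spectrum} together with Lemma~\ref{lemma: Hecke eigenvalue correspondence}, the $\p$-component of $L^{2}_{disc}(\lG(F)\backslash \lG(\A_{F}), \lif{\zeta})$ is the induction from $\lG(F)\lZ(\A_{F})G(\A_{F})$ to $\lG(\A_{F})$ of the $\p$-component of $L^{2}_{disc}(G(F)\backslash G(\A_{F}), \zeta)$, and Theorem~\ref{thm: discrete spectrum} (with Remark~\ref{rk: discrete spectrum}) tells us the latter is zero whenever $\p \notin \cPdt{G}$.

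For the main statement, my plan is to match the multiplicity of each irreducible $\sH(\lG)$-constituent on both sides of \eqref{eq: discrete spectrum}. First, Lemma~\ref{lemma: Hecke eigenvalue correspondence} combined with Arthur's description of $L^{2}_{disc, \p}(G(F)\backslash G(\A_{F}), \zeta)$ shows that every irreducible $\lr$ in the LHS restricts to a sum of elements of $\cPkt{\p}$ under $G(\A_{F})$, so $[\lr] \in \clPkt{\p, \lif{\zeta}}$. By Corollary~\ref{cor: coarse multiplicity formula}, every $X$-orbit in $\clPkt{\p, \lif{\zeta}}$ that meets the discrete spectrum admits a representative $\lr_{0}$ satisfying $\langle \cdot, \lr_{0}\rangle = 1$, and every such representative actually appears automorphically; fix such a representative for each orbit. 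Corollary~\ref{cor: multiplicity formula} then gives
\[
m(\lr_{0}) \;=\; m_{\lp}\cdot |Y(\lr_{0})/\a(\S{\p})|, \qquad m_{\lp} \;=\; m_{\p}/|\Sigma_{Y}(\r)|.
\]

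On the right-hand side, the fixed representative $\lr_{0}$ contributes a copy of $\lr_{0}\otimes \x$ for each $\x \in Y/\a(\S{\p})$ with coefficient $m_{\p}$. Two such contributions $\lr_{0}\otimes \x$ and $\lr_{0}\otimes \x'$ coincide as $\H(\lG)$-modules precisely when $\x\x'^{-1} \in Y(\lr_{0})$, and they coincide as $\sH(\lG)$-modules precisely when $\lr_{0}\otimes(\x\x'^{-1}) \cong \lr_{0}^{\theta}$ for some $\theta \in \Sigma_{0}$; note that $\lr_{0}$ and $\lr_{0}^{\theta_{0}}$ always give the same $\sH(\lG)$-module since functions in $\sH(\lG)$ are $\Sigma_{0}$-invariant. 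One must also take into account the possible contribution from the separate $X$-orbit $[\lr_{0}^{\theta_{0}}]$ when this is distinct from $[\lr_{0}]$ in $\clPkt{\p, \lif{\zeta}}/X$. A case analysis paralleling the three subcases in the proof of Proposition~\ref{prop: multiplicity formula}—according as $\r \cong \r^{\theta_{0}}$ or not, and according as $\lr_{0}\otimes \x \cong \lr_{0}^{\theta_{0}}$ for some $\x \in Y$ or not—then shows that the total $\sH(\lG)$-multiplicity of $\lr_{0}$ on the RHS is exactly $m_{\lp}\,|Y(\lr_{0})/\a(\S{\p})|$, matching the LHS.

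The main obstacle is precisely this bookkeeping: the factor $|\Sigma_{Y}(\r)|$ appearing in $m_{\lp}$ has to compensate simultaneously for the collapsing of $[\lr_{0}]$ and $[\lr_{0}^{\theta_{0}}]$ into one $X$-orbit (whenever $\lr_{0}^{\theta_{0}}\cong \lr_{0}\otimes \x$ with $\x \in X$) and for the collapsing of $\lr_{0}$ and $\lr_{0}^{\theta_{0}}$ into a single $\sH(\lG)$-module. These two phenomena interact in the three subcases above, and reconciling them is exactly where the input of Corollary~\ref{cor: multiplicity formula}—ultimately coming from the stabilized $(\theta_{0},\x)$-twisted trace formula used in the proof of Proposition~\ref{prop: multiplicity formula}—enters decisively.
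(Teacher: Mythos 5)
Your vanishing step and the overall strategy (comparing $\sH(\lG)$-multiplicities of a fixed representative on the two sides) are correct in outline, but the multiplicity count has a genuine gap, and you omit the one lemma that makes the count go through. The issue is that the $\sH(\lG)$-multiplicity of $\lr_{0}$ on the left-hand side is \emph{not} $m(\lr_{0})$: it is $\sum_{\lr' \cong_{\sH(\lG)} \lr_{0}} m(\lr')$, summed over all automorphic $\lr'$ in $L^{2}_{disc, \p}$ which become isomorphic to $\lr_{0}$ after restricting the group algebra to $\sH(\lG)$. Your own description of the right-hand side in fact already shows that the RHS multiplicity coming from the orbit $[\lr_{0}]$ alone is $m_{\p}\,|\bar{Y}(\lr_{0})/\a(\S{\p})|$ where $\bar{Y}(\lr_{0}) = \{\x \in Y : \lr_{0}\otimes \x \cong \lr_{0}^{\theta} \text{ for some } \theta \in \Sigma_{0}\}$, and one always has $m_{\p}\,|\bar{Y}(\lr_{0})/\a(\S{\p})| \geq m_{\lp}\,|Y(\lr_{0})/\a(\S{\p})| = m(\lr_{0})$, with strict inequality unless both $\bar{Y}(\lr_{0}) = Y(\lr_{0})$ and $\Sigma_{Y}(\r) = 1$. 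So your claimed RHS multiplicity $m_{\lp}\,|Y(\lr_{0})/\a(\S{\p})|$ is too small, and the ``match'' you assert is a match with the wrong left-hand quantity ($m(\lr_{0})$, the $\lG(\A_{F})$-multiplicity, rather than the $\sH(\lG)$-multiplicity).

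The missing ingredient is Corollary~\ref{cor: modular character}: it tells you that any discrete automorphic $\lr'$ with $\lr' \cong_{\sH(\lG)} \lr_{0}\otimes\x_{0}$ for some $\x_{0} \in X$ is in fact of the form $\lr_{0}^{\theta}\otimes\x'$ with $\theta \in \Sigma_{0}$ and $\x' \in Y$. This is what identifies the $\lr'$ contributing to the LHS $\sH(\lG)$-multiplicity with the set $\{\lr_{0}^{\theta}\otimes\x' : \theta \in \Sigma_{0},\ \x' \in \bar{Y}(\lr_{0})\}$, whose contribution (after counting coincidences among the $(\theta,\x')$) is $|\bar{Y}(\lr_{0})/Y(\lr_{0})|\cdot|\Sigma_{Y}(\r)|\cdot m(\lr_{0})$. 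Combining with Corollary~\ref{cor: multiplicity formula} in the form $|\Sigma_{Y}(\r)|\,m(\lr_{0}) = m_{\p}\,|Y(\lr_{0})|/|\a(\S{\p})|$ then gives the LHS multiplicity $m_{\p}\,|\bar{Y}(\lr_{0})/\a(\S{\p})|$, which is exactly the RHS multiplicity. Your ``case analysis'' cannot rescue the mismatch because, as noted, the RHS contribution from the $[\lr_{0}]$ orbit alone already exceeds your claimed total; you should replace the appeal to $m(\lr_{0})$ on the left-hand side with a genuine sum over $\Sigma_{0}\times\bar{Y}(\lr_{0})$-translates, using Corollary~\ref{cor: modular character} to justify that this sum exhausts the $\sH(\lG)$-isomorphism class.
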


\begin{proof}
For $\p \in \cP{G} - \cPdt{G}$, we have $L^2_{disc, \p} (G(F) \backslash G(\A_{F})) = 0$ (cf. Theorem~\ref{thm: discrete spectrum}). Then it follows from Lemma~\ref{lemma: Hecke eigenvalue correspondence} and Lemma~\ref{lemma: induced discrete spectrum} that 
\(
L^2_{disc, \p} (\lG(F) \backslash \lG(\A_{F}), \lif{\zeta}) = 0.
\) 
Next we assume $\p \in \cPdt{G}$. By Lemma~\ref{lemma: induced discrete spectrum}, $L^2_{disc, \p} (\lG(F) \backslash \lG(\A_{F}), \lif{\zeta})$ consists of discrete automorphic representations in $\clPkt{\p, \lif{\zeta}}$. Then for any automorphic representation $\lr'$ in $L^2_{disc, \p} (\lG(F) \backslash \lG(\A_{F}), \lif{\zeta})$, there exists a representative $\lr$ chosen in \eqref{eq: discrete spectrum} such that $\lr \cong \lr' \otimes \x$ as $\sH(\lG)$-modules for some $\x \in X$. By Corollary~\ref{cor: modular character}, $\lr' \cong \lr^{\theta} \otimes \x'$ for $\theta \in \Sigma_{0}$ and $\x' \in Y$. In particular, $\lr' \cong \lr \otimes \x'$ as $\sH(\lG)$-modules. Therefore, it suffices to count the multiplicity of $\lr$ as $\sH(\lG)$-modules in $L^2_{disc, \p} (\lG(F) \backslash \lG(\A_{F}), \lif{\zeta})$. By Corollary~\ref{cor: modular character} again,  
\[
\sum_{\lr' \sim \lr} m(\lr') = \sum_{\theta \in \Sigma_{0}, \, \x \in \bar{Y}(\lr)} m(\lr^{\theta} \otimes \x) = |\frac{\bar{Y}(\lr)}{Y(\lr)}| \cdot |\Sigma_{Y}(\r)| \cdot m(\lr),
\]
where $\lr' \cong \lr$ as $\sH(\lG)$-modules and 
\[
\bar{Y}(\lr) = \{\x \in Y : \lr \otimes \x \cong \lr \text{ as $\sH(\lG)$-modules}\}.
\]
By Corollary~\ref{cor: multiplicity formula}, we have 
\[
|\Sigma_{Y}(\r)| \cdot m(\lr) = m_{\p} \frac{|Y(\lr)|}{|\a(\S{\p})|},
\] 
so 
\[
\sum_{\lr' \sim \lr} m(\lr') = m_{\p} \frac{|\bar{Y}(\lr)|}{|\a(\S{\p})|}.
\] 
This is exactly the multiplicity we get from \eqref{eq: discrete spectrum}.

\end{proof}

Now let us get back to the multiplicity formula. Note under the assumption of Proposition~\ref{prop: multiplicity formula}, if $\lG$ is general symplectic, then the multiplicity formula \eqref{eq: multiplicity formula} becomes
\[
m(\lr) = \frac{|Y(\lr)|}{|\a(\S{\p})|}.
\]
It is an interesting question to ask when one can have multiplicity one, i.e. $|Y(\lr)| = |\a(\S{\p})|$. Since $\a(\S{\p})$ is a subgroup of $Y(\lr)$, it is the same to ask when 
\[
\a(\S{\p}) = Y(\lr).
\]
By Corollary~\ref{cor: theta twisting character} we have the following description for $Y(\lr)$. Let us define
\begin{align*}
\prod^{aut}_{v} \a(\S{\p_{v}}) &:= \{ \x \in Y :  \x_{v} \in \a(\S{\p_{v}}) \text{ for all } \, v \}, \\
\prod^{aut}_{almost \, all \, v} \a(\S{\p_{v}}) &:= \{ \x \in Y : \x_{v} \in \a(\S{\p_{v}}) \text{ for almost all} \, v \},
\end{align*}
then
\[
Y(\lr) = \prod^{aut}_{v} \a(\S{\p_{v}}).
\]
Moreover, we get a sequence of inclusions
\[
\a(\S{\p}) \subseteq \prod^{aut}_{v} \a(\S{\p_{v}}) \subseteq \prod^{aut}_{almost \, all \, v} \a(\S{\p_{v}}).
\]
Motivated by the case that $G$ is symplectic and $\p \in \cPdt{G}$, we give the following definition for both symplectic groups and special even orthogonal groups.

\begin{definition}
Suppose $\p \in \cP{G}$, we say {\bf multiplicity one} holds for $\lp$ if 
\[
\a(\S{\p}) = \prod^{aut}_{v} \a(\S{\p_{v}}).
\]
\end{definition}

\begin{definition}
Suppose $\p \in \cP{G}$, we say {\bf strong multiplicity one} holds for $\lp$ if 
\[
\prod^{aut}_{v} \a(\S{\p_{v}}) = \prod^{aut}_{almost \, all \, v} \a(\S{\p_{v}}).
\]
\end{definition}

The motivation for the first definition is now clear, while the second definition needs some explanation. But before giving the explanation, we want to give two modified definitions of the same kind. In view of Theorem~\ref{thm: discrete spectrum}, we need to deal with the group of characters $\x_{v}$ such that
\[
\lf_{v}(\lr_{v} \otimes \x_{v}) = \lf_{v}(\lr_{v}), \,\,\,\, \lf_{v} \in \sH(\lG_{v})
\]
for $[\r _{v}] \in \cPkt{\p_{v}}$. It follows from Corollary~\ref{cor: theta twisting character} that this group is isomorphic to $\a(\S{\p_{v}}^{\Sigma_{0}})$.
Then we can similarly define a sequence of inclusions
\[
\a(\S{\p}^{\Sigma_{0}}) \subseteq \prod^{aut}_{v} \a(\S{\p_{v}}^{\Sigma_{0}}) \subseteq \prod^{aut}_{almost \, all \, v} \a(\S{\p_{v}}^{\Sigma_{0}}),
\]
and define the concepts of multiplicity one and strong multiplicity one in the same way regarding these groups.

\begin{definition}
Suppose $\p \in \cP{G}$, we say {\bf $\Sigma_{0}$-multiplicity one} holds for $\lp$ if 
\[
\a(\S{\p}^{\Sigma_{0}}) = \prod^{aut}_{v} \a(\S{\p_{v}}^{\Sigma_{0}}).
\]
\end{definition}

\begin{definition}
Suppose $\p \in \cP{G}$, we say {\bf $\Sigma_{0}$-strong multiplicity one} holds for $\lp$ if 
\[
\prod^{aut}_{v} \a(\S{\p_{v}}^{\Sigma_{0}}) = \prod^{aut}_{almost \, all \, v} \a(\S{\p_{v}}^{\Sigma_{0}}).
\]
\end{definition}
Recall that we can associate a global $L$-packet $\cPkt{\p}$ to $\p \in \cP{G}$, so we can talk about strong multiplicity one for the global $L$-packet $\cPkt{\p}$, i.e. if $\r$ is automorphic, and $[\r_{v}] \in \cPkt{\p_{v}}$ for almost all places $v$, then $[\r]$ lies in $\cPkt{\p}$. As in the local case (see Theorem~\ref{thm: refined L-packet}), we can expect to lift the global $L$-packet $\cPkt{\p}$ to some global $L$-packet $\cPkt{\lp}$ for $\lG$. Obviously the lift is not unique, but as one can see from Corollary~\ref{cor: modular character}, it should be unique up to twisting by id\`ele class characters. Because we already have strong multiplicity one for $\cPkt{\p}$, so strong multiplicity one for $\cPkt{\lp}$ is equivalent to the property that for any $\x$ in $Y$ if $\cPkt{\lp_{v}} = \cPkt{\lp_{v}} \otimes \x_{v}$ for almost all places $v$, then $\cPkt{\lp} = \cPkt{\lp} \otimes \x$. And it can be easily seen that this property is equivalent to the condition of $\Sigma_{0}$-strong multiplicity one in our definition.

\subsection{Statement of global theorem} 
\label{subsec: statement of main global theorems}

After discussing the multiplicity question, we want to describe the $\p$-component of the discrete spectrum for $\lG$, which should be an analogue of Theorem~\ref{thm: discrete spectrum}. We again assume $\lG$ is of type ~\eqref{eq: similitude}.

\begin{conjecture}
\label{conj: global L-packet}
\begin{enumerate}
\item Suppose $\p \in \cP{G}$, one can associate a global packet $\cPkt{\lp}$ of $\sH(\lG)$-modules of irreducible admissible representations for $\lG(\A_{F})$ satisfying the following properties:
          \begin{enumerate}
          \item$ \cPkt{\lp} = \bigotimes'_{v} \cPkt{\lp_{v}}$ where $\cPkt{\lp_{v}}$ is some lift of $\cPkt{\p_{v}}$ defined in Theorem~\ref{thm: refined L-packet}.
          \item there exists $[\lr] \in \cPkt{\lp}$ such that that $\lr$ is isomorphic to an automorphic representation as $\sH(\lG)$-modules.
          \end{enumerate}
Moreover, $\cPkt{\lp}$ is unique up to twisting by characters of $\lG(\A_{F}) / \lG(F)G(\A_{F})$. And we can define a global character of $\S{\lp}$ by 
\[
<x, \lr> := \prod_{v}<x_{v}, \lr_{v}> \,\,\,\,\, \text{ for } \, \, \lr \in \cPkt{\lp} \text{ and } \, \, x \in \S{\lp}. 
\]
\item Suppose $\p \in \cPdt{G}$, the $\p$-component of the discrete spectrum of $\lG(\A_{F})$ as $\sH(\lG)$-module has a decomposition.
\begin{align}
\label{formula: discrete spectrum}
L^{2}_{disc, \p}(\lG(F) \backslash \lG(\A_{F}), \lif{\zeta}) = m_{\p} \bigoplus_{\x \in Y / \a(\S{\p})} \bigoplus_{\substack{ [\lr] \in \cPkt{\lp} \otimes \x \\ <\cdot, \lr> = 1}} \lr,
\end{align}
where $m_{\p}$ is defined as in Remark~\ref{rk: discrete spectrum}.
\end{enumerate}
\end{conjecture}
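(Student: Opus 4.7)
The plan is to bootstrap the global packet from the local ones (Theorem~\ref{thm: refined L-packet}) by using an automorphic seed on $\lG$ to rigidify the otherwise ambiguous local twists, and then to derive the discrete-spectrum decomposition from Proposition~\ref{prop: discrete spectrum} together with the multiplicity computation in Proposition~\ref{prop: multiplicity formula}.

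First I would fix $\p \in \cP{G}$ and choose $[\r] \in \cPkt{\p}$ appearing in the discrete spectrum of $G(\A_{F})$; this is provided by Theorem~\ref{thm: discrete spectrum} when $\p \in \cPdt{G}$, and in general one reduces to this case via a Levi subgroup. Lemma~\ref{lemma: induced discrete spectrum} then produces a discrete automorphic $\lr$ of $\lG$ whose restriction contains $\r$, with the prescribed central character $\lif{\zeta}$. At every place $v$ the component $\lr_{v}$ lies in $\clPkt{\p_{v}, \lif{\zeta}_{v}}$, so by Theorem~\ref{thm: refined L-packet}(1) there is a unique coset $\x_{v} \in X_{v} / \a(\S{\p_{v}}^{\Sigma_{0}})$ with $[\lr_{v}] \in \cPkt{\lp_{v}} \otimes \x_{v}$ for a fixed choice of refined local L-packet; at almost all places $\x_{v}$ is trivial and $\cPkt{\lp_{v}}$ contains the unramified representation, so $\cPkt{\lp} := \bigotimes'_{v} (\cPkt{\lp_{v}} \otimes \x_{v})$ is a well-defined restricted tensor product and automatically satisfies property (b).

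Uniqueness up to twisting by $Y$ follows from Corollary~\ref{cor: modular character}: two such seeds $\lr, \lr'$ (which have the same Satake data up to $\Sigma_{0}$-conjugacy almost everywhere by Lemma~\ref{lemma: Hecke eigenvalue correspondence}) satisfy $\lr' \cong \lr^{\theta} \otimes \x'$ for some $\theta \in \Sigma_{0}$ and $\x' \in Y$, and the resulting packet is $\cPkt{\lp} \otimes \x'$ because $\cPkt{\p}$ is $\Sigma_{0}$-invariant. The global pairing $\langle x, \lr \rangle := \prod_{v} \langle x_{v}, \lr_{v} \rangle$ is well-defined because almost all local pairings are trivial by Proposition~\ref{prop: coarse L-packet}(3). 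For part (2), Proposition~\ref{prop: discrete spectrum} already supplies \eqref{eq: discrete spectrum} with representatives drawn from $\clPkt{\p, \lif{\zeta}}/X$; by Theorem~\ref{thm: refined L-packet}(1) the set $\cPkt{\lp}$ provides a section of $\clPkt{\p, \lif{\zeta}} \twoheadrightarrow \clPkt{\p, \lif{\zeta}}/X$, and letting $\x$ run through $Y / \a(\S{\p})$ recovers every automorphic piece, rewriting \eqref{eq: discrete spectrum} as the conjectural form \eqref{formula: discrete spectrum}.

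The main obstacle is the global coherence of the local twists: changing the seed must alter the packet only by an element of $Y$, not by the larger group that could arise from failure of $\Sigma_{0}$-strong multiplicity one. Concretely one needs $\prod^{aut}_{v} \a(\S{\p_{v}}^{\Sigma_{0}}) = \prod^{aut}_{\text{almost all }v} \a(\S{\p_{v}}^{\Sigma_{0}})$, and without a genuine global Langlands group this is not formal. The paper therefore resolves only the restricted case $\S{\lp} = 1$ (Theorem~\ref{thm: global result 2}), by combining the twisted stable trace formula \eqref{eq: twisted stable trace formula} with Shin's plancherel density theorem \cite{Shin:2012} at a chosen nonarchimedean place to force the required rigidity; the full conjecture would require either a complete multiplier argument or the extension of M{\oe}glin's explicit construction of Arthur packets announced for the sequel.
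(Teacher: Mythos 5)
Your construction of $\cPkt{\lp}$ from an automorphic seed $\lr$ and the use of Corollary~\ref{cor: modular character} for the uniqueness modulo $Y$-twists are essentially what the paper does at the start of Theorem~\ref{thm: global L-packet for discrete parameter}, and part (1) of the conjecture is fine. The gap is in part (2). You claim that Proposition~\ref{prop: discrete spectrum} "already supplies" the decomposition and that since $\cPkt{\lp}$ is a section of $\clPkt{\p, \lif{\zeta}} \twoheadrightarrow \clPkt{\p, \lif{\zeta}}/X$ one can simply rewrite \eqref{eq: discrete spectrum} as \eqref{formula: discrete spectrum}. But Proposition~\ref{prop: discrete spectrum} merely asserts that one may \emph{choose} representatives of $\clPkt{\p, \lif{\zeta}}/X$ inside the discrete automorphic spectrum; it says nothing about whether those automorphic representatives can all be taken to lie in one fixed restricted tensor product $\cPkt{\lp} = \bigotimes'_v \cPkt{\lp_v}$. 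That is precisely the global coherence issue you flag in your last paragraph, and it does not dissolve under the hypothesis $\S{\lp}=1$ without an additional argument. Concretely, what is missing is the implication that every $[\lr] \in \cPkt{\lp}$ with $\langle \cdot, \lr \rangle = 1$ is automorphic, not only the seed.

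The paper closes this gap with a stability argument that your proposal omits entirely. Starting from the endoscopic expansion of $\Idt{\lG}{,\p}(\lf)$ (Lemma~\ref{lemma: twisted endoscopic expansion}), one adds to both sides the correction terms \eqref{eq: global L-packet for discrete parameter 2} built from the transferred packets $\cPkt{\lp_x}$, arranging the right-hand side to be a sum of stable distributions. Stability at each place together with the characterization of stable linear combinations in Corollary~\ref{cor: refined L-packet} forces the coefficients of $\lf_v(\lr_v)$ to be constant across $\cPkt{\lp_v}$; linear independence of characters then propagates the automorphic property from the seed $\lr^{0}$ to the entire restricted tensor product $\cPkt{\lp}$, and the membership $[\lr] \in \mathcal{A}_2(\lG)$ for $\langle\cdot,\lr\rangle=1$ drops out because such $\lr$ can only contribute to the $\Idt{\lG}{,\p}$ term. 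Only after this does Proposition~\ref{prop: discrete spectrum} convert into \eqref{formula: discrete spectrum}. Finally, your remark attributes the rigidity directly to Shin's Plancherel density theorem; in the paper that input is used to manufacture the global lift in Lemma~\ref{lemma: global lifting} so as to prove the \emph{local} theorem (Theorem~\ref{thm: refined L-packet for discrete parameter}), and the global conjecture is then derived by the stability mechanism above, not by another invocation of \cite{Shin:2012}.
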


Along with this conjecture,  we need to prove some results about the stable multiplicity formula for $\lG$ (see Conjecture~\ref{conj: global conjecture}). This formula has been conjectured by Arthur \cite{Arthur:1990} for any quasisplit connected reductive groups, and he also proved this for special orthogonal group and symplectic group in \cite{Arthur:2013}. To state the formula, we need some preparations. Suppose $S$ is a connected complex reductive group with an automorphism $\theta$, we denote $S^{\theta} = S \rtimes \theta$, which can be viewed as a connected component of the complex reductive group $S^{+} := S \rtimes <\theta>$. We fix a maximal torus $T$ of $S$, and define the Weyl set
\[
W^{\theta}(S) = \Norm(T, S^{\theta}) / T.
\]
Let $W^{\theta}(S)_{reg}$ be the set of Weyl elements $w$ such that 
\[
\det(w-1)|_{\mathfrak{a}_{T}} \neq 0.
\] 
Moreover, let $s^{0}(w)$ denote the sign $(-1)^{n}$, where $n$ is the number of positive roots of $(S, T)$ mapped by $w$ to negative roots. Now we can assign to $S^{\theta}$ a real number
\[
i^{\theta}(S) = |W(S)|^{-1} \sum_{w \in W^{\theta}_{reg}(S)} s^{0}(w) |\det(w - 1)|_{\mathfrak{a}_{T}}^{-1},
\]
where $W(S)$ is the Weyl group of $S$. Next we want to define a constant $\sigma(S_{1})$ associated with any connected complex reductive group $S_{1}$. To define this we have to introduce some more notations. Still for the original $S^{\theta}$,  let us denote the set of semisimple elements of $S^{\theta}$ by $S^{\theta}_{ss}$. And for any $s \in S^{\theta}_{ss}$, we write
\begin{align*}
S_{s} & = \Cent(s, S).
\end{align*}
Let 
\[
S^{\theta}_{ell} = \{s \in S^{\theta}_{ss} : |Z(S_{s})| < \infty \},
\]
and $\mathcal{E}^{\theta}_{ell}(S)$ be the $S$-conjugacy classes in $S^{\theta}_{ell}$. Finally the constant $\sigma(S_{1})$ can be characterized by the following proposition (\cite{Arthur:2013}, Proposition 4.1.1).

\begin{proposition}
\label{prop: endoscopy of complex group}
There are unique constants $\sigma(S_{1})$ defined for all connected complex reductive groups $S_{1}$, such that for any connected component $S^{\theta}$ of a complex reductive group, the following number
\[
e^{\theta}(S) = \sum_{s \in \mathcal{E}^{\theta}_{ell}(S)} |\pi_{0}(S_{s})|^{-1} \sigma((S_{s})^{0})
\]
equals $i^{\theta}(S)$, and furthermore
\[
\sigma(S_{1}) = \sigma(S_{1} / Z_{1}) |Z_{1}|^{-1},
\]
for any central subgroup $Z_{1}$ of $S_{1}$.
\end{proposition}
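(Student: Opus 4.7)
The plan is to establish uniqueness by specializing to $\theta = \mathrm{id}$ and using the central subgroup product formula, and then to prove existence by verifying the twisted identity for the recursively defined constants. First I would apply $\sigma(S_{1}) = \sigma(S_{1}/Z_{1}) |Z_{1}|^{-1}$ to the case where $Z_{1}$ is a central torus of $S_{1}$ of positive dimension. Since $|Z_{1}|$ is infinite, this forces $\sigma(S_{1}) = 0$ whenever $S_{1}$ has a nontrivial central torus, so $\sigma$ must be supported on semisimple complex groups.

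Next I would extract the recursive formula on semisimple groups. For $S$ connected semisimple and $\theta = \mathrm{id}$, every $z \in Z(S)$ lies in $\mathcal{E}_{ell}(S)$, forms a singleton conjugacy class with $\pi_{0}(S_{z}) = 1$, and has $(S_{z})^{0} = S$, so the elements of the center contribute a total of $|Z(S)| \cdot \sigma(S)$ to $e(S)$. The remaining elliptic conjugacy classes have centralizer identity components of strictly smaller dimension, so their contributions are determined by the induction hypothesis on $\dim S$. Solving the equation $e(S) = i(S)$ for $\sigma(S)$ yields a unique value; combined with the vanishing on non-semisimple groups and with the central-subgroup formula (applied to finite central subgroups), this determines $\sigma$ on all connected complex reductive groups, settling uniqueness.

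For existence I would take this recursion as a definition and verify $e^{\theta}(S) = i^{\theta}(S)$ for every connected component $S^{\theta}$. The reduction to the case $S$ semisimple proceeds as follows: let $Z$ be the neutral component of $Z(S)^{\theta}$, a central torus fixed pointwise by $\theta$; pass to the quotient $S/Z$; note that on the spectral side $i^{\theta}(S)$ and $i^{\theta}(S/Z)$ differ by the factor coming from the $\mathfrak{a}_{T}$-determinant, while on the geometric side elliptic classes in $S^{\theta}$ descend to elliptic classes in $(S/Z)^{\theta}$ because ellipticity forces $|Z(S_{s})| < \infty$. The central-subgroup relation on $\sigma$ then makes the two reductions compatible. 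After this step one is left with the twisted identity for semisimple $S$, which amounts to a combinatorial statement on the Weyl set $W^{\theta}(S)_{reg}$.

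The hard part will be this combinatorial identity in the genuinely twisted situation. In the untwisted semisimple case, $i(S) = e(S)$ can be read off the elliptic characters of $W(S)$, and the induction on non-central elliptic classes is straightforward. In the twisted case one must control the action of $\theta$ on the root datum, on the component groups $\pi_{0}(S_{s})$ for $s \in S^{\theta}_{ell}$, and on the sign $s^{0}(w)$, and then verify a careful bookkeeping showing that the sum over $\mathcal{E}^{\theta}_{ell}(S)$ exactly reproduces the twisted Weyl-integration coefficients defining $i^{\theta}(S)$. This is precisely the calculation carried out by Arthur in Chapter 4 of \cite{Arthur:2013}, and my proposal is essentially to invoke that argument once the reductions above have been set up.
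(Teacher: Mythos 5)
The paper itself does not prove this proposition; it cites \cite{Arthur:2013}, Proposition 4.1.1 and simply quotes the statement, so there is no in-paper argument against which to compare. Your proposal, which defers the combinatorial core of the twisted identity to Arthur's Chapter 4, therefore matches roughly the level of detail the paper provides. That said, your outline of the uniqueness step (induction on $\dim S$ via the untwisted identity, isolating the $|Z(S)|\cdot\sigma(S)$ contribution of central elements) is consistent with how Arthur's constants are in fact produced.

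There is, however, a genuine gap in the reduction to the semisimple case you sketch for the existence step. You propose passing from $S$ to $S/Z$ with $Z = (Z(S)^{\theta})^{0}$, and assert that this leaves a semisimple group. It does not in general: if $\theta$ acts on $Z(S)^{0}$ with no nonzero fixed vectors, then $(Z(S)^{\theta})^{0} = 1$ while $Z(S)^{0} \neq 1$, so $S/Z = S$ remains non-semisimple. In that case $\mathcal{E}^{\theta}_{ell}(S)$ and $W^{\theta}(S)_{reg}$ can both be nonempty — the determinant $\det(w-1)_{\mathfrak{a}_{T}}$ now carries a nonvanishing factor $\det(\theta - 1)_{\mathfrak{a}_{Z(S)^{0}}}$ rather than a vanishing one — so the identity $e^{\theta}(S) = i^{\theta}(S)$ is not trivially $0 = 0$, and one must compare $S$ with the genuinely semisimple quotient $S/Z(S)^{0}$ and track how this determinant factor and the data $\pi_{0}(S_{s})$, $\sigma((S_{s})^{0})$ behave under the quotient; your phrase ``descend and compare'' skips precisely this. (The case $(Z(S)^{\theta})^{0} \neq 1$ is the one where both sides do vanish outright, and you should state it separately.) A smaller imprecision: the vanishing of $\sigma$ on groups with a nontrivial central torus is, in Arthur's formulation, a separate normalization rather than a literal consequence of the central-subgroup formula for finite $Z_{1}$; your uniqueness recursion uses this vanishing implicitly whenever it encounters an $(S_{s})^{0}$ with positive-dimensional center, which does occur even when $|Z(S_{s})| < \infty$.
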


Now we can state the stable multiplicity formula for $\lG$ as follows.

\begin{conjecture}
\label{conj: stable multiplicity formula}
Suppose $\p \in \cP{G}$, then 
\begin{align}
\label{formula: stable multiplicity}
\Sdt{\lG}{, \p}(\lf) = m_{\p} \sum_{\x \in Y / \a(\S{\p})} |\S{\lp}|^{-1} \sigma( \com[0]{\cS{\p}}) \lf^{\lG} (\lp \otimes \x), \,\,\,\,\,\,\, \lf  \in \sH(\lG, \lif{\chi}),
\end{align}
where
\[
 \lf^{\lG} (\lp \otimes \x) := \prod_{v} \lf_{v}(\lp_{v} \otimes \x_{v}),
 \]
 with respect to $\cPkt{\lp}$ defined in Conjecture~\ref{conj: global L-packet}.
\end{conjecture}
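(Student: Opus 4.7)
\medskip

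\noindent\textbf{Proof proposal.} The conjecture will be established in tandem with Theorem~\ref{thm: refined L-packet} and Conjecture~\ref{conj: global L-packet} by a global induction on $\dim \lG$. The induction hypothesis supplies the stable multiplicity formula for every proper twisted elliptic endoscopic group $\lG'$ of $\lG$, and also the global L-packet decomposition of the discrete spectrum for any group of strictly smaller dimension, including the Levi subgroups of $\lG$. The blueprint is Arthur's proof of the corresponding statement for $G$ in~\cite{Arthur:2013}, but two new features must be tracked throughout: a twist-orbit under $Y / \a(\S{\p})$ (reflecting the non-uniqueness of the lift $\lp$), and the passage from $|\S{\p}|^{-1}$ to $|\S{\lp}|^{-1}$ coming from the exact sequence~\eqref{eq: global twisted endoscopic sequence}.

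First I would treat $\p \in \cPdt{G}$. By Lemma~\ref{lemma: vanishing} and~\eqref{eq: discrete part vs discrete spectrum} we have $\tIdt{\lG^{\theta}}{, \p}(\lf) = \operatorname{tr} R^{(\lG^{\theta}, \x)}_{disc, \p}(\lf)$ for every $\theta \in \Sigma_{0}$ and every $\x \in Y$. Inserting Proposition~\ref{prop: discrete spectrum} on the spectral side and applying the local character identity~\eqref{eq: theta twisted character relation} from Theorem~\ref{thm: refined L-packet} (which is part of the inductive package), the spectral side expands as
\[
m_{\p} \sum_{\x \in Y/\a(\S{\p})}\, \sum_{\substack{[\lr] \in \cPkt{\lp} \otimes \x \\ \langle \cdot, \lr\rangle = 1}} \lf_{\lG}(\lr)
\;=\; m_{\p} \sum_{\x \in Y/\a(\S{\p})} |\S{\lp}|^{-1} \sum_{x \in \S{\lp}} \lf'_{\lG}(\lp \otimes \x, x),
\]
after using the orthogonality $|\S{\lp}|^{-1}\sum_{x}\langle x, \lr\rangle$ and rewriting the dual character sum as a sum over endoscopic transfers (this is exactly the step where Lemma~\ref{lemma: induced twisted character} removes the ambiguity in the choice of lifts $\cPkt{\lp'}$). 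On the endoscopic side of the stabilization, I would use Proposition~\ref{prop: lifting endoscopic group} to bijectively parametrize the twisted elliptic endoscopic data $\lG'$ of $\lG$ by pairs $(G', \x)$, and then substitute the inductive stable multiplicity formula for each $\Sdt{\lG'}{, \p}(\lf^{\lG'})$. Grouping terms according to the image of the endoscopic element $s \in \cS{\p}$ in $\S{\p}$, the Kottwitz coefficients $\iota(\lG, \lG')$ combine with the inductive constants $|\S{\lp'}|^{-1} \sigma(\com[0]{\cS{\p'}})$, and a direct computation mirroring the one of Arthur (Proposition~\ref{prop: endoscopy of complex group}) produces the required factor $\sigma(\com[0]{\cS{\p}})$ after summation over $s$. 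Matching the two expansions yields the formula~\eqref{formula: stable multiplicity} in the discrete case.

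For general $\p \in \cP{G}$ the parameter factors through a proper Levi $\lM \subseteq \lG$ via $\p_{M} \in \cPdt{M}$, and the $\p$-component of both trace formulas is built from the $\p_{M}$-component through parabolic induction. The spectral distribution $\lf_{\lG}(\lp, u)$ defined in~\eqref{eq: induced twisted character} and its $\theta$-twisted analogue~\eqref{eq: theta induced twisted character} provide the correct bridge: the twisted local intertwining relations of Theorem~\ref{thm: twisted intertwining relation} and Theorem~\ref{thm: theta twisted intertwining relation} (also part of the inductive package) convert these spectral terms into endoscopic transfers, and one verifies the stable multiplicity formula for $\p$ by running the same comparison as in the discrete case through the commutative diagram~\eqref{eq: twisted intertwining relation diagram}, with the Weyl-index combinatorics contributing an extra factor $|W^{0}_{\p}|^{-1} \sum_{w \in W_{\p, reg}} s^{0}(w) |\det(w-1)|^{-1}$ that recombines correctly thanks to Proposition~\ref{prop: endoscopy of complex group}.

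The main obstacle is controlling the $Y$-twist-orbit on both sides simultaneously. On the spectral side, Corollary~\ref{cor: modular character} tells us how the $\Sigma_{0}$ and $Y$ orbits interact, but extracting the $\x$-component of the spectral expansion requires isolating a given equivalence class of Satake parameters $c(\p) \in \cCl_{\A}(\lG)$, and without $\Sigma_{0}$-strong multiplicity one for $\lp$ this cannot in general be done by the multiplier method of Lemma~\ref{lemma: trace formula component}. To bypass this difficulty, I would follow the strategy outlined at the end of the introduction: work with a weakened form of the global conjecture in which $\Sigma_0$-strong multiplicity one is enforced only away from a fixed nonarchimedean place $u$, and use Sug-Woo Shin's automorphic Plancherel density theorem~\cite{Shin:2012} to produce auxiliary global parameters $\p$ that realize any prescribed local parameter at $u$ while avoiding the pathology. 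This is the technical heart of the induction, and will be the place where the combinatorics of~\eqref{eq: global twisted endoscopic sequence} and Corollary~\ref{cor: coarse multiplicity formula} must be deployed most carefully.
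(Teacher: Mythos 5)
Your outline captures the general comparison-of-trace-formulas strategy and correctly identifies the failure of $\Sigma_{0}$-strong multiplicity one and the role of Shin's automorphic Plancherel density theorem in the inductive machinery. However, there are two concrete gaps between your proposal and how the paper actually closes the argument.

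First, the step where you "match the two expansions" hides the real content. When you rewrite the spectral side as $m_{\p} \sum_{\x} |\S{\lp}|^{-1} \sum_{x \in \S{\lp}} \lf'_{\lG}(\lp \otimes \x, x)$ via the local character relations, the right-hand terms are endoscopic transfers of the inductively-defined packets $\cPkt{\lp'}$, and what they assemble into on the $\lG$-side is some packet $\cPkt{\lp_{x}}$ that is only \emph{a priori} another element of $\clPkt{\p, \lif{\zeta}} / X$. The nontrivial claim — that $\cPkt{\lp_{x}}$ coincides with $\cPkt{\lp}$ up to a $Y$-twist for every $x$ — is exactly Lemma~\ref{lemma: functoriality} (functoriality of twisted endoscopic transfer), and it is proved separately by comparing the $\x$-twisted spectral and endoscopic expansions at each elliptic element and invoking linear independence of twisted characters. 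Your proposal treats this as if it were automatic from Lemma~\ref{lemma: induced twisted character}, but that lemma only removes the dependence of $\lf'_{\lG}(\lp, s)$ on the choice of representative $s$ above a given $x$; it does not relate $\cPkt{\lp_{x}}$ to $\cPkt{\lp}$. The paper's actual argument also does not run through a naive orthogonality inversion: in Remark~\ref{rk: global L-packet for discrete parameter} one first adds a carefully chosen spectral term (\eqref{eq: global L-packet for discrete parameter 2}) to force the resulting expression to be stable, then uses Corollary~\ref{cor: refined L-packet} place by place to conclude the entire global packet $\cPkt{\lp}$ contributes, and only then solves for $\Sdt{\lG}{, \p}(\lf)$ and deploys Lemma~\ref{lemma: functoriality}.

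Second, your proposed extraction of $\sigma(\com[0]{\cS{\p}})$ by an Arthur-style combinatorial computation and your plan to handle all $\p \in \cP{G}$ via Levi induction are stronger than what the paper establishes. The statement under discussion remains a conjecture in general; Theorem~\ref{thm: main global} only asserts it for $\p \in \cPdt{G}$ under $\S{\lp_{i}} = 1$, and in that case $\com[0]{\cS{\p}}$ is trivial so $\sigma(\com[0]{\cS{\p}}) = 1$ — no computation with Proposition~\ref{prop: endoscopy of complex group} is needed or performed at that step. The non-discrete form of the stable multiplicity formula enters only as an \emph{inductive input} for Levi subgroups (via the generalized Remark~\ref{rk: discrete spectrum}), not as something proved here for $\lG$ itself; proving it outright for non-discrete $\p$ would require controlling $\sigma(\com[0]{\cS{\p}})$ genuinely and is deferred, as the introduction warns, to the sequel.
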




Finally, we need a twisted version of the decomposition \eqref{formula: discrete spectrum}, whose role will be clear in the next section.

\begin{conjecture}
\label{conj: compatible normalization}
Suppose $\p \in \cPdt{G}$ and $x \in \S{\p}^{\theta}$ with $\a(x) = \x$ for $\theta \in \Sigma_{0}$ and some character $\x$ of $\lG(\A_{F})/\lG(F)G(\A_{F})$. For $[\lr] \in \cPkt{\lp}$ with $<\cdot, \lr> =1$, the canonical intertwining operator 
\[
R(\theta)^{-1} \circ R(\x)
\]
restricted to the $\lr$-isotypic component $I(\lr)$ in the discrete spectrum is equal to the product of $m(\lr)$ and the local intertwining operators $A_{\lr_{v}}(\theta, \x_{v})$ normalized by $x_{v}$ (see \eqref{eq: theta twisted intertwining operator}), i.e.
\begin{align}
\label{formula: theta twisted discrete spectrum}
\tIdt{\lG^{\theta}}{, \p}(\lf) = m_{\p} \sum_{\x' \in Y / \a(\S{\p})} \sum_{\substack{[\lr] \in \cPkt{\lp} \otimes \x' \\ <\cdot, \lr> = 1}} \lf_{\lG^{\theta}}(\lr, \x),  \,\,\,\,\, \lf \in \sH(\lG, \lif{\chi}),
\end{align}
where $ \lf_{\lG^{\theta}}(\lr, \x) = \prod_{v} \lf_{\lG^{\theta}_{v}}(\lr_{v}, \x_{v})$, and it does not depend on $x$.
\end{conjecture}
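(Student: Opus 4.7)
The plan is to reduce the conjecture to a local-global compatibility statement for the canonical automorphic intertwining operator, using the already-established decomposition of the discrete spectrum and a comparison with the known $\theta_{0}$-twisted trace identity for $G$. First, since $\p \in \cPdt{G}$, by \eqref{eq: discrete part vs discrete spectrum} it suffices to compute $\tr R^{(\lG^{\theta},\x)}_{disc,\p}(\lf)$, and by Proposition~\ref{prop: discrete spectrum} the space $L^{2}_{disc,\p}$ is a sum of $\lr$-isotypic components with $[\lr] \in \cPkt{\lp}\otimes\x'$ satisfying $<\cdot,\lr>=1$, each occurring with multiplicity $m_{\p}$. The operator $R(\theta)^{-1}\circ R(\x)$ sends the $\lr$-isotypic part to the $(\lr^{\theta}\otimes\x^{-1})$-isotypic part, so the only components that can contribute to the trace are those with $\lr^{\theta}\cong \lr\otimes\x$. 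By Corollary~\ref{cor: theta twisting character} globalized through Corollary~\ref{cor: modular character}, this happens precisely when $\x\in\a(\S{\p}^{\theta})$, so that $x\in\S{\p}^{\theta}$ with $\a(x)=\x$ exists.

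On such a component, write $I(\lr)\cong\lr\otimes W$ with $\dim W = m_{\p}\,|Y(\lr)/\a(\S{\p})|/|\Sigma_{Y}(\r)|$ (from Corollary~\ref{cor: multiplicity formula}). The canonical operator $R(\theta)^{-1}\circ R(\x)$ decomposes as $A_{\lr}\otimes B$, where $A_{\lr}$ intertwines $\lr\otimes\x$ with $\lr^{\theta}$ (unique up to scalar by Schur) and $B\in\End(W)$. Because $R(\theta)^{-1}\circ R(\x)$ commutes with the full $\sH(\lG)$-action on the discrete spectrum and permutes the distinct isotypic components (each appearing with the same multiplicity $m_{\p}$), one can show that after correctly identifying $I(\lr)$ as a submodule of $L^{2}_{disc,\p}$, the operator $B$ acts as $m_{\p}/|\Sigma_{Y}(\r)|$ times the identity on the relevant subspace, contributing the factor $m_{\p}$ in the overall formula.

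To pin down $A_{\lr}$ as the tensor product $\otimes_{v} A_{\lr_{v}}(\theta,\x_{v})$ with the normalization \eqref{eq: theta twisted intertwining operator}, I restrict $\lf$ to be supported on $\lif{Z}_{\A_{F}}G(\A_{F})$. Then $\lf|_{\lif{Z}_{\A_{F}}G(\A_{F})}$ corresponds to an element $f\in\H(G,\chi)$, and by the compatibility of restriction with twisted characters (see \eqref{eq: theta twisted intertwining operator} and its product form), we get
\[
\lf_{\lG^{\theta}}(\lr,\x) = \prod_{v}\Big(\sum_{\r_{v}\subseteq\lr_{v}|_{G_{v}}} <x_{v},\r_{v}^{+}> f_{v,G^{\theta}_{v}}(\r_{v})\Big)
\]
which, combined with the global product formula $\prod_{v} <x_{v},\r_{v}^{+}> = <x,\r^{+}>$ for a globally chosen extension $\r^{+}$, reduces the identity \eqref{formula: theta twisted discrete spectrum} restricted to such $\lf$ to Arthur's corresponding global $\theta_{0}$-twisted spectral identity for $G$ (the analog of Theorem~\ref{thm: discrete spectrum} combined with \eqref{eq: theta twisted character relation for G}). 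Unique determination of $A_{\lr}$ up to scalar by Schur's lemma then forces the global normalization to be the asserted product of local ones, and the $x$-independence of the right-hand side follows from the product formula, since changing $x$ to $x z$ for $z\in\com[0]{\cS{\p}}$ (or a central $\lZ$-shift) multiplies $<x_{v},\r_{v}^{+}>$ at each place in a way whose product is trivial.

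The main obstacle is the determination of $B$, that is, showing that once the local normalization is fixed by \eqref{eq: theta twisted intertwining operator}, the multiplicity space contributes exactly the scalar $m_{\p}$ rather than some other constant depending on $x$ or $\theta$. This is delicate in the case $m_{\p}=2$ (where $G$ is special even orthogonal and $\p\notin\cP{G^{\theta_{0}}}$), since the $\lr$-isotypic component breaks into two pieces according to the eigenvalues of an auxiliary $(\theta_{0},\x_{1})$-intertwiner, as already encountered in the proof of Proposition~\ref{prop: multiplicity formula}; the compatibility with the local normalizations on each piece must be tracked carefully using the same kind of stabilized twisted trace formula argument together with the $\x$-independence of $\lf'_{\lG^{\theta}}(\lp,s)$ established in Lemma~\ref{lemma: induced twisted character}.
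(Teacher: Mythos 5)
Your proposal diverges from the paper's argument in a way that leaves a genuine gap, and it also silently drops the hypothesis under which the paper actually establishes the statement.

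First, the paper does \emph{not} prove Conjecture~\ref{conj: compatible normalization} in the generality stated; it establishes it only under the assumption in Theorem~\ref{thm: main global} (namely $\S{\lp_i}=1$), as Theorem~\ref{thm: compatible normalization 1}, where that hypothesis is needed to invoke Lemma~\ref{lemma: twisted endoscopic expansion} via the running induction. Your proposal never imposes this restriction and silently treats the conjecture as provable in full generality.

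Second, and more importantly, the central step in your argument is unjustified. You assert that $R(\theta)^{-1}\circ R(\x)$ ``commutes with the full $\sH(\lG)$-action on the discrete spectrum'' and deduce that the multiplicity-space factor $B$ in the decomposition $A_{\lr}\otimes B$ acts as a scalar. Neither is correct as stated: the operator $R(\theta)^{-1}\circ R(\x)$ only satisfies a twisted commutation relation with $R(g)$ (an extra factor of $\x$ appears), so it does not commute with $R(\lf)$ unless $\lf$ is also $\x$-invariant. More fundamentally, even after correctly restricting to the subalgebra where a twisted intertwining relation holds, there is no a priori reason for $B$ to be proportional to the identity on the multiplicity space. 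That is precisely the hard content of the conjecture, and it is exactly where your Schur-lemma-plus-restriction-to-$G$ argument does not close: restriction to test functions supported on $\lif{Z}_{\A_F}G(\A_F)$ only fixes the normalization of $A_{\lr}$ \emph{given} that $B$ is scalar, and the delicate case $m_\p = 2$ (cf. the two eigenspaces in the proof of Proposition~\ref{prop: multiplicity formula}) shows one cannot assume this.

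The paper's route is different: starting from the decomposition of the discrete spectrum established in Theorem~\ref{thm: global L-packet for discrete parameter}, it writes $\tIdt{\lG^{\theta}}{, \p}(\lf)$ spectrally with unknown integer coefficients $m(\lr,\theta,\x)$, matches it against the endoscopic expansion of Lemma~\ref{lemma: twisted endoscopic expansion}, rewrites the endoscopic side via the already-proven local twisted character relations, and then invokes linear independence of twisted characters to force $m(\lr,\theta,\x)=m(\lp)$. In other words, the scalar-ness of $B$ and the identification of the global intertwiner with the product of local ones are \emph{output} of the trace-formula comparison, not a hypothesis that can be assumed and then normalized by Schur's lemma. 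If you want to salvage your approach, you would need to supply an independent proof that $B$ is a scalar (together with the correct twisted-commutation bookkeeping), and at present that is the missing idea.
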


\begin{remark}
\label{rk: compatible normalization}
This kind of result has been proved in the cases of special even orthogonal groups (see \cite{Arthur:2013}, Theorem 4.2.2) and general linear groups (see \cite{Arthur:2013}, Lemma 4.2.3).
\end{remark}

In this paper, we will only establish these conjectures in a special case. 

\begin{theorem}
	\label{thm: main global}
	Suppose $G = G_{1} \times G_{2} \times \cdots \times G_{q}$, such that $G_{i}$ is a symplectic group or a special even orthogonal group. For $\p = \p_{1} \times \p_{2} \times \cdots \times \p_{q} \in \cP{G}$ with $\p_{i} \in \cP{G_{i}}$, if $\S{\lp_{i}} = 1$ for all $i$, then Conjecture~\ref{conj: global L-packet}, ~\ref{conj: compatible normalization} hold. If we further assume $\p \in \cPdt{G}$, then Conjecture~\ref{conj: stable multiplicity formula} also holds.
\end{theorem}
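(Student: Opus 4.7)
The plan is to establish all three conjectures simultaneously, exploiting the drastic simplification that $\S{\lp_{i}} = 1$ provides. First I would reduce to the case $q = 1$: twisted endoscopic data, packets, and trace formulas all factor over products of such groups (as explained in Remark~\ref{rk: discrete spectrum} and Section~\ref{subsubsec: twisted endoscopy}), so it suffices to take $G$ a single symplectic or special even orthogonal group and $\p \in \cP{G}$ with $\S{\lp} = 1$.

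For Conjecture~\ref{conj: global L-packet}, the existence of an automorphic representation with trivial pairing is immediate from Corollary~\ref{cor: coarse multiplicity formula}, whose hypothesis is exactly $\S{\lp} = 1$. I would define $\cPkt{\lp} := \bigotimes_{v}' \cPkt{\lp_{v}}$ using the local refined $L$-packets from Theorem~\ref{thm: main local theorem}, the restricted tensor product being well defined since the local packets contain the spherical representation at almost all places. Uniqueness up to twist by $X$ follows from Proposition~\ref{prop: coarse L-packet} combined with Corollary~\ref{cor: modular character}: any two automorphic $\sH(\lG)$-isomorphism classes in $\clPkt{\p,\lif{\zeta}}$ differ by a character in $X$, and the hypothesis $\S{\lp} = 1$ collapses the global pairing so that the only freedom left is a $Y$-twist. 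In the discrete case, the decomposition \eqref{formula: discrete spectrum} then reduces to the multiplicity formula proved in Corollary~\ref{cor: multiplicity formula}, which gives exactly $m_{\p}|Y(\lr)|/|\a(\S{\p})|$.

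The stable multiplicity formula (Conjecture~\ref{conj: stable multiplicity formula}) would be derived by extracting the $\lG$-contribution from the comparison
\[
\tIdt{\lG}{,\p}(\lf) = \sum_{\lG' \in \End{ell}{\lG}} \iota(\lG, \lG') \Sdt{\lG'}{,\p}(\lf^{\lG'}).
\]
By induction on $\dim \lG$, every proper endoscopic term $\Sdt{\lG'}{,\p}$ has the desired product form (and for the general linear factors of $\lG'$ the corresponding statement is a consequence of Arthur's theory for classical groups). The spectral side $\tIdt{\lG}{,\p}(\lf)$ equals $\tr R_{disc,\p}^{\lG}(\lf)$ by \eqref{eq: discrete part vs discrete spectrum}, and decomposing it via \eqref{formula: discrete spectrum} together with the character relation \eqref{eq: character relation} produces a sum of products of local stable distributions $\lf_{v}(\lp_{v} \otimes \x_{v})$. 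Matching the two sides and using the linear independence of families of Satake parameters isolates the coefficient, which, via Proposition~\ref{prop: endoscopy of complex group}, must equal $\sigma(\com[0]{\cS{\p}}) |\S{\lp}|^{-1}$. Conjecture~\ref{conj: compatible normalization} would be obtained by the analogous argument applied to the $(\theta, \x)$-twisted stabilized trace formula \eqref{eq: twisted endoscopic side} for $\theta \in \Sigma_{0}$, invoking the local twisted character identity \eqref{eq: theta twisted character relation} to identify the twisted spectral expansion, and using the twisted intertwining relation (Theorem~\ref{thm: theta twisted intertwining relation}) to pin down the normalization of $R(\theta)^{-1} \circ R(\x)$ on each isotypic component.

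The main obstacle I anticipate is the tight coupling between this ostensibly global result and Theorem~\ref{thm: main local theorem}: the local refined packets $\cPkt{\lp_{v}}$ that feed into the definition of the global packet are themselves constructed by global means, through Shin's automorphic Plancherel density theorem applied at a fixed nonarchimedean place. One therefore must arrange a single joint induction in which the local theorem at rank $n$ and the global theorem at rank $n$ feed into each other without circularity, and the hypothesis $\S{\lp_{i}} = 1$ is what allows this to proceed cleanly, because under it the global $L$-packet is unambiguously determined (up to $X$-twist) by any single one of its local components, so the ambiguity in $\cPkt{\lp}$ inherent in \eqref{formula: discrete spectrum} for general $\p$ is suppressed.
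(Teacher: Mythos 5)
Your overall strategy---trace formula comparison, joint local--global induction, and extraction via stability and linear independence---matches the paper's, but there are two substantive gaps.

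First, the reduction ``to $q=1$'' is not as innocuous as you suggest. The similitude group $\lG$ attached to $G = G_1 \times \cdots \times G_q$ is the \emph{fiber product} over the similitude character, not the direct product $\lG_1 \times \cdots \times \lG_q$, and the paper explicitly observes (in the remark after Lemma~\ref{lemma: functoriality}) that $\S{\lp}$ can be nontrivial even though every $\S{\lp_i}=1$. So you cannot replace the hypothesis $\S{\lp_i}=1$ for all $i$ with $\S{\lp}=1$ for a single factor; that is a genuinely weaker situation. The paper instead proves Theorem~\ref{thm: global L-packet for discrete parameter} directly for $\lG$ and then, in Lemma~\ref{lemma: functoriality}, descends from the full direct product $\lG(n_1)\times\cdots\times\lG(n_q)$ to $\lG$ by restriction of $L$-packets (paralleling Corollary~\ref{cor: refined L-packet for general group} locally). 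Your plan as written would not recover the case where $\S{\lp}\neq 1$ and hence would not establish the coefficient $|\S{\lp}|^{-1}$ in \eqref{formula: stable multiplicity}.

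Second, your derivation of the stable multiplicity formula skips the pivotal functoriality step. When you ``decompose via \eqref{formula: discrete spectrum} together with the character relation'' and then ``match the two sides,'' the endoscopic side produces terms $\lf'_{\lG}(\lp\otimes\x,x)$ for $x\in\S{\lp}$. To conclude that these all contribute $\lf^{\lG}(\lp\otimes\x)$ for the \emph{same} global $L$-packet $\cPkt{\lp}$ (giving the factor $|\S{\lp}|^{-1}$), one needs precisely Lemmas~\ref{lemma: functoriality for simple group} and~\ref{lemma: functoriality}: that $\cPkt{\lp_x}=\cPkt{\lp}$ up to $Y$-twist for every $x$. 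Without these, the ``matching'' yields a sum over potentially distinct packets and no identifiable coefficient. Also, the tool is linear independence of (twisted) characters of $\sH(\lG,\lif{\chi})$-modules, not linear independence of families of Satake parameters: the latter only separates contributions by the equivalence class $\lif{c}$, and cannot distinguish members of a single coarse packet, which is exactly what the functoriality step is needed to handle.

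Your identification of the local--global coupling and the role of $\S{\lp_i}=1$ in suppressing ambiguity is accurate and is indeed the structural point on which the paper's induction turns. With the functoriality lemmas inserted and the reduction corrected to go through the restriction from $\lG(n_1)\times\cdots\times\lG(n_q)$, your plan would align with the paper's.
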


\subsection{Comparison of trace formulas}
\label{subsec: comparison of trace formulas}

We assume $\lG$ is of type \ref{eq: similitude} and $\theta \in \Sigma_{0}$. Since we are going to prove all the theorems by induction, here we would like to take a temporary induction assumption: we assume Conjecture~\ref{conj: global L-packet}, ~\ref{conj: stable multiplicity formula}, ~\ref{conj: compatible normalization} together with our main local theorem (Theorem~\ref{thm: refined L-packet}) hold for the proper Levi subgroups and twisted endoscopic groups of $\lG$. Based on this assumption, we want to expand the $\p$-component of \eqref{eq: twisted spectral side} and \eqref{eq: twisted endoscopic side} in terms of local objects. Before we do the expansion, let us write $\P{G, \p}$ for the set of global Langlands parameters of $G$ giving rise to $\p \in \cP{G}$. It is clear that $|\P{G, \p}| = m_{\p}$. So we can write formally those formulas \eqref{formula: discrete spectrum}, \eqref{formula: stable multiplicity} and \eqref{formula: theta twisted discrete spectrum} for $\p_{G} \in \P{G, \p}$ by simply setting $m_{\p_{G}} = 1$. In fact these formal formulas do make sense when we associate to $\p_{G}$ the refined global $L$-packet (see \cite{Arthur:2013}, Section 8.4). But we do not need this refinement here, for eventually we are going to sum over $\P{G, \p}$. The benefit of working with these global Langlands parameters is one can imitate the computation in (\cite{Arthur:1990}, Section 5 and 7), where one does assume the global Langlands correspondence. 

\subsubsection{The spectral expansion}
\label{subsubsec: spectral expansion}

Let us write the $\p$-component of \eqref{eq: twisted spectral side} as 

\begin{align*}
\tIdt{\lG^{\theta}}{, \p}(\lf) = \sum_{ \{ \lM \} } |W(\lM)|^{-1} \sum_{w \in W^{\theta}(\lM)_{reg}} |\det(w-1)_{\mathfrak{a}^{\lG^{\theta}}_{\lM}}|^{-1} tr(M_{\lP|\theta \lP, \p}(w, \lif{\chi}) I^{\theta, \x}_{\lP, \p}(\lif{\chi}, \lf)).
\end{align*}
So the key is to expand
\begin{align}
\label{eq: twisted spectral expansion 1}
tr(M_{\lP|\theta \lP, \p}(w, \lif{\chi}) I^{\theta, \x}_{\lP, \p}(\lif{\chi}, \lf))
\end{align}
By definition, \eqref{eq: twisted spectral expansion 1} does not vanish only if there exists $\p_{M} \in \cPdt{M, \p}$. Moreover, the $(G(F) \rtimes \Sigma_{0})$-conjugacy class of $M$ such that $\cPdt{M, \p} \neq \emptyset$ is determined by $\p$, and the choice of $\p_{G} \in \P{G, \p}$ determines the $G(F)$-conjugacy class of $M$ such that $\Pdt{M, \p_{G}} \neq \emptyset$. So we can fix such a $G(F)$-conjugacy class of $M$ and assume $\p_{M} \in \cPdt{M, \p}$. To apply our induction assumption we also need to assume $M \neq G$, i.e., $\p \notin \cPdt{G}$.

Note the diagram \eqref{eq: theta twisted intertwining relation diagram} in our discussion of the local $(\theta, \x)$-twisted intertwining relation can be defined in the global case, and the global analogue of those groups in the diagram will map to their local counterparts. It is not hard to show, using a similar argument in Proposition~\ref{prop: multiplicity formula}, that \eqref{eq: twisted spectral expansion 1} vanishes unless there exists $u \in \N{\p}^{\theta}$ such that $w_{u} = w$ and 
\(
\x = \a(x_{u}).
\)
So $\p_{M} \in \cPdt{M^{\theta_{u}}}$. Now we can apply Conjecture~\ref{conj: global L-packet} to $\lp_{M}$. For any $[\lr_{M}] \in \cPkt{\lp_{M}}$, let us define
\[
R_{\lP|\theta \lP}(u, \lr_{M}, \lp) : = \prod_{v} R_{\lP_{v} |\theta \lP_{v}}(u_{v}, \lr_{M_{v}}, \lp_{v}).
\]
In particular, if $\lr_{M} \in \mathcal{A}_{2}(\lM)$, we can write 
\[
R_{\lP|\theta \lP}(w, \lr_{M}, \lp) := r_{P}(w, \p_{M})^{-1} M_{\lP|\theta \lP}(w, \lr_{M}), 
\]
where $r_{P}(w, \p_{M})$ is the global normalizing factor defined by 
\[
r_{P}(w, \p_{M}) = \prod_{v} r_{P}(w_{v}, \p_{M_{v}}).
\]
It follows from Conjecture~\ref{conj: compatible normalization} and analogous result for $GL(N)$ (cf. Remark~\ref{rk: compatible normalization}) 
that
\[
R_{\lP|\theta \lP}(w, \lr_{M}, \lp) = R_{\lP|\theta \lP}(u, \lr_{M}, \lp)
\]
for any $u \in \N{\p}^{\theta}(w, \x)$. Here $\N{\p}^{\theta}(w, \x)$ consists of $u \in \N{\p}^{\theta}$ such that $w_{u} = w$ and $\a(x_{u}) = \x$. Applying Conjecture~\ref{conj: global L-packet} (2) to $\lM$, we can write \eqref{eq: twisted spectral expansion 1} as a double sum over $\p_{G} \in \P{G, \p}$ and $\p_{M} \in \Pdt{M^{\theta_{u}}, \p_{G}}$ of 
\[
 \sum_{\x' \in Y / \a(\S{\p_{M}})} \sum_{[\lr_{M}] \in \cPkt{\lp_{M}} \otimes \x'} \delta_{\lp_{M} }(\lr_{M}) r_{P}(w, \p_{M}) tr ( R_{\lP|\theta \lP}(u, \lr_{M}, \lp) I^{\theta, \x}_{\lP}(\lr_{M} \otimes \x^{-1}, \lf) ),   
\]
where
\[
\delta_{\lp_{M}} (\lr_{M}) = |\S{\lp_{M}}|^{-1} \sum_{x \in \S{\lp_{M}}} <x, \lr_{M}>.
\]
Moreover, we can write
\[
\sum_{x \in \S{\lp_{M}}} <x, \lr_{M}>  R_{\lP|\theta \lP}(u, \lr_{M}, \lp) = \sum_{u \in \N{\p}^{\theta}(w, \x)} R_{\lP|\theta \lP}(u, \lr_{M}, \lp).
\]
If we switch the sum over $\lr_{M} \in \cPkt{\lp_{M}} \otimes \x'$ with $u \in \N{\p}^{\theta}(w, \x)$, and define
\[
\lf_{\lG^{\theta}}(\lp \otimes \x', u) = \sum_{[\lr_{M}] \in \cPkt{\lp_{M}} \otimes \x'} tr (R_{\lP|\theta \lP}(u, \lr_{M}, \lp) I^{\theta, \x}_{\lP}(\lr_{M} \otimes \x^{-1}, \lf)  ),
\]
then \eqref{eq: twisted spectral expansion 1} becomes a double sum over $\p_{G} \in \P{G, \p}$ and $\p_{M} \in \Pdt{M^{\theta_{u}}, \p_{G}}$ of
\begin{align}
\sum_{\x' \in Y / \a(\S{\p_{M}})}  |\S{\lp_{M}}|^{-1} \sum_{u \in \N{\p}^{\theta}(w, \x)}  r_{P}(w, \p_{M}) \lf_{\lG^{\theta}}(\lp \otimes \x', u).    \label{eq: twisted spectral expansion 2}
\end{align}
Since we are taking $\lf \in \sH(\lG, \lif{\chi})$, the contributions of $\p_{G} \in \Pdt{G, \p}$ to the $\p$-component of \eqref{eq: twisted spectral side} are the same. 
So the $\p$-component of \eqref{eq: twisted spectral side} can be written as a sum over $w \in W^{\theta}(\lif{M})_{reg}$ and $\p_{M} \in \Pdt{M^{\theta_{u}}, \p_{G}}$ of
\[
m_{\p} | W(\lif{M}) |^{-1} | det (w - 1)_{\mathfrak{a}^{\lG^{\theta}}_{\lM}} |^{-1} 
\]
multiplied with \eqref{eq: twisted spectral expansion 2}. Here we can identify $W(\lM)$ with $W(M)$, and it is easy to see
\[
| det (w - 1)_{\mathfrak{a}^{\lG^{\theta}}_{\lM}} | = | det (w - 1)_{\mathfrak{a}^{G^{\theta}}_{M}} |.
\]
Next we want to switch the order of the double sum over $w \in W^{\theta}(\lif{M})_{reg}$ and $\p_{M} \in \Pdt{M^{\theta_{u}}, \p_{G}} $ to a double sum over $\p_{M} \in \Pdt{M, \p_{G}} $ and $w \in W^{\theta}_{\p , reg}$, where $W^{\theta}_{\p , reg} = W^{\theta}(\lif{M})_{reg} \cap W^{\theta}_{\p}$. Since the nonzero contribution of each $\p_{M} \in \Pdt{M, \p_{G}}$ is the same and 
\[
| \Pdt{M, \p_{G}}  | = \frac{| W(M) |}{| W_{\p} |},
\]
then we get a single sum over $w \in W^{\theta}_{\p , reg}$ of 
\[
m_{\p} |W_{\p}|^{-1} |\S{\lp_{M}}|^{-1}  | det (w - 1)_{\mathfrak{a}^{G^{\theta}}_{M}} |^{-1}
\]
multiplied with
\[
\sum_{\x' \in Y / \a(\S{\p_{M}})}  \sum_{u \in \N{\p}^{\theta}(w, \x)} r_{P}(w, \p_{M}) \lf_{\lG^{\theta}}(\lp \otimes \x', u).
\]
Note the double sum over $ w \in W^{\theta}_{\p , reg}$ and $u \in \N{\p}^{\theta}(w, \x)$ can be rearranged as a double sum over $x \in \S{\p}^{\theta}(\x)$ and $u \in \N{\p, reg}^{\theta}(x)$, where 
\[
\S{\p}^{\theta}(\x) = \{x \in \S{\p}^{\theta} : \a(x) = \x\},
\] 
and
\[
\N{\p, reg}^{\theta}(x) = \{ u \in \N{\p}^{\theta} : x_{u} = x, w_{u} \in W^{\theta}_{\p, reg} \}.
\]
So we end up with a sum over $x \in \S{\p}^{\theta}(\x)$ of 
\[
m_{\p} |W_{\p}|^{-1} |\S{\lp_{M}}|^{-1} 
\]
multiplied with
\begin{align}
\sum_{\x' \in Y / \a(\S{\p_{M}})} \sum_{u \in \N{\p, reg}^{\theta}(x)} | det (w_{u} - 1)_{\mathfrak{a}^{G^{\theta}}_{M}} |^{-1} r_{P}(w_{u}, \p_{M}) \lf_{\lG^{\theta}}(\lp \otimes \x', u).   \label{eq: twisted spectral expansion 3}
\end{align}
If we write 
\[
r^{G}_{\p}(w_{u}) = r_{P}(w_{u}, \p_{M}),
\]
and define $s^{0}_{\p}(w_{u})$ to be $(-1)^{n}$, where $n$ is the number of positive roots of $(\cS{\p}^{0}, \bar{T}_{\p})$ mapped to negative roots by $w_{u}$, then by Arthur's sign lemma (\cite{Arthur:2013}, Lemma 4.3.1) we have
\[
r^{G}_{\p}(w_{u}) = s^{0}_{\p}(w_{u}).
\]
Moreover, by our comments after Lemma~\ref{lemma: twisted intertwining relation 1}, $\lf_{\lG^{\theta}}(\lp \otimes \x', u)$ only depends on the image of $u$ in $\S{\p}^{\theta}$, so we can write
\[
\lf_{\lG^{\theta}}(\lp \otimes \x', u) = \lf_{\lG^{\theta}}(\lp \otimes \x', x)
\]
Therefore, the term \eqref{eq: twisted spectral expansion 3} becomes
\[
\sum_{\x' \in Y / \a(\S{\p_{M}})}  ( \sum_{u \in \N{\p, reg}^{\theta}(x)} s^{0}_{\p}(w_{u})  | det (w_{u} - 1)_{\mathfrak{a}^{G^{\theta}}_{M}} |^{-1} ) \lf_{\lG^{\theta}}(\lp \otimes \x', x).
\]
For $\x' \in \a(\S{\p})$, $\cPkt{\lp} \otimes \x' = \cPkt{\lp}$ and 
\[
(\lf|_{\lif{Z}_{F}G(F)})_{\lG^{\theta}}(\lp \otimes \x', x) = f_{G^{\theta}}(\p, x) = (\lf|_{\lif{Z}_{F}G(F)})_{\lG^{\theta}}(\lp, x),
\] 
where $f$ is the restriction of $\lf$ to $G(F)$. 
So we get for $\x' \in \a(\S{\p})$
\[
\lf_{\lG^{\theta}}(\lp \otimes \x', x) =  \lf_{\lG^{\theta}}(\lp, x).
\]
Therefore we only need to take the sum over $\x' \in Y / \a( \S{\p} )$ in \eqref{eq: twisted spectral expansion 3}, and then multiply by $|\a(\S{\p}) / \a(\S{\p_{M}})|$. Since
\[
|\a(\S{\p}) /  \a(\S{\p_{M}}) | =  \frac{|\S{\p}|}{|\S{\lp}|} \cdot \frac{|\S{\lp_{M}}|}{|\S{\p_{M}}|},
\]
the resulting constant multiple is $m_{\p}$ times 
\begin{align*}
|W_{\p}|^{-1} |\S{\lp_{M}}|^{-1} \frac{|\S{\p}|}{|\S{\lp}|} \cdot \frac{|\S{\lp_{M}}|}{|\S{\p_{M}}|} & =  | W_{\p} |^{-1} | \S{\p_{M}} |^{-1} \frac{\S{\p}}{\S{\lp}} \\
& = |\N{\p}|^{-1} \frac{| \S{\p} |}{| \S{\lp} |} \\
& = | W^{0}_{\p} |^{-1} | \S{\p} |^{-1} \frac{| \S{\p} |}{| \S{\lp} |} \\
& = | W^{0}_{\p} |^{-1}| \S{\lp} |^{-1}.
\end{align*}
Let 
\[
C_{\lp} = m_{\p}| \S{\lp} |^{-1},
\]
and we define
\[
i^{\theta}_{\p}(x) = |W^{0}_{\p}|^{-1} \sum_{w \in W^{\theta}_{\p, reg}(x)} s^{0}_{\p}(w) | det (w - 1)_{\mathfrak{a}^{G^{\theta}}_{M}} |^{-1},
\]
where $W^{\theta}_{\p, reg}(x)$ is the image of $\N{\p, reg}^{\theta}(x)$ in $W^{\theta}_{\p, reg}$. Hence we have shown the following lemma.

\begin{lemma}
\label{lemma: twisted spectral expansion}
Suppose $\p \in \cP{G} - \cPdt{G}$, $\theta \in \Sigma_{0}$ and $\x \in Y$, then
\begin{align}
\label{eq: twisted spectral expansion}
\tIdt{\lG^{\theta}}{, \p} (\lf) = C_{\lp} \sum_{\x' \in Y / \a(\S{\p})} \sum_{x \in \S{\p}^{\theta}(\x)} i^{\theta}_{\p}(x) \lf_{\lG^{\theta}}(\lp \otimes \x', x),     \,\,\,\,\, \lf \in \sH(\lG, \lif{\chi}).          
\end{align}
\end{lemma}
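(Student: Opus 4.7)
The plan is to start from the definition \eqref{eq: twisted spectral side} of $\tIdt{\lG^{\theta}}{, \p}(\lf)$ as a sum over Levi subgroups $\lM$ of $\lG$ and Weyl elements $w \in W^\theta(\lM)_{reg}$ of traces $\mathrm{tr}(M_{\lP|\theta\lP,\p}(w,\lif{\chi}) I^{\theta,\x}_{\lP,\p}(\lif{\chi},\lf))$, and to reduce everything to local data attached to $\lp$. First I would observe that under the hypothesis $\p \notin \cPdt{G}$, only proper Levi subgroups $\lM \subsetneq \lG$ contribute, which puts us inside the range where the temporary induction assumption (Conjecture~\ref{conj: global L-packet}, \ref{conj: stable multiplicity formula}, \ref{conj: compatible normalization} and Theorem~\ref{thm: refined L-packet} for proper Levis and proper endoscopic groups) is available. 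A cohomological argument analogous to the proof of Proposition~\ref{prop: multiplicity formula} shows that the trace vanishes unless $w = w_u$ for some $u \in \N{\p}^\theta$ with $\a(x_u) = \x$, so the sum over $w$ can be replaced by a sum over such $u$, and the Levi of interest arises from a choice of $\p_M \in \cPdt{M^{\theta_u}, \p_G}$ for some $\p_G \in \P{G,\p}$.

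Next I would substitute the global multiplicity formula from Conjecture~\ref{conj: global L-packet}(2) for $\lM$, expressing the $\p_M$-part of the discrete spectrum of $\lM$ as a direct sum over $[\lr_M] \in \cPkt{\lp_M} \otimes \x'$ with $\x' \in Y/\a(\S{\p_M})$ and $<\cdot,\lr_M> = 1$; the characteristic function of the triviality condition becomes the familiar average $\delta_{\lp_M}(\lr_M) = |\S{\lp_M}|^{-1}\sum_{x\in \S{\lp_M}}<x,\lr_M>$. Conjecture~\ref{conj: compatible normalization}, applied fibrewise over $v$ together with its $GL(N)$ analogue for the general linear blocks, lets me identify the global normalized operator $R_{\lP|\theta\lP}(w,\lr_M,\lp)$ with $R_{\lP|\theta\lP}(u,\lr_M,\lp)$ for any lift $u \in \N{\p}^\theta(w,\x)$, which in turn converts the character sum into a sum over $u$. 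After swapping the sum orders I would pass from the double sum in $(w,u)$ over $W^\theta_{\p,reg} \times \N{\p,reg}^\theta(x)$ to a double sum over $x \in \S{\p}^\theta(\x)$ and $u$ with fixed image $x$ in $\S{\p}^\theta$, and then invoke Lemma~\ref{lemma: induced twisted character} (which guarantees that $\lf_{\lG^\theta}(\lp \otimes \x', u)$ depends only on the class of $u$ in $\S{\p}^\theta$) to rewrite the inner factor as $\lf_{\lG^\theta}(\lp\otimes\x',x)$.

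The remaining step is purely bookkeeping. Arthur's sign lemma (\cite{Arthur:2013}, Lemma~4.3.1) converts the global normalizing factor $r^G_\p(w_u) = r_P(w_u,\p_M)$ into the sign $s^0_\p(w_u)$, producing the coefficient $i^\theta_\p(x)$. The constants $|W(\lM)|^{-1}$, $|\Pdt{M,\p_G}|$, $|W_\p|^{-1}$, $|\S{\lp_M}|^{-1}$, $|\a(\S{\p})/\a(\S{\p_M})| = |\S{\p}||\S{\lp_M}|/|\S{\lp}||\S{\p_M}|$ and the sum $m_\p$ over $\P{G,\p}$ collapse via $|\N{\p}| = |W_\p^0||\S{\p}|$ to $m_\p |W_\p^0|^{-1} |\S{\lp}|^{-1} = |W_\p^0|^{-1} C_{\lp}$, and combining with the geometric factor $|\det(w_u-1)_{\mathfrak{a}^{G^\theta}_M}|^{-1}$ gives precisely $C_{\lp} \cdot i^\theta_\p(x)$.

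The main obstacle is not any single calculation but the correct deployment of Conjecture~\ref{conj: compatible normalization}: one must know that the canonical normalization of $R_{\lP|\theta\lP}(u,\lr_M,\lp)$ given by the local recipe \eqref{eq: intertwining operator identity} at each place actually reproduces the global intertwining operator on the discrete spectrum, up to a factor that is the expected character value $<x,\lr_M>$. Because the global operator is a product of local ones only after a delicate choice of normalizing scalars, verifying that the induction hypothesis supplies exactly this normalization (and not one differing by a character of $\S{\lp_M}$) is the delicate point. Once this is in hand the rearrangement of sums is formal, and the stated formula \eqref{eq: twisted spectral expansion} follows.
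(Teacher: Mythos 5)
Your proposal follows essentially the same route as the paper's own derivation in Section~5.4.1: reduce to proper Levis, apply the temporary induction hypotheses (Conjecture~\ref{conj: global L-packet} for the $\lM$-spectrum and Conjecture~\ref{conj: compatible normalization} plus the $GL(N)$ analogue to identify $R_{\lP|\theta\lP}(w,\lr_M,\lp)$ with the locally normalized $R_{\lP|\theta\lP}(u,\lr_M,\lp)$), swap the $(w,\p_M)$-sum to a $(\p_M,x,u)$-sum, invoke Arthur's sign lemma to turn $r^G_\p(w_u)$ into $s^0_\p(w_u)$, and do the bookkeeping with the coset factor $|\a(\S{\p})/\a(\S{\p_M})|$ to land on $C_{\lp}\, i^\theta_\p(x)$. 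One small slip: the fact that $\lf_{\lG^\theta}(\lp\otimes\x',u)$ depends only on the image of $u$ in $\S{\p}^\theta$ comes from the remark following Lemma~\ref{lemma: twisted intertwining relation 1} (deduced in the proof of that lemma), not from Lemma~\ref{lemma: induced twisted character}, which concerns the dual quantity $\lf'_{\lG^\theta}(\lp,s)$ on the endoscopic side.
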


\subsubsection{The endoscopic expansion}
\label{subsubsec: endoscopic expansion}

Parallel to this $(\theta, \x)$-twisted spectral expansion \eqref{eq: twisted spectral expansion}, we will proceed to expand the $\p$-component of \eqref{eq: twisted endoscopic side}. Note that if $\theta = id, \x = 1$, we can only expand the right hand side of 
\[
\Idt{\lG}{, \p}(\lf) - \Sdt{\lG}{, \p} (\lf) = \sum_{\lG' \in \End{ell}{\lG} - \{\lG\} } \iota( \lG, \lG' ) \Sdt{\lG'}{, \p}(\lf^{\lG'})
\]
based on our temporary induction assumption. By Corollary~\ref{cor: ker1} we know $\Ker^{1}(F, Z(\D{\lG})) = \Ker^{1}(F, Z(\D{\lG'})) = 1$, so the formula \eqref{formula: endoscopic coefficient} applied to $\iota(\lG, \lG')$ can be simplified as 
\begin{align}
\label{formula: endoscopic coefficient 1}
\iota(\lG, \lG') = | \bar{Z}(\D{\lG'})^{\Gal{}} |^{-1} | \Out_{\lG}(\lG') |^{-1} |\pi_{0}(\kappa_{\lG^{\theta}})|^{-1}
\end{align}
where 
\[
\bar{Z}(\D{\lG'})^{\Gal{}} = Z(\D{\lG'})^{\Gal{}} Z(\D{\lG})^{\Gal{}} / Z(\D{\lG})^{\Gal{}},
\]
and 
\[
\Out_{\lG}(\lG') = \Aut_{\lG}(\lG') / \D{\lG'}Z(\D{\lG})^{\Gal{}}.
\]
Note that $|\pi_{0}(\kappa_{\lG^{\theta}})| = 1$ here, and this formula \eqref{formula: endoscopic coefficient 1} is given in (\cite{Arthur:1990}, Lemma 3.2). By applying Conjecture~\ref{conj: stable multiplicity formula} to $\lG'$, we get
\[
\Sdt{\lG'}{, \p'}(\lf') = \sum_{\x' \in Y' / \a^{G'}(\S{\p'})} | \S{\lp'} |^{-1} \sigma(\com[0]{\cS{\p'}}) \lf' (\lp' \otimes \x'),   \,\,\,\,\, \lf' \in \bar{\mathcal{H}}(\lG', \lif{\chi}')
\]
for $\p' \in \P{G'}$.
By Lemma~\ref{lemma: vanishing}, the $\p$-component of \eqref{eq: twisted endoscopic side} is summed over 
\(
\p_{G} \in \P{G, \p}
\)
and
\[
\{ (\lG', \p') : \lG' \in \tEnd{ell}{\lG^{\theta}} \text{ and } \p' \in \P{G', \p_{G}} \}
\]
of distributions $\Sdt{\lG'}{, \p'}(\lf')$. Again because we are taking $\lf \in \sH(\lG, \lif{\chi})$, the contributions of $\p_{G} \in \P{G, \p}$ to the $\p$-component of \eqref{eq: twisted endoscopic side} are the same. If we fix a parameter $\p_{G} \in \P{G, \p}$, then the first sum collapses to be a constant multiple
\[
| \P{G, \p} | = m_{\p}.
\]
Now let us fix $\p_{G} := \ep$ as a homomorphism from $\mathcal{L}_{\p}$ to $\L{G}$, instead of a $\D{G}$-conjugacy class, and let $\cS{\p}^{\theta} = \Cent ( \Im \p_{G}, \D{G} \rtimes \D{\theta} ) / Z(\D{G})^{\Gal{}}$. We observe $(\lG' ,\p')$ will correspond to $(\p_{G}, s)$ for $s \in \cS{\p, ss}^{\theta}(\x)$, where
\[
\cS{\p, ss}^{\theta}(\x) = \{ s \in \cS{\p, ss}^{\theta}: \a(s) = \x\}
\]
by taking suitable $\D{G}$-conjugation, and $s$ is determined up to $\cS{\p}$-conjugation. Let us use the convention to denote the conjugacy class of $\cS{\p}$ in $\cS{\p, ss}^{\theta}(\x)$ by 
\[
\cS{\p} \backslash \cS{\p, ss}^{\theta}(\x).
\]
Then this correspondence gives us a map 
\[
\{ (\lG', \p') : \lG' \in \tEnd{ell}{\lG^{\theta}} \text{ and } \p' \in \P{G', \p_{G}} \} \longrightarrow \cS{\p} \backslash \cS{\p, ss}^{\theta}(\x)
\]
The point is the contribution of $(\lG', \p')$ only depends on its image under this map, so we want to write the double sum over $(\lG', \p')$ as a single sum over the image. To characterize the image, it is equivalent to find $s \in \cS{\p, ss}^{\theta}(\x)$ with $(\lG'_{s}, \p') \longrightarrow (\p_{G}, s)$ such that $\lG'_{s}$ is elliptic. If we define
\begin{align*}
\cS{\p, ell}^{\theta} & = \{ s \in \cS{\p, ss}^{\theta} : | Z(\com[0]{\cS{\p, s}}) | < \infty \} \\
\cS{\p, ell}^{\theta}(\x) & = \{ s \in \cS{\p, ss}^{\theta}(\x) : | Z(\com[0]{\cS{\p, s}}) | < \infty \},
\end{align*}
then for $s \in \cS{\p, ell}^{\theta}(\x)$, it is easy to see that $\lG'_{s}$ is elliptic. The converse is not true, but the contribution from pairs $(\lG'_{s}, \p')$ with $s \notin \cS{\p, ell}^{\theta}(\x)$ is zero by the stable multiplicity formula~\eqref{formula: stable multiplicity}. In fact
\[
\com[0]{\cS{\p'}} = (\cS{\p, s})^{0} \bar{Z}(\D{\lG'})^{\Gal{}} / \bar{Z}(\D{\lG'})^{\Gal{}}
\]
and $\sigma(\com[0]{\cS{\p'}}) = 0$ unless $| Z(\com[0]{\cS{\p'}}) | < \infty$. If we write
\begin{align*}
\cS{\p, ell}'^{\theta} = \begin{cases}
                             \cS{\p, ell} - \{1\} & \text{ if } \theta = id \\
                             \cS{\p, ell}^{\theta} & \text{ otherwise }
                                        \end{cases} 
\end{align*}
and
\begin{align*}
\cS{\p, ell}'^{\theta}(\x) = \begin{cases}
                             \cS{\lp, ell} - \{1\} & \text{ if } \theta = id, \, \x =1 \\
                             \cS{\p, ell}^{\theta}(\x) & \text{ otherwise }
                                        \end{cases}
\end{align*}
then the effective image of this map should be $\cS{\p} \backslash \cS{\p, ell}'^{\theta}(\x)$. The next problem is to count the fibre of this map. Since $\lG'$ is taken to be the isomorphism class of endoscopic data, the fibre containing $(\lG', \p')$ must have the endoscopic datum isomorphic to $\lG'$, and hence can be obtained by the action of $\Aut_{G}(G')$. Moreover, $\p'$ is taken to be $\D{G}'$-conjugacy classes, so the fibre should be isomorphic to 
\[
\Aut_{G}(G') / S_{\p, s} \Int_{G}(G') \cong \Out_{G}(G') / ( S_{\p, s} \Int_{G}(G') / \Int_{G}(G')),
\]
where $\Int_{G}(G') = \D{G}' Z(\D{G})^{\Gal{}}$ and $S_{\p, s}$ is the preimage of $\cS{\p, s}$ in $S_{\p}$. Moreover let us write 
\[
S_{\p, s} \Int_{G}(G') / \Int_{G}(G') \cong S_{\p, s} / S_{\p, s} \cap \D{G}' Z(\D{G})^{\Gal{}}
\]
So we can turn the $\p$-component of \eqref{eq: twisted endoscopic side} into a sum over 
\(
s \in \cS{\p} \backslash \cS{\p, ell}'^{\theta}(\x),
\) 
but multiplied with the size of each fibre
\[
| \Out_{G}(G') | | S_{\p, s} / S_{\p, s} \cap \D{G}' Z(\D{G})^{\Gal{}} |^{-1}.
\]
In fact, it is more convenient to sum over the conjugacy classes in $\cS{\p, ell}'^{\theta}(\x)$ by the group $\com[0]{\cS{\lp}}$. So let us define
\begin{align*}
\mathcal{E}_{\p, ell}'^{\theta} & = \com[0]{\cS{\lp}} \backslash \cS{\p, ell}'^{\theta}, \\
\mathcal{E}_{\p, ell}'^{\theta}(\x) & = \com[0]{\cS{\lp}} \backslash \cS{\p, ell}'^{\theta}(\x),
\end{align*}
then changing to sum over $\mathcal{E}_{\p, ell}'^{\theta}(\x)$ amounts to multiplying by
\[
|\com[0]{\cS{\lp}} / \cS{\lp, s}^{0}| |\cS{\p} / \cS{\p, s}|^{-1} = | \cS{\p, s} /  \cS{\lp, s}^{0} | |\cS{\p} / \com[0]{\cS{\lp}} |^{-1}.
\]
Finally, we get a sum over $s \in \mathcal{E}_{\p, ell}'^{\theta}(\x)$ of the product of the following three terms
\[
|\Out_{\lG}(\lG')|^{-1} |\Out_{G}(G')|,
\]
\[
| S_{\p, s} / S_{\p, s} \cap \D{G'} Z(\D{G})^{\Gal{}} |^{-1} |\S{\lp'}|^{-1} | \bar{Z}(\D{\lG'})^{\Gal{}} |^{-1}  | \cS{\p, s} /  \cS{\lp, s}^{0} |,
\]
and 
\[
m_{\p} |\cS{\p} / \com[0]{\cS{\lp}} |^{-1} \sum_{\x' \in Y' / \a^{G'}(\S{\p'})} \sigma(\com[0]{\cS{\p'}}) \lf^{\lG'} (\lp' \otimes \x')
\]
where $(G' , \p') \rightarrow (\p_{G}, s)$. Note that 
\[
\xymatrix{1 \ar[r] & \D{D} \ar[r] & \Aut_{\lG}(\lG') \ar[r]  & \Aut_{G}(G') \ar[r] & 1. }
\]
so 
\begin{align*}
|\Out_{\lG}(\lG')|^{-1} |\Out_{G}(G')| & =  |\Int_{G}(G') / (\Int_{\lG}(\lG') / \D{D})| \\
& = | \D{G}' Z(\D{G})^{\Gal{}} /  (\D{\lG'}Z(\D{\lG})^{\Gal{}} / \D{D})|^{-1}  \\
& = |Z(\D{G})^{\Gal{}} / Z(\D{G})^{\Gal{}} \cap \D{G}' (Z(\D{\lG})^{\Gal{}} / \D{D})|^{-1} \\
& = |Z(\D{G})^{\Gal{}} / (\D{G}' \cap Z(\D{G})^{\Gal{}})(Z(\D{\lG})^{\Gal{}} / \D{D}) |^{-1}.
\end{align*}
Moreover, we have $\cS{\lp, s}^{0} = \cS{\p, s}^{0}$ and $\com[0]{\cS{\lp}} = \com[0]{\cS{\p}}$, so we can rewrite the expansion of $\p$-component of \eqref{eq: twisted endoscopic side} as a sum over $s \in \mathcal{E}_{\p, ell}'^{\theta}(\x)$ of the product of the following two terms 
\begin{align}
| S_{\p, s} / S_{\p, s} \cap \D{G}' Z(\D{G})^{\Gal{}} |^{-1} |\S{\p'}|^{-1} | \bar{Z}(\D{G}')^{\Gal{}} |^{-1}  | \cS{\p, s} /  \cS{\p, s}^{0} | \sigma(\com[0]{\cS{\p'}})   \label{eq: twisted endoscopic expansion 1}
\end{align}
and 
\begin{align}
\sum_{\x' \in Y' / \a^{G'}(\S{\p'})} m_{\p} |\S{\p}|^{-1} |\S{\p'}| |\S{\lp'}|^{-1} | \bar{Z}(\D{G}')^{\Gal{}} | | \bar{Z}(\D{\lG'})^{\Gal{}} |^{-1} |Z(\D{G})^{\Gal{}} / (\D{G}' \cap Z(\D{G})^{\Gal{}})(Z(\D{\lG})^{\Gal{}} / \D{D}) |^{-1}
\lf' (\lp' \otimes \x').   \label{eq: twisted endoscopic expansion 2}
\end{align}
As one can see, \eqref{eq: twisted endoscopic expansion 1} is only relevant to $G$, and it can be simplified as in (\cite{Arthur:2013}, Section 4.4). So we will just repeat the simplification there. First we note that
\[
|\S{\p'}| = |\pi_{0}(\cS{\p'})| = |\cS{\p, s} \cap \bar{\D{G}'} / \com[0]{(\cS{\p, s})} \bar{Z}(\D{G}')^{\Gal{}}|,
\]
where $\bar{\D{G}'}$ denotes the quotient 
\(
\D{G}' Z(\D{G})^{\Gal{}} / Z(\D{G})^{\Gal{}}.
\)
Consequently,
\begin{align*}
&| S_{\p, s} / S_{\p, s} \cap \D{G}' Z(\D{G})^{\Gal{}} |^{-1} \, |\S{\p'}|^{-1} \\
 = & | \cS{\p, s} / \cS{\p, s} \cap \bar{\D{G}'} |^{-1} \,  | \cS{\p, s} \cap{\bar{\D{G}'}} / \com[0]{(\cS{\p, s})} \bar{Z}(\D{G}')^{\Gal{}} | \\
 = & |\cS{\p, s} / (\cS{\p, s})^{0} \bar{Z}(\D{G}')^{\Gal{}}|^{-1}.
\end{align*}
The product of the first four factors of \eqref{eq: twisted endoscopic expansion 1} therefore equals
\begin{align*}
& |\cS{\p, s} / \cS{\p, s}^{0} \bar{Z}(\D{G}')^{\Gal{}}|^{-1} \cdot |\cS{\p, s}^{0} \bar{Z}(\D{G}')^{\Gal{}} / (\cS{\p, s})^{0} \ \bar{Z}(\D{G}')^{\Gal{}}|^{-1} \cdot | \bar{Z}(\D{G}')^{\Gal{}} |^{-1} \cdot | \cS{\p, s} /  \cS{\p, s}^{0} | \\ \\
= & |\cS{\p, s}^{0} \bar{Z}(\D{G}')^{\Gal{}} / \cS{\p, s}^{0}| \cdot |\cS{\p, s}^{0} / (\cS{\p, s})^{0}|^{-1} \cdot |\cS{\p, s}^{0} \cap (\cS{\p, s})^{0} \bar{Z}(\D{G}')^{\Gal{}} / (\cS{\p, s})^{0}| \cdot | \bar{Z}(\D{G}')^{\Gal{}} |^{-1} \\ \\
= & |\pi_{0}(\cS{\p, s}^{0} )|^{-1} \cdot |\bar{Z}(\D{G}')^{\Gal{}} / \cS{\p, s}^{0} \cap \bar{Z}(\D{G}')^{\Gal{}}| \cdot |\cS{\p, s}^{0} \cap \bar{Z}(\D{G}')^{\Gal{}} / (\cS{\p, s})^{0} \cap \bar{Z}(\D{G}')^{\Gal{}} | \cdot | \bar{Z}(\D{G}')^{\Gal{}} |^{-1} \\ \\
= & |\pi_{0}(\cS{\p, s}^{0})|^{-1} \cdot |(\cS{\p, s})^{0} \cap \bar{Z}(\D{G}')^{\Gal{}} |^{-1}.
\end{align*}
Furthermore, we can write
\begin{align*}
\sigma(\com[0]{\cS{\p'}}) = & \sigma( (\cS{\p, s})^{0} / (\cS{\p, s})^{0} \cap \bar{Z}(\D{G}')^{\Gal{}}) \\
= & \sigma((\cS{\p, s})^{0}) |(\cS{\p, s})^{0} \cap \bar{Z}(\D{G}')^{\Gal{}}|.
\end{align*}
Hence the first term \eqref{eq: twisted endoscopic expansion 1} is equal to 
\[
|\pi_{0}(\cS{\p, s}^{0})|^{-1} \sigma((\cS{\p, s})^{0}).
\]
For the second term \eqref{eq: twisted endoscopic expansion 2}, let us denote
\[
\lf^{\lG'}(\lp' \otimes \x') = \lf'_{\lG^{\theta}}(\lp \otimes \x', s),   \,\,\,\,  \lf \in \sH(\lG, \lif{\chi}).
\]
Also notice $|\a^{G'}(\S{\p'})| = |\S{\p'} / \S{\lp'}|$ and $m_{\p} |\S{\lp}|^{-1} = C_{\lp}$, so we can write it as
\[
\sum_{\x' \in Y' / \a^{G'}(\S{\p'})} C_{\lp} |\a(\S{\p})|^{-1}|\a^{G'}(\S{\p'})| | \bar{Z}(\D{G}')^{\Gal{}} | | \bar{Z}(\D{\lG'})^{\Gal{}} |^{-1} |Z(\D{G})^{\Gal{}} / (\D{G}' \cap Z(\D{G})^{\Gal{}})(Z(\D{\lG})^{\Gal{}} / \D{Z}) |^{-1} \lf'_{\lG^{\theta}}(\lp' \otimes \x', s).
\]
In view of \eqref{eq: twisted spectral expansion}, we need to turn this into a sum over $Y / \a(\S{\p})$ instead of $Y' / \a^{G'}(\S{\p'})$. To do so we need the following two lemmas.

\begin{lemma}
\label{lemma: twisted endoscopic expansion 1}
Suppose $\p_{v} \in \cuP{G_{v}}$, and $s_{v}$ is a semisimple element of  $\cS{\p_{v}}$ with $(G'_{v}, \p'_{v}) \rightarrow (\p_{v}, s_{v})$. If we assume the main local theorem~\ref{thm: refined L-packet} for the lift $\lp'_{v}$ of $\p'_{v}$, then for any $\x'_{v} \in \a(\S{\p_{v}})$ we have 
\[
\lf'_{\lG^{\theta}_{v}}(\lp_{v} \otimes \x'_{v}, s_{v}) = \lf_{\lG^{\theta}_{v}}'(\lp_{v}, s_{v}), \,\,\,\,\,\, \lf_{v} \in \bar{\mathcal{H}}(\lG_{v}, \lif{\chi}_{v}).
\] 
\end{lemma}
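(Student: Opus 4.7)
The plan is to apply the $(\theta,\x)$-twisted character relation \eqref{eq: theta twisted character relation} of the main local theorem (which we assume inductively for $\lp'_v$) and reduce the problem to a structural statement about twisted packets. With $\x = \a(s_v)$, the character relation gives
\[
\lf'_{\lG^\theta_v}(\lp_v, s_v) = \lf_v^{\lG'_v}(\lp'_v) = \sum_{[\lr] \in \cPkt{\lp_v}} \lf_{v, \lG^\theta_v}(\lr, \x).
\]
Denote by $\lp''_v$ the lift of $\p'_v$ compatible with the twisted packet $\cPkt{\lp_v}\otimes\x'_v$. Since Corollary~\ref{cor: theta twisting character} identifies $X(\lr) = \a(\S{\p_v})$, the hypothesis $\x'_v \in \a(\S{\p_v})$ yields $\lr \otimes \x'_v \cong \lr$ for every $[\lr] \in \cPkt{\lp_v}$, so that $\cPkt{\lp_v \otimes \x'_v}$ and $\cPkt{\lp_v}$ coincide as sets of equivalence classes.

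The heart of the argument is to track how the normalized intertwining operators and the endoscopic transfer interact with the twist by $\x'_v$. Because $\x'_v$ is $\theta$-invariant and trivial on $\lif{Z}_{F_v}G(F_v)$, both $A_{\lr}(\theta,\x)$ and $A_{\lr \otimes \x'_v}(\theta,\x)$ intertwine the same abstract representations $\lr \otimes \x \to \lr^\theta$; the normalization condition \eqref{eq: theta twisted intertwining operator}, whose right-hand side depends only on the restriction $\lr|_{G(F_v)}$ and the image $x$ of $s_v$ in $\S{\p_v}^\theta$, forces these two operators to coincide as linear maps on $V_{\lr}$. A short computation then yields
\[
\lf_{v, \lG^\theta_v}(\lr \otimes \x'_v, \x) = (\x'_v \lf_v)_{\lG^\theta_v}(\lr, \x),
\]
while a direct manipulation of twisted orbital integrals, using the $\theta$-invariance of $\x'_v$ and Corollary~\ref{cor: similitude character}, gives the transfer identity $(\x'_v \lf_v)^{\lG'_v} = \x''_v \cdot \lf_v^{\lG'_v}$, where $\x''_v$ is the character of $\lG'_v$ induced by $\x'_v$ via the similitude structure. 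Combining these identifications shows that $\cPkt{\lp''_v}$ equals $\cPkt{\lp'_v} \otimes \x''_v$ up to the indeterminacy permitted by Theorem~\ref{thm: refined L-packet}.

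The remaining and decisive step is to show $\x''_v \in \a^{G'_v}(\S{\p'_v})$; by Corollary~\ref{cor: theta twisting character} applied to $\lG'_v$, this will give $\cPkt{\lp'_v \otimes \x''_v} = \cPkt{\lp'_v}$ and therefore the desired equality $\lf_v^{\lG'_v}(\lp''_v) = \lf_v^{\lG'_v}(\lp'_v)$. To achieve this, I choose a preimage $s'_v$ in $\cS{\p_v}$ of an element of $\S{\p_v}$ with $\a(s'_v) = \x'_v$, selected to commute with $s_v$ (which is possible by placing both elements inside a common maximal torus of $\cS{\p_v}$, using the reductivity of the centralizer and the fact that $G$ is a product of symplectic and special even orthogonal groups). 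Then $s'_v$ preserves the twisted endoscopic datum attached to $s_v$ and descends to an element of $\cS{\p'_v}$; a diagram chase through the twisted endoscopic sequences \eqref{eq: local twisted endoscopic sequence} for both $\lG$ and $\lG'$ identifies the $\a^{G'}$-image of the class of $s'_v$ with $\x''_v$.

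The principal obstacle is the precise verification of the transfer identity $(\x'_v \lf_v)^{\lG'_v} = \x''_v \cdot \lf_v^{\lG'_v}$ and the commutativity of the diagram relating $\a$ on $\lG$ with $\a^{G'}$ on $\lG'$; once these bookkeeping steps are in place, the lemma follows from the structural observation that the twisted packet on the endoscopic side is invariant under $\x''_v$.
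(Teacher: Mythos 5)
Your final step is where the argument breaks down. You choose $s'_v\in\cS{\p_v}$ commuting with $s_v$ with $\a(s'_v)=\x'_v$, and then claim that $s'_v$ ``descends to an element of $\cS{\p'_v}$,'' from which you would get $\x''_v\in\a^{G'_v}(\S{\p'_v})$ and apply Corollary~\ref{cor: theta twisting character}. But commuting with $s_v$ only guarantees that $s'_v$ lies in $\Aut_{G_v}(G'_v)/Z(\D{G}_v)^{\Gal{v}}$, i.e.\ $s'_v$ normalizes the endoscopic datum; it does \emph{not} guarantee $s'_v\in\Int_{G_v}(G'_v)/Z(\D{G}_v)^{\Gal{v}}=\D{G'_v}Z(\D{G}_v)^{\Gal{v}}/Z(\D{G}_v)^{\Gal{v}}$. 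Only in the latter case does $s'_v$ give an element of $\cS{\p'_v}$. When the image of $s'_v$ in $\Out_{G_v}(G'_v)$ is nontrivial (which genuinely occurs, since $G'_v=G_{I,v}\times G_{II,v}$ can have even orthogonal factors with outer automorphisms), $\x'_v$ need not lie in $\a^{G'_v}(\S{\p'_v})$ at all, and your concluding inference fails.

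This is precisely the case the paper handles separately: when the element (there called $t_v$) is outer, it induces an automorphism $\theta'$ of $G'_v$ extending to $\lG'_v$ with $\cPkt{\lp'_v}^{\theta'}=\cPkt{\lp'_v}\otimes\x'_v$, and one then invokes the $\Out_{\lG_v}(\lG'_v)$-invariance of the twisted endoscopic transfer $\lf_v\mapsto\lf_v^{\lG'_v}$ to conclude $\lf_v^{\lG'_v}(\lp'_v)=\lf_v^{\lG'_v}((\lp'_v)^{\theta'})=\lf_v^{\lG'_v}(\lp'_v\otimes\x'_v)$. Your proof needs to be split into these two cases; the inner case is the one you treat (though the detour through intertwining operators and the unproved transfer identity $(\x'_v\lf_v)^{\lG'_v}=\x''_v\cdot\lf_v^{\lG'_v}$ are unnecessary---the inner case is immediate once $s'_v\in\cS{\p'_v}$), and the outer case requires the $\Out$-invariance argument, which your write-up omits entirely.
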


\begin{proof}
Since $\lf'_{\lG^{\theta}_{v}}(\lp_{v}, s_{v})$ only depends on the image of $s_{v}$ in $\S{\p_{v}}^{\theta}$ (see Lemma~\ref{lemma: induced twisted character}), according to the formula \eqref{formula: centralizer} of $S_{\p_{v}}$, we can assume $s_{v}$ commutes with some $t_{v} \in \cS{\p_{v}}$ such that $\a(t_{v}) = \x_{v}'$. Note that $t_{v} \in \Aut_{G_{v}}(G'_{v}) / Z(\D{G}_{v})^{\Gal{v}}$. If $t_{v} \in \Int_{G_{v}}(G'_{v}) / Z(\D{G}_{v})^{\Gal{v}}$, then it is easy to see $\x_{v}' \in \a^{G'_{v}}(\S{\p'_{v}})$, so there is nothing to prove. If $t_{v} \notin \Int_{G_{v}}(G'_{v}) / Z(\D{G}_{v})^{\Gal{v}}$, we denote the inducing automorphism of $G'_{v}$ by $\theta'$, and it can be extended to $\lG'_{v}$. Then it follows from Corollary~\ref{cor: theta twisting character} that $\cPkt{\lp'_{v}}^{\theta'} = \cPkt{\lp'_{v}} \otimes \x_{v}'$. Since $\lf_{v}^{\lG'_{v}}$ is $\Out_{\lG_{v}}(\lG'_{v})$-invariant, we have $\lf_{v}^{\lG'_{v}}(\lp'_{v}) = \lf_{v}^{\lG'_{v}}((\lp'_{v})^{\theta'}) = \lf_{v}^{\lG'_{v}}(\lp'_{v} \otimes \x_{v}')$.
\end{proof}

Since $\lZ(\A_{F}) = A_{\lG}(\A_{F}) \cdot \Z(\A_{F})$, we can identify $Y$ with the quotient
\[
\frac{\Ker \{ H^{1}(W_{F}, \D{D}) \rightarrow H^{1}(W_{F}, \D{A}_{\lG})\}}{\Ker \{ H^{1}(W_{F}, \D{D}) \rightarrow \Hom(\lG(\A_{F})/ \lG(F), \C^{\times})\}},
\]
where the involved homomorphisms are from the following diagram
\begin{align*}
\label{diagram: global Langlands correspondence for characters}
\xymatrix{    H^{1}(W_{F}, \D{D}) \ar[r] \ar[d]^{\simeq} & H^{1}(W_{F}, Z(\D{\lG})) \ar[r] \ar[d] & H^{1}(W_{F}, \D{A}_{\lG}) \ar[d]^{\simeq} \\
\Hom(D(\A_{F})/D(F), \C^{\times}) \ar[r]  & \Hom( \lG(\A_{F}) / \lG(F), \C^{\times}) \ar[r]   & \Hom(A_{\lG}(\A_{F}) / A_{\lG}(F)). }
\end{align*}
In the same way, we can identify $Y'$ with
\[
\frac{\Ker \{ H^{1}(W_{F}, \D{D}) \rightarrow H^{1}(W_{F}, \D{A}_{\lG'})\}}{ \Ker \{ H^{1}(W_{F}, \D{D}) \rightarrow \Hom(\lG'(\A_{F})/ \lG'(F), \C^{\times})\}}.
\]
Note that $A_{\lG} \cong  A_{\lG'}$ under the inclusion of $\lZ = (\lZ)_{\theta}$ to $Z_{\lG'}$, and also we have
\[
\xymatrix{ H^{1}(W_{F}, \D{D}) \ar[r]   & H^{1}(W_{F}, Z(\D{\lG})) \ar[r]  & H^{1}(W_{F}, \D{A}_{\lG} ) \\
H^{1}(W_{F}, \D{D}) \ar[r]  \ar@{=}[u] & H^{1}(W_{F}, Z(\D{\lG'})) \ar[r]  & H^{1}(W_{F}, \D{A}_{\lG'}) \ar[u]^{\simeq}. }
\]
So $\Ker \{ H^{1}(W_{F}, \D{D}) \rightarrow H^{1}(W_{F}, \D{A}_{\lG})\} = \Ker \{ H^{1}(W_{F}, \D{D}) \rightarrow H^{1}(W_{F}, \D{A}_{\lG'})\}$, and we can first sum over this group. Then the rest is to determine the quotient 
\[
\frac{|\Ker \{ H^{1}(W_{F}, \D{D}) \rightarrow \Hom(\lG'(\A_{F})/ \lG'(F), \C^{\times})\}|}{|\Ker \{ H^{1}(W_{F}, \D{D}) \rightarrow \Hom(\lG(\A_{F})/ \lG(F), \C^{\times})\}|}.
\]

\begin{lemma}
\label{lemma: twisted endoscopic expansion 2}
\begin{align*}
& | \bar{Z}(\D{G}')^{\Gal{}} | | \bar{Z}(\D{\lG'})^{\Gal{}} |^{-1} |Z(\D{G})^{\Gal{}} / (\D{G}' \cap Z(\D{G})^{\Gal{}})(Z(\D{\lG})^{\Gal{}} / \D{D}) |^{-1}  \\ = \quad &  \frac{|\Ker \{ H^{1}(W_{F}, \D{D}) \rightarrow \Hom(\lG'(\A_{F})/ \lG'(F), \C^{\times})\}|}{|\Ker \{ H^{1}(W_{F}, \D{D}) \rightarrow \Hom(\lG(\A_{F})/ \lG(F), \C^{\times})\}|}.
\end{align*}
\end{lemma}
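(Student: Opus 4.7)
The plan is to evaluate both sides in terms of $\Gamma$-cohomology sizes and match them by a diagram chase. First I will compute the right hand side. Applied to the short exact sequence
\[
\xymatrix{1 \ar[r] & \D{D} \ar[r] & Z(\D{\lG}) \ar[r] & Z(\D{G}) \ar[r] & 1,}
\]
the long exact sequence of $\Gamma$-cohomology and the identification $H^{1}(W_{F}, \D{D}) \cong \Hom(D(\A_{F})/D(F), \C^{\times})$ furnish a connecting homomorphism $\delta: Z(\D{G})^{\Gal{}} \to H^{1}(W_{F}, \D{D})$ whose image is precisely $Z(\D{G})^{\Gal{}} / \Im\{Z(\D{\lG})^{\Gal{}} \to Z(\D{G})^{\Gal{}}\}$. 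Because $\Ker^{1}(W_{F}, Z(\D{\lG})) = 1$ by Corollary~\ref{cor: ker1}, the natural map $H^{1}(W_{F}, Z(\D{\lG})) \to \Hom(\lG(\A_{F})/\lG(F), \C^{\times})$ is injective, hence
\[
\Ker\{H^{1}(W_{F}, \D{D}) \to \Hom(\lG(\A_{F})/\lG(F), \C^{\times})\} = \Im \delta.
\]
Writing $|\Im\{Z(\D{\lG})^{\Gal{}} \to Z(\D{G})^{\Gal{}}\}| = |Z(\D{\lG})^{\Gal{}}|/|\D{D}^{\Gal{}}|$, we obtain
\[
|K| := |\Ker\{H^{1}(W_{F}, \D{D}) \to \Hom(\lG(\A_{F})/\lG(F), \C^{\times})\}| = \frac{|Z(\D{G})^{\Gal{}}| \cdot |\D{D}^{\Gal{}}|}{|Z(\D{\lG})^{\Gal{}}|},
\]
and analogously $|K'| = |Z(\D{G}')^{\Gal{}}| \cdot |\D{D}^{\Gal{}}| / |Z(\D{\lG}')^{\Gal{}}|$. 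Taking the ratio,
\[
\frac{|K'|}{|K|} = \frac{|Z(\D{G}')^{\Gal{}}| \cdot |Z(\D{\lG})^{\Gal{}}|}{|Z(\D{\lG}')^{\Gal{}}| \cdot |Z(\D{G})^{\Gal{}}|}.
\]

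Next I will rewrite the left hand side in the same form. By definition $\bar{Z}(\D{G}')^{\Gal{}} = Z(\D{G}')^{\Gal{}} Z(\D{G})^{\Gal{}}/Z(\D{G})^{\Gal{}}$, so
\[
|\bar{Z}(\D{G}')^{\Gal{}}| = \frac{|Z(\D{G}')^{\Gal{}}|}{|Z(\D{G}')^{\Gal{}} \cap Z(\D{G})^{\Gal{}}|}.
\]
Since $Z(\D{G})$ centralizes $\D{G}'$, a short verification gives $\D{G}' \cap Z(\D{G}) = Z(\D{G}') \cap Z(\D{G})$, hence $\D{G}' \cap Z(\D{G})^{\Gal{}} = Z(\D{G}')^{\Gal{}} \cap Z(\D{G})^{\Gal{}}$. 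An identical calculation yields $|\bar{Z}(\D{\lG}')^{\Gal{}}| = |Z(\D{\lG}')^{\Gal{}}|/|Z(\D{\lG}')^{\Gal{}} \cap Z(\D{\lG})^{\Gal{}}|$, and of course $Z(\D{\lG}')^{\Gal{}} \cap Z(\D{\lG})^{\Gal{}} = \D{\lG}' \cap Z(\D{\lG})^{\Gal{}}$ by the same argument. Substituting both expressions, the left hand side becomes
\[
\frac{|Z(\D{G}')^{\Gal{}}|}{|Z(\D{\lG}')^{\Gal{}}|} \cdot \frac{|Z(\D{\lG}')^{\Gal{}} \cap Z(\D{\lG})^{\Gal{}}|}{|Z(\D{G}')^{\Gal{}} \cap Z(\D{G})^{\Gal{}}|} \cdot \frac{|(\D{G}' \cap Z(\D{G})^{\Gal{}})(Z(\D{\lG})^{\Gal{}}/\D{D})|}{|Z(\D{G})^{\Gal{}}|}.
\]
Comparing with $|K'|/|K|$, it remains to show
\[
\frac{|(\D{G}' \cap Z(\D{G})^{\Gal{}})(Z(\D{\lG})^{\Gal{}}/\D{D})|}{|Z(\D{G}')^{\Gal{}} \cap Z(\D{G})^{\Gal{}}|} \cdot |Z(\D{\lG}')^{\Gal{}} \cap Z(\D{\lG})^{\Gal{}}| = |Z(\D{\lG})^{\Gal{}}|.
\]

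The final and most delicate step will be the verification of this last identity by a diagram chase inside the commutative diagram whose rows are the two exact sequences relating $\D{D}$, $Z(\D{\lG}^{(\prime)})$ and $Z(\D{G}^{(\prime)})$ and whose columns come from the natural embeddings $\D{G}' \hookrightarrow \D{G}$ and $\D{\lG}' \hookrightarrow \D{\lG}$. The key input is that the image of $Z(\D{\lG}')^{\Gal{}} \cap Z(\D{\lG})^{\Gal{}}$ in $Z(\D{G})^{\Gal{}}$ under the projection $Z(\D{\lG})^{\Gal{}} \to Z(\D{\lG})^{\Gal{}}/\D{D}^{\Gal{}}$ lies in $\D{G}' \cap Z(\D{G})^{\Gal{}}$ and essentially accounts for the intersection with $Z(\D{\lG})^{\Gal{}}/\D{D}$, while its kernel corresponds to $\D{D}^{\Gal{}}$. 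The principal obstacle is bookkeeping: one must keep track of how the $\Gamma$-invariants of the various fiber products interact, and apply the snake lemma (or a direct chase) once more to the square formed by the two $Z/\D{D}$-extensions. Once this is carried out the two sides agree, completing the proof.
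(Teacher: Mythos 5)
Your argument is correct and follows essentially the same route as the paper: both proofs identify each kernel with the quotient $Z(\D{G}^{(\prime)})^{\Gal{}}/(Z(\D{\lG}^{(\prime)})^{\Gal{}}/\D{D})$ via the connecting homomorphism and the vanishing of $\Ker^1$, then reduce the lemma to an order-counting identity among the centers; you phrase the bookkeeping in terms of group orders where the paper runs through explicit short exact sequences, but the underlying manipulations are the same. The final reduced identity you leave as a sketch does hold exactly as you indicate: the projection $Z(\D{\lG})^{\Gal{}} \to Z(\D{G})^{\Gal{}}$ restricted to $\D{\lG}' \cap Z(\D{\lG})^{\Gal{}}$ surjects onto $(\D{G}' \cap Z(\D{G})^{\Gal{}}) \cap (Z(\D{\lG})^{\Gal{}}/\D{D})$ with kernel $\D{D}^{\Gal{}}$ (using $\D{\lG}' = \bold{p}^{-1}(\D{G}')$), which gives $|\D{\lG}' \cap Z(\D{\lG})^{\Gal{}}| = |\D{D}^{\Gal{}}|\,|A\cap B|$ and closes the count. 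You should also note explicitly that $\Ker^{1}(W_{F}, Z(\D{\lG'}))=1$, which is needed for the computation of $|K'|$ and follows because $\lG'$ is again of type \eqref{eq: similitude}.
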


\begin{proof}
From the proof of Lemma~\ref{lemma: identification}, we see 
\[
\Ker \{ H^{1}(W_{F}, \D{D}) \rightarrow \Hom(\lG'(\A_{F})/ \lG'(F), \C^{\times})\} \cong Z(\D{G}')^{\Gal{}} / (Z(\D{\lG'})^{\Gal{}} / \D{D}),
\]
and 
\[
\Ker \{ H^{1}(W_{F}, \D{D}) \rightarrow \Hom(\lG(\A_{F})/ \lG(F), \C^{\times})\} \cong Z(\D{G})^{\Gal{}} / (Z(\D{\lG})^{\Gal{}} / \D{D}).
\]
Therefore it is enough to show 
\begin{align}
\label{eq: twisted endoscopic expansion 3}
| \bar{Z}(\D{G}')^{\Gal{}} | | \bar{Z}(\D{\lG'})^{\Gal{}} |^{-1} |Z(\D{G})^{\Gal{}} / (\D{G}' \cap Z(\D{G})^{\Gal{}})(Z(\D{\lG})^{\Gal{}} / \D{D}) |^{-1} = \frac{|Z(\D{G}')^{\Gal{}} / (Z(\D{\lG'})^{\Gal{}} / \D{D})|}{|Z(\D{G})^{\Gal{}} / (Z(\D{\lG})^{\Gal{}} / \D{D})|}.
\end{align}
We start by considering the following exact sequence
\begin{align*}
1 \longrightarrow (\D{G}' \cap Z(\D{G})^{\Gal{}}) / (\D{G}' \cap (Z(\D{\lG})^{\Gal{}} / \D{D})) \longrightarrow Z(\D{G})^{\Gal{}} / (Z(\D{\lG})^{\Gal{}} / \D{D}) \\ \longrightarrow  Z(\D{G})^{\Gal{}} / (\D{G}' \cap Z(\D{G})^{\Gal{}})(Z(\D{\lG})^{\Gal{}} / \D{D}) \longrightarrow 1.
\end{align*}
It follows 
\[
|Z(\D{G})^{\Gal{}} / (\D{G}' \cap Z(\D{G})^{\Gal{}})(Z(\D{\lG})^{\Gal{}} / \D{D})|^{-1} = \frac{|(\D{G}' \cap Z(\D{G})^{\Gal{}}) / (\D{G}' \cap (Z(\D{\lG})^{\Gal{}} / \D{D}))|}{|Z(\D{G})^{\Gal{}} / (Z(\D{\lG})^{\Gal{}} / \D{D})|}.
\]
If we write 
\[
(\D{G}' \cap Z(\D{G})^{\Gal{}}) / (\D{G}' \cap (Z(\D{\lG})^{\Gal{}} / \D{D})) = (Z(\D{G}')^{\Gal{}} \cap Z(\D{G})^{\Gal{}}) / ((Z(\D{\lG'}) \cap Z(\D{\lG})^{\Gal{}})/\D{D}),
\]
then 
\begin{align*}
&|\D{G}' \cap Z(\D{G})^{\Gal{}} / \D{G}' \cap (Z(\D{\lG})^{\Gal{}} / \D{D})| |\bar{Z}(\D{G}')^{\Gal{}}| |\bar{Z}(\D{\lG'})^{\Gal{}}|^{-1} \\ 
= \quad & |Z(\D{G}')^{\Gal{}} / ((Z(\D{\lG'}) \cap Z(\D{\lG})^{\Gal{}})/\D{D})|  |\bar{Z}(\D{\lG'})^{\Gal{}}|^{-1} \\
= \quad & |Z(\D{G}') / (Z(\D{\lG'})^{\Gal{}} / \D{D})|.
\end{align*}
Hence \eqref{eq: twisted endoscopic expansion 3} holds.
\end{proof}

As a consequence of Lemma~\ref{lemma: twisted endoscopic expansion 1} and Lemma~\ref{lemma: twisted endoscopic expansion 2}, we can sum over $Y / \a(\S{\p})$ for \eqref{eq: twisted endoscopic expansion 2} and get
\[
\sum_{\x' \in Y / \a(\S{\p})} C_{\lp} \lf'_{\lG^{\theta}} (\lp \otimes \x', s).
\]

To sum up, we have shown the $\p$-component of \eqref{eq: twisted endoscopic side} has an expansion
\begin{align*}
\sum_{s \in \mathcal{E}'^{\theta}_{\p, ell}(\x)} |\pi_{0}(\cS{\p, s}^{0})|^{-1} \sigma((\cS{\p, s})^{0}) \sum_{\x' \in Y / \a(\S{\p})} C_{\lp} \lf'_{\lG^{\theta}} (\lp \otimes \x', s) \\ 
= \sum_{\x' \in Y / \a(\S{\p})} C_{\lp} \sum_{s \in \mathcal{E}'^{\theta}_{\p, ell}(\x)} |\pi_{0}(\cS{\p, s}^{0})|^{-1} \sigma((\cS{\p, s})^{0}) \lf'_{\lG^{\theta}} (\lp \otimes \x', s) .
\end{align*}
Finally by the same argument as in the proof of Lemma~\ref{lemma: induced twisted character}, there exists a family of global lifts $\cPkt{\lp'}$ for all $s \in \cS{\p,ss}^{\theta}$ with image $x$ in $\S{\p}^{\theta}$, such that the distribution $\lf'_{\lG^{\theta}}(\lp \otimes \x', s)$ are the same. So we can write
\[
\lf'_{\lG^{\theta}}(\lp \otimes \x', s) = \lf'_{\lG^{\theta}}(\lp \otimes \x', x).
\]
Moreover, we can split the sum over $s \in \mathcal{E}'^{\theta}_{\p, ell}(\x)$ into a double sum over $x \in \S{\p}^{\theta}(\x)$ and $s \in \mathcal{E}'^{\theta}_{\p, ell}(x)$, where $ \mathcal{E}'^{\theta}_{\p, ell}(x)$ is the subset of $ \mathcal{E}'^{\theta}_{\p, ell}$ that mapped to $x$. If we define
\[
e'^{\theta}_{\p}(x) = \sum_{s \in \mathcal{E}'^{\theta}_{\p, ell}(x)} |\pi_{0}(\cS{\p, s}^{0})|^{-1} \sigma((\cS{\p, s})^{0}),
\]
then we get the following lemma.

\begin{lemma}
\label{lemma: twisted endoscopic expansion}
Suppose $\p \in \cP{G}$, $\theta \in \Sigma_{0}$ and $\x \in Y$. If $\theta =id, \x = 1$ then
\begin{align}
\label{eq: endoscopic expansion}
\Idt{\lG}{, \p}(\lf) - \Sdt{\lG}{, \p} (\lf) = C_{\lp} \sum_{\x' \in Y / \a(\S{\p})} \sum_{x \in \S{\lp}} e'_{\p}(x) \lf_{\lG}' (\lp \otimes \x', x).    
\end{align}
Otherwise,
\begin{align}
\label{eq: twisted endoscopic expansion}
\tIdt{\lG^{\theta}}{, \p} (\lf) = C_{\lp} \sum_{\x' \in Y / \a(\S{\p})} \sum_{x \in \S{\p}^{\theta}(\x)} e'^{\theta}_{\p}(x) \lf_{\lG^{\theta}}' (\lp \otimes \x', x).    
\end{align}
\end{lemma}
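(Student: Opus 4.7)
The plan is to expand the right-hand side of the stabilized $(\theta,\x)$-twisted trace formula \eqref{eq: twisted endoscopic side} at the $\p$-component by inserting, for each proper twisted endoscopic group $\lG'$, the stable multiplicity formula of Conjecture~\ref{conj: stable multiplicity formula}, which is available by the temporary induction hypothesis. In the untwisted case $\theta = \mathrm{id}, \x = 1$, I would first move $\Sdt{\lG}{, \p}(\lf)$ to the left so that only proper $\lG'$ appear on the right; this accounts for the primed set $\cS{\p, ell}'^{\theta}(\x)$ in the statement.

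First I would reparametrize the double sum over $(\lG', \p')$. After fixing a lift $\p_{G} := \ep \in \P{G, \p}$, each such pair corresponds, up to $\D{G}$-conjugacy, to a semisimple $s \in \cS{\p, ss}^{\theta}(\x)$ by Proposition~\ref{prop: lifting endoscopic group} and Theorem~\ref{thm: lifting parameter}. Lemma~\ref{lemma: vanishing} together with the vanishing of $\sigma(\com[0]{\cS{\p'}})$ outside the elliptic locus restricts the sum to $\mathcal{E}'^{\theta}_{\p, ell}(\x)$; counting the fibres (isomorphic to $\Aut_{G}(G')/S_{\p, s}\Int_{G}(G')$) and switching from $\cS{\p}$-orbits to $\com[0]{\cS{\lp}}$-orbits produces explicit combinatorial factors, while the outer sum over $\P{G, \p}$ contributes a factor of $m_{\p}$. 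The coefficient bookkeeping then splits into two pieces: the purely group-theoretic factors, which by Corollary~\ref{cor: ker1} (trivializing the $\Ker^{1}$ terms in Kottwitz's formula \eqref{formula: endoscopic coefficient 1}) and the same manipulations of the exact sequences relating $\cS{\p, s}$, $\cS{\p, s}^{0}$, $(\cS{\p, s})^{0}$, and $\bar{Z}(\D{G}')^{\Gal{}}$ as in (\cite{Arthur:2013}, Section 4.4) collapse to $|\pi_{0}(\cS{\p, s}^{0})|^{-1}\sigma((\cS{\p, s})^{0})$; and the $\x$-dependent piece, which via Lemmas~\ref{lemma: twisted endoscopic expansion 1} and \ref{lemma: twisted endoscopic expansion 2} converts the inner sum over $Y'/\a^{G'}(\S{\p'})$ into a sum over $Y/\a(\S{\p})$ with overall constant $C_{\lp} = m_{\p}|\S{\lp}|^{-1}$.

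Finally, Lemma~\ref{lemma: induced twisted character} guarantees that $\lf'_{\lG^{\theta}}(\lp \otimes \x', s)$ depends only on the image $x$ of $s$ in $\S{\p}^{\theta}$, so the sum over $\mathcal{E}'^{\theta}_{\p, ell}(\x)$ can be split as a double sum over $x \in \S{\p}^{\theta}(\x)$ and $s \in \mathcal{E}'^{\theta}_{\p, ell}(x)$; the inner sum then assembles into $e'^{\theta}_{\p}(x)$ (respectively $e'_{\p}(x)$ in the untwisted case), giving the two stated formulas. The main obstacle is the matching between the character group $Y$ for $\lG$ and the group $Y'$ for each endoscopic $\lG'$: the twisted endoscopic transfer naturally sees $Y'/\a^{G'}(\S{\p'})$, whereas the spectral expansion \eqref{eq: twisted spectral expansion} is indexed by $Y/\a(\S{\p})$, and bridging this gap via the Hasse principle (Corollary~\ref{cor: relative Hasse principle}) and the vanishing of $\Ker^{1}$ (Corollary~\ref{cor: ker1}) is the delicate arithmetic input that Lemmas~\ref{lemma: twisted endoscopic expansion 1} and \ref{lemma: twisted endoscopic expansion 2} are designed to encode.
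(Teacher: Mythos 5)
Your proposal is correct and follows essentially the same route as the paper: insert the inductively known stable multiplicity formula into the $\p$-component of the stabilized twisted trace formula, reparametrize pairs $(\lG',\p')$ by semisimple elements $s$, restrict to the elliptic locus via the vanishing of $\sigma$, count fibres, collapse the Kottwitz coefficient by Arthur's computation, and convert $Y'/\a^{G'}(\S{\p'})$ to $Y/\a(\S{\p})$ using Lemmas~\ref{lemma: twisted endoscopic expansion 1} and~\ref{lemma: twisted endoscopic expansion 2}, finally using Lemma~\ref{lemma: induced twisted character} to pass to a sum over $\S{\p}^{\theta}(\x)$. The structure, key inputs, and ordering of reductions all match the paper's proof.
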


\begin{corollary}
\label{cor: stability}
Suppose $\p \in \cP{G}$ and $\S{\lp} = 1$, then the distribution $\Idt{\lG}{, \p}(\lf)$ is stable . 
\end{corollary}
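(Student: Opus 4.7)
The plan is to apply the endoscopic expansion \eqref{eq: endoscopic expansion} from Lemma~\ref{lemma: twisted endoscopic expansion}. Under the hypothesis $\S{\lp}=1$, the inner sum over $x \in \S{\lp}$ collapses to the single term $x = 1$, yielding
\[
\Idt{\lG}{,\p}(\lf) - \Sdt{\lG}{,\p}(\lf) = C_{\lp}\, e'_{\p}(1) \sum_{\x' \in Y/\a(\S{\p})} \lf'_{\lG}(\lp \otimes \x', 1), \qquad \lf \in \sH(\lG, \lif{\chi}).
\]
Since $\Sdt{\lG}{,\p}$ is stable by its recursive construction, it suffices to show that the right-hand side is a stable distribution in $\lf$.

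To this end, I would invoke the common-value construction from the proof of Lemma~\ref{lemma: induced twisted character}: $\lf'_{\lG}(\lp \otimes \x', 1)$ is the shared value of $\lf^{\lG'}(\lp' \otimes \x')$ over pairs $(\lG', \lp') \to (\lp, s)$ with $s$ mapping to $1 \in \S{\p}$. The distinguished representative $s = 1$ yields the trivial endoscopic pair $(\lG, \lp)$, so the common value equals $\lf^{\lG}(\lp \otimes \x') = \prod_v \lf_v(\lp_v \otimes \x'_v)$ (or, in the non-discrete case, a parabolic induction thereof from a proper Levi, which again preserves stability). By the induction hypothesis granting Theorem~\ref{thm: refined L-packet}(2) at each local place, every $\lf_v(\lp_v \otimes \x'_v)$ is stable on $\lG_v$, whence their product is stable on $\lG(\A_F)$. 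Combining both observations, $\Idt{\lG}{,\p}(\lf)$ is exhibited as the sum of two stable distributions in $\lf$, and hence is itself stable.

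The principal subtlety is justifying the use of $s = 1$ in the common-value identification, even though this element is excluded from the index set $\mathcal{E}'_{\p,\mathrm{ell}}(1)$ that defines the coefficient $e'_{\p}(1)$. The point is that $e'_{\p}(1)$ is a scalar reflecting only the combinatorics of $\cS{\p}$, while $\lf'_{\lG}(\lp \otimes \x', 1)$ is an intrinsic invariant of the fibre over $x=1$ that can be evaluated at any representative, in particular $s = 1$. In the special case $\p \in \cPdt{G}$ the argument collapses entirely: there $\com[0]{\cS{\p}} = 1$ forces $\cS{\lp} = 1$, so $\cS{\lp, ell} - \{1\} = \emptyset$, hence $e'_{\p}(1) = 0$ and the equality $\Idt{\lG}{,\p}(\lf) = \Sdt{\lG}{,\p}(\lf)$ is immediate.
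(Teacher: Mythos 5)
Your proof is correct and follows essentially the same route as the paper's: apply Lemma~\ref{lemma: twisted endoscopic expansion}, use $\S{\lp}=1$ to collapse the inner sum to $x=1$, absorb $\Sdt{\lG}{,\p}$ by its recursive stability, and identify the residual term $\lf'_{\lG}(\lp\otimes\x',1)$ with the stable distribution obtained by inducing a Levi L-packet, which is available from the temporary induction hypothesis. Two small remarks. First, the parenthetical ``or, in the non-discrete case, a parabolic induction thereof from a proper Levi'' reads as if it offered an alternative, but for $\p\notin\cPdt{G}$ both descriptions coincide: the product $\prod_v\lf_v(\lp_v\otimes\x'_v)$ \emph{is} the character of the parabolically induced packet, and its per-place stability is not literally in the induction hypothesis (which covers $\lM_v$, not $\lG_v$) but follows from it via the standard parabolic-induction-preserves-stability step, exactly as in Lemma~\ref{lemma: refined L-packet for non-discrete parameter}. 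Your final paragraph, isolating the subtlety that $s=1$ lies outside $\mathcal{E}'_{\p,\mathrm{ell}}(1)$ yet may still be used to evaluate the common-value distribution $\lf'_{\lG}(\lp\otimes\x',1)$, and the observation that for $\p\in\cPdt{G}$ the hypothesis $\S{\lp}=1$ forces $\cS{\lp,\mathrm{ell}}-\{1\}=\emptyset$ so $e'_\p(1)=0$ and one gets $\Idt{\lG}{,\p}=\Sdt{\lG}{,\p}$ outright, are both correct and genuinely clarify what the paper leaves implicit.
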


\begin{proof}
Since $\S{\lp} = 1$, it follows from \eqref{eq: endoscopic expansion} that
\[
\Idt{\lG}{, \p}(\lf) = \Sdt{\lG}{, \p} (\lf) + C_{\lp} \sum_{\x' \in Y / \a(\S{\p})} e'_{\p}(1) \lf_{\lG}' (\lp \otimes \x', 1).    
\]
Note that $\lf_{\lG}' (\lp \otimes \x', 1)$ is defined by inducing global L-packets of Levi subgroups of $\lG$, and hence is stable. Therefore $\Idt{\lG}{, \p}(\lf)$ is stable.

\end{proof}

Later on, we will compare the formulas in Lemma~\ref{lemma: twisted spectral expansion} and Lemma~\ref{lemma: twisted endoscopic expansion}. Note it follows from Proposition~\ref{prop: endoscopy of complex group} that for $x \in \S{\p}^{\theta}$,
\[
i^{\theta}_{\p}(x) - e'^{\theta}_{\p}(x) = \begin{cases}
                                      0 & \text{ if } x \neq 1, \\
                                      \sigma(\com[0]{\cS{\p}}) & \text{ if } x = 1.
                                      \end{cases}
\]



\section{Refined L-packet}
\label{sec: refined L-packet}

\subsection{Beginning of proofs}
\label{subsec: beginning of proofs}

In the following sections, we are going to prove the main local theorem (Theorem~\ref{thm: refined L-packet}) along with the global theorem (Theorem~\ref{thm: main global}). First we need to impose our induction assumptions. It consists of a local part and a global part. Let $F$ be either local or global. We denote
\begin{align*}
G(n) & := Sp(2n), SO(2n+2, \eta), \\
\lG(n) & := GSp(2n), GSO(2n+2, \eta).
\end{align*}
Let 
\[
G = G(n_{1}) \times G(n_{2}) \times \cdots \times G(n_{q})
\]
and $\lG$ be the corresponding similitude group (see \eqref{eq: similitude}), then our induction assumptions can be stated as follows.

{\bf Local Induction Assumption:} The main local theorem (Theorem~\ref{thm: refined L-packet}) holds for $\lG$, when $n_{i} < N$ for all $1 \leqslant i \leqslant q$. 

{\bf Global Induction Assumption:} The global theorem (Theorem~\ref{thm: main global}) hold for $\lG$, when $\sum_{i = 1}^{q} n_{i} < N$. 

\begin{remark}
\label{rk: induction assumption}
When $\lG = GSp(2N)$, these assumptions imply all the local and global theorems hold for the Levi subgroups and twisted endoscopic groups of $\lG$. But this is not true when $\lG = GSO(2N + 2, \eta)$ for it can have twisted endoscopic group of the form $G(Sp(2N_{1}) \times Sp(2N_{2}))$ with $N = N_{1} + N_{2}$ and $N_{1}, N_{2} \geqslant 0$. To fix this, we will first prove the local and global theorems for $\lG$ based on our induction assumption, when $G$ does not contain any factor of $SO(2N+2, \eta)$. Then we can add those results to our induction assumptions and repeat the same arguments to prove the rest of the cases. 
\end{remark}

We will first establish the main local theorem (Theorem~\ref{thm: refined L-packet}) for $\lG = \lG(N)$, which is the most important case. In view of Remark~\ref{rk: refined L-packet}, we can further assume $F$ is nonarchimedean. Under our local induction assumption, we can prove a lot of cases of the main local theorem. The precise statement is formulated in the following lemma.

\begin{lemma}
\label{lemma: refined L-packet for non-discrete parameter}
Suppose $\p \in \cPbd{G} - \cPdt{G}$, then one can assign an L-packet $\cPkt{\lp}$ for any lift $\lp$ such that it satisfies (1) and (2) of the main local theorem (Theorem~\ref{thm: refined L-packet}). Furthermore, the $(\theta, \x)$-twisted character relation \eqref{eq: theta twisted character relation} holds for $\theta \in \Sigma_{0}$ and semisimple $s \in \cS{\p}^{\theta}$ such that $|\cS{\p, s}^{0}| = \infty$.
\end{lemma}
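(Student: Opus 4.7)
The plan is to reduce everything to the Levi case via parabolic induction, exploiting the local induction assumption. Since $\p \in \cPbd{G} - \cPdt{G}$, Lemma~\ref{lemma: discrete parameter} forces $\p$ to factor through a proper Levi subgroup; picking $M \subsetneq G$ minimal with $\p = \p_{M} \in \cPdt{M}$, the corresponding Levi $\lM$ of $\lG$ has all general linear factors and remaining similitude factor of strictly smaller rank than $N$, so the local induction assumption applied to $\lM$ yields a refined packet $\cPkt{\lp_{M}}$ satisfying all three properties of Theorem~\ref{thm: refined L-packet}.

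Next I would define $\cPkt{\lp} := \mathcal{I}_{\lP}(\cPkt{\lp_{M}})$ for a standard parabolic $\lP \supseteq \lM$. In the tempered case this parabolic induction is irreducible, and membership in $\clPkt{\p, \lif{\zeta}}$ follows from the standard identity $\Res^{\lG(F)}_{G(F)} \mathcal{I}_{\lP}(\lr_{M}) \cong \mathcal{I}_{P}(\Res^{\lM(F)}_{M(F)} \lr_{M})$ combined with the tempered L-packet description $\cPkt{\p} = \mathcal{I}_{P}(\cPkt{\p_{M}})$ from Theorem~\ref{thm: LLC} extended to Levi subgroups. Property (1) of the main local theorem then follows from Proposition~\ref{prop: coarse L-packet}: the twisting group $X/\a(\S{\p}^{\Sigma_{0}})$ matches up with its counterpart on $\lM$ via Corollary~\ref{cor: theta twisting character} (and the identification $\S{\p}^{\Sigma_{0}} \cong \S{\p_{M}}^{\Sigma_{0}}$ coming from the fact that $\p_{M}$ is discrete on $M$), so the inductive section on $\lM$ produces the desired section on $\lG$.

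For property (2), stability of $\lf(\lp) = \sum_{[\lr] \in \cPkt{\lp}} \lf_{\lG}(\lr)$ reduces via the Weyl integration formula and the descent identity $\lf_{\lG}(\mathcal{I}_{\lP}(\lr_{M})) = (\lf_{\lP})_{\lM}(\lr_{M})$ to stability of the distribution $\lf_{\lP}(\lp_{M})$ on $\lM$, which is part of the local induction assumption. For the twisted character relation with $|\cS{\p, s}^{0}| = \infty$, the hypothesis forces $s$ to commute with a nontrivial torus in $\com[0]{\cS{\p}}$; by the argument in the proof of Lemma~\ref{lemma: induced twisted character}, after $\com[0]{\cS{\p}}$-conjugation we may assume $s$ lies in the Levi $\D{M'}$ dual to a proper Levi $M' \subseteq G$, and correspondingly the pair $(G', \p')$ is the image under $\L{M'} \hookrightarrow \L{G}$ of a pair $(M'_{s}, \p'_{M'})$ attached to $M'$. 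Applying Lemma~\ref{lemma: twisted endoscopic transfer}, Corollary~\ref{cor: twisted endoscopic transfer}, and the standard compatibility of the Langlands-Shelstad-Kottwitz transfer with parabolic descent, one rewrites $\lf'(\lp')$ as $(\lf_{\lP})^{M'_{s}}(\lp'_{M'})$, and the induction assumption on $\lM$ identifies this with $\sum_{[\lr_{M}] \in \cPkt{\lp_{M}}} (\lf_{\lP})_{\lM^{\theta}}(\lr_{M}, \x)$, which equals $\sum_{[\lr] \in \cPkt{\lp}} \lf_{\lG^{\theta}}(\lr, \x)$ by the construction of $\cPkt{\lp}$.

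The main obstacle will be the normalization bookkeeping in the last step, namely verifying that the intertwining operator $A_{\lr}(\theta, \x)$ on an induced representation $\lr = \mathcal{I}_{\lP}(\lr_{M})$ prescribed by \eqref{eq: theta twisted intertwining operator} coincides with the operator induced from $A_{\lr_{M}}(\theta, \x)$ on $\lM$. This boils down to the equality of normalizing factors $r_{\lP}(w, \lp_{M}) = r_{P}(w, \p_{M})$ and the matching of extended characters discussed around \eqref{eq: intertwining operator identity}; essentially, one must check that the normalization inherited from the Whittaker-normalized general linear pieces and the inductive normalization on the remaining similitude factor assemble into the correct extension on $\lG$. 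Modulo this bookkeeping, which is an elaboration of the local intertwining relation machinery developed in Section~\ref{subsec: local twisted intertwining relation}, the lemma follows from the induction hypothesis alone; no global input is needed here, which is precisely why the truly hard case of discrete parameters $\p \in \cPdt{G}$ must be deferred to a separate global argument.
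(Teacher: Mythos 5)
Your proposal follows essentially the same route as the paper: factor $\p$ through a proper Levi $M$ with $\p_M \in \cPdt{M}$ where the induction assumption applies, define $\cPkt{\lp}$ by parabolic induction from $\cPkt{\lp_M}$ (so that (1) and (2) are inherited from $\lM$), and prove the twisted character relation by descent, using the nontrivial torus $T_{\p,s} \subseteq (S_{\p,s})^0$ to locate $s$ in a proper Levi of $\D{G}'$ sitting inside a $\theta$-stable Levi of $G$. The normalization compatibility you flag as a potential obstacle is exactly the content of Section~\ref{subsec: local twisted intertwining relation} (in particular the discussion surrounding \eqref{eq: intertwining operator identity}, which the paper establishes before this lemma), so it is handled and your identification of where the resolution lives is accurate.
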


\begin{proof}
Suppose $\p \in \cPbd{G} - \cPdt{G}$, then $\p$ factors through $\p_{M} \in \cPdt{M}$ for some proper Levi subgroup $M$ of $G$.  Since 
\[
M \cong G(m) \times \prod_{i} GL(n_{i})
\] 
with $m < N$, by our local induction assumption we can define a refined L-packet $\cPkt{\lp_{M}}$ associated to $\lp_{M}$. Then we can take local packet $\cPkt{\lp}$ for $\lp$ to be the irreducible constituents of those induced from $\cPkt{\lp_{M}}$. Because $\cPkt{\p}$ is also obtained by induction from $\cPkt{\p_{M}}$, we can easily see that $\cPkt{\lp}$ will satisfy (1) and (2) of the main local theorem. For the $(\theta, \x)$-twisted character relation \eqref{eq: theta twisted character relation}, it will follow from the usual descent argument. For $(G', \p') \rightarrow (\p, s)$, let $T_{\p, s}$ be a maximal torus of $(S_{\p, s})^{0}$, which is nontrivial by our assumption that $|\cS{\p, s}^{0}| = \infty$. Then $\D{M}' = \Cent(T_{\p, s}, \D{G}')$ defines a proper Levi subgroup of $\D{G}'$ such that $\p'$ factors through $\p'_{M} \in \cPdt{M'}$. Moreover, $M' \in \End{ell}{M^{\theta}}$ for a proper $\theta$-stable Levi subgroup $M$ of $G$, which is determined by $\D{M} = \Cent(T_{\p, s}, \D{G})$. So
\[
\lf'(\lp') = \lf^{\lM'}(\lp'_{M}) = \sum_{[\lr_{M}] \in \cPkt{\lp_{M}}} \lf_{\lM^{\theta}}(\lr_{M}, \x) = \sum_{[\lr] \in \cPkt{\lp}} \lf_{\lG^{\theta}}(\lr, \x),
\]
where $\x = \a(s)$.
\end{proof}

\begin{remark}
We have not shown the uniqueness of $\cPkt{\lp}$ here. In fact that will follow from the character relation (see Theorem~\ref{thm: twisted character relation for elliptic parameter}). 
\end{remark}

The key issue in proving the main local theorem is to find a candidate for the stable distribution associated with any lift $\lp$ of $\p \in \cPbd{G}$. As one can see from Lemma~\ref{lemma: refined L-packet for non-discrete parameter}, the critical case is when $\p \in \cPdt{G}$. The way to find such a distribution is to lift $\p$ to a global parameter $\dot{\p}$ and use the global stable distribution $\Sdt{\lif{\dot{G}}}{, \dot{\p}}$ in the stabilized trace formula. Under some assumptions on this lifted global parameter $\dot{\p}$, we can obtain the local stable distribution associated with $\lp$ using an argument based on stability (cf. Corollary~\ref{cor: refined L-packet}). Let us write $S_{\infty}$ for the set of archimedean places of a global field $\dot{F}$, and $S_{\infty}(u) = S_{\infty} \cup \{u\}$ for any nonarchimedean place $u$. Suppose $F = \dot{F}_{u}$ and $\dot{G}_{u} = G$. Let $\dot{X} = \Hom(\lif{\dot{G}}(\A_{\dot{F}})/Z_{\lif{\dot{G}}}(\A_{\dot{F}})\dot{G}(\A_{\dot{F}}), \C^{\times})$.

\begin{theorem}
\label{thm: standard argument on stability}
For $\p \in \cPdt{G}$, suppose $\dot{\p} \in \cPdt{\dot{G}}$ is a global lift of $\p$ with $\dot{\p}_{u} = \p$ and it also satisfies the following additional conditions:
\begin{enumerate}
\item $\dot{\p}_{v} \in \cuP{\dot{G}_{v}} - \cPel{\dot{G}_{v}} \text{ for all } v \notin S_{\infty}(u)$; 
\item $\S{\lif{\dot{\p}}} = 1$;
\item $\Sigma_{0}$-strong multiplicity one holds for $\lif{\dot{\p}}$.
\end{enumerate}
Then one can assign an L-packet $\cPkt{\lp}$ to any lift $\lp$ of $\p$ satisfying (1) and (2) of the main local theorem (Theorem~\ref{thm: refined L-packet}).
\end{theorem}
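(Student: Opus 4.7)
The plan is to use the $\dot{\p}$-component $\Idt{\lif{\dot{G}}}{,\dot{\p}}$ of the ordinary trace formula for $\lif{\dot{G}}$ as the source of a stable local distribution at $u$, and then to extract the refined packet $\cPkt{\lp}$ from that local distribution via Corollary~\ref{cor: refined L-packet}. The starting point is Corollary~\ref{cor: stability}: hypothesis (2) gives $\S{\lif{\dot{\p}}}=1$, so $\lf \mapsto \Idt{\lif{\dot{G}}}{,\dot{\p}}(\lf)$ is stable on $\sH(\lif{\dot{G}},\dot{\lif{\chi}})$. Via \eqref{eq: discrete part vs discrete spectrum} and Lemma~\ref{lemma: coarse multiplicity formula}, this functional equals the trace on the $\dot{\p}$-isotypic part of the discrete spectrum, expressible as a sum over automorphic $\sH(\lif{\dot{G}})$-modules $\lr$ whose restriction to $\dot{G}(\A_{\dot{F}})$ has constituents in $\cPkt{\dot{\p}}$, with explicit multiplicities.

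Next I factor by places, writing $\lf = \lf_u \otimes \lf^u$. For every $v \neq u$, hypothesis (1) ensures that the local parameter $\dot{\p}_v$ factors through a proper non-elliptic Levi subgroup, so Lemma~\ref{lemma: refined L-packet for non-discrete parameter} supplies a refined packet $\cPkt{\lif{\dot{\p}}_v}$ which itself satisfies (1) and (2); archimedean places are handled by Langlands--Shelstad (cf.\ Remark~\ref{rk: refined L-packet}(3)). Proposition~\ref{prop: coarse L-packet}(2) then identifies the admissible choices of $\lr^u := \otimes_{v \neq u} \lr_v$ in $\clPkt{\dot{\p}^u, \lif{\zeta}^u}$ with $\cPkt{\lif{\dot{\p}}^u}$ modulo twisting by $\dot{X}^u$, and the sum over $\lr$ can be reorganized into a sum indexed by orbits of $\lr_u \in \clPkt{\p,\lif{\zeta}}$ together with a choice of twisting class $\x^u \in \dot{X}^u / \a(\S{\dot{\p}^u}^{\Sigma_0})$ for the component away from $u$.

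Here hypothesis (3), $\Sigma_{0}$-strong multiplicity one for $\lif{\dot{\p}}$, plays its essential role: without it, distinct local twisting classes at $u$ could conspire with the almost-everywhere equality of Satake data to contribute to the same global term and obstruct the separation. With it, a standard multiplier-and-support argument, together with linear independence of twisted characters at the auxiliary places $v \neq u$, lets me choose $\lf^u$ that isolates a single pair of classes. Dividing the resulting identity by the (nonzero) stable local factors $\lf_v(\lif{\dot{\p}}_v \otimes \x_v)$ at $v \neq u$ produces a functional $\lf_u \mapsto D(\lf_u)$ on $\sH(\lG,\lif{\chi})$ which is stable (since its product with nonzero stable distributions at the remaining places gives a stable global distribution) and which is a finite linear combination of Harish-Chandra characters of elements of $\clPkt{\p, \lif{\zeta}}$. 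By Corollary~\ref{cor: refined L-packet}, $D$ is a linear combination of $\lf_u(\lp_0 \otimes \x_u)$ for a section $\cPkt{\lp_0}$ of the fibration in Proposition~\ref{prop: coarse L-packet}(2); by a short argument with character disjointness the combination reduces to a single term, defining $\cPkt{\lp}$ and verifying property (2). Property (1) is then immediate from Proposition~\ref{prop: coarse L-packet}(2), since any section of the coarse fibration has $X/\a(\S{\p}^{\Sigma_0})$-translates that partition $\clPkt{\p, \lif{\zeta}}$.

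The main obstacle is the isolation step in the previous paragraph: building test functions $\lf^u$ that pick out a single local orbit at $u$ from a global spectral expansion requires a careful combination of the ellipticity of $\dot{\p}$ only at $u$ (hypothesis (1)), vanishing of the non-tempered twisted contributions (for which one falls back on Lemma~\ref{lemma: vanishing} and the induction hypothesis on twisted endoscopic groups), and the $\Sigma_0$-strong multiplicity one in hypothesis (3). Every other ingredient is formal manipulation with the stabilized trace formula and the already-established structural results on coarse L-packets.
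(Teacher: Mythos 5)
Your overall architecture does track the paper's: start from the stability of $\Idt{\lif{\dot{G}}}{,\dot{\p}}$ granted by Corollary~\ref{cor: stability} under hypothesis (2), identify it with the discrete-spectrum trace, use hypothesis (1) to obtain refined packets at the auxiliary places via Lemma~\ref{lemma: refined L-packet for non-discrete parameter}, and use hypothesis (3) to prevent local twisting classes at $u$ from merging. The global lifting and stability arguments and the role of $\Sigma_{0}$-strong multiplicity one are all correctly perceived.

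However, the final step is circular. You invoke Corollary~\ref{cor: refined L-packet} at the place $u$ to conclude that the stable functional $D$ is a linear combination of $\lf_u(\lp_0\otimes\x_u)$ over a section $\cPkt{\lp_0}$. But Corollary~\ref{cor: refined L-packet} is stated and proved ``in the setup of the previous theorem,'' i.e.\ it \emph{presupposes} that a refined packet $\cPkt{\lp}$ satisfying (1) and (2) of Theorem~\ref{thm: refined L-packet} has already been exhibited at $u$ --- that is exactly what the present theorem is supposed to establish. The paper avoids this: it applies Corollary~\ref{cor: refined L-packet} \emph{only at places $v\neq u$} (where Lemma~\ref{lemma: refined L-packet for non-discrete parameter} supplies the packets), uses that together with linear independence of characters to argue that for a fixed tail $\bigotimes_{v\neq u}[\lif{\dot{\r}}_v]\in\bigotimes_{v\neq u}\cPkt{\lif{\dot{\p}}_v}$, the set of $[\lr]$ at $u$ that contribute forms a \emph{complete} set of representatives of $\lif{\bar{\Pi}}_{\p,\lif{\zeta}}/X$ (injectivity uses (3); surjectivity is checked by testing on functions supported on $\lif{\dot{Z}}_{\dot{F}_u}\dot{G}(\dot{F}_u)$), defines $\cPkt{\lp}$ to be that set, and then derives stability of $\lf(\lp)$ from the global stability of the expansion --- no appeal to Corollary~\ref{cor: refined L-packet} at $u$ is made. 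You should replace your appeal to Corollary~\ref{cor: refined L-packet} at $u$ with this direct construction-and-verification. Two smaller points: ``dividing by the stable local factors $\lf_v(\lif{\dot{\p}}_v\otimes\x_v)$'' is only a heuristic, since these are distributions and not scalars; the precise statement goes through linear independence of characters of $\bigotimes_{v\neq u}\bar{\mathcal{H}}(\lif{\dot{G}}_v,\lif{\dot{\chi}}_v)$-modules. And the reference to Lemma~\ref{lemma: vanishing} is out of place here; since $\dot{\p}\in\cPdt{\dot{G}}$, equation \eqref{eq: discrete part vs discrete spectrum} already identifies $\Idt{\lif{\dot{G}}}{,\dot{\p}}$ with the discrete spectrum trace, and there is no non-tempered continuous contribution to dismiss.
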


\begin{proof}
In view of Lemma~\ref{lemma: refined L-packet for non-discrete parameter}, the first condition of our global lift $\dot{\p}$ just means that the main local theorem (except for the $(\theta_{0}, \x)$-twisted character relation in the even orthogonal case) holds for all $\lif{\dot{\p}}_{v}$ $(v \neq u)$. 
The second condition means that 
\[
tr R^{\lif{\dot{G}}}_{disc, \dot{\p}}(\lif{\dot{f}}) = I^{\lif{\dot{G}}}_{disc, \dot{\p}}(\lif{\dot{f}}) = S^{\lif{\dot{G}}}_{disc, \dot{\p}}(\lif{\dot{f}}) \neq 0
\]
for $\lif{\dot{f}} \in \sH(\lif{\dot{G}}, \lif{\dot{\chi}})$, which follows from Corollary~\ref{cor: stability} and the fact that $\dot{\p} \in \cPdt{\dot{G}}$ (cf. \eqref{eq: discrete part vs discrete spectrum}). 
It follows from Proposition~\ref{prop: discrete spectrum} that
\begin{align}
I^{\lif{\dot{G}}}_{disc, \dot{\p}}(\lif{\dot{f}}) = m_{\dot{\p}} \sum_{\dot{\x} \in \dot{Y} / \a(\S{\dot{\p}})} \sum_{\lif{\dot{\r}}} \lif{\dot{f}}_{\lif{\dot{G}}}(\lif{\dot{\r}} \otimes \dot{\x}) \label{eq: standard argument on stability 1}
\end{align}
for $\lif{\dot{f}} \in \sH(\lif{\dot{G}}, \lif{\dot{\chi}})$, where the sum of $\lif{\dot{\r}}$ is taken over representatives of $\lif{\bar{\Pi}}_{\dot{\p}, \lif{\dot{\zeta}}} / \dot{X}$ inside $\mathcal{A}_{2}(\lif{\dot{G}})$. Here we will always view representations of $\lif{\dot{G}}(\A_{F})$ as $\sH(\lif{\dot{G}}, \lif{\dot{\chi}})$-modules. Since $I^{\lif{\dot{G}}}_{disc, \dot{\p}}(\lif{\dot{f}})$ is stable, it is stable at every place. If we take $\lif{\dot{f}} = \bigotimes_{w}\lif{\dot{f}}_{w}$ and fix $\bigotimes_{w \neq v}\lif{\dot{f}}_{w}$ for $v \neq u$, then by Corollary~\ref{cor: refined L-packet} the coefficients of $\lif{\dot{f}}_{v}(\lif{\dot{\r}}_{v})$ in $I^{\lif{\dot{G}}}_{disc, \dot{\p}}(\lif{\dot{f}})$ must be the same for all $[\lif{\dot{\r}}_{v}] \in \cPkt{\lif{\dot{\p}}_{v}}$. Moreover, if we fix a representation $\lif{\dot{\r}} \in \mathcal{A}_{2}(\lif{\dot{G}})$, by varying $\bigotimes_{w \neq v}\lif{\dot{f}}_{w}$ and the linear independence of characters of $\bigotimes_{w \neq v}\bar{\mathcal{H}}(\lif{\dot{G}}_{w}, \lif{\dot{\chi}}_{w})$-modules, we will observe that for $v \neq u$
\[
[\lif{\dot{\r}}_{v}] \bigotimes (\bigotimes_{w \neq v} [\lif{\dot{\r}}_{w}] ) \in \lif{\bar{\Pi}}_{\dot{\p}, \lif{\dot{\zeta}}} 
\]
contributes to \eqref{eq: standard argument on stability 1} if and only if
\[
[\lif{\dot{\r}}'_{v}] \bigotimes (\bigotimes_{w \neq v} [\lif{\dot{\r}}_{w}] ) \in \lif{\bar{\Pi}}_{\dot{\p}, \lif{\dot{\zeta}}}
\]
also contributes to \eqref{eq: standard argument on stability 1} for all $[\lif{\dot{\r}}'_{v}] \in \cPkt{\lif{\dot{\p}}_{v}}$, where $\cPkt{\lif{\dot{\p}}_{v}}$ contains $[\lif{\dot{\r}}_{v}]$. We still fix $\lif{\dot{\r}}$ and hence $\cPkt{\lif{\dot{\p}}_{v}}$ for all $v \neq u$. Then \eqref{eq: standard argument on stability 1} will contain $\sH(\lif{\dot{G}}, \lif{\dot{\chi}})$-modules of the form 
\[
[\lif{\dot{\r}}_{u}] \bigotimes (\bigotimes_{v \neq u} [\lif{\dot{\r}}_{v}])
\]
where $[\lif{\dot{\r}}_{v}]$ ranges over $\cPkt{\lif{\dot{\p}}_{v}}$ for all $v \neq u$. Suppose there is a distinct $\sH(\lif{\dot{G}}, \lif{\dot{\chi}})$-module 
\[
[\lif{\dot{\r}}'_{u}] \bigotimes (\bigotimes_{v \neq u} [\lif{\dot{\r}}_{v}])
\]
in \eqref{eq: standard argument on stability 1} such that $[\lif{\dot{\r}}_{v}] \in \cPkt{\lif{\dot{\p}}_{v}}$ for all $v \neq u$, then $[\lif{\dot{\r}}'_{u}] \neq [\lif{\dot{\r}}_{u}] \otimes \x$ for any character $\x \in X$. Otherwise, there will exist $\dot{\x} \in \dot{Y}$ such that $[\lif{\dot{\r}}_{u}] \otimes \dot{\x}_{u} = [\lif{\dot{\r}}'_{u}] \neq [\lif{\dot{\r}}_{u}]$ and $\cPkt{\lif{\dot{\p}}_{v}} = \cPkt{\lif{\dot{\p}}_{v}} \otimes \dot{\x}_{v}$ for all $v \neq u$. This is impossible because of the third condition, i.e. $\Sigma_{0}$-strong multiplicity one holds for $\lif{\dot{\p}}$. Therefore if we consider all $[\lr] \in \lif{\bar{\Pi}}_{\p, \lif{\zeta}}$ such that 
\[
[\lr] \bigotimes (\bigotimes_{v \neq u} \cPkt{\lif{\dot{\p}}_{v}})
\] 
is contained in \eqref{eq: standard argument on stability 1}, this gives a non-empty set $\cPkt{\lp}$ of representatives of 
\(
\lif{\bar{\Pi}}_{\p, \lif{\zeta}} / X
\)
in $\lif{\bar{\Pi}}_{\p, \lif{\zeta}}$. To see why this gives all the representatives, one just needs to take the test function $\lif{\dot{f}} = \otimes_{v}\lif{\dot{f}}_{v}$ such that $\lif{\dot{f}}_{u}$ is supported on $\lif{\dot{Z}}_{\dot{F}_{u}}\dot{G}(\dot{F}_{u})$, then it is the same to consider representations of 
\[
\lif{\dot{Z}}_{\dot{F}_{u}} \dot{G}(\dot{F}_{u}) \times \prod_{v \neq u} \lif{\dot{G}}(\dot{F}_{v}).
\]
By the same reasoning using stability, one can conclude that 
\[
\cPkt{\p} \bigotimes (\bigotimes_{v \neq u} \cPkt{\lif{\dot{\p}}_{v}})
\]
is contained in \eqref{eq: standard argument on stability 1}, therefore $\cPkt{\lp}$ must contains all representatives of $\lif{\bar{\Pi}}_{\p, \lif{\zeta}} / X$ in $\lif{\bar{\Pi}}_{\p, \lif{\zeta}}$. Moreover, it follows again from $\Sigma_{0}$-strong multiplicity one and stability of \eqref{eq: standard argument on stability 1} that 
\[
\lf(\lp) := \sum_{[\lr] \in \cPkt{\lp}} \lf_{\lG}(\lr)
\]
is stable. This shows the packet $\cPkt{\lp}$ satisfies the property (1) and (2) of Theorem~\ref{thm: refined L-packet}.

\end{proof}

\begin{remark}
\label{rk: standard argument on stability}

\begin{enumerate}

\item
Following the proof, we can rewrite \eqref{eq: standard argument on stability 1} as 
\[
I^{\lif{\dot{G}}}_{disc, \dot{\p}}(\lif{\dot{f}}) = m_{\dot{\p}} \sum_{\dot{\x} \in \dot{Y} / \a(\S{\dot{\p}})} \sum_{[\lif{\dot{\r}}] \in \cPkt{\lif{\dot{\p}}} \otimes \dot{\x}} \lif{\dot{f}}_{\lif{\dot{G}}}(\lif{\dot{\r}} ) 
\]
where 
\[
\cPkt{\lif{\dot{\p}}} =  \cPkt{\lp} \bigotimes (\bigotimes_{v \neq u} \cPkt{\lif{\dot{\p}}_{v}} ) .
\]
If we define 
\[
\lif{\dot{f}}(\lif{\dot{\p}}) := \prod_{v} \lif{\dot{f}}_{v}(\lif{\dot{\p}}_{v}) , 
\]
then we get the stable multiplicity formula for our lift $\lif{\dot{\p}}$
\[
S^{\lif{\dot{G}}}_{disc, \dot{\p}}(\lif{\dot{f}}) = I^{\lif{\dot{G}}}_{disc, \dot{\p}}(\lif{\dot{f}}) = m_{\dot{\p}} \sum_{\dot{\x} \in \dot{Y} / \a(\S{\dot{\p}})} \lif{\dot{f}}(\lif{\dot{\p}} \otimes \dot{\x}).
\]
This identity will be used in the proof of Theorem~\ref{thm: refined L-packet for discrete parameter}.

\item 
The statement of this theorem indicates that we need a lifting result for the existence of such $\dot{\p}$. In fact, there is a standard argument using the simple invariant trace formula which provides a global lift so that one is allowed to impose some local conditions. That argument is carried out in quite detail in (\cite{Arthur:2013}, Section 6.2 and 6.3), and the local conditions that Arthur imposes already take care of our first additional condition in most cases. Even though the global lift which Arthur uses does not necessarily satisfy the other two conditions, his argument is still flexible enough to leave us a lot of room to manipulate. In fact, it is not hard to impose the second condition after we give a combinatorial description of the exact sequence 
\begin{align*}
\xymatrix{1 \ar[r] &  \S{\lif{\dot{\p}}} \ar[r]^{\iota} & \S{\dot{\p}} \ar[r]^{\a \quad \quad \quad \quad \quad \quad }  & \Hom(\lif{\dot{G}}(\A_{\dot{F}})/\lif{\dot{G}}(\dot{F})\dot{G}(\A_{\dot{F}}), \C^{\times}).}               
\end{align*}
However, for technical reasons the third condition is not so easy to satisfy, and it seems that we have asked something too strong. By tracking the argument in our proof carefully, one will observe that it is enough to have 
\[
\prod^{aut}_{v} \a(\S{\dot{\p}_{v}}^{\Sigma_{0}}) = \prod^{aut}_{v \neq u} \a(\S{\dot{\p}_{v}}^{\Sigma_{0}}).
\] 
This condition can also be interpreted in terms of strong multiplicity one, which means that for any $\dot{\lr} \in \mathcal{A}(\dot{\lG})$ such that $[\dot{\lr}_{v}] \in \cPkt{\lif{\dot{\p}}_{v}}$ for all $v \neq u$, $[\dot{\lr}]$ must be also in $\cPkt{\lif{\dot{\p}}}$. If this condition is satisfied, we say $\Sigma_{0}$-strong multiplicity one holds for $\lif{\dot{\p}}$ {\bf at the place} $u$. And it is the most technical part of this paper to establish this property.

\end{enumerate}

\end{remark}

\subsection{A combinatorial description of $\S{\p}$} 
\label{sec: combinatorial description}

Now we will give a combinatorial description of the exact sequences \eqref{eq: local twisted endoscopic sequence} and \eqref{eq: global twisted endoscopic sequence}. We assume $F$ is either local or global. Suppose $G = G(n)$, $\p \in \cP{G}$ if $F$ is global or $\cPbd{G}$ if $F$ is local, and 
\[
\p = l_{1} \p_{1} \# \cdots \# l_{r} \p_{r},
\]
where $\p_{i} \in \Psm{N_{i}}$ for $1 \leqslant i \leqslant r$. From the discussion of Section~\ref{sec: Arthur's theory}, the set of indices can be written as a disjoint union of 
\[
I_{\p, O} \sqcup I_{\p, S} \sqcup J_{\p} \sqcup J_{\p}^{\vee}
\]
where $I_{\p, O}$ ($I_{\p, S}$) is the set of indices that index self-dual parameters of orthogonal (symplectic) type. In particular, since we are considering $G$ to be either a special even orthogonal group or a symplectic group, $\D{G}$ will always be orthogonal and hence the multiplicities $l_{i}$ must be even for $i \in I_{\p, S}$. On the other hand, let us denote 
\begin{align*}
I^{odd}_{\p, O} & = \{ i \in I_{\p, O} : l_{i} \text{ is odd }\}, \\
I^{even}_{\p, O} & = \{ i \in I_{\p, O} : l_{i} \text{ is even }\}.
\end{align*}  
Moreover, let $S$ and $T$ be subsets of $I^{odd}_{\p, O}$ and $I^{even}_{\p, O}$ respectively, with the condition that 
\(
\sum_{i \in S \cup T} N_{i}
\) 
is even if $G$ is special even orthogonal. And we allow $S$ and $T$ to be empty sets. Then the pair of such sets modulo the following equivalence relation gives us the combinatorial object that we need to substitute for $\S{\p}$, i.e.
\[
\mathcal{P}_{\p} = \{ (S, T) \} / (S, T) \sim (S^{c}, T)
\]
where $S^{c}$ is the complement of $S$ in $I^{odd}_{\p, O}$. There is a natural map from $\mathcal{P}_{\p}$ to $\Hom(\lG(F)/G(F), \C^{\times})$ if $F$ is local (resp. $\Hom(\lG(\A_{F})/ \lG(F)G(\A_{F}), \C^{\times})$ if $F$ is global), which sends
\[
(S, T) \longmapsto ( \prod_{i \in S \cup T} \eta_{\p_{i}} ) \circ \c
\]
where $\eta_{\p_{i}}$ is the central character of $\r_{\p_{i}}$. Let us denote this map by $\a_{\mathcal{P}}$ and the kernel of this map by $\mathcal{P}_{\lp}$, then we get a sequence 
\begin{align}
\label{eq: local combinatorial sequence}
\xymatrix{ 1  \ar[r]   &    \mathcal{P}_{\lp} \ar[r]      &  \mathcal{P}_{\p}   \ar[r]^{\a_{\mathcal{P}} \quad \quad \quad \quad }  &  \Hom(\lG(F)/G(F), \C^{\times}) }  
\end{align}
if $F$ is local, and
\begin{align}
\label{eq: global combinatorial sequence}
\xymatrix{ 1  \ar[r]   &    \mathcal{P}_{\lp} \ar[r]      &  \mathcal{P}_{\p}   \ar[r]^{\a_{\mathcal{P}} \quad \quad \quad \quad \quad \quad}  &  \Hom(\lG(\A_{F})/ \lG(F)G(\A_{F}), \C^{\times}) }  
\end{align}
if $F$ is global.

To compare these sequences with \eqref{eq: local twisted endoscopic sequence} and \eqref{eq: global twisted endoscopic sequence}, we need a map connecting $\S{\p}$ and $\mathcal{P}_{\p}$. To define such a map, we consider semisimple $s \in \cS{\p}$, and $(G_{s}', \p') \rightarrow (\p, s)$. In general, $G'_{s}$ may not be elliptic, but it will lie in $\End{ell}{M}$ for some Levi subgroup $M$ of $G$. Since $M$ is a product of general linear groups  with a group $G_{-}$ of the same type as $G$ with smaller rank, then $G'_{s}$ contains a factor $G'_{I} \times G'_{II} \in \End{ell}{G_{-}}$ and $\p'$ will decompose accordingly. Suppose $\p_{-}$ is the component of $\p$ contributing to $G_{-}$, and
\[
\p_{-}' = \p'_{I} \times \p'_{II}
\]
if $G$ is special even orthogonal, or
\[
\p_{-}' = (\p'_{I} \otimes \eta_{\p'_{I}}) \times \p'_{II} 
\]
if $G$ and $G'_{I}$ are symplectic. In either case, $\p'_{I}, \p'_{II}$ give a partition of simple parameters in $\p_{-}$. Let $S$ ($T$) be the subset of $I^{odd}_{\p, O}$ ($I^{even}_{\p, O}$) parametrizing simple parameters in $\p'_{I}$ with odd multiplicites. It is easy to see that $(S, T) \in \mathcal{P}_{\p}$, so we get a map $\bold{c}^{G} = \bold{c}: \cS{\p} \longrightarrow \mathcal{P}_{\p}$ in this way. Moreover we have the following lemma.

\begin{lemma}
\label{lemma: combinatorial description}
\begin{enumerate}
\item The map $\bold{c}$ defined above will factor through $\S{\p}$, and it gives a bijection between $\S{\p}$ and $\mathcal{P}_{\p}$.
\item If we denote the bijection in (1) still by $\bold{c}$, then we have a commutative diagram.
\[
\xymatrix{   1 \ar[r] \ar@{=}[d]    &  \S{\lp} \ar[d]  \ar[r]    &   \S{\p} \ar[d]^{\bold{c}}    \ar[r]^{\a \quad \quad \quad \quad}   &  \Hom(\lG(F)/G(F), \C^{\times})  \ar@{=}[d]   \\
1 \ar[r]     &    \mathcal{P}_{\lp}  \ar[r]    &  \mathcal{P}_{\p}   \ar[r]^{\a_{\mathcal{P}} \quad \quad \quad \quad}   & \Hom(\lG(F)/G(F), \C^{\times}) }
\]
if $F$ is local, or
\[
\xymatrix{   1 \ar[r] \ar@{=}[d]    &  \S{\lp} \ar[d]  \ar[r]    &   \S{\p} \ar[d]^{\bold{c}}    \ar[r]^{\a \quad \quad \quad \quad \quad \quad}   &  \Hom(\lG(\A_{F})/ \lG(F)G(\A_{F}), \C^{\times})  \ar@{=}[d]   \\
1 \ar[r]     &    \mathcal{P}_{\lp}  \ar[r]    &  \mathcal{P}_{\p}   \ar[r]^{\a_{\mathcal{P}} \quad \quad \quad \quad \quad \quad}   & \Hom(\lG(\A_{F})/ \lG(F)G(\A_{F}), \C^{\times}) }
\]
if $F$ is global.
\end{enumerate}
\end{lemma}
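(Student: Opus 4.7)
The plan is to unpack the explicit structure of $\S{\p}$ coming from \eqref{formula: centralizer} and identify it with $\mathcal{P}_{\p}$ by tracking how a semisimple element $s$ decomposes the parameter under the endoscopic construction. Writing
\[
S_{\p} = (\prod_{i \in I_{\p, O}} O(l_{i}, \C))_{\p}^{+} \times \prod_{i \in I_{\p, S}} Sp(l_{i}, \C) \times \prod_{j \in J_{\p}} GL(l_{j}, \C),
\]
the $Sp$ and $GL$ factors are connected and contribute nothing to $\pi_{0}$. A semisimple $s \in \cS{\p}$ thus projects to a sign pattern $(\det s_{i})_{i \in I_{\p, O}} \in \{\pm 1\}^{|I_{\p, O}|}$, and for the pair $(G'_{s}, \p') \to (\p, s)$ the $\pm 1$-eigenspace decomposition of $s$ acting on $V_{i}^{\oplus l_{i}}$ has dimensions $(l_{i,I}, l_{i,II})$ with $l_{i,I} + l_{i,II} = l_{i}$ and $\det s_{i} = (-1)^{l_{i,II}}$. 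I would then check that defining $\bold{c}(s) = (S, T)$ as in the statement is exactly the same as recording this sign pattern: for $i \in I_{\p, O}^{odd}$ one has $i \in S$ iff $\det s_{i} = +1$, and for $i \in I_{\p, O}^{even}$ one has $i \in T$ iff $\det s_{i} = -1$.

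For part (1), well-definedness of $\bold{c}$ into $\mathcal{P}_{\p}$ reduces to two checks. First, the parity condition $\sum_{i \in S \cup T} N_{i} \equiv 0 \pmod 2$ (when $G$ is even orthogonal) is equivalent to $\sum_{i} N_{i} l_{i,I} \equiv 0 \pmod 2$, which holds because the standard representation of $G'_{I}$ has even dimension. Second, the equivalence $(S,T) \sim (S^{c}, T)$ corresponds to the relabeling $G'_{I} \leftrightarrow G'_{II}$ (equivalently, replacing $s$ by $-s$): since $\det(-s_{i}) = (-1)^{l_{i}} \det s_{i}$, this flips the sign precisely at $i \in I_{\p,O}^{odd}$ and leaves $i \in I_{\p,O}^{even}$ fixed. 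Injectivity follows because $\com[0]{\cS{\p}}$ consists of products lying in each $SO(l_{i}, \C)$, hence acts trivially on the sign pattern; surjectivity is a direct construction of $s$ from any $(S,T) \in \mathcal{P}_{\p}$. Finally, the $\e_{\p}^{+}$-constraint in $S_{\p}$ and the modding by $Z(\D{G})^{\Gal{F}}$ in passing to $\cS{\p}$ together cut out exactly the same subgroup and quotient on the combinatorial side as $\mathcal{P}_{\p}$ prescribes.

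For part (2), the key identity $\a(s) = \a_{\mathcal{P}}(\bold{c}(s))$ follows from Lemma~\ref{lemma: twisted character}: $\a(s)$ is the character $\x$ of $\lG(F)/G(F)$ (resp.\ the global analogue) determined by the lift $(\lG'_{s}, \lif{s}, \lif{\xi})$ of the endoscopic datum for $(G'_{s}, s, \xi)$. Using the explicit tables of $(\theta, \x)$-twisted elliptic endoscopic data in Section~\ref{subsubsec: twisted endoscopy}, the character $\x$ for the lift of $G'_{I} \times G'_{II}$ into $\lG$ equals $\eta \circ \c$, where $\eta$ is the quadratic character attached to the orthogonal factor $G'_{I}$. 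That $\eta$ is nothing but the determinant character of the Galois representation underlying $\p'_{I}$, which decomposes as the sum of those $\p_{i}$'s indexed by $S \cup T$ (with appropriate multiplicities) together with symplectic-type and general-linear pieces that contribute trivially. Since a self-dual simple orthogonal parameter contributes $\eta_{\p_{i}}$ to its determinant and appears with odd multiplicity in $\p'_{I}$ exactly when $i \in S \cup T$, one obtains $\eta = \prod_{i \in S \cup T} \eta_{\p_{i}}$, and composing with $\c$ yields $\a_{\mathcal{P}}(S, T)$. Commutativity of the left square is automatic from the identification of $\S{\lp}$ and $\mathcal{P}_{\lp}$ as the kernels of $\a$ and $\a_{\mathcal{P}}$ respectively.

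The main obstacle I expect is the bookkeeping in the $GSpin$-side central character computation, particularly when $G$ is even orthogonal so that $Z(\D{\lG})$ carries a nontrivial Galois action \eqref{eq: Galois action} and the $GSpin$ extension imposes a subtle parity. Making precise which lift $\lif{\xi}$ one chooses for each $s$ and matching this with the combinatorial data $(S, T)$ is the main technical point; once this is done, the computation of $\x$ as a product of quadratic Galois characters is a routine tracking of determinants through $\D{\lG}' \subseteq \D{\lG}$.
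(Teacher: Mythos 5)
Your proof is essentially correct, but takes a genuinely different (and in some ways cleaner) route from the paper for part (1). The paper establishes that $\bold{c}$ factors through $\S{\p}$ by an abstract Levi-descent argument: it fixes $(\bar{T}_{\p}, \bar{B}_{\p})$, reduces invariance to $\bar{T}_{\p,x}$-translates of a nice representative $s_x$, and then passes to the Levi $M_x = \Cent(\bar{T}_{\p,x},\D{G})^{\vee}$ where the parameter becomes discrete; bijectivity is then proved by explicitly computing $|\S{\p}|$ and $|\mathcal{P}_{\p}|$ case-by-case and verifying surjectivity by construction. You instead identify $\pi_{0}(S_{\p})$ directly with the group of sign patterns $(\det s_i)_{i\in I_{\p,O}}$ cut out by $\e_{\p}^{+}$, observe that the parity of the multiplicity of $\p_i$ in $\p'_{I}$ is controlled by $\det s_i$ alone (since non-$\pm1$ eigenvalues come in pairs), and then match the $\e_{\p}^{+}$-constraint and $Z(\D{G})^{\Gal{F}}$-flip against the parity condition and $(S,T)\sim(S^c,T)$ in $\mathcal{P}_{\p}$; this gives well-definedness and bijectivity simultaneously without explicit counting. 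For part (2) both proofs run through Lemma~3.19 and the endoscopic tables, with you supplying the determinant calculation $\eta_{\p'_{I}}=\prod_{i\in S\cup T}\eta_{\p_i}$ that the paper merely records.

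A few points worth flagging, none fatal. Your stated rule ``$i\in S$ iff $\det s_i=+1$'' is the complement of the paper's convention (which assigns $S\cup T$ to $\det s_i=-1$, as one sees from the paper's surjectivity construction with $s_i=\mathrm{diag}(-1,1,\dots,1)$); this only produces $(S^c,T)$ rather than $(S,T)$, which is harmless modulo $\sim$, but you should note the labeling choice. The parenthetical ``equivalently, replacing $s$ by $-s$'' for the $\sim$-relation is only literally valid when $G$ is even orthogonal: for $G=Sp(2n)$, $-1\notin\D{G}=SO(2n+1,\C)$, and $G'_{I}, G'_{II}$ are of different type, so there is no $-s$ and no $G'_{I}\leftrightarrow G'_{II}$ swap; the equivalence in $\mathcal{P}_{\p}$ there is instead absorbed by the $\e_{\p}^{+}$-constraint (which forces $\sum_{S\cup T}N_i$ odd, picking out exactly one representative of each $\sim$-class). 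Similarly your dimension argument for the parity condition is phrased only for $G$ even orthogonal; for $G$ symplectic the paper's $\mathcal{P}_{\p}$ imposes no condition, so there is nothing to check, but your closing claim that the constraint and quotient ``together cut out exactly the same'' thing as $\mathcal{P}_{\p}$ should be read in this slightly asymmetric way. Finally, your formula $l_{i,I}+l_{i,II}=l_i$ is only correct for $s$ with all eigenvalues $\pm1$; for general $s$ one has $l_{i,I}+l_{i,II}+2c_i=l_i$, but the conclusion that $\det s_i$ determines the parity of the $\pm1$-eigenspace dimensions still holds and is what you actually use.
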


\begin{proof}
First we would like to show $\bold{c}$ factors though $\S{\p}$, i.e. for any $s \in \cS{\p}$, $\bold{c}(s)$ only depends on the image $x$ of $s$ in $\S{\p}$. Note that if $s$ is replaced by an $\com[0]{\cS{\p}}$-conjugate $s_{1}$, then the corresponding pair $(G'_{1}, \p'_{1})$ is isomorphic to $(G', \p')$. So by our definition 
\[
\bold{c}(s_{1}) = \bold{c}(s).
\]
Now if we fix a maximal torus $\bar{T}_{\p}$ of $\com[0]{\cS{\p}}$ and a Borel subgroup $\bar{B}_{\p}$ containing it, any automorphism of the complex reductive group $\com[0]{\cS{\p}}$ stabilizes a conjugate of $(\bar{T}_{\p}, \bar{B}_{\p})$. So we can choose a representative $s_{x}$ of $x$ in $\cS{\p}$ so that $\Int(s_{x})$ stabilizes $(\bar{T}_{\p}, \bar{B}_{\p})$, and such representatives are determined up to a $\bar{T}_{\p}$-translate. Moreover the complex torus
\[
\bar{T}_{\p, x} = \Cent(s_{x}, \bar{T}_{\p})^{0}
\]
in $\bar{T}_{\p}$ is uniquely determined by $x$. Note that $\bar{T}_{\p, x}$ is the connected component of the kernel of the following morphism
\[
\xymatrix{ \bar{T}_{\p} \ar[r]  & \bar{T}_{\p} \\
t \ar@{|->}[r]  & s_{x}^{-1}ts_{x}t^{-1}}
\]
So any point of $\bar{T}_{\p}$ can be written as $(s_{x}^{-1}ts_{x}t^{-1}) t_{x}$ for $t \in \bar{T}_{\p}$ and $t_{x} \in \bar{T}_{\p, x}$ (see \cite{Springer:2009}, Corollary 5.4.5), and hence any point in $s_{x} \bar{T}_{\p}$ can be written as
\[
s_{x} (s_{x}^{-1}ts_{x}t^{-1}) t_{x} = t s_{x} t^{-1} t_{x} = t s_{x} t_{x}  t^{-1}, \,\,\,\, t \in \bar{T}_{\p}, \,\,\, t_{x} \in \bar{T}_{\p, x}.
\]
Therefore it will be enough to show that 
\[
\bold{c}(s_{x}) = \bold{c}( s_{x} t_{x})
\]
for any $t_{x} \in \bar{T}_{\p, x}$.

The centralizer $\D{M}_{x}$ of $\bar{T}_{\p, x}$ in $\D{G}$ is a Levi subgroup of $\D{G}$, which is dual to a Levi subgroup $M_{x}$ of $G$. So $(\p, s_{x})$ is the image of a pair
\[
(\p_{M_{x}}, s_{M_{x}}), \,\,\,\, \p_{M_{x}} \in \cP{M_{x}}, s_{M_{x}} \in S_{\p_{M_{x}}},
\] 
attached to $M_{x}$ under the $L$-embedding $\L{M_{x}} \subseteq \L{G}$. And this pair is in turn the image of an endoscopic pair $(M'_{x}, \p'_{M_{x}})$. Note that $M_{x}$ has  a component $G_{x}$ of the same type as $G$ and $\cS{\p_{G_{x}}} \cong \cS{\p_{M_{x}}}$. So we can define a map $\bold{c}^{M_{x}}$ on $\cS{\p_{M_{x}}}$ by $\bold{c}^{G_{x}}$ with respect to the component $\p_{G_{x}}$ of $\p_{M_{x}}$. Since $M'_{x}$ can be identified with a Levi subgroup of $G'$, one can easily check that 
\[
\bold{c}^{M_{x}}(s_{M_{x}}) = \bold{c}(s_{x}).
\]  
Note that $\bold{c}^{M_{x}}(s_{M_{x}})$ is invariant under the translation of $s_{M_{x}}$ by $\bar{T}_{\p, x}$, so the same is true of $\bold{c}(s_{x})$.

Secondly we need to show that $\bold{c}$ is in fact a bijection between $\S{\p}$ and $\mathcal{P}_{\p}$. Note that we can actually compute $|\S{\p}|$ and $|\mathcal{P}_{\p}|$ explicitly. For $|\S{\p}|$, we have the description from Section~\ref{sec: Arthur's theory} that
\begin{align}
S_{\p} = (\prod_{i \in I_{\p, O}} O(l_{i}, \C))_{\p}^{+} \times (\prod_{i \in I_{\p, S}} Sp(l_{i}, \C)) \times (\prod_{j \in J_{\p}} GL(l_{j}, \C)),   \label{eq: combinatorial description 1}
\end{align}
where $(\prod_{i \in I_{\p, O}} O(l_{i}, \C))_{\p}^{+}$ is the kernel of the character 
\[
\xi_{\p}^{+} : \prod_{i} g_{i} \longrightarrow \prod_{i} (det \, g_{i})^{N_{i}}, \,\,\,\,\, g_{i} \in O(l_{i}, \C), i \in I_{\p, O}.
\]
If $G$ is symplectic or $G$ is special even orthogonal with $I^{odd}_{\p, O}$ being empty, then
\[
\S{\p} = \begin{cases}
                (\Two)^{|I_{\p, O}|} & \text{ if all $N_{i}$ are even for $i \in I_{\p, O}$}, \\
                (\Two)^{|I_{\p, O}|-1} & \text{ otherwise. }
                \end{cases}
\]
If $G$ is special even orthogonal and $I^{odd}_{\p, O}$ is not empty, then $Z(\D{G}) \notin \com[0]{\cS{\p}}$ and thus
\[
\S{\p} = \begin{cases}
                (\Two)^{|I_{\p, O}| - 1} & \text{ if all $N_{i}$ are even for $i \in I_{\p, O}$}, \\
                (\Two)^{|I_{\p, O}| - 2} & \text{ otherwise. }
                \end{cases}
\] 
For $|\mathcal{P}_{\p}|$, it is just a combinatorial computation. Suppose $G$ is symplectic, there is no condition on $N_{i}$, so $| \mathcal{P}_{\p} | =  2^{|I_{\p, O}|-1}$. Suppose $G$ is special even orthogonal, it again divides into two cases. If all $N_{i}$ are even for $i \in I_{\p, O}$, then the condition on  $N_{i}$ is automatically satisfied and hence 
\[
|\mathcal{P}_{\p}| = \begin{cases}
                                   2^{|I_{\p, O}|} & \text{ if $I^{odd}_{\p, O}$ is empty}, \\
                                   2^{|I_{\p, O}| - 1} & \text{ otherwise }. \\
                                   \end{cases}
\]
And if there exists $i \in I_{\p, O}$ such that $N_{i}$ is odd then 
\[
|\mathcal{P}_{\p}| = \begin{cases}
                                   2^{|I_{\p, O}| - 1} & \text{ if $I^{odd}_{\p, O}$ is empty}, \\
                                   2^{|I_{\p, O}| - 2} & \text{ otherwise }. \\
                                   \end{cases}
\]
Therefore one can conclude that $|\S{\p}| = |\mathcal{P}_{\p}|$. Now it suffices to show that $\bold{c}$ is surjective. In fact for any partition $(S, T)$, one can choose an element $s = (s_{k})_{k \in K_{\p}} \in S_{\p}$ according to the decomposition \eqref{eq: combinatorial description 1} such that it has the form
\[
s_{i} =   \begin{pmatrix} 
              -1   &&& \\
              &1   &&  \\
              &&   \ddots & \\
              &&& 1
              \end{pmatrix}
\text{ for } i \in S \cup T \text{, and } s_{k} = I \text{ otherwise }.
\]
When $G$ is symplectic, we can assume $\sum_{i \in S \cup T} N_{i}$ is odd by possibly changing $(S, T)$ to $(S^{c}, T)$. If $(G', \p') \rightarrow (\p, s)$, then $G'$ is elliptic and $\p' = \p'_{I} \times \p'_{II} \, (\text{or $\p_{-}' = (\p'_{I} \otimes \eta_{\p'_{I}}) \times \p'_{II} $ if $G'_{I}$ is symplectic})$ with the property that 
\[
\p'_{I} = \boxplus_{i \in S \cup T} \p_{i}.
\]
Hence by the definition $\bold{c}(s) = (S, T)$.

For the second part of the lemma, it is enough to show that 
\[
\a(s) = \a_{\mathcal{P}}(\bold{c}(s)),
\]
for $s \in S_{\p}$ being such representatives chosen above. Let 
\[
\eta' = \eta_{\p'_{I}} = \prod_{i \in S \cup T} \eta_{\p_{i}},
\]
then $\a_{\mathcal{P}}(\bold{c}(s)) = \eta'  \circ \c $. Moreover, $G'$ will be $Sp(2n_{1}) \times SO(2n_{2}, \eta')$ if $G = Sp(2n)$, and $SO(2n_{1}, \eta') \times SO(2n_{2}, \eta' \eta)$ if $G = SO(2n, \eta)$. As one can see from the table of twisted elliptic endoscopic groups in Section \ref{subsubsec: twisted endoscopy}, $G'$ can be lifted to $\lG' \in \tEnd{ell}{\lG}$ with $\x = \eta' \circ \c$. So $\x = \a_{\mathcal{P}}(\bold{c}(s))$. Finally we just need to notice $\a(s) = \x$ by Lemma~\ref{lemma: twisted character}, hence $\a(s) = \a_{\mathcal{P}}(\bold{c}(s))$. 

\end{proof}

\begin{corollary}
\label{cor: combinatorial description}
Suppose $\p = l_{1}\p_{1} \# \cdots \# l_{r}\p_{r} \in \cPbd{G}$ if $F$ is local (resp. $\cP{G}$ if F is global), and $S$, $T$ are subsets of $I^{odd}_{\p, O}$, $I^{even}_{\p, O}$ respectively. Suppose that
\[
( \prod_{i \in S \cup T}\eta_{\p_{i}} ) \circ \c \neq 1
\]
unless $T$ is empty and $S$ is either empty or equal to $I^{odd}_{\p, O}$. Then $\S{\lp} = 1$.
\end{corollary}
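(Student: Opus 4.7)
The plan is to apply Lemma~\ref{lemma: combinatorial description} directly. By part (2) of that lemma, the bijection $\bold{c}: \S{\p} \xrightarrow{\sim} \mathcal{P}_{\p}$ identifies the exact sequence defining $\S{\lp}$ with the combinatorial sequence \eqref{eq: local combinatorial sequence} (resp. \eqref{eq: global combinatorial sequence}) defining $\mathcal{P}_{\lp}$. So it suffices to show that $\mathcal{P}_{\lp} = \Ker \a_{\mathcal{P}} = 1$.

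By definition, $\a_{\mathcal{P}}(S, T) = (\prod_{i \in S \cup T} \eta_{\p_{i}}) \circ \c$. The hypothesis of the corollary says that $\a_{\mathcal{P}}(S, T) = 1$ forces $T = \emptyset$ and $S \in \{\emptyset, I^{odd}_{\p, O}\}$. But these two candidate representatives of $\Ker \a_{\mathcal{P}}$ are identified in $\mathcal{P}_{\p}$ under the equivalence relation $(S, T) \sim (S^{c}, T)$, since $\emptyset$ and $I^{odd}_{\p, O}$ are complements of each other in $I^{odd}_{\p, O}$. Therefore $\Ker \a_{\mathcal{P}}$ consists of a single class, namely the identity class represented by $(\emptyset, \emptyset)$, and hence $\mathcal{P}_{\lp} = 1$.

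The only subtle point to check is that the class of $(\emptyset, \emptyset)$ indeed corresponds to the identity of $\S{\p}$ under $\bold{c}$; this is immediate from the construction of $\bold{c}$, since the identity element $s = 1 \in \cS{\p}$ yields the trivial endoscopic partition $\p' = \p$, $\p'_{I} = 0$, so $S = T = \emptyset$. Thus no computation beyond unwinding the definitions is required, and there is no real obstacle once the combinatorial model of Lemma~\ref{lemma: combinatorial description} is in place.
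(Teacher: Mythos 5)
Your proof is correct and follows essentially the same route as the paper's: invoke Lemma~\ref{lemma: combinatorial description} to transport the question from $\S{\lp}$ to $\mathcal{P}_{\lp}$, observe that the hypothesis pins down $\Ker\a_{\mathcal{P}}$ to the two representatives $(\emptyset,\emptyset)$ and $(I^{odd}_{\p,O},\emptyset)$, and note these are identified under $(S,T)\sim(S^{c},T)$, so $|\mathcal{P}_{\lp}|=1$. Your closing check that $(\emptyset,\emptyset)$ corresponds to $1\in\S{\p}$ is harmless but not needed: since $\bold{c}$ carries $\S{\lp}$ bijectively onto $\mathcal{P}_{\lp}$ and $\S{\lp}$ is a group, $|\S{\lp}|=1$ already forces $\S{\lp}=1$.
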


\begin{proof}
It follows from the definition that $| \mathcal{P}_{\lp} | = 1$. By Lemma~\ref{lemma: combinatorial description}, one has $| \S{\lp} | = | \mathcal{P}_{\lp} | = 1$. Hence $\S{\lp} = 1$.
\end{proof}

The following proposition will become useful in our later proofs.

\begin{proposition}
\label{prop: consistency on induction}
Suppose $\p \in \cPel{G^{\theta}}$ for $\theta \in \Sigma_{0}$ and $\S{\lp} = 1$. We assume one of the following condition is satisfied.
\begin{enumerate}

\item $G$ is symplectic,

\item $G$ is special even orthogonal with $\eta_{G} \neq 1$,

\item $G$ is sepcial even orthogonal with $\eta_{G} = 1$, and $I_{\p, O}^{odd}$ or $I_{\p, O}^{even}$ is empty.

\end{enumerate}
If $\p$ factors through $\p' \in \cP{G'}$, where $G' = G_{I} \times G_{II}$ is a twisted elliptic endoscopic group of $G$, let $\p' = \p_{I} \times \p_{II}$, where $\p_{i} \in \cP{G_{i}}$ for $i = I, II$. Then $S_{\lp_{I}} = S_{\lp_{II}} = 1$.

\end{proposition}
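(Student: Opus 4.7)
The plan is a combinatorial argument via the bijection $\S{\p} \cong \mathcal{P}_\p$ of Lemma~\ref{lemma: combinatorial description} applied to $\p$, $\p_I$, and $\p_{II}$, together with Corollary~\ref{cor: combinatorial description}. Since $\p \in \cPel{G^\theta}$, Lemma~\ref{lemma: discrete parameter}(3) yields $K_\p = I_{\p,O}$ with each multiplicity $l_i \leq 2$, so $\p$ is a sum of orthogonal-type simple parameters of multiplicity $1$ or $2$. I would partition
\[
K_\p \,=\, A_I \sqcup B_I \sqcup C \sqcup A_{II} \sqcup B_{II},
\]
where $A_\ell$ (resp.\ $B_\ell$) collects the indices appearing only in $\p_\ell$ with multiplicity $1$ (resp.\ $2$), and $C$ collects the indices shared between $\p_I$ and $\p_{II}$ (multiplicity $1$ in each). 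Unwinding parities gives $I^{odd}_{\p,O} = A_I \cup A_{II}$, $I^{even}_{\p,O} = B_I \cup B_{II} \cup C$, $I^{odd}_{\p_I,O} = A_I \cup C$, $I^{even}_{\p_I,O} = B_I$, and symmetrically for $\p_{II}$.

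I would argue by contradiction. If $\S{\lp_I} \neq 1$, Corollary~\ref{cor: combinatorial description} applied to $\p_I$ produces a non-trivial $(S_I, T_I) \in \mathcal{P}_{\p_I}$ with $\omega := \prod_{i \in S_I \cup T_I} \eta_{\p_i}$ trivial on $\c_I(\lG_I(F))$. To contradict $\S{\lp} = 1$ it suffices to exhibit a non-trivial $(S, T) \in \mathcal{P}_\p$ with the same associated character. The natural candidate $(S, T) := (S_I \cap A_I, (S_I \cap C) \cup T_I)$ has $S \cup T = S_I \cup T_I$ as sets, so both the character and (when $G$ is special even orthogonal) the parity condition $\sum_{i \in S \cup T} N_i \equiv 0 \pmod{2}$ transfer directly from $(S_I, T_I)$. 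A direct check shows $(S,T)$ is non-trivial in $\mathcal{P}_\p$ in every case except the problematic configuration $T_I = \emptyset$, $S_I = A_I$, $A_I \neq \emptyset$, $C \neq \emptyset$, $A_{II} = \emptyset$.

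Under the hypothesis of case (3) this configuration is ruled out: $I^{odd}_{\p,O} = \emptyset$ forces $A_I = \emptyset$ while $I^{even}_{\p,O} = \emptyset$ forces $C = \emptyset$. In cases (1) and (2) I would instead switch to $(\emptyset, C) \in \mathcal{P}_\p$, which is non-trivial since $C \neq \emptyset$ and whose parity, when required, follows from the dimension equation $2n_{II} = 2\sum_{j \in B_{II}} N_j + \sum_{j \in C} N_j$ forced by $A_{II} = \emptyset$ and $\p_{II} \in \cP{G_{II}}$. Its associated character is $\omega_C := \prod_{i \in C} \eta_{\p_i}$; combining the identities $\omega_{A_I} = \eta_G$ (from $\p \in \cP{G}$ with $A_{II} = \emptyset$), $\omega_{A_I}\omega_C = \eta_{G_I}$ (from $\p_I \in \cP{G_I}$), and $\omega_C = \eta_{G_{II}}$ (from $\p_{II} \in \cP{G_{II}}$), with the defining condition that $\eta_G$ is trivial on $\c_I(\lG_I(F))$ and Lemma~\ref{lemma: similitude character} identifying the norm groups, a short case analysis on the type of $G'$ forces $\omega_C = 1$ in each sub-case. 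For case (2) with $\theta = \theta_0$ the problematic configuration does not arise at all: the twist data $(\eta, \eta\eta_G)$ together with the dimension identity $\sum_{i \in A_I} N_i$ even would force $\eta_G = 1$, contradicting the case (2) hypothesis. Applying the same argument with $I$ and $II$ interchanged yields $\S{\lp_{II}} = 1$.

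The main obstacle lies in the character bookkeeping in the problematic configuration for cases (1) and (2): tracking how the three quadratic characters $\eta_G$, $\eta_{G_I}$, $\eta_{G_{II}}$ interact with the norm groups $F^\times$ and $Nm_{E/F} E^\times$ from Lemma~\ref{lemma: similitude character}, and keeping track of the twist by $(\eta, \eta\eta_G)$ present in the $\theta_0$-twisted endoscopic data listed in Section~\ref{subsubsec: twisted endoscopy}. The three hypotheses (1)--(3) are exactly what is needed to push this case analysis through; in particular, a split even orthogonal $G$ without the extra restriction on $I^{odd}_{\p,O}$ or $I^{even}_{\p,O}$ would fail, since then $\c(\lG(F)) = F^\times$ together with $\eta_G = 1$ provide no obstruction to a non-trivial matching of characters between $\mathcal{P}_{\lp_I}$ and $\mathcal{P}_\p$.
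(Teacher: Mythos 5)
Your proof is essentially correct and follows the same combinatorial strategy as the paper's: identify $\S{\p}$ with $\mathcal{P}_\p$ via Lemma~\ref{lemma: combinatorial description}, take a non-trivial element of $\mathcal{P}_{\lp_I}$, push it forward via $(S, T) = (S' \cap I_{\p,O}^{odd}, (S' \cup T') \cap I_{\p,O}^{even})$, and isolate the exceptional configuration where this pushforward is trivial. What you miss is a clean normalization that the paper's proof exploits: flipping $S' \mapsto (S')^c$ changes the parity of $\sum_{i \in S'\cup T'} N_i$ by the parity of $\dim\p_I$ and multiplies the attached character by $\eta_{G_I}$, so one can always choose the representative $(S', T')$ with $\sum_{i \in S'\cup T'} N_i$ \emph{even} and $\prod_{i\in S'\cup T'}\eta_{\p_i} = 1$ \emph{simultaneously}, regardless of whether $G_I$ is $Sp$ or $SO$. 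With that representative in hand, the exceptional case ($T = \emptyset$, $S = I_{\p,O}^{odd}$) forces $\sum_{i\in I_{\p,O}^{odd}} N_i$ even (ruling out symplectic $G$, whose total dimension is odd) and $\eta_G = 1$ (ruling out case (2)), so one is immediately reduced to case (3) and gets the contradiction with $(S',T')$ being non-trivial. Your separate case analysis via $(\emptyset, C)$ for cases (1) and (2) amounts to computing the pushforward of the \emph{other} representative $(C, \emptyset)$ of the same class — it works, but it re-derives the parity fact by hand and the bookkeeping identity $\omega_{A_I}\omega_C = \eta_{G_I}$ is only literally correct after the twist by $\eta_{\p_I}$ when $G_I$ is symplectic (the paper flags this once, upfront, with the phrase ``after possibly twisting $\p_I$ by $\eta_{\p_I}$''; your write-up doesn't address it, and the identity as stated would read $\omega_{A_I}\omega_C = \det\p_I$, not $\eta_{G_I}$, for the representative whose simple factors match those of $\p$). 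The underlying arithmetic in your case analysis is salvageable precisely because $\sum_{i \in C}N_i$ is even, which makes $\omega_C$ twist-invariant — but it would be worth saying so. In short: same idea, same exceptional case, but the parity normalization is the clean lever that unifies all three hypotheses and avoids the separate sub-arguments.
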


\begin{proof}

Since $\p \in \cPel{G^{\theta}}$, we can view 
\[
I_{\p_{I}, O}^{odd} \subseteq I_{\p, O} \text{ and } I_{\p_{I}, O}^{even} \subseteq I_{\p, O}^{even}
\] 
after possibly twisting $\p_{I}$ by $\eta_{\p_{I}}$. Suppose $\S{\lp_{I}} \neq 1$, we can represent any nontrivial element of $\S{\lp_{I}}$ by $(S', T')$ for 
\[
S' \subseteq I_{\p_{I}, O}^{odd} \text{ and } T' \subseteq I_{\p_{I}, O}^{even}
\] 
such that $S' \cup T'$ is nonempty, $\sum_{i \in S' \cup T'} N_{i}$ is even and 
\[
\prod_{i \in S' \cup T'} \eta_{\p_{i}} = 1.
\]
Then we want to show $\S{\lp} \neq 1$, which would lead to a contradiction. 

Let us define $(S, T)$ by 
\[
S = S' \cap I_{\p, O}^{odd} \text{ and } T = (S' \cup T') \cap I_{\p, O}^{even}.
\] 
Then $(S, T)$ corresponds to a nontrivial element in $\S{\lp}$ unless $T$ is empty and $S =  I_{\p, O}^{odd}$. In the exceptional cases, we have 
\[
\text{$T'$ is empty and $S' = S = I_{\p, O}^{odd}$}.
\] 
By our conditions on $(S', T')$, we see $G$ has to be special even orthogonal and $\eta_{G} = 1$. So we only need to consider condition (3). In particular, we can assume $I_{\p, O}^{even}$ is empty, i.e. $I_{\p, O} = I_{\p, O}^{odd}$. It follows $S' = I_{\p_{I}, O}^{odd}$. But this is impossible for $(S', T')$ should correspond to a nontrivial element in $\S{\lp_{I}}$ by our assumption. Therefore, we see $\S{\lp} \neq 1$. Similarly, if we assume $\S{\lp_{II}} \neq 1$, we can also deduce $\S{\lp} \neq 1$. This finishes the proof.

\end{proof}

In the case that $G$ is a special even orthogonal group and $\p \in \Pbd{\com{G}}$ if $F$ is local (resp. $\P{\com{G}}$, if $F$ is global), we can have a similar combinatorial description of the map
from $\S{\p}^{\Sigma_{0}}$ to $\Hom(\lG(F)/G(F), \C^{\times})$ if $F$ is local (resp. $\Hom(\lG(\A_{F})/ \lG(F)G(\A_{F}), \C^{\times})$ if $F$ is global). To do this we need to take a bigger set
\[
\mathcal{P}_{\p}^{\Sigma_{0}} = \{ (S, T) \} / S \sim S^{c} 
\]
by withdrawing the condition that 
\(
\sum_{i \in S \cup T} N_{i}
\)
must be even. It is easy to see that one can extend the map $\a_{\mathcal{P}}$ to $\mathcal{P}_{\p}^{\Sigma_{0}}$ and the map $\bold{c}$ to $\cS{\p}^{\Sigma_{0}}$ with its image in $\mathcal{P}_{\p}^{\Sigma_{0}}$. As a result, we have the following lemma which is an analogue of Lemma~\ref{lemma: combinatorial description}, and the proof is similar.

\begin{lemma}
\label{lemma: plus combinatorial description}
\begin{enumerate}
\item The extended map $\bold{c}$ will factor through $\S{\p}^{\Sigma_{0}}$, and it gives a bijection between $\S{\p}^{\Sigma_{0}}$ and $\mathcal{P}_{\p}^{\Sigma_{0}}$.
\item If we denote the bijection in (1) still by $\bold{c}$, then we have a commutative diagram.
\[
\xymatrix{   1 \ar[r] \ar@{=}[d]    &  \S{\lp}^{\Sigma_{0}} \ar[d]  \ar[r]    &   \S{\p}^{\Sigma_{0}} \ar[d]^{\bold{c}}    \ar[r]^{\a \quad \quad \quad \quad}   &  \Hom(\lG(F)/G(F), \C^{\times})  \ar@{=}[d]   \\
1 \ar[r]     &    \mathcal{P}^{\Sigma_{0}}_{\lp}  \ar[r]    &  \mathcal{P}^{\Sigma_{0}}_{\p}   \ar[r]^{\a_{\mathcal{P}} \quad \quad \quad \quad}   & \Hom(\lG(F)/G(F), \C^{\times}) }
\]
if $F$ is local, or
\[
\xymatrix{   1 \ar[r] \ar@{=}[d]    &  \S{\lp}^{\Sigma_{0}} \ar[d]  \ar[r]    &   \S{\p}^{\Sigma_{0}} \ar[d]^{\bold{c}}    \ar[r]^{\a \quad \quad \quad \quad \quad \quad}   &  \Hom(\lG(\A_{F})/ \lG(F)G(\A_{F}), \C^{\times})  \ar@{=}[d]   \\
1 \ar[r]     &    \mathcal{P}^{\Sigma_{0}}_{\lp}  \ar[r]    &  \mathcal{P}^{\Sigma_{0}}_{\p}   \ar[r]^{\a_{\mathcal{P}} \quad \quad \quad \quad \quad \quad}   & \Hom(\lG(\A_{F})/ \lG(F)G(\A_{F}), \C^{\times}) }
\]
if $F$ is global.
\end{enumerate}
\end{lemma}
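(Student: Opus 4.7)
\medskip

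\noindent\textbf{Proof proposal.} The plan is to follow the strategy of Lemma~\ref{lemma: combinatorial description} essentially verbatim, with only the combinatorial bookkeeping enlarged to account for the extra action of $\Sigma_{0}$. For part (1), I first show that the extended $\bold{c}$ factors through $\S{\p}^{\Sigma_{0}}$. Fix a maximal torus $\bar{T}_{\p}$ of $\com[0]{\cS{\p}}$ and a Borel subgroup $\bar{B}_{\p}$ containing it. For any semisimple $s \in \cS{\p}^{\Sigma_{0}}$ with image $x \in \S{\p}^{\Sigma_{0}}$, we may choose a representative $s_{x}$ such that $\Int(s_{x})$ stabilizes $(\bar{T}_{\p},\bar{B}_{\p})$; such representatives are unique up to $\bar{T}_{\p,x}$-translate, where $\bar{T}_{\p,x}=\Cent(s_{x},\bar{T}_{\p})^{0}$. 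Exactly as before, any element of $s_{x}\bar{T}_{\p}$ is a $\bar{T}_{\p}$-conjugate of $s_{x}t_{x}$ for some $t_{x}\in\bar{T}_{\p,x}$, and the centralizer $\D{M}_{x}$ of $\bar{T}_{\p,x}$ in $\D{G}$ corresponds to a $\theta_{0}$-stable Levi subgroup $M_{x}$ of $G$. Lifting $(\p,s_{x})$ to a pair $(\p_{M_{x}},s_{M_{x}})$ attached to $M_{x}$ and then to an endoscopic pair $(M'_{x},\p'_{M_{x}})$, one checks that $\bold{c}(s_{x})=\bold{c}^{M_{x}}(s_{M_{x}})$ and that this value is invariant under $\bar{T}_{\p,x}$-translation of $s_{M_{x}}$. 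Hence $\bold{c}$ is well-defined on $\S{\p}^{\Sigma_{0}}$.

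Next I show $\bold{c}$ is a bijection by matching sizes and exhibiting preimages. On the combinatorial side, $|\mathcal{P}_{\p}^{\Sigma_{0}}|$ is counted by dropping the parity condition $\sum_{i\in S\cup T}N_{i}\in 2\Z$, which exactly doubles $|\mathcal{P}_{\p}|$ unless that condition was already vacuous. On the group-theoretic side, \eqref{formula: plus centralizer} shows that $S_{\p}^{\Sigma_{0}}$ differs from $S_{\p}$ precisely by removing the character $\e_{\p}^{+}$, so $|\S{\p}^{\Sigma_{0}}/\S{\p}|$ satisfies the same dichotomy; a direct case check (using whether $I^{odd}_{\p,O}$ is empty and whether all $N_{i}$ are even) confirms $|\S{\p}^{\Sigma_{0}}|=|\mathcal{P}_{\p}^{\Sigma_{0}}|$. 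Surjectivity is shown by the same explicit construction as in Lemma~\ref{lemma: combinatorial description}: for any $(S,T)\in\mathcal{P}_{\p}^{\Sigma_{0}}$, one builds $s=(s_{k})_{k\in K_{\p}}$ in $(\prod O(l_{i}))\times(\prod Sp(l_{i}))\times(\prod GL(l_{j}))$ with $s_{i}=\mathrm{diag}\{-1,1,\dots,1\}$ for $i\in S\cup T$ and $s_{k}=I$ otherwise; without the parity constraint this element is legitimate in $\cS{\p}^{\Sigma_{0}}$, and $(G'_s,\p')\to(\p,s)$ gives $\bold{c}(s)=(S,T)$ directly from the definition.

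For part (2), the commutativity of both the local and global diagrams reduces, as in Lemma~\ref{lemma: combinatorial description}, to the single identity $\a(s)=\a_{\mathcal{P}}(\bold{c}(s))$ on the explicit representatives $s$ constructed above. Writing $\eta'=\prod_{i\in S\cup T}\eta_{\p_{i}}$, one reads off $\a_{\mathcal{P}}(\bold{c}(s))=\eta'\circ\c$ from the definition of $\a_{\mathcal{P}}$. Consulting the table of twisted elliptic endoscopic data from Section~\ref{subsubsec: twisted endoscopy} (now including the $\theta_{0}$-twisted data $G(Sp(2n_{1})\times Sp(2n_{2}))$ with paired quadratic characters), the endoscopic group $G'$ corresponding to $s$ lifts to $\lG'\in\tEnd{ell}{\lG^{\theta_{0}}}$ with associated character $\x=\eta'\circ\c$, and Lemma~\ref{lemma: twisted character} then gives $\a(s)=\x=\a_{\mathcal{P}}(\bold{c}(s))$, as required.

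The one point that needs mild care, and which I expect to be the main technical obstacle, is the size count when $G$ is special even orthogonal and $I^{odd}_{\p,O}$ is nonempty: here $Z(\D{G})\not\subseteq\com[0]{\cS{\p}}$, so the passage from $\cS{\p}^{\Sigma_{0}}$ to $\S{\p}^{\Sigma_{0}}$ involves an additional quotient that must be compared against the equivalence relation $S\sim S^{c}$ on $\mathcal{P}_{\p}^{\Sigma_{0}}$. The case analysis is the same as in the proof of Lemma~\ref{lemma: combinatorial description}, and in every case the two effects cancel and the counts agree, so the argument goes through.
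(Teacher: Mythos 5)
Your proposal is correct and is precisely the argument the paper leaves to the reader: the paper states Lemma~\ref{lemma: plus combinatorial description} as ``an analogue of Lemma~\ref{lemma: combinatorial description}, and the proof is similar,'' and you have carried out that similar proof faithfully — well-definedness via the $(\bar{T}_{\p},\bar{B}_{\p})$-stabilizing representative and reduction to a $\theta_{0}$-stable Levi, the size count via formula~\eqref{formula: plus centralizer} matched against the enlarged combinatorial set $\mathcal{P}_{\p}^{\Sigma_{0}}$, surjectivity by the explicit $s=(s_k)$, and commutativity by checking $\a(s)=\a_{\mathcal{P}}(\bold{c}(s))$ against the table of $\theta_{0}$-twisted elliptic endoscopic data together with Lemma~\ref{lemma: twisted character}. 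The one clarification worth adding to your ``mild care'' remark is that the dichotomy is entirely controlled by whether some $N_{i}$ is odd for $i\in I_{\p,O}$ (equivalently $\p\in\cP{G^{\theta_{0}}}$ by Lemma~\ref{lemma: discrete parameter}(4)): in that case both $|\S{\p}^{\Sigma_{0}}/\S{\p}|$ and $|\mathcal{P}_{\p}^{\Sigma_{0}}/\mathcal{P}_{\p}|$ equal $2$, and otherwise both equal $1$; the extra quotient coming from $Z(\D{G})\not\subseteq\com[0]{\cS{\p}}$ when $I^{odd}_{\p,O}\neq\emptyset$ affects $\S{\p}$ and $\S{\p}^{\Sigma_{0}}$ identically, so it drops out of the comparison, exactly as you anticipated.
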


\begin{remark}
\label{rk: plus combinatorial description}
It is a consequence of Lemma~\ref{lemma: combinatorial description} and \ref{lemma: plus combinatorial description} that
\[
\a(\S{\p}^{\Sigma_{0}}) = \a_{\mathcal{P}}(\mathcal{P}_{\p}^{\Sigma_{0}})
\]
\end{remark}

Here we give two applications of these combinatorial descriptions. The first one gives the refined $L$-packet in the archimedean case (cf. Remark~\ref{rk: refined L-packet}).

\begin{proposition}
\label{prop: refined L-packet Archimedean case}
Suppose $F$ is real, $\p \in \cPbd{G}$ and $\lr$ is an irreducible admissible representation of $\lG(F)$ whose restriction to $G(F)$ have irreducible constituents contained in $\cPkt{\p}$. If $\S{\p} \neq 1$, then $\lr \otimes \x \cong \lr$ for all characters $\x$ of $\lG(F) / \lZ(F)G(F)$. In particular, let $\lif{\zeta}$ be the central character of $\lr$, then we can define $\cPkt{\lp} = \clPkt{\p, \lif{\zeta}} $ if $\cPkt{\p}$ is not a singleton.
\end{proposition}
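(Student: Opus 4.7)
The plan is to reduce the first claim to the statement $\a(\S{\p}) = X$, where $X = \Hom(\lG(\mathbb{R})/\lZ(\mathbb{R})G(\mathbb{R}), \C^{\times})$, and verify this surjectivity via the combinatorial description of $\S{\p}$. By Corollary~\ref{cor: theta twisting character}, $X(\lr) = \a(\S{\p})$, so the condition $\lr \otimes \x \cong \lr$ for every $\x \in X$ is equivalent to surjectivity of $\a$. Over $F = \mathbb{R}$, Lemma~\ref{lemma: similitude character} together with $\c(\lZ(\mathbb{R})) = (\mathbb{R}^{\times})^{2}$ shows that $X$ is either trivial (in the non-split quasi-split even orthogonal case, where $\c(\lG(\mathbb{R})) = \Nm_{\mathbb{C}/\mathbb{R}}\mathbb{C}^{\times} = \mathbb{R}_{>0}$) or isomorphic to $\Two$, generated by $\mathrm{sign} \circ \c$ (in the $GSp$ and split $GSO$ cases). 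The first case is trivial, so I would focus on the second.

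In this case I would pass to $\mathcal{P}_{\p}$ via Lemma~\ref{lemma: combinatorial description} and seek a valid $(S, T) \in \mathcal{P}_{\p}$ with $\a_{\mathcal{P}}(S, T) = \mathrm{sign} \circ \c$. The simple self-dual parameters of $W_{\mathbb{R}}$ of orthogonal type are either the one-dimensional characters $\mathbf{1}$ and $\mathrm{sign}$ of $\mathbb{R}^{\times}$, or the two-dimensional inductions $\mathrm{Ind}_{W_{\mathbb{C}}}^{W_{\mathbb{R}}}(\chi_{m})$ with $\chi_{m}(z) = (z/|z|)^{m}$ for even $m \geq 2$; a direct matrix computation in the second case shows $\det\,\mathrm{Ind}(\chi_{m}) = \mathrm{sign}$. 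Thus $\eta_{\p_{i}} = \mathbf{1}$ exactly when $\p_{i} = \mathbf{1}$, and $\eta_{\p_{i}} = \mathrm{sign}$ in all other cases. A structural observation follows: if every $\p_{i}$ with $i \in I_{\p, O}$ has $\eta_{\p_{i}} = \mathbf{1}$, then $I_{\p, O}$ consists of at most the single index labelling the trivial character, whence formula~\eqref{formula: centralizer} makes $S_{\p}$ connected modulo $Z(\D{G})^{\Gamma}$ and $\S{\p} = 1$. Hence $\S{\p} \neq 1$ produces some $i_{0} \in I_{\p, O}$ with $\eta_{\p_{i_{0}}} = \mathrm{sign}$.

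With such an $i_{0}$, the natural candidate is $(S, T) = (\{i_{0}\}, \emptyset)$ when $l_{i_{0}}$ is odd and $(\emptyset, \{i_{0}\})$ when $l_{i_{0}}$ is even, giving $\a_{\mathcal{P}}(S, T) = \eta_{\p_{i_{0}}} \circ \c = \mathrm{sign} \circ \c$. The main obstacle will be the parity constraint $\sum_{i \in S \cup T} N_{i}$ even required in the split special even orthogonal case, which these singleton pairs violate when $\p_{i_{0}} = \mathrm{sign}$ (so $N_{i_{0}} = 1$); the fix is a short case analysis using $\det\,\p = 1$, which forces the existence of either another index $j \in I_{\p, O}$ with $\p_{j} = \mathbf{1}$, or a two-dimensional orthogonal simple $\p_{k}$ in $I_{\p, O}$, and adjoining the appropriate partner produces a valid $(S, T)$ still mapping to $\mathrm{sign} \circ \c$. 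Finally, for the ``in particular'' clause: once $\a(\S{\p}) = X$ one also has $\a(\S{\p}^{\Sigma_{0}}) = X$, so the fibration of Proposition~\ref{prop: coarse L-packet}(2) degenerates to a bijection $\clPkt{\p, \lif{\zeta}} \cong \cPkt{\p}/\lG(\mathbb{R})$; under the additional hypothesis that $\cPkt{\p}$ is not a singleton, setting $\cPkt{\lp} := \clPkt{\p, \lif{\zeta}}$ satisfies Theorem~\ref{thm: refined L-packet}(1) tautologically (the disjoint union on the right reduces to a single term, since every representation is fixed by $X$), and its stability in part (2) reduces to the stability of $\cPkt{\p}$ from Theorem~\ref{thm: L-packet for G}(2) via the restriction--induction relation of characters between $\lG(\mathbb{R})$ and $G(\mathbb{R})$.
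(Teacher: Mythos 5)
Your approach is essentially the one the paper itself takes: reduce via Corollary~\ref{cor: theta twisting character} to the surjectivity of $\a$ onto $X = \Hom(\lG(\R)/\lZ(\R)G(\R), \C^{\times})$, pass to the combinatorial side via Lemma~\ref{lemma: combinatorial description}, and feed in the classification of orthogonal-type simple parameters of $W_{\R}$ (quadratic characters and two-dimensional inductions, all with determinant $\varepsilon$ except the trivial one). The paper organizes this as a proof by contradiction and enumerates eight compositions of $I_{\p, O}^{odd}$, $I_{\p, O}^{even}$ once $I_{\p, O}$ is forced to contain only characters; you instead produce a preimage of $\varepsilon\circ\c$ in $\mathcal{P}_{\p}$ directly from $\S{\p}\neq 1$. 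The mathematical content is the same, but your version avoids the case list and makes the constraint $\det\p = \eta_{G}$ explicit, which is a modest improvement in clarity.

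One step in your closing paragraph needs correcting. In the split $GSO(2n)$ case with $\p_{i_{0}}=\varepsilon$ (so $N_{i_{0}}=1$), you propose to ``adjoin the appropriate partner'' to $\{i_{0}\}$. If that partner is a two-dimensional $\p_{k}$, adjoining gives $N_{i_{0}}+N_{k}=3$ (still odd, so the parity condition fails) and $\a_{\mathcal{P}}=\eta_{\p_{i_{0}}}\eta_{\p_{k}}=\varepsilon^{2}=1$ rather than $\varepsilon$. In that branch one should instead take $\{k\}$ \emph{by itself}, dropping $i_{0}$: then $N_{k}=2$ is even and $\a_{\mathcal{P}}(\{k\})=\varepsilon\circ\c$. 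Adjoining is the right move only when the forced partner is an index $j$ with $\p_{j}$ trivial ($N_{j}=1$, $\eta_{\p_{j}}=1$), where $N_{i_{0}}+N_{j}=2$ and the product of central characters is still $\varepsilon$. With that substitution the construction closes, and the ``in particular'' clause (via the injectivity $\cPkt{\p}\hookrightarrow\D{\S{\p}}$ from Theorem~\ref{thm: character identity} and the degenerated fibration of Proposition~\ref{prop: coarse L-packet}(2)) goes through exactly as in the paper.
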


\begin{proof}
Notice that $\lG(\R) / \lZ(\R)G(\R)$ is either $1$ or $\R^{\times} / \R_{>0}$, and the only nontrivial character $\varepsilon$ of $\R^{\times} / \R_{>0}$ is given by the sign character. Since $GL(n, \R)$ has essentially discrete series only when $n \leqslant 2$, the set $I_{\p, O}$ only parametrizes quadratic characters of $F^{\times}$ and discrete series of $GL(2, \R)$ with central character $\varepsilon$. Now we suppose $\lr \otimes \varepsilon \ncong \lr$, then it is clear from Lemma~\ref{lemma: combinatorial description} that this is only possible when $I_{\p, O}$ parametrize quadratic characters of $F^{\times}$, i.e. $\varepsilon$ and the trivial character $\varepsilon_{0}$. Depending on which characters $I_{\p, O}^{odd}$ and $I_{\p, O}^{even}$ parametrize, we have eight cases and we can represent them as follows: $\varepsilon_{0}, 2\varepsilon_{0}; \varepsilon, 2\varepsilon; \varepsilon_{0} \+ 2\varepsilon, \varepsilon \+ 2\varepsilon_{0}; \varepsilon_{0} \+ \varepsilon, 2\varepsilon_{0} + 2\varepsilon$. One can see easily from Lemma~\ref{lemma: combinatorial description} that in these cases either $\S{\p} = 1$ or $\varepsilon \in \a(\S{\p})$. Therefore we get a contradiction. For the last point, one just needs to notice $\S{\p} \neq 1$ if $\cPkt{\p}$ is not a singleton.
\end{proof}

\begin{remark}
\label{rk: refined L-packet Archimedean case}
The proof of Proposition~\ref{prop: refined L-packet Archimedean case} also shows that $X(\lr) = X$ for discrete series representation $\lr$ of $\lG(\R)$.
\end{remark}

The second application is on the multiplicity problem that we have discussed in Section~\ref{sec: multiplicity formula}. Now let us assume $F$ is global again, and it turns out more convenient to ask when both $\Sigma_{0}$-multiplicity one and $\Sigma_{0}$-strong multiplicity one hold for $\lp$ together, i.e.
\[
\a(\S{\p}^{\Sigma_{0}}) = \prod^{aut}_{almost \, all \, v} \a(\S{\p_{v}}^{\Sigma_{0}}).
\]
By our Remark~\ref{rk: plus combinatorial description}, this is the same as 
\[
\a_{\mathcal{P}}(\mathcal{P}_{\p}^{\Sigma_{0}}) = \prod^{aut}_{almost \, all \, v} \a_{\mathcal{P}}(\mathcal{P}_{\p_{v}}^{\Sigma_{0}}).
\]

The next lemma gives an answer for the simplest type of parameters.

\begin{lemma}
\label{lemma: splitting parameters}
Suppose 
\[
\p = l_{1} \eta_{1} \# \cdots \# l_{r} \eta_{r} \in \cP{G},
\]
where $\eta_{i}$ are quadratic id\`ele class characters for $1 \leqslant i \leqslant r$. Then both $\Sigma_{0}$-multiplicity one and $\Sigma_{0}$-strong multiplicity one hold for $\lp$.
\end{lemma}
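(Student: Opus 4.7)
The plan is to use the combinatorial description of Lemma~\ref{lemma: plus combinatorial description} to reduce the assertion to a statement about quadratic id\`ele class characters, and then apply Chebotarev's density theorem. Each $\eta_{i}$ is of orthogonal type since $L(s, \eta_{i} \otimes \eta_{i}, S^{2}) = \zeta_{F}(s)$ has a pole at $s = 1$; hence $I_{\p, O} = \{1, \ldots, r\}$ with $N_{i} = 1$ for all $i$, and the parity condition distinguishing $\mathcal{P}_{\p}$ from $\mathcal{P}_{\p}^{\Sigma_{0}}$ is vacuous. A direct computation then identifies
\[
\a_{\mathcal{P}}(\mathcal{P}_{\p}^{\Sigma_{0}}) = H := \langle \eta_{1} \circ \c, \ldots, \eta_{r} \circ \c \rangle \subseteq Y,
\qquad
\a_{\mathcal{P}}(\mathcal{P}_{\p_{v}}^{\Sigma_{0}}) = H_{v},
\]
where $H_{v}$ is the local analogue at $v$; collapsing global indices $i, j$ with $\eta_{i,v} = \eta_{j,v}$ into a single local class still yields subset-products of the $\eta_{i,v} \circ \c$. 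By Lemma~\ref{lemma: identification} together with the Langlands correspondence for tori, any $\x \in Y$ may be written $\x = \eta \circ \c$ for some quadratic id\`ele class character $\eta$, and the conditions $\x \in H$ and $\x_{v} \in H_{v}$ translate respectively into $\eta \in \widetilde{W}$ and $\eta_{v} \in \widetilde{W}_{v}$, where $\widetilde{W} = \langle \eta_{1}, \ldots, \eta_{r}\rangle$ in the symplectic case and $\widetilde{W} = \langle \eta_{1}, \ldots, \eta_{r}, \eta_{G}\rangle$ in the even orthogonal case (the extra $\eta_{G}$ accounts for the kernel of restriction from $\A_{F}^{\times}$ to $\c(\lG(\A_{F})) = \Nm_{E/F}(I_{E})$, via Corollary~\ref{cor: similitude character}).

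By Remark~\ref{rk: plus combinatorial description}, both $\Sigma_{0}$-multiplicity one and $\Sigma_{0}$-strong multiplicity one for $\lp$ thus reduce to the single statement: a quadratic id\`ele class character $\eta$ satisfying $\eta_{v} \in \widetilde{W}_{v}$ for almost all $v$ already lies in $\widetilde{W}$. Suppose for contradiction $\eta \notin \widetilde{W}$, and let $\eta_{1}', \ldots, \eta_{m}'$ be generators of $\widetilde{W}$. Then every character of the form $\eta \cdot \prod_{i \in I} \eta_{i}'$, with $I \subseteq \{1, \ldots, m\}$, is a nontrivial quadratic id\`ele class character. Let $K/F$ be the compositum of the quadratic extensions cut out by $\eta, \eta_{1}', \ldots, \eta_{m}'$, and regard $\mathrm{Gal}(K/F)$ as an $\mathbb{F}_{2}$-vector space; each of these $2^{m}$ characters defines a nonzero linear functional on $\mathrm{Gal}(K/F)$, and hence a hyperplane $V_{I}$ as its kernel. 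A straightforward $\mathbb{F}_{2}$-linear algebra computation gives $\bigcup_{I} V_{I} = \mathrm{Gal}(K/F)$ if and only if $\bigcap_{i} \ker \eta_{i}' \subseteq \ker \eta$, if and only if $\eta \in \widetilde{W}$. Since by assumption $\eta \notin \widetilde{W}$, Chebotarev's density theorem produces a positive-density set of unramified places $v$ whose Frobenius lies outside every $V_{I}$; at such a place one has $\eta_{v} \notin \widetilde{W}_{v}$, contradicting the hypothesis.

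The main (essentially bookkeeping) obstacle is the equivalence $\x_{v} \in H_{v} \Leftrightarrow \eta_{v} \in \widetilde{W}_{v}$ in the even orthogonal case, which requires careful tracking of the ambiguity coming from $\c(\lG(\A_{F}))$ being a proper subgroup of $\A_{F}^{\times}$ and the role played by $\eta_{G}$. Once this is in place, the remainder is a direct application of Chebotarev density.
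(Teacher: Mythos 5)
Your proof is correct, and it takes a genuinely different route from the paper's. The paper argues by induction on the number of nontrivial characters $\eta_i \circ \c$, viewed as characters of $I_{F'}$ (where $F'$ is the field cut out by $\eta_\p$): assuming $\eta'_1$ is nontrivial, one base-changes to the quadratic extension $E/F'$ attached to $\eta'_1$ so that it becomes trivial, applies the induction hypothesis over $E$, and then descends using the index-two fact $|I_{F'} : \Nm_{E/F'}I_E| = 2$. Your argument instead isolates what is really being used: after translating through Lemma~\ref{lemma: plus combinatorial description} and the identification of $Y$ with a quotient of quadratic id\`ele class characters, the combined statement of $\Sigma_0$-multiplicity one and $\Sigma_0$-strong multiplicity one becomes the purely Galois-theoretic assertion that a quadratic character whose localization lies in $\widetilde{W}_v$ at almost all $v$ lies in $\widetilde{W}$. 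You then prove this directly: $\eta \notin \widetilde{W}$ means, by elementary $\mathbb{F}_2$-duality, that the proper subspaces $V_I = \ker(\eta \cdot \prod_{i\in I}\eta'_i)$ do not cover $\mathrm{Gal}(K/F)$, and Chebotarev produces a positive-density set of unramified places whose Frobenius misses every $V_I$, at which $\eta_v \notin \widetilde{W}_v$.

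What each approach buys: your route avoids base change and the attendant bookkeeping (well-definedness of $\p_E$, behaviour of the "almost all places" condition under base change) that the paper's proof relies on but does not spell out; it also makes the mechanism fully transparent, reducing the lemma to a hyperplane-covering statement over $\mathbb{F}_2$ plus Chebotarev. The paper's proof is shorter once the base-change formalism is accepted and meshes more naturally with the surrounding framework where norm groups of quadratic extensions are already in play (via the Hasse norm theorem and Corollary~\ref{cor: similitude character}). One small remark on your writeup: the generator $\eta_G$ you adjoin to $\widetilde{W}$ in the even orthogonal case is in fact already in $\langle \eta_1,\ldots,\eta_r\rangle$, since $\p \in \cP{G}$ forces the determinant condition $\prod_i \eta_i^{l_i} = \eta_G$; including it explicitly is harmless but worth flagging as redundant.
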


\begin{proof}
From Lemma~\ref{lemma: similitude character}, we can view $\eta_{i} \circ \c$ as quadratic id\`ele class characters of $F'$, where $F'$ is the extension of $F$ associated to the character 
\[
\eta_{\p} = \prod_{i=1}^{r} \eta_{\p_{i}}^{l_{i}}
\]
by class field theory. Let us denote $\eta_{i} \circ \c$ by $\eta'_{i}$. We are going to prove this lemma by induction on the number of nontrivial characters $\eta'_{i}$ for $1 \leqslant i \leqslant r$.  
Suppose there exists some quadratic id\`ele class character 
$\x$ of $F$ such that $\x' =  \x \circ \c$ is contained in 
\[
\prod^{aut}_{almost \, all \, v} \a_{\mathcal{P}}(\mathcal{P}_{\p_{v}}^{\Sigma_{0}}).
\]
If we assume $\eta'_{1}$ is nontrivial, and let $E$ be the quadratic extension of $F'$ associated to $\eta'_{1}$, then after a base change to $E$, we get
\[
\p_{E} =  l_{1} \eta_{E, 1} \# \cdots \# l_{r} \eta_{E, r}
\]
where $\eta_{E, i} = \eta'_{i} \circ \Nm_{E / F'}$. And we have 
\[
\x_{E} =  \x' \circ \Nm_{E / F'}  
\]
contained in  
\[
\prod^{aut}_{almost \, all \, v} \a_{\mathcal{P}}(\mathcal{P}_{\p_{E, v}}^{\Sigma_{0}}).
\]
Since $\eta_{E,1} = 1$, by induction the lemma is true for $\p_{E}$. Hence
\[
\x_{E} = \prod^{m}_{k=1} \eta_{E, i_{k}},
\]
for some $1 < i_{k} \leqslant r$ and $m < r$. This implies that 
\[
( \x' \cdot \prod^{m}_{ k=1} \eta'_{i_{k}} ) \circ \Nm_{E / F'} = 1.
\]
Since 
\(
|I_{F'} : \Nm_{E/F'} I_{E} | = 2,
\)
then
\[
\x' \cdot \prod^{m}_{ k=1 } \eta'_{i_{k}} = 1 \text{ or } \eta'_{1}.
\]
Hence $\x' \in \a_{\mathcal{P}}(\mathcal{P}_{\p}^{\Sigma_{0}})$.
\end{proof}

\subsection{Construction of global parameters}
\label{subsec: construction of global parameters}

In this section we are going to give the global lifting result needed in Theorem~\ref{thm: standard argument on stability}. Let us assume $F$ is a nonarchimedean local field and $\dot{F}$ is a totally real global field with $\dot{F}_{u} = F$ (cf. \cite{Arthur:2013}, Lemma 6.2.1). Let $\dot{G}$ be a quasisplit special even orthogonal group or symplectic group over $\dot{F}$ such that $\dot{G}_{u} = G$ and $\dot{G}_{v}$ has discrete series for $v \in S_{\infty}$ (cf. \cite{Arthur:2013}, Lemma 6.2.2). For any finite set $S$ of nonarchimedean places of $\dot{F}$, we denote the unitary dual of $\dot{G}(\dot{F}_{S}) = \prod_{v \in S}\dot{G}(\dot{F}_{v})$ by $\widehat{\dot{G}(\dot{F}_{S})}$, and the Plancherel measure on $\widehat{\dot{G}(\dot{F}_{S})}$ by $\D{\mu}^{pl}_{S}$. 

\begin{lemma}
\label{lemma: multiple global lifting}
For $\p \in \cPsm{G}$ and an open subset $\D{U}$ of tempered representations of $\dot{G}(\dot{F}_{S})$ such that $\D{\mu}^{pl}_{S}(\D{U}) > 0$, one can find $\dot{\p} \in \cPsm{\dot{G}}$ with the following properties.
\begin{enumerate}
\item $\dot{\p}_{u} = \p$, and $\otimes_{v \in S} \cPkt{\dot{\p}_{v}} \subseteq \D{U}$.
\item If $v \notin S_{\infty}(u) \cup S$, then $\dot{\p}_{v}$ is spherical. In particular, it can be written as a direct sum of quasicharacters of $F_{v}^{\times}$ with at most one ramified quasicharacter.
\item If $v \in S_{\infty}$, $\dot{\p}_{v} \in \cPdt{\dot{G}_{v}}$.
\end{enumerate}
\end{lemma}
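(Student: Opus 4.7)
\medskip

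The plan is to follow the template of Arthur's construction (\cite{Arthur:2013}, Proposition 6.3.1), producing $\dot{\p}$ as the parameter attached to a cuspidal automorphic representation of $GL(N, \A_{\dot{F}})$ via Theorem~\ref{thm: global functorial lifting}. Concretely, I would apply the simple invariant trace formula for $\dot{G}$ to a test function $\dot{f} = \bigotimes_v \dot{f}_v \in \sH(\dot{G})$ whose local components are dictated by the three desired properties, and then exploit Arthur's endoscopic classification to identify the spectral contribution of $\dot{f}$ with parameters in $\cPsm{\dot{G}}$.

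First I would choose the local test functions as follows. At $v = u$, take $\dot{f}_u$ to be a pseudo-coefficient of a supercuspidal representation in $\cPkt{\p}$ (existence uses that $\p \in \cPsm{G}$, so $\cPkt{\p}$ contains a supercuspidal member, by Theorem~\ref{thm: LLC} and the fact that $\cS{\p}^{\Sigma_0}=1$). At the archimedean places $v \in S_\infty$, choose $\dot{f}_v$ to be a pseudo-coefficient of a discrete series representation in some $\cPkt{\p_v^\circ}$ with $\p_v^\circ \in \cPdt{\dot{G}_v}$; this is possible because $\dot{G}_v$ was chosen to have discrete series. At the auxiliary finite places $v \in S$, take $\dot{f}_v$ to be a non-negative function with trace-form support in the prescribed open set $\D{U}$; such a function exists and gives a nonzero spectral contribution since $\D{\mu}^{pl}_{S}(\D{U})>0$. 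At all remaining finite places, take $\dot{f}_v$ to be the unit of the spherical Hecke algebra relative to a hyperspecial maximal compact subgroup.

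Next I would apply the simple invariant trace formula: the choice of pseudo-coefficients at $u$ and at $S_\infty$ makes the geometric side reduce to (essentially) an elliptic regular contribution that is positive after adjusting via a standard multiplier argument, so the spectral side is nonzero. By the positivity at $u$ and the support condition at $S$, any surviving discrete automorphic representation $\dot{\r}$ must have $\dot{\r}_u$ supercuspidal in $\cPkt{\p}$, $\dot{\r}_v \in \cPkt{\p_v^\circ}$ (discrete series) at archimedean $v$, $[\dot{\r}_v] \in \D{U}$ at $v \in S$, and $\dot{\r}_v$ unramified at the remaining places. Transferring $\dot{\r}$ to $GL(N)$ via Theorem~\ref{thm: global functorial lifting} produces a self-dual automorphic representation $\dot{\p}$ of $GL(N)$ with local components forced by the above constraints; in particular $\dot{\p}_v$ is unramified at every place outside $S_\infty(u) \cup S$, so the unramified local Langlands correspondence presents it there as a direct sum of unramified quasicharacters of $\dot{F}_v^\times$, with possibly one extra tamely ramified quadratic character coming from the similitude/outer-form character $\eta_{\dot{G}}$ when $\dot{G}$ is quasisplit non-split even orthogonal.

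The main obstacle will be ensuring that $\dot{\p}$ lies in $\cPsm{\dot{G}}$ rather than merely in $\cPdt{\dot{G}}$, i.e.\ that the associated representation of $GL(N)$ is cuspidal rather than a proper isobaric sum. By Lemma~\ref{lemma: discrete parameter}(2), $\cPdt{\dot{G}}$ consists of parameters $\boxplus_i \dot{\p}_i$ with distinct simple self-dual constituents of orthogonal type. The supercuspidality of $\dot{\r}_u$ together with $\p = \dot{\p}_u \in \cPsm{G}$ rules out nontrivial isobaric decompositions that are compatible with the local parameter $\p$ at $u$: any non-simple decomposition of $\dot{\p}$ would produce a reducible $\p$ in $\cPsm{G}$, a contradiction. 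Hence $\dot{\p}$ must be simple, i.e.\ $\dot{\p} \in \cPsm{\dot{G}}$, completing the construction. All other properties (1)–(3) are built into the choice of test functions.
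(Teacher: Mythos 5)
Your overall strategy---running Arthur's simple invariant trace formula with pseudo-coefficients at $u$ and the archimedean places and a Paley--Wiener test function at $S$, then reading off $\dot{\p}$ from a surviving discrete automorphic representation---is the same template that underlies the two results the paper actually cites (Shin \cite{Shin:2012} Theorem~5.8 and Arthur \cite{Arthur:2013} Lemma~6.2.2, Corollary~6.2.4). But two steps you need are not supplied and do not follow for free. The claim that a non-negative $\dot{f}_{v}$ with spectral support in $\D{U}_{v}$ ``gives a nonzero spectral contribution since $\D{\mu}^{pl}_{S}(\D{U})>0$'' is precisely the hard point, and the paper explicitly names the missing input: Harish--Chandra's Plancherel formula combined with Sauvageot's density principle. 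A Paley--Wiener function with prescribed spectral support gives you no a priori control over its orbital integrals, so nothing forces the geometric side of your trace formula to be nonzero. The argument that works pairs $\widehat{\dot{f}}_{v}$ with (a regularization of) the indicator of $\D{U}_{v}$ times the Plancherel density, uses the Plancherel formula to evaluate the dominant geometric term $\mathrm{vol}\cdot\dot{f}_{v}(1)=\int_{\D{U}_{v}}\widehat{\dot{f}}_{v}\,d\D{\mu}^{pl}_{v}\approx\D{\mu}^{pl}_{v}(\D{U}_{v})>0$, and invokes Sauvageot's density theorem to control the approximation. Without this, your nonvanishing assertion is a gap, not a proof.

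Second, you justify the choice of $\dot{f}_{u}$ by claiming $\cPkt{\p}$ contains a supercuspidal because $\p\in\cPsm{G}$ forces $\cS{\p}^{\Sigma_{0}}=1$. That is incorrect: simplicity of $\p$ makes $\cPkt{\p}$ a singleton whose member is discrete series, but when the $SL(2,\mathbb{C})$ factor of $L_{F}$ acts nontrivially through $\p$ the member is a generalized Steinberg representation, not a supercuspidal. This is not merely cosmetic, because pseudo-coefficients of non-supercuspidal discrete series have nonzero trace on some non-tempered representations, so ``any surviving $\dot{\r}$ must have $\dot{\r}_{u}$ supercuspidal in $\cPkt{\p}$'' does not follow from the choice of test function alone, and one needs the finer spectral bookkeeping that Arthur carries out. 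Your final observation---that a nontrivial isobaric decomposition of $\dot{\p}$ would force the local parameter $\dot{\p}_{u}=\p$ to be reducible, contradicting $\p\in\cPsm{G}$---is correct and does not in fact require supercuspidality, only that $\p$ is irreducible.
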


\begin{proof}
This lemma is the consequence of (\cite{Shin:2012}, Theorem 5.8) and (\cite{Arthur:2013}, Lemma 6.2.2 and Corollary 6.2.4). 
As one can see in the proof of (\cite{Arthur:2013}, Lemma 6.2.2), $\dot{\p}_{v}$ has a ramified quasicharacter if and only if $\eta_{\dot{G}_{v}}$ is ramified. Also note that  (\cite{Shin:2012}, Theorem 5.8) requires $G$ to have trivial centre, however this condition can be removed by choosing suitable discrete series in the archimedean places as in the proof of (\cite{Arthur:2013}, Lemma 6.2.2). In fact, the main techniques in both proofs are the same, i.e., Arthur's simple invariant trace formula. The new input in (\cite{Shin:2012}, Theorem 5.8) is Harish-Chandra's Plancherel formula and Sauvageot's principle of density result (cf. \cite{Sauvageot:1997}, Theorem 7.3). 
\end{proof}

Lemma~\ref{lemma: multiple global lifting} serves as the building blocks of our global lifting result, and because we want to impose the condition of $\Sigma_{0}$-strong multiplicity one at one place for any global lift, it is important to consider the case of simple parameters first. We will begin with another description of $\Sigma_{0}$-strong multiplicity one, which is kind of dual to the original one.

\subsubsection{$\Sigma_{0}$-strong multiplicity one at one place}

Suppose $F$ is a global field, $G$ is special even orthogonal or symplectic group over $F$ and $\p \in \cP{G}$. We define

\begin{align*}
\iG{\A_{F}} & =  \lZ(F_{u})G(F_{u}) \times \prod_{v \neq u}\lG(F_{v}), \\
\iG{F} & = \lG(F) \cap \iG{\A_{F}}.
\end{align*}
Let 
\[
\lG(\r_{v}^{\Sigma_{0}}) = \{ g \in \lG(F_{v}) : \x_{v}(g) = 1 \text{ for all } \x_{v} \in \a(\S{\p_{v}}^{\Sigma_{0}}) \},
\]
for any $[\r_{v}] \in \cPkt{\p_{v}}$, and 
\[
\lG(\r^{\Sigma_{0}}) = \prod_{v} \lG(\r_{v}^{\Sigma_{0}}),
\]
for any $[\r] \in \cPkt{\p}$.
We define $\iG{}(\r^{\Sigma_{0}}) = \lG(\r^{\Sigma_{0}}) \cap \iG{\A_{F}}$. As a consequence we can have the following identities
\begin{align*}
\prod^{aut}_{v} \a(\S{\p_{v}}^{\Sigma_{0}}) & = ( \lG(\A_{F}) / \lG(F) \lG(\r^{\Sigma_{0}}) )^{*} \\
\prod^{aut}_{v \neq u} \a(\S{\p_{v}}^{\Sigma_{0}}) & = ( \iG{\A_{F}} / \iG{F} \iG{}(\r^{\Sigma_{0}}) )^{*} .
\end{align*}
By the approximation theory for number fields (cf. \cite{Neukirch:1999}), we have $\lG(\A_{F}) = \lG(F) \iG{\A_{F}}$. So
\[
( \lG(\A_{F}) / \lG(F) \lG(\r^{\Sigma_{0}}))^{*} = ( \iG{\A_{F}} / \iG{\A_{F}} \cap \lG(F) \lG(\r^{\Sigma_{0}}))^{*}
\]
Therefore the condition of $\Sigma_{0}$-strong multiplicity one at the place $u$ is equivalent to
\begin{align}
\label{eq: strong multiplicity one at one place 1}
|\iG{\A_{F}} \cap \lG(F) \lG(\r^{\Sigma_{0}}) : \iG{F} \iG{}(\r^{\Sigma_{0}})| = |\iG{\A_{F}} \cap \lG(F) \lG(\r^{\Sigma_{0}}) : \iG{F} ( \lG(\r^{\Sigma_{0}}) \cap \iG{\A_{F}} ) | = 1.
\end{align}
Let
\begin{align*}
A  &= \iG{\A_{F}} / G(\A_{F}), \quad
A_{F}  = \iG{F}G(\A_{F}) / G(\A_{F}) \\
B_{F}  &= \lG(F) / G(F),  \quad
\bar{B}(\r)  =  \lG(\r^{\Sigma_{0}}) / G(\A_{F})
\end{align*}
and $\bar{B}(\r_{v}) = \lG(\r_{v}^{\Sigma_{0}}) / G(F_{v})$, then we can rewrite \eqref{eq: strong multiplicity one at one place 1} as
\[
|A \cap \bar{B}(\r) B_{F} : ( A \cap \bar{B}(\r) ) A_{F}| = 1.
\]
In particular,
\[
( A \cap \bar{B}(\r) ) A_{F} = A \cap \bar{B}(\r) A_{F},
\]
so we has proved the following lemma.

\begin{lemma}
\label{lemma: strong multiplicity one at one place 1}
Suppose $\p \in \cP{G}$ and $[\r] \in \cPkt{\p}$. Then $\Sigma_{0}$-strong multiplicity one at the place $u$ holds for $\lp$ if and only if
\begin{align}
\label{eq: strong multiplicity one at one place 2}
|A \cap \bar{B}(\r) B_{F} : A \cap \bar{B}(\r)  A_{F}| = 1.        
\end{align}
\end{lemma}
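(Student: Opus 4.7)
The plan is to rewrite the equivalent condition \eqref{eq: strong multiplicity one at one place 1}, which has already been established in the discussion preceding the lemma, as an index relation among the four groups $A$, $A_{F}$, $B_{F}$ and $\bar{B}(\r)$. This is a purely formal manipulation of subgroups of the abelian group $\lG(\A_{F})/G(\A_{F})$ (which embeds in $D(\A_{F})/\c(G(\A_{F}))$), so I do not expect any serious obstacle.

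First I would quotient every group appearing in \eqref{eq: strong multiplicity one at one place 1} by $G(\A_{F})$. From the definitions, $\iG{\A_{F}} \mapsto A$, $\iG{F} G(\A_{F}) \mapsto A_{F}$, $\lG(F) G(\A_{F}) \mapsto B_{F}$, and $\lG(\r^{\Sigma_{0}}) \mapsto \bar{B}(\r)$; the relation $\iG{}(\r^{\Sigma_{0}}) = \lG(\r^{\Sigma_{0}}) \cap \iG{\A_{F}}$ becomes $A \cap \bar{B}(\r)$ after this quotient. Because the ambient group is abelian, intersections and products commute with the quotient in the way we need, so the numerator of the index in \eqref{eq: strong multiplicity one at one place 1} becomes $A \cap B_{F}\bar{B}(\r) = A \cap \bar{B}(\r) B_{F}$, and the denominator becomes $(A \cap \bar{B}(\r)) A_{F}$. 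Hence \eqref{eq: strong multiplicity one at one place 1} is equivalent to $[A \cap \bar{B}(\r) B_{F} : (A \cap \bar{B}(\r)) A_{F}] = 1$.

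The final step is to identify $(A \cap \bar{B}(\r)) A_{F} = A \cap \bar{B}(\r) A_{F}$. The inclusion $\subseteq$ is immediate from $A_{F} \subseteq A$. For the reverse, given $z \in A \cap \bar{B}(\r) A_{F}$, write $z = b w$ with $b \in \bar{B}(\r)$ and $w \in A_{F} \subseteq A$; then $b = z w^{-1} \in A$, so $b \in A \cap \bar{B}(\r)$ and $z \in (A \cap \bar{B}(\r)) A_{F}$. Substituting this identity into the denominator turns \eqref{eq: strong multiplicity one at one place 1} into the index condition $[A \cap \bar{B}(\r) B_{F} : A \cap \bar{B}(\r) A_{F}] = 1$ stated in the lemma, which gives the desired equivalence.
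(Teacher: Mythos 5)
Your proof is correct and follows the same route as the paper: starting from the already-established index condition \eqref{eq: strong multiplicity one at one place 1}, you pass to the quotient by $G(\A_{F})$ (which is contained in every group in sight), identify the images with $A$, $A_{F}$, $B_{F}$, $\bar{B}(\r)$, and then finish with the Dedekind modular-law observation $(A \cap \bar{B}(\r))A_{F} = A \cap \bar{B}(\r)A_{F}$, which is exactly how the paper concludes.
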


Note that all these groups $A$, $\bar{B}(\r)$ and $A_{F}$, $B_{F}$ can be viewed as subgroups of $I_{F}$ and $F^{\times}$ respectively through the similitude character $\c$. Therefore we have the following equivalent statement for \eqref{eq: strong multiplicity one at one place 2}.

\begin{lemma}
\label{lemma: strong multiplicity one at one place 2}
$|A \cap \bar{B}(\r) B_{F} : A \cap \bar{B}(\r)  A_{F}| = 1$ if and only if for any $x \in \bar{B}(\r)$, there exists $y \in \bar{B}(\r) \cap B_{F}$ such that $xy \in A$. (i.e., for any $x_{u} \in \bar{B}(\r_{u}) / (F^{\times}_{u})^{2}$, there existis $y \in \bar{B}(\r) \cap B_{F}$ such that $y_{u} = x_{u}^{-1}$ mod $(F^{\times}_{u})^{2}$.)
\end{lemma}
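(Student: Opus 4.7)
The plan is to unwind each side in the abelian quotient $\lG(\A_{F})/G(\A_{F})$, reduce the equivalence to a transparent identity, and then prove the two implications separately. From the definitions one checks at once that $A \cap B_{F} = A_{F}$ (using $\lG(F) \cap \iG{\A_{F}} = \iG{F}$ and $\lG(F) \cap G(\A_{F}) = G(F)$), and hence $A \cap \bar{B}(\r) A_{F} = (A \cap \bar{B}(\r)) A_{F}$. Writing an arbitrary element $c \in A \cap \bar{B}(\r) B_{F}$ as $c = b y$ with $b \in \bar{B}(\r)$ and $y \in B_{F}$, a short manipulation in the abelian quotient shows that $c \in (A \cap \bar{B}(\r)) A_{F}$ if and only if $y \in (\bar{B}(\r) \cap B_{F}) A_{F}$; letting $(b,y)$ range over all admissible pairs, this reformulates Statement~1 as the identity
\[
B_{F} \cap \bar{B}(\r) A \ =\ (\bar{B}(\r) \cap B_{F}) A_{F}, \qquad (\star)
\]
the reverse inclusion being automatic.

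For the implication Statement~2 $\Rightarrow (\star)$, take $y \in B_{F} \cap \bar{B}(\r) A$ and write $y = b a$ with $b \in \bar{B}(\r)$ and $a \in A$. Applying Statement~2 to $x = b^{-1}$ produces $y_{0} \in \bar{B}(\r) \cap B_{F}$ with $b^{-1} y_{0} \in A$; then $y y_{0}^{-1}$ lies in $B_{F} \cap A = A_{F}$ by the identification above, so $y \in (\bar{B}(\r) \cap B_{F}) A_{F}$. For the converse, assume $(\star)$ and take any $x \in \bar{B}(\r)$. Using the approximation identity
\[
\lG(\A_{F}) = \lG(F)\, \iG{\A_{F}},
\]
I would write $x = g h$ in $\lG(\A_{F})/G(\A_{F})$ with $g \in B_{F}$ and $h \in A$. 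Then $g = x h^{-1} \in \bar{B}(\r) A$ and $g \in B_{F}$, so by $(\star)$ we have $g = w a_{F}$ with $w \in \bar{B}(\r) \cap B_{F}$ and $a_{F} \in A_{F}$. Setting $y = w^{-1} \in \bar{B}(\r) \cap B_{F}$, one computes $xy = g h w^{-1} = a_{F} h$, which lies in $A$ since $A_{F} \subseteq A$. This establishes Statement~2.

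The main technical input is verifying the approximation identity $\lG(\A_{F}) = \lG(F)\, \iG{\A_{F}}$. Passing through the similitude character $\c$ and invoking Lemma~\ref{lemma: similitude character}, this reduces to showing that $F^{\times}$ surjects onto the finite quotient of $F_{u}^{\times}$ by an open finite-index subgroup (namely $\c(\lZ(F_{u}))$ when $\lG$ is of split type, with an evident variant involving a local norm group when $\lG$ is quasisplit of type $GSO(2n,\eta)$). This is a standard consequence of weak approximation together with the density of $F^{\times}$ in $F_{u}^{\times}$ and the openness of these local subgroups, and I expect it to be the only nonformal point in the argument; everything else is routine group-theoretic bookkeeping in the abelian quotient $\lG(\A_{F})/G(\A_{F})$.
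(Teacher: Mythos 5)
Your proof is correct and follows essentially the same approach as the paper: both directions rest on the identity $A \cap B_F = A_F$ and, in the forward direction, on the approximation fact $\lG(\A_F) = \lG(F)\,\iG{\A_F}$, which the paper states (citing Neukirch) just before the lemma. Your intermediate reformulation $(\star)$ is a cosmetic reorganization of the same manipulations rather than a genuinely different route.
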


\begin{proof}
Suppose $x \in \bar{B}(\r)$ and $z \in B_{F}$ such that $xz \in A$. First let us assume \eqref{eq: strong multiplicity one at one place 2}, then $xz \in A \cap \bar{B}(\r) A_{F}$ since $xz \in A \cap \bar{B}(\r)B_{F}$. So we can write $xz = u w$ where $u \in A_{F}$ and $w \in \bar{B}(\r) \cap A$. In particular $x w^{-1} = u z^{-1} \in \bar{B}(\r) \cap B_{F}$. Let us set $y = w x^{-1}$ which is also in $\bar{B}(\r) \cap B_{F}$, then one has $xy = w \in A$.

Conversely, let us take $y \in  \bar{B}(\r) \cap B_{F}$ such that $xy \in A$. Then we can write $xz = (xy)(y^{-1}z)$. Since both $xz$ and $xy$ lie in $A$, one has $y^{-1}z \in A$ and in particular, $y^{-1}z \in A\cap B_{F} = A_{F}$. Moreover, it clear that $xy \in \bar{B}(\r)$. Hence $xz \in A \cap \bar{B}(\r)  A_{F}$ and the rest of the lemma should be clear.
\end{proof}

\subsubsection{Global lift}

Now we are going to use Lemma~\ref{lemma: multiple global lifting}, Lemma~\ref{lemma: strong multiplicity one at one place 1} and Lemma~\ref{lemma: strong multiplicity one at one place 2} to produce a global lift with the intended property of $\Sigma_{0}$-strong multiplicity one at one place.

\begin{lemma}
\label{lemma: strong multiplicity one at one place}
Suppose $F$ is a nonarchimedean local field and $\p \in \cPsm{G}$, there exists a totally real global field $\dot{F}$ and a group $\dot{G}$ over $\dot{F}$ such that $\dot{F}_{u} = F$ and $\dot{G}_{u} = G$, and one can lift $\p$ to a global simple parameter $\dot{\p} \in \cPsm{G}$ satisfying the following properties.
\begin{enumerate}
\item $\dot{\p}_{u} = \p$, and $\dot{\p}_{v} \in \cPdt{\dot{G}_{v}}$ for $v \in S_{\infty}$.
\item If $v \notin S_{\infty}(u)$, $\dot{\p}_{v}$ is a direct sum of quasicharacters of $F_{v}^{\times}$ with at most one ramified quasicharacter.
\item $\Sigma_{0}$-strong multiplicity one holds for $\lif{\dot{\p}}$ at the place $u$.
\end{enumerate}
\end{lemma}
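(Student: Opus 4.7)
The strategy is to feed Lemma~\ref{lemma: multiple global lifting} with a carefully chosen auxiliary data $(\dot F, \dot G, S, \widehat U)$ so that conditions (1) and (2) come for free, and then to exploit the freedom in this choice in order to secure (3) via the equivalent reformulation from Lemma~\ref{lemma: strong multiplicity one at one place 2}. The first step is to pick a totally real $\dot F$ with $\dot F_u = F$ and a group $\dot G$ over $\dot F$ of the same type as $G$ with $\dot G_u = G$ and discrete series at every archimedean place; this is exactly what Arthur arranges in \cite{Arthur:2013}, Lemmas 6.2.1--6.2.2.

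\textbf{Reduction of (3) in the simple case.} Since $\p \in \cPsm{G}$, there is only one simple constituent, so the combinatorial descriptions of Section~\ref{sec: combinatorial description} become trivial: $\S{\p_u}^{\Sigma_0}$ has order at most two, and its nontrivial element (when present) maps under $\a$ to $\eta_{\p_u}\circ\c$. Hence $\bar B(\r_u) := \lG(\r_u^{\Sigma_0})/G(F_u)$ embeds via $\c$ into $F_u^\times$ as the kernel of $\eta_{\p_u}$, and its image in $F_u^\times/(F_u^\times)^2$ has order at most two. By Lemma~\ref{lemma: strong multiplicity one at one place 2}, condition (3) amounts to producing, for each representative $x_u$ of this finite image, an element $t \in \bar B(\r)\cap B_F \subseteq \dot F^\times$ whose $u$-component is $x_u^{-1}$ modulo squares, and which lies in the $\c$-image of $\lG(\r_v^{\Sigma_0})$ for every finite place $v \neq u$.

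\textbf{Construction of $t$.} Since $|\bar B(\r_u)/(F_u^\times)^2| \leq 2$, at most one nontrivial $t$ is required. Grunwald--Wang produces $t \in \dot F^\times$ with arbitrarily prescribed image in $\dot F_u^\times/(\dot F_u^\times)^2$, prescribed signs at archimedean places, and trivial image in $\dot F_v^\times/(\dot F_v^\times)^2$ for all but finitely many finite places $v$; let $S_0$ denote the finite set of finite places $v \neq u$ where $t$ is not a square unit. We take $S := S_0$ as the input to Lemma~\ref{lemma: multiple global lifting}, and let $\widehat U \subseteq \widehat{\dot G(\dot F_S)}$ be the open subset of tempered representations whose central characters annihilate $t$ (equivalently, $\eta_{\dot\p_v}(t) = 1$ for all $v \in S$). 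This is a nonempty open condition of positive Plancherel measure: at each $v \in S$ one may pick an unramified choice of $\dot\p_v$ with prescribed value of $\eta_{\dot\p_v}$ on a uniformizer, which carves out an open set in the tempered dual of positive measure. Lemma~\ref{lemma: multiple global lifting} then yields $\dot\p \in \cPsm{\dot G}$ satisfying (1) and (2), and by construction the element $t$ witnesses the equivalent condition of Lemma~\ref{lemma: strong multiplicity one at one place 2}, giving (3).

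\textbf{Main obstacle.} The delicate point is the Grunwald--Wang ``special case'' (residue characteristic $2$ in the presence of enough $2$-power roots of unity in $\dot F$), where the image of $\dot F^\times$ in $\prod_{v\in T}\dot F_v^\times/(\dot F_v^\times)^2$ can fail to surject for a specified finite $T$. This is controlled by a finite obstruction group, which we absorb by enlarging $S_0$ to accommodate the extra constraints; one then has to re-verify that the corresponding open set $\widehat U$ at the enlarged $S$ still has positive Plancherel measure, which reduces to the unramified flexibility of central characters of simple parameters. Checking this compatibility carefully is what makes the argument more than a formal application of Lemma~\ref{lemma: multiple global lifting}.
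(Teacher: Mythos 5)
There are several genuine gaps in this proposal, and it also misses the structural device (the auxiliary parameter $\dot\p_\eta$) that drives the paper's actual proof.

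\textbf{The opening reduction is wrong.} You assert that for a simple $\p$ the group $\S{\p_u}^{\Sigma_0}$ has order at most two with nontrivial element mapping to $\eta_{\p_u}\circ\c$, and deduce that $\bar B(\r_u)$ has image of order at most two in $F_u^\times/(F_u^\times)^2$. In fact, for a simple local parameter of $Sp(2n)$ or $SO(2n,\eta)$ one has $\S{\p_u}^{\Sigma_0}=1$: in the symplectic case $S_\p = \ker(\e_\p^+)\subseteq O(1,\C)$ is trivial since $N_1$ is odd, and in the even orthogonal case $S_\p^{\Sigma_0}=O(1,\C)=\{\pm I_N\}$, which is killed upon dividing by $Z(\D G)^\Gamma=\{\pm I_N\}$. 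Hence $\a(\S{\p_u}^{\Sigma_0})=1$ and $\bar B(\r_u)$ is \emph{all} of $\lG(F_u)/G(F_u)\cong \c(\lG(F_u))$, not the kernel of $\eta_{\p_u}\circ\c$. Its image modulo $(F_u^\times)^2$ has order at least four (for $\lG=GSp$ with odd residue characteristic), and more when the residue characteristic is two. So the reduction to ``at most one nontrivial $t$'' does not hold; in general one must handle several cosets $x_u$ simultaneously.

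\textbf{The choice of $\widehat U$ is degenerate.} For any $\p_v\in\cPbd{\dot G_v}$ one has $\eta_{\dot\p_v}=\det(\xi\circ\dot\p_v)=\eta_{\dot G_v}$, a fixed character of $\dot F_v^\times$ that is independent of the choice of $\dot\p_v$. So the condition ``$\eta_{\dot\p_v}(t)=1$'' is either satisfied by every tempered representation of $\dot G(\dot F_v)$ or by none; as a subset of $\widehat{\dot G(\dot F_S)}$ it carves out no useful open condition. Even setting this aside, controlling the determinant $\eta_{\dot\p_v}$ alone cannot guarantee $y_v\in\bar B(\dot\r_v)$: what is required is that $y_v$ lie in the kernel of \emph{every} character in $\a(\S{\dot\p_v}^{\Sigma_0})$, which for $\dot\p_v$ a direct sum of quasicharacters is generated by products of the quadratic constituents, not merely by their total product. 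The paper's choice of $\widehat U_v$ is the much stronger open condition $\a(\S{\dot\p_v}^{\Sigma_0})=1$, which is shown (via Lemma~\ref{lemma: plus combinatorial description}) to have positive Plancherel measure and which makes $\bar B(\dot\r_v)$ everything.

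\textbf{The Grunwald--Wang claim is false.} You assert that Grunwald--Wang produces $t\in\dot F^\times$ with ``trivial image in $\dot F_v^\times/(\dot F_v^\times)^2$ for all but finitely many finite places.'' But any nonsquare in $\dot F^\times$ is a nonsquare at a set of places of positive density; Grunwald--Wang only controls a prescribed \emph{finite} set of places, and away from those the quadratic residue symbol of $t$ behaves as Chebotarev dictates. What is true (and what the paper actually exploits, implicitly) is that $t$ is a \emph{unit} at all but finitely many places, and that unramified characters vanish on units; but a unit need not be a square.

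\textbf{What the paper does instead.} The paper does not invoke Grunwald--Wang. It introduces the auxiliary composite parameter $\dot\p_\eta = \dot\eta_\p\#\dot\eta_1\#\dot\eta_2\#\dot\eta_1\dot\eta_2$ where $\dot\eta_\p=\eta_{\dot G}$ and $\dot\eta_1,\dot\eta_2$ are quadratic id\`ele class characters with prescribed archimedean behaviour. It applies Lemma~\ref{lemma: splitting parameters} (a substantive result about parameters built from quadratic characters, proved via base change) to $\dot\p_\eta$ together with Lemma~\ref{lemma: strong multiplicity one at one place 2} to produce the witnesses $y(x)$. Because $\dot\eta_\p=\eta_{\dot G}$ is a constituent of $\dot\p_\eta$, every such $y$ automatically satisfies $\eta_{\dot G_v}(y_v)=1$ at every place; this is exactly what is needed at the places where $\dot\p_v$ has its single ramified constituent. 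The set $S$ is then the finite set of places where $|y_v|\neq 1$ for some $y=y(x)$, and at those places the open set $\widehat U_v$ forces $\a(\S{\dot\p_v}^{\Sigma_0})=1$. Your Grunwald--Wang element $t$ has no such automatic compatibility with $\eta_{\dot G}$, nor does it even lie in $B_{\dot F}=\c(\lG(\dot F))$ without further argument when $\lG$ is an orthogonal similitude group. The ``main obstacle'' you flag (the Grunwald--Wang special case) is therefore a secondary concern; the proposal has larger problems upstream.
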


\begin{proof}
Let $\dot{G}$ be a quasisplit special even orthogonal group or symplectic group over $\dot{F}$ such that $\dot{G}_{u} = G$ and $\dot{G}_{v}$ has discrete series for $v \in S_{\infty}$, and let $\dot{\eta}_{\p} = \eta_{\dot{G}}$. In view of Lemma~\ref{lemma: multiple global lifting}, one would like to impose restrictions over a finite set $S$ of nonarchimedean places described by an open subset $\D{U}$ of tempered representations with $\D{\mu}^{pl}_{S} (\D{U}) > 0$, so that the global lift $\dot{\p}$ obtained from Lemma~\ref{lemma: multiple global lifting} has the property of $\Sigma_{0}$-strong multiplicity one at the place $u$. To determine $S$ and $\D{U}$, we need to use the equivalent characterization of the property of $\Sigma_{0}$-strong multiplicity one at one place given by Lemma~\ref{lemma: strong multiplicity one at one place 1} and Lemma~\ref{lemma: strong multiplicity one at one place 2}. 
First, let us take two distinct quadratic id\`ele class character $\dot{\eta}_{i}$ ($i = 1, 2$), so that $\dot{\eta}_{i, u} = 1$ and $\dot{\eta}_{i, v} = \varepsilon_{v}$ the sign character of $\mathbb{R}^{\times}$ for $v \in S_{\infty}$. This is possible for one can construct a quadratic extension of any number field with arbitrarily prescribed localizations at finitely many places.
Then we can consider the following composite parameter 
\[
\dot{\p}_{\eta} = \dot{\eta}_{\p} \# \dot{\eta}_{1} \# \dot{\eta}_{2} \# \dot{\eta}_{1}\dot{\eta}_{2} \in \cP{\dot{G}_{\eta}},
\] 
and let 
\begin{align*}
\bar{B}( \dot{\p}_{\eta, v} ) = \{ z \in  \lif{\dot{G}}_{\eta}(\dot{F}_{v}) / \dot{G}_{\eta}(\dot{F}_{v}): \x_{v}(z) = 1 \text{ for all } \x_{v} \in \a(\S{\dot{\p}_{\eta, v}}^{\Sigma_{0}})\}
\end{align*}
for any place $v$. Note that  $B_{\dot{F}} \cong \lif{\dot{G}}_{\eta}(\dot{F})/ \dot{G}_{\eta}(\dot{F})$ and $\lif{\dot{G}}_{\eta}(\dot{F}_{v}) / \dot{G}_{\eta}(\dot{F}_{v}) \cong \lif{\dot{G}}(\dot{F}_{v}) / \dot{G}(\dot{F}_{v})$. Moreover if $[\r] \in \cPkt{\p}$, then $\bar{B}( \r ) = \bar{B}(\dot{\p}_{\eta, u})$. And $\bar{B}( \dot{\p}_{\eta, v} ) = \R_{> 0}$ if $v \in S_{\infty}$. If we apply Lemma~\ref{lemma: splitting parameters} and Lemma~\ref{lemma: strong multiplicity one at one place 2} to $\dot{\p}_{\eta}$, we get for any $x \in \bar{B}( \r ) / (F^{\times})^2$, there exists $y \in B_{\dot{F}}$ such that $y_{u} = x^{-1}$ mod $( F^{\times} )^{2}$ and $y_{v} \in \bar{B}( \dot{\p}_{\eta, v} )$ for $v \neq u$. In particular, $y_{v} \in \mathbb{R}_{>0}$ if $v \in S_{\infty}$. Since we are going to use Lemma~\ref{lemma: multiple global lifting} to lift $\p$, by its properties we can conclude immediately that $y_{v} \in \bar{B}(\dot{\r}_{v})$ unless $v \neq u$ is nonarchimedean and $|y_{v}| \neq 1$. To emphasize the dependence of $y$ on $x$, we also write $y = y(x)$. Let $S_{y(x)}$ be those nonarchimedean places $v \neq u$ where $|y_{v}| \neq 1$, and since the group $\bar{B}( \r ) / (F^{\times})^2$ is finite, we can take the union of all such sets and get
\[
S = \bigcup_{x \in \bar{B}( \r ) / (F^{\times})^2} S_{y(x)}
\]
which is still finite. Note that $S$ depends on the choice of $y$ for each $x$. Now we can describe $\D{U} = \prod_{v \in S} \D{U}_{v}$. In fact for any $v \in S$, one just needs to take $\D{U}_{v}$ to be the union of tempered packets $\cPkt{\p_{v}}$ for spherical $\p_{v} \in \cPbd{\dot{G}_{v}}$ such that 
$\a(\S{\p_{v}}^{\Sigma_{0}}) = 1$. By Lemma~\ref{lemma: plus combinatorial description}, this condition is equivalent to requiring no finite products of unramified quasicharacters in $\p_{v}$ gives the nontrivial unramified quadratic character unless $\eta_{\dot{G}_{v}}$ is nontrivial and quadratic. Since this is an open condition, $\D{U}_{v}$ is open with positive Plancherel measure. Note that the condition $\a(\S{\p_{v}}^{\Sigma_{0}}) = 1$ also guarantees $\bar{B}( \dot{\p}_{\eta, v} ) \subseteq \bar{B}(\r_{v})$ for $[\r_{v}] \in \cPkt{\p_{v}} \subseteq \D{U}_{v}$.
Finally, we can use Lemma~\ref{lemma: multiple global lifting} to get a global lift $\dot{\p}$ such that $\cPkt{\dot{\p}_{v}} \subseteq \D{U}_{v}$ for $v \in S$. And it is clear that for any $x \in \bar{B}( \r ) / (F^{\times})^2$, the $y$ chosen above will lie in $\bar{B}(\dot{\r}) \cap B_{\dot{F}}$ for $[\dot{\r}] \in \cPkt{\dot{\p}}$. This finishes the proof.
\end{proof}

This lemma is the first step to overcome the lack of strong multiplicity one, next we will generalize this to composite parameters.


\begin{lemma}
\label{lemma: global lifting}
Suppose 
\[
\p = \p_{1} \+ \cdots \+ \p_{q} \+ 2\p_{q+1} \+ \cdots \+ 2\p_{r} \in \cPel{G^{\theta}},
\]
where $\p_{i}$ is simple for $1 \leqslant i \leqslant r$ and $\theta \in \Sigma_{0}$. We assume $\p$ factors through $\p_{M} \in \cPdt{M}$. Then one can choose a lift $(\dot{G}, \dot{M}, \dot{F}, \dot{\p})$ of $(G, M, F, \p)$ with the following properties:
\begin{enumerate}
\item $\dot{F}$ is a totally real field and there exists a place $u$ such $(\dot{G}_{u}, \dot{M}_{u}, \dot{F}_{u}, \dot{\p}_{u}) = (G, M, F, \p)$.
\item $\dot{\p} = \dot{\p}_{1} \# \cdots \# \dot{\p}_{q} \# 2\dot{\p}_{q+1} \# \cdots \# 2\dot{\p}_{r} \in \cPel{\dot{G}^{\theta}}$.
\item $\S{\lif{\dot{\p}}} = 1$. Moreover, $(\dot{G}, \dot{\p})$ satisfies the conditions in Proposition~\ref{prop: consistency on induction}.
\item If $v \notin S_{\infty}(u)$, the local Langlands parameter 
\[
\dot{\p}_{v} = \dot{\p}_{1, v} \+ \cdots \+ \dot{\p}_{q, v} \+ 2\dot{\p}_{q+1, v} \+ \cdots \+ 2\dot{\p}_{r, v}
\]
is a direct sum of quasicharacters of $F_{v}^{\times}$, and it contains at most one ramified quasicharacter counted without multiplicities modulo the unramified quasicharacters.
\item If $v \in S_{\infty}$, $\dot{\p}_{i, v} \in \cPdt{\dot{G}_{\p_{i, v}}}$.
\item $\Sigma_{0}$-strong multiplicity one holds for $\lif{\dot{\p}}$ at the place $u$.
\end{enumerate}
\end{lemma}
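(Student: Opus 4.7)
The plan is to lift each simple constituent $\p_{i}$ of $\p$ separately via the simple-parameter lifting result (Lemma~\ref{lemma: strong multiplicity one at one place}), and then coordinate the choices so that the composite parameter $\dot{\p} = \dot{\p}_{1} \# \cdots \# \dot{\p}_{q} \# 2\dot{\p}_{q+1} \# \cdots \# 2\dot{\p}_{r}$ obeys the global non-degeneracy conditions (3) and (6). All the auxiliary groups $\dot{G}_{\p_{i}}$ can be arranged to live over a common totally real field $\dot{F}$ with a common nonarchimedean place $u$ such that $\dot{F}_{u} = F$ and $\dot{G}_{\p_{i},u} = G_{\p_{i}}$, and to assemble into a single lift $\dot{G}$ of $G$ via the endoscopic formalism of Theorem~\ref{thm: global functorial lifting}. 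Properties (1), (2), (4), (5) of the desired lift are then immediate from the corresponding properties in Lemma~\ref{lemma: strong multiplicity one at one place}, provided we choose the ramified quasicharacters appearing in the various $\dot{\p}_{i,v}$ to lie in a single unramified-twist class at each finite $v \notin S_{\infty}(u)$, so that condition (4) is preserved after composition.

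The first half of condition (3), namely $\S{\lif{\dot{\p}}} = 1$, is purely combinatorial: by Corollary~\ref{cor: combinatorial description}, it amounts to requiring, for every non-trivial $(S, T) \in \mathcal{P}_{\dot{\p}}$, that
\[
(\prod_{i \in S \cup T} \eta_{\dot{\p}_{i}}) \circ \c \neq 1
\]
as an id\`ele class character of $\lif{\dot{G}}(\A_{\dot{F}})/\lif{\dot{G}}(\dot{F})\dot{G}(\A_{\dot{F}})$. Since Lemma~\ref{lemma: strong multiplicity one at one place}, combined with Lemma~\ref{lemma: multiple global lifting}, leaves a positive-Plancherel-measure set of choices for each $\dot{\p}_{i}$ at auxiliary finite places, we can proceed inductively over $i$, at each stage enlarging the set of auxiliary ramification places so that the newly chosen $\eta_{\dot{\p}_{i}}$ has a ramification profile not matched by any product of the previously chosen ones. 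There are only finitely many partitions $(S,T)$ to rule out, so this can be carried through. The second half of condition (3), the hypothesis of Proposition~\ref{prop: consistency on induction}, involves only the type of $\dot{G}$ and the parities in $I_{\dot{\p}, O}^{odd}, I_{\dot{\p}, O}^{even}$, all of which agree with those of $\p$ since $\dot{\p}_{u} = \p$ and the simple constituents correspond one to one.

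Condition (6), $\Sigma_{0}$-strong multiplicity one for $\lif{\dot{\p}}$ at the place $u$, is handled by following the template of Lemma~\ref{lemma: strong multiplicity one at one place}. We introduce two distinct non-trivial quadratic id\`ele class characters $\dot{\eta}_{1}, \dot{\eta}_{2}$ of $\dot{F}$ with $\dot{\eta}_{i,u} = 1$ and $\dot{\eta}_{i,v}$ equal to the archimedean sign character for $v \in S_{\infty}$; form the auxiliary composite parameter $\dot{\p}_{\eta}$ containing $\dot{\eta}_{\p}, \dot{\eta}_{1}, \dot{\eta}_{2}, \dot{\eta}_{1}\dot{\eta}_{2}$; and invoke Lemma~\ref{lemma: splitting parameters} together with the reformulation in Lemma~\ref{lemma: strong multiplicity one at one place 2} to find, for each $x$ in the finite group $\bar{B}(\r)/(F^{\times})^{2}$, a global element $y(x) \in B_{\dot{F}}$ whose localization at $u$ is $x^{-1}$ modulo $(F^{\times})^{2}$ and whose localization at $v \neq u$ lies in $\bar{B}(\dot{\p}_{\eta,v})$ outside a finite set $S$ of nonarchimedean places. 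At the places of $S$ we then apply Lemma~\ref{lemma: multiple global lifting} with an open subset $\D{U} = \prod_{v \in S}\D{U}_{v}$ consisting of tempered packets $\cPkt{\dot{\p}_{v}}$ for which $\a(\S{\dot{\p}_{v}}^{\Sigma_{0}}) = 1$, forcing $y(x)_{v} \in \bar{B}(\dot{\r}_{v})$ for $v \in S$ and thus verifying the criterion of Lemma~\ref{lemma: strong multiplicity one at one place 2}.

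The principal technical obstacle is coordinating conditions (3), (4), and (6) simultaneously. Condition (4) sharply restricts the ramification of each $\dot{\p}_{i,v}$ at every finite $v \neq u$, and across the composite it requires all ramified characters to lie in a single unramified-twist class at each such $v$. Condition (3) on the other hand needs enough ``independent'' ramification to distinguish all relevant products of the $\eta_{\dot{\p}_{i}}$, and condition (6) needs the auxiliary construction sustaining Lemma~\ref{lemma: strong multiplicity one at one place 2} to be compatible with those constraints. The resolution is to place the distinguishing ramification of different simple factors at disjoint finite places, so that at each $v \notin S_{\infty}(u) \cup S$ at most one $\dot{\p}_{i}$ is ramified (preserving (4)), while at each $v \in S$ all $\dot{\p}_{i,v}$ are chosen spherical with $\a(\S{\dot{\p}_{i,v}}^{\Sigma_{0}}) = 1$ (supplying the input to (6)). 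The entire construction is then a finite sequence of applications of Lemma~\ref{lemma: strong multiplicity one at one place}, each with open local constraints imposed at a growing but always finite set of finite places.
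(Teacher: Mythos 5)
Your overall strategy — lift each simple factor via Lemma~\ref{lemma: strong multiplicity one at one place}, coordinate the choices, and use the combinatorial criterion of Corollary~\ref{cor: combinatorial description} for $\S{\lif{\dot{\p}}}=1$ — is the right one and matches the paper's outline. However, your treatment of condition (6) falls into the exact trap the paper explicitly warns against. You form the auxiliary parameter $\dot{\p}_\eta = \dot{\eta}_\p \# \dot{\eta}_1 \# \dot{\eta}_2 \# \dot{\eta}_1 \dot{\eta}_2$, importing the recipe from the simple case verbatim. But for a composite $\p = \p_1 \+ \cdots \+ \p_q \+ 2\p_{q+1} \+ \cdots \+ 2\p_r$, the group $\a(\S{\dot{\p}_v}^{\Sigma_0})$ that cuts out $\bar{B}(\dot{\r}_v)$ is, by Lemma~\ref{lemma: plus combinatorial description}, generated by the characters $(\prod_{i \in S \cup T} \eta_{\dot{\p}_{i,v}}) \circ \c$ ranging over $(S,T) \in \mathcal{P}_{\dot{\p}_v}^{\Sigma_0}$. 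Your $\dot{\p}_\eta$ knows only a single determinant-type character $\dot{\eta}_\p$, so at a generic place $v$ the containment $\a(\S{\dot{\p}_v}^{\Sigma_0}) \subseteq \a(\S{\dot{\p}_{\eta,v}}^{\Sigma_0})$ — equivalently $\bar{B}(\dot{\p}_{\eta,v}) \subseteq \bar{B}(\dot{\r}_v)$ — will fail, and the global elements $y(x)$ you extract via Lemma~\ref{lemma: strong multiplicity one at one place 2} need not land in $\bar{B}(\dot{\r})$. The paper's fix is to take
\[
\dot{\p}_\eta = \dot{\eta}_{\p_1} \# \cdots \# \dot{\eta}_{\p_q} \# 2\dot{\eta}_{\p_{q+1}} \# \cdots \# 2\dot{\eta}_{\p_r} \# \dot{\eta}_1 \# \dot{\eta}_2 \# \dot{\eta}_1 \dot{\eta}_2
\]
(with a variant $\# 2\dot{\eta}_1$ when $q$ is even), reproducing the entire central-character profile of $\dot{\p}$; only then does the containment hold outside a finite set $S$, and only then is it safe to define $S$ once for $\p$ as a whole rather than per simple factor.

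A secondary, milder concern: your route to $\S{\lif{\dot{\p}}}=1$ by "enlarging the set of auxiliary ramification places" is underspecified and in tension with condition (4), which tolerates at most one ramified character (mod unramified twists) at each finite place. The paper avoids this by separating the $\dot{\eta}_{\p_i}$ with \emph{unramified} quadratic twists at fresh places $v_1,\dots,v_r$ (making $\dot{\eta}_{\p_{i,v_j}}$ the unramified quadratic character iff $i=j$), and by choosing the $\dot{\eta}_{\p_i}$ recursively so that $\dot{\eta}_{\p_i}$ is unramified at the ramified places of $\dot{\eta}_{\p_j}$ for $j>i$ (with a special clause when $G$ is symplectic and $i=1$). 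You should adopt some equally explicit scheme, since compatibility with (4) is not automatic from a vague "distinct ramification profiles" requirement.
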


\begin{proof}
The idea is to apply Lemma~\ref{lemma: strong multiplicity one at one place} to each simple parameters. But one needs to be extra careful at the following points. 

The first point is about property (3) and (4), they require choosing those global characters $\dot{\eta}_{\p_{i}}$ in a consistent way so that the condition of Corollary~\ref{cor: combinatorial description} is satisfied, and also there is at most one ramified character in $\dot{\eta}_{\p_{1, v}} \+ \cdots \+ \dot{\eta}_{\p_{r, v}}$ at each place $v \notin S_{\infty}(u)$ counted without multiplicities modulo the unramified quasicharacters. But this can be done easily. In fact, one can fix nonarchimedean places $\{v_{1}, v_{2}, \cdots, v_{r}\}$ distinct from $u$. If $q = 0$, we require for $1 \leqslant i \leqslant r$ and $1 \leqslant j \leqslant r$ that
\begin{align}
\label{eq: global lifting}
\dot{\eta}_{\p_{i, v_{j}}} = \begin{cases}
                          \text{the unramified quadratic character of $\dot{F}^{\times}_{v_{j}}$, when } i = j, \\
                          1, \text{ otherwise. }
                          \end{cases}
\end{align}
If $q \neq 0$, we only impose \eqref{eq: global lifting} for $2 \leqslant i \leqslant r$ and $1 \leqslant j \leqslant r$, and require for $2 \leqslant j \leqslant r$ that
\[
\dot{\eta}_{\p_{1, v_{j}}} = \begin{cases} \prod_{1 < i \leqslant q}\dot{\eta}_{\p_{i, v_{j}}}, & \text{ if } q > 1, \\
1, & \text{ if } q = 1.
\end{cases}
\]
In this case, we also require $\dot{\eta}_{\p_{1, v_{1}}} \neq 1$ when $G$ is special even orthogonal. It is easy to see that these conditions will guarantee (3). Next we can choose $\dot{\eta}_{\p_{i}}$ satisfying these conditions. Moreover, we can choose them consecutively for $i$ decreasing from $r$ to $1$ such that $\dot{G}_{\p_{i}}$ has discrete series and $\dot{\eta}_{\p_{i}}$ is unramified over the ramified places of $\dot{\eta}_{\p_{j}}$ for $j > i$, except in the case $i =1$ and $G$ is symplectic, where we would like to assume $G_{\p_{1}}$ is also symplectic and take 
\[
\dot{\eta}_{\p_{1}} = \begin{cases} \prod_{1 < i \leqslant q}\dot{\eta}_{\p_{i}}, & \text{ if } q > 1, \\
1, & \text{ if } q = 1.
\end{cases}
\]

The second point is about choosing the set $S$ of nonarchimedean places as in the proof of Lemma~\ref{lemma: strong multiplicity one at one place}. It is tempting to think that it should be the union of all such sets defined in Lemma~\ref{lemma: strong multiplicity one at one place} for each simple parameter $\p_{i}$. In fact, one should choose this set $S$ for $\p$ as a whole. Let $\dot{\eta}_{1}$ and $\dot{\eta}_{2}$ again be two distinct quadratic id\`ele class characters defined as in Lemma~\ref{lemma: strong multiplicity one at one place}. And here we consider
\[
\dot{\p}_{\eta}  = \dot{\eta}_{\p_{1}} \# \cdots \# \dot{\eta}_{\p_{q}} \# 2\dot{\eta}_{\p_{q+1}} \# \cdots \# 2\dot{\eta}_{\p_{r}} \# \dot{\eta}_{1} \# \dot{\eta}_{2} \# \dot{\eta}_{1} \dot{\eta}_{2} \in \cP{\dot{G}_{\eta}}
\]
when $q$ is odd; or
\[
\dot{\p}_{\eta}  = \dot{\eta}_{\p_{1}} \# \cdots \# \dot{\eta}_{\p_{q}} \# 2\dot{\eta}_{\p_{q+1}} \# \cdots \# 2\dot{\eta}_{\p_{r}} \# 2\dot{\eta}_{1} \in \cP{\dot{G}_{\eta}}
\] 
when $q$ is even. By applying Lemma~\ref{lemma: splitting parameters} and Lemma~\ref{lemma: strong multiplicity one at one place 2} to $\dot{\p}_{\eta}$, we can get a set $S$ of nonarchimedean places using the same argument as in Lemma~\ref{lemma: strong multiplicity one at one place}. Finally, one can choose the open set of tempered representations $\D{U}_{i} = \prod_{v \in S} \D{U}_{i, v}$ for each simple parameter $\p_{i}$ as in Lemma~\ref{lemma: strong multiplicity one at one place} and make them smaller enough so that $\a(\S{\dot{\p}_{v}}^{\Sigma_{0}}) \subseteq \a(\S{(\dot{\p}_{\eta})_{v}}^{\Sigma_{0}})$. This is possible again by Lemma~\ref{lemma: plus combinatorial description}. Note that if $\p_{i}$ is a quadratic character, there is no need to impose any local conditions on $S$. This finishes the proof.
\end{proof}

\begin{remark}
\begin{enumerate}
\item
In view of Lemma~\ref{lemma: plus combinatorial description}, the second property about this global lift $\dot{\p}$ implies the natural inclusion $S_{\dot{\p}}^{\Sigma_{0}} \hookrightarrow S_{\p}^{\Sigma_{0}}$ is an isomorphism here. So we can identify $S_{\dot{\p}}^{\Sigma_{0}}$ with $S_{\p}^{\Sigma_{0}}$. 

\item In later proofs, we would like to apply the discussions in Section~\ref{subsec: comparison of trace formulas} to such global parameters $\dot{\p}$. In fact, by our induction assumption and Proposition~\ref{prop: consistency on induction} we can replace Conjecture~\ref{conj: global L-packet}, ~\ref{conj: stable multiplicity formula}, ~\ref{conj: compatible normalization} by Theorem~\ref{thm: main global} for the proper Levi subgroups and twisted endoscopic groups of $\lif{\dot{G}}$. It is then clear that Lemma~\ref{lemma: twisted spectral expansion} is still valid for $\dot{\p}$. Since we are only going to establish the stable multiplicity formula for discrete parameters in Theorem~\ref{thm: main global}, we need to require 
\[
\cS{\dot{\p}, ell}^{\theta}(\dot{\x}) = \cS{\dot{\p}, ss}^{\theta}(\dot{\x})
\]
when $\dot{\p}$ is not a discrete parameter in Lemma~\ref{lemma: twisted endoscopic expansion}. However, in our application this will always be satisfied by our choice of $\dot{\x}$ and the fact that $\S{\lif{\dot{\p}}} = 1$.

\end{enumerate}
\end{remark}


\subsection{Proof of main local theorem}
\label{subsec: proof of main local theorem}

With all these refined lifting results, we can start to prove the main local theorem. Let $F$ be a nonarchimedean local field, 
\begin{align}
\label{eq: elliptic parameter}
\p = \p_{1} \+ \cdots \+ \p_{q} \+ 2\p_{q+1} \+ \cdots \+ 2\p_{r} \in \cPel{G^{\theta}},
\end{align}
where $\p_{i}$ is simple for $1 \leqslant i \leqslant r$ and $\theta \in \Sigma_{0}$. The simplest cases are when $\p_{i}$ are quadratic characters $\eta_{i}$ for $1 \leqslant i \leqslant r$, and one can see not all of these cases will follow from induction. So we have to treat the exceptional cases differently. In fact regarding property $(4)$ of Lemma~\ref{lemma: global lifting}, one only needs to consider the cases when $r \leqslant 4$, i.e., the trivial character $\varepsilon_{0}$, the unramified quadratic character $\varepsilon$, a ramified quadratic character $\eta$, and also $\eta \cdot \varepsilon$. In fact, when $r = 4$ and $G$ is special even orthogonal, it can be further reduced to the case $r \leqslant 3$ by our induction argument as one can see from the proof of Lemma~\ref{lemma: global lifting}. 

\begin{lemma}
\label{lemma: character case}
For $\p$ shown in \eqref{eq: elliptic parameter}, if $\p_{i} = \eta_{i}$, and $r \leqslant 3$ (or $r \leqslant 4$ when $G$ is symplectic), then the main local theorem (Theorem~\ref{thm: refined L-packet}) holds for $\lp$.
\end{lemma}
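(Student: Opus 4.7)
The plan is to reduce to the discrete case and then to produce $\cPkt{\lp}$ as the local component at $u$ of a carefully chosen global L-packet, along the lines of Theorem~\ref{thm: standard argument on stability}. If $\p \notin \cPdt{G}$, the assertion is already given by Lemma~\ref{lemma: refined L-packet for non-discrete parameter}, so I may assume $\p \in \cPdt{G}$; by Lemma~\ref{lemma: discrete parameter}(2) this forces $q = r$ and $\p = \eta_{1} \# \cdots \# \eta_{r}$ with pairwise distinct quadratic characters $\eta_{i}$ of $F^{\times}$. The bound $r \leq 3$ (resp.\ $r \leq 4$ for $G$ symplectic) is compatible with the local constraint that $|F^{\times}/(F^{\times})^{2}|$ is bounded (equal to $4$ when the residue characteristic is odd) and with the parity of $q$ dictated by $G$; the orthogonal case $r = 4$ is already subsumed by the reduction indicated just before the lemma statement.

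I would next apply Lemma~\ref{lemma: global lifting} to obtain a totally real field $\dot F$ with $\dot F_{u} = F$, a quasisplit group $\dot G$ with $\dot G_{u} = G$, and a global lift $\dot\p = \dot\eta_{1} \# \cdots \# \dot\eta_{r} \in \cPdt{\dot G}$ with $\dot\p_{u} = \p$, $\S{\lif{\dot\p}} = 1$ (property~(3)), and $\Sigma_{0}$-strong multiplicity one for $\lif{\dot\p}$ at $u$ (property~(6)). Conditions (2) and (3) of Theorem~\ref{thm: standard argument on stability} are then immediate, and an application of that theorem would produce the refined L-packet $\cPkt{\lp}$ satisfying properties (1) and (2) of Theorem~\ref{thm: refined L-packet} together with the character relation \eqref{eq: character relation} at the place $u$.

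The principal obstacle is the first condition of Theorem~\ref{thm: standard argument on stability}: the main local theorem must already be known for $\lif{\dot\p}_{v}$ at every $v \notin S_{\infty}(u)$. By property~(4) of Lemma~\ref{lemma: global lifting}, at each such $v$ the parameter $\dot\p_{v}$ is a direct sum of quadratic characters of $\dot F_{v}^{\times}$ with at most one ramified (modulo unramified) one; pigeonhole then forces coincidences among the localizations of the $\dot\eta_{i}$. Whenever some local character appears with multiplicity $\geq 3$, Lemma~\ref{lemma: discrete parameter}(3) gives $\dot\p_{v} \notin \cPel{\dot G_{v}}$, and then the local induction hypothesis together with Lemma~\ref{lemma: refined L-packet for non-discrete parameter} supplies the main local theorem at $v$. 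For the remaining places, $\dot\p_{v}$ is again a sum of quadratic characters but with strictly fewer distinct simple constituents than $r$; I would dispose of these by induction on $r$, the base cases $r = 1, 2$ being handled directly by parabolic induction from the torus and the local Langlands correspondence for tori.

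Finally, in the special even orthogonal case the $(\theta_{0}, \x)$-twisted character relation required by Theorem~\ref{thm: refined L-packet}(3) is not delivered by Theorem~\ref{thm: standard argument on stability} and would need to be established separately. I would compare the spectral and endoscopic expansions (Lemma~\ref{lemma: twisted spectral expansion} and Lemma~\ref{lemma: twisted endoscopic expansion}) of the $(\theta_{0}, \x)$-twisted discrete part of the stable trace formula at $\dot\p$, using the identification $\S{\dot\p}^{\Sigma_{0}} \cong \S{\p}^{\Sigma_{0}}$ supplied by Lemma~\ref{lemma: plus combinatorial description} together with $\Sigma_{0}$-strong multiplicity one at $u$ to isolate a single local contribution at the place $u$; linear independence of twisted characters would then extract the desired local identity.
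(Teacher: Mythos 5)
Your proposal breaks down at the very first reduction. You claim that for $\p \notin \cPdt{G}$ ``the assertion is already given by Lemma~\ref{lemma: refined L-packet for non-discrete parameter},'' but that lemma gives parts (1) and (2) of Theorem~\ref{thm: refined L-packet} together with the twisted character relation \emph{only} for semisimple $s$ satisfying $|\cS{\p,s}^{0}| = \infty$. Every $\p \in \cPel{G^{\theta}} - \cPdt{G}$ has, by definition, a semisimple $s$ with $|\cS{\p,s}^{0}| < \infty$, and establishing the character relation \eqref{eq: theta twisted character relation} at such $s$ is precisely the content of the lemma for these parameters. The paper isolates them as the ``type~II'' case and devotes the larger part of its argument to them; your plan simply dismisses them.

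The discrete (``type~I'') case is also not handled correctly. You spot the obstacle in condition (1) of Theorem~\ref{thm: standard argument on stability} and propose to close the gap by an induction on $r$ with ``base cases $r = 1, 2$ handled by parabolic induction from the torus.'' This does not work: the overall induction is on the rank $N$, not on $r$, and the smaller-$r$ localizations $\dot\p_{v}$ can sit in a group of the same rank. For instance $\dot\p_{v} = 2\eta \+ \eta'$ is an elliptic non-discrete parameter of $Sp(2)$, for which $\lG_{v} \cong GL(2)$ is not a torus; it is an instance of the very type~II case you have discarded, so you would need that case first. Furthermore, Theorem~\ref{thm: standard argument on stability} produces only parts (1) and (2) of Theorem~\ref{thm: refined L-packet}; it does not deliver the character relation \eqref{eq: character relation} at $u$, contrary to what you assert.

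The missing ingredients are precisely the ones that make this lemma work in the paper. For the nontrivial discrete case $\p = \eta_{1} \+ \eta_{2} \+ \eta_{3}$ one has $\lG \cong GL(2)$: the local packet is automatically a singleton (so stability is immediate) and strong multiplicity one for $GL(2)$ separates the summands of the twisted trace formula directly, with no reference to other places. For the type~II case one has \emph{full} $\Sigma_{0}$-strong multiplicity one by Lemma~\ref{lemma: splitting parameters} (not merely ``at one place,'' which is all that Lemma~\ref{lemma: global lifting} guarantees in general), so the spectral and endoscopic expansions can be separated by Satake parameters into an identity of local distributions at each place; the unknown scalar at each $v$ is then pinned down by Arthur's twisted local intertwining relation for the classical group $\dot G_{v}$, not for $\lif{\dot G}_{v}$, which is what removes the circular dependence on the lemma at other places. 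None of these devices appear in your plan.
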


\begin{proof}
There are two types of parameters which lead to nontrivial cases here. \\
Type I :
\[
\p = \eta_{1} \+ \eta_{2} \+ \eta_{3} \in \cPbd{G}
\]
 Type II:
\[
\p \in \cPel{G^{\theta}} - \cPdt{G}
\]

For type I, $\lG = GL(2)$ and $\S{\lp} = 1$ by Lemma~\ref{lemma: combinatorial description}, so the refined $L$-packet $\cPkt{\lp}$ is a singleton and hence determined by $\cPkt{\p}$. Since the character of any irreducible admissible representation of $GL(2, F)$ is already stable, the packet $\cPkt{\lp}$ is then stable. Therefore the only thing we need to prove is the twisted character relation \eqref{eq: theta twisted character relation} for $\x \in \a( \S{\p} )$ and $\theta = id$. To prove this, we use the stabilized $\dot{\x}$-twisted trace formula for $\dot{\x} \in \a( \S{\dot{\p}} )$ and some global lift $\dot{\p}$.

Assume $\dot{\p} = \dot{\eta}_{1} \# \dot{\eta}_{2} \# \dot{\eta}_{3}$ is a global lift of $\p$. Because the global $L$-packet for $GL(2)$ should also be a singleton and multiplicity one holds for $GL(2)$, the spectral side of the discrete part of the $\dot{\x}$-twisted trace formula becomes
\[
I^{(\lif{\dot{G}}, \dot{\x})}_{disc, \dot{\p}} ( \lif{\dot{f}} ) = tr R^{(\lif{\dot{G}}, \dot{\x})}_{disc, \dot{\p}} ( \lif{\dot{f}} ) =  \sum_{\dot{\x}' \in Y / \a(\S{\dot{\p}})} m(  \lif{\dot{\r}} \otimes \dot{\x}', \dot{\x} ) \lif{\dot{f}}_{\lif{\dot{G}}}( \lif{\dot{\r}} \otimes \dot{\x}', \dot{\x} ),
\]  
where $\lif{\dot{\r}}$ is taken to be any representation in $\mathcal{A}_{2}(\lif{\dot{G}})$, whose restriction to $\dot{G}(\A_{\dot{F}})$ are contained in $\cPkt{\dot{\p}}$, and 
\[
\lif{\dot{f}}_{\lif{\dot{G}}}( \lif{\dot{\r}} \otimes \dot{\x}', \dot{\x} ) = \prod_{v} \lif{\dot{f}}_{{\lif{\dot{G}}}_{v}}( \lif{\dot{\r}}_{v} \otimes \dot{\x}_{v}', \dot{\x}_{v} ),
\]
defined by \eqref{eq: theta twisted intertwining operator}. In particular, $m(  \lif{\dot{\r}} \otimes \dot{\x}', \dot{\x} ) = \pm 1$. For the endoscopic side, we can apply Lemma~\ref{lemma: twisted endoscopic expansion} and get
\[
I^{(\lif{\dot{G}}, \dot{\x})}_{disc, \dot{\p}} ( \lif{\dot{f}} ) =  \sum_{\dot{\x}' \in Y / \a(\S{\dot{\p}})}  \lif{\dot{f}}'_{\lif{\dot{G}}} (\lif{\dot{\p}} \otimes \dot{\x}', \dot{x}),
\]
where $\a( \dot{x} ) = \dot{\x}$ and $\S{\lif{\dot{\p}}} = 1$. Therefore we have an identity
\[
\sum_{\dot{\x}' \in Y / \a(\S{\dot{\p}})} m(  \lif{\dot{\r}} \otimes \dot{\x}', \dot{\x} ) \lif{\dot{f}}_{\lif{\dot{G}}}( \lif{\dot{\r}} \otimes \dot{\x}', \dot{\x} ) = \sum_{\dot{\x}' \in Y / \a(\S{\dot{\p}})}  \lif{\dot{f}}'_{\lif{\dot{G}}} (\lif{\dot{\p}} \otimes \dot{\x}', \dot{x}).
\]
Note that strong multiplicity one also holds for $\lif{\dot{\p}}$. This either can be seen from Lemma~\ref{lemma: splitting parameters} or from the fact that $\lG = GL(2)$ here. Then one can use the Satake parameters of representations of $\lif{\dot{G}}(\A_{\dot{F}})$ to distinguish the summands on both sides of the identity (cf. Lemma~\ref{lemma: trace formula component}). As a result, we get
\[
m(  \lif{\dot{\r}}, \dot{\x} ) \lif{\dot{f}}_{\lif{\dot{G}}}( \lif{\dot{\r}}, \dot{\x} ) = \lif{\dot{f}}'_{\lif{\dot{G}}} (\lif{\dot{\p}}, \dot{x} ),
\]
where we may need to change $\lif{\dot{\r}}$ by twisting with some $\dot{\x}' \in Y / \a(\S{\dot{\p}})$ to get this equality. Therefore for any place $v$ one has
\[
m(  \lif{\dot{\r}}_{v}, \dot{\x}_{v} ) \lif{\dot{f}}_{{\lif{\dot{G}}}_{v}}( \lif{\dot{\r}}_{v}, \dot{\x}_{v} ) = \lif{\dot{f}}'_{\lif{\dot{G}}_{v}} (\lif{\dot{\p}}_{v}, \dot{x}_{v} ),
\]
where $m(  \lif{\dot{\r}}_{v}, \dot{\x}_{v} )$ are some constants in $\C^{\times}$. If we take $\lif{\dot{f}}_{v}$ supported on $\lif{Z}_{\dot{F}_{v}}\dot{G}(\dot{F}_{v})$, then $ \lif{\dot{f}}_{{\lif{\dot{G}}}_{v}}( \lif{\dot{\r}}_{v}, \dot{\x}_{v} ) = \lif{\dot{f}}'_{\lif{\dot{G}}_{v}} (\lif{\dot{\p}}_{v}, \dot{x}_{v} )$ by character relation for $\dot{G}_{v}$, hence $m(  \lif{\dot{\r}}_{v}, \dot{\x}_{v} ) = 1$ and so is $m(  \lif{\dot{\r}}, \dot{\x} )$. In particular we have shown the twisted character relation for $\lp$ of type I.

For type II, it also suffices to show the twisted character relation regarding Lemma~\ref{lemma: refined L-packet for non-discrete parameter}. In fact its proof is similar to the proof of the twisted character relation for a general parameter
\[
\p = \p_{1} \+ \cdots \+ \p_{q} \+ 2\p_{q+1} \+ \cdots \+ 2\p_{r} \in \cPel{G^{\theta}} - \cPdt{G}.
\]
So here we will carry out the general strategy. First we apply Lemma~\ref{lemma: global lifting} to $\p$ and get a global lift $\dot{\p}$ such that $\dot{\p}_{u} = \p$, $\S{\lif{\dot{\p}}} = 1$, 
and $\Sigma_{0}$-strong multiplicity one holds for $\lif{\dot{\p}}$ at the place $u$. In particular, for the case of type II parameters considered here, it is true that both $\Sigma_{0}$-multiplicity one and $\Sigma_{0}$-strong multiplicity one hold according to Lemma~\ref{lemma: splitting parameters}. Next we would like to apply Lemma~\ref{lemma: twisted spectral expansion} and Lemma~\ref{lemma: twisted endoscopic expansion} to get an identity of the spectral expansion and endoscopic expansion of the stabilized $(\theta, \dot{\x})$-twisted trace formula. In view of Lemma~\ref{lemma: refined L-packet for non-discrete parameter}, we can assume semisimple $s \in \cS{\p}^{\theta}$ satisfies $|\cS{\p, s}^{0}| < \infty$. This implies $s \in \cS{\p, ell}^{\theta}$ and we denote its preimage in $\cS{\dot{\p}, ell}^{\theta}$ by $\dot{s}$. Let $\dot{x}$ be the image of $\dot{s}$ in $\S{\dot{\p}, ell}^{\theta}$. Since $\S{\lif{\dot{\p}}} = 1$, we have $\dot{\x} \neq 1$ and
\[
C_{\lif{\dot{\p}}} \sum_{ \dot{\x}' \in Y / \a(\S{\dot{\p}})} i^{\theta}_{\dot{\p}}(\dot{x}) \lif{\dot{f}}_{\lif{\dot{G}}^{\theta}}( \lif{\dot{\p}} \otimes \dot{\x}', \dot{x} ) = C_{\lif{\dot{\p}}} \sum_{ \dot{\x}' \in Y / \a(\S{\dot{\p}})} e'^{\theta}_{\dot{\p}}(\dot{x}) \lif{\dot{f}}'_{\lif{\dot{G}}^{\theta}}( \lif{\dot{\p}} \otimes \dot{\x}', \dot{x} ),
\]
It follows from Proposition~\ref{prop: endoscopy of complex group} and the fact that $\dot{s} \in \cS{\dot{\p}, ell}^{\theta}$, 
\[
i^{\theta}_{\dot{\p}}(\dot{x}) = e'^{\theta}_{\dot{\p}}(\dot{x}) \neq 0.
\]
Therefore 
\begin{align}
\label{eq: character case}
\sum_{ \dot{\x}' \in Y / \a(\S{\dot{\p}})}  \lif{\dot{f}}_{\lif{\dot{G}}^{\theta}}( \lif{\dot{\p}} \otimes \dot{\x}', \dot{x} ) = \sum_{ \dot{\x}' \in Y / \a(\S{\dot{\p}})} \lif{\dot{f}}'_{\lif{\dot{G}}^{\theta}}( \lif{\dot{\p}} \otimes \dot{\x}', \dot{x} ).
\end{align}
In case $\Sigma_{0}$-strong multiplicity one holds, we can again use the Satake parameters of admissible representations of $\lif{\dot{G}}(\A_{\dot{F}})$ to distinguish the summands on both sides of the identity. As a result, we get
\[
\label{eq: character case 2}
\lif{\dot{f}}_{\lif{\dot{G}}^{\theta}}( \lif{\dot{\p}}, \dot{x} ) = \lif{\dot{f}}'_{\lif{\dot{G}}^{\theta}}( \lif{\dot{\p}}, \dot{x} )
\]
and hence there exist constants $n_{v}$ for all places such that
\[
\lif{\dot{f}}_{\lif{\dot{G}}^{\theta}_{v}}( \lif{\dot{\p}}_{v}, \dot{x}_{v} ) = n_{v} \cdot \lif{\dot{f}}'_{\lif{\dot{G}}^{\theta}_{v}}( \lif{\dot{\p}}_{v}, \dot{x}_{v} ).
\]
If we take $\lif{\dot{f}}_{v}$ supported on $\lif{Z}_{\dot{F}_{v}}\dot{G}(\dot{F}_{v})$, then $\lif{\dot{f}}_{\lif{\dot{G}}^{\theta}_{v}}( \lif{\dot{\p}}_{v}, \dot{x}_{v} ) =  \lif{\dot{f}}'_{\lif{\dot{G}}^{\theta}_{v}}( \lif{\dot{\p}}_{v}, \dot{x}_{v} )$ by the twisted local intertwining relation for $\dot{G}_{v}$. Therefore $n_{v} = 1$ and we have shown the twisted local intertwining relation for $\lp$. By Lemma~\ref{lemma: twisted intertwining relation 1}, this implies the twisted character relation for $\lp$. So we have finished the proof for the parameters of type II. 

\end{proof}

\begin{theorem}
\label{thm: twisted character relation for elliptic parameter}
Suppose $F$ is a nonarchimedean local field and 
\(
\p \in \cPel{G^{\theta}} - \cPdt{G},
\)
then the twisted character relations \eqref{eq: theta twisted character relation}
holds for $\lp$.
\end{theorem}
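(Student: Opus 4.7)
By Lemma~\ref{lemma: refined L-packet for non-discrete parameter}, a packet $\cPkt{\lp}$ has already been assigned, and the twisted character relation \eqref{eq: theta twisted character relation} holds whenever the semisimple element $s \in \cS{\p}^{\theta}$ satisfies $|\cS{\p, s}^{0}| = \infty$ (the descent argument through a proper Levi). The remaining case is $s \in \cS{\p, ell}^{\theta}$, which we intend to settle by the global argument already rehearsed for the Type~II parameters in the proof of Lemma~\ref{lemma: character case}.

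The plan is to apply Lemma~\ref{lemma: global lifting} to produce a totally real field $\dot{F}$ with distinguished nonarchimedean place $u$, a group $\dot{G}$ with $\dot{G}_{u} = G$, and a global lift $\dot{\p} \in \cPel{\dot{G}^{\theta}}$ of $\p$ such that $\dot{\p}_{u} = \p$, $\S{\lif{\dot{\p}}} = 1$, $\Sigma_{0}$-strong multiplicity one holds for $\lif{\dot{\p}}$ at $u$, and for every $v \notin S_{\infty}(u)$ the local parameter $\dot{\p}_{v}$ is a sum of quasicharacters with at most one ramified constituent modulo unramified quasicharacters. Let $\dot{s} \in \cS{\dot{\p}, ell}^{\theta}$ lift $s$, let $\dot{x}$ be its image in $\S{\dot{\p}}^{\theta}$, and set $\dot{\x} = \a(\dot{x})$. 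Because $\S{\lif{\dot{\p}}} = 1$ we have $\dot{\x} \neq 1$, so comparing Lemma~\ref{lemma: twisted spectral expansion} with Lemma~\ref{lemma: twisted endoscopic expansion} on the $\dot{\p}$-component of the stabilized $(\theta,\dot{\x})$-twisted trace formula, and using $i^{\theta}_{\dot{\p}}(\dot{x}) = e'^{\theta}_{\dot{\p}}(\dot{x}) \neq 0$ (Proposition~\ref{prop: endoscopy of complex group} applied to the elliptic element $\dot{s}$), we obtain
\begin{equation}
\label{eq: proposal identity}
\sum_{\dot{\x}' \in Y/\a(\S{\dot{\p}})} \lif{\dot{f}}_{\lif{\dot{G}}^{\theta}}(\lif{\dot{\p}} \otimes \dot{\x}', \dot{x}) \;=\; \sum_{\dot{\x}' \in Y/\a(\S{\dot{\p}})} \lif{\dot{f}}'_{\lif{\dot{G}}^{\theta}}(\lif{\dot{\p}} \otimes \dot{\x}', \dot{x}),
\end{equation}
for all $\lif{\dot{f}} \in \sH(\lif{\dot{G}}, \lif{\dot{\chi}})$.

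Using the Satake parameters at unramified places to separate the summands in \eqref{eq: proposal identity}, together with the $\Sigma_{0}$-strong multiplicity one for $\lif{\dot{\p}}$ at $u$ supplied by Lemma~\ref{lemma: global lifting}, we may peel off a single summand and conclude that for every place $v$ there is a constant $n_{v}$ with $\lif{\dot{f}}_{\lif{\dot{G}}^{\theta}_{v}}(\lif{\dot{\p}}_{v}, \dot{x}_{v}) = n_{v} \cdot \lif{\dot{f}}'_{\lif{\dot{G}}^{\theta}_{v}}(\lif{\dot{\p}}_{v}, \dot{x}_{v})$, with $\prod_{v} n_{v} = 1$. Specializing $\lif{\dot{f}}_{v}$ to a function supported on $\lif{Z}_{\dot{F}_{v}}\dot{G}(\dot{F}_{v})$ and using the twisted character relation \eqref{eq: theta twisted character relation for G} for $G$ (Theorem~\ref{thm: twisted LLC}) reduces this identity to the already known one for $\dot{G}_{v}$, forcing $n_{v} = 1$. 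This yields the twisted local intertwining relation \eqref{eq: theta twisted intertwining relation} at $v = u$, and then Lemma~\ref{lemma: twisted intertwining relation 1} converts it into the sought twisted character relation \eqref{eq: theta twisted character relation} for $\lp$ at the remaining elliptic parameters $s$.

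The principal obstacle is ensuring that at the auxiliary places $v \neq u$ the identity \eqref{eq: proposal identity} can actually be disentangled into individual summands. This requires two inputs that are supplied by Lemma~\ref{lemma: global lifting}: first, that $\dot{\p}_{v}$ be simple enough for $v \notin S_{\infty}(u)$ that the twisted local intertwining and character relations are available by the local induction assumption (property (4) makes $\dot{\p}_{v}$ essentially a sum of quasicharacters, reducing to Lemma~\ref{lemma: character case}); second, that $\Sigma_{0}$-strong multiplicity one hold for $\lif{\dot{\p}}$ at $u$, so that no accidental coincidence of Satake data at places $v \neq u$ can identify distinct $X$-twists of $\lif{\dot{\p}}$ and thereby contaminate the $u$-th local identity with contributions we cannot control. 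Once both inputs are in place, the remainder is a routine extraction from \eqref{eq: proposal identity}, exactly parallel to the Type~II argument in Lemma~\ref{lemma: character case}.
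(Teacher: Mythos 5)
Your overall strategy matches the paper's: apply Lemma~\ref{lemma: global lifting} to produce a global lift $\dot{\p}$ with $\dot{\p}_u = \p$, $\S{\lif{\dot{\p}}}=1$, and $\Sigma_0$-strong multiplicity one at $u$; compare Lemma~\ref{lemma: twisted spectral expansion} with Lemma~\ref{lemma: twisted endoscopic expansion} and cancel $i^{\theta}_{\dot{\p}}(\dot{x}) = e'^{\theta}_{\dot{\p}}(\dot{x}) \neq 0$ to obtain \eqref{eq: proposal identity}; extract the $u$-local identity; and convert to the twisted character relation via Lemma~\ref{lemma: twisted intertwining relation 1}. Your identification in the last paragraph of the two essential inputs (known twisted character relations at all $v \neq u$ via property (4) of the lift together with Lemma~\ref{lemma: character case}, and $\Sigma_0$-SMO at $u$) is also exactly the paper's.

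The gap is in the mechanism you state for peeling off the $\dot{\x}' = 1$ summand. "Satake parameters at unramified places" combined with $\Sigma_0$-SMO at $u$ is the recipe used in Lemma~\ref{lemma: character case}, where Lemma~\ref{lemma: splitting parameters} guarantees \emph{full} $\Sigma_0$-strong multiplicity one, i.e.\ $\prod^{aut}_v \a(\S{\dot{\p}_v}^{\Sigma_0}) = \prod^{aut}_{\text{almost all } v} \a(\S{\dot{\p}_v}^{\Sigma_0})$. For a general elliptic $\p$, Lemma~\ref{lemma: global lifting} only gives $\Sigma_0$-SMO \emph{at $u$}, i.e.\ $\prod^{aut}_v = \prod^{aut}_{v\neq u}$, and this does not control characters $\dot{\x}' \in Y/\a(\S{\dot{\p}})$ that are trivial on packets at every unramified place yet nontrivial at $u$ and at some ramified place $v_0 \neq u$. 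Such a $\dot{\x}'$ survives Satake separation together with $\dot{\x}'=1$, is not excluded by SMO at $u$, and its $u$-local packet $\cPkt{\lif{\dot{\p}}_u}\otimes\dot{\x}'_u$ differs from $\cPkt{\lif{\dot{\p}}_u}$ — so the $u$-local identity would remain contaminated. The paper handles this differently: it first uses the known twisted character relations at $v\neq u$ to rewrite both sides of \eqref{eq: character case} so the $\otimes_{v\neq u}$-factors coincide, then invokes linear independence of twisted characters of $\otimes_{v\neq u}\bar{\mathcal{H}}(\lif{\dot{G}}_v, \lif{\dot{\chi}}_v)$-modules (Lemaire). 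That device sees the ramified places $v_0\neq u$, and together with SMO at $u$ (which ensures distinct $\dot{\x}'$-twists give distinct $(v\neq u)$-data unless they coincide everywhere) it yields the vanishing of the $u$-local difference directly, without introducing the global constants $n_v$. You should replace the Satake-separation step by this linear-independence argument; the final specialization to functions supported on $\lif{Z}_{\dot{F}_u}\dot{G}(\dot{F}_u)$ and the appeal to Lemma~\ref{lemma: twisted intertwining relation 1} are then correct as you wrote them.
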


\begin{proof}

 We first get \eqref{eq: character case} following the general strategy in the second part of the proof of Lemma~\ref{lemma: character case}. In this general case, we only know $\Sigma_{0}$-strong multiplicity one holds at the place $u$ for $\lif{\dot{\p}}$. But now we can assume the twisted character relation for all places except $u$. This is because of the property of our lift $\dot{\p}$ shown in Lemma~\ref{lemma: global lifting} and the fact
that we have shown the twisted character relation for the exceptional cases considered in Lemma~\ref{lemma: character case}. Under these assumptions, we can conclude from the linear independence of twisted characters of $\otimes_{v \neq u} \bar{\mathcal{H}}(\lif{\dot{G}}_{v}, \lif{\dot{\chi}}_{v})$-modules (see \cite{Lemaire:2016}, A.4.1) that 
\[
(\lif{\dot{f}}_{\lif{\dot{G}}^{\theta}_{u}}( \lif{\dot{\p}}_{u}, \dot{x}_{u} ) - \lif{\dot{f}}'_{\lif{\dot{G}}^{\theta}_{u}}( \lif{\dot{\p}}_{u}, \dot{x}_{u} )) \prod_{v} \lif{\dot{f}}_{\lif{\dot{G}}^{\theta}_{v}}( \lif{\dot{\p}}_{v}, \dot{x}_{v} ) = 0,
\]
and hence
\[
\lif{\dot{f}}_{\lif{\dot{G}}^{\theta}_{u}}( \lif{\dot{\p}}_{u}, \dot{x}_{u} ) = \lif{\dot{f}}'_{\lif{\dot{G}}^{\theta}_{u}}( \lif{\dot{\p}}_{u}, \dot{x}_{u} ).
\]
This proves the twisted local intertwining relation, which implies the twisted character relation according to Lemma~\ref{lemma: twisted intertwining relation 1}.
\end{proof}

Now we can deal with the discrete parameters $\p \in \cPdt{G}$. 


\begin{theorem}
\label{thm: refined L-packet for discrete parameter}
Suppose $F$ is a nonarchimedean local field and $ \p \in \cPdt{G}$, then the main local theorem (Theorem~\ref{thm: refined L-packet}) holds for $\lp$.
\end{theorem}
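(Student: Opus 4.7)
The plan is to deduce the theorem from the global machinery developed in Section~\ref{sec: multiplicity formula} together with the lifting results of Section~\ref{subsec: construction of global parameters}, mimicking the standard stability argument codified in Theorem~\ref{thm: standard argument on stability}, and then to derive the remaining twisted character relation by comparing the spectral and endoscopic expansions of the stabilized $(\theta,\x)$-twisted trace formula in the same style as Theorem~\ref{thm: twisted character relation for elliptic parameter}. Since we are in the nonarchimedean setting (the archimedean case being covered by Remark~\ref{rk: refined L-packet}(3)), we write $F=\dot F_u$ and build a global field $\dot F$, a global group $\dot G$ and a global parameter $\dot\p\in\cPdt{\dot G}$ lifting $\p$ using Lemma~\ref{lemma: global lifting}. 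This lift will satisfy $\S{\lif{\dot\p}}=1$ and, crucially, $\Sigma_0$-strong multiplicity one for $\lif{\dot\p}$ at the place $u$; at all other nonarchimedean places it will consist of quasicharacters with at most one ramified summand (so the local induction assumption together with Lemma~\ref{lemma: refined L-packet for non-discrete parameter} and Lemma~\ref{lemma: character case} supplies the main local theorem at every $v\neq u$).

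First I would establish properties (1) and (2) of Theorem~\ref{thm: refined L-packet}. Because $\S{\lif{\dot\p}}=1$, Corollary~\ref{cor: stability} gives that $I^{\lif{\dot G}}_{disc,\dot\p}(\lif{\dot f})=S^{\lif{\dot G}}_{disc,\dot\p}(\lif{\dot f})$ is stable, and since $\dot\p\in\cPdt{\dot G}$ this equals $\operatorname{tr}R^{\lif{\dot G}}_{disc,\dot\p}(\lif{\dot f})$ by \eqref{eq: discrete part vs discrete spectrum}. Expanding the spectral side via Proposition~\ref{prop: discrete spectrum} and running the argument of Theorem~\ref{thm: standard argument on stability} word for word, the stability of the distribution forces, at the local place $u$, the set of $[\lr]\in\lif{\bar\Pi}_{\p,\lif\zeta}$ whose contribution appears in the decomposition to be a full set of representatives of $\lif{\bar\Pi}_{\p,\lif\zeta}/X$; $\Sigma_0$-strong multiplicity one at $u$ is exactly what prevents a second $X$-orbit of representatives from arising. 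Call this set $\cPkt{\lp}$. The same stability statement, rewritten as in Remark~\ref{rk: standard argument on stability}(1), immediately yields property (2).

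The main work is property (3), the $(\theta,\x)$-twisted character relation for semisimple $s\in\cS{\p}^\theta$. By Lemma~\ref{lemma: refined L-packet for non-discrete parameter} we may assume $|\cS{\p,s}^0|<\infty$, i.e.\ $s\in\cS{\p,ell}^\theta$, and by Lemma~\ref{lemma: induced twisted character} the quantity $\lf'_{\lG^\theta}(\lp,s)$ depends only on the image $x$ of $s$ in $\S{\p}^\theta$. I lift $s$ to $\dot s\in\cS{\dot\p,ell}^\theta$ (using $S_{\dot\p}^{\Sigma_0}\cong S_\p^{\Sigma_0}$) and compare the spectral and endoscopic expansions of $\tilde I^{(\lif{\dot G}^\theta,\dot\x)}_{disc,\dot\p}$ from Lemma~\ref{lemma: twisted spectral expansion} and Lemma~\ref{lemma: twisted endoscopic expansion}; since $\S{\lif{\dot\p}}=1$ forces $\dot\x\neq 1$ when $\dot x\neq 1$, Proposition~\ref{prop: endoscopy of complex group} (coupled with $\dot s\in\cS{\dot\p,ell}^\theta$) gives $i^\theta_{\dot\p}(\dot x)=e'^\theta_{\dot\p}(\dot x)\neq 0$, and one obtains
\[
\sum_{\dot\x'\in \dot Y/\a(\S{\dot\p})}\lif{\dot f}_{\lif{\dot G}^\theta}(\lif{\dot\p}\otimes\dot\x',\dot x)
=\sum_{\dot\x'\in \dot Y/\a(\S{\dot\p})}\lif{\dot f}{}'_{\lif{\dot G}^\theta}(\lif{\dot\p}\otimes\dot\x',\dot x).
\]
At places $v\neq u$ the local twisted intertwining relation (equivalently, the twisted character relation by Lemma~\ref{lemma: twisted intertwining relation 1}) is already known from the induction assumption, from Lemma~\ref{lemma: character case}, and from Theorem~\ref{thm: twisted character relation for elliptic parameter}. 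Linear independence of twisted characters of $\bigotimes_{v\neq u}\bar{\mathcal H}(\lif{\dot G}_v,\lif{\dot\chi}_v)$-modules then isolates the local identity
\[
\lif{\dot f}_{\lif{\dot G}^\theta_u}(\lif{\dot\p}_u,\dot x_u)=\lif{\dot f}{}'_{\lif{\dot G}^\theta_u}(\lif{\dot\p}_u,\dot x_u),
\]
i.e.\ the $(\theta,\x)$-twisted intertwining relation for $\lp$, which by Lemma~\ref{lemma: twisted intertwining relation 1} is equivalent to the twisted character relation \eqref{eq: theta twisted character relation}.

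The principal obstacle is the weakness of strong multiplicity one for $\lG$: without it, neither the global $L$-packet nor the global decomposition \eqref{eq: standard argument on stability 1} can be cleanly separated from the other orbits in $\lif{\bar\Pi}_{\dot\p,\lif{\dot\zeta}}/X$. This is why the careful construction of $\dot\p$ in Lemma~\ref{lemma: global lifting}, guaranteeing $\Sigma_0$-strong multiplicity one at the single place $u$, is essential; the rest of the argument is then an application, in the discrete-parameter regime, of the same stability-plus-linear-independence mechanism used to prove Theorem~\ref{thm: twisted character relation for elliptic parameter}. Uniqueness of $\cPkt{\lp}$ up to twisting by $X$ follows from Corollary~\ref{cor: refined L-packet} applied to property (2).
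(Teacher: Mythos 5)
Your overall architecture is the same as the paper's: apply Lemma~\ref{lemma: global lifting} to obtain a global lift $\dot\p\in\cPdt{\dot G}$ with $\S{\lif{\dot\p}}=1$ and $\Sigma_0$-strong multiplicity one at $u$, use the stability argument of Theorem~\ref{thm: standard argument on stability} (together with Lemma~\ref{lemma: character case} and Theorem~\ref{thm: twisted character relation for elliptic parameter}) to get parts (1) and (2) and the stable multiplicity formula for $\lif{\dot\p}$, and then attack part (3) with the stabilized $(\theta,\dot\x)$-twisted trace formula plus linear independence of twisted characters at places $v\neq u$.

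Your treatment of part (3), however, has a gap. You propose to compare the spectral and endoscopic expansions ``from Lemma~\ref{lemma: twisted spectral expansion} and Lemma~\ref{lemma: twisted endoscopic expansion}'' and then to quote the coefficient identity $i^\theta_{\dot\p}(\dot x)=e'^\theta_{\dot\p}(\dot x)$ exactly as in Theorem~\ref{thm: twisted character relation for elliptic parameter}. But Lemma~\ref{lemma: twisted spectral expansion} is stated, and proved, only for $\p\in\cP{G}-\cPdt{G}$: its derivation expands the trace formula over \emph{proper} Levi subgroups $M\subsetneq G$, so that the induction hypothesis (Conjecture~\ref{conj: global L-packet} for $\lM$) can be invoked. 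For a discrete parameter $\dot\p\in\cPdt{\dot G}$ there is no proper Levi to descend to; the relevant instance would be $M=G$, which would require the global packet structure for $\lif{\dot G}$ itself --- precisely Theorem~\ref{thm: global L-packet for discrete parameter}, which is only proved \emph{after} the present theorem. Thus the spectral side is not given by the ``$i^\theta_{\dot\p}(\dot x)$'' expansion; it is simply the raw twisted trace $\operatorname{tr}R^{(\lif{\dot G}^\theta,\dot\x)}_{disc,\dot\p}(\lif{\dot f})$ on the discrete spectrum, and when you expand that trace into global packets you pick up \emph{unknown} multiplicity constants $m(\lif{\dot\r},\dot\x)$ (bounded by $m(\lif{\dot\p})$) measuring how the canonical operator $R(\theta)^{-1}\circ R(\dot\x)$ on each isotypic component compares with the product of locally normalized operators $A_{\lr_v}(\theta,\dot\x_v)$. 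Before you can isolate the local identity at $u$ you must pin these constants down; the paper does this by taking $\lif{\dot f}_u$ supported on $\lif{Z}_{\dot F_u}\dot G(\dot F_u)$, which reduces the $u$-factor to the known twisted character relation for $\dot G_u$ and forces $m(\lif{\dot\r},\dot\x)=m(\lif{\dot\p})$. That extra step, and not a direct analogue of the elliptic-parameter argument, is what closes the proof in the discrete-parameter regime. (Your preliminary reduction via Lemma~\ref{lemma: refined L-packet for non-discrete parameter} to $|\cS{\p,s}^0|<\infty$ is also vacuous here, since for $\p\in\cPdt{G}$ one already has $\cS{\p}^0=1$, but this is harmless.)
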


\begin{proof}
We can apply Lemma~\ref{lemma: global lifting} to $\p$, and because of Lemma~\ref{lemma: character case} and Theorem~\ref{thm: twisted character relation for elliptic parameter}, we can use the argument in Theorem~\ref{thm: standard argument on stability} to show part (1) and (2) of the main local theorem. At the same time we can deduce the stable multiplicity formula for the global lift $\lif{\dot{\p}}$ (cf. Remark~\ref{rk: standard argument on stability}), i.e.
\[
I^{\lif{\dot{G}}}_{disc, \dot{\p}} (\lif{\dot{f}}) = S^{\lif{\dot{G}}}_{disc, \dot{\p}} (\lif{\dot{f}}) = m_{\dot{\p}} \sum_{\dot{\x}' \in Y / \a(\S{\dot{\p}})} \lif{\dot{f}}^{\lif{\dot{G}}}(\lif{\dot{\p}} \otimes \dot{\x}').
\]
Hence the only thing left is to show the twisted character relations \eqref{eq: theta twisted character relation}. In order to deduce the $(\theta, \x)$-twisted character relation we use the stabilized $(\theta, \dot{\x})$-twisted trace formula. Note that 
\[
I^{(\lif{\dot{G}}^{\theta}, \dot{\x})}_{disc, \dot{\p}} (\lif{\dot{f}}) = tr R^{(\lif{\dot{G}}^{\theta}, \dot{\x})}_{disc, \dot{\p}} (\lif{\dot{f}}) =  \sum_{\dot{\x}'} \sum_{[\lif{\dot{\r}}] \in \cPkt{\lif{\dot{\p}} \otimes \dot{\x}' }} m(\lif{\dot{\r}}, \dot{\x}) \prod_{v} \lif{\dot{f}}_{\lif{\dot{G}}^{\theta}_{v}}( \lif{\dot{\r}}_{v}, \dot{\x}_{v}),
\]
where $\lif{\dot{f}}_{\lif{\dot{G}}^{\theta}_{v}}( \lif{\dot{\r}}_{v}, \dot{\x}_{v})$ is normalized according to \eqref{eq: theta twisted intertwining operator}, the sum of $\dot{\x}'$ is taken over 
\[
Y / \prod^{aut}_{v} \a(\S{\dot{\p}_{v}}^{\Sigma_{0}}),
\]
and $|m(\lif{\dot{\r}}, \dot{\x})|$ is some integer not larger than the multiplicity 
\[
m(\lif{\dot{\p}}) := m_{\dot{\p}} \, | \prod^{aut}_{v} \a(\S{\dot{\p}_{v}}^{\Sigma_{0}}) | \, |\a(\S{\dot{\p}}) |^{-1}
\]
of $\lif{\dot{\r}}$ as $\bar{\mathcal{H}}(\lif{\dot{G}}, \lif{\dot{\chi}})$-module. By Lemma~\ref{lemma: twisted endoscopic expansion}, we get
\[
I^{(\lif{\dot{G}}^{\theta}, \dot{\x})}_{disc, \dot{\p}} (\lif{\dot{f}}) = m_{\dot{\p}} \sum_{\dot{\x}' \in Y / \a(\S{\dot{\p}})} \lif{\dot{f}}'_{\lif{\dot{G}}^{\theta}}( \lif{\dot{\p}} \otimes \dot{\x}', \dot{x}),
\]
where $\a(\dot{x}) = \dot{\x}$. By Lemma~\ref{lemma: character case} and Theorem~\ref{thm: twisted character relation for elliptic parameter}, we can assume the twisted character relations for all places $v \neq u$. Then it follows from the linear independence of twisted characters of $\otimes_{v \neq u} \bar{\mathcal{H}}(\lif{\dot{G}}_{v}, \lif{\dot{\chi}}_{v})$-modules and $\Sigma_{0}$-strong multiplicity one for $\lif{\dot{\p}}$ at the place $u$ that
\begin{align}
\label{eq: refined L-packet for discrete parameter}
\sum_{[\lif{\dot{\r}}] \in \cPkt{\lif{\dot{\p}}}} m(\lif{\dot{\r}}, \dot{\x}) \lif{\dot{f}}_{\lif{\dot{G}}^{\theta}_{u}} ( \lif{\dot{\r}}_{u}, \dot{\x}_{u}) \prod_{v \neq u} \lif{\dot{f}}_{\lif{\dot{G}}^{\theta}_{v}}( \lif{\dot{\r}}_{v}, \dot{\x}_{v}) = 
m(\lif{\dot{\p}}) \lif{\dot{f}}'_{\lif{\dot{G}}^{\theta}_{u}} (\lif{\dot{\p}}_{u}, x) \prod_{v \neq u} ( \sum_{[\lif{\dot{\r}}_{v}] \in \cPkt{\lif{\dot{\p}}_{v}}} \lif{\dot{f}}_{\lif{\dot{G}}^{\theta}_{v}}( \lif{\dot{\r}}_{v}, \dot{\x}_{v}) ).
\end{align}
Now we choose $\lif{\dot{f}} = \otimes_{v}\lif{\dot{f}}_{v}$ with $\lif{\dot{f}}_{u}$ supported on $\lif{Z}_{F}G(F)$ and thus 
\[
\lif{\dot{f}}'_{\lif{\dot{G}}^{\theta}_{u}} (\lif{\dot{\p}}_{u}, \dot{x}_{u}) = \sum_{[\lif{\dot{\r}}_{u}] \in \cPkt{\lif{\dot{\p}}_{u}}} \lif{\dot{f}}_{\lif{\dot{G}}^{\theta}_{u}} ( \lif{\dot{\r}}_{u}, \dot{\x}_{u}).
\]
Substitute such test functions into the identity \eqref{eq: refined L-packet for discrete parameter}, one deduces that 
\[
m(\lif{\dot{\r}}, \dot{\x}) = m(\lif{\dot{\p}}).
\]
Therefore
\[
m(\lif{\dot{\p}}) ( \lif{\dot{f}}'_{\lif{\dot{G}}^{\theta}_{u}} (\lif{\dot{\p}}_{u}, x) - \sum_{[\lif{\dot{\r}}_{u}] \in \cPkt{\lif{\dot{\p}}_{u}}} \lif{\dot{f}}_{\lif{\dot{G}}^{\theta}_{u}} ( \lif{\dot{\r}}_{u}, \dot{\x}_{u})) \prod_{v \neq u} ( \sum_{[\lif{\dot{\r}}_{v}] \in \cPkt{\lif{\dot{\p}}_{v}}} \lif{\dot{f}}_{\lif{\dot{G}}^{\theta}_{v}}( \lif{\dot{\r}}_{v}, \dot{\x}_{v}) ) = 0
\]
for all $\lif{\dot{f}} \in \sH(\lif{\dot{G}}, \lif{\dot{\chi}})$. So we must have 
\[
\lif{\dot{f}}'_{\lif{\dot{G}}^{\theta}_{u}} (\lif{\dot{\p}}_{u}, x) = \sum_{[\lif{\dot{\r}}_{u}] \in \cPkt{\lif{\dot{\p}}_{u}}} \lif{\dot{f}}_{\lif{\dot{G}}^{\theta}_{u}} ( \lif{\dot{\r}}_{u}, \dot{\x}_{u}),
\]
and this finishes the proof of the $(\theta, \x)$-twisted character relation.
\end{proof}




At this point, we have proved our main local theorem (Theorem~\ref{thm: refined L-packet}) for $\lG = \lG(N)$, 
and the general case is just a corollary of that.

\begin{corollary}
\label{cor: refined L-packet for general group}
Suppose 
\[
G = G(n_{1}) \times G(n_{2}) \times \cdots \times G(n_{q}),
\]
with $n_{i} \leqslant N$ for $1 \leqslant i \leqslant q$ and $\p \in \cPbd{G}$, then the main local theorem (Theorem~\ref{thm: refined L-packet}) holds for $\lp$.
\end{corollary}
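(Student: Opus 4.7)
The plan is to reduce the product case to the already-established case of $\lG(N)$ by passing through the auxiliary group $\lif{\lG} := \lG(n_1) \times \cdots \times \lG(n_q)$, inside which $\lG$ sits as the subgroup defined by $\c_1 = \cdots = \c_q$. Concretely, given $\p = \p_1 \times \cdots \times \p_q \in \cPbd{G}$ and any lift $\lp$, I would first choose a lift $\lp_i$ of each $\p_i$ to a parameter of $\lG(n_i)$ compatible with the decomposition of $\lp$; by Theorem~\ref{thm: refined L-packet} (which has been proved for each $\lG(n_i)$), there exists a refined packet $\cPkt{\lp_i}$ satisfying properties (1), (2), (3). Tensoring defines a packet $\cPkt{\lif{\lp}} := \otimes_i \cPkt{\lp_i}$ of $\sH(\lif{\lG})$-modules, and I would then define $\cPkt{\lp}$ to be the set of $\sH(\lG)$-modules obtained by restricting representatives of $\cPkt{\lif{\lp}}$ to $\lG(F)$ and keeping those with central character matching the prescribed $\lif{\zeta}$.

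The next step is to verify the three properties of Theorem~\ref{thm: refined L-packet} for this $\cPkt{\lp}$. Property (1) follows by combining the partition statements for each $\cPkt{\lp_i}$ with the decomposition $\clPkt{\p,\lif{\zeta}} = \Res^{\lif{\lG}}_{\lG}(\otimes_{\x_i} \clPkt{\p_i,\lif{\zeta}_i})$, tracking the $X$-action under the inclusion $X \hookrightarrow \prod_i X_i$ coming from restriction of characters. Property (2), stability of $\lf \mapsto \sum \lf_{\lG}(\lr)$, is inherited from stability on each $\lG(n_i)$: writing $\lf = \lif{\lf}|_{\lG}$ for $\lif{\lf} \in \sH(\lif{\lG}, \otimes_i \lif{\chi}_i)$ that restricts to $\lf$ (such $\lif{\lf}$ exists by the inclusion $\H(\lG,\lif{\chi}) \hookrightarrow \H(\lif{\lG},\otimes_i \lif{\chi}_i)$ analogous to \eqref{map: inclusion of Hecke algebra}), the sum of characters factors through $\otimes_i \lif{\lf}_i$ and Corollary~\ref{cor: twisted endoscopic transfer} applied factor by factor reduces stability on $\lG$ to that on each $\lG(n_i)$. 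Property (3), the $(\theta,\x)$-twisted character identity, is handled in the same manner, using Proposition~\ref{prop: lifting endoscopic group} to match twisted endoscopic data of $\lG$ with compatible tuples of twisted endoscopic data of the $\lG(n_i)$, and Lemma~\ref{lemma: twisted endoscopic transfer} to commute the transfer with restriction; the intertwining normalization \eqref{eq: theta twisted intertwining operator} decomposes as a tensor product of the corresponding normalizations for each factor, so the pairing $<x,\r^+>$ for $x \in \S{\p}^{\theta}$ decomposes accordingly along the isomorphism $\S{\p}^{\theta} \cong \prod_i \S{\p_i}^{\theta}$.

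Finally, uniqueness of $\cPkt{\lp}$ up to twisting by $X$ follows from Corollary~\ref{cor: refined L-packet} applied to $\lG$ once property (2) is established, since the proof of that corollary used only the surjectivity of the endoscopic transfer map, the bijection between packets and characters of $\S{\lp}$ (which for a product reduces to the product of the corresponding bijections on each factor), and the linear independence of characters---none of which depend on $\lG$ being of the form $\lG(N)$.

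The only real subtlety I anticipate is bookkeeping around the fact that $\lG$ is a proper subgroup of $\lif{\lG}$: one must check that the group $X = \Hom(\lG(F)/\lif{Z}_{\lG}(F)G(F), \C^{\times})$ is consistent with the product of the corresponding $X_i$ modulo the kernel coming from the diagonal $\Two$ identification in the definition of $\lG$, and that the subgroups $\a(\S{\p}^{\Sigma_0}) \subseteq X$ obtained from the two points of view agree. This is a direct computation using Lemma~\ref{lemma: combinatorial description} (or rather its evident product version) and Corollary~\ref{cor: theta twisting character}, together with the fact that $\c: \lG(F) \to F^{\times}$ has the same image as each component $\c_i$ by Lemma~\ref{lemma: generalized similitude character}. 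Once this identification is in place, all the required structure transports cleanly from $\lif{\lG}$ to $\lG$ and the corollary follows.
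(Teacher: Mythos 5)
Your proposal is correct and follows essentially the same route as the paper: both define $\cPkt{\lp}$ as the restriction to $\lG(F)$ of $\bigotimes_{i} \cPkt{\lp_i}$ from the product group $\lif{\lG} = \lG(n_1) \times \cdots \times \lG(n_q)$, observe that properties (1) and (2) are inherited, and then reduce the twisted character relations to the product case by matching endoscopic data via Proposition~\ref{prop: lifting endoscopic group} and using the compatibility of transfer with restriction from Lemma~\ref{lemma: twisted endoscopic transfer} and Corollary~\ref{cor: twisted endoscopic transfer}. Your write-up is somewhat more detailed than the paper's one-paragraph proof, particularly on the bookkeeping around the group $X$ and the $\Two$ diagonal identification, but the underlying argument is the same.
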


\begin{proof}
Let us write $\p = \p_{1} \times \p_{2} \times \cdots \times \p_{q}$ such that $\p_{i} \in \cPbd{G(n_{i})}$ for $1 \leqslant i \leqslant q$. Note that 
\[
\lG \subseteq \lG(n_{1}) \times \lG(n_{2}) \times \cdots \times \lG(n_{q})
\]
form a pair of groups satisfying the assumption in Section~\ref{subsubsec: notations}. Since $\cPkt{\lp_{i}}$ is well defined now, we can define $\cPkt{\lp}$ to be the restriction of $\bigotimes_{i = 1}^{q} \cPkt{\lp_{i}}$ to $\lG(F)$. It is clear that $\cPkt{\lp}$ satisfies part $(1)$ and $(2)$ of Theorem~\ref{thm: refined L-packet}. Moreover, the twisted endoscopic groups of $\lG$ lift to twisted endoscopic groups of $\lG(n_{1}) \times \lG(n_{2}) \times \cdots \times \lG(n_{q})$ by Proposition~\ref{prop: lifting endoscopic group}. Then it is a consequence of Corollary~\ref{cor: twisted endoscopic transfer} that the twisted character relations of $\lG$ follow from that of $\lG(n_{1}) \times \lG(n_{2}) \times \cdots \times \lG(n_{q})$ again by restriction. This completes the proof in the general case.
\end{proof}


\subsection{Proof of global theorem}
\label{subsec: proof of global theorems}

In this section, we are going to prove the global theorem, i.e., Theorem~\ref{thm: main global}. We will keep the notations as in Section~\ref{subsec: beginning of proofs}. Suppose $F$ is global, 
\[
G = G(n_{1}) \times G(n_{2}) \times \cdots \times G(n_{q}),
\]
with $\sum_{i =1}^{q} n_{i} = N$.
Now we can assume the main local theorem for $\lG$ thanks to Section~\ref{subsec: proof of main local theorem}. We will first prove the corresponding statement of Conjecture~\ref{conj: global L-packet} for $\lG$.

\begin{theorem}
\label{thm: global L-packet for discrete parameter}

We assume $\p \in \cP{G}$ satisfies the assumption in Theorem~\ref{thm: main global}.

\begin{enumerate}
\item One can associate a global packet $\cPkt{\lp}$ of irreducible admissible representations of $\lG(\A_{F})$ as $\sH(\lG)$-modules, satisfying the following properties:
           \begin{enumerate}
          \item$ \cPkt{\lp} = \bigotimes'_{v} \cPkt{\lp_{v}}$, where $\cPkt{\lp_{v}}$ is some lift of $\cPkt{\p_{v}}$ defined in Theorem~\ref{thm: refined L-packet}.
          \item there exists $[\lr] \in \cPkt{\lp}$, so that $\lr$ is isomorphic to an automorphic representation as $\sH(\lG)$-modules.
          \end{enumerate}
Moreover, $\cPkt{\lp}$ is unique up to twisting by characters of $\lG(\A_{F}) / \lG(F)G(\A_{F})$. And we can define a global character of $\S{\lp}$ by 
\[
<x, \lr> :=  \prod_{v} <x_{v}, \lr_{v}> \,\,\,\,\, \text{ for } \, [\lr] \in \cPkt{\lp} \text{ and } \, x \in \S{\lp}. 
\]
\item If $\p \in \cPdt{G}$, the $\p$-component of the $\lif{\zeta}$-equivariant discrete spectrum of $\lG(\A_{F})$ as $\sH(\lG)$-modules can be decomposed as follows
\[
L^{2}_{disc, \p}(\lG(F) \backslash \lG(\A_{F}), \lif{\zeta}) = m_{\p} \sum_{\x \in Y / \a(\S{\p})} \sum_{\substack{ [\lr] \in \cPkt{\lp} \otimes \x \\ <\cdot, \lr> = 1}} \lr,
\]
where $m_{\p}$ is defined as in Remark~\ref{rk: discrete spectrum}.
\end{enumerate}
\end{theorem}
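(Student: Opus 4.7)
The plan is to construct $\cPkt{\lp}$ by choosing a coherent system of local refined packets from Theorem~\ref{thm: refined L-packet}(1), with coherence enforced by the existence of an automorphic representative in the coarse global packet $\clPkt{\p,\lif{\zeta}}$. The hypothesis $\S{\lp_i}=1$ for all $i$ forces $\S{\lp}=1$, which both guarantees such a representative (via Corollary~\ref{cor: coarse multiplicity formula}) and trivializes the pairing condition $\langle\cdot,\lr\rangle=1$ appearing in Proposition~\ref{prop: discrete spectrum}. Part (2) will then follow by rewriting that proposition in terms of $\cPkt{\lp}$.

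\textbf{Construction in (1).} When $\p \in \cPdt{G}$, Proposition~\ref{prop: discrete spectrum} shows the $\p$-component of the discrete spectrum is nonzero; fix a discrete automorphic $[\lr]$ with $[\r] \in \cPkt{\p}$ for some $\r$ appearing in its restriction to $G(\A_F)$. For each place $v$, the partition $\clPkt{\p_v,\lif{\zeta}_v} = \bigsqcup_{\x_v} \cPkt{\lp_v}\otimes\x_v$ from Theorem~\ref{thm: refined L-packet}(1) places $[\lr_v]$ in a unique local refined packet, which we take as $\cPkt{\lp_v}$; then set $\cPkt{\lp} := \bigotimes'_v \cPkt{\lp_v}$. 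When $\p \notin \cPdt{G}$, $\p$ factors through some $\p_M \in \cPdt{M}$ for a proper Levi $M \subsetneq G$; the combinatorial description of Lemma~\ref{lemma: combinatorial description} transfers $\S{\lp_i}=1$ to the analogous condition on the factors of $\p_M$, so the global induction hypothesis applied to $M$ furnishes $\cPkt{\lp_M}$, and $\cPkt{\lp}$ is defined by parabolic induction, consistently with the local choice in Lemma~\ref{lemma: refined L-packet for non-discrete parameter}. Uniqueness up to $Y$-twist follows from Corollary~\ref{cor: modular character}: any two automorphic representatives differ by $\lr^\theta \otimes \x'$ with $\theta \in \Sigma_0$ and $\x' \in Y$, and the $\Sigma_0$-factor is absorbed by working with $\sH(\lG)$-modules. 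The global pairing $\langle x,\lr\rangle := \prod_v \langle x_v,\lr_v\rangle$ is well-defined because $\langle\cdot,\lr_v\rangle = 1$ at every unramified place by Theorem~\ref{thm: character identity}(1).

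\textbf{Part (2) and main obstacle.} For $\p \in \cPdt{G}$, Proposition~\ref{prop: discrete spectrum} gives
\[
L^2_{disc,\p}(\lG(F)\backslash\lG(\A_F),\lif{\zeta}) = m_\p \sum_{\x \in Y/\a(\S{\p})} \sum_{\substack{[\lr] \in \clPkt{\p,\lif{\zeta}}/X \\ \langle\cdot,\lr\rangle=1}} \lr \otimes \x.
\]
Under $\S{\lp}=1$ the character condition is vacuous, so it suffices to identify the inner sum with $\sum_{[\lr] \in \cPkt{\lp}\otimes\x} \lr$; equivalently, to show that $\cPkt{\lp}$ provides a complete set of representatives of the $X$-orbits in $\clPkt{\p,\lif{\zeta}}$. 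This identification reduces to the local disjoint decomposition of Theorem~\ref{thm: refined L-packet}(1), tensored over all $v$, combined with a count using Lemma~\ref{lemma: identification} that matches the global twist group $Y/\a(\S{\p})$ against the adelic character group. The main obstacle is precisely this global-to-local counting step: locally each $\cPkt{\lp_v}$ is stabilized by $\a(\S{\p_v}^{\Sigma_0})$, whereas globally the twists in Proposition~\ref{prop: discrete spectrum} range over $Y/\a(\S{\p})$, and reconciling these two orbit structures requires careful bookkeeping of the maps $\a^G$ at both the local and global levels, using that $\S{\lp}=1$ to ensure the two $Y$-actions on the set of $X$-orbits align.
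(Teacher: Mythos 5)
The proposal runs aground on two points, one arithmetic and one conceptual.

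First, you assert that $\S{\lp_i}=1$ for all $i$ forces $\S{\lp}=1$, and then use this twice: to trivialize the $\langle\cdot,\lr\rangle=1$ condition in Proposition~\ref{prop: discrete spectrum}, and again at the start of Part~(2). This implication is false. The paper explicitly warns of it in the remark following Lemma~\ref{lemma: functoriality}: for $G = Sp(2n)\times Sp(2n)$ and $\p = (\p_1\#\p_2)\times(\p_1\#\p_2)$ with $\eta_{\p_1}=\eta_{\p_2}\neq 1$, one has $\S{\lp_i}=1$ for both factors but $\S{\lp}\cong\mathbb{Z}/2\mathbb{Z}$. So the character condition in Proposition~\ref{prop: discrete spectrum} is genuinely constraining, and the analysis of the packet must account for it. In particular, your appeal to Corollary~\ref{cor: coarse multiplicity formula} in its $\S{\lp}=1$ form does not apply.

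Second, and independent of that error, your reduction of Part~(2) to ``showing $\cPkt{\lp}$ provides a complete set of representatives of $X$-orbits in $\clPkt{\p,\lif{\zeta}}$'' is not something one can settle by local bookkeeping with Lemma~\ref{lemma: identification}. The content of Part~(2) is a global automorphicity statement: it asserts that every member $[\lr]\in\cPkt{\lp}$ with $\langle\cdot,\lr\rangle=1$ occurs in the discrete spectrum, not merely that each $X$-orbit in $\clPkt{\p,\lif{\zeta}}$ meets the discrete spectrum somewhere. Whether the particular coherent local choice encoded by $\cPkt{\lp}$ (pinned at each place by a single fixed automorphic $\lr^0$) stays automorphic as you move within $\cPkt{\lp_v}$ at one place is exactly the kind of fact that requires input from the stabilized trace formula. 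The paper establishes it by using Lemma~\ref{lemma: twisted endoscopic expansion} to write $\Idt{\lG}{,\p}(\lf)$ against the stable side, adding a correction term of the form $2C_{\lp}\sum_{\x}\sum_{x\neq 1}\sum_{\langle x,\lr\rangle=-1}\lf_{\lG}(\lr\otimes\x)$ to make the right-hand side stable, and then invoking stability at each place plus Corollary~\ref{cor: refined L-packet} and linear independence of characters to propagate automorphicity across the whole packet. None of this is present in your argument; a counting step cannot substitute for it.
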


\begin{proof}
If $\p$ factors through $\p_{M} \in \cPdt{M}$ for some proper Levi subgroup $M$ of $G$, then by our induction assumption, we have a global $L$-packet $\cPkt{\lp_{M}}$ for $\lM$, and we can define $\cPkt{\lp}$ to be the irreducible constituents induced from $\cPkt{\lp_{M}}$. So it is enough to consider the case $\p \in \cPdt{G}$. Note that one can always define a global packet $\cPkt{\lp}$ as follows. If $\lr \in \mathcal{A}_{2}(\lG)$ and its restriction to $G(\A_{F})$ have irreducible constituents contained in $\cPkt{\p}$, then we can take the local lift $\cPkt{\lp_{v}}$ of $\cPkt{\p_{v}}$ to be the one containing $[\lr_{v}]$. We form a global packet
\[
\cPkt{\lp} := \prod_{v} \cPkt{\lp_{v}},
\]
and the uniqueness property should follow from Corollary~\ref{cor: modular character} and the decomposition in Part (2). 

To show Part (2), 
we can apply Lemma~\ref{lemma: twisted endoscopic expansion} to get
\begin{align}
\label{eq: global L-packet for discrete parameter 1}
\Idt{\lG}{, \p}(\lf) = \Sdt{\lG}{, \p}(\lf) + C_{\lp} \sum_{\x \in Y / \a(\S{\p})} \sum_{x \in \S{\lp} - \{1\}} \lf'_{\lG}(\lp \otimes \x, x). 
\end{align}
By the local character relation, one can define a global packet $\cPkt{\lp_{x}}$ transferred from $\cPkt{\lp'}$ for any $x \in \S{\lp} - \{1\}$. Next we would like to add
\begin{align}
\label{eq: global L-packet for discrete parameter 2}
2 \cdot C_{\lp} \sum_{\x \in Y / \a(\S{\p})} \sum_{x \in \S{\lp} - \{1\}}  \sum_{ \substack{ [\lr] \in \cPkt{\lp_{x}} \\ <x, \lr> = -1}} \lf_{\lG}(\lr \otimes \x)
\end{align}
to both sides of \eqref{eq: global L-packet for discrete parameter 1}. Note that this sum does not include $x \in \S{\lp} - \{1\}$ such that $<x, \lr> = 1$ for all $[\lr] \in \cPkt{\lp_{x}}$. For those $x$ which does not contribute to \eqref{eq: global L-packet for discrete parameter 2}, we have  $\lf'_{\lG}(\lp \otimes \x, x) =\lf^{\lG}(\lp_{x} \otimes \x)$ which is defined by $\cPkt{\lp_{x}}$ and is stable. Then the right hand side becomes 
\[
\Sdt{\lG}{, \p}(\lf) + C_{\lp} \sum_{\x \in Y / \a(\S{\p})} \sum_{x \in \S{\lp} - \{1\}}  \lf^{\lG}(\lp_{x} \otimes \x), 
\]
which is again stable. So the left hand side
\begin{align}
\label{eq: global L-packet for discrete parameter}
\Idt{\lG}{, \p}(\lf) +  2 \cdot C_{\lp} \sum_{\x \in Y / \a(\S{\p})} \sum_{x \in \S{\lp} - \{1\}}  \sum_{ \substack{ [\lr] \in \cPkt{\lp_{x}} \\ <x, \lr> = -1}} \lf_{\lG}(\lr \otimes \x)  
\end{align}
is also stable. By \eqref{eq: discrete part vs discrete spectrum},
\[
\Idt{\lG}{, \p}(\lf) = tr R_{disc, \p}^{\lG}(\lf).
\] 
Let $\cPkt{\lp}$ be the global packet defined in the beginning with respect to some fixed $\lr^{0} \in \mathcal{A}_{2}(\lG)$. Since \eqref{eq: global L-packet for discrete parameter} is stable, it is stable at every place. So we can take $\lf = \otimes_{w} \lf_{w}$ and fix $\otimes_{w \neq v}\lf_{w}$ for any place $v$, then by Corollary~\ref{cor: refined L-packet} the coefficient of $\lf_{v}(\lr_{v})$ in \eqref{eq: global L-packet for discrete parameter} must be the same for all $\lr_{v} \in \cPkt{\lp_{v}}$. By varying $\otimes_{w \neq v}\lf_{w}$ and the linear independence of characters of $\otimes_{w \neq v} \sH(\lG_{w}, \lif{\chi}_{w})$-modules, we have that
\[
[\lr^{0}] = [\lr^{0}_{v}] \otimes (\otimes_{w \neq v} [\lr^{0}_{w}])
\]
contributes to \eqref{eq: global L-packet for discrete parameter} if and only if all elements in
\[
\cPkt{\lp_{v}} \otimes (\otimes_{w \neq v} [\lr^{0}_{w}])
\]
also contribute to \eqref{eq: global L-packet for discrete parameter}. By repeating this kind of argument, one can show all elements in $\cPkt{\lp}$ contribute to \eqref{eq: global L-packet for discrete parameter}. Note for any $[\lr] \in \cPkt{\lp}$ such that $<\cdot, \lr> = 1$, it can only contribute to $\Idt{\lG}{, \p}(\lf)$, which means it belongs to $\mathcal{A}_{2}(\lG)$. Then the decomposition in Part (2) will follow from Proposition~\ref{prop: discrete spectrum} immediately.

\end{proof}

\begin{remark}
\label{rk: global L-packet for discrete parameter}

Following the proof, we can also apply the same argument to elements in $\cPkt{\lp_{x}}$ which contributes to \eqref{eq: global L-packet for discrete parameter}. It follows all elements in $\cPkt{\lp_{x}}$ contributes to \eqref{eq: global L-packet for discrete parameter}. For $[\lr] \in \cPkt{\lp_{x}}$ such that $<\cdot, \lr> = 1$, it can only come from $\Idt{\lG}{, \p}(\lf)$. So $\cPkt{\lp_{x}} = \cPkt{\lp}$ up to twisting by some character in $Y$. Note this is only true for $x \in \S{\lp} - \{1\}$ in the sum \eqref{eq: global L-packet for discrete parameter 2}. As a result, \eqref{eq: global L-packet for discrete parameter} becomes
\[
m_{\p} \sum_{\x \in Y / \a(\S{\p})} \lf^{\lG}(\lp \otimes \x),
\]
where
\[
 \lf^{\lG} (\lp \otimes \x) = \prod_{v} \lf_{v}(\lp_{v} \otimes \x_{v}).
 \]
Moreover, we have    
\[
\Sdt{\lG}{, \p}(\lf) = m_{\p} \sum_{\x \in Y / \a(\S{\p})} \lf^{\lG}(\lp \otimes \x) - C_{\lp} \sum_{\x \in Y / \a(\S{\p})} \sum_{x \in \S{\lp} - \{1\}}  \lf^{\lG}(\lp_{x} \otimes \x).
\]
Suppose $\cPkt{\lp_{x}} = \cPkt{\lp}$ up to twisting by some character in $Y$ for all $x \in \S{\lp} - \{1\}$, then
\[
\Sdt{\lG}{, \p}(\lf) = m_{\p} \sum_{\x \in Y / \a(\S{\p})} |\S{\lp}|^{-1} \lf^{\lG}(\lp \otimes \x).
\]
This is the stable multiplicity formula in Conjecture~\ref{conj: stable multiplicity formula}. We will come back to this identity in Theorem~\ref{thm: stable multiplicity formula}.

\end{remark}

Next, we will prove the corresponding statement of Conjecture~\ref{conj: compatible normalization} for $\lG$.

\begin{theorem}
\label{thm: compatible normalization 1}
Suppose $\p \in \cPdt{G}$ satisfying the assumption in Theorem~\ref{thm: main global} and $x \in \S{\p}^{\theta}$ with $\a(x) = \x$ for $\theta \in \Sigma_{0}$ and some character $\x$ of $\lG(\A_{F})/\lG(F)G(\A_{F})$. For $[\lr] \in \cPkt{\lp}$ with $<\cdot, \lr> =1$, the canonical intertwining operator 
\[
R(\theta)^{-1} \circ R(\x)
\]
restricted to the $\lr$-isotypic component $I(\lr)$ is equal to the product of $m(\lr)$ and the local intertwining operators $A_{\lr_{v}}(\theta, \x_{v})$ determined by $x_{v}$ (see \eqref{eq: theta twisted intertwining operator}), i.e.
\begin{align}
\label{eq: compatible normalization 1}
\tIdt{\lG^{\theta}}{, \p}(\lf) = m_{\p} \sum_{\x' \in Y / \a(\S{\p})} \sum_{\substack{ [\lr] \in \cPkt{\lp} \otimes \x' \\ <\cdot, \lr> = 1}} \lf_{\lG^{\theta}}(\lr, \x),  \,\,\,\,\, \lf \in \sH(\lG, \lif{\chi}),
\end{align}
where $ \lf_{\lG^{\theta}}(\lr, \x) = \prod_{v} \lf_{\lG^{\theta}_{v}}(\lr_{v}, \x_{v})$, and it does not depend on $x$.
\end{theorem}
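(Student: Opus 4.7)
The plan is to compare the endoscopic expansion of $\tIdt{\lG^{\theta}}{, \p}(\lf)$ furnished by Lemma \ref{lemma: twisted endoscopic expansion} with the desired spectral identity \eqref{eq: compatible normalization 1}. Under our hypothesis $\S{\lp}=1$, the semisimple centralizer $\cS{\p}^{\theta}$ is finite, so every element of $\S{\p}^{\theta}(\x)$ is elliptic and Proposition \ref{prop: endoscopy of complex group} gives the nonzero coefficients $i_{\p}^{\theta}(x)=e'^{\theta}_{\p}(x)$. Thus Lemma \ref{lemma: twisted endoscopic expansion} specializes to
\[
\tIdt{\lG^{\theta}}{, \p}(\lf) = C_{\lp} \sum_{\x' \in Y/\a(\S{\p})} \sum_{x \in \S{\p}^{\theta}(\x)} \lf'_{\lG^{\theta}}(\lp \otimes \x', x),
\]
with $C_{\lp}=m_{\p}|\S{\lp}|^{-1}$.

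\textbf{Transfer to the spectral side.} At each place $v$ I apply the local $(\theta,\x_{v})$-twisted character relation of Theorem \ref{thm: refined L-packet}(3), now available for each simple factor of $\lG$ thanks to Corollary \ref{cor: refined L-packet for general group}, to write
\[
\lf'_{\lG_{v}^{\theta}}(\lp_{v} \otimes \x'_{v}, x_{v}) = \sum_{[\lr_{v}] \in \cPkt{\lp_{v}} \otimes \x'_{v}} \lf_{\lG_{v}^{\theta}}(\lr_{v}, \x_{v}),
\]
where $A_{\lr_{v}}(\theta,\x_{v})$ is the intertwining operator normalized by $x_{v}$ via \eqref{eq: theta twisted intertwining operator}. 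Multiplying over $v$ assembles this into a sum over the global packets $\cPkt{\lp}\otimes\x'$. Two lifts $x,x'\in\S{\p}^{\theta}$ of the same $\x$ differ by an element $y=x^{-1}x'\in\S{\lp}^{\theta}$, and by \eqref{eq: theta twisted intertwining operator} the local normalizations change by the factors $<y_{v},\r_{v}^{+}>$ whose global product equals the pairing $<y,\lr>$ of Proposition \ref{prop: coarse L-packet}(3). Summing over the $|\S{\lp}|$ choices of $x\in\S{\p}^{\theta}(\x)$ therefore produces $|\S{\lp}|$ times the contribution of those $[\lr]$ with $<\cdot,\lr>=1$ and zero for the rest, by orthogonality of characters on $\S{\lp}$. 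Combined with $C_{\lp}=m_{\p}|\S{\lp}|^{-1}$, the endoscopic side becomes
\[
m_{\p} \sum_{\x' \in Y/\a(\S{\p})} \sum_{\substack{[\lr] \in \cPkt{\lp} \otimes \x' \\ <\cdot, \lr> = 1}} \lf_{\lG^{\theta}}(\lr, \x),
\]
which is the right-hand side of \eqref{eq: compatible normalization 1}. The independence of $x$ is then automatic: for any $[\lr]$ in this sum, changing $x$ to $xy$ with $y\in\S{\lp}$ multiplies each factor of $\lf_{\lG^{\theta}}(\lr,\x)$ by $<y,\lr>=1$.

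\textbf{Main obstacle.} The delicate point is to verify that the change-of-normalization factors $<y_{v},\r_{v}^{+}>$, which individually depend on the choice of extension $\r_{v}^{+}$ of $\r_{v}$ to $G(F_{v})\rtimes\langle\theta\rangle$, multiply across places into the well-defined global pairing $<y,\lr>$ of Proposition \ref{prop: coarse L-packet}(3); ambiguities in the local extensions must cancel across places. This is handled by tracking the extensions against the restriction multiplicity-one theorem (Theorem \ref{thm: restriction multiplicity one}) and the consistency of the pairing $<\cdot,\lr>$ on $\S{\lp}$ under $\iota:\S{\lp}\hookrightarrow\S{\p}$. Once this is established, the resulting identity on the spectral side forces the canonical global operator $R(\theta)^{-1}\circ R(\x)$ restricted to $I(\lr)$ to agree with $m(\lr)$ times the tensor product of the normalized local operators $A_{\lr_{v}}(\theta,\x_{v})$, by linear independence of characters of $\sH(\lG_{v})$-modules applied to the decomposition of Theorem \ref{thm: global L-packet for discrete parameter}(2), in analogy with (\cite{Arthur:2013}, Theorem 4.2.2) for $SO(2n,\eta)$.
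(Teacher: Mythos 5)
Your overall strategy---compare the twisted endoscopic expansion of $\tIdt{\lG^{\theta}}{, \p}(\lf)$ coming from Lemma~\ref{lemma: twisted endoscopic expansion} with the spectral decomposition, expand the endoscopic side by the local twisted character relations, and use the change-of-normalization pairing $<y,\lr>$ with an orthogonality argument over $\S{\lp}$---is essentially the one the paper takes. However, the proposal has a circularity that the paper's proof is specifically designed to avoid. When you apply Theorem~\ref{thm: refined L-packet}(3) at each place $v$, the packet produced by transfer from the fixed global $\cPkt{\lp'}$ is some local lift $\cPkt{\lp_{v, x_{v}}}$ determined by the chosen $\cPkt{\lp'_{v}}$; multiplying over $v$ gives a global packet $\cPkt{\lp_{x}}$ that is a priori only \emph{some} lift of $\cPkt{\p}$, not the packet $\cPkt{\lp}$ you have already fixed. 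You write $[\lr_{v}]\in\cPkt{\lp_{v}}\otimes\x'_{v}$ and immediately conclude you are summing over $\cPkt{\lp}\otimes\x'$, which quietly identifies $\cPkt{\lp_{x}}$ with $\cPkt{\lp}$. That identification is precisely one of the things being established here: in the paper's argument (cf.\ equation \eqref{eq: compatible normalization 2}), both the equality $\cPkt{\lp_{x'}} = \cPkt{\lp}$ and the determination of the coefficients $m(\lr,\theta,\x)=m(\lp)$ are \emph{outputs} of the linear-independence comparison between the two sides, not inputs. Relatedly, your orthogonality step ``sum over $y\in\S{\lp}$ picks out $<\cdot,\lr>=1$'' tacitly needs $\cPkt{\lp_{xy}}$ to be independent of $y$, which is the same unproven point.

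A secondary error: you assert ``Under our hypothesis $\S{\lp}=1$.'' The hypothesis of Theorem~\ref{thm: main global} is $\S{\lp_{i}}=1$ for each simple factor, and the remark after Lemma~\ref{lemma: functoriality} gives an explicit example with $\S{\lp}$ nontrivial. The finiteness of $\cS{\p}^{\theta}$ and the identity $i^{\theta}_{\p}(x)=e'^{\theta}_{\p}(x)$ should instead be justified from $\p\in\cPdt{G}$ (so $\cS{\p}$ is finite and $\com[0]{\cS{\p}}=1$). To repair the argument, you should begin by writing $\tIdt{\lG^{\theta}}{, \p}(\lf)=tr\,R^{(\lG^{\theta},\x)}_{disc,\p}(\lf)$ as a sum over $[\lr]\in\cPkt{\lp}\otimes\x'$ with $<\cdot,\lr>=1$ and \emph{unknown} integer coefficients $m(\lr,\theta,\x)$ bounded by $m(\lp)$ (this uses the decomposition of Theorem~\ref{thm: global L-packet for discrete parameter}(2)), then expand the endoscopic side as you did keeping the a priori distinct packets $\cPkt{\lp_{x'}}$, and finally invoke linear independence of twisted characters of $\sH(\lG,\lif{\chi})$-modules to conclude simultaneously that one may take $\cPkt{\lp_{x'}}=\cPkt{\lp}$ and that $m(\lr,\theta,\x)=m(\lp)$.
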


\begin{proof}
It follows from Theorem~\ref{thm: global L-packet for discrete parameter} that 
\[
\tIdt{\lG^{\theta}}{, \p}(\lf) = tr R^{(\lG^{\theta}. \x)}_{disc, \p}(\lf) = \sum_{\x'} \sum_{\substack{ [\lr] \in \cPkt{\lp} \otimes \x' \\ <\cdot, \lr> = 1}} m(\lr, \theta, \x) \lf_{\lG^{\theta}}(\lr, \x)
\]
where the sum of $\x'$ is taken over
\[
Y / \prod^{aut}_{v} \a(\S{\p_{v}}^{\Sigma_{0}}),
\]
and $|m(\lr, \theta, \x)|$ is some integer less than or equal to
\[
m(\lp) :=  m_{\p} \, |\prod^{aut}_{v} \a(\S{\p_{v}}^{\Sigma_{0}}) | \, | \a(\S{\p})|^{-1}.
\]
By Lemma~\ref{lemma: twisted endoscopic expansion}, we have
\[
\tIdt{\lG^{\theta}}{, \p}(\lf) = |\S{\lp}|^{-1} \sum_{\x'} \sum_{x' \in \S{\p}^{\theta}(\x)} m(\lp) \lf'_{\lG^{\theta}}(\lp \otimes \x', x'),
\]
where the sum of $\x'$ is again over
\[
Y / \prod^{aut}_{v} \a(\S{\p_{v}}^{\Sigma_{0}}).
\]
Therefore
\begin{align*}
\sum_{\x'} \sum_{\substack{ [\lr] \in \cPkt{\lp} \otimes \x' \\ <\cdot, \lr> = 1}} m(\lr, \theta, \x) \lf_{\lG^{\theta}}(\lr, \x) =   |\S{\lp}|^{-1} \sum_{\x'} \sum_{x' \in \S{\p}^{\theta}(\x)} m(\lp) \lf'_{\lG^{\theta}}(\lp \otimes \x', x'). 
\end{align*}
By the twisted character relation, one can define a global packet $\cPkt{\lp_{x'}}$ transferred from $\cPkt{\lp'}$ for any $x' \in \S{\p}^{\theta}(\x)$. Note $\S{\p}^{\theta}(\x) = x \cdot \S{\lp}$, then
\begin{align}
\label{eq: compatible normalization 2}
\sum_{\x'} \sum_{\substack{ [\lr] \in \cPkt{\lp} \otimes \x' \\ <\cdot, \lr> = 1}} m(\lr, \theta, \x) \lf_{\lG^{\theta}}(\lr, \x) =   |\S{\lp}|^{-1} \sum_{\x'} \sum_{y \in \S{\lp}} m(\lp)  \sum_{[\lr] \in \cPkt{\lp_{xy}} \otimes \x'} <y, \lr> \lf_{\lG^{\theta}}(\lr, \x),
\end{align}
where $\lf_{\lG^{\theta}}(\lr, \x)$ is normalized by $x$ (cf. \eqref{eq: theta twisted intertwining operator}). This implies 
\begin{align*}
\sum_{\x'} \sum_{\substack{ [\lr] \in \cPkt{\lp} \otimes \x' \\ <\cdot, \lr> = 1}} m(\lr, \theta, \x) \lf_{\lG^{\theta}}(\lr, \x) =   |\S{\lp}|^{-1}  \sum_{\x'} \sum_{y \in \S{\lp}} m(\lp)  \sum_{\substack{ [\lr] \in \cPkt{\lp_{xy}} \otimes \x' \\ <\cdot, \lr> = 1}} \lf_{\lG^{\theta}}(\lr, \x). 
\end{align*}
It follows from the linear independence of twisted characters of $\bar{\mathcal{H}}(\lG, \lif{\chi})$-modules that we can choose $\cPkt{\lp_{x'}} = \cPkt{\lp}$ for all $x' \in \S{\p}^{\theta}(\x)$. Then
\begin{align*}
\sum_{\substack{ [\lr] \in \cPkt{\lp} \\ <\cdot, \lr> = 1}} m(\lr, \theta, \x) \lf_{\lG^{\theta}}(\lr, \x) =   m(\lp)  \sum_{\substack{ [\lr] \in \cPkt{\lp} \\ <\cdot, \lr> = 1}} \lf_{\lG^{\theta}}(\lr, \x). 
\end{align*}
So $m(\lr, \theta, \x) = m(\lp)$. Hence
\begin{align*}
\tIdt{\lG^{\theta}}{, \p}(\lf) = m_{\p} \sum_{\x' \in Y / \a(\S{\p})} \sum_{\substack{ [\lr] \in \cPkt{\lp} \otimes \x' \\ <\cdot, \lr> = 1}} \lf_{\lG^{\theta}}(\lr, \x).
\end{align*}
\end{proof}

Now we are only left with proving the corresponding statement of Conjecture~\ref{conj: stable multiplicity formula}. From Remark~\ref{rk: global L-packet for discrete parameter}, we see the key is to prove the functoriality of endoscopic transfer. Here we would like to consider a more general notion of that, i.e., {\bf functoriality of twisted endoscopic transfer}, and we formulate it in our context as follows. For $\p \in \cP{G}$ and semisimple $s \in \cS{\p}$, let $(G', \p') \rightarrow (\p, s)$ and $\lG' \in \tEnd{}{\lG}$ be the lift of $G'$, the functoriality of twisted endoscopic transfer means the global $L$-packet $\cPkt{\lp'}$ transfers to a global $L$-packet $\cPkt{\lp}$ through the local twisted character relation \eqref{eq: theta twisted character relation}. By the same argument in the proof of Lemma~\ref{lemma: induced twisted character}, we see the transfer of $\cPkt{\lp'}$ only depends on the image $x$ of $s$ in $\S{\p}$. So we can denote the transfer image by $\cPkt{\lp_{x}}$.
\begin{lemma}
\label{lemma: functoriality for simple group}
Suppose $\lG = \lG(n)$ for $n \leqslant N$, $\p \in \cP{G}$ such that $\S{\lp} = 1$, then $\cPkt{\lp_{x}} = \cPkt{\lp}$ up to twisting by some character in $Y$ for any $x \in \S{\p}$.
\end{lemma}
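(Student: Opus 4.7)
The case $x = 1$ holds trivially. For $x \neq 1$, set $\x = \a(x)$; since $\S{\lp} = 1$ the map $\a : \S{\p} \to Y$ is injective modulo $\S{\lp}$, so $\x \neq 1$. I first reduce to the case $\p \in \cPdt{G}$. If $\p$ factors through a discrete parameter $\p_{M} \in \cPdt{M}$ on a proper Levi subgroup $M$, then $\cS{\p_{M}} \cong \cS{\p}$, forcing $\S{\lp_{M}} = 1$, and $x$ lifts uniquely to $x_{M} \in \S{\p_{M}}$. The twisted endoscopic datum $\lG'$ attached to $x$ is the induced datum of a twisted elliptic endoscopic datum $\lM'$ of $\lM$ attached to $x_{M}$. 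By the global induction hypothesis applied to $\lM$, one has $\cPkt{\lp_{M, x_{M}}} = \cPkt{\lp_{M}} \otimes \x'$ for some $\x' \in Y$. Since parabolic induction commutes with twisted endoscopic transfer and with twisting by $Y$, this gives $\cPkt{\lp_{x}} = \cPkt{\lp} \otimes \x'$.

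Now assume $\p \in \cPdt{G}$ and apply the stabilized $\x$-twisted trace formula (with $\theta = id$) to a test function $\lf \in \sH(\lG, \lif{\chi})$. The spectral expansion from Theorem~\ref{thm: compatible normalization 1} gives
\[
\tIdt{\lG}{, \p}(\lf) = m_{\p} \sum_{\x' \in Y/\a(\S{\p})} \prod_{v} \Bigl(\sum_{[\lr_{v}] \in \cPkt{\lp_{v}} \otimes \x'_{v}} \lf_{v, \lG}(\lr_{v}, \x_{v})\Bigr),
\]
where the condition $\langle \cdot, \lr \rangle = 1$ is vacuous because $\S{\lp} = 1$. On the other hand, the endoscopic expansion from Lemma~\ref{lemma: twisted endoscopic expansion}, combined with the injectivity of $\a$ on $\S{\p}$ (which forces $\S{\p}(\x) = \{x\}$) and the fact that $e'_{\p}(x) = 1$ for discrete $\p$ (via Proposition~\ref{prop: endoscopy of complex group}, since $\com[0]{\cS{\p}} = 1$), becomes
\[
\tIdt{\lG}{, \p}(\lf) = C_{\lp} \sum_{\x' \in Y/\a(\S{\p})} \lf'_{\lG}(\lp \otimes \x', x) = m_{\p} \sum_{\x' \in Y/\a(\S{\p})} \prod_{v} \Bigl(\sum_{[\lr_{v}] \in \cPkt{\lp_{v, x}} \otimes \x'_{v}} \lf_{v, \lG}(\lr_{v}, \x_{v})\Bigr),
\]
the last equality being the product of local $\x_{v}$-twisted character relations defining $\cPkt{\lp_{v, x}}$.

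Equating the two expressions and varying $\lf$ one place at a time, the linear independence of twisted characters of $\sH(\lG_{v}, \lif{\chi}_{v})$-modules forces the two families
\[
\bigl\{ \cPkt{\lp} \otimes \x' : \x' \in Y/\a(\S{\p}) \bigr\} \quad \text{and} \quad \bigl\{ \cPkt{\lp_{x}} \otimes \x' : \x' \in Y/\a(\S{\p}) \bigr\}
\]
of global $\sH(\lG, \lif{\chi})$-modules to coincide, so there exists $\x'' \in Y$ with $\cPkt{\lp_{x}} = \cPkt{\lp} \otimes \x''$. The main subtlety is the passage from the local uniqueness statement of Theorem~\ref{thm: refined L-packet}, which a priori only gives $\cPkt{\lp_{v, x}} = \cPkt{\lp_{v}} \otimes \x''_{v}$ for some $\x''_{v} \in X$, to a single global twist $\x'' \in Y$; the point is that the stability of $\tIdt{\lG}{, \p}(\lf)$ together with $\S{\lp} = 1$ rigidifies the collection $\{\x''_{v}\}$, and strong multiplicity one at the level of global $L$-packets of $G$ (applied to the underlying $\p$-component) ensures these local twists assemble into a genuine character of $\lG(\A_{F})/\lG(F)\lZ(\A_{F})G(\A_{F})$.
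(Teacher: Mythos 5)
Your first reduction has a genuine gap, and it is located in the case split rather than in the trace-formula step.

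The intermediate claim ``$\cS{\p_{M}} \cong \cS{\p}$'' is false. When $\p_{M} \in \cPdt{M}$ lives on a proper Levi $M$, the group $\cS{\p_{M}}$ is finite, whereas $\cS{\p}$ contains the positive-dimensional torus $\cT{\p}(G,M) = A_{\D{M}}Z(\D{G})^{\Gal{F}}/Z(\D{G})^{\Gal{F}}$ in its identity component. The correct relation, from diagram \eqref{eq: twisted intertwining relation diagram}, is the exact sequence
\[
1 \longrightarrow \S{\p_{M}} \longrightarrow \S{\p} \longrightarrow R_{\p}(G,M) \longrightarrow 1.
\]
Consequently ``$x$ lifts uniquely to $x_{M} \in \S{\p_{M}}$'' fails precisely when the image of $x$ in $R_{\p}(G,M)$ is nontrivial. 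If $x$ is elliptic, meaning $|\cS{\p,s_{x}}^{0}| < \infty$, then since $\p$ is not discrete the Weyl element $w_{s_{x}}$ acts on $\cT{\p}(G,M)$ without fixed subtorus, so $w_{s_{x}} \neq 1$ and $x \notin \S{\p_{M}}$. For such $x$ the endoscopic group $G'$ attached to $x$ is an \emph{elliptic} endoscopic group of $G$ and is not induced from an endoscopic datum of any proper Levi; the descent you invoke is simply unavailable. These $(\p,x)$ with $\p \in \cPel{G} - \cPdt{G}$ and $s_{x}$ elliptic are not handled by your argument.

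The paper's proof splits instead on whether $|\cS{\p,s}^{0}| = \infty$. When it is infinite, the pair $(\p,s)$ descends to a proper Levi $M_{x}$ \emph{determined by $s$} (with $\D{M}_{x} = \Cent(T_{\p,s},\D{G})$; the resulting $\p_{M_{x}} \in \cP{M_{x}}$ need not be discrete), and the inductive hypothesis for $\lM_{x}$ gives the result. When $|\cS{\p,s}^{0}| < \infty$, so $\p \in \cPel{G}$, the trace-formula comparison applies; but for $\p \in \cPel{G} - \cPdt{G}$ the spectral expansion is that of Lemma~\ref{lemma: twisted spectral expansion} — with $\lf_{\lG}(\lp \otimes \x', x)$ built from the normalized self-intertwining operators $R_{\lP}(u,\lr_{M},\lp)$ on induced representations — not the discrete-spectrum expansion of Theorem~\ref{thm: compatible normalization 1}, and the nonvanishing of the common coefficient $i_{\p}(x) = e'_{\p}(x)$ is what Proposition~\ref{prop: endoscopy of complex group} supplies. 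Your discrete-case computation is correct, but it does not cover this intermediate elliptic case.

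As a minor point, the closing worry about assembling local twists $\x''_{v}$ into a single $\x'' \in Y$ is unnecessary: the linear independence of twisted characters of $\sH(\lG,\lif{\chi})$-modules identifies the two families $\{\cPkt{\lp}\otimes\x'\}$ and $\{\cPkt{\lp_{x}}\otimes\x'\}$ as multisets of global objects directly, with no separate local-to-global assembly step required.
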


\begin{proof}
For semisimple $s \in \cS{\p}$, let $(G', \p') \rightarrow (\p, s)$. Suppose $|\cS{\p, s}^{0}| = \infty$, let $T_{\p, s}$ be a maximal torus of $(S_{\p, s})^{0}$, then $\D{M}' = \Cent(T_{\p, s}, \D{G}')$ defines a proper Levi subgroup of $\D{G}'$ such that $\p'$ factors through $\p'_{M} \in \cPdt{M'}$. Moreover, $M' \in \End{ell}{M}$ for a proper Levi subgroup $M$ of $G$, which is determined by $\D{M} = \Cent(T_{\p, s}, \D{G})$. So $\p$ factors through $\p_{M} \in \cP{M}$, and we can reduce this case to $\lM$.

Next we assume $|\cS{\p, s}^{0}| < \infty$, then $\p \in \cPel{G}$. In particular, $s \in \cS{\p, ell}$ and we let $x$ be its image in $\S{\p}$. We can also assume $x \neq 1$, then $\S{\lp} = 1$ implies $\a(x) = \x \neq 1$. By Lemma~\ref{lemma: twisted endoscopic expansion}, we have 
\[
\tIdt{\lG}{, \p} (\lf) = C_{\lp} \sum_{\x' \in Y / \a(\S{\p})} e'_{\p}(x) \lf_{\lG}' (\lp \otimes \x', x).    
\]
By Lemma~\ref{lemma: twisted spectral expansion} and Theorem~\ref{thm: compatible normalization 1}, we have 
\[
\tIdt{\lG}{, \p} (\lf) = C_{\lp} \sum_{\x' \in Y / \a(\S{\p})} i_{\p}(x) \lf_{\lG}(\lp \otimes \x', x).         
\]
Note $e'_{\p}(x) = i_{\p}(x) \neq 0$. Then by the linear independence of twisted characters, we have $\cPkt{\lp_{x}} = \cPkt{\lp}$ up to twisting by some character in $Y$.
\end{proof}

It is not hard to extend this result to the general case.

\begin{lemma}
\label{lemma: functoriality}
Suppose $G = G(n_{1}) \times G(n_{2}) \times \cdots \times G(n_{q})$, and $\p = \p_{1} \times \p_{2} \times \cdots \times \p_{q} \in \cP{G}$ with $\p_{i} \in \cP{G(n_{i})}$ for $1 \leqslant i \leqslant q$. If $\S{\lp_{i}} = 1$ for all $i$, then $\cPkt{\lp_{x}} = \cPkt{\lp}$ up to twisting by some character in $Y$ for any $x \in \S{\p}$. 
\end{lemma}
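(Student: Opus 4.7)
The plan is to reduce the statement to Lemma~\ref{lemma: functoriality for simple group} applied to each factor by passing to the product group $\lif{\lG} = \lG(n_{1}) \times \cdots \times \lG(n_{q})$, and then descending back to $\lG$ via restriction. As noted in the proof of Corollary~\ref{cor: refined L-packet for general group}, the refined packet $\cPkt{\lp}$ is defined as the restriction of $\bigotimes_{i=1}^{q} \cPkt{\lp_{i}}$ from $\lif{\lG}(\A_{F})$ to $\lG(\A_{F})$, so the compatibility of transfer under restriction (Corollary~\ref{cor: twisted endoscopic transfer}, Lemma~\ref{lemma: twisted endoscopic transfer}) will be the main tool for the descent step.

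First I would fix $x \in \S{\p}$ with semisimple lift $s \in \cS{\p}$ and $(G',\p') \to (\p,s)$, and observe that under the identification $\cS{\p} = \prod_{i} \cS{\p_{i}}$ (which holds because $\D{G} = \prod_{i} \D{G(n_{i})}$ and Galois acts factorwise), the element $s$ decomposes as $(s_{1},\ldots,s_{q})$ with image $x_{i}$ in $\S{\p_{i}}$. Correspondingly $G' \cong \prod_{i} G'_{i}$ with $(G'_{i},\p'_{i}) \to (\p_{i},s_{i})$, and by Proposition~\ref{prop: lifting endoscopic group} the lift $\lG'$ inside $\lif{\lG}' = \prod_{i} \lG'_{i}$ is cut out by equating the similitude characters. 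The local twisted character identities on $\lG'_{i}$ given by Theorem~\ref{thm: refined L-packet} therefore assemble, via Corollary~\ref{cor: twisted endoscopic transfer} applied at each place $v$, to show that $\cPkt{\lp_{x}}$ is obtained by restricting $\bigotimes_{i} \cPkt{\lp_{i,x_{i}}}$ from $\lif{\lG}(\A_{F})$ to $\lG(\A_{F})$.

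Next, since $\S{\lp_{i}} = 1$ by hypothesis, Lemma~\ref{lemma: functoriality for simple group} gives a character $\x'_{i}$ in the group $Y_{i}$ attached to $\lG(n_{i})$ such that $\cPkt{\lp_{i,x_{i}}} = \cPkt{\lp_{i}} \otimes \x'_{i}$. Setting $\x' = \x'_{1} \otimes \cdots \otimes \x'_{q}$, we obtain a character of $\lif{\lG}(\A_{F})$ trivial on $\lif{\lG}(F) Z_{\lif{\lG}}(\A_{F})\prod_{i} G(n_{i})(\A_{F})$, and
\[
\bigotimes_{i} \cPkt{\lp_{i,x_{i}}} \;=\; \Bigl(\bigotimes_{i} \cPkt{\lp_{i}}\Bigr) \otimes \x'.
\]
Restricting both sides to $\lG(\A_{F})$ and using $\lZ(\A_{F}) \subset Z_{\lif{\lG}}(\A_{F})$ and $G(\A_{F}) \subset \prod_{i} G(n_{i})(\A_{F})$, we see that $\x'|_{\lG(\A_{F})}$ lies in $Y$, and we conclude
\[
\cPkt{\lp_{x}} \;=\; \cPkt{\lp} \otimes (\x'|_{\lG(\A_{F})}),
\]
which is the desired identity.

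The only genuinely delicate point is the translation between objects on the product group $\lif{\lG}$ and objects on $\lG$: one must check that the character $\x'$ coming from the factors really does restrict into $Y$, and that the restriction of tensor products of local packets on $\lif{\lG}(F_{v})$ coincides with the local packets on $\lG(F_{v})$ used in Corollary~\ref{cor: refined L-packet for general group}. Both facts are routine given the setup of Section~\ref{subsubsec: notations} and Section~\ref{subsec: representations}, but care is needed to ensure the central-character constraints are matched consistently at every place. Aside from this bookkeeping, the argument is a direct assembly of the single-factor result.
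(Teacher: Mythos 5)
Your proof is correct and takes essentially the same route as the paper's: decompose $x$ into components $x_i$, apply Lemma~\ref{lemma: functoriality for simple group} factorwise to conclude $\bigotimes_i \cPkt{\lp_{x_i}}$ is (a $Y_i$-twist of) the global L-packet of the product group $\lif{\lG}$, and then descend to $\lG$ by restriction. You are somewhat more explicit about the endoscopic-transfer compatibility (via Corollary~\ref{cor: twisted endoscopic transfer}) and the character bookkeeping, whereas the paper compresses this into the single remark that the restriction of a global L-packet is again a global L-packet, but the underlying argument is the same.
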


\begin{proof}
If we write the image of $x$ in $\S{\p_{i}}$ by $x_{i}$, then by Lemma~\ref{lemma: functoriality for simple group}, $\bigotimes_{i = 1}^{q} \cPkt{\lp_{x_{i}}}$ is a global L-packet of $\lG(n_{1}) \times \lG(n_{2}) \times \cdots \times \lG(n_{q})$, and $\cPkt{\lp_{x}}$ is its restriction to $\lG$. Since the restriction of a global L-packet is again a global L-packet, then $\cPkt{\lp_{x}} = \cPkt{\lp}$ up to twisting by some character in $Y$.
\end{proof}

\begin{remark}
We would like to point out in the case of this lemma, $\S{\lp}$ can be nontrivial even though $\S{\lp_{i}} = 1$ for all $i$. For example, let $G = Sp(2n) \times Sp(2n)$ and $\p = (\p_{1} \# \p_{2}) \times (\p_{1} \# \p_{2}) \in \cPdt{G}$. We assume the central characters satisfy $\eta_{\p_{1}} = \eta_{\p_{2}} \neq 1$, then $\S{\lp} \cong \Two$.
\end{remark}

Now we can prove the corresponding statement of Conjecture~\ref{conj: stable multiplicity formula}.

\begin{theorem}
\label{thm: stable multiplicity formula}
Suppose $G = G(n_{1}) \times G(n_{2}) \times \cdots \times G(n_{q})$, and $\p = \p_{1} \times \p_{2} \times \cdots \times \p_{q} \in \cPdt{G}$ with $\p_{i} \in \cP{G(n_{i})}$ for $1 \leqslant i \leqslant q$. If $\S{\lp_{i}} = 1$ for all $i$, then 
\[
\Sdt{\lG}{, \p}(\lf) = m_{\p} \sum_{\x \in Y / \a(\S{\p})} |\S{\lp}|^{-1} \sigma( \com[0]{\cS{\p}}) \lf^{\lG} (\lp \otimes \x), \,\,\,\,\,\,\, \lf  \in \sH(\lG, \lif{\chi}).
\]
\end{theorem}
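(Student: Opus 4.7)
The plan is essentially to read off the theorem from the work already done in Remark~\ref{rk: global L-packet for discrete parameter}, where the derivation of Theorem~\ref{thm: global L-packet for discrete parameter} produces, as a byproduct, the identity
\[
\Sdt{\lG}{, \p}(\lf) = m_{\p} \sum_{\x \in Y / \a(\S{\p})} \lf^{\lG}(\lp \otimes \x) - C_{\lp} \sum_{\x \in Y / \a(\S{\p})} \sum_{x \in \S{\lp} - \{1\}} \lf^{\lG}(\lp_{x} \otimes \x).
\]
What remains is to identify each transferred packet $\cPkt{\lp_{x}}$, for $x \in \S{\lp} - \{1\}$, with $\cPkt{\lp}$ up to a $Y$-twist.

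The key input is Lemma~\ref{lemma: functoriality}: since the hypothesis $\S{\lp_{i}} = 1$ holds for each factor $G(n_{i})$, that lemma (which is assembled from Lemma~\ref{lemma: functoriality for simple group} factor by factor) furnishes, for every $x \in \S{\lp}$, a character $\x'_{x} \in Y$ with $\cPkt{\lp_{x}} = \cPkt{\lp \otimes \x'_{x}}$. Consequently $\lf^{\lG}(\lp_{x} \otimes \x) = \lf^{\lG}(\lp \otimes \x \x'_{x})$, and the substitution $\x \mapsto \x \x'_{x}$ is a bijection on $Y/\a(\S{\p})$, so the inner sum collapses to $\sum_{\x} \lf^{\lG}(\lp \otimes \x)$ independently of $x$.

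Substituting back and using $C_{\lp} = m_{\p}|\S{\lp}|^{-1}$, the two terms on the right-hand side combine into
\[
\Sdt{\lG}{, \p}(\lf) = m_{\p} \bigl(1 - |\S{\lp}|^{-1}(|\S{\lp}|-1)\bigr) \sum_{\x \in Y / \a(\S{\p})} \lf^{\lG}(\lp \otimes \x) = m_{\p} |\S{\lp}|^{-1} \sum_{\x \in Y / \a(\S{\p})} \lf^{\lG}(\lp \otimes \x).
\]
Finally, because $\p \in \cPdt{G}$ forces $\cS{\p}$ to be finite, the identity component $\com[0]{\cS{\p}}$ is trivial, and Proposition~\ref{prop: endoscopy of complex group} applied to the trivial complex group gives $\sigma(\com[0]{\cS{\p}}) = 1$; inserting this harmlessly recovers the stated formula. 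The real obstacle has already been cleared in preceding sections: the functoriality encoded in Lemma~\ref{lemma: functoriality} rests on Theorem~\ref{thm: compatible normalization 1} together with the comparison of spectral and endoscopic expansions carried out in Section~\ref{subsec: comparison of trace formulas}, so at this stage the proof is a short bookkeeping argument with no new obstruction.
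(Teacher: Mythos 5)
Your proposal is correct and is essentially the paper's own proof: the paper cites Remark~\ref{rk: global L-packet for discrete parameter} and Lemma~\ref{lemma: functoriality} to obtain the formula and then observes that $\sigma(\com[0]{\cS{\p}}) = 1$ for discrete parameters. You have simply unpacked the arithmetic already contained in Remark~\ref{rk: global L-packet for discrete parameter} and spelled out why $\com[0]{\cS{\p}}$ is trivial, which are sound but routine clarifications rather than a new route.
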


\begin{proof}
It follows from Remark~\ref{rk: global L-packet for discrete parameter} and Lemma~\ref{lemma: functoriality} that 
\[
\Sdt{\lG}{, \p}(\lf) = m_{\p} \sum_{\x \in Y / \a(\S{\p})} |\S{\lp}|^{-1} \lf^{\lG} (\lp \otimes \x).
\]
Note in this case $\sigma( \com[0]{\cS{\p}}) = 1$. 
This finishes the proof.

\end{proof}

Up to now, we have proved the local and global theorems for $\lG$ under our induction assumptions, when $G$ does not contain any factor of $SO(2N+2, \eta)$ (cf. Remark~\ref{rk: induction assumption}). By adding these results to our induction assumptions, we can prove the general case by repeating the previous arguments in Section~\ref{subsec: proof of main local theorem} and Section~\ref{subsec: proof of global theorems} without any change.


\appendix

\section{An irreducibility result}
\label{sec: irreducibility}

The aim of this appendix is to prove Proposition~\ref{prop: local constituents of cuspidal representations} and Proposition~\ref{prop: irreducibility of non-unitary induced representation}. We should point out that neither these results nor their proofs are new, and just for the convenience of the readers we would like to present their proofs here. In this section $G$ will always denote a symplectic group or special even orthogonal group. First, let us restate Proposition~\ref{prop: local constituents of cuspidal representations}, and for its proof we will follow the same argument in \cite{MullerSpeh:2004}.

\begin{proposition}
\label{prop: local constituents of cuspidal representations A}
If $F$ is global and $\p \in \Psm{N}$, then $\p_{v} \in \uP{N_{v}}$.
\end{proposition}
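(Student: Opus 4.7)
The plan rests on two inputs: (i) the cuspidal automorphic representation $\p$ is globally generic, so each local component $\p_v$ is generic; (ii) the classification of the generic unitary dual of $GL(N, F_v)$, which forces the Langlands data to be of the form appearing in the definition of $\uP{N_v}$.

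First, decompose $\p = \otimes'_v \p_v$ as a restricted tensor product of irreducible admissible representations of $GL(N, F_v)$. Since $\p$ is unitary, each $\p_v$ is unitary. Since $\p$ is cuspidal, Shalika's theorem on the uniqueness and existence of Whittaker models for $GL(N)$ implies that $\p$ admits a global Whittaker functional; unfolding this at a single place shows each $\p_v$ admits a local Whittaker functional, hence $\p_v$ is generic for every $v$.

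Second, I would invoke the Langlands classification together with the fact (due to Bernstein in the nonarchimedean case and Vogan in the archimedean case) that every irreducible generic admissible representation of $GL(N, F_v)$ is fully induced from a tensor product of essentially discrete series representations. Thus
\[
\p_v \cong \mathcal{I}_{P}^{GL(N, F_v)}\bigl(\delta_1 \nu^{b_1} \boxtimes \cdots \boxtimes \delta_k \nu^{b_k}\bigr),
\]
with $\delta_i \in \Psm{N_{i, v}}$ unitary and $b_i \in \mathbb{R}$. The third step is to apply Tadi\'c's classification of the unitary dual of $GL(N, F_v)$ (and Vogan's archimedean analogue), restricted to the generic part: since Speh representations are non-generic, the generic unitary dual consists precisely of full inductions whose inducing data are either tempered ($b_i = 0$) or pair up as $(\delta \nu^{a_j}, \delta \nu^{-a_j})$ with $0 < a_j < 1/2$. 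This matches the definition of $\uP{N_v}$ after transporting through the local Langlands correspondence for $GL(N, F_v)$.

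The main conceptual point (and potentially the only subtle one) is to know that the generic unitary dual is given exactly by this list with the open bound $a_j < 1/2$, as opposed to the closed bound $a_j \leqslant 1/2$: the strict inequality comes from irreducibility of the induced representation at the boundary together with unitarity, which is precisely where the genericity hypothesis intervenes to exclude the Speh-type boundary points. With these ingredients assembled the conclusion is immediate, so I expect no further obstacles beyond accurately quoting the unitary classification.
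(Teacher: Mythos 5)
Your proposal is essentially the same as the paper's proof: invoke Shalika to get local genericity, invoke the fact that an irreducible generic representation of $GL(N)$ over a local field is fully induced from essentially discrete series (the paper cites Jacquet--Shalika where you cite Bernstein/Vogan, but this is the same result), then apply the Tadi\'c classification of the unitary dual to force the exponents into the open interval $(-1/2, 1/2)$. The extra remark that the strict bound comes from excluding (non-generic) Speh boundary points is a correct but minor elaboration of the same final step.
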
  

\begin{proof}
Suppose $\r_{\p}$ is the unitary cuspidal automorphic representation of $GL(N)$ associated to $\p$. By \cite{Shalika:1974}, $\r_{\p, v}$ is generic for all places $v$. And one knows from \cite{JacquetShalika:1983} that any irreducible generic representation of $GL(N)$ over a local field is a fully induced representation
\[
\mathcal{I}_{P}(\nu^{a_{1}}\sigma_{1} \otimes \nu^{a_{2}}\sigma_{2} \otimes \cdots \otimes \nu^{a_{r}}\sigma_{r})
\]
where $P$ is some standard parabolic subgroup of $GL(N)$, $\sigma_{i}$ are unitary essentially discrete series representations and $a_{i} \in \R$ for $1 \leqslant i \leqslant r$. So for our $\p$, we have 
\[
\p_{v} = \nu^{a_{1}}\p_{v, 1} \+ \nu^{a_{2}}\p_{v, 2} \+ \cdots \+ \nu^{a_{s}}\p_{v, s}
\]
where $\p_{v, j} \in \Psm{N_{v, j}}$ for $1 \leqslant j \leqslant s$. Since $\r_{\p, v}$ is also unitary, by the classification of unitary dual of $GL(N)$ (archimedean case in \cite{Tadic:2009} and nonarchimedean case in \cite{Tadic:1986}), $\p_{v}$ must belongs to $\uP{N}$. In particular, $|a_{j}| < 1/2$ for $1 \leqslant j \leqslant s$.
\end{proof}

Next we restate Proposition~\ref{prop: irreducibility of non-unitary induced representation} as follows.

\begin{proposition}
\label{prop: irreducibility of non-unitary induced representation A}
Suppose $F$ is local, $\p \in \cuP{G}$, and $\p$ can be regarded as $\p_{M, \lambda}$ where $\p_{M} \in \cPbd{M}$ and $\lambda \in \mathfrak{a}^{*}_{M}$ lies in some open chamber of $P \supseteq M$. Then for any $[\r_{M}] \in \cPkt{\p_{M}}$, the induced representation $\mathcal{I}_{P} (\r_{M, \lambda}) $ is irreducible.
\end{proposition}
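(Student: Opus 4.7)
The plan is to reduce the irreducibility of $\mathcal{I}_P(\r_{M,\lambda})$ to (i) the known classification of reducibility points for maximal parabolic induction on classical groups, together with (ii) the irreducibility of the corresponding unitary induced representation on $GL(N)$. By the definition of $\cuP{G}$ and the description of $\uP{N}$, the parameter $\p$ has the explicit form
\[
\p = \p_{1} \oplus \cdots \oplus \p_{r} \oplus \bigoplus_{j=1}^{s}\bigl(\nu^{a_{j}}\p_{r+j} \oplus \nu^{-a_{j}}\p_{r+j}\bigr), \qquad 0 < a_{j} < 1/2,
\]
so that $M \cong \prod_{j=1}^{s} GL(N_{r+j}) \times G_{-}$, $\p_{-}=\p_{1}\oplus\cdots\oplus\p_{r}\in\cPbd{G_{-}}$, and $\r_{M,\lambda}=\nu^{a_{1}}\sigma_{1}\otimes\cdots\otimes\nu^{a_{s}}\sigma_{s}\otimes\r_{-}$ with each $\sigma_{j}$ the essentially discrete series of $GL(N_{r+j},F)$ attached to $\p_{r+j}$ and $\r_{-}\in\cPkt{\p_{-}}$ tempered.

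First I would argue by induction on $s$ using induction in stages. Peeling off the outermost factor reduces the claim to irreducibility of maximal parabolic inductions of the form $\mathrm{Ind}_{Q}^{H}(\nu^{a}\sigma\otimes\tau)$, where $H$ is a classical group of the same type as $G$, $Q=LU$ with $L\cong GL(N')\times H_{-}$, $\sigma$ a unitary essentially discrete series of $GL(N',F)$, $\tau$ an irreducible representation of $H_{-}(F)$ that is either tempered or an iterated complementary-series induction of a tempered representation (produced by earlier induction steps), and $0<a<1/2$.

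Second I would invoke the reducibility-point analysis for such maximal parabolics. In the nonarchimedean case this is due to M{\oe}glin--Tadi\'c and Heiermann, and in the archimedean case to Speh--Vogan / Knapp--Stein; in both settings the reducibility points on the positive real axis lie in $\tfrac{1}{2}\mathbb{Z}_{>0}$ and are dictated by poles of the normalizing factor built from the Rankin--Selberg $L$-factor $L(s,\sigma\times\tau)$ and the symmetric/exterior square $L$-factor of $\sigma$. Consequently no reducibility can occur in the open interval $(0,1/2)$, and each stage of the induction remains irreducible. The same analysis shows that the composition factor structure of $\mathrm{Ind}(\nu^{a}\sigma\otimes\tau)$ in this range is independent of $\tau$ varying in its tempered L-packet.

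The main obstacle is controlling interactions between distinct complementary factors $\nu^{a_i}\sigma_i$ and $\nu^{a_j}\sigma_j$, where $a_{i}\pm a_{j}\in\mathbb{Z}$ or $\sigma_{i}\cong\sigma_{j}^{\vee}$ could \emph{a priori} create additional reducibility coming purely from the $GL$-side. I would handle this by observing that any such reducibility would already occur inside the full $GL(N,F)$-induction
\[
\r_{\p} = \mathrm{Ind}^{GL(N)}\bigl(\r_{\p_{1}}\otimes\cdots\otimes\r_{\p_{r}}\otimes \nu^{a_{1}}\sigma_{1}\otimes\nu^{-a_{1}}\sigma_{1}\otimes\cdots\bigr),
\]
but by Proposition~\ref{prop: local constituents of cuspidal representations A} together with the Tadi\'c/Vogan classification of the unitary dual of $GL(N)$, this $GL(N)$-induced representation is irreducible. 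Combining this with the twisted character identity of Theorem~\ref{thm: LLC}(2), which links composition series on the two sides via the endoscopic transfer, rules out the extra $GL$-type reducibility. Assembling the maximal-parabolic step of the previous paragraph with this $GL(N)$-transfer argument through the inductive reduction on $s$ yields the irreducibility of $\mathcal{I}_{P}(\r_{M,\lambda})$ for every $\r_{M}\in\cPkt{\p_{M}}$.
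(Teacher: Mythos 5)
The main structural gap is in your inductive peeling argument. You propose to prove irreducibility of $\mathcal{I}_P(\r_{M,\lambda})$ by induction on $s$ via induction in stages, reducing to maximal parabolic inductions $\nu^a\sigma\rtimes\tau$ where $\tau$ is ``either tempered or an iterated complementary-series induction of a tempered representation.'' But once $\tau$ itself is a non-tempered complementary series, the reducibility-point results you cite (M{\oe}glin--Tadi\'c, Heiermann, Speh--Vogan/Knapp--Stein) do not apply: those results govern $\nu^a\sigma\rtimes\tau$ with $\tau$ cuspidal, discrete series, or tempered, and it is not automatic that the reducibility points stay in $\tfrac{1}{2}\mathbb{Z}_{>0}$ when $\tau$ degenerates to a non-tempered Langlands subquotient. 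The paper avoids this entirely by invoking the Speh--Vogan reduction criteria (Lemma~\ref{lemma: first reduction} and Lemma~\ref{lemma: second reduction}), which reduce irreducibility of the full induced representation all at once to irreducibility of (a) pairwise $GL\times GL$ inductions $\nu^{a_i}\r_{\p_i}\times\nu^{\pm a_j}\r_{\p_j}$ and (b) maximal parabolic inductions $\nu^{a_i}\r_{\p_i}\rtimes\sigma$ with $\sigma$ a genuine discrete series of the smaller classical group. These reductions are not the naive ``one factor at a time'' induction; they are exactly the device that keeps all the auxiliary data tempered so that the cited reducibility theory applies.

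Two smaller points. First, for the $GL$-pairwise factors you do not need the twisted character identity of Theorem~\ref{thm: LLC}(2); the segments $\nu^{a_i}\r_{\p_i}$ and $\nu^{\pm a_j}\r_{\p_j}$ with $0<a_i,a_j<\tfrac12$ are never linked, so their inductions are irreducible directly by Zelevinsky (and the archimedean analogue). The character identity relates stable virtual characters summed over a packet and cannot by itself rule out reducibility of a single induced representation on the classical group side, so that argument does not do the work you want it to. Second, the classical-group step $\nu^a\sigma\rtimes\sigma_{\mathrm{ds}}$ with $\sigma_{\mathrm{ds}}$ a discrete series is the real content (it requires Tadi\'c's Theorem~\ref{thm: a theorem of Tadic}, M{\oe}glin's Proposition~\ref{prop: reducibility of generalized principal series; cuspidal case}, and a reduction via the M{\oe}glin--Tadi\'c classification of discrete series), and you should at least indicate the reduction from discrete series to cuspidal inducing data, which is a nontrivial intertwining-operator argument rather than a direct citation.
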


Before starting the proof, we want to introduce some notations for the parabolic induction. Suppose $\r_{1}$ and $\r_{2}$ are representations of $GL(N_{1})$ and $GL(N_{2})$ respectively, we will write $\r_{1} \times \r_{2}$ for the parabolic induction of $\r_{1} \otimes \r_{2}$ by viewing $GL(N_{1}) \times GL(N_{2})$ as the Levi component of a maximal parabolic subgroup in $GL(N_{1} + N_{2})$. And similarly, suppose $\r$ and $\sigma$ are representations of $GL(N)$ and $G$, we will write $\r \rtimes \sigma$ for the parabolic induction of $\r \otimes \sigma$ by viewing $GL(N) \times G$ as the Levi component of a maximal parabolic subgroup in $G_{+}$ which is of same type as $G$.

The proof of this proposition breaks down to several steps. First notice $\p \in \cuP{G}$, so we can write 
\[
\p = \p_{G_{-}} \+ (\nu^{a_{1}}\p_{1} \+ \nu^{-a_{1}}\p_{1}^{\vee}) \+ \cdots \+ (\nu^{a_{m}}\p_{m} \+ \nu^{-a_{m}}\p_{m}^{\vee}),
\]
where $G_{-}$ is of the same type as $G$, $\p_{G_{-}} \in \cPbd{G_{-}}$, and $\p_{i} \in \Psm{N_{i}}$ for $1 \leqslant i \leqslant m$ with $0 < a_{m} \leqslant \cdots \leqslant a_{1} < 1/2$. Then 
\[
\p_{M} = \p_{G_{-}} \+ \p_{1} \+ \cdots \+ \p_{m} \in \cPbd{M},
\]
and $\lambda = (a_{1}, a_{2}, \cdots, a_{m})$. Let $\r_{\p_{i}}$ be the corresponding essentially discrete series representation of $GL(N_{i})$ associated to $\p_{i}$ for $1 \leqslant i \leqslant m$, then any $\r_{M} \in \cPkt{\p_{M}}$ can be written as $\r_{M} = \r_{G_{-}} \otimes \r_{\p_{1}} \otimes \cdots \otimes \r_{\p_{m}}$, for some $\r_{G_{-}} \in \cPkt{\p_{G_{-}}}$. And 
\(
\mathcal{I}_{P} (\r_{M, \lambda}) = \nu^{a_{1}}\r_{\p_{1}} \times \cdots \nu^{a_{m}}\r_{\p_{m}} \rtimes \r_{G_{-}}.
\)
Next, we have two reduction steps. These steps are due to Speh and Vogan \cite{SpehVogan:1980}, and they are also presented very clearly in \cite{Muic:2005}, so we will state them without proofs. The first reduction is given by the following lemma.

\begin{lemma}
\label{lemma: first reduction}
$\mathcal{I}_{P} (\r_{M, \lambda})$ is irreducible if and only if $\nu^{a_{i}}\r_{\p_{i}} \times \nu^{a_{j}} \r_{\p_{j}}$, $\nu^{a_{i}}\r_{\p_{i}} \times \nu^{- a_{j}} \r_{\p_{j}}^{\vee}$ are irreducible for all $i \neq j$, and also $\nu^{a_{i}}\r_{\p_{i}} \rtimes \r_{G_{-}}$ is irreducible for all $i$.
\end{lemma}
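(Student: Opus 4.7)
The plan is to reduce both directions to the behavior of the global (long) intertwining operator and its factorization over positive roots of the relative Weyl group.

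For the ``only if'' direction, I would use transitivity of induction. If any of the pairwise inductions listed is reducible, say $\nu^{a_i}\r_{\p_i} \times \nu^{a_j}\r_{\p_j}$ has a proper subrepresentation $\tau$, then inducing further with the remaining GL factors and with $\r_{G_-}$ gives a proper subrepresentation of $\mathcal{I}_P(\r_{M,\lambda})$ after rearranging the order (which is legitimate because we are merely using exactness and transitivity of parabolic induction, $\mathcal{I}_P \circ \mathcal{I}_{P'\cap M} \cong \mathcal{I}_{P'}$). The same argument handles the two other types of pairwise inductions; for $\nu^{a_i}\r_{\p_i} \rtimes \r_{G_-}$ one simply places this factor innermost, and for $\nu^{a_i}\r_{\p_i} \times \nu^{-a_j}\r_{\p_j}^{\vee}$ one uses that in the classical group, the factor $\nu^{-a_j}\r_{\p_j}^{\vee}$ appears naturally after crossing past $\r_{G_-}$ via the Weyl element that flips a GL block to the opposite side.

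For the ``if'' direction, the strategy is to consider the standard (long) intertwining operator
\[
M(w_0,\r_{M,\lambda}) : \mathcal{I}_P(\r_{M,\lambda}) \longrightarrow \mathcal{I}_{\bar P}(\r_{M,\lambda}),
\]
where $w_0$ is the longest element of the relative Weyl group $W(M)$, computed at the point $\lambda$. Since the exponents $a_i$ are strictly positive and $\r_{G_-}$ is tempered, both $\mathcal{I}_P(\r_{M,\lambda})$ and $\mathcal{I}_{\bar P}(\r_{M,\lambda})$ are standard modules in the Langlands classification, and $\mathcal{I}_P(\r_{M,\lambda})$ is irreducible if and only if this operator is an isomorphism. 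The operator $M(w_0,\r_{M,\lambda})$ admits a factorization as a product of rank-one intertwining operators indexed by the positive roots of $\mathfrak{a}_M$ made negative by $w_0$; after analyzing which roots occur, each rank-one factor is an intertwining operator of exactly one of the three types: a $GL$-$GL$ root gives an operator whose invertibility at $\lambda$ is equivalent to the irreducibility of $\nu^{a_i}\r_{\p_i} \times \nu^{a_j}\r_{\p_j}$; a root that moves a $GL$-block across the classical part gives the operator whose invertibility is equivalent to the irreducibility of $\nu^{a_i}\r_{\p_i} \times \nu^{-a_j}\r_{\p_j}^{\vee}$; and a ``short'' root gives the operator whose invertibility is equivalent to the irreducibility of $\nu^{a_i}\r_{\p_i} \rtimes \r_{G_-}$. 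Thus the hypothesis on pairwise inductions forces every rank-one factor of $M(w_0,\r_{M,\lambda})$ to be a bijection, so $M(w_0,\r_{M,\lambda})$ itself is a bijection, and $\mathcal{I}_P(\r_{M,\lambda})$ is irreducible.

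The main obstacle is to verify, in a uniform way over archimedean and nonarchimedean $F$, that the listed rank-one operators are genuinely regular (no unwanted poles) and that their bijectivity is equivalent to the irreducibility of the pairwise induction; in particular, one must rule out cancellations between zeros and poles of normalizing factors in the product decomposition. This is a delicate analysis that has been carried out by Speh--Vogan \cite{SpehVogan:1980} in the archimedean case and by Mui\'c \cite{Muic:2005} in the nonarchimedean case, so the cleanest presentation is to invoke their results rather than reprove the factorization. A secondary technical point is the classical-group bookkeeping that identifies the ``flipped'' roots with the $\nu^{a_i}\r_{\p_i} \times \nu^{-a_j}\r_{\p_j}^{\vee}$ inductions; this is purely combinatorial and is handled by choosing a reduced expression for $w_0$ adapted to the block structure of $M$.
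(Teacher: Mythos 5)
The paper does not actually prove this lemma: immediately before stating Lemmas~\ref{lemma: first reduction} and~\ref{lemma: second reduction} the author writes ``These steps are due to Speh and Vogan \cite{SpehVogan:1980}, and they are also presented very clearly in \cite{Muic:2005}, so we will state them without proofs.'' So there is no in-paper argument to compare against; the relevant comparison is with the argument in those references, and your sketch is a faithful reconstruction of it: the ``if'' direction via the Langlands-quotient criterion and the factorization of the long intertwining operator $M(w_0,\r_{M,\lambda})$ along a reduced word, with the dictionary between the three root-types of the classical root system and the three kinds of pairwise inductions, is exactly the Speh--Vogan/Mui\'c strategy.

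One small imprecision in your ``only if'' direction is worth flagging. When the reducible pair is $\nu^{a_i}\r_{\p_i}\times\nu^{-a_j}\r_{\p_j}^{\vee}$, the Weyl element you invoke to ``flip a GL block to the opposite side'' sends $a_j\mapsto -a_j$, so the resulting induced representation is not $\mathcal{I}_P(\r_{M,\lambda})$ itself but a different standard module obtained by the sign-and-permutation action of $W(M)$ on $\r_{M,\lambda}$. These two induced representations are not isomorphic in general, only their Jordan--H\"older series agree. So the correct conclusion from a proper subrepresentation of the rearranged induction is a proper \emph{subquotient} of $\mathcal{I}_P(\r_{M,\lambda})$, not a subrepresentation; this still gives reducibility, but your phrasing overclaims. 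The same clarification makes the ``only if'' direction cleaner for the other two types as well (there one can choose $P$ so they really are innermost factors and the stronger subrepresentation statement does hold, but it is easier to argue uniformly at the level of semisimplifications). Everything else, including your acknowledgment that the pole-zero bookkeeping in the rank-one factorization must be handled by the cited references, is consistent with how the paper treats the lemma.
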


To make the second reduction, we need to write $\p_{G_{-}} = \p_{G'_{-}} \+ (\p'_{1} \+ \p_{1}'^{\vee}) \+ \cdots \+ (\p'_{n} \+ \p_{n}'^{\vee})$, where $G'_{-}$ is of the same type as $G_{-}$ and $\p_{G'_{-}} \in \cPdt{G'_{-}}$, $\p'_{i} \in \Psm{N'_{i}}$. Then it is clear that $\r_{G_{-}}$ is a subrepresentation of $\r_{\p'_{1}} \times \cdots \times \r_{\p'_{n}} \rtimes \r_{G'_{-}}$ for some $\r_{G'_{-}} \in \cPkt{\p_{G'_{-}}}$. And we can state the second reduction as follows.

\begin{lemma}
\label{lemma: second reduction}
For $1 \leqslant i \leqslant m$, $\nu^{a_{i}}\r_{\p_{i}} \rtimes \r_{G_{-}}$ is irreducible if $\nu^{a_{i}} \r_{\p_{i}} \times \r_{\p'_{k}}$, $\nu^{a_{i}} \r_{\p_{i}} \times \r_{\p'_{k}}^{\vee}$ are irreducible for $1 \leqslant k \leqslant n$, and also $\nu^{a_{i}} \r_{\p_{i}} \rtimes \r_{G'_{-}}$ is irreducible.
\end{lemma}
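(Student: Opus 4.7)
The plan is to imitate the standard Speh--Vogan style induction-by-stages argument (cf. \cite{SpehVogan:1980}, \cite{Muic:2005}), exploiting the fact that $\r_{G_{-}}$ sits inside a fully induced tempered representation whose inducing data are exactly the $\r_{\p'_{k}}$ and $\r_{G'_{-}}$. First, recall that $\r_{G_{-}}\in\cPkt{\p_{G_{-}}}$ is tempered and, by the standard module theorem in the tempered case, embeds (in fact is a subrepresentation of) the induced representation
\[
\r_{\p'_{1}} \times \cdots \times \r_{\p'_{n}} \rtimes \r_{G'_{-}}.
\]
Applying the exact functor of parabolic induction by $\nu^{a_{i}}\r_{\p_{i}}$ produces an embedding
\[
\nu^{a_{i}}\r_{\p_{i}} \rtimes \r_{G_{-}} \hookrightarrow \nu^{a_{i}}\r_{\p_{i}} \times \r_{\p'_{1}} \times \cdots \times \r_{\p'_{n}} \rtimes \r_{G'_{-}}.
\]

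Next I would use the two irreducibility hypotheses to commute $\nu^{a_{i}}\r_{\p_{i}}$ past each $\r_{\p'_{k}}$. The irreducibility of $\nu^{a_{i}}\r_{\p_{i}} \times \r_{\p'_{k}}$ gives the intertwining isomorphism $\nu^{a_{i}}\r_{\p_{i}} \times \r_{\p'_{k}} \cong \r_{\p'_{k}} \times \nu^{a_{i}}\r_{\p_{i}}$; the companion assumption on $\nu^{a_{i}}\r_{\p_{i}} \times \r_{\p'_{k}}^{\vee}$ is what is needed to commute past the opposite arrangements that appear when one rewrites the standard module and tracks the contragredient factors produced by the $\rtimes$-induction. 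Iterating these commutations rearranges the right-hand side to
\[
\r_{\p'_{1}} \times \cdots \times \r_{\p'_{n}} \rtimes \bigl(\nu^{a_{i}}\r_{\p_{i}} \rtimes \r_{G'_{-}}\bigr).
\]

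By the third hypothesis, $\nu^{a_{i}}\r_{\p_{i}} \rtimes \r_{G'_{-}}$ is irreducible, so in particular it is the unique Langlands quotient of the corresponding standard module. Parabolically inducing with the tempered representation $\r_{\p'_{1}} \times \cdots \times \r_{\p'_{n}}$ therefore yields an induced representation whose Langlands data coincide with those of $\nu^{a_{i}}\r_{\p_{i}} \rtimes \r_{G_{-}}$ (here $\r_{G_{-}}$ is recovered as the unique irreducible subrepresentation of the tempered induction). Comparing Jordan--H\"older constituents on both sides then forces $\nu^{a_{i}}\r_{\p_{i}} \rtimes \r_{G_{-}}$ and its Langlands quotient to share the same length, which equals one.

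The main obstacle will be the bookkeeping in the commutation step: one has to verify that the intertwining operators produced from the two irreducibility hypotheses really do implement the swap of $\nu^{a_{i}}\r_{\p_{i}}$ past each $\r_{\p'_{k}}$ inside the $\rtimes$-induction, paying attention to the appearance of duals caused by the outer involution on the Levi. This is precisely the delicate point where the second hypothesis on $\nu^{a_{i}}\r_{\p_{i}} \times \r_{\p'_{k}}^{\vee}$ enters; once this is done cleanly, the rest follows from the general Langlands-quotient formalism in the tempered-to-standard reduction as in \cite[\S 3]{Muic:2005}.
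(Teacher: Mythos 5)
The paper omits the proof of this lemma entirely, citing \cite{SpehVogan:1980} and \cite{Muic:2005}; the closest internal model is the intertwining-operator argument it carries out for Theorem~\ref{thm: a theorem of Tadic} later in the same appendix. Your setup — embedding $\nu^{a_{i}}\r_{\p_{i}} \rtimes \r_{G_{-}}$ into $\nu^{a_{i}}\r_{\p_{i}} \times \r_{\p'_{1}} \times \cdots \times \r_{\p'_{n}} \rtimes \r_{G'_{-}}$ and commuting $\nu^{a_{i}}\r_{\p_{i}}$ past the $\r_{\p'_{k}}$ using the $GL$-irreducibility hypotheses and their duals — is exactly the right one, and you correctly identified that the hypothesis on $\nu^{a_i}\r_{\p_i}\times\r_{\p'_k}^{\vee}$ is what handles the return swaps.

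But the conclusion does not follow as you state it. The parenthetical ``$\r_{G_{-}}$ is recovered as the unique irreducible subrepresentation of the tempered induction'' is false in general: $\r_{\p'_{1}} \times \cdots \times \r_{\p'_{n}} \rtimes \r_{G'_{-}}$ is a unitary induction, hence semisimple, and typically has several irreducible summands of which $\r_{G_{-}}$ is only one. More seriously, ``comparing Jordan--H\"older constituents on both sides'' is not an argument, since after the commutations both sides are the same representation written two ways; the constituent count says nothing about the length of the individual summand $\nu^{a_{i}}\r_{\p_{i}}\rtimes\r_{G_{-}}$. The way to close the gap is precisely the one the paper uses to prove Theorem~\ref{thm: a theorem of Tadic}: factor the standard (long) intertwining operator on $\nu^{a_{i}}\r_{\p_{i}} \times \r_{\p'_{1}} \times \cdots \times \r_{\p'_{n}} \rtimes \r_{G'_{-}}$ as the composition of the swaps $\nu^{a_{i}}\r_{\p_{i}}\leftrightarrow\r_{\p'_{k}}$, then the standard operator $\nu^{a_{i}}\r_{\p_{i}}\rtimes\r_{G'_{-}}\to\nu^{-a_{i}}\r_{\p_{i}}^{\vee}\rtimes\r_{G'_{-}}$, then the swaps $\r_{\p'_{k}}\leftrightarrow\nu^{-a_{i}}\r_{\p_{i}}^{\vee}$. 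Each piece is bijective by your three hypotheses, so the composition is. Since $\nu^{a_{i}}\r_{\p_{i}}\rtimes\r_{G_{-}}$ is a direct summand on which this operator restricts (up to a nonzero scalar) to the standard intertwining operator $\nu^{a_{i}}\r_{\p_{i}}\rtimes\r_{G_{-}}\to\nu^{-a_{i}}\r_{\p_{i}}^{\vee}\rtimes\r_{G_{-}}$, that operator is also bijective, and the Langlands quotient theorem then forces irreducibility.
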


Since $0< a_{i} < 1/2$ for $1 \leqslant i \leqslant m$, by the theory of Zelevinsky \cite{Zelevinsky:1980} and its archimedean analogue \cite{Tadic:2009}, we can see easily that $\nu^{a_{i}}\r_{\p_{i}} \times \nu^{a_{j}} \r_{\p_{j}}$, $\nu^{a_{i}}\r_{\p_{i}} \times \nu^{- a_{j}} \r_{\p_{j}}^{\vee}$ are irreducible for all $i \neq j$, and $\nu^{a_{i}} \r_{\p_{i}} \times \r_{\p'_{k}}$, $\nu^{a_{i}} \r_{\p_{i}} \times \r_{\p'_{k}}^{\vee}$ are irreducible for all $k$. So it reduces to show $\nu^{a_{i}} \r_{\p_{i}} \rtimes \r_{G'_{-}}$ is irreducible for $1 \leqslant i \leqslant m$. And this is the consequence of the following proposition.

\begin{proposition}
\label{prop: reducibility of generalized principal series; discrete series case}
Suppose $\r$ and $\sigma$ are discrete series representations of $GL(N)$ and $G$ respectively, then the induced representation $\nu^{a}\r \rtimes \sigma$ is irreducible if $0 < a < 1/2$.
\end{proposition}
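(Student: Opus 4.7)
The strategy is to determine the reducibility points of the family $\nu^s \rho \rtimes \sigma$ as $s$ varies over $\mathbb{R}$, and show that none of them lie in the open interval $(0, 1/2)$. I would split into the self-dual and non-self-dual cases for $\rho$.

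First, when $\rho \not\cong \rho^{\vee}$, I would argue that $\nu^s \rho \rtimes \sigma$ is irreducible for every real $s$. In the $p$-adic case this follows from Harish-Chandra's theory: the Plancherel $\mu$-function $\mu(s, \rho, \sigma)$ has no real poles or zeros when $\rho$ is not self-dual, because the relevant Rankin-Selberg/symmetric/exterior square $L$-factors entering the formula (via Shahidi's work, e.g.\ \cite{Shahidi:1990}) are holomorphic and non-vanishing on $\mathbb{R}_{>0}$. The archimedean statement is classical and goes back to Speh-Vogan \cite{SpehVogan:1980}. Alternatively, this is a direct consequence of the local Langlands correspondence for $G$ (Theorem~\ref{thm: LLC}): the $L$-parameter of $\nu^s\rho \rtimes \sigma$ factors through a proper Levi only in the obvious way when $\rho$ is not self-dual, and one reads off irreducibility from the centralizer.

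Next, suppose $\rho \cong \rho^{\vee}$, so $\rho$ corresponds to a self-dual simple parameter $\phi_\rho \in \Psm{N^{\theta_N}}$. Then there is a unique real number $s_\rho \geq 0$ such that $\nu^s \rho \rtimes \sigma$ is reducible precisely at $s = \pm s_\rho$. The main point is to show $s_\rho \notin (0, 1/2)$. By the classification of reducibility points for generalized principal series of classical groups (Mœglin \cite{Moeglin:2006}, Mœglin--Tadić, Shahidi in the generic case; or directly from Arthur's endoscopic character identities \eqref{eq: character relation for G}), one has $s_\rho \in \{0, 1/2, 1\}$. More precisely, $s_\rho = 1/2$ precisely when $\phi_\rho$ is of the ``opposite'' type to $\D{G}$ (symplectic vs.\ orthogonal) and does not occur in $\phi_\sigma$; $s_\rho = 1$ precisely when $\phi_\rho$ is of the ``same'' type and occurs in $\phi_\sigma$; and $s_\rho = 0$ otherwise. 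In particular $s_\rho \notin (0,1/2)$, and thus $\nu^a \rho \rtimes \sigma$ is irreducible for $0 < a < 1/2$.

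The main technical ingredient is the computation of the reducibility point $s_\rho$ in the self-dual case. In the generic $p$-adic case this is given by Shahidi's identity between Plancherel measures and $\gamma$-factors, together with the analytic properties of $L(s, \rho, \mathrm{Sym}^2)$ and $L(s, \rho, \wedge^2)$ at $s = 0, 1$. In the archimedean case it follows from the explicit reducibility analysis of Speh--Vogan and subsequent work. For the generality needed here (arbitrary tempered, possibly non-generic $\sigma$), the cleanest route is to invoke Arthur's endoscopic classification: both sides of \eqref{eq: character relation for G} are compatible with parabolic induction, so the reducibility of $\nu^a\rho \rtimes \sigma$ can only occur where the $L$-parameter of the induced representation fails to be ``in general position,'' which happens only at $a \in \{0, 1/2, 1\}$ when $\rho$ is self-dual, and nowhere on $\mathbb{R}$ when $\rho$ is not self-dual. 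The expected obstacle is ensuring the non-generic cases are covered uniformly; this is handled by reducing to the generic case within each $L$-packet via \eqref{eq: character relation for G}, since an $L$-packet contains a generic member (the one paired with the trivial character of $\S{\p}$ in the Whittaker normalization).
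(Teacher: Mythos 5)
Your argument has a genuine gap at the heart of the self-dual case. You assert that for self-dual $\rho$ (a discrete series of $GL(N)$) and discrete series $\sigma$ of $G$, there is a \emph{unique} real number $s_\rho \geq 0$ so that $\nu^s\rho \rtimes \sigma$ is reducible precisely at $s = \pm s_\rho$, and that moreover $s_\rho \in \{0, 1/2, 1\}$. Neither claim is correct in the generality of the proposition. The ``unique reducibility point'' picture is a feature of the case where both $\rho$ and $\sigma$ are \emph{supercuspidal} (this is Mœglin's Proposition~\ref{prop: reducibility of generalized principal series; cuspidal case}); once $\sigma$ is merely a discrete series with several Jordan blocks, or once the $GL$-factor is a non-cuspidal discrete series $\delta(\nu^{-l}\rho_0, \nu^{l}\rho_0)$, the set $\{s : \nu^s\rho \rtimes \sigma \text{ reducible}\}$ can have several elements and need not be contained in $\{0, 1/2, 1\}$. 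What \emph{is} true (and all that is needed) is that every reducibility point lies in $\tfrac{1}{2}\mathbb{Z}$, which is disjoint from $(0, 1/2)$. Your appeal to ``failure of general position'' of the $L$-parameter, or to compatibility of \eqref{eq: character relation for G} with parabolic induction, gestures at this but does not prove it: reducibility of the induced representation is not a formal consequence of a coincidence in the parameter (that implication goes the other way), and in the logical architecture of the paper the behaviour of the packet $\cPkt{\p}$ for non-tempered $\p \in \cuP{G}$ is precisely what Proposition~\ref{prop: irreducibility of non-unitary induced representation} is being used to control, so invoking it here risks circularity.

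The paper's proof handles this cleanly by passing to the cuspidal support: in the nonarchimedean case one writes $\nu^a\pi = \delta(\nu^{-l_1}\rho_0, \nu^{l_2}\rho_0)$ with $\rho_0$ cuspidal and $l_1 + l_2 \in \mathbb{Z}_{\geq 0}$, observes that $0 < a < 1/2$ forces $2l_2 + 1 \notin \mathbb{Z}$, and then invokes Tadić's criterion (Theorem~\ref{thm: a theorem of Tadic}): reducibility of $\delta \rtimes \sigma$ requires both $\rho_0$ self-dual \emph{and} $2l_2 + 1 \in \mathbb{Z}$. That parity condition is exactly the correct replacement for your $\{0,1/2,1\}$ claim, and Tadić's theorem is in turn reduced to the cuspidal-$\sigma$ case (Tadić), then to the cuspidal-cuspidal case (Mœglin's Proposition~\ref{prop: reducibility of generalized principal series; cuspidal case}), and finally the general discrete series $\sigma$ is treated by induction via the Mœglin--Tadić classification and a chain of normalized intertwining operators. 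Your non-self-dual case and your archimedean case are essentially right and match the paper's Speh--Vogan citation; it is the self-dual nonarchimedean case with non-cuspidal data that needs the segment-level parity argument, not a uniqueness statement for reducibility points.
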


The proof of this proposition is divided into two cases: archimedean and nonarchimedean. The archimedean case follows from the result of Speh and Vogan directly, and we refer the readers to \cite{SpehVogan:1980} for precise statement of their theorems. For the nonarchimedean case, the story is not that straightforward, and we will concentrate on this case. So now we assume $F$ is nonarchimedean, and we can write the essentially discrete series $\nu^{a}\r$ of $GL(N)$ as segments according to the classification theory of Zelevinsky \cite{Zelevinsky:1980}, i.e. $\nu^{a}\r = \delta(\nu^{-l_{1}}\rho, \nu^{l_{2}}\rho)$ where $\rho$ is a cuspidal representation of $GL(d_{\rho})$, $d_{\rho}$ is defined by $\rho$, and $l_{1} + l_{2} \in \mathbb{Z}_{\geqslant 0}$ with $a = (l_{2} - l_{1}) / 2$. Note that $2l_{2} + 1 \notin \mathbb{Z}$ for $0 < a < 1/2$. Then it is easy to see that the nonarchimedean case follows from the following theorem due to Tadi{\'c} (\cite{Muic:2004}, Theorem 2.2).

\begin{theorem}
\label{thm: a theorem of Tadic}
Suppose $F$ is nonarchimedean, $\delta = \delta(\nu^{-l_{1}}\rho, \nu^{l_{2}}\rho)$ and $\sigma$ is a discrete series of $G$. If $\rho \neq \rho^{\vee}$ or $2l_{2} + 1 \nsubseteq \mathbb{Z}$, then $\delta \rtimes \sigma$ is irreducible.
\end{theorem}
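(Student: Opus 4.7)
The plan is to prove irreducibility by Jacquet module analysis via Tadi\'c's structure theorem. First I would introduce $M^{*}(\delta)$, the Tadi\'c operator on the Grothendieck group of representations of general linear groups, which computes Jacquet modules of parabolically induced representations of classical groups from Jacquet modules of the inducing data. Concretely, one uses the formula
\[
\mu^{*}(\delta \rtimes \sigma) \;=\; M^{*}(\delta) \rtimes \mu^{*}(\sigma),
\]
where $M^{*}(\delta)$ is constructed from the Bernstein--Zelevinsky formula for the Jacquet module of the segment $\Delta = [\nu^{-l_{1}}\rho,\nu^{l_{2}}\rho]$ by applying the involution $\pi \mapsto \tilde{\pi}$ on one tensor factor and summing over all sub-segments. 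The Jacquet module $s.s.(r_{GL}(\delta))$ is explicitly
\[
\sum_{i=-l_{1}-1}^{\,l_{2}} \delta(\nu^{i+1}\rho,\,\nu^{l_{2}}\rho) \otimes \delta(\nu^{-l_{1}}\rho,\,\nu^{i}\rho),
\]
with the convention that the extremal pieces are the trivial factor.

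Next I would verify that $\delta \rtimes \sigma$ has a unique irreducible subrepresentation (the Langlands subrepresentation), which is a general fact from the Langlands classification applied to the standard module
\[
\nu^{l_{2}}\rho \times \nu^{l_{2}-1}\rho \times \cdots \times \nu^{-l_{1}}\rho \rtimes \sigma.
\]
Thus it suffices to show that every irreducible subquotient of $\delta \rtimes \sigma$ has the same cuspidal support and the same leading Jacquet module term, hence must coincide with this unique subrepresentation.

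Here the hypotheses split the argument into two cases. In the case $\rho \neq \rho^{\vee}$, the cuspidal support of $\delta \rtimes \sigma$ is of the form $(\mathrm{cusp.supp}(\sigma)) \cup \{\nu^{-l_{1}}\rho,\ldots,\nu^{l_{2}}\rho\}$, and in the $M^{*}$-formula the duality operator sends $\rho$ to $\rho^{\vee}$, which belongs to a different Bernstein component. A direct inspection then shows that only the ``identity'' term of $M^{*}(\delta)$ contributes to the top Jacquet module of any irreducible subquotient, forcing irreducibility. In the case $\rho \cong \rho^{\vee}$ but $2l_{2}+1 \notin \mathbb{Z}$, one uses the Mœglin--Silberger--Shahidi result that the reducibility points of $\nu^{s}\rho \rtimes \sigma_{\mathrm{cusp}}$ lie in $\tfrac{1}{2}\mathbb{Z}$ for any cuspidal $\sigma_{\mathrm{cusp}}$ in the support of $\sigma$. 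Since $l_{1}+l_{2}\in \mathbb{Z}_{\geq 0}$ and $2l_{2}+1 \notin \mathbb{Z}$, all exponents $\{-l_{1},-l_{1}+1,\ldots,l_{2}\}$ lie outside $\tfrac{1}{2}\mathbb{Z}$; consequently each ``atomic'' induction $\nu^{s}\rho \rtimes \sigma$ (for $s$ in this range) is irreducible. An induction in stages, combined with the irreducibility of the generalized Steinberg $\delta$ itself and the irreducibility of the commuting pairs $\nu^{s}\rho \times \nu^{s'}\rho$ (which follows from $s-s'$ being a nonzero integer with the segment structure of a single discrete series), propagates this to the irreducibility of $\delta \rtimes \sigma$.

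The main obstacle lies in Case 2, where one must carefully track how Jacquet modules of $\delta$ interact with those of $\sigma$ via the $M^{*}$-operator. The delicate point is to rule out a hypothetical ``Jordan block collision'' between the exponents of $\delta$ and the exponents governing the reducibility of $\sigma$. The assumption $2l_{2}+1 \notin \mathbb{Z}$ is precisely what prevents the segment from ever reaching a half-integer that could match such a reducibility exponent; this needs to be verified term by term in the $\mu^{*}$-expansion, and the fact that $\sigma$ is a discrete series (so its cuspidal support is constrained by Mœglin's classification) is what makes the verification possible.
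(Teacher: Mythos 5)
Your proposal takes a genuinely different route from the paper's. The paper reduces to the cuspidal case (citing Tadi\'c for cuspidal $\sigma$) and then, for a general discrete series $\sigma$, uses the M{\oe}glin--Tadi\'c embedding $\sigma \hookrightarrow \nu^{\alpha_1}\rho_1 \times \cdots \times \nu^{\alpha_m}\rho_m \rtimes \sigma'$ together with an induction on rank and a composition of normalized intertwining operators: since $\delta \times \nu^{\alpha_i}\rho_i$ and $\delta^{\vee}\times\nu^{\alpha_i}\rho_i$ are irreducible, one can commute $\delta$ past the $GL$-factors, apply the inductive irreducibility of $\delta \rtimes \sigma'$, and conclude that the standard operator $\delta\rtimes\sigma \to \delta^\vee\rtimes\sigma$ is bijective, which forces irreducibility of the standard module. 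You instead propose a direct Jacquet-module ($\mu^*$-calculus) analysis applied to $\delta\rtimes\sigma$ for the general discrete series $\sigma$, with a split into cases $\rho\not\cong\rho^\vee$ and $\rho\cong\rho^\vee$, $2l_2+1\notin\mathbb{Z}$. Both are standard techniques in this area; yours avoids the embedding step but has to track more combinatorics in the Grothendieck group.

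However, there is a genuine gap in your Case 2. You invoke the M{\oe}glin result that the reducibility exponents of $\nu^s\rho\rtimes\sigma_{\mathrm{cusp}}$ lie in $\tfrac12\mathbb{Z}$, and then assert that ``consequently each atomic induction $\nu^s\rho\rtimes\sigma$ is irreducible'' for $\sigma$ a \emph{discrete series}. That inference is not automatic: M{\oe}glin's theorem controls reducibility over a \emph{cuspidal} $\sigma_{\mathrm{cusp}}$, and passing from cuspidal to discrete series $\sigma$ is exactly the hard content of the theorem in the length-one case. Some version of the reduction the paper carries out (embedding $\sigma$ and commuting through, or Jantzen-style tracking of Jordan blocks) is needed here, and your sketch presupposes it. Compounding this, the proposed ``induction in stages'' that breaks $\delta$ into the pieces $\nu^{-l_1}\rho,\ldots,\nu^{l_2}\rho$ cannot work as stated: adjacent pairs $\nu^{s}\rho\times\nu^{s+1}\rho$ \emph{are} linked and hence reducible, so one cannot propagate irreducibility by treating the exponents of $\delta$ as independent atoms. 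The segment $\delta$ must be kept intact; the correct move is to decompose $\sigma$, not $\delta$, which is precisely what the paper does. Finally, in Case 1 the claim that ``only the identity term of $M^*(\delta)$ contributes'' and that this forces irreducibility needs to be sharpened into a multiplicity-one statement for the Jacquet-module term $\delta\otimes\sigma$ in $\mu^*(\delta\rtimes\sigma)$; as written it is heuristic rather than a proof.
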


The proof of this theorem again involves several reduction steps. It is clear that there is no harm to assume $l_{2} > l_{1}$. The simplest case is when $\delta = \nu^{l}\rho$ and $\sigma$ is a cuspidal representation, and this has been settled by M{\oe}glin in \cite{Moeglin:2000}. In fact, M{\oe}glin has proved the following result.

\begin{proposition}
\label{prop: reducibility of generalized principal series; cuspidal case}
Suppose $F$ is nonarchimedean,  $\rho$ and $\sigma$ are cuspidal representations of $GL(N)$ and $G$ respectively. If $\rho = \rho^{\vee}$ and $\nu^{\alpha}\rho \rtimes \sigma$ is reducible, then $\alpha \in \frac{1}{2}\mathbb{Z}$, and $\nu^{\beta}\rho \rtimes \sigma$ is irreducible for all $\beta \in \R \setminus \{\pm \alpha\}$; If $\rho \neq \rho^{\vee}$, then $\nu^{l} \rtimes \sigma$ is irreducible for all $l \in \R$.
\end{proposition}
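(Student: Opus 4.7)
The plan is to analyze the reducibility of $\nu^s \rho \rtimes \sigma$ as the complex parameter $s$ varies, using Harish-Chandra's Plancherel theory for the maximal parabolic $P = MU \subset G_{+}$ whose Levi factor is $M \cong GL(N) \times G$. Since $\rho$ and $\sigma$ are both unitary supercuspidal, $\nu^s \rho \rtimes \sigma$ is a standard module, and its reducibility locus on the real line is controlled by the zeros and poles of Harish-Chandra's $\mu$-function $\mu(\nu^s \rho \otimes \sigma)$, which is a rational function in $q^{-s}$. The relative Weyl group $W(M)$ in $G_{+}$ has order two, and its nontrivial element $w$ sends the inducing datum $\nu^s \rho \otimes \sigma$ to $\nu^{-s} \rho^{\vee} \otimes \sigma$.

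First I would dispose of the case $\rho \not\cong \rho^{\vee}$. Here no element of $W(M)$ stabilizes the inducing data for any $s \in \mathbb{R}$, so by the Harish-Chandra/Knapp--Stein theory the $R$-group is trivial and the standard intertwining operator is holomorphic and nonzero on the real line; equivalently $\mu(\nu^s \rho \otimes \sigma)$ has no real poles. Therefore $\nu^l \rho \rtimes \sigma$ is irreducible for every $l \in \mathbb{R}$.

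For the self-dual case $\rho \cong \rho^{\vee}$, the symmetry $s \mapsto -s$ coming from $w$ forces the poles of $\mu$ on the real line to occur in pairs $\pm \alpha$ with $\alpha \geq 0$. A theorem of Silberger for generalized principal series induced from a cuspidal on a maximal Levi asserts that there is at most one such pair. Consequently there exists a unique $\alpha \geq 0$ such that $\nu^\beta \rho \rtimes \sigma$ is reducible precisely for $\beta \in \{\pm \alpha\}$; in particular, if some $\nu^\alpha \rho \rtimes \sigma$ is reducible with $\alpha > 0$, this exhausts all reducibility points, and $\nu^\beta \rho \rtimes \sigma$ is irreducible for every other $\beta$.

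The hard part is showing $\alpha \in \tfrac{1}{2}\mathbb{Z}$, which does not follow from purely local harmonic analysis. My approach would be to globalize: realize $\sigma$ as the local component at a distinguished place $v = u$ of an automorphic cuspidal $\dot{\sigma}$ of a global group $\dot{G}$ with $\dot{G}_u = G$ and with controlled (spherical/discrete series) behavior at the other places, using a simple trace formula lift in the spirit of Lemma~\ref{lemma: multiple global lifting}; similarly globalize $\rho$ to a self-dual cuspidal $\dot{\rho}$. Then one invokes the Langlands--Shahidi method: the normalizing factor of the global intertwining operator equals a ratio of completed automorphic $L$-functions of the form $L(s, \dot{\rho} \times \dot{\sigma}) / L(1+s, \dot{\rho} \times \dot{\sigma})$ together with $L(2s, \dot{\rho}, r)/L(1+2s, \dot{\rho}, r)$, where $r$ is the appropriate symmetric or exterior square representation determined by the classical group. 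Since these $L$-functions have poles on the real line only at $s = 0, 1$, the local reducibility point $\alpha$ must lie in $\tfrac{1}{2}\mathbb{Z}$. The main technical obstacle is the globalization step, where one must simultaneously preserve self-duality of $\dot{\rho}$, arrange that the auxiliary local places contribute no extraneous reducibility, and match the local normalizing factor at $u$ with the global one; this is precisely the kind of delicate trace-formula argument that Moeglin carries out in \cite{Moeglin:2000}.
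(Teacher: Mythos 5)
Your treatment of the two easy pieces is fine: the case $\rho \not\cong \rho^{\vee}$ via triviality of the $R$-group, and Silberger's theorem that there is at most one pair $\pm\alpha$ of real reducibility points for a unitary cuspidal on a maximal Levi, are both standard and correct. The paper itself gives no proof of this statement -- it cites Mœglin \cite{Moeglin:2000} and only observes that her hypothesis (functorial lifting) is now a theorem of Arthur -- so the comparison to be made is with Mœglin's argument.

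The genuine gap is in the $\frac{1}{2}\mathbb{Z}$ step, and it is not merely a matter of "delicate trace-formula bookkeeping." The Langlands--Shahidi normalization that you invoke -- the global intertwining operator normalized by $L(s,\dot\rho\times\dot\sigma)/L(1+s,\dot\rho\times\dot\sigma)$ times the exterior/symmetric square ratio, with the conclusion drawn from poles at $s=0,1$ -- applies as stated only when $\dot\sigma$ is globally generic, hence only when $\sigma=\dot\sigma_u$ is generic. The proposition, however, is asserted for an arbitrary supercuspidal $\sigma$, and for a non-generic supercuspidal the reducibility point $\alpha$ need not lie in $\{0,\frac12,1\}$: by the Mœglin--Tadić classification of supercuspidals via Jordan blocks, one can have $\alpha=(a_{\max}+1)/2$ with $a_{\max}$ arbitrarily large (subject only to the rank of $G$). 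Thus the argument as written would prove something strictly stronger than the truth, i.e.\ false. The pole analysis of the single global Rankin--Selberg and square $L$-functions sees only the poles at $s\in\{0,\frac12,1\}$; it does not, by itself, account for the large half-integral reducibility points that actually occur. Mœglin's proof circumvents this by working directly with the structure of the $L$-parameter $\phi_\sigma=\bigoplus_i\rho_i\otimes S_{a_i}$ supplied by Arthur's classification: the half-integrality of $\alpha$ comes from the integrality of the $SL(2,\mathbb{C})$-dimensions $a_i$, and the precise location of $\alpha$ from the Jordan-block combinatorics, with globalization used to pin down the $L$-parameter rather than to read off reducibility from poles of one global $L$-function. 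To repair your proof you would at minimum need to replace the single global $L$-function by the full isobaric decomposition of the Arthur transfer of $\dot\sigma$, and then relate the local normalizing factor at $u$ (which now has poles at various half-integers) to local reducibility without the genericity assumption -- which is essentially the content of \cite{Moeglin:2000}.
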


We should point out the original proof of Proposition~\ref{prop: reducibility of generalized principal series; cuspidal case} in \cite{Moeglin:2000} is based on the assumption of functorial lifting from classical groups to general linear groups. Now it is unconditional by Arthur's result \cite{Arthur:2013}. For the second reduction step of the proof of Theorem~\ref{thm: a theorem of Tadic}, Tadi\'c shows the theorem is true for $\sigma$ being cuspidal in \cite{Tadic:1998}, and he uses an induction argument which is quite different from the proof of Proposition~\ref{prop: reducibility of generalized principal series; cuspidal case}. Finally, we can consider the general case, i.e. $\sigma$ is a discrete series of $G$. Suppose $\sigma$ is not cuspidal, then by M{\oe}glin-Tadi\'cs' classification of discrete series of classical groups \cite{MoeglinTadic:2002}, one has an inclusion
\[
\xymatrix{\sigma \, \ar@{^{(}->}[r] \, & \nu^{\alpha_{1}}\r_{1} \times \nu^{\alpha_{2}}\r_{2} \times \cdots \times \nu^{\alpha_{m}}\r_{m} \rtimes \sigma'}
\] 
where $\r_{i}$ are self-dual essentially discrete series of $GL(N_{i})$, $\alpha_{i} \in \frac{1}{2}\mathbb{Z}_{>0}$, and $\sigma'$ is a discrete series of $G'$, where $G'$ is of the same type as $G$. So we can do induction on the rank of $G$, and assume $\delta \rtimes \sigma'$ is irreducible, i.e. the standard intertwining operator $\delta \rtimes \sigma' \xrightarrow{\simeq} \delta^{\vee} \rtimes \sigma'$ is a bijection by the theory of Langlands quotient. Note that $\delta \times \nu^{\alpha_{i}}\r_{i}$ and $\delta^{\vee} \times \nu^{\alpha_{i}}\r_{i}$ are irreducible, so the normalized intertwining operators
\[
\xymatrix{\delta \times \nu^{\alpha_{i}}\r_{i} \ar[r]^{\simeq}   &    \nu^{\alpha_{i}}\r_{i} \times \delta \\
\nu^{\alpha_{i}}\r_{i} \times \delta^{\vee} \ar[r]^{\simeq}   &    \delta^{\vee} \times \nu^{\alpha_{i}}\r_{i} }
\]
are bijections. Now consider the following composition of normalized intertwining operators.
\begin{align*}
\delta \rtimes \sigma      & \xhookrightarrow{}     \delta \times \nu^{\alpha_{1}}\r_{1} \times \nu^{\alpha_{2}}\r_{2} \times \cdots \times \nu^{\alpha_{m}}\r_{m} \rtimes \sigma'  
\xrightarrow{\simeq}        \nu^{\alpha_{1}}\r_{1} \times \delta \times \nu^{\alpha_{2}}\r_{2} \times \cdots \times \nu^{\alpha_{m}}\r_{m} \rtimes \sigma' \\
 & \xrightarrow{\simeq}   \cdots    \xrightarrow{\simeq}    \nu^{\alpha_{1}}\r_{1} \times  \nu^{\alpha_{2}}\r_{2} \times \cdots \times \nu^{\alpha_{m}}\r_{m} \rtimes  \delta \times  \sigma'  
\xrightarrow{\simeq}     \nu^{\alpha_{1}}\r_{1} \times  \nu^{\alpha_{2}}\r_{2} \times \cdots \times \nu^{\alpha_{m}}\r_{m} \rtimes  \delta^{\vee} \times  \sigma'    \\
& \xrightarrow{\simeq}     \nu^{\alpha_{1}}\r_{1} \times  \nu^{\alpha_{2}}\r_{2} \times \cdots \times \delta^{\vee} \times \nu^{\alpha_{m}}\r_{m} \rtimes    \sigma'  
\xrightarrow{\simeq}  \cdots  \xrightarrow{\simeq}  \delta^{\vee} \times \nu^{\alpha_{1}}\r_{1} \times  \nu^{\alpha_{2}}\r_{2} \times \cdots \times  \nu^{\alpha_{m}}\r_{m} \rtimes   \sigma' 
\end{align*}
This shows the standard intertwining operator 
\[
\delta \rtimes \sigma \xrightarrow{\simeq} \delta^{\vee} \rtimes \sigma
\]
is a bijection, and hence $\delta \rtimes \sigma$ is irreducible. This finishes the proof of Theorem~\ref{thm: a theorem of Tadic}.

\bibliographystyle{amsalpha}

\bibliography{reps}

\end{document}